\DeclareFontFamily{U}{mathx}{\hyphenchar\font45}
\DeclareFontShape{U}{mathx}{m}{n}{
	<5> <6> <7> <8> <9> <10>
	<10.95> <12> <14.4> <17.28> <20.74> <24.88>
	mathx10
}{}
\DeclareSymbolFont{mathx}{U}{mathx}{m}{n}
\DeclareMathAccent{\widecheck}{0}{mathx}{"71}
\DeclareMathAccent{\wideparen}{0}{mathx}{"75}
\setlist[enumerate]{leftmargin=1.5em}
\setlist[itemize]{leftmargin=1.5em}
\definecolor{green}{rgb}{0,0.8,0} % Redefines the color green.
\newtheorem{maintheorem}{Theorem}
\newtheorem{theorem}{Theorem}[section]
\newtheorem{corollary}[theorem]{Corollary}
\newtheorem{lemma}[theorem]{Lemma}
\newtheorem{proposition}[theorem]{Proposition}
\theoremstyle{definition}
\newtheorem{definition}[theorem]{Definition}
\theoremstyle{remark}
\newtheorem{remark}[theorem]{Remark}
\numberwithin{equation}{section}
\newcommand{\relphantom}[1]{\mathrel{\phantom{#1}}}
\newcommand{\nrm}{\@ifstar{\nrmb}{\nrmi}}
\newcommand{\nrmi}[1]{\Vert{#1}\Vert}
\newcommand{\nrmb}[1]{\left\Vert{#1}\right\Vert}
\newcommand{\abs}{\@ifstar{\absb}{\absi}}
\newcommand{\absi}[1]{\vert{#1}\vert}
\newcommand{\absb}[1]{\left\vert{#1}\right\vert}
\newcommand{\brk}{\@ifstar{\brkb}{\brki}}
\newcommand{\brki}[1]{\langle{#1}\rangle}
\newcommand{\brkb}[1]{\left\langle{#1}\right\rangle}
\newcommand{\set}{\@ifstar{\setb}{\seti}}
\newcommand{\seti}[1]{\{#1\}}
\newcommand{\setb}[1]{\left\{ #1\right\}}
\newcommand{\tld}[1]{\widetilde{#1}}
\newcommand{\br}[1]{\overline{#1}}
\newcommand{\ubr}[1]{\underline{#1}}
\newcommand{\nnrm}[1]{{\vert\kern-0.25ex\vert\kern-0.25ex\vert #1 
    \vert\kern-0.25ex\vert\kern-0.25ex\vert}}
\newcommand{\VERT}[1]{{\left\vert\kern-0.25ex\left\vert\kern-0.25ex\left\vert #1 
    \right\vert\kern-0.25ex\right\vert\kern-0.25ex\right\vert}}
\DeclareMathOperator{\supp}{supp}
\let\Re\relax
\DeclareMathOperator{\Re}{Re}
\newcommand{\aeq}{\simeq}
\newcommand{\aleq}{\lesssim}
\newcommand{\ageq}{\gtrsim}
\newcommand{\lap}{\Delta}
\newcommand{\ud}{\mathrm{d}}
\newcommand{\rd}{\partial}
\newcommand{\nb}{\nabla}
\newcommand{\peq}{\relphantom{=}}			% makes an empty space with the same width as =
\newcommand{\pleq}{\relphantom{\leq}}			% makes an empty space with the same width as \leq
\newcommand{\paleq}{\relphantom{\aleq}}			% makes an empty space with the same width as \aleq
\newcommand{\alp}{\alpha}
\newcommand{\bt}{\beta}
\newcommand{\gmm}{\gamma}
\newcommand{\Gmm}{\Gamma}
\newcommand{\dlt}{\delta}
\newcommand{\Dlt}{\Delta}
\newcommand{\eps}{\epsilon}
\newcommand{\kpp}{\kappa}
\newcommand{\lmb}{\lambda}
\newcommand{\Lmb}{\Lambda}
\newcommand{\sgm}{\sigma}
\newcommand{\tht}{\theta}
\newcommand{\Tht}{\Theta}
\newcommand{\vtht}{\vartheta}
\newcommand{\Omg}{\Omega}
\newcommand{\ups}{\upsilon}
\newcommand{\Ups}{\Upsilon}
\newcommand{\zt}{\zeta}
\newcommand{\bbC}{\mathbb C}
\newcommand{\bbN}{\mathbb N}
\newcommand{\bbR}{\mathbb R}
\newcommand{\bbT}{\mathbb T}
\newcommand{\bbZ}{\mathbb Z}
\newcommand{\calL}{\mathcal L}
\newcommand{\calM}{\mathcal M}
\newcommand{\calN}{\mathcal N}
\newcommand{\err}{\boldsymbol{\epsilon}}		% error in the linearized electron-MHD
\newcommand{\rst}[1]{\left. #1 \right\vert}
\newcommand{\ollmb}{\overline{\lmb}}
\newcommand{\bgtht}{\mathring{\tht}}
\newcommand{\dgx}{\mathring{x}}
\newcommand{\bfPhi}{\mathbf{\Phi}}				% overall phase $\lmb_{0} x_{1} + \Phi(t, x_{2})$
\newcommand{\wpT}{T}						% bound for the time interval in Section~\ref{sec:HJE}
\begin{document}

\title{ {Illposedness via degenerate dispersion} for generalized surface quasi-geostrophic equations with singular velocities}%: Title of the article
\author{Dongho Chae\thanks{Department of Mathematics, Chung-ang University. E-mail: dchae@cau.ac.kr} \and In-Jee Jeong\thanks{Department of Mathematical Sciences and RIM, Seoul National University. E-mail: injee\_j@snu.ac.kr}\and Sung-Jin Oh\thanks{Department of Mathematics, UC Berkeley and School of Mathematics, Korea Institute for Advanced Study. E-mail: sjoh@math.berkeley.edu}}

\date{\today}

%\thanks{}%
%\subjclass{}%
%\keywords{}%

%\date{\today}%
%\dedicatory{}%
%\commby{}%
% ----------------------------------------------------------------

\maketitle

% ----------------------------------------------------------------

\begin{abstract}
We prove strong nonlinear illposedness results for the generalized SQG equation \begin{equation*}
	\begin{split}
	\rd_t \tht + \nb^\perp \Gmm[\tht] \cdot \nb \theta = 0 
	\end{split}
	\end{equation*} in any sufficiently regular Sobolev spaces, when $\Gmm$ is a singular multiplier in the sense that its symbol satisfies  $|\Gmm(\xi)|\rightarrow\infty$ as $|\xi|\rightarrow\infty$ together some mild regularity assumptions. The key mechanism is degenerate dispersion, i.e., the rapid growth of frequencies of solutions around certain shear states, as in the second and third author's earlier work on Hall-magnetohydrodynamics \cite{JO1}. The robustness of our method allows one to extend linear and nonlinear illposedness to fractionally dissipative systems, as long as the order of dissipation is lower than that of $\Gmm$. Our illposedness results are completely sharp in view of various existing wellposedness statements as well as those from our companion paper \cite{CJNO}. 
	
	Key to our proofs is a novel construction of degenerating wave packets for the class of linear equations \begin{equation*}
	\begin{split}
	\rd_t \phi + ip(t,X,D)\phi = 0 
	\end{split}
	\end{equation*} possibly with lower order terms, where $p(t,X,D)$ is a possibly time dependent pseudo-differential operator which is formally self-adjoint in $L^2$, degenerate, and dispersive. Degenerating wave packets are approximate solutions to the above linear equation with spatial and frequency support localized at $(X(t),\Xi(t))$, which are solutions to the bicharacteristic ODE system associated with $p(t,x,\xi)$. These wave packets explicitly show degeneration as $X(t)$ approaches a point where $p$ vanishes, which in particular allows us to prove illposedness in topologies finer than $L^2$. While the equation for the wave packet can be formally obtained from a Taylor expansion of the symbol near $\xi=\Xi(t)$, the difficult part is to rigorously control the error in sufficiently long timescales within which significant degeneration occurs. To achieve this task, we develop a systematic way to obtain sharp estimates for not only degenerating wave packets but also for oscillatory integrals which naturally appear in the error estimate.
 \end{abstract}

\tableofcontents

\section{Introduction}

\subsection{Generalized SQG equations}

In two spatial dimensions, the \textit{generalized surface quasi-geostrophic} (gSQG) equations are given by 
\begin{equation} \label{eq:ssqg}
	\left\{
	\begin{aligned}
		&\rd_{t} \tht + u \cdot \nb \tht = 0, \\
		&u = \nb^{\perp} \Gmm \tht,
	\end{aligned}
	\right.
\end{equation}
where $\nb^{\perp} = (-\rd_{x_2}, \rd_{x_1})^{\top}$ and $\Gmm$ is a Fourier multiplier with a real-valued symbol $\gmm$. Here, $\tht(t,\cdot): \Omg\rightarrow \bbR$ and $u(t,\cdot):\Omg\rightarrow \bbR^2$ with $\Omg$ a two-dimensional domain without boundaries (e.g. $\Omg = \bbT^2$, $\bbR^2$, or $\bbT\times\bbR$). The system \eqref{eq:ssqg} says that the scalar $\theta$ is being advected by the flow of $u$, which is determined from $\theta$ at each moment of time by the ``Biot--Savart'' law $u=\nb^\perp\Gmm\tht$. For this reason, \eqref{eq:ssqg} is sometimes referred to as an \textit{active scalar} system. 

The gSQG system generalizes several important partial differential equations (PDEs) arising in hydrodynamics and magnetohydrodynamics (MHD) and has been intensively studied over the past few decades. A well-known special case occurs when $\Gamma = (-\Delta)^{-1}$, in which case \eqref{eq:ssqg} reduces to the vorticity equation for the two-dimensional incompressible Euler equations, where $\theta$ corresponds to the vorticity of the fluid, which is $\nabla \times u$. Another fundamental case is $\Gamma = \Lambda^{-1}$, which yields the (standard) SQG equations, describing the evolution of atmospheric fronts. Here, $\Lambda = (-\Delta)^{1/2}$ denotes the Zygmund operator. The SQG model was introduced by Constantin--Majda--Tabak \cite{CMT1, CMT2} to describe Boussinesq dynamics at the boundary of the upper half-space (see \cite{Ped} for further details). {At the opposite extreme, the cases $\Gamma = \Lambda$ and $\Gamma = \Lambda^2$ arise naturally in MHD and large-scale atmospheric dynamics, respectively, and we will return to these examples below in Section~\ref{subsec:singular}.} 

{The pioneering works \cite{CMT1,CMT2} suggested the possibility of rapid small scale creation for the SQG equation, and there have been several works demonstrating large gradient growth for smooth solutions (\cite{DEJ,HeKi,Kis,Kis2,KiNa,DE}). 
%and while significant efforts have been devoted to the question of global regularity for the SQG equation $\Gmm = \Lmb^{-1}$, to the best of our knowledge, 
However, it is still unclear whether smooth solutions of the SQG equation could blow up in finite time, in stark contrast with the Euler case $\Gmm= \Lmb^{-2}$ for which global regularity is a classical result. This global regularity is based upon the fact that the conservation of $\nrm{\tht(t,\cdot)}_{L^{\infty}}$ ``almost'' controls the Lipschitz norm of the velocity. Therefore, as the multiplier $\Gmm$ becomes more singular, the control over the velocity one obtains from $\nrm{\tht(t,\cdot)}_{L^{\infty}}$ becomes weaker, and one may expect that smooth solutions are more likely to blow up in finite time. Towards this goal, the global regularity question has been extensively studied for the  {generalized} models where $\Gmm$ is taken to be $\Lmb^{\bt}$ for {$- 1 \geq \bt > -2$ (i.e., between the SQG and 2D Euler equations)}.  When the equation is posed in the half-plane, some finite time singularity formation results were obtained in the range $-1.5>\bt>-2$ (\cite{KRYZ,KYZ,GaPa,MTXX,Z2,JKY}) {using in a crucial way the presence of the boundary (which, roughly speaking, stabilizes the singularity formation mechanism)}. The numerical simulations in \cite{ScDr} report self-similar type singularity formation for SQG patches. On the other hand, some global solutions were constructed in the works \cite{CCG-annpde,CCG-duke,CCW3,CGI,CQZZ,CQZZ-gSQG,GIP,HH-sqg,HMsqg,HXX,HSZ,HSZ2}. 
}

As $\beta$ increases, however, even the question of \textit{local} regularity for smooth solutions becomes nontrivial. This difficulty arises as soon as the multiplier becomes more singular than in the SQG case (i.e., for $\beta > -1$). In this regime, the velocity field $u$ is more singular than $\theta$, which creates serious obstacles in closing energy estimates in Sobolev spaces. Specifically, one must carefully analyze terms where all derivatives in the $H^m$ estimate fall on $u$:
\begin{equation*}
	\begin{split}
		\langle \partial^m( \nabla^\perp \Lambda^\beta \theta ) \cdot \nabla\theta, \partial^m\theta \rangle = \langle \nabla\theta \cdot \nabla^\perp\Lambda^\beta g, g \rangle,
	\end{split}
\end{equation*}
where $g := \partial^m\theta$, and $\langle \cdot, \cdot \rangle$ denotes the standard $L^2$ inner product.

A crucial observation here is that the principal part of the operator $\nabla\theta \cdot \nabla^\perp \Lambda^\beta$ is antisymmetric, allowing one to gain a derivative. This idea was exploited by Chae--Constantin--C\'{o}rdoba--Gancedo--Wu in \cite{CCCGW} to establish local well-posedness for $\beta < 0$, and this observation can be generalized to obtain local wellposedness when $|\gamma(\xi)| \lesssim 1$ as $|\xi| \to 0$, assuming some natural regularity conditions on the derivatives of $\gamma$. Since then, the behavior of solutions in the range $\beta \in (-1,0)$ was considered by many authors  (\cite{CQZZ-gSQG,LiSi,MR4302173,CGI,KR,CMZ,GIP}).

\subsection{The case of singular multipliers}\label{subsec:singular}

Turning to the case of \textit{singular multipliers}, by which we simply mean that $\gmm(\xi)\rightarrow\infty$ as $|\xi|\rightarrow\infty$, one first sees that in the ``borderline'' case when $\beta = 0$ ($\Gmm = \Lmb^\bt$), the nonlinearity vanishes completely: $u\cdot\nb\tht = \nb^\perp\tht\cdot\nb\tht\equiv 0$. This could make one speculate that there might be some additional cancellation which gives local regularity even when $\beta>0$. Furthermore, the (formally) conserved quantity $\nrm{\Gmm^{\frac12}\tht(t,\cdot)}_{L^{2}}$ for \eqref{eq:ssqg} becomes stronger than the other $L^2$-based conservation law $\nrm{\tht(t,\cdot)}_{L^{2}}$ as soon as $\bt>0$. 

Despite these facts, our main result shows \emph{strong illposedness} in Sobolev spaces (with arbitrarily high regularity) for singular $\Gmm$ satisfying a few reasonable assumptions. Interestingly, the generalized SQG equations with singular multipliers naturally appear in a variety of situations, as we shall now explain. 
\begin{itemize}
	\item \textbf{Ohkitani model}. In the papers \cite{Oh1,Oh2}, Ohkitani considered the collective behavior of solutions to \eqref{eq:ssqg} which are obtained by varying $\bt<0$ with the same initial data, towards the goal of settling the question of global regularity versus finite time singularity formation. Numerical simulations in \cite{Oh1,Oh2} did not show any singular behavior of solutions in the limit $\bt\to 0^-$, and based on these, Ohkitani conjectured global regularity of the limiting model \begin{equation}\label{eq:Oh}
		\begin{split}
			\rd_t\tht - \nb^\perp \ln(\Lmb) \tht \cdot \nb \tht = 0
		\end{split}
	\end{equation} which is \eqref{eq:ssqg} with $\Gmm = -\ln(\Lmb)$. This has been referred to as \textit{Ohkitani model} in \cite{CCCGW}. To see how \eqref{eq:Oh} arises, one can simply rewrite \eqref{eq:ssqg} with $\Gmm=\Lmb^\bt$ with $\bt<0$ as \begin{equation}\label{eq:bt-sqg-rewrite}
	\begin{split}
		\frac{1}{\bt} \rd_t \tht + \nb^\perp \left( \frac{\Lmb^\bt -1}{\bt} \right) \tht \cdot\nb\tht = 0, 
	\end{split}
\end{equation} and formally we have that as $\bt\to0^-$, \eqref{eq:bt-sqg-rewrite} converges to \eqref{eq:Oh} in the rescaled timescale $\bt t$. This limit was made rigorous in our companion paper \cite{CJNO}, {and in particular, gave a long-time existence result for solutions in the limit $\bt\to0^-$.}
	\item \textbf{E-MHD system}. The electron magnetohydrodynamics (E-MHD) system takes the form \begin{equation}  \label{eq:e-MHD}
		\left\{
		\begin{aligned} 
			&\rd_t B + \nb \times( (\nb\times B) \times B) = 0, \\
			&\nb\cdot B = 0, 
		\end{aligned}
		\right.
	\end{equation} where $B(t,x) :\bbR\times \bbR^3\rightarrow\bbR^3$. This is a leading order model for the Hall--MHD system (\cite{Light,Pecseli}). Under the so-called $2+\frac{1}{2}$-dimensional assumption, $B$ can be written as \begin{equation*}
		\begin{split}
			B = \nb\psi \times e_z + \Lmb\phi \, e_z
		\end{split}
	\end{equation*} for some \textit{scalar} functions $\psi, \phi$ independent of the third coordinate $z$, and \eqref{eq:e-MHD} reduces to the following system in two dimensions (\cite{LM}): \begin{equation}  \label{eq:e-MHD-21/2}
		\left\{
		\begin{aligned}
			&\rd_t\psi + \nb^\perp\Lmb\phi \cdot\nb\psi = 0 ,\\
			&\rd_t\phi + \Lmb^{-1}( \nb^\perp\Lmb (\Lmb\psi)\cdot\nb\psi )= 0.
		\end{aligned}
		\right.
	\end{equation} Up to leading order, the ansatz $\psi\simeq \phi$ propagates in time, which simply corresponds to \eqref{eq:ssqg} with $\Gmm=\Lmb$. 

	\item \textbf{Asymptotic model for the large-scale quasi-geostrophic (AM-LQG) equation}. The following equation is referred to as the asymptotic model (AM) for the large-scale quasi-geostrophic equation (see \cite{LaMc,Burg,TrDr,BurgDrits,SHF1,SHF2}): \begin{equation}\label{eq:LQG}
		\begin{split}
			\rd_t \psi + \nb^\perp\lap\psi\cdot\nb\psi = 0. 
		\end{split}
	\end{equation} Notice that this is nothing but \eqref{eq:ssqg} with the choice $\Gmm = \lap$. The AM equation has received quite a bit of attention from physicists as the solutions exhibit very different features from the usual 2D turbulence \cite{BurgDrits}. 
	One can arrive at \eqref{eq:LQG} by starting from the so-called Charney--Hasegawa--Mima (CHM) equation \begin{equation}\label{eq:CHM}
		\begin{split}
			\rd_t q + \nb^\perp \psi \cdot \nb q = 0,
		\end{split}
	\end{equation} which is relevant for shallow water quasi-geostrophic dynamics. It is argued that this equation governs ocean front dynamics and planetary atmospheric pattern including Great Red Spot (\cite{PrattStern,Ped,HaseMima}). Here, $q$ denotes the \textit{potential vorticity} and is related by the stream function $\psi$ by $q=(\lap - L_D^{-2})\psi$ (so that \eqref{eq:CHM} is nothing but \eqref{eq:ssqg} with $\Gmm=(\lap-L_D^{-2})^{-1}$). Here, $L_D$ is the so-called  \textit{Rossby deformation length} and it is argued in Burgess and Dritschel \cite{BurgDrits} that when the characteristic length-scale of the flow $L$ satisfies $L \gg L_D$, \eqref{eq:LQG} can be obtained from \eqref{eq:CHM} in the rescaled timescale $t L_D^2$. {Very recently, Svirsky--Herbert--Frishman \cite{SHF1,SHF2} reported detailed statistical properties of the condensate for \eqref{eq:LQG}.}
\end{itemize}

{Let us discuss the physical relevance of illposedness for E-MHD and AM-LQG in Section~\ref{subsec:interpret} below, after precisely stating our main results.}

\subsection{Rough statement of the results}

Our main result, which is stated roughly for now, gives strong illposedness for a large class of singular symbols, including all of the above three examples. 
\begin{theorem} \label{thm:main-simple}
	Consider the following symbols $\gmm$ and pairs of exponents $s, s'$:
	\begin{center}
		\begin{tabular}{| l | l | l |} 
			\hline
			Multiplier
			& Sobolev regularity exponents  \\
			\hline
			$\gmm = \brk{\xi}^{\bt}$, $\bt > 1$
			&  {$s' = s > 3 + \frac{3}{2}\bt$} \\
			$\gmm = \brk{\xi}$
			&  {$s' > \frac{9}{2}$} \\
			$\gmm = \brk{\xi}^{\bt}$, $\bt < 1$
			&  {$\frac{1}{1-\bt} s' > \max\set{s + \frac{\bt^2}{2(1-\bt)}, \frac{3}{2} \frac{2+\beta}{1-\bt}}$} \\
			$\gmm = \log^{\bt} (10+\abs{\xi})$, $\bt > 0$
			&  {$s' = s > 3$} \\
			$\gmm = \log^{\bt} (10+\log (10+\abs{\xi}))$, $\bt > 0$
			&  {$s' = s > 3$} \\
			\hline
		\end{tabular}
	\end{center}
	In each of the above cases, the Cauchy problem for \eqref{eq:ssqg} on the domain {$\Omg = \bbT^{2}$ or $\bbT \times \bbR$} is \emph{$H^{s}$-$H^{s'}$ ill-posed} in the following sense:
For any $\eps, \dlt, A>0$, there exists initial data $\tht_0 \in C^\infty_c(\Omg)$ with $\nrm{\tht_0}_{H^s}<\eps $ such that either \begin{itemize}
		\item there exists no solution $\tht \in L^\infty([0,\dlt]; H^{s'})$ to \eqref{eq:ssqg} with $\rst{\tht}_{t=0} = \tht_0$, or 
		\item any solution $\tht$ belonging to $L^\infty([0,\dlt]; H^{s'})$ satisfy the growth \begin{equation*}
		\begin{split}
		\sup_{t \in [0,\dlt]} \nrm{\tht(t, \cdot)}_{H^{s'}} > A. 
		\end{split}
		\end{equation*}
	\end{itemize}
\end{theorem}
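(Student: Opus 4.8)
The plan is to transplant the degenerate dispersion mechanism of \cite{JO1} to the gSQG setting: linearize \eqref{eq:ssqg} around a shear steady state to reach a degenerate dispersive pseudodifferential equation, construct degenerating wave packets for it, and then upgrade to nonlinear illposedness by a perturbative argument.

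\pfstep{Shear reduction and the degenerate dispersive equation} Fix a smooth profile $\br{\tht} = \eps_0\br{\tht}_*(x_1)$ that is even about a point $x_1^*$ (so $\br{\tht}_*'(x_1^*) = 0$, and, since $\gmm$ is radial in the cases at hand, also $(\Gmm\br{\tht}_*)'(x_1^*) = 0$), with $\br{\tht}_*''(x_1^*)\neq 0$ and $\eps_0$ small enough that $\nrm{\br{\tht}}_{H^s} < \eps/2$; on $\bbT\times\bbR$ one additionally cuts $\br{\tht}$ off in $x_2$, which is harmless because the wave packet below stays in a fixed bounded set where the cutoff is $1$. Since $\Gmm$ maps functions of $x_1$ to functions of $x_1$, we have $\nb^\perp\Gmm\br{\tht} = (0,(\Gmm\br{\tht})')$ and $u\cdot\nb\br{\tht}\equiv 0$, so $\br{\tht}$ is a (modulo cutoff) steady state. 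Writing $\tht = \br{\tht}+\phi$, the perturbation obeys $\rd_t\phi + \tilde L\phi + Q(\phi,\phi) = 0$, where $\tilde L\phi := w'(x_1)\rd_{x_2}\phi - \br{\tht}'(x_1)\rd_{x_2}\Gmm\phi$ with $w := \Gmm\br{\tht}$, and $Q(f,g) := \nb^\perp\Gmm f\cdot\nb g$. The linear part is $\rd_t\phi + ip(x,D)\phi = 0$ up to lower order terms, with real principal symbol $p(x,\xi) = \xi_2\big(w'(x_1) - \br{\tht}'(x_1)\gmm(\xi)\big)$; after a lower-order correction $p(x,D)$ is formally self-adjoint in $L^2$, as in the framework described in the abstract. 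Along the bicharacteristics $\Xi_2\equiv n$ is conserved, and the bicharacteristic stays at the point $(x_1^*,x_2^*)$, where $p$ vanishes identically (both $w'$ and $\br{\tht}'$ vanish at $x_1^*$), while $\dot\Xi_1 = n\br{\tht}''(x_1^*)\gmm(\Xi_1,n) + O(\abs n)$. Since $\gmm(\xi)\to\infty$, choosing the sign of $n$ makes $\abs{\Xi_1(t)}$ grow without bound --- to $\infty$ in finite time when $\gmm = \brk{\xi}^{\bt}$ with $\bt > 1$, and, starting from $\abs{\Xi_1(0)}\aeq\abs n\aeq N$, over a long but controllable time otherwise (reaching $\aeq N^{1/(1-\bt)}$ by time $\dlt$ for $0<\bt<1$, $\aeq N e^{cN\dlt}$ for $\bt = 1$, and an iterated-logarithmic rate for the log-type symbols). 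This is the degenerate dispersion: the top-order part of $p$ carries the vanishing coefficient $\br{\tht}'(x_1)$, yet its variable-coefficient structure still drives frequencies to infinity.

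\pfstep{Degenerating wave packets} Invoking the general construction of this paper for $\rd_t\phi + ip(t,X,D)\phi = 0$, I would build an approximate solution $\phi(t,x)\approx e^{i\Phi(t,x)}a(t,x)$ concentrated at $(X(t),\Xi(t))$ solving the bicharacteristic ODE above, with $\nb\Phi(t,X(t)) = \Xi(t)$, amplitude $a$ localized in $x_1$ on scale $\brk{\Xi_1(t)}^{-1/2}$ and solving a transport equation, and $\Phi$ carrying a complex quadratic part encoding this localization. The two properties needed are: (i) $\phi$ solves $\rd_t\phi + ip(t,X,D)\phi = 0$ with an error that stays small on the (possibly long) timescale over which degeneration occurs; and (ii) $\nrm{\phi(t)}_{H^{\sgm}}\aeq c(t)\brk{\Xi_1(t)}^{\sgm}$ with $c(t)$ essentially constant, so that, after normalizing by a small amplitude $\mu$, $\nrm{\phi(0)}_{H^s}$ is tiny while $\nrm{\phi(T)}_{H^{s'}}\aeq\mu\brk{\Xi_1(T)}^{s'}$ is as large as desired for some $T<\dlt$. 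Balancing the frequency growth rate --- governed by $\bt$ or by the (iterated) logarithm --- against the loss of derivatives incurred in controlling the error is precisely what produces the Sobolev thresholds tabulated above. I expect this step --- rigorously bounding the wave-packet error, together with the oscillatory integrals appearing in it, over a timescale long enough for genuine degeneration --- to be the main obstacle and the technical core.

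\pfstep{From linear degeneration to nonlinear illposedness} Suppose toward a contradiction that there are $\eps,\dlt,A>0$ such that every $\tht_0$ with $\nrm{\tht_0}_{H^s}<\eps$ admits a solution $\tht\in L^\infty([0,\dlt];H^{s'})$ with $\sup_t\nrm{\tht(t)}_{H^{s'}}\leq A$ (if for some datum no such solution exists, the first alternative in the theorem already holds). Take $\tht_0 = \br{\tht} + \phi(0) \in C^\infty_c(\Omg)$, parametrizing the packet --- base frequency $N$ large, amplitude $\mu$ in a window of the form $A\brk{\Xi_1(T)}^{-s'}\ll\mu\ll\min(\eps N^{-s}, N^{-C})$ --- so that $\nrm{\tht_0}_{H^s}<\eps$ but $\nrm{\phi(T)}_{H^{s'}}\gg A$ at the degeneration time $T<\dlt$; this window is nonempty exactly on the ranges of $(s,s')$ in the table. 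Writing the putative solution as $\tht = \br{\tht}+\phi+\psi$, the remainder $\psi$ (with $\psi(0) = 0$) solves $\rd_t\psi + \tilde L\psi + Q(\phi,\psi) + Q(\psi,\phi) = -e - Q(\phi,\phi) - Q(\psi,\psi)$, $e$ being the wave-packet error. One then controls $\psi$ in a suitable topology by a perturbative argument in the spirit of \cite{JO1}, using: the a priori bound $\nrm{\tht}_{H^{s'}}\leq A$, which (for $s'$ above the stated threshold) controls the velocity and the terms linear in $\psi$; the smallness of $\mu$, which makes the forcing $e + Q(\phi,\phi) + Q(\psi,\psi)$ negligible; and the null structure $\Xi^\perp\cdot\Xi = 0$, which kills the leading part of the self-interaction $Q(\phi,\phi)$ --- the same cancellation that trivializes the dynamics at $\bt = 0$ --- so that $\br{\tht}+\phi$ is itself an approximate solution of the full nonlinear equation. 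The resulting control on $\psi$ contradicts $\nrm{\tht(T)}_{H^{s'}}\leq A$ in view of the degeneration of $\phi$. The five rows of the table are treated by the same scheme; they differ only in the rate of degeneration --- $\abs{\Xi_1}$ reaching infinity in finite versus infinite time --- and in the bookkeeping for the wave-packet and error estimates.
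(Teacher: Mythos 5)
Your linear mechanism (degenerate shear, bicharacteristic frequency growth, wave packets localized near the degeneracy) is the same as the paper's, but your nonlinear step takes a genuinely different route, and that is where the proposal has a real gap. The paper never attempts to show that the true solution stays close to $\bgtht + \tld{\varphi}$. Instead it runs a duality/testing argument: writing $\varphi^{\star} = \Gmm^{\frac12}(\tht-\bgtht)$, it only needs (i) an $L^{2}$ bound on $\varphi^{\star}$ (available because the $\Gmm^{\frac12}$-conjugated linearized operator has a good $L^{2}$ energy structure), (ii) the single estimate $\nrm{\Gmm^{\frac12}(\nb^{\perp}\Gmm^{\frac12}\varphi^{\star}\cdot\nb\Gmm^{-\frac12}\varphi^{\star})}_{L^{2}} \aleq (1+A)\nrm{\varphi^{\star}}_{L^{2}}$, which is exactly where the threshold $s' > 3+\tfrac32\bt_{0}$ in the table comes from, and (iii) the degeneration and error estimates for the packet, fed into $\tfrac{\ud}{\ud t}\brk{\varphi^{\star},\tld{\varphi}}$. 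Your scheme instead tries to control the remainder $\psi = \tht-\bgtht-\phi$ perturbatively. To close any such estimate you need an energy inequality for the equation linearized around $\bgtht+\phi$, and the dangerous term is $\nb^{\perp}\Gmm\psi\cdot\nb\phi$: it loses $\gmm$ derivatives on $\psi$, and after the best available symmetrization (the analogue of the commutator boundedness used for the shear coefficient) the resulting operator norm is controlled by derivatives of $\Gmm\phi$, which grow like powers of $\lmb(t)$ along the degeneration. Smallness of the amplitude $\mu$ does not obviously rescue this, because $\mu$ is bounded below by the requirement $\mu\,\gmm(\lmb(T))^{-\frac12}\lmb(T)^{s'}\gg A$, and you give no quantitative accounting that the window is nonempty — let alone that it is nonempty precisely under the tabulated conditions on $(s,s')$, which your argument never produces. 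In short, controlling $\psi$ in a Sobolev-type norm over the degeneration timescale is essentially a wellposedness statement for a linearization that the theorem itself says is strongly ill-posed; this step, as written, does not go through, and it is also a mischaracterization of \cite{JO1}, whose nonlinear argument is the testing method, not remainder stability.

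Two secondary points. First, on $\Omg=\bbT\times\bbR$ your orientation of the shear is problematic: a profile depending on the periodic variable $x_{1}$ is constant in $x_{2}\in\bbR$, hence not in $H^{s}$, and cutting it off in $x_{2}$ is not harmless — the truncated function is no longer a steady state (the nonlocal velocity $\nb^{\perp}\Gmm\bgtht$ feels the cutoff), and the background loses the one-dimensional structure on which the separation of variables and the whole Hamilton--Jacobi analysis rest. The paper avoids this by letting the shear depend on the $\bbR$-variable (so compact support is compatible with being an exact steady state) and fixing an integer frequency in the $\bbT$-direction; your construction should be rotated accordingly. Second, your heuristic that for $\gmm=\brk{\xi}^{\bt}$, $\bt>1$, frequencies reach infinity in finite time refers to the bicharacteristic ODE, not to what the rigorous packet can track: the admissible growth factor $M$ is capped by the conditions needed to control focusing of nearby characteristics and the wave-packet error (this is why the $\bt>1$ row only gives $s'=s>3+\tfrac32\bt$ rather than a genuine loss $s'<s$), so the "balancing" you defer is not mere bookkeeping but the source of the exact exponents in the statement.
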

Theorem~\ref{thm:main-simple} is a \emph{norm inflation} result.  {When $\Omg = \bbT \times \bbR$ and $s = s'$, we may furthermore establish a \emph{non-existence} result.}

\begin{theorem} \label{thm:main-nonexist-simple}
Consider a symbol $\gmm$ and a real number $s$ such that $\gmm$ and $s = s'$ satisfy the hypothesis of Theorem~\ref{thm:main-simple}. Then the Cauchy problem for \eqref{eq:ssqg} on the domain $\Omg = \bbT \times \bbR$ is \emph{ill-posed} in the following sense: There exists an initial data set $\tht_{0} \in H^{s}$ with arbitrarily small $H^{s}$ norm, for which there does \emph{not} exist a solution to the Cauchy problem for \eqref{eq:ssqg} in $L^{\infty}_{t}([0, \dlt], H^{s'})$.
\end{theorem}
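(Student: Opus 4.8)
The plan is to upgrade the norm inflation of Theorem~\ref{thm:main-simple} (where $s = s'$) to a non-existence statement by a \emph{concatenation argument} exploiting the unbounded $\bbR$-factor of $\Omg = \bbT \times \bbR$: one packs infinitely many spatially separated copies of norm-inflating data into a single datum of small $H^{s}$ norm. The first task is to extract from the proof of Theorem~\ref{thm:main-simple} --- that is, from the construction of degenerating wave packets on a shear background together with the accompanying stability estimate --- a localized and quantitative form. Namely, for every $n \in \bbN$ one should produce a datum $\phi_{n} \in C^{\infty}_{c}(\bbT \times \bbR)$ supported in a ball $B_{n}$ of radius $O(1)$ about a prescribable center $y_{n}$, a time $t_{n} \in (0,\dlt]$ with $t_{n}\le 1/n$, and a concentric ball $R_{n} \supseteq B_{n}$ of radius $O(1)$, such that $\nrm{\phi_{n}}_{H^{s}}\le 2^{-n}$ and \emph{any} $\tht \in L^{\infty}([0,t_{n}];H^{s})$ solving \eqref{eq:ssqg} with $\rst{\tht}_{t=0}=\phi_{n}$ satisfies $\nrm{\tht(t_{n},\cdot)}_{H^{s}(R_{n})} > n$, while the associated wave packet $\tht_{\mathrm{app}}^{(n)}$ stays supported in $R_{n}$ on $[0,t_{n}]$. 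The smallness of $t_{n}$ is obtained by placing the wave packet at a high carrier frequency, which only speeds up the degeneration; the spatial localization over $[0,t_{n}]$ relies on the finite speed of propagation of $\rd_{t}\tht + u\cdot\nb\tht = 0$, valid since for $s$ above the thresholds in the table $u = \nb^{\perp}\Gmm\tht$ lies in $L^{\infty}_{t}L^{\infty}_{x}$ with norm controlled by $\nrm{\tht}_{L^{\infty}_{t}H^{s}}$.

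Next, choose the centers $y_{n}$ along the $\bbR$-axis so that $\dist(y_{n},y_{m}) \ge D$ for all $n\ne m$, with $D$ a large constant to be fixed, making the balls $R_{n}$ pairwise $D/2$-separated, and set $\Phi := \sum_{n\ge 1}\phi_{n}$. Disjointness of supports and $\sum_{n}2^{-2n}<\infty$ give $\Phi\in H^{s}$ with $\nrm{\Phi}_{H^{s}}$ (and $\nrm{\Phi}_{L^{2}}$) as small as desired. Suppose now, for contradiction, that $\tht \in L^{\infty}([0,\dlt];H^{s})$ solves \eqref{eq:ssqg} with $\rst{\tht}_{t=0} = \Phi$, and put $M := \nrm{\tht}_{L^{\infty}_{t}H^{s}} < \infty$; then $\nrm{u(t,\cdot)}_{L^{\infty}}\le CM$ on $[0,\dlt]$ and, since $u$ is divergence-free, $\nrm{\tht(t,\cdot)}_{L^{2}} = \nrm{\Phi}_{L^{2}}$. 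Fix any $n$ with $t_{n} < 1/(2CM)$, so that on $[0,t_{n}]$ every characteristic moves by less than $\tfrac12$; hence $\tht(t,\cdot)$ near $R_{n}$ is, on this interval, transported from $\phi_{n}$ alone, the other pieces remaining near their own $R_{m}$'s.

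One then runs the stability estimate of Theorem~\ref{thm:main-simple}, localized to $R_{n}$, comparing $\tht$ to $\tht_{\mathrm{app}}^{(n)}$. Its forcing has two sources: the intrinsic wave-packet error of the $n$-th piece, small by construction; and the velocity produced inside $R_{n}$ by the remaining pieces through the nonlocal part of $\nb^{\perp}\Gmm$. Splitting $\Gmm = \Gmm^{R_{n}}_{\mathrm{loc}} + \Gmm_{\mathrm{rem}}$ with $\Gmm_{\mathrm{rem}}$ having convolution kernel supported at distances $\ge D/4$, this second contribution and all of its derivatives on $R_{n}$ are bounded by $C_{k}D^{-p}\nrm{\tht}_{L^{2}} = C_{k}D^{-p}\nrm{\Phi}_{L^{2}}$, using the rapid off-diagonal decay of the kernel of $\nb^{\perp}\Gmm$ (polynomial for $\gmm = \brk{\xi}^{\bt}$ and $\log^{\bt}$, and even vanishing for local operators such as $\Gmm = \Dlt$). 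Fixing $D$ large --- a choice independent of $\tht$ --- so that this is $\le \tfrac12$, the localized stability estimate yields $\nrm{(\tht - \tht_{\mathrm{app}}^{(n)})(t_{n},\cdot)}_{H^{s}(R_{n})} \le \tfrac12$, hence $\nrm{\tht(t_{n},\cdot)}_{H^{s}} \ge \nrm{\tht_{\mathrm{app}}^{(n)}(t_{n},\cdot)}_{H^{s}(R_{n})} - \tfrac12 > n - \tfrac12$. Since this holds for all sufficiently large $n$ and $t_{n}\in[0,\dlt]$, we get $\sup_{t\in[0,\dlt]}\nrm{\tht(t,\cdot)}_{H^{s}} = +\infty$, contradicting $\tht\in L^{\infty}_{t}H^{s}$. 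Therefore no such solution exists, which is the claim.

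The main obstacle is the localized stability estimate: one must revisit the energy/bootstrap argument underlying Theorem~\ref{thm:main-simple} and check that it tolerates (a) spatial cutoffs adapted to $R_{n}$, so that commutators of the cutoff with $\nb^{\perp}\Gmm$ are perturbative, and (b) the presence of the other wave packets in the initial data, which enter only via the tiny remote velocity described above. The two structural facts that make this work --- and that force the domain to be $\bbT\times\bbR$ rather than $\bbT^{2}$, where there is no room to separate the pieces --- are the finite speed of propagation of the transport and the off-diagonal decay of the kernel of $\nb^{\perp}\Gmm$. A secondary point is to confirm that the carrier frequency and amplitude in the wave-packet construction can be tuned so that $t_{n}\searrow 0$ with each $\phi_{n}$ still localized in a unit ball, so that a single separation scale $D$ works for all $n$ at once.
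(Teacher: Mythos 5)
There is a genuine gap at the heart of your argument: the ``localized stability estimate'' asserting $\nrm{(\tht - \tht_{\mathrm{app}}^{(n)})(t_{n},\cdot)}_{H^{s}(R_{n})} \leq \tfrac12$. Nothing in the wave-packet machinery behind Theorem~\ref{thm:main-simple} provides an approximation of the \emph{true} solution by the wave packet in the inflating norm $H^{s}$ (nor even in $L^{2}$): the wave packet is only an approximate solution with error small in $L^{1}_{t}L^{2}$, and controlling the difference $\tht-\tht_{\mathrm{app}}^{(n)}$ in $H^{s}$ would require energy estimates at $H^{s}$ level for the perturbation equation around the packet --- precisely the kind of estimate that fails here, since the linearized operator loses $\bt_{0}+1>1$ derivatives and the problem is ill-posed. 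Worse, the statement is self-defeating: under your contradiction hypothesis $\nrm{\tht}_{L^{\infty}_{t}H^{s}} = M < \infty$, while $\nrm{\tht_{\mathrm{app}}^{(n)}(t_{n})}_{H^{s}(R_{n})} > n$, so for $n$ large the claimed $H^{s}$-closeness \emph{cannot} hold; you cannot prove it and then use it, because it is essentially equivalent to the conclusion. The way the paper (and the proof of Theorems~\ref{thm:norm-growth} and \ref{thm:nonlin-norm-infl}) avoids this is the duality/testing mechanism: one only controls the $L^{2}$ pairing $\brk{\chi_{k}\varphi^{\star}, \tld{\varphi}_{k}}$ via the generalized energy identity (Proposition~\ref{prop:gei}), localized by cutoffs $\chi_{k}$, and then converts a lower bound on this pairing into a lower bound on the local $H^{s}$ norm of the solution using the \emph{degeneration} of the wave packet in $H^{-s}$ (Proposition~\ref{prop:deg}). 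Your proof needs to be rebuilt around that pairing argument in each window; the strong-norm comparison step cannot be repaired.

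Two secondary points. First, finite speed of propagation is neither available in the form you use it (the velocity $u=\nb^{\perp}\Gmm\tht$ is nonlocal, so the solution near $R_{n}$ is not ``transported from $\phi_{n}$ alone'' in any sense you can exploit quantitatively), nor needed: in Section~\ref{subsec:nonexist} the inter-packet interactions are handled through commutators of the cutoffs with $\Gmm^{\pm\frac12}$ and $\nb^{\perp}\Gmm\bgtht\cdot\nb$, together with the unit-scale-localized error bound \eqref{eq:wp-error-loc}. Second, a single separation scale $D$ independent of $n$ is not enough: the remote contributions must be small \emph{relative to the $k$-th piece's initial pairing}, which is of size $a_{k}\gmm(\lmb_{0,k},\lmb_{0,k})^{\frac12}\lmb_{0,k}^{-s} \to 0$, not merely bounded by an absolute constant such as $\tfrac12$; this is why the paper takes the centers $y_{k}$ (and the cutoff widths) growing rapidly, with $y_{k}$ chosen large \emph{after} $\lmb_{0,k}$. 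Your overall architecture --- infinitely many separated norm-inflating pieces along the $\bbR$-direction, with off-diagonal decay controlling their interaction --- is the same as the paper's, but the per-window mechanism and the quantitative choice of separations must follow the duality scheme rather than a strong-norm stability estimate.
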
 

The above illposedness results suggest that one needs to be careful when working with models with singular multipliers: either an appropriate dissipative term must be supplied,\footnote{We note that while these models are often written without any dissipative terms in many physics texts, numerical simulations are always performed by adding very strong dissipative terms.} or one should restrict to an appropriate class of functions (see Remark \ref{rem:wellposed} below for more discussion). Indeed, local wellposedness of all the above singular examples, namely \eqref{eq:Oh}, \eqref{eq:e-MHD}, \eqref{eq:e-MHD-21/2}, and \eqref{eq:LQG}, has been obtained with appropriate dissipation terms: consider now
 \begin{equation}\label{eq:ssqg-diss}
	\left\{
	\begin{split}
		&\rd_t \tht + u\cdot\nb \tht + \kappa \Upsilon(\tht) = 0, \\
		& u = \nb^\perp\Gmm[\tht],
	\end{split}
	\right.
\end{equation} where $\kpp>0$ and $\Upsilon$ is a multiplier with strictly positive symbol. The authors of \cite{CCCGW} have shown local regularity of the Ohkitani model with arbitrarily fractional dissipation (i.e. $\Upsilon=(-\lap)^\eps$ for any $\eps>0$), which have been improved to any super-logarithmic dissipation in \cite{CJNO}. In the case of the E-MHD (and Hall-MHD), the works \cite{CDL,CWW} obtained local regularity with magnetic dissipation stronger than $\Lmb$. A similar computation can be done for the AM, which then requires a dissipation term strictly stronger than $\Lmb^2$. 

While these wellposedness results are obtained by rather standard Sobolev and commutator estimates, our illposedness results for the dissipative systems \eqref{eq:ssqg-diss} show that these existing results are completely sharp:

\begin{theorem} \label{thm:main-diss-simple}
	Consider the following pairs of symbols $\gmm,\upsilon$ and exponents $s, s'$:
		\begin{center} 
			\begin{tabular}{ | l | l | l | l |} 
				\hline
				Multiplier $\Gmm$ & Dissipation $\Upsilon$ & Sobolev regularity exponents \\ 
				\hline
				$\gmm = \brk{\xi}^{\bt}$, $\bt > 1$ 
				& $\ups = \brk{\xi}^{\alp}$, $\alp < \bt$  
				& $\frac{s'}{1-(\bt-\alp)} > s + \frac{\bt(\bt-\alp)}{2(1-(\bt-\alp))} $ \\
				$\gmm = \brk{\xi}$
				& $\ups = \brk{\xi}^{\alp}$, $\alp < 1$ 
				& $\frac{s'}{\alp} > s + \frac{1-\alp}{2\alp} $ \\
				$\gmm = \brk{\xi}^{\bt}$, $\bt < 1$
				& $\ups = \brk{\xi}^{\alp}$, $\alp < \bt$ 
				& $\frac{s'}{1-(\bt-\alp)} > s + \frac{\bt(\bt-\alp)}{2(1-(\bt-\alp))} $ \\
				$\gmm = \log^{\bt} (10+\abs{\xi})$, $\bt > 0$
				& $\ups = \log^{\alp} (10+\abs{\xi})$, $\alp < \bt$ 
				& $s' = s$ \\
				\hline
			\end{tabular}
		\end{center} 
	The restrictions on $s$ and $s'$ in the table are \emph{in addition} to those from the nondissipative case. In each of the above cases, the Cauchy problem for \eqref{eq:ssqg-diss} on the domain $\Omg = \bbT^{2}$ or $\bbT \times \bbR$ is \emph{$H^{s}$-$H^{s'}$ ill-posed} in the same sense as in Theorem \ref{thm:main-simple}.
\end{theorem}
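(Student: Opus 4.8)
We prove Theorem~\ref{thm:main-diss-simple} by re-running the degenerate-dispersion mechanism behind Theorem~\ref{thm:main-simple}, treating the dissipative term $\kappa\Upsilon$ as a genuinely lower-order perturbation of the degenerate dispersive operator. As in the nondissipative case, we linearize \eqref{eq:ssqg-diss} about a one-parameter family of shear states $\br{\tht}=\br{\tht}_{\lmb}(x_2)$ whose derivative vanishes at $x_2=0$. Writing $\tht=\br{\tht}+\phi$, the evolution of $\phi$ is governed by
\begin{equation*}
\rd_t\phi+ip(t,X,D)\phi+\kappa\Upsilon\phi=F+(\text{quadratic in }\phi),\qquad p(x,\xi)=\br{\tht}'(x_2)\,\xi_1\gmm(\xi)-(\Gmm\br{\tht})'(x_2)\,\xi_1,
\end{equation*}
where the two terms of the real principal symbol $p$ come from $\nb^\perp\Gmm\phi\cdot\nb\br{\tht}$ and $\nb^\perp\Gmm\br{\tht}\cdot\nb\phi$, and $F=-\kappa\Upsilon\br{\tht}$ is the residual caused by $\br{\tht}$ not solving \eqref{eq:ssqg-diss}; since it lives at bounded frequency it is harmless. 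At high frequency the first term of $p$ dominates, so $p$ is degenerate (its top-order coefficient $\br{\tht}'(x_2)$ vanishes) and dispersive (order $>1$ in $\xi$, since $\gmm(\xi)\to\infty$). Because $\Upsilon$ is a Fourier multiplier whose symbol $\ups$ has order strictly below that of $\gmm$ (resp.\ strictly slower growth, in the logarithmic cases), it belongs to the class of admissible ``lower order terms'' in the abstract linear framework $\rd_t\phi+ip(t,X,D)\phi=0$ underlying Theorem~\ref{thm:main-simple}. A stability/uniqueness estimate in a norm weaker than $H^{s'}$ --- now with an extra favorable coercive contribution from $\kappa\Upsilon$ --- again reduces the nonlinear dichotomy (no $L^\infty_t H^{s'}$ solution, or one whose $H^{s'}$ norm exceeds $A$) to exhibiting an approximate solution of the linear equation that inflates in $H^{s'}$.

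We then build a wave packet $\phi_{\mathrm{wp}}(t)$ concentrated at $(X(t),\Xi(t))$, the bicharacteristic of the \emph{same} symbol $p$: the dissipation, being real and $x$-independent, does not enter Hamilton's equations $\dot X=\nb_\xi p$, $\dot\Xi=-\nb_x p$. Since $p$ is independent of $x_1$, the component $\Xi_1(t)\equiv\eta$ is conserved, the identity $\abs{X_2(t)}\,\abs{\Xi_2(t)}\simeq\abs{X_2(0)}\,\eta$ holds, and $\Xi_2$ grows, $\dot\Xi_2\simeq-\br{\tht}''(X_2)\,\eta\,\gmm(\Xi)$, carrying $\abs{\Xi(t)}$ from its initial size $\simeq\eta$ up to a prescribed large $N$ in time
\begin{equation*}
T\;\simeq\;\frac{1}{\eta}\int^{N}\frac{\ud r}{\gmm(r)},
\end{equation*}
exactly as in the nondissipative construction; meanwhile $\abs{X_2(t)}\to0$, which is where the degeneration of $p$ is felt and finer-than-$L^2$ norms inflate. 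The only new feature is that the amplitude of $\phi_{\mathrm{wp}}$ acquires the damping factor $\exp\!\left(-\kappa\int_0^t\ups(\Xi(\tau))\,\ud\tau+(\text{l.o.t.})\right)$, since on the narrow frequency support of the packet $\Upsilon$ acts to leading order as multiplication by $\ups(\Xi(t))$; the commutator of $\Upsilon$ with the spatial/frequency localization is lower order than the commutators already controlled and is absorbed into the existing error scheme.

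Changing variables by $\ud\tau=\ud\Xi_2/\dot\Xi_2\simeq\eta^{-1}\gmm(r)^{-1}\,\ud r$ yields
\begin{equation*}
\kappa\int_0^T\ups(\Xi(\tau))\,\ud\tau\;\simeq\;\frac{\kappa}{\eta}\int_{\simeq\eta}^{N}\frac{\ups(r)}{\gmm(r)}\,\ud r.
\end{equation*}
In the power cases $\gmm=\brk{\xi}^{\bt}$, $\ups=\brk{\xi}^{\alp}$ with $\alp<\bt$: when $\bt-\alp>1$ this is $O(\eta^{\alp-\bt})\to0$, the dissipation is negligible, and Theorem~\ref{thm:main-simple} is reproduced verbatim; when $0<\bt-\alp\le1$ it is $\simeq\kappa\,\eta^{-1}N^{1-(\bt-\alp)}$ (a logarithm when $\bt-\alp=1$), so choosing the \emph{initial} frequency $\eta\simeq\kappa\,N^{1-(\bt-\alp)}$ keeps both $T\to0$ (hence $T\ll\dlt$ for $N$ large) and the damping factor bounded below by a constant depending only on $\kappa$. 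The logarithmic case is identical, with $\ups/\gmm=\log^{\alp-\bt}(\cdots)$ decaying fast enough that the damping stays $O(1)$ for $\eta$ equal to $N$ times a logarithmic factor, still leaving an unbounded net gain in frequency. With initial $L^2$ mass $m$ one has $\nrm{\phi_{\mathrm{wp}}(0)}_{H^s}\simeq m\,\eta^{s}$ and $\nrm{\phi_{\mathrm{wp}}(T)}_{H^{s'}}\gtrsim m\,N^{s'}e^{-C\kappa}$, so $\nrm{\phi_{\mathrm{wp}}(0)}_{H^s}<\eps$ and $\nrm{\phi_{\mathrm{wp}}(T)}_{H^{s'}}>A$ are simultaneously achievable precisely when the window of admissible $m$ is nonempty, which after inserting $\eta\simeq\kappa N^{1-(\bt-\alp)}$ and letting $N\to\infty$ forces the leading constraint $s'>(1-(\bt-\alp))s$; the additional $+\tfrac{\bt(\bt-\alp)}{2}$ in the table's inequality, together with the carryover of all nondissipative conditions (hence the phrase ``in addition''), comes from the quantitative wave-packet error analysis described below, carried out now with $\eta$ tied to $N$ through the damping constraint. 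Feeding $\phi_{\mathrm{wp}}$ into the stability estimate then produces the claimed dichotomy.

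The genuinely delicate step, exactly as in the nondissipative proof, is the long-time error control: showing that the difference between $\phi_{\mathrm{wp}}$ and a true solution of the linear dissipative equation stays negligible over all of $[0,T]$, a time interval on which $\abs{\Xi(t)}$ grows by an unbounded factor and on which the quadratic and higher Taylor remainders of $p$ around $(X(t),\Xi(t))$ must be kept subdominant. The dissipation adds two complications: (i) the damping factor $\exp(-\kappa\int_0^t\ups(\Xi))$ must be carried consistently through every term of the error hierarchy and through the oscillatory integrals that arise in controlling it; and (ii) applying $\Upsilon$ to remainder terms could a priori create growth. Both are handled using that $\ups$ has order strictly below $\gmm$: the $\Upsilon$-generated contributions are dominated by a fixed negative power of $\abs{\Xi(t)}$ --- namely $\abs{\Xi}^{-(\bt-\alp)}$ in the power case, and an analogous logarithmic gain otherwise --- times the corresponding nondissipative error, hence are summable in the same way and do not spoil the estimate. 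Carrying this out amounts to re-running the oscillatory-integral and degenerating-wave-packet estimates of the nondissipative case with the multiplier $\Upsilon$ present throughout, and that is where the bulk of the work lies.
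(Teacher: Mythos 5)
Your proposal reproduces the broad philosophy of the paper (degenerating wave packets transported along the bicharacteristics of $p_{\bgtht}$, plus a duality/testing argument), but it mishandles the one structural feature that makes the dissipative case genuinely different, and this is a real gap. You linearize around a \emph{steady} shear $\br{\tht}$ that does not solve \eqref{eq:ssqg-diss} and dismiss the residual $F=-\kpp\Ups\br{\tht}$ as harmless because it is low-frequency. In the testing scheme, however, all error terms (the nonlinearity bound, the bounded-commutator term, the wave-packet error) are controlled only through $\nrm{\varphi^{\star}(t)}_{L^{2}}$, and the forcing drives $\nrm{\varphi^{\star}(t)}_{L^{2}}$ up to size $O(\kpp t)$ independently of $\lmb_{0}$, whereas the quantity you must protect, $\brk{\varphi^{\star}_{0},\tld{\varphi}_{0}}\aeq\nrm{\varphi^{\star}_{0}}_{L^{2}}\aeq\eps\,\gmm(\lmb_{0},\lmb_{0})^{\frac12}\lmb_{0}^{-s}$, tends to zero as $\lmb_{0}\to\infty$. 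The resulting error contribution $\aleq t^{\ast}\bigl(\nrm{\varphi^{\star}_{0}}_{L^{2}}+\kpp t^{\ast}\bigr)$ swamps the main pairing for large $s$, and "bounded frequency" does not save you without a frequency-localized refinement you never supply. This is precisely why the paper insists that the background be the exact time-dependent solution $f(t,x_{2})$ of \eqref{eq:ssqg-diss-shear}: then no forcing appears, but the Hamiltonian becomes time-dependent, and the hard new step is controlling $\rd_{x}^{2}\Phi$ (a Riccati equation along characteristics, i.e.\ avoidance of focal points) over the full degeneration time; this is done via the variable $h$ in \eqref{eq:def-h}, Lemma~\ref{lem:HJE-h}, Proposition~\ref{prop:h}, and the extra hypothesis \eqref{eq:gf-condition-diss-2}. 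Your sketch runs the bicharacteristic and packet analysis verbatim for a time-independent symbol, so the actual new difficulty of Theorem~\ref{thm:main-diss-simple} relative to Theorem~\ref{thm:main-simple} is never confronted.

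A second, smaller divergence: the paper does \emph{not} build the damping $\exp\bigl(-\kpp\int_{0}^{t}\ups(\Xi)\bigr)$ into the amplitude; it keeps the nondissipative packet, treats $\kpp\Ups\tld{\varphi}$ as an error in the generalized energy identity, and imposes \eqref{eq:gf-condition-diss-1} (net damping $o(1)$), so the restriction on the frequency-growth factor $M$ relative to $\lmb_{0}$ is what produces the table. Your variant of allowing $O(1)$ damping and tying $\lmb_{0}\simeq\kpp(M\lmb_{0})^{1-(\bt-\alp)}$ (the borderline $\sgm=1$ case) is plausible in principle and could even be marginally sharper, but it is not executed: you neither construct the dissipative packet with its damping factor through the oscillatory-integral estimates nor re-derive the table's thresholds. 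In particular, the correction $\tfrac{\bt(\bt-\alp)}{2(1-(\bt-\alp))}$ comes from the factor $\gmm(\lmb_{0},\lmb_{0})^{\frac12}/\gmm(\lmb_{0},M\lmb_{0})^{\frac12}$ in \eqref{eq:norm-growth-diss} — the amplitude degeneration already visible in the toy model — not from the "error analysis" to which you attribute it, and your norm count $\nrm{\phi_{\mathrm{wp}}(T)}_{H^{s'}}\gtrsim m N^{s'}$ omits it. As written, the proposal therefore neither closes the nonlinear reduction nor verifies the claimed Sobolev exponents.
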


\begin{remark}\label{rem:wellposed}
Our results establish illposedness of \eqref{eq:ssqg} for a large class of singular symbols in standard function spaces near the trivial solution $\bgtht = 0$. Nevertheless, wellposedness in standard function spaces may still be possible around nontrivial background solutions, which are sometimes physically motivated. For instance, small data local wellposedness of E-MHD in weighted Sobolev space around $\bgtht = x_{2}$ -- corresponding to a uniform magnetic field -- may be established as an application of the techniques in \cite{MMT}.
\end{remark}

\subsection{Degenerate dispersive equations}\label{subsec:prev-work}

It turns out that the linearization around degenerate shear steady states for \eqref{eq:ssqg} shows \textit{degenerate dispersion}, which is the mechanism behind strong illposedness in the singular regime. Indeed, a large part of this work is devoted to the construction of \textit{degenerating wave packets} for the class of linear equations \begin{equation}\label{eq:dd}
	\begin{split}
		\rd_t\phi + i p(t,X,D)\phi = 0
	\end{split}
\end{equation} possibly with lower order terms, where $p$ is a time-dependent real pseudo-differential operator which is degenerate and dispersive. Once degenerating wave packets are constructed for all large frequencies, their time evolution essentially governs in which topologies the initial value problem for \eqref{eq:dd} could be well-posed. While our framework works for the general class of equations \eqref{eq:dd}, we have chosen to focus on its applications towards the family of singular generalized SQG equations in this work, as the class of linearized equations arising from this family by varying $\Gmm$ and the steady state profile forms a representative class of \eqref{eq:dd}. 

Degenerate dispersive equations appear in a variety of physical contexts, besides those related with the gSQG equations described in the above. Primary examples include shallow water wave (\cite{CaHo,BCS,GrNa}) and sedimentation models (\cite{BBKT,CaPa,Zum,Rub,RuKe}). Many of these models, most notably the Camassa--Holm and $abcd$-Boussinesq equations (introduced in \cite{CaHo} and \cite{BCS} respectively and extensively studied since) feature principal terms which involve non-local and non-linear dispersion. More comprehensive list of physical systems involving degenerate dispersion (as well as related mathematical progress) is given in \cite{ASWY,GHM,JO3}. We shall review a few recent developments on the Cauchy problem for these type of equations, which are most directly relevant for the current work. 

\medskip

\noindent \textbf{Well/Illposedness of $K(m,n)$ equations and their variants}. The family of  $K(m,n)$ equations introduced in \cite{RoHy,Ro} is given by \begin{equation}\label{eq:Kmn}
	\begin{split}
		\rd_t u + (u^n)_{xxx} + (u^m)_{x} = 0.
	\end{split}
\end{equation} For $n>1$, this model could be considered as the simplest equations featuring a quasilinear dispersive principal term (see \cite{Zum} where this type of term appears for a model of particle suspensions). Various numerical simulations for this equation hinted at illposedness in strong topologies (see \cite{ASWY,IsTa,dLS}). In the case of degenerate Airy equation $\rd_t u + 2u u_{xxx} = 0 $ (which is a further simplified model for $n=2$), \cite{ASWY} gave illposedness in $H^{2}$ of the initial value problem using explicit self-similar solution with scaling symmetries of the equation. We note that (uni-directional in time) illposedness for $\rd_t u \pm x u_{xxx} = 0$ was obtained earlier in \cite{CrGm}, based on the explicit solution formula. A general illposedness result, which works not only for \eqref{eq:Kmn} but also for many variants, was obtained in our recent work \cite{JO3}. Here, the illposedness is deduced from the construction of degenerating wave packets for the linearized equation around degenerate solutions. On the other hand, \cite{GHM} obtained a well-posedness result for certain variant of \eqref{eq:Kmn} in the case of ``subcritically'' degenerate (cf. \cite[Section 1.5]{GHM}) data. This well-posedness result is not contradictory to illposedness results from \cite{JO3}; the solutions of \cite{GHM} live in a suitably weighted space, which takes into account the rate of degeneracy of the solution.

\medskip

\noindent \textbf{Illposedness of the Hall- and electron-MHD systems}. Based on construction of degenerating wave packets, the system \eqref{eq:e-MHD} (as well as the Hall--MHD system) was shown to be strongly ill-posed near degenerate shear magnetic backgrounds, in  {the recent work \cite{JO1,JO4} of the second and third authors}. Indeed, the mechanism of illposedness for \eqref{eq:e-MHD} is the same with the current paper. Additional difficulties arising in this work is that the system is \textit{non-local} (opposed to \eqref{eq:e-MHD}) and the symbol of $\Gmm$ could be only slightly singular, where the issue of well/illposedness becomes very delicate (as demonstrated explicitly in Theorem \ref{thm:wp-log}).

\bigskip

\noindent The remainder of the introduction is organized as follows.
In {\bf Section~\ref{subsec:results}}, we give precise statements of the main results of this paper, of which Theorems~\ref{thm:main-simple}--\ref{thm:main-nonexist-simple} are special cases. 
In {\bf Section~\ref{subsec:toy}}, we present a toy model for the linearized equation which is almost explicitly solvable in the Fourier space yet contains the main features of the linearized dynamics. The model is obtained simply by dropping the sub-principal term and replacing the principal coefficient with a linear function. This solvable toy model demonstrates that the illposedness behavior is caused by degenerate dispersion, and gives the optimal growth rate of Sobolev norms that can be achieved. Furthermore, by comparing the toy model with actual linear equations, we explain the main difficulties in understanding the dynamics of linear equations. 
%In {\bf Section~\ref{subsec:prev-work}}, we discuss some prior works on \eqref{eq:ssqg} and instability via degenerate dispersion. 
Then we end the introduction with an outline of the {\bf Organization of the Paper}.

\subsection{Main Results} \label{subsec:results}
We now give precise formulation of the main results of this paper.

\medskip 

\noindent \textbf{Assumptions on $\Gmm$}. 
%Throughout the paper, we fix a \textbf{smooth even positive symbol} $m(\xi)$, called the \emph{order symbol}, such that
%\begin{equation*}
%	\abs{\rd_{\xi}^{\bt} m(\xi)} \aleq_{\bt} \abs{\xi}^{-\abs{\bt}} m(\xi).
%\end{equation*}
%We furthermore assume that \textbf{$\xi_{2} \rd_{\xi_{2}} m$ is positive for $\abs{\xi_{2}} \geq \abs{\xi_{1}}$.} 
%
In what follows, we assume that $\gmm$ is a \textbf{smooth even positive symbol} that satisfies the following properties for some $\Xi_{0} > 1$:
\begin{enumerate}
	\item {\bf $\gmm$ is elliptic and slowly varying:} $\abs{\rd_{\xi}^{I} \gmm(\xi)} \aleq_{I} \brk{\xi}^{-\abs{I}} \gmm(\xi)$ for any multi-index $I$ and $\xi \in \bbR^{2}$.
	\item {\bf $\gmm \nearrow \infty$ as $\abs{\xi} \nearrow \infty$:}  {$\gmm(\xi_{1}, \xi_{2}) \to \infty$ as $\abs{\xi} \to \infty$.}
	\item  {{\bf $\xi_{2} \rd_{\xi_{2}} \gmm$ is elliptic and slowly varying:} 
	$\abs{\rd_{\xi}^{I} (\xi_{2} \rd_{\xi_{2}} \gmm(\xi))} \aleq_{I} \brk{\xi}^{-\abs{I}} \xi_{2} \rd_{\xi_{2}} \gmm(\xi)$} for any multi-index $I$ and $\Xi_{0} \leq \abs{\xi_{1}} \leq \abs{\xi_{2}}$.
	\item  {{\bf $\xi_{2} \rd_{\xi_{2}} \gmm$ is almost comparable to $\gmm$:} 
	$\xi_{2} \rd_{\xi_{2}} \gmm(\xi) \ageq \frac{1}{(\log \abs{\xi_{2}})^{2}} \gmm(\xi)$
	for $\Xi_{0} \leq \abs{\xi_{1}} \leq \abs{\xi_{2}}$.}
\end{enumerate}

A brief explanation of each assumption is in order:
\begin{itemize}
\item Assumption~1 is a natural assumption that justifies, in particular, symbolic calculus (see Section~\ref{sec:psdo}). 
\item Assumption~2 is a basic requirement for an arbitrarily fast frequency growth.
\item Assumptions~3 and 4 arise naturally in the control of the focusing of nearby bicharacteristics, which are put together to construct a suitable approximate solution to the linearized equation, called \emph{degenerating wave packets} (see Sections~\ref{sec:psdo}--\ref{sec:wavepacket}). In particular, Assumption~4 allows us to quantify the scale of the degenerating wave packets (denoted by $\mu^{-1}$ later) in terms of its frequency (denoted by $\lmb$ later). The factor $(\log \abs{\xi_{2}})^{-2}$ is somewhat arbitrary but fixed for simplicity.
\end{itemize}

Many natural choices of $\gmm$ satisfy the above assumptions, including $\gmm(\xi) = \brk{\xi}^{\bt}$ for any $\bt > 0$, $\gmm(\xi) = \exp(\bt \log^{\alp} (10 + \abs{\xi}))$ for any $\bt > 0$ and $0 < \alp < 1$, $\gmm(\xi) = \log^{\bt}(10 + \abs{\xi})$ for any $\bt > 0$, $\gmm(\xi) = \log^{\bt}(10 + \log^{\alp} (10 + \abs{\xi}))$ for any $\alp, \bt > 0$ etc. 

By Assumption~1, there exists $\bt_{0} > 0$ such that 
\begin{equation} \label{eq:slow-var}
\sup_{\xi' : \abs{\xi} \leq \abs{\xi'} \leq 2 \abs{\xi}} \gmm(\xi') \leq 2^{\bt_{0}} \gmm(\xi) \quad \hbox{ for } \abs{\xi} \geq \Xi_{0}.
\end{equation}
For the remainder of this paper, {\bf we fix one such $\bt_{0} > 0$} and let other constants depend on it. Iterating this bound, it follows that
\begin{equation*}
\gmm(\xi) \aleq \abs{\xi}^{\bt_{0}} \abs{\Xi_{0}}^{-\bt_{0}} \gmm(\Xi_{0}) \quad \hbox{ for } \abs{\xi} \geq \Xi_{0}.
\end{equation*}

The infimum of possible $\bt_{0}$'s (among all real numbers) is called the \emph{order} of $\gmm$. The justification for this terminology comes from the property that if $\gmm = \abs{\xi}^{\bt_{0}}$, then $\bt_{0}$ is its order in the usual sense.

\medskip

\noindent \textbf{Assumptions on $\Ups$.}
For the dissipative operator $\Ups$, we simply assume that its symbol $\ups$ is a \textbf{smooth even positive symbol} that is  {elliptic and slowly varying, in the sense that}
\begin{equation} \label{eq:slow-var-ups}
\abs{\rd_{\xi}^{ {I}} \ups(\xi)} \aleq_{ {I}} \abs{\xi}^{-\abs{ {I}}} \ups(\xi).
\end{equation}
As in the case of $\gmm$, there exists $\alp_{0} > 0$ such that 
\begin{equation}\label{eq:alp}
\sup_{\xi' : \abs{\xi} \leq \abs{\xi'} \leq 2 \abs{\xi}} \ups(\xi') \leq 2^{\alp_{0}} \ups(\xi) \quad \hbox{ for } \abs{\xi} \geq \Xi_{0}.
\end{equation}
For the remainder of this paper, {\bf we fix one such $\alp_{0} > 0$} and let other constants depend on it.

\medskip

\noindent \textbf{Assumptions on $\bgtht$}. Next, we specify the class of shear states that will be proved to be unstable, in linear/nonlinear settings and with/without dissipation, in high-regularity Sobolev spaces. 

In the absence of dissipation, recall that any shear steady state $\tht = f(x_{2})$ (with reasonable regularity assumptions) solves \eqref{eq:ssqg}. We shall {\bf assume that $f$ is smooth, bounded and has a quadratic degeneracy} in the following sense:
\begin{definition} \label{def:degen-shear}
	We say that a shear steady state $\bgtht = f(x_{2})$ for \eqref{eq:ssqg} is \emph{quadratically degenerate} at $\dgx_{2} \in (\bbT, \bbR)$ if 
	\begin{equation*}
	f'(\dgx_{2}) = 0, \qquad f''(\dgx_{2}) \neq 0.
	\end{equation*}
\end{definition}

\begin{remark}
	Our method easily extends to the case when the order of vanishing of $f'(x_{2})$ at $x_{2} = \dgx_{2}$ is generalized to any positive real number. The general heuristic principle is that the slower the vanishing (i.e., the lower the order), the faster the frequency growth. The quadratically degenerate case considered in Definition~\ref{def:degen-shear} is distinguished by the fact that it is the generic order for a smooth $f'(x_{2})$.  {We also note that the boundedness assumption can be readily generalized to a polynomial growth condition at infinity.}
\end{remark}

In the presence of dissipation, equation \eqref{eq:ssqg-diss} for a shear state $\bgtht = f(t, x_{2})$ reduces to
\begin{equation} \label{eq:ssqg-diss-shear}
(\rd_{t} + \kpp \Ups)f(t, x_{2}) = 0,
\end{equation}
where $\Ups$ obeys the assumptions made above. By Fourier analysis, \eqref{eq:ssqg-diss-shear} is clearly well-posed forward in time in $H^{s}((\bbT, \bbR)_{x_{2}})$ for any $s \in \bbR$. Moreover, for any well-posed solution $f$, if $f(0, x_{2})$ is even then so is $f(t, x_{2})$ for each $t > 0$. In what follows, we shall take as our background shear state a {\bf smooth bounded solution $\bgtht = f(t, x_{2})$} such that $f_{0}(x_{2}) = f(0, x_{2})$ is \textbf{even} and $ {f_{0}''(0)} \neq 0$. 

\begin{remark}
	The evenness assumption brings a technical simplification in our argument, as the degenerate point $x_{2} = 0$ is then fixed in $t$. Our methods may be extended to the case when this assumption is removed, in which case the degenerate point may move in time, but at the price of additional technical constraints on the length of the time interval. 
\end{remark}

\medskip

\noindent \textbf{Linear results.} 
We begin by stating our main results concerning the linearization of \eqref{eq:ssqg} and \eqref{eq:ssqg-diss} around $\bgtht$ introduced above. The direct linearization of \eqref{eq:ssqg} around $\bgtht$ is given by $L_{\bgtht} \phi = 0$, where
\begin{equation} \label{eq:direct-L-bgtht}
	L_{\bgtht} \phi = \rd_{t} \phi - \nb^{\perp} \bgtht \cdot \nb \Gmm \phi + \nb^{\perp} \Gmm \bgtht \cdot \nb \phi,
\end{equation}
where as for \eqref{eq:ssqg-diss}, the linearization around $\bgtht$ takes the form $L^{(\kpp)}_{\bgtht} \phi = 0$, where
\begin{equation} \label{eq:direct-L-bgtht-diss}
	L^{(\kpp)}_{\bgtht} \phi = \rd_{t} \phi - \nb^{\perp} \bgtht \cdot \nb \Gmm \phi + \nb^{\perp} \Gmm \bgtht \cdot \nb \phi + \kpp \Ups \phi.
\end{equation}

To formulate a linear illposedness result of the desired generality and precision, it is convenient to introduce the following set of parameters. Given $\lmb_{0} > 1$, called the \emph{initial frequency parameter} and $M > 1$, called the \emph{frequency growth factor}, define the corresponding (normalized) \emph{frequency growth time} $\tau_{M}$ to be %\footnote{The definition of $\tau_{M}$, \eqref{eq:gf-time}, is motivated by the consideration of bicharacteristic curves associated with the principal symbol of \eqref{eq:lin-L2}.}
\begin{equation} \label{eq:gf-time}
\tau_{M} := \int_{\lmb_{0}}^{M \lmb_{0}} \frac{1}{\gmm(\lmb_{0}, \lmb)} \frac{\ud \lmb}{\lmb_{0}}.
\end{equation}
Justification of the formula \eqref{eq:gf-time} shall be given later in Section~\ref{sec:ideas}, {but let us briefly explain the terminology we use here. Our wave packets will have frequency near $\lmb_{0}$, in both variables, at the initial time. The consideration of bicharacteristic ODE system (see Section~\ref{sec:ideas}, \eqref{eq:lin-L2}) predicts that wave packets would take time equal to $\tau_{M}$ for their initial frequency $\lmb_{0}$ to grow by the factor of $M$.} In addition to these parameters, we also fix an arbitrarily small parameter $0 < \dlt_{0} < \frac{1}{100}$ and a nonnegative parameter $0 \leq \sgm_{0} \leq \frac{1}{2} - \dlt_{0}$, which shall be used in the conditions that $\lmb_{0}, M$ need to satisfy. 

\medskip

\noindent \textit{Non-dissipative case}. Roughly speaking, our main linear result in the non-dissipative case, Theorem~\ref{thm:norm-growth}, states that given $\lmb_{0}$ and $M$ obeying a suitable condition (see \eqref{eq:gf-condition-1}--\eqref{eq:gf-condition-3} below), there exists an initial data set  {$\phi_{0}$} for the linearized equation  {$L_{\bgtht} \phi = 0$} with frequency $O(\lmb_{0})$ such that any corresponding solution exhibits frequency growth by factor $M$ in time $O_{f}(\tau_{M})$.

\begin{maintheorem}[Linear illposedness, non-dissipative case] \label{thm:norm-growth}
	Let $\bgtht = f(x_{2})$ be a smooth bounded shear steady state that is quadratically degenerate at $x_{2} = \dgx_{2}$ and fix a small parameter $0 < \dlt_{0} < \frac{1}{100}$  {and $0 \leq \sgm_{0} \leq \frac{1}{3}(1-2\dlt_{0})$}. Then there exist $\Lmb_{0} = \Lmb_{0}(f, \gmm, \dlt_{0}, \sgm_{0}) > 1$ and $T_0 = T_0(f,\gmm, \dlt_{0}, \sgm_{0})>0$ such that the following holds: For each $\lmb_{0} \in \bbN$ such that $\lmb_{0} \geq \Lmb_{0}$, $\tau_{M} \leq T_{0}$ and $M > 1$ satisfying the \emph{growth conditions}
\begin{align} 
\sup_{M' \in [1, M]} \frac{\gmm(\lmb_{0}, M' \lmb_{0})}{ {M'}} \tau_{M'} &\leq  {\min \set{\gmm(\lmb_{0}, \lmb_{0})^{1-\dlt_{0}}, \lmb_{0}^{\sgm_{0}}}}, \label{eq:gf-condition-1} \\
\tau_{M} &\leq \min \set*{\frac{\lmb_{0}^{1-\dlt_{0} {-3\sgm_{0}}}}{\gmm(\lmb_{0}, \lmb_{0})}, 1}, \label{eq:gf-condition-2} \\
M &\leq \lmb_{0}^{\frac{1}{\dlt_{0}}}, \label{eq:gf-condition-3} 
\end{align}
	there exists a smooth function  {$\phi_{0}$} such that
	\begin{equation} \label{eq:norm-growth-ini}
	\nrm{ {\phi_{0}}}_{H^{s'}} \leq  {C_{0; s', s}} \lmb_{0}^{s'-s} \nrm{ {\phi_{0}}}_{H^{s}} \quad \hbox{ for any } s' \geq s,
	\end{equation}
	yet any  {$\Gmm^{-\frac{1}{2}} L^{2}$}-solution  {$\phi$ to $L_{\bgtht} \phi = 0$} on $\left[0, \tfrac{100}{99} \tfrac{1}{\abs{f''(\dgx_{2})}}\tau_{M}\right]$ with  {$\rst{\phi}_{t=0}= \phi_{0}$} obeys
	\begin{equation} \label{eq:norm-growth}
	\sup_{t \in \left[ 0, \tfrac{100}{99} \tfrac{1}{\abs{f''(\dgx_{2})}}\tau_{M} \right]} \nrm{ {\phi}(t, \cdot)}_{H^{s'}} \geq C_{s', s}  {\frac{\gmm(\lmb_{0}, \lmb_{0})^{\frac{1}{2}}}{\gmm(\lmb_{0}, M \lmb_{0})^{\frac{1}{2}}}} M^{s'} \lmb_{0}^{s'-s} \nrm{ {\phi_{0}}}_{H^{s}} \quad \hbox{ for any } s, s' > 0.
	\end{equation}
\end{maintheorem}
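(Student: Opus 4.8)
\emph{Step 1: reduction to a one-dimensional degenerate dispersive equation.} The plan is to first unpack \eqref{eq:direct-L-bgtht}. Since $\bgtht = f(x_{2})$ depends only on $x_{2}$, we have $\nb^{\perp} \bgtht = (-f', 0)$ and $\Gmm \bgtht$ is again a function of $x_{2}$ alone, so $L_{\bgtht} \phi = \rd_{t} \phi + f'(x_{2}) \rd_{x_{1}} \Gmm \phi - (\Gmm \bgtht)'(x_{2}) \rd_{x_{1}} \phi$, which commutes with $\rd_{x_{1}}$. I would therefore fix a single $x_{1}$-frequency $k = \lmb_{0} \in \bbN$ and study the equation for the corresponding Fourier coefficient $\phi_{k}(t, x_{2})$, namely $\rd_{t} \phi_{k} + i p(x_{2}, D_{x_{2}}) \phi_{k} = 0$, where, modulo a self-adjoint lower-order remainder coming from $(\Gmm \bgtht)'$ and the commutator $[f', \Gmm]$, the symbol has principal part $p(x_{2}, \xi_{2}) = k f'(x_{2}) \gmm(k, \xi_{2})$. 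After translating $\dgx_{2}$ to $0$, near $x_{2} = 0$ we have $f'(x_{2}) = f''(0) x_{2} + O(x_{2}^{2})$, so $p$ vanishes simply along $\{x_{2} = 0\}$ (the \emph{degeneracy}) while $\rd_{\xi_{2}}^{2} p = k f'(x_{2}) \rd_{\xi_{2}}^{2} \gmm$ is nonvanishing off this set by Assumption~1 (\emph{dispersion}); thus $p$ is exactly of the type \eqref{eq:dd}.

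\emph{Step 2: bicharacteristics and the time $\tau_{M}$.} Next I would integrate the bicharacteristic system $\dot X = \rd_{\xi_{2}} p$, $\dot \Xi = - \rd_{x_{2}} p$ for the principal symbol, starting from $\Xi(0) = \lmb_{0}$ and $X(0) = x_{0}$ a small nonzero value whose sign is chosen so that $\dot \Xi > 0$. From $\dot \Xi = - k f''(X) \gmm(k, \Xi) \approx - \lmb_{0} f''(0) \gmm(\lmb_{0}, \Xi)$ one gets, separating variables, that $\Xi(t)$ reaches $M \lmb_{0}$ at time $\approx \tfrac{1}{\abs{f''(0)}} \tau_{M}$ — this is the content of \eqref{eq:gf-time}, and the slightly larger endpoint $\tfrac{100}{99} \abs{f''(\dgx_{2})}^{-1} \tau_{M}$ in the theorem absorbs the error in replacing $f''(X(t))$ by $f''(0)$ and related lower-order corrections along the trajectory. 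Meanwhile $\dot X = \lmb_{0} f''(X) (\rd_{\xi_{2}} \gmm)(\lmb_{0}, \Xi) X + O(X^{2})$ is, to leading order, a linear ODE in $X$ whose coefficient has a definite sign, so $X(t)$ stays near and in fact contracts toward $0$. Along the way one tracks the linearization of nearby bicharacteristics — this is precisely where Assumptions~3--4 enter, quantifying the rate at which the wave packet focuses — and the growth conditions \eqref{eq:gf-condition-1}--\eqref{eq:gf-condition-3} are the bookkeeping requirements that keep $(X(t), \Xi(t))$ and the wave packet's support inside the region where this picture is valid.

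\emph{Step 3: construction of the degenerating wave packet, and the main obstacle.} The heart of the argument is to build a genuine approximate solution $\phi_{k}^{\mathrm{app}}(t, x_{2}) = a(t, x_{2}) e^{i \Psi(t, x_{2})}$ with $\Im \Psi \geq 0$, Gaussian-concentrated in phase space at $(X(t), \Xi(t))$ with a shrinking spatial scale $\mu(t)^{-1}$, and with error $\err := (\rd_{t} + i p(x_{2}, D_{x_{2}})) \phi_{k}^{\mathrm{app}}$ as small as possible. The phase $\Psi$ is governed by an eikonal/Riccati system obtained from the second-order Taylor expansion of $p$ at $(X(t), \Xi(t))$, and the amplitude $a$ by a transport equation into which the self-adjoint lower-order part of the symbol is also fed; solvability of the amplitude equation is exactly what forces $\nrm{\phi^{\mathrm{app}}_{k}(t)}_{L^{2}} \aeq \gmm(\lmb_{0}, \lmb_{0})^{1/2} \gmm(\lmb_{0}, \Xi(t))^{-1/2} \nrm{\phi^{\mathrm{app}}_{k}(0)}_{L^{2}}$, i.e. the near-conservation of $\nrm{\Gmm^{1/2} \phi}_{L^{2}}$, and thereby produces the factor $\gmm(\lmb_{0}, \lmb_{0})^{1/2} \gmm(\lmb_{0}, M\lmb_{0})^{-1/2}$ in \eqref{eq:norm-growth}. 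The difficult part — as emphasized in the abstract — is to make $\err$ provably negligible \emph{over the full interval} $[0, \tfrac{100}{99} \abs{f''(\dgx_{2})}^{-1} \tau_{M}]$, on which the packet degenerates substantially: $\err$ is a Taylor remainder of the symbol tested against an increasingly concentrated profile, so controlling the resulting oscillatory integrals sharply — sharply enough that the accumulated error beats the conserved quantity $\nrm{\Gmm^{1/2} \phi_{0}}_{L^{2}}$, and does so by a margin uniform over Sobolev exponents — is exactly where \eqref{eq:gf-condition-1}--\eqref{eq:gf-condition-3} and the parameters $\dlt_{0}, \sgm_{0}$ are used, and is the step I expect to be the main obstacle. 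One then sets $\phi_{0}$ to be (the real part of) $e^{i \lmb_{0} x_{1}} \phi^{\mathrm{app}}_{k}(0, x_{2})$; since this is frequency-localized at $\abs{\xi} \aeq \lmb_{0}$, Bernstein's inequality gives \eqref{eq:norm-growth-ini}.

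\emph{Step 4: transferring the growth to an arbitrary solution.} Finally, let $\phi$ be any $\Gmm^{-1/2} L^{2}$-solution of $L_{\bgtht} \phi = 0$ with $\rst{\phi}_{t=0} = \phi_{0}$. If $\sup_{t} \nrm{\phi(t)}_{H^{s'}} = \infty$ there is nothing to prove, so assume it equals some $B < \infty$; then $\phi$ also lies in $L^{\infty}_{t}$ of the energy space, with norm controlled by $B$. Since the linear equation need not be well-posed in $\Gmm^{-1/2} L^{2}$ (this is, after all, the very source of the illposedness), I would not estimate $\phi - \phi^{\mathrm{app}}$ directly, but instead pair $\phi$ against a second degenerating wave packet $\Phi^{\mathrm{app}}$ built for the \emph{adjoint} equation, which has the same principal symbol — hence an analogous construction — but whose $L^{2}$ mass \emph{grows} like $\gmm(\lmb_{0}, \Xi(t))^{1/2} \gmm(\lmb_{0}, \lmb_{0})^{-1/2}$, normalized so that $\Phi^{\mathrm{app}}(0) = \phi_{0}$. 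Using that $\phi$ solves the equation in the distributional sense and that $\Phi^{\mathrm{app}}$ solves the adjoint equation up to a small error $\tld{\err}$, the pairing $\brk{\phi(t), \Phi^{\mathrm{app}}(t)}$ is conserved up to $\int_{0}^{t} \brk{\phi, \tld{\err}}$, which is negligible by the estimates of Step~3 and the growth conditions. Evaluating at the time $t_{\ast} \leq \tfrac{100}{99} \abs{f''(\dgx_{2})}^{-1} \tau_{M}$ at which $\Xi(t_{\ast}) = M \lmb_{0}$, and using that $\Phi^{\mathrm{app}}(t_{\ast})$ is frequency-localized at $\abs{\xi} \aeq M \lmb_{0}$, this shows that the part $P\phi(t_{\ast})$ of $\phi(t_{\ast})$ at frequencies $\aeq M\lmb_{0}$ obeys $\nrm{P\phi(t_{\ast})}_{L^{2}} \gtrsim \nrm{\phi_{0}}_{L^{2}}^{2} / \nrm{\Phi^{\mathrm{app}}(t_{\ast})}_{L^{2}} \aeq \gmm(\lmb_{0}, \lmb_{0})^{1/2} \gmm(\lmb_{0}, M\lmb_{0})^{-1/2} \nrm{\phi_{0}}_{L^{2}}$. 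Hence $B \geq \nrm{\phi(t_{\ast})}_{H^{s'}} \gtrsim (M\lmb_{0})^{s'} \nrm{P\phi(t_{\ast})}_{L^{2}} \gtrsim M^{s'} \lmb_{0}^{s'} \gmm(\lmb_{0}, \lmb_{0})^{1/2} \gmm(\lmb_{0}, M\lmb_{0})^{-1/2} \nrm{\phi_{0}}_{L^{2}}$, and combining with $\nrm{\phi_{0}}_{H^{s}} \aeq \lmb_{0}^{s} \nrm{\phi_{0}}_{L^{2}}$ (again frequency localization of $\phi_{0}$) yields \eqref{eq:norm-growth} for all $s, s' > 0$.
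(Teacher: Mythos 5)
Your overall strategy---wave packets transported along the bicharacteristics of the degenerate symbol, combined with a testing/duality argument against an arbitrary solution---is the same as the paper's (your Step~4 is essentially the paper's generalized energy argument, phrased for the adjoint equation instead of the $\Gmm^{\frac{1}{2}}$-conjugated one, which is equivalent bookkeeping). But there is a genuine gap in how you handle the term $\nb^{\perp} \Gmm \bgtht \cdot \nb$, i.e.\ the symbol $-\lmb_{0} (\rd_{x_{2}} \Gmm f)(x_{2})$ after fixing the $x_{1}$-frequency, which you relegate to a ``self-adjoint lower-order remainder'' fed into the amplitude transport. On the support of the packet $f'(X(t))$ vanishes to first order, so the ``principal'' term $f' \lmb_{0} \gmm_{\lmb_{0}}(\Xi)$ has size only about $\eps(\lmb_{0}) \lmb_{0} \gmm_{\lmb_{0}}(\lmb_{0})$, while the neglected term is of size $\lmb_{0}$ uniformly; the two are barely distinguishable when $\gmm$ grows slowly (exactly the logarithmic cases the theorem covers, where $\tau_{M}$ may be close to $1$). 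Discarding it as an error produces an accumulated $L^{1}_{t}L^{2}$ contribution of order $\lmb_{0}\tau_{M} \gg 1$, which swamps the conserved pairing; putting it into the amplitude instead multiplies $a$ by $\exp\bigl(i\lmb_{0}\int_{0}^{t}(\Gmm f)'(X(s))\,\ud s\bigr)$ along characteristics, whose spatial derivative is of order $\lmb_{0} t$---vastly larger than the inverse packet scale $\mu$ (polylogarithmic in the log case)---so the separation of scales $\abs{\rd_{x}a} \ll \abs{\rd_{x}\Phi}$ on which the whole error analysis rests is destroyed. This is why the paper incorporates that term into $p_{\bgtht,\lmb_{0}}$, hence into the Hamilton--Jacobi equation and the bicharacteristics, and why the initial phase is chosen by the separation-of-variables formula \eqref{eq:HJE-s-dPhi}, whose constant $E$ balances $\Gmm f'$ against $f'\gmm_{\lmb_{0}}$.

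Second, the step you yourself flag as ``the main obstacle''---controlling the packet and its error over the entire interval on which the frequency grows by the factor $M$---is the actual content of the theorem and is not supplied. A Gaussian (complex-phase) beam avoids focal points, but what is needed is quantitative: bounds of the form $\abs{\rd_{x}^{k}\rd_{x}\Phi} \aleq \mu(t)^{k}\lmb(t)$ and $\abs{\rd_{x}^{k}a} \aleq \mu(t)^{k}$ uniformly up to $t_{\ast}$, with the sharp scale $\mu(t)$ of \eqref{eq:wp-scale}; in the paper these come only from the specific initial phase making $h(0)=0$ in \eqref{eq:def-h}, the simple evolution law of Lemma~\ref{lem:HJE-h}, and the oscillatory-integral bounds of Proposition~\ref{prop:conj-err}, and the resulting error must beat the conserved quantity precisely under \eqref{eq:gf-condition-2}. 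Without an argument at this level, Steps~3--4 do not close. A smaller point: your claim that the packet at time $t_{\ast}$ is frequency-localized at $\abs{\xi} \aeq M\lmb_{0}$ needs the analogue of the main/small decomposition of Proposition~\ref{prop:deg}, since a spatially localized packet always carries a low-frequency tail that does not degenerate.
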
 

A $\Gmm^{-\frac{1}{2}} L^{2}$ solution to $L_{\bgtht} \phi$ is a natural notion of a weak solution in view of the energy structure of $L_{\bgtht}$; we postpone its precise definition to Section~\ref{sec:ideas} below.

\begin{remark}
Condition~\eqref{eq:gf-condition-1} is used to control the evolution of the frequency of the wave packet. The condition $\tau_{M} \leq \gmm(\lmb_{0}, \lmb_{0})^{-1} \lmb_{0}^{1-\dlt_{0}-3\sgm_{0}}$ in \eqref{eq:gf-condition-2} arises naturally from the error estimate (see Section~\ref{subsec:wp-error}), while $\tau_{M} \leq 1$ is natural in view of the local-in-time energy argument employed in the proof (see Section~\ref{sec:proofs}). Condition~\eqref{eq:gf-condition-3} is a mild technical condition that is assumed to simplify the control of some non-main error terms; we expect that it can be removed for specific $\gmm$'s (see, for instance, \cite{JO1} for electron and Hall MHD, where the frequency growth factor could be of size $e^{c \lmb_{0}}$).
\end{remark}

\begin{remark}
We note that the LHS of \eqref{eq:gf-condition-1} is \emph{uniformly bounded} when \emph{the order $\bt_{0}$ of $\gmm$ is less than $1$}, in which case we only need \eqref{eq:gf-condition-2} with $\sgm_{0} = 0$. In fact, the parameter $\sgm_{0}$ is simply a device introduced to handle (in an non-optimal way) the case $\bt_{0} \geq 1$, which is ``supercritical'' in many ways; see the discussion following Corollary~\ref{cor:norm-growth-ex}. If we assume that $f'''$ vanishes to a high order at $\dgx_{2}$, then the factor $3$ in front of $\sgm_{0}$ in \eqref{eq:gf-condition-2} can be lowered to any number greater than $2$ , and the condition $\sgm_{0} \leq \frac{1}{3}(1-2\dlt_{0})$ would be relaxed accordingly; see Remark~\ref{rem:x0-x1}.
\end{remark}

\begin{remark}
	{The above statement may give the impression that illposeness occurs only for a very carefully selected initial data, but this is not the case. We have stated the result in this way both for simplicity and also to achieve the sharp possible growth rate dictated by the bicharacteristic ODE system. In the proof, we shall see that the same growth rate (possibly up to multiplicative constants) persists for all initial data which are sufficiently close in $L^{2}$ to $\phi_{0}$ we constructed.
	
%	shown that indeed the growth \eqref{eq:norm-growth} is \textit{generic}; it occurs for an open set of initial data that have large frequency in both $x_1, x_2$ and supported in a neighborhood of the degeneracy of $f$. 
}
\end{remark}

We now discuss the implications of Theorem~\ref{thm:norm-growth}. We first demonstrate that under our assumptions on $\gmm$, \emph{the linearized equation around any smooth shear steady state with a quadratic degeneracy is always ill-posed in $H^{s}$ for any $s > {\frac{\bt_{0}}{2}}$.} As $\lmb \mapsto \gmm(\lmb_{0}, \lmb)$ is increasing on $[\lmb_{0}, \infty)$, we have the obvious bound
\begin{equation*}
\tau_{M} \leq \frac{M}{\gmm(\lmb_{0}, \lmb_{0})} \quad \hbox{ for $\lmb_{0}$ sufficiently large.}
\end{equation*}
As a result, if we set
\begin{equation*}
M(\lmb_{0}) = { \min\set*{\gmm(\lmb_{0}, \lmb_{0})^{1-\dlt_{0}}, \lmb_{0}^{\dlt_{0}}, \, \gmm(\lmb_{0}, \lmb_{0})^{\frac{1-2\dlt_{0}}{\bt_{0}}}, \lmb_{0}^{\frac{1-5\dlt_{0}}{3\bt_{0}}}}}, 
\end{equation*}
then \eqref{eq:gf-condition-2} (with $\sgm_{0} = \frac{1}{3}(1-2\dlt_{0})$) and \eqref{eq:gf-condition-3} are satisfied for sufficiently large $\lmb_{0}$. To check \eqref{eq:gf-condition-1}, we estimate 
\begin{align*}
\frac{\gmm(\lmb_0,M'\lmb_0)}{M' \min\set{\gmm(\lmb_{0}, \lmb_{0})^{1-\dlt_{0}}, \lmb_{0}^{\frac{1}{3}(1-2\dlt_{0})}}}  \tau_{M'} 
&\leq C(M')^{\beta_0} \frac{\gmm(\lmb_0,\lmb_0)}{\min\set{\gmm(\lmb_{0}, \lmb_{0})^{1-\dlt_{0}}, \lmb_{0}^{\frac{1}{3}(1-2\dlt_{0})}}} \frac{1}{\gmm(\lmb_0,\lmb_0)} \\ &\leq \frac{C}{\min\set{\gmm(\lmb_{0}, \lmb_{0})^{\dlt_{0}}, \lmb_{0}^{\dlt_{0}}}} \le 1,
\end{align*} 
using the above bound for $\tau_{M'}$ and {$M(\lmb_0) \le \gmm(\lmb_{0}, \lmb_{0})^{\frac{1-2\dlt_{0}}{\bt_{0}}}, \lmb_{0}^{\frac{1-5\dlt_{0}}{3\bt_{0}}}$}. Furthermore, observe that $M(\lmb_{0}) \to \infty$ and $\tau_{M(\lmb_{0})} \to 0$ as $\lmb_{0} \to \infty$. {Finally, by \eqref{eq:slow-var}, note that
\begin{equation} \label{eq:gmm-ratio}
\frac{\gmm(\lmb_{0}, \lmb_{0})^{\frac{1}{2}}}{\gmm(\lmb_{0}, M \lmb_{0})^{\frac{1}{2}}} \ageq M^{-\frac{\bt_{0}}{2}},
\end{equation}
so the RHS of \eqref{eq:norm-growth} is increasing in $M$ for $s' > \frac{\bt_{0}}{2}$. We have therefore proved:
\begin{corollary}[Linear $H^{s}$ illposedness for any $s > {\frac{\bt_{0}}{2}}$] \label{cor:norm-growth-all}
	Let $\bgtht = f(x_{2})$ be a smooth bounded shear steady state that is quadratically degenerate at $x_{2} = \dgx_{2}$. For any $s > {\frac{\bt_{0}}{2}}$, there exists a sequence ${\phi_{(n) 0}}$ of initial data sets and times $T_{(n)}$ such that
	\begin{equation*}
	\nrm{{\phi_{(n) 0}}}_{H^{s}} \leq 1, \qquad
	0 < T_{(n)} \leq 1, \qquad 
	T_{(n)} \searrow 0,
	\end{equation*}
	yet for any sequence ${\phi_{(n)}}$ of {$\Gmm^{-\frac{1}{2}} L^{2}$}-solutions to {$L_{\bgtht} \phi = 0$} on $[0, T_{(n)}]$ with {$\rst{\phi_{(n)}}_{t=0} = \phi_{(n) 0}$}, we have
	\begin{equation*}
	\frac{\sup_{t \in [0, T_{(n)}]}\nrm{{\phi_{(n)}}(t, \cdot)}_{H^{s}}}{\nrm{{\phi_{(n) 0}}}_{H^{s}}} \to \infty \quad \hbox{ as } n \to \infty.
	\end{equation*}
\end{corollary}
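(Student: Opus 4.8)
The plan is to read the corollary off from Theorem~\ref{thm:norm-growth} by evaluating it along a carefully scheduled sequence of parameters $(\lmb_{0}, M)$, which is essentially the content of the computation already displayed above the statement. First I would fix $s > \frac{\bt_{0}}{2}$, fix once and for all a small $\dlt_{0} \in (0, \frac{1}{100})$ (its precise value is irrelevant and, in particular, need not depend on $s$), and set $\sgm_{0} = \frac{1}{3}(1 - 2\dlt_{0})$, which is admissible in Theorem~\ref{thm:norm-growth}. For each large $\lmb_{0} \in \bbN$ I would schedule the frequency growth factor
\begin{equation*}
M(\lmb_{0}) = \min\set*{\gmm(\lmb_{0}, \lmb_{0})^{1-\dlt_{0}}, \; \lmb_{0}^{\dlt_{0}}, \; \gmm(\lmb_{0}, \lmb_{0})^{\frac{1-2\dlt_{0}}{\bt_{0}}}, \; \lmb_{0}^{\frac{1-5\dlt_{0}}{3\bt_{0}}}}
\end{equation*}
exactly as in the paragraph preceding the corollary.

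Next I would verify that this schedule is legitimate: using the crude bound $\tau_{M} \leq M \gmm(\lmb_{0}, \lmb_{0})^{-1}$ (valid since $\lmb \mapsto \gmm(\lmb_{0}, \lmb)$ is increasing on $[\lmb_{0}, \infty)$) together with the definition of $M(\lmb_{0})$, the slow-variation bound \eqref{eq:slow-var}, and the identity $3\sgm_{0} = 1 - 2\dlt_{0}$, the three growth conditions \eqref{eq:gf-condition-1}--\eqref{eq:gf-condition-3} all hold once $\lmb_{0}$ is large — this is precisely the chain of inequalities carried out above. I would also record that $M(\lmb_{0}) \to \infty$ and $\tau_{M(\lmb_{0})} \to 0$ as $\lmb_{0} \to \infty$ (using Assumption~2 that $\gmm(\xi) \to \infty$, and $\dlt_{0} < \frac{1}{5}$). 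Consequently one can choose inductively an increasing sequence $\lmb_{(n)} \in \bbN$ with $\lmb_{(n)} \geq \Lmb_{0}$ and with $\tau_{M(\lmb_{(n)})} \leq T_{0}$ strictly decreasing to $0$, so that Theorem~\ref{thm:norm-growth} applies to $\lmb_{0} = \lmb_{(n)}$ with $M = M_{(n)} := M(\lmb_{(n)})$ for every $n$.

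Then, applying Theorem~\ref{thm:norm-growth} to each $n$, I would obtain a smooth $\phi_{(n) 0}$ — nonzero, since it is constructed as an explicit wave packet — rescale it using linearity of $L_{\bgtht}$ so that $\nrm{\phi_{(n) 0}}_{H^{s}} = 1$, and set $T_{(n)} = \tfrac{100}{99} \tfrac{1}{\abs{f''(\dgx_{2})}} \tau_{M_{(n)}}$, which then satisfies $0 < T_{(n)} \leq 1$ (after discarding finitely many $n$) and $T_{(n)} \searrow 0$. Taking $s' = s$ in \eqref{eq:norm-growth} and inserting the elementary ratio bound \eqref{eq:gmm-ratio} yields, for any $\Gmm^{-\frac{1}{2}}L^{2}$-solution $\phi_{(n)}$ with $\rst{\phi_{(n)}}_{t=0} = \phi_{(n) 0}$,
\begin{equation*}
\sup_{t \in [0, T_{(n)}]} \nrm{\phi_{(n)}(t, \cdot)}_{H^{s}} \ageq_{s} M_{(n)}^{s - \frac{\bt_{0}}{2}} \nrm{\phi_{(n) 0}}_{H^{s}},
\end{equation*}
and since $s - \frac{\bt_{0}}{2} > 0$ and $M_{(n)} \to \infty$, the ratio blows up, which is exactly the assertion of the corollary.

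There is no genuine obstacle here, as all the analysis has been absorbed into Theorem~\ref{thm:norm-growth}; the only steps demanding a little care are (i) checking that the \emph{scheduled} factor $M(\lmb_{0})$ really does obey \eqref{eq:gf-condition-1}--\eqref{eq:gf-condition-3}, via the crude bound on $\tau_{M}$ and \eqref{eq:slow-var}, and (ii) observing that $M(\lmb_{0})$ still grows fast enough that $M(\lmb_{0})^{s - \frac{\bt_{0}}{2}} \to \infty$. I note in passing that when $\bt_{0} < 1$ the left-hand side of \eqref{eq:gf-condition-1} is automatically bounded, so one could take $\sgm_{0} = 0$ and schedule a larger $M(\lmb_{0})$; the more conservative choice above is made only so that this single argument covers the supercritical range $\bt_{0} \geq 1$ as well.
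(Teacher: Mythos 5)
Your proposal is correct and follows essentially the same route as the paper: the paper proves this corollary precisely by the scheduling $M(\lmb_{0}) = \min\{\gmm(\lmb_{0},\lmb_{0})^{1-\dlt_{0}}, \lmb_{0}^{\dlt_{0}}, \gmm(\lmb_{0},\lmb_{0})^{\frac{1-2\dlt_{0}}{\bt_{0}}}, \lmb_{0}^{\frac{1-5\dlt_{0}}{3\bt_{0}}}\}$ with $\sgm_{0}=\tfrac{1}{3}(1-2\dlt_{0})$, the crude bound $\tau_{M}\leq M\gmm(\lmb_{0},\lmb_{0})^{-1}$ plus \eqref{eq:slow-var} to verify \eqref{eq:gf-condition-1}--\eqref{eq:gf-condition-3}, and \eqref{eq:gmm-ratio} with $s'=s$ to get growth like $M^{s-\frac{\bt_{0}}{2}}\to\infty$. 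Your added bookkeeping (normalizing $\phi_{(n)0}$, choosing $\lmb_{(n)}$ so that $T_{(n)}\searrow 0$, discarding finitely many $n$ to ensure $T_{(n)}\leq 1$) only makes explicit what the paper leaves implicit.
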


On the other hand, an inspection of the size of the optimal frequency growth factor $M$ for various model cases (see Corollary~\ref{cor:norm-growth-ex}) shows that the instability mechanism in hand is stronger for a multiplier $\gmm$ with faster growth. One way to make quantify this idea is to introduce the following notion:
\begin{definition} \label{def:HsHs'-illposed}
	We say that the shear steady state $\bgtht$ with a quadratic degeneracy is \emph{linearly $H^{s}$-$H^{s'}$ unstable by degenerate dispersion according to Theorem~\ref{thm:norm-growth}} if there exist a sequence $(M_{(n)}, \lmb_{0 (n)}, {\tau_{M (n)}})$ satisfying \eqref{eq:gf-condition-1}--\eqref{eq:gf-condition-3}, $\lmb_{0 (n)} \geq \Lmb_{0}$ {and $\tau_{M (n)} \leq T_{0}$} such that 
	\begin{equation*}
	{\frac{\gmm(\lmb_{0 (n)}, \lmb_{0 (n)})^{\frac{1}{2}}}{\gmm(\lmb_{0 (n)}, M_{(n)} \lmb_{0 (n)})^{\frac{1}{2}}}} M_{(n)}^{s'} \lmb_{0(n)}^{s' - s} \to \infty, \qquad \tau_{M_{(n)}} \to 0.
	\end{equation*}
\end{definition}
In what follows, we shall often drop the proviso \emph{according to Theorem~\ref{thm:norm-growth}}. Given a shear steady state $\bgtht$ that is linearly $H^{s}$-$H^{s'}$ unstable by degenerate dispersion, no mapping from $H^{s}$ to a set of {$\Gmm^{-\frac{1}{2}} L^{2}$}-solutions to {$L_{\bgtht} \phi = 0$} in $L^{\infty}_{t}([0, \dlt], H^{s'})$ can be bounded thanks to \eqref{eq:norm-growth} in Theorem~\ref{thm:norm-growth}. Note that if Definition~\ref{def:HsHs'-illposed} holds for one shear steady state $\bgtht$ with a quadratic degeneracy, then it holds for any other such shear steady states. {Indeed, the key conditions  \eqref{eq:gf-condition-1}--\eqref{eq:gf-condition-3} depend only on $\gmm$; the only dependence on the specific shear steady state $\bgtht$ is through the lower and upper bounds $\Lmb_{0}$ and $T_{0}$ on $\lmb_{0 (n)}$ and $\tau_{M (n)}$, respectively, which can be trivially enforced by restricting $n$ to be large enough}. Hence, Definition~\ref{def:HsHs'-illposed} is a property of the system \eqref{eq:ssqg} (more precisely, of $\gmm$). 

Next, we specialize $\gmm$ to several model cases and compute the (essentially) optimal growth factor given by Theorem~\ref{thm:norm-growth}.
\begin{corollary} \label{cor:norm-growth-ex}
	Let $\bgtht = f(x_{2})$ be a smooth shear steady state that is quadratically degenerate at $x_{2} = \dgx_{2}$.
	In each of the following cases, Theorem~\ref{thm:norm-growth} applies with the specified choice of $M$ for any $0 \leq \sgm < 1$, provided that {$\dlt_{0}$ and $\sgm_{0}$ are chosen appropriately depending on $\sgm$ and $\bt$}; moreover, in each case, $\tau_{M} \to 0$ as $\lmb_{0} \to \infty$. As a result, \eqref{eq:ssqg} is linearly $H^{s}$-$H^{s'}$ unstable by degenerate dispersion for the specified values of $s, s'$.  		\begin{center}
			\begin{tabular}{| l | l | l |} 
				\hline
				Multiplier $\Gmm$
				& Freq.~growth $M$
				& Lin.~$H^{s}$-$H^{s'}$ inst.  \\
				\hline
				$\gmm = \brk{\xi}^{\bt}$, $\bt > 1$
				& $M = \lmb_{0}^{\sgm \frac{1}{3(\bt-1)} }$ 
				& $s', s > \frac{\bt}{2}$, ${(1 + \frac{1}{3(\bt-1)})s' > s + \frac{\bt}{6(\bt-1)}}$ \\
				$\gmm = \brk{\xi}$
				& $M = \lmb_{0}^{\frac{1}{1-\sgm}}$ 
				& $s', s > \frac{1}{2}$ \\
				$\gmm = \brk{\xi}^{\bt}$, $\bt < 1$
				& $M = \lmb_{0}^{\sgm \frac{\bt}{1-\bt}}$ 
				& $s', s > \frac{\bt}{2}$, ${\frac{1}{1-\bt} s' > s + \frac{\bt^2}{2(1-\bt)}}$ \\
				$\gmm = \log^{\bt} (10+\abs{\xi})$, $\bt > 0$
				& $M = \log^{\sgm \bt} \lmb_{0}$ 
				& $s' = s > 0$ \\
				$\gmm = \log^{\bt} (10+\log (10+\abs{\xi}))$, $\bt > 0$
				& $M = \log^{\sgm \bt} \log \lmb_{0}$
				& $s' = s > 0$ \\
				\hline
			\end{tabular}
		\end{center}
\end{corollary}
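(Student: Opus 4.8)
The proof of Corollary~\ref{cor:norm-growth-ex} is a direct, case-by-case verification that the indicated choice of $M = M(\lmb_{0})$ satisfies the growth conditions \eqref{eq:gf-condition-1}--\eqref{eq:gf-condition-3} of Theorem~\ref{thm:norm-growth} for a suitable $\dlt_{0}, \sgm_{0}$ (depending only on $\sgm$ and $\bt$) and all large $\lmb_{0}$, together with the asymptotics $\tau_{M} \to 0$ and the divergence of the factor on the right of \eqref{eq:norm-growth}; the $H^{s}$-$H^{s'}$ instability then follows by applying Definition~\ref{def:HsHs'-illposed} along a sequence $\lmb_{0 (n)} \to \infty$. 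The first task is to make the asymptotics of $\tau_{M'}$ for $1 \leq M' \leq M$ explicit from \eqref{eq:gf-time}. Assumption~1 yields $\gmm(\lmb_{0}, \lmb) \aeq \gmm(\lmb, \lmb)$ for $\lmb \geq \lmb_{0}$, so for $\gmm = \brk{\xi}^{\bt}$ the integral in \eqref{eq:gf-time} reduces to $\int_{\lmb_{0}}^{M' \lmb_{0}} \lmb^{-\bt} \, \frac{\ud \lmb}{\lmb_{0}}$, which gives $\tau_{M'} \aeq \lmb_{0}^{-\bt}$ uniformly for $M' \ageq 1$ when $\bt > 1$, $\tau_{M'} \aeq \lmb_{0}^{-1} \log M'$ when $\bt = 1$, and $\tau_{M'} \aeq \lmb_{0}^{-\bt}\bigl((M')^{1 - \bt} - 1\bigr)$ when $\bt < 1$; for $\gmm = \log^{\bt}(10 + \abs{\xi})$ and $\gmm = \log^{\bt}(10 + \log(10 + \abs{\xi}))$ the point is rather that the chosen $M$ grows only like a power of $\log \lmb_{0}$ (resp.\ of $\log \log \lmb_{0}$), so $\gmm(\lmb_{0}, \lmb) \aeq \gmm(\lmb_{0}, \lmb_{0})$ throughout $[\lmb_{0}, M \lmb_{0}]$ and hence $\tau_{M'} \aeq M' \gmm(\lmb_{0}, \lmb_{0})^{-1}$. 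The same computations give the quantity on the left of \eqref{eq:gf-condition-1}: it is $\aeq (M')^{\bt - 1}$ for $\gmm = \brk{\xi}^{\bt}$ with $\bt \geq 1$ --- so its supremum over $[1, M]$ is $\aeq M^{\bt - 1}$, and $\aeq \log M$ when $\bt = 1$ --- and is bounded by an absolute constant in all the remaining cases.

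With these asymptotics in hand, the parameters are chosen as follows. When the order $\bt_{0}$ of $\gmm$ is $\leq 1$ --- that is, for $\gmm = \brk{\xi}$ and for the three profiles with $\bt_{0} < 1$ --- one takes $\sgm_{0}$ to be a fixed small positive number and $\dlt_{0}$ small; then the right side of \eqref{eq:gf-condition-1} is $\min\set{\gmm(\lmb_{0}, \lmb_{0})^{1 - \dlt_{0}}, \lmb_{0}^{\sgm_{0}}} = \lmb_{0}^{\sgm_{0}} \to \infty$, which dominates its left side (a constant, resp.\ $\aeq \log \lmb_{0}$), while \eqref{eq:gf-condition-2}--\eqref{eq:gf-condition-3} reduce, upon inserting the asymptotics above, to numerical inequalities of the shape $\bt \sgm \leq 1 - \dlt_{0} - 3 \sgm_{0}$ (trivial in the logarithmic cases) and $M \leq \lmb_{0}^{1/\dlt_{0}}$, all valid for $\dlt_{0}, \sgm_{0}$ small and $\lmb_{0}$ large because $\sgm < 1$. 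For $\gmm = \brk{\xi}^{\bt}$ with $\bt > 1$ the parameter $\sgm_{0}$ must be activated: the choice $M = \lmb_{0}^{\sgm / (3(\bt - 1))}$ makes the left side of \eqref{eq:gf-condition-1} $\aeq \lmb_{0}^{\sgm/3}$, so one sets $\sgm_{0} = \sgm/3$, which is admissible (i.e.\ $\sgm_{0} \leq \tfrac{1}{3}(1 - 2 \dlt_{0})$) precisely because $\sgm < 1$ allows choosing $\dlt_{0}$ with $1 - 2\dlt_{0} > \sgm$; the conditions \eqref{eq:gf-condition-2}--\eqref{eq:gf-condition-3} then amount to $1 - \dlt_{0} - 3 \sgm_{0} \geq 0$ and $\sgm / (3(\bt - 1)) \leq 1 / \dlt_{0}$, both holding for $\dlt_{0}$ small. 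The bounds $\lmb_{0} \geq \Lmb_{0}$ and $\tau_{M} \leq T_{0}$ of Theorem~\ref{thm:norm-growth} are automatic for $\lmb_{0}$ large.

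It remains to check the two asymptotic assertions. First, $\tau_{M} \to 0$ as $\lmb_{0} \to \infty$: this is immediate from the first step, since $\tau_{M} \aeq \lmb_{0}^{-\bt}$ (for $\bt > 1$), $\aeq \lmb_{0}^{-1} \log \lmb_{0}$ (for $\bt = 1$), $\aeq \lmb_{0}^{-\bt(1 - \sgm)}$ (for $\bt < 1$), or $\aeq (\log \lmb_{0})^{-\bt(1-\sgm)}$, resp.\ $(\log\log\lmb_{0})^{-\bt(1-\sgm)}$, for the two logarithmic profiles, all tending to $0$ because $\sgm < 1$. Second, using $\gmm(\lmb_{0}, \lmb_{0})^{1/2} / \gmm(\lmb_{0}, M \lmb_{0})^{1/2} \aeq M^{-\bt/2}$ for the power profiles and $\aeq 1$ for the logarithmic ones, the factor on the right of \eqref{eq:norm-growth} equals $\lmb_{0}$ raised to an explicit exponent --- times a power of $\log\lmb_{0}$ or $\log\log\lmb_{0}$ in the logarithmic cases --- and a short computation shows that this exponent is positive, so that the factor diverges as $\lmb_{0} \to \infty$, precisely when $s, s' > \bt_{0}/2$ and the additional linear inequality listed in the table holds, upon taking $\sgm$ sufficiently close to $1$ in the power cases (any fixed $\sgm \in (0,1)$ works in the logarithmic cases with $s = s'$). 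One may note that for $s' = s$ this linear inequality is equivalent to $s > \bt_{0}/2$, consistently with Corollary~\ref{cor:norm-growth-all}. Applying Theorem~\ref{thm:norm-growth} along $\lmb_{0} = \lmb_{0 (n)} \to \infty$, with $\sgm$ fixed as above and $\dlt_{0}, \sgm_{0}$ as in the preceding paragraph, produces the sequence required by Definition~\ref{def:HsHs'-illposed}, and the corollary follows.

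The only genuine work lies in the bookkeeping of the first two steps: obtaining the asymptotics of $\tau_{M'}$ \emph{uniformly} in $M' \in [1, M]$, as \eqref{eq:gf-condition-1} requires, and --- in the supercritical range $\bt_{0} \geq 1$ --- threading the admissibility constraint $\sgm_{0} \leq \tfrac{1}{3}(1 - 2\dlt_{0})$ between the size of $M$ forced by \eqref{eq:gf-condition-1} and the room left by \eqref{eq:gf-condition-2}. Once the parameters are pinned down, matching the exact exponent thresholds in the table (such as $(1 + \tfrac{1}{3(\bt - 1)}) s' > s + \tfrac{\bt}{6(\bt - 1)}$) is routine arithmetic. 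I expect this parameter juggling, rather than any conceptual difficulty, to be the main obstacle.
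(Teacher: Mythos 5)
Your proposal is correct and follows essentially the route the paper intends: the corollary is a direct verification of \eqref{eq:gf-condition-1}--\eqref{eq:gf-condition-3} with the tabulated $M(\lmb_{0})$, mirroring the computation the paper sketches just before Corollary~\ref{cor:norm-growth-all}, followed by the asymptotics of $\tau_{M}$ and of the prefactor in \eqref{eq:norm-growth} (via \eqref{eq:gmm-ratio}) and an appeal to Definition~\ref{def:HsHs'-illposed} with $\sgm$ taken close to $1$. The only cosmetic adjustment is in the case $\bt>1$: since \eqref{eq:gf-condition-1} is stated without implicit constants, take $\sgm_{0}$ strictly larger than $\sgm/3$ (still below $\tfrac{1}{3}(1-2\dlt_{0})$, as you note) rather than exactly $\sgm/3$, so the constant in $\sup_{M'} \frac{\gmm(\lmb_{0},M'\lmb_{0})}{M'}\tau_{M'} \aleq M^{\bt-1}$ is absorbed for large $\lmb_{0}$.
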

Corollary~\ref{cor:norm-growth-ex} makes quantitative the expectation that the faster growth of $\gmm$, the stronger the instability by degenerate dispersion.\footnote{The case $\brk{\xi}^{\bt}$ for $\bt > 1$ is an exception, but this seems to be due to the inefficiency of our method in this case.}

\medskip

\noindent \textit{Dissipative case}. We now state the main linear result in the dissipative case. Let $\lmb_{0}$, $M$ and $\tau_{M}$ be defined as before. We introduce two small constants $0 < \dlt_{1} \leq \dlt_{0} < \frac{1}{100}$. In place of $\abs{f''(0)}^{-1} {\tau_{M}}$ that arose in Theorem~\ref{thm:norm-growth}, we define the function $t_{f}(\tau_{M})$ by the relation
\begin{equation*}
	\int_{0}^{t_{f}(\tau_{M})} \abs{f''(t', 0)} \, \ud t' = \tau_{M}.
\end{equation*}
Note that $f''$ is time-independent, then $t_{f}(\tau_{M}) = \abs{f''(0)}^{-1} \tau_{M}$.

\begin{maintheorem}[Linear illposedness, dissipative case] \label{thm:norm-growth-diss} 
Let $\bgtht_{0} = f_{0}(x_{2})$ a smooth {even} function with {$f_{0}''(0) \neq 0$}, let $\bgtht = f(t, x_{2})$ be the smooth solution to \eqref{eq:ssqg-diss-shear} with $f(0, x_{2}) = f_{0}(x_{2})$ and fix small parameters $0 < \dlt_{1} \leq \dlt_{0} < \frac{1}{100}$ and a {parameter $0 \leq \sgm_{0} \leq \frac{1}{3}(1-2\dlt_{0})$.} 
Then there exist $\Lmb_{1} = \Lmb_{1}({f, \gmm, \ups, \dlt_{0}, \dlt_{1}, \sgm_{0}})$ and {$T_1 = T_1({f, \gmm, \ups, \dlt_{0}, \dlt_{1}, \sgm_{0}})>0$} such that the following holds. For each $\lmb_{0} \in \bbN$ such that $\lmb_{0} \geq \Lmb_{1}$, $\tau_{M} \leq T_{1}$ and $M > 1$ satisfying the nondissipative growth conditions \eqref{eq:gf-condition-1}--\eqref{eq:gf-condition-3}, as well as the conditions 
\begin{gather} 
\int_{\lmb_{0}}^{M \lmb_{0}} \frac{\ups(\lmb_{0}, \lmb)}{\gmm(\lmb_{0}, \lmb)}\frac{\ud \lmb}{\lmb_{0}} 
=  o_{\lmb_{0}}(1),\label{eq:gf-condition-diss-1} \\
\sup_{M' \in [1, M]} \int_{\lmb_{0}}^{M' \lmb_{0}} \left(\frac{\rd_{\xi_{2}} \gmm(\lmb_{0}, \lmb)}{\rd_{\xi_{2}} \gmm(\lmb_{0}, M' \lmb_{0})}\right)^{1-\dlt_{1}} \frac{\gmm(\lmb_{0}, \lmb_{0})}{\gmm(\lmb_{0}, \lmb)^{2}} \frac{\ud \lmb}{\lmb_{0}} 
\leq 1,
\label{eq:gf-condition-diss-2}
\end{gather}
there exists a smooth function ${\phi_{0}}$ such that
\begin{equation} \label{eq:norm-growth-ini-diss}
	\nrm{{\phi_{0}}}_{H^{s'}} \leq {C_{s, s'}} \lmb_{0}^{s'-s} \nrm{{\phi_{0}}}_{H^{s}} \quad \hbox{ for any } s' \geq s,
\end{equation}
yet any {$\Gmm^{-\frac{1}{2}} L^{2}$}-solution {$\phi$ to $L^{(\kpp)}_{\bgtht} \phi = 0$} on $\left[ 0, \tfrac{100}{99} t_{f}(\tau_{M}) \right]$ with {$\phi(t=0) = \phi_{0}$} obeys
\begin{equation} \label{eq:norm-growth-diss}
	\sup_{t \in \left[ 0, \tfrac{100}{99} t_{f} (\tau_{M}) \right]} \nrm{{\phi}(t, \cdot)}_{H^{s'}} \geq C_{s', s} {\frac{\gmm(\lmb_{0}, \lmb_{0})^{\frac{1}{2}}}{\gmm(\lmb_{0}, M \lmb_{0})^{\frac{1}{2}}}} M^{s'} \lmb_{0}^{s'-s} \nrm{{\phi_{0}}}_{H^{s}} \quad \hbox{ for any } s' > 0.
\end{equation}
\end{maintheorem}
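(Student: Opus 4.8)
The plan is to obtain Theorem~\ref{thm:norm-growth-diss} as a perturbation of the non-dissipative result, Theorem~\ref{thm:norm-growth}, by treating the dissipation term $\kpp \Ups$ in \eqref{eq:direct-L-bgtht-diss} as a \emph{lower order} correction that damps the degenerating wave packet only by a bounded multiplicative factor. Two structural facts drive this. First, because $\ups$ is of strictly lower order than $\gmm$ (the hypotheses force the integral in \eqref{eq:gf-condition-diss-1} to be $o_{\lmb_{0}}(1)$), the dissipation does not affect the bicharacteristic flow $(X(t), \Xi(t))$ associated with $L_{\bgtht}$, only the amplitude transport equation of the wave packet. Second, in the energy/duality step that upgrades the wave-packet growth to a statement about arbitrary $\Gmm^{-\frac{1}{2}} L^{2}$-solutions, the term $\kpp \Ups$ contributes $-\kpp \brk{\Ups \phi, \phi} \leq 0$ and hence is harmless.

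\textbf{Dissipative degenerating wave packets.} First I would redo the wave-packet construction (cf.\ Section~\ref{sec:wavepacket}) for the dissipative operator $L^{(\kpp)}_{\bgtht}$. The packet is localized at $(X(t), \Xi(t))$ solving the \emph{same} bicharacteristic ODE system as in the non-dissipative case — so that $\Xi_{2}$ still grows from $\lmb_{0}$ to $M \lmb_{0}$, now in time $\aeq t_{f}(\tau_{M})$, the reparametrization $\int_{0}^{t_{f}} \abs{f''(t', 0)} \, \ud t' = \tau_{M}$ accounting for the time dependence of the background $f$ solving \eqref{eq:ssqg-diss-shear}. Substituting this ansatz into $L^{(\kpp)}_{\bgtht} \phi = 0$ and Taylor-expanding the symbol of $\Gmm$ near $\xi = \Xi(t)$ as before, the only change in the leading transport equation for the amplitude $a(t)$ is the extra damping term $\kpp \ups(\Xi(t)) a(t)$. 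Hence $a(t) = \exp\!\left( - \kpp \int_{0}^{t} \ups(\Xi(t')) \, \ud t' \right) a_{\mathrm{nd}}(t)$ with $a_{\mathrm{nd}}$ the non-dissipative amplitude; and since $\ud t' \aeq \gmm(\lmb_{0}, \lmb)^{-1} \lmb_{0}^{-1} \, \ud \lmb$ along the bicharacteristic (cf.\ \eqref{eq:gf-time}), the exponent is $\aeq \kpp \int_{\lmb_{0}}^{M \lmb_{0}} \frac{\ups(\lmb_{0}, \lmb)}{\gmm(\lmb_{0}, \lmb)} \frac{\ud \lmb}{\lmb_{0}} = o_{\lmb_{0}}(1)$ by \eqref{eq:gf-condition-diss-1}. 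Thus the amplitude is lost by at most a bounded factor, and the approximate solution $\phi_{\mathrm{app}}$ obeys the same lower bound \eqref{eq:norm-growth-diss} as in \eqref{eq:norm-growth}, with \eqref{eq:norm-growth-ini-diss} immediate from its frequency-$\lmb_{0}$ localization at $t = 0$.

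\textbf{Error estimates.} The substantive part is bounding $L^{(\kpp)}_{\bgtht} \phi_{\mathrm{app}}$ in the weak-solution norm over $[0, \tfrac{100}{99} t_{f}(\tau_{M})]$. The error splits as in the non-dissipative analysis — Taylor remainder of $\gmm$ near $\xi = \Xi(t)$, symbolic-calculus remainders, and contributions of the variable (now time-dependent) coefficient $f$ — together with new terms from commuting $\kpp \Ups$ against the wave-packet profile and its focusing phase. It is exactly this last class that condition \eqref{eq:gf-condition-diss-2} is built to absorb: the weight $\left( \rd_{\xi_{2}} \gmm(\lmb_{0}, \lmb) / \rd_{\xi_{2}} \gmm(\lmb_{0}, M' \lmb_{0}) \right)^{1-\dlt_{1}}$ encodes the damping, under the dissipative flow, of the frequency-$\lmb$ content of a packet that has already focused to frequency $M' \lmb_{0}$, while $\gmm(\lmb_{0}, \lmb_{0}) \gmm(\lmb_{0}, \lmb)^{-2}$ is the weight carried over from the non-dissipative error bound (essentially \eqref{eq:gf-condition-2}); the slow-variation hypothesis \eqref{eq:slow-var-ups} controls the variation of $\ups$ over the frequency support of the packet. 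I would also exploit the dissipative smoothing as a genuine gain on high-frequency tails where convenient, taking care that it does not spoil the spatial localization that keeps the packet coherent.

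\textbf{From wave packets to weak solutions, and the main obstacle.} With the error made small, I would run the same duality argument as for Theorem~\ref{thm:norm-growth} (cf.\ the proof in Section~\ref{subsec:wp-error} and after): pair the error equation against a test/dual solution, integrate by parts, and use that both the commutator and the dissipation term have controllable sign to conclude that any $\Gmm^{-\frac{1}{2}} L^{2}$-solution $\phi$ of $L^{(\kpp)}_{\bgtht} \phi = 0$ with $\phi(0) = \phi_{0}$ remains $L^{2}$-close to $\phi_{\mathrm{app}}$ throughout the interval, whence \eqref{eq:norm-growth-diss}. The main obstacle, as in the non-dissipative case, is the \emph{long-time} error control — the packet must stay a good approximate solution over a time in which $\Xi_{2}$ grows by the possibly large factor $M$, and the accumulated error must be beaten by the lower bound \eqref{eq:norm-growth-diss}. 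The genuinely new tension is between degenerate-dispersive focusing, which concentrates the packet at ever higher frequency, and dissipative smoothing, which damps precisely those frequencies; condition \eqref{eq:gf-condition-diss-2} together with $0 < \dlt_{1} \leq \dlt_{0}$ is what keeps this balance favorable. The time dependence of $f$ is a secondary complication, neutralized by the definition of $t_{f}(\tau_{M})$ and by the evenness of $f_{0}$, which fixes the degenerate point at $x_{2} = 0$ for all $t$.
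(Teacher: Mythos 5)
There is a genuine gap, and it concerns the part of the theorem you dismiss as secondary. Condition \eqref{eq:gf-condition-diss-2} has nothing to do with "commuting $\kpp \Ups$ against the wave-packet profile": it does not involve $\ups$ at all. In the paper it is consumed entirely by the Hamilton--Jacobi analysis for the \emph{time-dependent} background (Section~\ref{subsec:HJE-gen}): since $f$ evolves under \eqref{eq:ssqg-diss-shear}, the strength of the degeneracy $\abs{f''(t,0)}$ changes in time, the separation-of-variables phase of the steady case is no longer available, and $\rd_{x}^{2}\Phi$ obeys a Riccati-type ODE along characteristics that threatens to blow up (focal points) before $\Xi$ has grown by the factor $M$. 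The paper's resolution is the renormalized variable $h = \frac{\rd_{\xi}p}{\rd_{x}p}\rd_{x}^{2}\Phi + 1$, the choice of initial phase making $h(0)=0$, the simple evolution equation of Lemma~\ref{lem:HJE-h}, and the bootstrap of Proposition~\ref{prop:h}, in which the weight $\bigl(\rd_{\xi_{2}}\gmm(\lmb_{0},\lmb)/\rd_{\xi_{2}}\gmm(\lmb_{0},M'\lmb_{0})\bigr)^{1-\dlt_{1}}$ is exactly the integrating-factor loss (the exponent $\dlt_{1}$ coming from the bootstrap bound on $h$), and \eqref{eq:gf-condition-diss-2} is what keeps $\abs{h}$ small on $[0, t_{f}(\tau_{M})]$. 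Your statement that "the time dependence of $f$ is a secondary complication, neutralized by the definition of $t_{f}(\tau_{M})$ and by the evenness of $f_{0}$" is where the proof breaks: evenness only pins the \emph{location} of the degenerate point and $t_{f}$ only reparametrizes time; neither gives the control of $\rd_{x}^{k}\rd_{x}\Phi \aleq \mu^{k}\lmb$ on which the degeneration and error estimates of Sections~\ref{sec:conj-err}--\ref{sec:wavepacket} rest, and the paper explicitly notes that this effect cannot be treated as a perturbation of the initial profile.

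On the dissipative term itself your plan is essentially sound but packaged differently from the paper: the paper keeps the non-dissipative amplitude equation, treats $\kpp\Ups\tld{\varphi}$ wholly as an error, bounds it by $\kpp\,\ups(\lmb_{0},\lmb(t))$ using the frequency localization of the packet, and integrates along the bicharacteristic flow to land on \eqref{eq:gf-condition-diss-1}; your folding of $\kpp\ups(\Xi(t))$ into the amplitude produces the same $o_{\lmb_{0}}(1)$ exponent and is an acceptable variant. One more correction for the duality step: the dissipative contribution there is the cross term $2\kpp\brk{\Ups^{\frac12}\varphi,\Ups^{\frac12}\tld{\varphi}}$, which has \emph{no} sign (only the self-pairing in the a priori $L^{2}$ bound does); it must be estimated by $\kpp\nrm{\Ups\tld{\varphi}}_{L^{2}}\nrm{\varphi}_{L^{2}}$ and absorbed via \eqref{eq:gf-condition-diss-1}, not discarded by positivity. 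With the first point repaired — i.e., importing the full time-dependent phase construction and the role of \eqref{eq:gf-condition-diss-2} — your outline matches the paper's argument.
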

\begin{remark}
Condition \eqref{eq:gf-condition-diss-1} arises naturally from the contribution of the dissipative term in the error estimate in our degenerating wave packet construction. Condition \eqref{eq:gf-condition-diss-2} is a technical condition arising due to the time dependence of $\bgtht = f(t, x_{2})$. We note that when $\dlt_{1} = 0$ and $\tau_{M} \leq 1$, \eqref{eq:gf-condition-diss-2} already holds up to a logarithmic power of $\lmb_{0}$ (from Assumption~4 for $\gmm$); see \eqref{eq:HJE-h-int-s-re} and \eqref{eq:HJE-h-simple-0}.
\end{remark}

We introduce the following analogue of Definition~\ref{def:HsHs'-illposed}:
\begin{definition} \label{def:HsHs'-illposed-diss}
We say that the shear state $\bgtht$ satisfying the assumptions of Theorem~\ref{thm:norm-growth-diss} is \emph{linearly $H^{s}$-$H^{s'}$ unstable by degenerate dispersion according to Theorem~\ref{thm:norm-growth-diss}} if there exist a sequence $(M_{(n)}, \lmb_{0 (n)})$ satisfying \eqref{eq:gf-condition-1}--\eqref{eq:gf-condition-3}, \eqref{eq:gf-condition-diss-1}--\eqref{eq:gf-condition-diss-2} and $\lmb_{0 (n)} \geq \Lmb_{1}$ such that 
\begin{equation*}
	{\frac{\gmm(\lmb_{0 (n)}, \lmb_{0 (n)})^{\frac{1}{2}}}{\gmm(\lmb_{0 (n)}, M_{(n)} \lmb_{0 (n)})^{\frac{1}{2}}}} M_{(n)}^{s'} \lmb_{0(n)}^{s' - s} \to \infty, \qquad \tau_{M_{(n)}} \to 0.
\end{equation*}
\end{definition}
In what follows, we shall often drop the proviso \emph{according to Theorem~\ref{thm:norm-growth-diss}}. As before, Theorem~\ref{thm:norm-growth-diss} implies that given a shear state $\bgtht$ that is linearly $H^{s}$-$H^{s'}$ unstable by degenerate dispersion, no mapping from $H^{s}$ to a set of {$\Gmm^{-\frac{1}{2}} L^{2}$}-solutions to {$L_{\bgtht} \phi = 0$} in $L^{\infty}_{t}([0, \dlt], H^{s'})$ can be bounded. Moreover, Definition~\ref{def:HsHs'-illposed-diss} is a property of the system \eqref{eq:ssqg-diss} (more precisely, of $\gmm$, $\kpp \ups$) in the same sense as before.

We now specialize $\gmm$ and $\ups$ to several model cases and exhibit instances of illposedness in the dissipative case given by Theorem~\ref{thm:norm-growth-diss}.
\begin{corollary} \label{cor:norm-growth-diss-ex}
Fix $\kpp > 0$, let $\bgtht_{0} = f_{0}(x_{2})$ a smooth odd function with $f_{0}'(0) \neq 0$, and let $\bgtht = f(t, x_{2})$ be the smooth solution to \eqref{eq:ssqg-diss-shear}. In each of the following cases, Theorem~\ref{thm:norm-growth-diss} applies with the specified choice of $M$ for any $0 \leq \sgm < 1$, provided that $\lmb_{0}$ is sufficiently large depending on $\sgm$; moreover, in each case, $\tau_{M} \to 0$ as $\lmb_{0} \to \infty$. As a result, \eqref{eq:ssqg-diss} is linearly $H^{s}$-$H^{s'}$ unstable by degenerate dispersion for the specified values of $s'$. In the table below, the restrictions on $M$ and $(s,s')$ are \emph{in addition} to those from the nondissipative case.
\begin{center} 
	\begin{tabular}{ | l | l | l | l |} 
		\hline
		Multiplier $\Gmm$ & Dissipation $\Upsilon$ & Freq.~growth $M$ & Lin.~$H^{s}$-$H^{s'}$ inst.~ \\ 
		\hline
		$\gmm = \brk{\xi}^{\bt}$, $\bt > 1$ 
		& $\ups = \brk{\xi}^{\alp}$, $\alp < \bt$ 
		& $M =  \lmb_{0}^{\sigma\frac{\bt-\alp}{1-(\bt-\alp)} } $ 
		& $\frac{s'}{1-(\bt-\alp)} > s + \frac{\bt(\bt-\alp)}{2(1-(\bt-\alp))} $ \\
		$\gmm = \brk{\xi}$
		& $\ups = \brk{\xi}^{\alp}$, $\alp < 1$
		& $M = \lmb_{0}^{\sigma \frac{1-\alp}{\alp}}$ 
		& $\frac{s'}{\alp} > s + \frac{1-\alp}{2\alp} $ \\
		$\gmm = \brk{\xi}^{\bt}$, $\bt < 1$
		& $\ups = \brk{\xi}^{\alp}$, $\alp < \bt$
		& $M =  \lmb_{0}^{\sgm \frac{\bt-\alp}{1-(\bt-\alp)}} $ 
		& $\frac{s'}{1-(\bt-\alp)} > s + \frac{\bt(\bt-\alp)}{2(1-(\bt-\alp))} $ \\
		$\gmm = \log^{\bt} (10+\abs{\xi})$, $\bt > 0$
		& $\ups = \log^{\alp} (10+\abs{\xi})$, $\alp < \bt$
		& $M = \log^{\sigma(\bt-\alp)}\lmb_{0}$
		& $s' = s$ \\
		\hline
	\end{tabular}
\end{center} 
\end{corollary}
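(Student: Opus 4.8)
The plan is to prove Corollary~\ref{cor:norm-growth-diss-ex} as a \emph{verification} of the hypotheses of Theorem~\ref{thm:norm-growth-diss}. For each of the four model pairs $(\gmm,\ups)$ and each fixed $0\le\sgm<1$, I take $M=M(\lmb_0)$ as in the table (understood as capped by the nondissipative choice of Corollary~\ref{cor:norm-growth-ex} whenever the latter is smaller) and choose auxiliary parameters $\dlt_0,\dlt_1,\sgm_0$ with $0<\dlt_1\le\dlt_0<\tfrac{1}{100}$ and $0\le\sgm_0\le\tfrac13(1-2\dlt_0)$, depending on $\sgm,\bt,\alp$. Along the sequence $\lmb_0=\lmb_{0(n)}\to\infty$ I then check, in order: (i) the nondissipative growth conditions \eqref{eq:gf-condition-1}--\eqref{eq:gf-condition-3} --- which, in the regime where the dissipative constraint is binding so that $M$ does not exceed its nondissipative counterpart, go through exactly as in the proof of Corollary~\ref{cor:norm-growth-ex}, and otherwise are inherited verbatim; (ii) $\tau_M\to0$ (whence $\lmb_{0(n)}\ge\Lmb_1$ and $\tau_{M(\lmb_{0(n)})}\le T_1$ for $n$ large), which is one of the two assertions of the corollary; (iii) the dissipative conditions \eqref{eq:gf-condition-diss-1}--\eqref{eq:gf-condition-diss-2}; and (iv) that the lower bound factor $\gmm(\lmb_0,\lmb_0)^{1/2}\gmm(\lmb_0,M\lmb_0)^{-1/2}M^{s'}\lmb_0^{s'-s}$ in \eqref{eq:norm-growth-diss} diverges in precisely the stated range of $(s,s')$. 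Items (ii) and (iv) are exactly what Definition~\ref{def:HsHs'-illposed-diss} requires, so together they deliver the claimed linear $H^s$-$H^{s'}$ instability of \eqref{eq:ssqg-diss}, and hence (via the unboundedness observation following that definition) the advertised norm-inflation/unsolvability statement.

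The substance is then a collection of elementary integral estimates. For the power-law rows $\gmm=\brk{\xi}^{\bt}$, $\ups=\brk{\xi}^{\alp}$ with $\alp<\bt$ (including $\bt=1$), I use $\brk{(\lmb_0,\lmb)}\simeq\lmb$ and $\rd_{\xi_2}\gmm(\lmb_0,\lmb)\simeq\lmb^{\bt-1}$ uniformly for $\lmb\ge\lmb_0\ge1$, which turns everything into power integrals: from \eqref{eq:gf-time}, $\tau_{M'}\simeq_{\bt}\lmb_0^{-\bt}(1-(M')^{1-\bt})$ for $\bt\neq1$ and $\tau_{M'}\simeq\lmb_0^{-1}\log M'$ for $\bt=1$, both of which tend to $0$ as $\lmb_0\to\infty$ for the listed choices of $M$; the left side of \eqref{eq:gf-condition-diss-1} is $\simeq\lmb_0^{\alp-\bt}\bigl(M^{1-(\bt-\alp)}-1\bigr)$ when $\bt-\alp<1$, which for $M=\lmb_0^{\sgm\frac{\bt-\alp}{1-(\bt-\alp)}}$ equals $\lmb_0^{(\sgm-1)(\bt-\alp)}(1+o(1))\to0$ since $\sgm<1$ (and is trivially $o(1)$, namely $\simeq\lmb_0^{\alp-\bt}$, when $\bt-\alp\ge1$, in which case no dissipative cap on $M$ is needed); and the left side of \eqref{eq:gf-condition-diss-2}, after inserting $\rd_{\xi_2}\gmm\simeq\lmb^{\bt-1}$, is a convergent power integral bounded by $1$ for $\lmb_0$ large and $\dlt_1$ small, in line with the remark following Theorem~\ref{thm:norm-growth-diss}. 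Finally $\gmm(\lmb_0,\lmb_0)^{1/2}\gmm(\lmb_0,M\lmb_0)^{-1/2}\simeq M^{-\bt/2}$, so the lower bound factor is $\simeq M^{s'-\bt/2}\lmb_0^{s'-s}=\lmb_0^{\,\sgm\frac{\bt-\alp}{1-(\bt-\alp)}(s'-\bt/2)+s'-s}$; letting $\sgm\uparrow1$ the exponent tends to $\frac{s'}{1-(\bt-\alp)}-s-\frac{\bt(\bt-\alp)}{2(1-(\bt-\alp))}$, which is positive exactly under the stated strict inequality, so a choice of $\sgm<1$ sufficiently close to $1$ makes the factor diverge. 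The $\gmm=\brk{\xi}$ row is the $\bt=1$ specialization (the limiting exponent becomes $\frac{s'}{\alp}-s-\frac{1-\alp}{2\alp}$), and the $\gmm=\brk{\xi}^{\bt}$, $\bt<1$ row is the same computation with $\bt$ now less than $1$.

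For the logarithmic rows $\gmm=\log^{\bt}(10+\abs{\xi})$, $\ups=\log^{\alp}(10+\abs{\xi})$, $\alp<\bt$, the analysis is even softer: since $M=\log^{\sgm(\bt-\alp)}\lmb_0$ is sub-polynomial, $\log\lmb\simeq\log\lmb_0$ throughout $[\lmb_0,M\lmb_0]$, whence $\tau_{M'}\simeq(M'-1)\log^{-\bt}\lmb_0\to0$, the left side of \eqref{eq:gf-condition-diss-1} is $\simeq(M-1)\log^{\alp-\bt}\lmb_0\simeq\log^{(\sgm-1)(\bt-\alp)}\lmb_0\to0$, \eqref{eq:gf-condition-diss-2} follows immediately from Assumptions~3--4 on $\gmm$ (with $\rd_{\xi_2}\gmm\simeq\bt\lmb^{-1}\log^{\bt-1}\lmb$), and the lower bound factor is $\simeq M^{s'}=\log^{\sgm(\bt-\alp)s'}\lmb_0\to\infty$ for every $s'>0$, so the only surviving restriction is $s'=s$ (already imposed nondissipatively). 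The main obstacle --- indeed the only point requiring genuine care --- is the supercritical row $\gmm=\brk{\xi}^{\bt}$, $\bt>1$, where the nondissipative conditions are themselves restrictive (this is the reason $\sgm_0>0$ must be used). There one must take $M$ to be the minimum of the nondissipative cap $\sim\lmb_0^{\sgm_0/(\bt-1)}$ forced by \eqref{eq:gf-condition-1}--\eqref{eq:gf-condition-2} and the dissipative cap $\sim\lmb_0^{\sgm(\bt-\alp)/(1-(\bt-\alp))}$ forced by \eqref{eq:gf-condition-diss-1}, and choose $\dlt_0,\sgm_0$ (subject to $\sgm_0\le\tfrac13(1-2\dlt_0)$) and $\dlt_1$ small enough that \eqref{eq:gf-condition-1}--\eqref{eq:gf-condition-3} and \eqref{eq:gf-condition-diss-2} hold simultaneously for all large $\lmb_0$; the table entry and its extra $(s,s')$-restriction correspond to the subregime $\bt-1<\alp<\bt$ in which the dissipative cap is binding, while for $\alp\le\bt-1$ the conclusion is already subsumed by Corollary~\ref{cor:norm-growth-ex}. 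Everything else reduces to the elementary estimates sketched above.
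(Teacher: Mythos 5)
Your proposal is correct and takes essentially the same route as the paper: the paper states Corollary~\ref{cor:norm-growth-diss-ex} without a separate printed proof, intending exactly the verification you carry out, namely checking \eqref{eq:gf-condition-1}--\eqref{eq:gf-condition-3} and \eqref{eq:gf-condition-diss-1}--\eqref{eq:gf-condition-diss-2} for the tabulated $M$ by elementary power/log integral estimates and then reading off the divergence of $\gmm(\lmb_{0},\lmb_{0})^{1/2}\gmm(\lmb_{0},M\lmb_{0})^{-1/2}M^{s'}\lmb_{0}^{s'-s}$ as $\sgm\uparrow1$, precisely as in Definition~\ref{def:HsHs'-illposed-diss}. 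Your reading of the supercritical row $\bt>1$ (interpreting the table's $M$ as capped by the nondissipative bound, consistent with the ``in addition'' convention, so that the conjunction of the $(s,s')$ restrictions is what guarantees divergence) is also the intended one, and the minor sign/log glosses in your $\tau_{M}$ and borderline $\bt-\alp=1$ computations do not affect the argument.
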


The illposedness results in Corollary~\ref{cor:norm-growth-diss-ex} are essentially sharp, in that it is not difficult to prove using standard energy estimates that in each case, {$L^{(\kpp)}_{\bgtht} \phi = 0$} is locally well-posed in $H^{s}$ for any $s \geq 0$ if the dissipative exponent $\bt$ is greater than $\alp$.

\medskip

\noindent\textbf{Nonlinear illposedness results.}
Remarkably, the linear norm growth results (Theorems~\ref{thm:norm-growth} and \ref{thm:norm-growth-diss}) may be extended to corresponding norm inflation properties of the nonlinear Cauchy problem. 
%For nonlinear result, we need to assume that $\Gmm$ has a finite \textit{order}; we say that $\Gmm$ is of order $a$ if \begin{equation*}
%\begin{split}
%|\gmm(\xi)| \le C_b(1+|\xi|^2)^{\frac{b}{2}}
%\end{split}
%\end{equation*} for any $b>a$. 

%{Comment about the conservation law $\nrm{\Gmm^{\frac{1}{2}} \tht}_{L^{2}} = \nrm{\tht}_{\Gmm^{-1/2} L^{2}}$, define $\Gmm^{-1/2} H^{s}$.}
\begin{maintheorem}[Nonlinear illposedness] \label{thm:nonlin-norm-infl} 
Assume that a shear steady state $\bgtht$ with a quadratic degeneracy (resp.~a shear state $\bgtht$ satisfying the assumptions of Theorem~\ref{thm:norm-growth-diss}) is linearly $H^{s}$-$H^{s'}$ unstable by degenerate dispersion according to Theorem~\ref{thm:norm-growth} (resp.~Theorem~\ref{thm:norm-growth-diss}) with $s' >  {\frac{3}{2}} \beta_{0}+3$.
Then $\bgtht$ is nonlinearly {$H^{s} \to H^{s'}$} ill-posed with respect to \eqref{eq:ssqg} (resp.~\eqref{eq:ssqg-diss}) in the following sense: For any $\eps, \dlt, A>0$, there exists initial data $\tht_0 \in C^\infty_c(\Omg)$ with ${\nrm{\tht_0}_{H^s}}<\eps $ such that either \begin{itemize}
		\item there exists no solution $\tht \in \bgtht + L^\infty([0,\dlt]; {H^{s'}})$ to \eqref{eq:ssqg} (resp.~\eqref{eq:ssqg-diss}) with $\rst{\tht}_{t=0} = \rst{\bgtht}_{t=0} + \tht_0$, or 
		\item any solution $\tht$ belonging to $\bgtht + L^\infty([0,\dlt]; {H^{s'}})$ satisfy the growth \begin{equation*}
		\begin{split}
		\sup_{t \in [0,\dlt]} \nrm{{(\tht - \bgtht)}(t, \cdot)}_{H^{s'}} > A. 
		\end{split}
		\end{equation*}
	\end{itemize}
\end{maintheorem}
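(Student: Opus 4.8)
The plan is to transfer the linear frequency-growth of Theorem~\ref{thm:norm-growth} (resp.~Theorem~\ref{thm:norm-growth-diss}) to a norm-inflation statement for the nonlinear Cauchy problem by a perturbative analysis around $\bgtht$, localized to the short time interval on which the degenerating wave packet constructed in the proof of that theorem does its work. First I would set up the difference equation: given a candidate solution $\tht$ of \eqref{eq:ssqg} (resp.~\eqref{eq:ssqg-diss}) with $\rst{\tht}_{t=0}=\rst{\bgtht}_{t=0}+\tht_{0}$, put $\phi:=\tht-\bgtht$; using that $\bgtht$ solves \eqref{eq:ssqg} (resp.~that $f$ solves \eqref{eq:ssqg-diss-shear}, hence $\bgtht$ solves \eqref{eq:ssqg-diss}) and the antisymmetry $\nb^{\perp}a\cdot\nb b=-\nb^{\perp}b\cdot\nb a$, one gets that $\phi$ solves the inhomogeneous linear equation
\begin{equation*}
L_{\bgtht}\phi=N(\phi),\qquad N(\phi):=-\nb^{\perp}\Gmm\phi\cdot\nb\phi,\qquad \rst{\phi}_{t=0}=\tht_{0}
\end{equation*}
(resp.~$L^{(\kpp)}_{\bgtht}\phi=N(\phi)$), with $N$ quadratic and losing $\bt_{0}+1$ derivatives. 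Since $\tht\in\bgtht+L^{\infty}([0,\dlt];H^{s'})$ is equivalent to $\phi\in L^{\infty}([0,\dlt];H^{s'})$, it suffices to exhibit, for given $\eps,\dlt,A>0$, a $\tht_{0}\in C^{\infty}_{c}(\Omg)$ with $\nrm{\tht_{0}}_{H^{s}}<\eps$ for which no $\phi\in L^{\infty}([0,\dlt];H^{s'})$ solves the above with $\sup_{[0,\dlt]}\nrm{\phi}_{H^{s'}}\le A$.

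Next I would choose the data. As $\bgtht$ is linearly $H^{s}$-$H^{s'}$ unstable by degenerate dispersion, fix a sequence $(M_{(n)},\lmb_{0(n)},\tau_{M_{(n)}})$ verifying \eqref{eq:gf-condition-1}--\eqref{eq:gf-condition-3} (resp.~also \eqref{eq:gf-condition-diss-1}--\eqref{eq:gf-condition-diss-2}), with $\tau_{M_{(n)}}\to0$ and growth factor $K_{(n)}:=\gmm(\lmb_{0(n)},\lmb_{0(n)})^{\frac12}\gmm(\lmb_{0(n)},M_{(n)}\lmb_{0(n)})^{-\frac12}M_{(n)}^{s'}\lmb_{0(n)}^{s'-s}\to\infty$. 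Let $\phi^{(n)}_{\mathrm{wp}}$ be the \emph{explicit} degenerating wave packet produced in the proof of Theorem~\ref{thm:norm-growth} (resp.~\ref{thm:norm-growth-diss}): an approximate solution $L_{\bgtht}\phi^{(n)}_{\mathrm{wp}}=\calE^{(n)}_{\mathrm{wp}}$ (resp.~with $L^{(\kpp)}_{\bgtht}$) on $[0,T_{(n)}]$, where $T_{(n)}:=\tfrac{100}{99}\abs{f''(\dgx_{2})}^{-1}\tau_{M_{(n)}}$ (resp.~$\tfrac{100}{99}t_{f}(\tau_{M_{(n)}})$), concentrated in space at scale $\mu^{-1}$ and in frequency near the value that travels along a bicharacteristic from $\sim\lmb_{0(n)}$ at $t=0$ to $\sim M_{(n)}\lmb_{0(n)}$ at $t=T_{(n)}$, with $\calE^{(n)}_{\mathrm{wp}}$ negligible in the norm of the construction and $\nrm{\phi^{(n)}_{\mathrm{wp}}(T_{(n)})}_{H^{s'}}\ageq K_{(n)}\nrm{\phi^{(n)}_{\mathrm{wp}}(0)}_{H^{s}}$. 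Normalizing $\nrm{\phi^{(n)}_{\mathrm{wp}}(0)}_{H^{s}}=1$, I would take $\tht_{0}:=\tfrac12\eps\,\chi\,\phi^{(n)}_{\mathrm{wp}}(0)$ with $\chi$ a fixed smooth cutoff (harmless, by the spatial concentration); then $\nrm{\tht_{0}}_{H^{s}}<\eps$, and $T_{(n)}<\dlt$, $\tfrac14\eps K_{(n)}\gg A$ for $n$ large. The goal is then to show that any $\phi$ as above must track $\tfrac12\eps\,\phi^{(n)}_{\mathrm{wp}}$ well enough that $\nrm{\phi(T_{(n)})}_{H^{s'}}\ageq\tfrac14\eps K_{(n)}\gg A$, contradicting $\sup_{[0,\dlt]}\nrm{\phi}_{H^{s'}}\le A$ (and if no solution exists, the first alternative of the conclusion holds).

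Then I would run the comparison by duality, to avoid a (false) forward $H^{s'}$ energy estimate for the ill-posed operator $L_{\bgtht}$. Under the standing assumption $\sup_{[0,\dlt]}\nrm{\phi}_{H^{s'}}\le A$, the nonlinearity is perturbative: since $s'>\tfrac32\bt_{0}+3$ comfortably exceeds the local-wellposedness regularity, the bilinear and commutator estimates give $\nrm{N(\phi)(t)}_{\Gmm^{-1/2}L^{2}}\aleq\nrm{\Gmm\phi}_{H^{s'-\bt_{0}}}\nrm{\nb\phi}_{H^{s'-1}}\aleq A^{2}$, and also $\nrm{\Gmm^{\frac12}\phi}_{L^{2}}\aleq\nrm{\phi}_{H^{\bt_{0}/2}}\le A$. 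Running the degenerating wave packet \emph{backward} in time from an $L^{2}$-normalized state at frequency $\sim M_{(n)}\lmb_{0(n)}$ produces $\Phi^{(n)}$ with $L_{\bgtht}^{*}\Phi^{(n)}=r^{(n)}$ (resp.~$(L^{(\kpp)}_{\bgtht})^{*}\Phi^{(n)}=r^{(n)}$; the dissipation enters self-adjointly and is of lower order than $\Gmm$, hence harmless), $r^{(n)}$ negligible and $\Phi^{(n)}(0)$ aligned with $\phi^{(n)}_{\mathrm{wp}}(0)$. Pairing $\phi$ against $\Phi^{(n)}$ over $[0,T_{(n)}]$ yields
\begin{equation*}
\langle\phi(T_{(n)}),\Phi^{(n)}(T_{(n)})\rangle=\langle\tht_{0},\Phi^{(n)}(0)\rangle+\int_{0}^{T_{(n)}}\big(\langle N(\phi)(t),\Phi^{(n)}(t)\rangle+\langle\phi(t),r^{(n)}(t)\rangle\big)\,\ud t .
\end{equation*}
Here $\langle\tht_{0},\Phi^{(n)}(0)\rangle$ is a definite positive quantity, and by the frequency localization $\nrm{\Phi^{(n)}(T_{(n)})}_{H^{-s'}}\aeq(M_{(n)}\lmb_{0(n)})^{-s'}$ with $(M_{(n)}\lmb_{0(n)})^{-s'}\aeq\eps^{-1}K_{(n)}^{-1}\langle\tht_{0},\Phi^{(n)}(0)\rangle$, so $|\langle\phi(T_{(n)}),\Phi^{(n)}(T_{(n)})\rangle|\le A\nrm{\Phi^{(n)}(T_{(n)})}_{H^{-s'}}=o(1)\langle\tht_{0},\Phi^{(n)}(0)\rangle$. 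The term $\int\langle\phi,r^{(n)}\rangle\,\ud t$ is $o(1)\langle\tht_{0},\Phi^{(n)}(0)\rangle$ using $\nrm{\Gmm^{\frac12}\phi}_{L^{2}}\le A$ and the smallness of $r^{(n)}$, and the genuinely delicate term $\int_{0}^{T_{(n)}}\langle N(\phi),\Phi^{(n)}\rangle\,\ud t$ must likewise be shown to be $o(1)\langle\tht_{0},\Phi^{(n)}(0)\rangle$, by combining $\nrm{N(\phi)}_{\Gmm^{-1/2}L^{2}}\aleq A^{2}$ with the sharp space–frequency localization of $\Phi^{(n)}$ (the oscillatory-integral estimates developed for the linear error analysis) together with the fast decay $T_{(n)}\to0$ forced by \eqref{eq:gf-condition-2}. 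Granting this, the identity collapses to $\langle\tht_{0},\Phi^{(n)}(0)\rangle=o(1)\langle\tht_{0},\Phi^{(n)}(0)\rangle$, impossible for large $n$; equivalently $\nrm{\phi(T_{(n)})}_{H^{s'}}\ageq\tfrac14\eps K_{(n)}\gg A$, contradicting the assumption. This proves the theorem in both the non-dissipative and dissipative cases.

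The hard part will be exactly that last estimate: controlling the quadratic interaction $N(\phi)=-\nb^{\perp}\Gmm\phi\cdot\nb\phi$, mediated by the \emph{singular} multiplier $\Gmm$, over the degeneration time $T_{(n)}$ --- which, although $\to 0$, is extremely long in semiclassical units --- using only the crude a priori bound $\nrm{\phi}_{L^{\infty}H^{s'}}\le A$ and never a forward $H^{s'}$ estimate (which cannot exist, since $L_{\bgtht}$ is itself ill-posed). One must exploit the fine space–frequency structure of the degenerating wave packet --- precisely the machinery the paper builds to control the \emph{linear} error term and the attendant oscillatory integrals --- so that the interaction between the merely-$H^{s'}$ solution and the highly oscillatory dual packet stays subcritical along the whole trajectory; I expect this to require a bootstrap keeping $\phi$ close to $\tfrac12\eps\,\phi^{(n)}_{\mathrm{wp}}$, so that $N(\phi)$ is dominated by the packet self-interaction plus controllable corrections. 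This is what forces $s'>\tfrac32\bt_{0}+3$: $\Gmm$ costs $\bt_{0}$ derivatives, the nonlinearity one more, the bilinear estimate in the energy space demands more than $d/2=1$ further derivatives of regularity, and placing $N(\phi)$ in the energy space $\Gmm^{-\frac12}L^{2}$ adjusted to the $\Gmm^{-\frac12}L^{2}$ structure of $L_{\bgtht}$ (cf.~$\nrm{\Gmm^{\frac12}\phi}_{L^{2}}\aleq\nrm{\phi}_{H^{\bt_{0}/2}}$) consumes a further $\sim\tfrac12\bt_{0}$. The quantitative growth conditions \eqref{eq:gf-condition-1}--\eqref{eq:gf-condition-3} (and \eqref{eq:gf-condition-diss-1}--\eqref{eq:gf-condition-diss-2}), inherited through the definition of linear instability by degenerate dispersion, then supply the remaining slack --- most importantly the fast decay of $T_{(n)}$ relative to $\lmb_{0(n)}$ and $M_{(n)}$ --- needed to beat the nonlinear error against $K_{(n)}\to\infty$.
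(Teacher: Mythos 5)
Your overall strategy is the paper's: test the putative solution against a degenerating wave packet over the degeneration time, use approximate conservation of the pairing, and convert the decay of the packet's (weighted) $H^{-s'}$ norm at the final time into forced growth of the solution's $H^{s'}$ norm. (Your ``backward adjoint packet'' is in substance the same test function as the paper's forward packet, since after conjugation by $\Gmm^{\frac12}$ the linearized operator is antisymmetric up to $L^{2}$-bounded terms; the paper simply reuses $\tld{\varphi}$ in the generalized energy identity, Proposition~\ref{prop:gei}.) However, there is a genuine gap at the step you yourself flag as the ``hard part'': your only stated bound on the nonlinearity is $\nrm{N(\phi)}_{\Gmm^{-1/2}L^{2}} \aleq A^{2}$, which is uniform in the data, whereas the quantity you must beat is the initial pairing, of size $\aeq \nrm{\varphi^{\star}_{0}}_{L^{2}} \aeq \eps\, \gmm(\lmb_{0(n)},\lmb_{0(n)})^{\frac12}\lmb_{0(n)}^{-s} \to 0$. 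Since $T_{(n)} \aeq \tau_{M_{(n)}}$ decays only at the rate dictated by \eqref{eq:gf-condition-1}--\eqref{eq:gf-condition-3} (e.g.\ like $\log^{\sgm-1}\lmb_{0}$ in the logarithmic case), $T_{(n)} A^{2}$ is in general \emph{not} $o\big(\nrm{\varphi^{\star}_{0}}_{L^{2}}\big)$, so the crude quadratic bound cannot close the argument; and your proposed remedy --- oscillatory-integral analysis of the interaction plus a bootstrap keeping $\phi$ close to $\tfrac12\eps\,\phi^{(n)}_{\mathrm{wp}}$ --- is not viable, because closeness cannot be propagated in any norm strong enough to help (the linearized flow is precisely what is ill-posed), and in $L^{2}$ it is neither available from the data normalization nor needed.

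The paper's resolution is simpler and is the actual content of the hypothesis $s' > \tfrac32\bt_{0}+3$: work with $\varphi^{\star} = \Gmm^{\frac12}(\tht-\bgtht)$ and the conjugated operator $\calL_{\bgtht}$ (this is also needed for your pairing identity as written, since the sub-principal term $[\nb^{\perp}\bgtht\cdot\nb,\Gmm]$ of the unconjugated $L_{\bgtht}$ is symmetric but unbounded on $L^{2}$); use the exact cancellation $\brk{\calN[\varphi^{\star}],\varphi^{\star}}=0$ to get the a priori bound $\nrm{\varphi^{\star}(t)}_{L^{2}} \aleq e^{Ct}\nrm{\varphi^{\star}_{0}}_{L^{2}}$ independently of the nonlinearity's size; and prove, by a Sobolev product estimate plus interpolation between $L^{2}$ and $H^{s'}$, the \emph{$L^{2}$-linear} bound $\nrm{\calN[\varphi^{\star}]}_{L^{2}} \leq C(1+A)\nrm{\varphi^{\star}}_{L^{2}}$. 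Then in the generalized energy identity the nonlinear contribution over $[0,t^{\ast}_{(n)}]$ is $O\big((1+A)\,t^{\ast}_{(n)}\big)\nrm{\varphi^{\star}_{0}}_{L^{2}}$, which is $o(1)$ relative to the initial pairing simply because $\tau_{M_{(n)}}\to 0$; no fine space--frequency analysis of the interaction and no bootstrap are required. Your derivative counting for why $s'>\tfrac32\bt_{0}+3$ is needed is morally right, but without the linear-in-$\nrm{\varphi^{\star}}_{L^{2}}$ structure (and the cancellation giving the $L^{2}$ energy bound), the contradiction at the end of your argument does not follow. There are also minor bookkeeping slips (the degeneration estimate controls $\nrm{\Gmm^{\frac12}\tld{\varphi}^{main}(t^{\ast})}_{H^{-s'}}$, so the $\gmm^{\frac12}$ ratio in $K_{(n)}$ must be tracked there, not in a bare $(M\lmb_{0})^{-s'}$ bound), but these are fixable once the main estimate is repaired along the paper's lines.
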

 
Since $\bgtht(t, \cdot)$ can be chosen to have an arbitrarily small $H^{s'} \cap H^{s}$ norm on a sufficiently short time interval in both the dissipative and non-dissipative cases, we immediately obtain the following illposedness statement with $\bgtht = 0$:
\begin{corollary} \label{cor:nonlin-norm-infl-0}
Assume that \eqref{eq:ssqg} (resp.~\eqref{eq:ssqg-diss}) is linearly $H^{s}$-$H^{s'}$ unstable by degenerate dispersion with {$s' > 3+ {\frac{3}{2}} \beta_0$}. Then \eqref{eq:ssqg} (resp.~\eqref{eq:ssqg-diss}) is nonlinearly {$H^{s} \to H^{s'}$} ill-posed in the following sense: For any $\eps, \dlt, A>0$, there exists initial data $\tht_0 \in C^\infty_c(\Omg)$ with ${\nrm{\tht_0}_{H^s}}<\eps $ such that either \begin{itemize}
		\item there exists no solution $\tht \in L^\infty([0,\dlt];{H^{s'}})$ to \eqref{eq:ssqg} (resp.~\eqref{eq:ssqg-diss}) with $\rst{\tht}_{t=0} = \tht_0$, or 
		\item any solution $\tht$ belonging to $L^\infty([0,\dlt];{H^{s'}})$ satisfy the growth \begin{equation*}
		\begin{split}
		\sup_{t \in [0,\dlt]} \nrm{{\tht}(t, \cdot)}_{H^{s'}} > A. 
		\end{split}
		\end{equation*}
	\end{itemize}
\end{corollary}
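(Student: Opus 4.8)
The plan is to obtain Corollary~\ref{cor:nonlin-norm-infl-0} as a direct consequence of Theorem~\ref{thm:nonlin-norm-infl}, the only work being the exhibition of an \emph{admissible} background state $\bgtht$ --- a quadratically degenerate shear steady state for the non-dissipative problem, or a shear state of the type permitted in Theorem~\ref{thm:norm-growth-diss} for the dissipative problem --- that is both (a) linearly $H^{s}$-$H^{s'}$ unstable by degenerate dispersion and (b) arbitrarily small in $H^{s}$ and $H^{s'}$ on $[0,\dlt]$. Property (a) comes for free: as explained after Definition~\ref{def:HsHs'-illposed} (resp.~Definition~\ref{def:HsHs'-illposed-diss}), linear $H^{s}$-$H^{s'}$ instability by degenerate dispersion is a property of $\gmm$ alone (resp.~of $\gmm$ and $\kpp \ups$), so the hypothesis of the corollary --- that \eqref{eq:ssqg} (resp.~\eqref{eq:ssqg-diss}) is linearly $H^{s}$-$H^{s'}$ unstable --- transfers to \emph{every} such background. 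Since the corollary also assumes $s' > 3 + \tfrac{3}{2} \bt_{0}$, which is exactly the threshold appearing in Theorem~\ref{thm:nonlin-norm-infl}, that theorem will indeed be applicable.

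For the background I would take a fixed smooth, compactly supported profile $g$ on $\bbT$ (supported in a fundamental domain) or on $\bbR$ with $g'(0) = 0$ and $g''(0) \neq 0$ --- e.g.\ a standard bump function, which is moreover even --- and set $f_{0} := \eta g$ for a small parameter $\eta \in (0,1]$. In the non-dissipative case, put $\bgtht := f_{0}(x_{2})$: a one-line computation shows $\nb^{\perp} \Gmm \bgtht$ points in the $x_{1}$-direction while $\nb \bgtht$ points in the $x_{2}$-direction, so $\bgtht$ is a (smooth, bounded) shear steady state of \eqref{eq:ssqg}, and it is quadratically degenerate at $\dgx_{2} = 0$ since $f_{0}'(0) = 0$ and $f_{0}''(0) = \eta g''(0) \neq 0$. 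In the dissipative case, let $\bgtht := f(t, x_{2})$ solve \eqref{eq:ssqg-diss-shear} with $f(0,\cdot) = f_{0}$; as the symbol of $\Ups$ is positive, $f(t,\cdot) = e^{-t\kpp\Ups} f_{0}$ is smooth, bounded, even, with $f_{0}''(0) \neq 0$, so $\bgtht$ meets all hypotheses of Theorems~\ref{thm:norm-growth-diss} and \ref{thm:nonlin-norm-infl}. In either case $\sup_{t \in [0,\dlt]} \bigl( \nrm{\bgtht(t,\cdot)}_{H^{s}} + \nrm{\bgtht(t,\cdot)}_{H^{s'}} \bigr) \le \eta \bigl( \nrm{g}_{H^{s}} + \nrm{g}_{H^{s'}} \bigr)$, so choosing $\eta$ small makes this $< \min\{\eps/2, 1\}$.

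With $\bgtht$ so fixed, the remaining step is bookkeeping between the two formulations of ill-posedness. Given $\eps, \dlt, A > 0$, apply Theorem~\ref{thm:nonlin-norm-infl} with parameters $(\eps/2, \dlt, A+1)$ to produce $\tht_{0} \in C^{\infty}_{c}(\Omg)$ with $\nrm{\tht_{0}}_{H^{s}} < \eps/2$, and set $\tld{\tht}_{0} := \rst{\bgtht}_{t=0} + \tht_{0} \in C^{\infty}_{c}(\Omg)$, so $\nrm{\tld{\tht}_{0}}_{H^{s}} < \eps$. If there is a solution $\tht \in L^{\infty}([0,\dlt]; H^{s'})$ of \eqref{eq:ssqg} (resp.~\eqref{eq:ssqg-diss}) with $\rst{\tht}_{t=0} = \tld{\tht}_{0}$, then since $\bgtht \in L^{\infty}([0,\dlt]; H^{s'})$ we have $\tht \in \bgtht + L^{\infty}([0,\dlt]; H^{s'})$ with perturbation data $\tht_{0}$, so Theorem~\ref{thm:nonlin-norm-infl} forces $\sup_{t \in [0,\dlt]} \nrm{(\tht - \bgtht)(t,\cdot)}_{H^{s'}} > A + 1$, whence $\sup_{t \in [0,\dlt]} \nrm{\tht(t,\cdot)}_{H^{s'}} > A + 1 - \sup_{t} \nrm{\bgtht(t,\cdot)}_{H^{s'}} > A$. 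Thus either no $L^{\infty}_{t}([0,\dlt], H^{s'})$ solution with data $\tld{\tht}_{0}$ exists, or every one grows past $A$ --- exactly the claimed ill-posedness with $\bgtht = 0$.

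I do not expect a genuine analytic obstacle here; all the real work is in Theorem~\ref{thm:nonlin-norm-infl}. The two points that need a little care are (i) checking that the scaled profile is a \emph{legitimate} background for that theorem --- in particular that a compactly supported function of $x_{2}$ really is a steady state of \eqref{eq:ssqg}, and that in the dissipative case the evenness of $f_{0}$ (inherited by $f(t,\cdot)$) keeps the degenerate point fixed at $0$ --- and (ii) the small bookkeeping above, which goes through only because $\bgtht$ is chosen small in both $H^{s}$ and $H^{s'}$, so that ``perturbation data small / perturbation grows'' translates into ``data small / solution grows''.
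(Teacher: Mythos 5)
Your proposal is correct and matches the paper's own argument, which is exactly the one-line observation that the background shear state can be chosen with arbitrarily small $H^{s}\cap H^{s'}$ norm (instability by degenerate dispersion being a property of $\gmm$, resp.\ $\gmm,\kpp\ups$, alone) and then Theorem~\ref{thm:nonlin-norm-infl} is applied, with the triangle inequality absorbing the small background. Your write-up simply makes explicit the scaling of the profile and the bookkeeping with $(\eps/2,\dlt,A+1)$, which the paper leaves implicit.
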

{
\begin{remark}
	While we shall refrain from giving details, we note that the assumption $s' > 3+ {\frac{3}{2}}\beta_0$ can be lowered (up to $s'>2$) when $\Gmm$ becomes less singular. The issue is to control the nonlinearity in $L^{2}$ using the hypothesis that $\tht \in H^{s'}$, and more precisely, to have an estimate of the form \begin{equation*}
		\begin{split}
			\nrm{ \nb^\perp \Gmm[\tht] \cdot \nb \tht }_{L^{2}} \le C(\nrm{\tht}_{H^{s'}}) \nrm{ \tht }_{L^2}. 
		\end{split}
	\end{equation*} When $\Gmm = \Lmb^{\bt}$ (which means $\beta_0 = \bt$), using Sobolev inequalities we have that for any $s>2$, \begin{equation*}
		\begin{split}
			\nrm{ \nb^\perp \Lmb^\bt[\tht] \cdot \nb \tht }_{L^{2}} \lesssim \nrm{\nb\tht}_{L^{\infty}} \nrm{  \nb^\perp \Lmb^\bt[\tht] }_{L^{2}} \lesssim \nrm{\tht}_{L^{2}}^{ 2 - \frac{2+\bt}{s'} } \nrm{\tht}_{H^{s'}}^{ \frac{2+\bt}{s'} }
		\end{split}
	\end{equation*} and hence we see that the requirement is precisely $s' \ge 2+\bt$. 
\end{remark}
}

By Corollary~\ref{cor:norm-growth-all}, it follows that \emph{any \eqref{eq:ssqg} satisfying our assumptions for $\gmm$ is nonlinearly {$H^{s} \to H^{s'}$} ill-posed for {$s' > 3+ {\frac{3}{2}} \beta_0$}.} Moreover, Corollaries~\ref{cor:norm-growth-ex} and \ref{cor:norm-growth-diss-ex} provide ranges of $(s, s')$ for which \eqref{eq:ssqg} and \eqref{eq:ssqg-diss}, respectively, are nonlinearly {$H^{s} \to H^{s'}$} ill-posed for some model cases of $\gmm$ and $\kpp \ups$.

\smallskip 

{When $\Omg = \bbT \times \bbR$ and $s = s'$ sufficiently large, we may furthermore exhibit an $H^{s}(\Omg)$ initial data corresponding to which no $L^{\infty} H^{s}$ solution to \eqref{eq:ssqg} exists on any time interval. }

\begin{maintheorem}[Nonexistence] \label{thm:nonexist}
	Let $\Omg = \bbT \times \bbR$. For any $s>\frac{3}{2}\beta_{0}+3$ and $\eps>0$, there exists $\tht_{0} \in H^{s}(\Omg)$ satisfying $\nrm{\tht_{0}}_{H^{s}} <\eps$ such that for any $\dlt>0$, there is no solution to \eqref{eq:ssqg} belonging to $L^\infty([0,\dlt];H^{s}(\Omg))$ with initial data $\tht_{0}$. 
\end{maintheorem}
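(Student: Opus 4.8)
The plan is to promote the nonlinear norm-inflation statement (Corollary~\ref{cor:nonlin-norm-infl-0}) to nonexistence by a superposition argument, exploiting in an essential way that $\Omg = \bbT \times \bbR$ has an unbounded factor along which infinitely many mutually disjoint ``bad data'' blocks can be arranged. First, since $s = s' > 3 + \tfrac{3}{2}\bt_{0}$ forces (via Corollary~\ref{cor:norm-growth-all}) the system \eqref{eq:ssqg} to be linearly $H^{s}$-$H^{s}$ unstable by degenerate dispersion, I would apply Corollary~\ref{cor:nonlin-norm-infl-0} with $(\eps 2^{-n}, \tfrac1n, n)$ in place of $(\eps, \dlt, A)$ to obtain, for each $n \in \bbN$, data $\eta_{n} \in C^{\infty}_{c}(\Omg)$ with $\nrm{\eta_{n}}_{H^{s}} < \eps 2^{-n}$ and $\supp \eta_{n} \subseteq B(0, r_{0})$ for a fixed radius $r_{0}$ (the degenerate-shear part of the data has a fixed scale and the wave-packet part a shrinking one). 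Re-examining the proof of Corollary~\ref{cor:nonlin-norm-infl-0} -- a linear degenerating wave packet from Theorem~\ref{thm:norm-growth} fed into an energy-type forcing argument, both of which are stable under small perturbations of the data and of the advecting velocity (cf.\ the $L^{2}$-robustness remark after Theorem~\ref{thm:norm-growth}) -- one extracts a refined statement: there is $\rho_{n} > 0$ such that \emph{no} $\tht \in L^{\infty}([0,\tfrac1n]; H^{s})$ with $\sup_{t} \nrm{\tht(t)}_{H^{s}} \leq n$ can solve the velocity-perturbed equation $\rd_{t}\tht + (\nb^{\perp}\Gmm\tht + w) \cdot \nb\tht = 0$ with $\rst{\tht}_{t=0} = \eta_{n}$ whenever $\nrm{w}_{L^{\infty}_{t} C^{1}(\Omg)} \leq \rho_{n}$. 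I would then fix a sequence $a_{n} \uparrow \infty$ whose gaps $d_{n} := \min(a_{n} - a_{n-1}, a_{n+1} - a_{n})$ grow rapidly -- fast enough that $d_{n} \to \infty$ and $d_{n}^{-N} \leq \rho_{n}$, where $N = N(\bt_{0})$ is a fixed exponent used below -- and set $\eta_{n}^{a}(x) := \eta_{n}(x_{1}, x_{2} - a_{n})$ and $\tht_{0} := \sum_{n} \eta_{n}^{a}$. For $n$ large the supports are pairwise disjoint, so $\tht_{0} \in H^{s}(\Omg)$ with $\nrm{\tht_{0}}_{H^{s}}^{2} = \sum_{n} \nrm{\eta_{n}}_{H^{s}}^{2} < \eps^{2}$; and since \eqref{eq:ssqg} is invariant under $x_{2}$-translation, each $\eta_{n}^{a}$ still obeys the refined statement above.

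Next, I would argue by contradiction: suppose that for some $\dlt > 0$ there is a solution $\tht \in L^{\infty}([0,\dlt]; H^{s}(\Omg))$ of \eqref{eq:ssqg} with $\rst{\tht}_{t=0} = \tht_{0}$, and set $K := \nrm{\tht}_{L^{\infty}_{t} H^{s}} < \infty$. Since $s > \bt_{0} + 3$, the velocity $u = \nb^{\perp}\Gmm\tht$ lies in $L^{\infty}_{t} C^{1}$ with $\nrm{u}_{L^{\infty}_{t} C^{1}} \aleq_{\Gmm} K$, so $\tht$ is transported by the ($C^{1}$, uniformly bi-Lipschitz, volume-preserving) flow of $u$; in particular the $\tht$-mass originating in any block moves at speed $\aleq_{\Gmm} K$, and $\nrm{\tht(t)}_{L^{1}} = \nrm{\tht_{0}}_{L^{1}} \aleq \eps$. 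Moreover, Assumption~1 implies that the kernel of $\nb^{\perp}\Gmm$ decays like $\abs{x - y}^{-N}$ for $\abs{x - y} \geq 1$, with the fixed $N = N(\bt_{0})$. I would then fix $n$ so large that $\tfrac1n < \dlt$, that $C K < n$ ($C$ an absolute constant bounding multiplication by the cutoff below on $H^{s}$), and that $C_{\Gmm} K / n \ll d_{n}$; these hold once $n$ exceeds a threshold depending only on $K, \dlt$ and $\Gmm$. On $[0, \tfrac1n]$ the block-$n$ mass then stays in a neighborhood $V_{n} \supseteq B(a_{n} e_{2}, r_{0})$ disjoint from the masses of every other block, and choosing $\chi_{n} \equiv 1$ near $\overline{V_{n}}$ with $\supp \chi_{n}$ still avoiding the other masses, one checks that $\tld v_{n} := \chi_{n} \tht$ solves $\rd_{t}\tld v_{n} + u \cdot \nb \tld v_{n} = 0$ exactly -- equivalently $\rd_{t}\tld v_{n} + (\nb^{\perp}\Gmm\tld v_{n} + \tld w_{n}) \cdot \nb\tld v_{n} = 0$ with $\tld w_{n} := \nb^{\perp}\Gmm(\tht - \tld v_{n})$ -- since $\tht$ vanishes on $\supp \nb\chi_{n}$. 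Because $\tht - \tld v_{n}$ is supported at distance $\ageq d_{n}$ from $\supp \nb\tld v_{n} \subseteq \overline{V_{n}}$, the off-diagonal decay gives $\nrm{\tld w_{n}}_{L^{\infty}_{t} C^{1}(\supp \nb\tld v_{n})} \aleq_{N} d_{n}^{-N} \eps \leq \rho_{n}$, and after harmlessly modifying $\tld w_{n}$ away from $\supp \nb\tld v_{n}$ we may take $\nrm{\tld w_{n}}_{L^{\infty}_{t} C^{1}(\Omg)} \leq \rho_{n}$. Translating back by $-a_{n} e_{2}$ yields $v_{n} \in L^{\infty}([0,\tfrac1n]; H^{s})$ with $\rst{v_{n}}_{t=0} = \eta_{n}$, $\sup_{t} \nrm{v_{n}(t)}_{H^{s}} \leq C K < n$, solving the $w_{n}$-perturbed equation with $\nrm{w_{n}}_{L^{\infty}_{t} C^{1}(\Omg)} \leq \rho_{n}$ -- exactly what the refined statement forbids. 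This contradiction shows that no solution in $L^{\infty}([0,\dlt]; H^{s}(\Omg))$ exists, for any $\dlt > 0$.

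The crux, and the main obstacle, is the reduction in the second paragraph: isolating the effectively single-block dynamics near the $n$-th block out of the globally coupled nonlocal flow. It rests on two structural facts -- the finite speed of propagation of the scalar transport, which is exactly why the hypothesis $s > \bt_{0} + 3$ (ensuring $u \in C^{1}$) is natural, and the rapid off-diagonal decay of the kernel of $\Gmm$ forced by the slow-variation Assumption~1, which renders the far blocks essentially invisible near block $n$ -- and, most delicately, on a robustness statement for Corollary~\ref{cor:nonlin-norm-infl-0} under an $O(\rho_{n})$-small $C^{1}$ perturbation of the advecting velocity, which has to be read off from the energy-based proof rather than quoted verbatim. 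A final bookkeeping point is that a single index $n$, chosen only \emph{after} $K$ is known, must meet all of the smallness requirements simultaneously; this is accommodated by taking the inter-block gaps $d_{n}$ to grow sufficiently fast relative to the prefixed $\rho_{n}$ in the construction of $\tht_{0}$.
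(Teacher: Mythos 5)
Your strategy is genuinely different from the paper's, and its skeleton (widely separated blocks along the unbounded $x_{2}$-direction, kernel decay of $\Gmm$ to decouple them, per-block inflation beating a uniform $H^{s}$ bound) matches the paper's in spirit; but as written it has a real gap at exactly the point you flag as "delicate." The entire reduction rests on the ``refined statement'' that the conclusion of Corollary~\ref{cor:nonlin-norm-infl-0} survives an external advection term $w\cdot\nb$ with $\nrm{w}_{L^{\infty}_{t}C^{1}}\leq\rho_{n}$, and this is not something you can read off from the quoted results: the remark after Theorem~\ref{thm:norm-growth} concerns $L^{2}$-perturbations of the \emph{initial data}, not perturbations of the operator. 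To prove it you must rerun the duality argument of Section~6.3 for $\rd_{t}\tht+(\nb^{\perp}\Gmm\tht+w)\cdot\nb\tht=0$, and several nontrivial issues appear: (i) the shear profile inside $\eta_{n}$ is no longer an exact solution of the perturbed equation, so $\varphi^{\star}=\Gmm^{\frac12}(\tht-\bgtht)$ picks up a forcing term $\Gmm^{\frac12}(w\cdot\nb\bgtht)$ and the linearized operator acquires $\Gmm^{\frac12}(w\cdot\nb\,\Gmm^{-\frac12}\cdot)$; (ii) in the pairing with the wave packet the transport part costs $\abs{\brk{w\cdot\nb\varphi^{\star},\tld{\varphi}}}\aleq\nrm{w}_{L^{\infty}}\nrm{\varphi^{\star}}_{L^{2}}\nrm{\nb\tld{\varphi}}_{L^{2}}\aleq\nrm{w}_{L^{\infty}}\lmb(t)$, so the admissible $\rho_{n}$ must shrink like $(M_{n}\lmb_{0,n}\tau_{M_{n}})^{-1}$ — harmless, but it means $\rho_{n}$ is frequency-dependent and must be fixed before the gaps $d_{n}$, which you should state; and (iii) $C^{1}$ smallness of $w$ is not enough for the commutators of type $[\Gmm^{\frac12},w\cdot\nb]\Gmm^{-\frac12}$ (cf.\ Proposition~\ref{prop:bdd-comm}, which needs weighted $L^{1}_{\xi}$ control of $\hat{w}$); you need smallness in a stronger norm, which the off-diagonal kernel decay does supply but which must be built into the statement. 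None of these looks fatal, but together they amount to reproving a perturbed version of Theorem~\ref{thm:nonlin-norm-infl}, not quoting it.

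The paper avoids this entirely by a different decoupling: it puts \emph{all} the bumps into a single background $\bgtht(x_{2})=\sum_{k}a_{k}f_{0}(x_{2}-y_{k})$, which is an exact shear steady state of \eqref{eq:ssqg} (so no perturbed equation ever arises), and takes for each $k$ the degenerating wave packet adapted to $\calL_{\bgtht}$ near the $k$-th bump, at frequency $\lmb_{0,k}$. The inter-block coupling is then handled \emph{inside} the testing argument against the full solution: a localized energy estimate with cutoffs $\chi_{k}$ (using commutator bounds for $\Gmm^{\pm\frac12}$ with the cutoff, and choosing $y_{k}$ large relative to the block-$k$ data size) and the unit-scale-localized error bound \eqref{eq:wp-error-loc} to control $\brk{\varphi^{\star},(1-\chi_{k})\calL_{\bgtht}\tld{\varphi}_{k}}$. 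The contradiction is then the same as yours: the local $H^{s}$ norm near block $k$ at time $a_{k}^{-1}\tau_{k}$ is forced to be $\ageq a_{k}M_{k}^{s-\bt_{0}}\to\infty$, against the assumed uniform bound. So your route buys a clean modular statement (single-block illposedness, stable under small external drift) at the price of proving that stability; the paper's route buys the fact that only the already-established linear wave-packet estimates are needed, at the price of doing the localization inside the duality functional. If you want to complete your version, the missing step is precisely a velocity-perturbed analogue of Theorem~\ref{thm:nonlin-norm-infl}, with $\rho_{n}$ quantified in terms of $\lmb_{0,n},M_{n}$ and with $w$ measured in a norm strong enough for the $\Gmm^{\pm\frac12}$ commutators.
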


\medskip

\noindent\textbf{Result {from} \cite{CJNO}: Nonlinear well(!)posedness of the logarithmically singular case with loss of regularity.}
Through Corollary~\ref{cor:norm-growth-ex} and Corollary~\ref{cor:nonlin-norm-infl-0}, we concluded nonlinear $H^{s}$-$H^{s'}$ illposedness of \eqref{eq:ssqg} for $s'$ strictly smaller than $s$ when, say, $\gmm(\xi) = \brk{\xi}^{\bt}$ for $\bt > 0$. However, this conclusion did \emph{not} apply to multipliers with slower growth, e.g., $\gmm(\xi) = \log (10 + \abs{\xi})$. This difference is no shortcoming of our approach. In the paper \cite{CJNO}, we actually obtain local well(!)posedness of \eqref{eq:ssqg} with $\gmm(\xi) = \log (10 + \abs{\xi})$ in Sobolev spaces with exponents that \emph{decrease in time}. As a result, this system is \emph{not} nonlinearly $H^{s}$-$H^{s'}$ ill-posed for any $s' < s$; in the opposite direction, Corollaries~\ref{cor:norm-growth-ex} and \ref{cor:nonlin-norm-infl-0} demonstrate that a decrease of the Sobolev exponent in time is inevitable.

\begin{maintheorem}[Wellposedness in the logarithmically singular system \cite{CJNO}] \label{thm:wp-log}
	 Consider the logarithmically singular SQG equation, possibly with dissipation: \begin{equation}\label{eq:sqg-log}
	 \begin{split}
	 \rd_t\tht + u\cdot\nb\tht+ \kappa\Upsilon(\tht)  = 0,\\
	 u = \nb^\perp \log(10+\Lmb)\tht.
	 \end{split}
	 \end{equation}
	 
	 In the inviscid case $(\kpp=0)$, for any $s_0>4$ and $\tht_0\in H^{s_0}$, there exists some $T = T(s_0,\nrm{\tht_0}_{H^{s_0}})>0$ such that there is a solution $\tht\in C([0,T];H^4)$ to \eqref{eq:sqg-log} with initial data $\tht_0$ satisfying \begin{equation*}
	 \begin{split}
	 \nrm{\tht(t,\cdot)}_{H^{s(t)}} \le C\nrm{\tht_0}_{H^{s_0}}
	 \end{split}
	 \end{equation*} for some continuous function $s(t)>4$ of $t$ with $s(0)=s_0$ in $t\in[0,T]$. The solution is unique in the class $C([0,T];H^4)$. 
	 
	 Furthermore, the dissipative system $(\kpp>0)$ is locally well-posed in $C([0,T];H^{s})$ for any $s>4$, as long as there exists some $\Xi_0>0$ such that \begin{equation*}
	 \begin{split}
	 \frac{\upsilon(|\xi|)}{\log(10+|\xi|)} \ge \frac{C_s}{\kappa}\nrm{\tht_0}_{H^s},\quad |\xi|>\Xi_0 
	 \end{split}
	 \end{equation*} for some $C_s>0$ depending only on $s$.
\end{maintheorem}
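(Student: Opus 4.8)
\textbf{Proof proposal for Theorem~\ref{thm:wp-log}.}
The plan is to run a high-regularity energy estimate that exploits the divergence-free structure of the velocity $u = \nb^{\perp}\Gmm\tht$ and the self-adjointness of $\Gmm = \log(10+\Lmb)$, and to absorb the (genuinely present) logarithmic loss by letting the Sobolev exponent decrease in time. Since $\nb\cdot u = \nb\cdot\nb^{\perp}\Gmm\tht = 0$, one has $\langle u\cdot\nb g, g\rangle = 0$ for every scalar $g$; hence, with $g = \Lmb^{s}\tht$,
\begin{equation*}
	\tfrac{1}{2}\tfrac{\ud}{\ud t}\nrm{\Lmb^{s}\tht}_{L^{2}}^{2} = -\langle [\Lmb^{s}, u\cdot\nb]\tht, g\rangle = -\langle [\Lmb^{s}, u]\cdot\nb\tht, g\rangle,
\end{equation*}
so the whole estimate hinges on this commutator, in which $u$ is one derivative plus a logarithm rougher than $\tht$. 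Using the identity $\nb^{\perp}a\cdot\nb b = -\nb^{\perp}b\cdot\nb a$ and the factorization $\Gmm = \Gmm^{1/2}\Gmm^{1/2}$, one invokes the divergence-free structure to see that the principal contribution collapses, leaving: (i) commutator terms each controlled by $\nrm{\tht}_{H^{s_{1}}}\nrm{\tht}_{H^{s}}^{2}$ with $s_{1}$ slightly above $3$, where the extra derivative and logarithm carried by $u$ are absorbed by Sobolev embedding once $s > 4$; and (ii) one \emph{irreducible} term of size $C\nrm{\nb^{2}\tht}_{L^{\infty}}\nrm{\Lmb^{s}\tht}_{L^{2}}\nrm{(\log\Lmb)^{\frac{1}{2}}\Lmb^{s}\tht}_{L^{2}}$, arising from commuting $\Gmm^{1/2}$ past the transport operator. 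As $\nrm{(\log\Lmb)^{\frac{1}{2}}\Lmb^{s}\tht}_{L^{2}}$ is strictly stronger than $\nrm{\tht}_{H^{s}}$, term (ii) is exactly the obstruction to closing at a fixed regularity --- and, by Corollaries~\ref{cor:norm-growth-ex}--\ref{cor:nonlin-norm-infl-0}, some such loss is unavoidable.

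To absorb (ii), run the estimate at a time-dependent exponent $s(t)$ with energy $E(t) = \langle\Lmb^{2s(t)}\tht,\tht\rangle$, so that $\tfrac{\ud}{\ud t}E = 2\dot s(t)\nrm{(\log\Lmb)^{\frac{1}{2}}\Lmb^{s(t)}\tht}_{L^{2}}^{2} + (\hbox{energy terms})$; choosing $\dot s(t) = -c\nrm{\tht(t,\cdot)}_{H^{4}}$ with $c$ fixed and large enough that $c\nrm{\tht}_{H^{4}} \geq C\nrm{\nb^{2}\tht}_{L^{\infty}}$ (possible since $\nrm{\nb^{2}\tht}_{L^{\infty}}\aleq\nrm{\tht}_{H^{4}}$), Young's inequality lets the $\dot s$-sink absorb term (ii), leaving $\tfrac{\ud}{\ud t}E \aleq \nrm{\tht}_{H^{s_{1}}}\nrm{\tht}_{H^{s}}^{2}\aleq E^{3/2}$ (using $s_{1} < 4 < s(t)$). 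Hence $\sup_{[0,T]}E \aleq E(0) = \nrm{\tht_{0}}_{H^{s_{0}}}^{2}$ on an interval of length $T\aeq\nrm{\tht_{0}}_{H^{s_{0}}}^{-1}$, on which $s(t) = s_{0} - c\int_{0}^{t}\nrm{\tht(t',\cdot)}_{H^{4}}\,\ud t'$ stays above $4$ once $T$ is chosen slightly smaller still; this is the asserted continuous profile with $s(0) = s_{0}$ and $\nrm{\tht(t,\cdot)}_{H^{s(t)}} \leq C\nrm{\tht_{0}}_{H^{s_{0}}}$. Producing an actual solution is then routine: regularize (frequency truncation, or artificial viscosity $\eps\lap$), derive the same estimate uniformly in $\eps$ (the argument is stable under self-adjoint Fourier mollification), and pass to the limit by Aubin--Lions (with $\rd_{t}\tht_{\eps}$ bounded in $H^{2}$, say). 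For uniqueness in $C([0,T];H^{4})$, the difference $w = \tht_{1} - \tht_{2}$ solves $\rd_{t}w + u_{1}\cdot\nb w + (\nb^{\perp}\Gmm w)\cdot\nb\tht_{2} = 0$; the $L^{2}$ estimate, using $\nb\cdot u_{1} = 0$ and the same antisymmetric cancellation for the last term, gives $\tfrac{\ud}{\ud t}\nrm{w}_{L^{2}}^{2}\aleq \nrm{\nb^{2}\tht_{2}}_{L^{\infty}}\nrm{w}_{L^{2}}\nrm{(\log\Lmb)^{\frac{1}{2}}w}_{L^{2}}$, and the logarithmic interpolation bound $\nrm{(\log\Lmb)^{\frac{1}{2}}w}_{L^{2}}^{2}\aleq\nrm{w}_{L^{2}}^{2}\log\!\big(e + \nrm{w}_{H^{1}}^{2}\nrm{w}_{L^{2}}^{-2}\big)$ (with $\nrm{w}_{H^{1}}$ bounded) turns this into an Osgood inequality $\dot y\aleq C(t)\,y\sqrt{\log(e + Ky^{-1})}$, forcing $w\equiv 0$.

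In the dissipative case the energy estimate gains $-2\kpp\nrm{\Ups^{1/2}\Lmb^{s}\tht}_{L^{2}}^{2}$, and the hypothesis $\ups(\abs{\xi}) \geq (C_{s}/\kpp)\nrm{\tht_{0}}_{H^{s}}\log(10+\abs{\xi})$ for $\abs{\xi} > \Xi_{0}$ gives $\kpp\nrm{\Ups^{1/2}\Lmb^{s}\tht}_{L^{2}}^{2} \geq C_{s}\nrm{\tht_{0}}_{H^{s}}\nrm{(\log\Lmb)^{1/2}\Lmb^{s}\tht}_{L^{2}}^{2}$ up to a harmless contribution from $\abs{\xi}\leq\Xi_{0}$. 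This dissipative sink plays exactly the role of the $2\dot s$-sink above; by Young's inequality it absorbs the irreducible term (ii) as long as $C_{s}\nrm{\tht_{0}}_{H^{s}}\ageq\nrm{\nb^{2}\tht(t,\cdot)}_{L^{\infty}}$, which holds on a short interval by a continuity/bootstrap argument since $\nrm{\nb^{2}\tht}_{L^{\infty}}\aleq\nrm{\tht}_{H^{4}}\aleq\nrm{\tht}_{H^{s}}$; the estimate now closes at \emph{fixed} $s$. Construction and uniqueness proceed as before (uniqueness now immediate in $L^{2}$, the dissipation directly dominating the $(\log\Lmb)^{1/2}$ factor), yielding local well-posedness in $C([0,T];H^{s})$.

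The main obstacle is the commutator analysis behind the a priori estimate: one must show, after using the divergence-free/antisymmetric structure and the self-adjointness of $\Gmm$, that \emph{every} resulting term other than (ii) is genuinely bounded by $\nrm{\tht}_{H^{s_{1}}}\nrm{\tht}_{H^{s}}^{2}$ with $s_{1}$ well below $s$ --- which is where the threshold $s > 4$ enters, through the Sobolev embeddings needed to handle the velocity --- and that (ii) carries \emph{exactly} one $(\log\Lmb)^{1/2}$ on each of two copies of $\Lmb^{s}\tht$ (equivalently, one full $\log\Lmb$), no worse. Getting this logarithmic bookkeeping sharp is the crux: a loss of $(\log\Lmb)^{1+\veps}$ would not be absorbable by the $\dot s$- or dissipative sink, and would in fact contradict the illposedness theorems of this paper.
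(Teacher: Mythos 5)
A preliminary structural point: this paper does not actually prove Theorem~\ref{thm:wp-log} — it is imported from the companion paper \cite{CJNO}, and the only in-paper indication of the mechanism is the toy-model computation in Section~\ref{subsec:toy}, which suggests a losing estimate in time-dependent Sobolev spaces $H^{s(t)}$. So there is no in-paper proof to compare you against; judged against the strategy of \cite{CJNO}, your outline is the right one: an $H^{s(t)}$ energy estimate in which the sink $2\dot s\,\nrm{(\log\Lmb)^{1/2}\Lmb^{s(t)}\tht}_{L^{2}}^{2}$ (inviscid case) or the dissipation $\kpp\Ups$ with $\ups \ageq \log$ (viscous case) absorbs the logarithmic loss, plus a standard regularization/compactness construction and an $L^{2}$ difference estimate for uniqueness.

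The genuine gap is the commutator estimate that you yourself flag as the crux, and your two statements of it are not even mutually consistent: in the displayed term (ii) the loss is $\nrm{\nb^{2}\tht}_{L^{\infty}}\nrm{\Lmb^{s}\tht}_{L^{2}}\nrm{(\log\Lmb)^{1/2}\Lmb^{s}\tht}_{L^{2}}$ (half a logarithm in total), while your closing paragraph says $(\log\Lmb)^{1/2}$ on each copy (one full logarithm). More importantly, the naive estimate produces neither form. The antisymmetrized top-order term $\tfrac12\brk{[\nb\tht\cdot\nb^{\perp},\Gmm]g,g}$ is in fact harmless: the Poisson bracket of $\nb\tht(x)\cdot i\xi^{\perp}$ with $\log(10+\abs{\xi})$ is of order zero, so no logarithm is lost there (with operator norm controlled by a few derivatives of $\tht$). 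The loss comes from the intermediate terms, e.g.\ $\brk{\rd^{m-1}u\cdot\nb\rd\tht,\rd^{m}\tht}$ with $\rd^{m-1}u \sim \nb^{\perp}\Gmm\rd^{m-1}\tht$, whose crude bound is $\nrm{\nb^{2}\tht}_{L^{\infty}}\nrm{(\log\Lmb)\Lmb^{s}\tht}_{L^{2}}\nrm{\Lmb^{s}\tht}_{L^{2}}$ — a full logarithm sitting on one copy. This quantity is in general strictly larger than $\nrm{(\log\Lmb)^{1/2}\Lmb^{s}\tht}_{L^{2}}^{2}$ (Cauchy--Schwarz gives the comparison in the opposite direction), so neither the $\dot s$-sink nor the dissipative sink can absorb it by Young's inequality as written. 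Closing the scheme requires redistributing the logarithm between the two high-frequency factors: in a Littlewood--Paley/paraproduct decomposition the piece where $\nb^{2}\tht$ carries the low frequency forces the other two factors to comparable frequencies, so $\log$ of one is comparable to $(\log)^{1/2}$ on each, yielding the absorbable form $\nrm{\tht}_{H^{s_{1}}}\nrm{(\log\Lmb)^{1/2}\Lmb^{s}\tht}_{L^{2}}^{2}$. This frequency-localized bookkeeping is precisely the content of the key estimate in \cite{CJNO} and is absent from your proposal. Two smaller remarks: with such a commutator bound, your uniqueness step does not need Osgood — the same antisymmetrization shows $[\nb^{\perp}\tht_{2}\cdot\nb,\Gmm]$ is $L^{2}$-bounded (by a few derivatives of $\tht_{2}\in H^{4}$), so plain Gr\"onwall in $L^{2}$ suffices; and the claimed continuity $\tht\in C([0,T];H^{4})$ of the limit deserves a sentence (uniform $H^{s(t)}$ bounds with $s(t)>4$ plus weak continuity and interpolation), though it is routine.
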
 

\begin{remark}
	A similar wellposedness result can be proved for $\Gmm = \log^{\bt}(10+\Lmb)$ with any $\bt\le1$.
\end{remark}

{\begin{remark}
	The nonexistence theorem (Theorem~\ref{thm:nonexist}), when combined with Theorem~\ref{thm:wp-log}, shows that for logarithmic $\Gmm$, the local solution with $\tht_0\in H^s$ for some $s>4$ in general instantaneously escapes $H^s$ for $t>0$.
\end{remark}}

% Notes on smallness parameters (to be removed): 
%\begin{itemize}
%	\item $\dlt_0$: largest smallest parameter
%	\item $\dlt_{1}$: second largest smallest parameter, only used in Theorem~\ref{thm:norm-growth-diss}
%	\item $\dlt_{2}$: $\lmb_{q}= \lmb^{1-\dlt_{2}}$, suffices to take $\dlt_{2} = \dlt_0^{10}$. 
%	\item $N$: large so that $N>1 + \frac{\beta_0}{\dlt_{2}}$.
%	\item $\dlt_{3}$: appears in \eqref{eq:gf-condition-diss-2}, and then $\mu := \frac{\lmb_0^{2\dlt_{3} N}}{x_0} \frac{\gmm_{\lmb_{0}}(\lmb)}{\gmm_{\lmb_{0}}(\lmb_0)} \frac{\gmm_{\lmb_{0}}'(\lmb_0)}{\gmm_{\lmb_{0}}'(\lmb)}$. We fix $\dlt_{3} = 10 \frac{\dlt_0^{100}}{1+\beta_0}$.
%	\item $\dlt_{4}$: appears in the integral of $[\rd_{x},\nb_t]$ so that $\exp(I) \simeq \frac{\gmm(\lmb)}{\gmm(\lmb_{0})} \frac{\gmm'(\lmb)}{\gmm'(\lmb)} (\frac{\lmb}{\lmb_{0}})^{\dlt_{4}}$. Small so that $\dlt_{4} \le \frac{\dlt_{3}\dlt_0}{10N}$.
%	\item $\dlt_{5}$: the size of $h$, need $\dlt_{4} \ge  C\dlt_{5}$ with some $C$ depending on $f,\gmm$. 
%	\item $\Lmb_0$: large in a way depending on all previous parameters 
%\end{itemize}

\subsection{Explicitly solvable toy model and discussion of difficulties}\label{subsec:toy}
Let us present a toy model \eqref{eq:sqg-lin-model-fourier} which clearly demonstrates degeneration of linear solutions. To arrive at the toy model, we may start from the nonlinear equation \begin{equation*}
\begin{split}
\rd_t\tht + \nb^\perp\Gmm(\tht)\cdot\nb\tht = 0, 
\end{split}
\end{equation*} and consider the linearization around the steady state $\bgtht(x_1,x_2) = - \frac{x_2^2}{2}$, under the \textit{formal} assumption that $\nabla^\perp\Gmm (x_2^2) \equiv 0$. The resulting equation for the perturbation, which is again denoted by $\tht$, is simply   \begin{equation}\label{eq:sqg-lin-model}
\begin{split}
\rd_t\tht + x_2 \rd_{x_1} \Gmm(\tht)= 0 . 
\end{split}
\end{equation} We may separate $x_1$-dependence under the ansatz \begin{equation*}
\begin{split}
\tht^{(\lmb_0)} (t,x_1,x_2) = e^{i\lmb_0 x_1} \varphi(t,x_2)
\end{split}
\end{equation*} for some $\lmb_0$. Assume for simplicity that the multiplier $\gmm$ for $\Gmm$ is radial. Denoting the dual variable of $x_2$ by $\xi$ and taking the Fourier transform, we have with $\Lmb(\xi) :=  \sqrt{\lmb_0^2+\xi^2}$ that  \begin{equation}\label{eq:sqg-lin-model-fourier}
\begin{split}
\lmb_0^{-1}\rd_t \widehat{\varphi} (\xi) + \gmm(\Lmb) \rd_\xi \widehat{\varphi} (\xi) =  - (\rd_\xi\gmm)(\Lmb) \frac{\xi}{\Lmb} \widehat{\varphi}(\xi). 
\end{split}
\end{equation}  This is simply a transport equation in $\xi$, which can be explicitly solved along the characteristics: we may define the trajectories $\xi(t;\xi_0)$ by \begin{equation}\label{eq:ODE-toy}
\left\{
\begin{aligned} 
\dot{\xi}(t;\xi_0) &= \lmb_0 \gmm( \Lmb( \xi(t;\xi_0) ) ), \\
\xi(0;\xi_0) & = \xi_0.
\end{aligned}
\right.
\end{equation} The solution can be written by \begin{equation}\label{eq:sol-toy}
\begin{split}
	\widehat{\varphi} (t,\xi(t;\xi_0)) = \frac{\gmm(\Lmb(\xi_0))}{\gmm(\Lmb(\xi(t;\xi_0)))} \widehat{\varphi}_0 (\xi_0) . 
\end{split}
\end{equation} The Jacobian of the flow map $\xi_0 \mapsto \xi(t;\xi_0)$ is given simply by \begin{equation*}
\begin{split}
	\frac{\gmm(\Lmb(\xi(t;\xi_0)))}{\gmm(\Lmb_0)}. 
\end{split}
\end{equation*} In the following, we shall take initial data $\widehat{\varphi}_0$ which is sharply concentrated near $\xi_0 \simeq \lmb_0$ and introduce simplifying notation $\Lmb(t) := \Lmb(\xi(t;\lmb_0))$, $\Lmb_0 := \sqrt{2}\lmb_0$. Then, we have that \begin{equation*}
\begin{split}
\nrm{\widehat{\varphi}(t)}_{L^2_\xi} \simeq \left( \frac{\gmm(\Lmb(t))}{\gmm(\Lmb_0)} \right)^{-\frac{1}{2}} \nrm{\widehat{\varphi}_0}_{L^2_\xi}. 
\end{split}
\end{equation*} This is consistent with propagation of  $\gmm^{\frac{1}{2}}\widehat{\varphi}$ in $L^2$. On the other hand, \begin{equation*}
\begin{split}
\nrm{|\xi|^s\widehat{\varphi}(t)}_{L^2_\xi} \simeq \frac{|\xi(t;\lmb_0)|^s}{|\lmb_0|^s}\left( \frac{\gmm(\Lmb(t))}{\gmm(\Lmb_0)} \right)^{-\frac{1}{2}} \nrm{|\xi|^s\widehat{\varphi}_0}_{L^2_\xi}. 
\end{split}
\end{equation*} Hence, for \eqref{eq:sqg-lin-model} to be ill-posed in $H^s$, it suffices to have for $|\lmb_0|\gg 1$ 
and $T = T(\lmb_0) \ll 1$
that  \begin{equation}\label{eq:illposed-Hs-condition}
\begin{split}
\frac{|\xi(T;\lmb_0)|^s}{|\lmb_0|^s}\left( \frac{\gmm(\Lmb(T;\lmb_0))}{\gmm(\Lmb_0(\lmb_0))} \right)^{-\frac{1}{2}} \gg 1 
\end{split}
\end{equation} or simply \begin{equation*}
\begin{split}
\frac{|\xi(T)|^{2s}}{\gmm(\xi(T))} \gg \frac{|\lmb_0|^{2s}}{\gmm(\lmb_0)}.
\end{split}
\end{equation*} 
Assuming that $\gmm$ is {increasing}, we have \begin{equation*}
\begin{split}
\xi(t;\lmb_0) \ge \lmb_0 + t\lmb_0 \gmm(\lmb_0).
\end{split}
\end{equation*} (This is expected to be sharp for small timescales.) Therefore, for some $T = T(\lmb_0)$ satisfying $ \frac{1}{\gmm(\lmb_0)} \ll T \ll 1$, we have $\xi(T) \gg \lmb_0$, and this will guarantee \eqref{eq:illposed-Hs-condition} for $s>0$ large.

Given some concrete symbol $\gmm$ (e.g. $\gmm(\Lmb)= \brk{\Lmb}^\beta$ for some $\beta>0$), one can see the range of $s, s'$ where the toy model \eqref{eq:sqg-lin-model} is $H^s$--$H^{s'}$ unstable in the sense of Definition \ref{def:HsHs'-illposed}, using the above formula for the solution in the Fourier variable. We leave the details of this computation for the interested reader. Let us demonstrate that in the logarithmic case (where $\gmm(\xi) \lesssim \log(\xi)$ for large $|\xi|$), $s = s'$ is forced. Indeed, for $s(T) = s-MT$ with some $M>0$ independent of $\lmb_0$, we have \begin{equation*}
\begin{split}
\frac{|\xi(T)|^{2s(T)}}{\gmm(\xi(T))} \lesssim \frac{|\lmb_0|^{2s}}{\gmm(\lmb_0)},
\end{split}
\end{equation*} which shows a losing estimate in the scale of time-dependent Sobolev spaces $H^{s(T)}$. It suggests that there could be a similar estimate in the nonlinear case; this is precisely the content of \cite{CJNO}. 

One may use the above model equation to understand the dissipative case as well. Then, \eqref{eq:sqg-lin-model} and \eqref{eq:sqg-lin-model-fourier} (after separating $x_1$-dependence) are simply replaced with \begin{equation}\label{eq:sqg-lin-model-diss}
	\begin{split}
		\rd_t\tht + x_2\rd_{x_1}\Gmm(\tht) = -\kpp\Upsilon(\tht)
	\end{split}
\end{equation} and \begin{equation}\label{eq:sqg-lin-model-fourier-diss}
\begin{split}
	\lmb_0^{-1}\rd_t \widehat{\varphi} (\xi) + \gmm(\Lmb) \rd_\xi \widehat{\varphi} (\xi) =  - (\rd_\xi\gmm)(\Lmb) \frac{\xi}{\Lmb} \widehat{\varphi}(\xi) - \kpp \upsilon(\Lmb) \widehat{\varphi}(\xi) .
\end{split}
\end{equation} (We are assuming that the symbol $\upsilon$ is radial.) Again, this equation is solvable along characteristics. Using the solution, one may see that when $\upsilon \gg \gmm$, there is well-posedness in $H^s$-spaces, while $\upsilon \ll \gmm$ still gives $H^s$ illposedness.

\medskip
\noindent {\bf Discussion of difficulties.}
We are now in a good position to explain the main difficulties in establishing Sobolev illposedness for the actual linear  {homogeneous equations associated with }\eqref{eq:lin-L2} and \eqref{eq:lin-L2-diss},  {as well as the nonlinear equations \eqref{eq:ssqg} and \eqref{eq:ssqg-diss}.} Comparing the inviscid linear case \eqref{eq:lin-L2} with \eqref{eq:sqg-lin-model}, there are two differences: (1)~first, the principal coefficient is not exactly linear, and (2)~second, a lower order term is present in \eqref{eq:lin-L2}.
With these two differences \textit{combined}, it becomes a challenging problem to just construct an approximate solution to \eqref{eq:lin-L2} which exhibits the same illposedness behavior with an explicit solution to \eqref{eq:sqg-lin-model}.  {Meanwhile, once a good approximate construction has been constructed, it can be upgraded to illposedness results for \eqref{eq:lin-L2}, and even the nonlinear equation \eqref{eq:ssqg}, using the robust testing (or duality) method introduced in \cite{JO1}.}

Henceforth, we focus on approximate solution construction. Regarding \textbf{difference~(1),} it has been known that having an exactly linear principal coefficient significantly simplifies the analysis (Craig--Goodman \cite{CrGm} is a good example). We have seen in the above that in such a case, taking the Fourier transform results in a transport term. Moreover, one can perform WKB analysis with a linear phase. In view of this, when the principal term is given by a differential operator with a linearly degenerate coefficient, it is natural to first apply a coordinate transform which makes the coefficient exactly linear near the degeneracy; this was the approach in our previous work \cite{JO1}. Such a coordinate change is not available in the case of pseudo-differential principal term. Next, regarding \textbf{difference~(2)}, note that the lower order term in \eqref{eq:lin-L2} is again given by a pseudo-differential operator, and its (generalized) order could become very close to that for the principal operator when $\Gmm$ is only \textit{slightly} singular (e.g. when $\gmm(\xi)=\log(10+|\xi|), \log(10+\log(10+|\xi|))$). In such cases, it is basically impossible to distinguish the lower order term from the principal term, and in the proof we indeed incorporate a part of the lower order term into the principal part. 

In the dissipative linear case \eqref{eq:lin-L2-diss},  {the main idea is to use the same approximate solution to \eqref{eq:lin-L2} and treat the dissipative term as an error}. Nevertheless, there is yet another serious difficulty: in general, there is no nontrivial steady solutions to \eqref{eq:ssqg-diss-shear} and we have to work with time-dependent shear flows. In general, the strength of the degeneracy of the principal coefficient $\Gmm\nb^\perp\bgtht$ {changes} with time\footnote{In principle, the location of the degeneracy would move in time as well; in the current work, we prohibit such behavior by imposing even symmetry both in the shear profile $\bgtht$ and the dissipative operator $\Upsilon$.}, whose effect cannot be regarded as a perturbation from the initial data $\Gmm\nb^\perp\bgtht_{0}$. Again, with respect to this difficulty, the most problematic case is when $\Gmm$ is slightly singular, because then the frequency growth is slower (hence most sensitive to time dependence of the background shear). 

To overcome these difficulties, we develop a fairly general framework for directly constructing degenerating wave packets for a linear pseudo-differential equation of the form $$\rd_{t} \phi + i p(t, x, D) \phi = 0,$$ where $p(t, x, D)$ takes into account not only the principal term but also key lower order terms in \eqref{eq:lin-L2}, and may possibly be time-dependent. While the formal derivation of the ansatz for the approximate solution is straightforward (see Section~\ref{sec:psdo}), the difficulty is to rigorously control the ansatz and the error in sufficiently long time scales within which significant degeneration occurs. Among others, two key ideas in this work that allow us to resolve this difficulty are: observations concerning the Hamilton--Jacobi equation for $\rd_{t} + i p$ and the associated transport equations that allow for controlling the ansatz in long enough time scales (see Section~\ref{sec:HJE}) and sharp estimates for oscillatory integrals appearing in the symbol for the error term (see Section~\ref{sec:conj-err}). We refer the reader to Section~\ref{subsec:ideas} for a more detailed discussion of the key ideas.

{\subsection{Interpretation of strong illposedness}\label{subsec:interpret}
	
	We would like to discuss the potential physical interpretations of strong illposedness, especially since we have demonstrated in the above that some generalized SQG equations with singular multipliers have physical origin. 
	
	Naïvely, strong illposedness for a PDE of physical origin (especially the nonexistence statement from Theorem \ref{thm:nonexist}) suggests that the model is physically invalid and should therefore be rejected. At this point, one could argue that the illposedness simply arises from choosing the wrong function spaces: the illposedness results we obtained in this work pertain to Sobolev spaces rather than, for example, analytic (or Gevrey) spaces. However, we shall proceed under the assumption that this is \textit{not} the case, which is supported by the rapid frequency growth inherent in the bicharacteristic ODE system. Actually, based on the fact that the frequency rate itself grows with the initial frequency, illposedness in analytic spaces for the Hall-MHD case was established in \cite{JO1}. Returning to the point where we should decide whether or not to reject the model, one should consider several possibilities (and answer the resulting questions): \begin{itemize}
		\item \textbf{Incorporating a higher order term}. Indeed, for both of Hall-MHD and AM-LQG equations we discussed in the above, it is natural to have some dissipative terms, which makes the equations (at least locally in time) well-posed. Then the natural question is: what is the implication of strong illposedness for the dissipative case? More concretely, what happens in the limit for the sequence of solutions when the dissipation coefficient vanishes? At this point, it is worth mentioning that dissipation is not the only physical mechanism for fixing illposedness; in the case of Hall-MHD, it was reported recently in \cite{CheBe} that by adding ``electron inertial effects'' to the Hall-MHD system (this is referred to extended magnetohydrodynamics, or XMHD for short, system in the physics community), wellposedness in Sobolev spaces is restored, without incorporating the dissipation term for the magnetic field. This ``inertial'' term is hyperbolic and not parabolic. Indeed, physicists have reported (\cite{Abdelhamid_2016,Miloshevich_2017}) that in magnetohydrodynamic turbulence (which roughly describes the behavior of solutions in the zero-dissipation limit), there are three frequency regimes: the smallest is governed by ideal MHD, the intermediate by the Hall current effect, and the largest by the inertial effect.
		\item \textbf{Wellposedness theory with nondegenerate data}. Without the addition of any other terms, the equations could be perfectly valid for certain ``regimes'': our linear and nonlinear illposedness results are crucially relying upon quadratic degeneracy of the background shear profile. It is possible that at least small perturbations of a linearly degenerate background is well-behaved (this is clear in the case of the linearized equation, at least). Indeed, in the case of Hall-MHD system, local wellposedness can be established near a nonzero constant magnetic field (\cite{JO2}), which turns out to be the original physical setup by Lighthill \cite{Light} who first derived the model. 
	\end{itemize}
	
	In conclusion, we emphasize that the illposedness results presented in this paper serve merely as a starting point for the mathematical study of generalized SQG equations with singular velocities, opening up several important and challenging problems. 
	
} 

{

\subsection{Further open problems}

We discuss a few technical open problems that naturally arise after this work. \begin{itemize}
	\item \textbf{Existence of weak solutions}. The paper \cite{CCCGW} proves the existence of global in time $L^{p}$ weak solutions for the case when $\Gmm = \Lmb^{\bt}$ with $\bt<0$. It seems that their existence proof fails as soon as $\Gmm$ becomes singular (in the sense that the multiplier diverges to infinity as $|\xi|\to\infty$). It would be interesting to see whether convex integration techniques can be adapted to give existence of solutions with singular multipliers. 
	\item \textbf{Gevrey illposedness}. While we expect the singular SQG equations to be ill-posed in analytic (and even in Gevrey) spaces, it seems quite difficult to obtain such an illposedness statement based on the norm growth estimates in the current paper. For this purpose, it might be necessary to construct degenerating wave packet solutions for a larger interval of time. Note that our previous work on the illposedness of the Hall-MHD system \cite{JO1} includes analytic/Gevrey illposedness. A similar problem, which could be also challenging, is to study whether the range of $(s,s')$ given in Corollary \ref{cor:norm-growth-ex} is sharp. To this end, one may consider the perturbation dynamics near other steady states.
	
	\item \textbf{Nonexistence in $\bbR^{2}$ and $\bbT^{2}$}. Note that our nonexistence statement (Theorem \ref{thm:nonexist}) requires the physical domain to be $\bbT\times\bbR$. When the domain is $\bbT^{2}$ (or more generally a compact two-dimensional manifold), there is a serious difficulty in superposing degenerating wave packets to upgrade norm inflation to nonexistence. On the other hand, when the domain is $\bbR^{2}$, the difficulty comes from spatially truncating the degenerating wave packets. 
	
	\item \textbf{Illposedness for 3D vector systems}. One may consider the \textit{active vector system} for the variable $B(t,\cdot):\bbR^3\to\bbR^3$ given by \begin{equation}  \label{eq:e-MHD-gen}
		\left\{
		\begin{aligned} 
			&\rd_t B + \nb \times( (\nb\times (-\lap)^{-\alp} B) \times B) = 0, \\
			&\nb\cdot B = 0, 
		\end{aligned}
		\right.
	\end{equation} for some $\alp\ge0$. This turns out to be a 3D generalization of SQG models, and interpolates between the 3D vorticity equations (when $\alp=1$) and the E-MHD system (when $\alp=0$) \cite{CJ,CCJ}. Local wellposedness as well as existence of some global solutions were obtained for $\alp \ge 1/2$ in \cite{CJ,CCJ}. It would be interesting to study local wellposedness of \eqref{eq:e-MHD-gen} for general values of $\alpha$. 
	
\end{itemize}

}

\subsection{Organization of the paper}\label{subsec:orga}

The rest of the paper is divided into six sections, which we briefly describe below. The proofs of the main results in each section is largely independent of those from other sections.

\begin{itemize}
	\item Section \ref{sec:ideas} contain preliminary computations on the linearized operators and a preview of the proofs of main results. 
	\item Section \ref{sec:psdo} begins with algebraic preliminaries regarding pseudo-differential calculus. The equations satisfied by the phase and amplitude functions are fixed in this section. Then, we provide expansion formulas for the pseudo-differential operators appearing after the conjugation by the phase function. In particular, an explicit representation formula for the remainder operator is derived. 
	\item Section \ref{sec:HJE} deals with the equations for the phase and amplitude functions chosen in Section \ref{sec:psdo}. In this section, we fix the choice of initial data for the phase and amplitude functions and derive sharp high order estimates for them. 
	\item In Section \ref{sec:conj-err}, we obtain operator bounds for symbols appeared in Section \ref{sec:psdo}. 
	\item In Section \ref{sec:wavepacket}, key estimates for the degenerating wave packet solutions are obtained, by applying sharp estimates from Section \ref{sec:HJE} to operator bounds from Section \ref{sec:conj-err}. 
	\item All the main theorems are proved in Section \ref{sec:proofs}, combining all the ingredients.
\end{itemize}

\subsection*{Acknowledgments}{ {We sincerely thank the anonymous referee for several helpful comments, which have been incorporated into the current version of the paper.} 
D.~Chae was supported by NRF grant No. 2021R1A2C1003234.
I.-J.~Jeong was supported  by NRF Korea grant No. 2022R1C1C1011051 and RS-2024-00406821. S.-J.~Oh was supported in part by the Samsung Science and Technology Foundation under Project Number SSTF-BA1702-02, a Sloan Research Fellowship and a National Science Foundation CAREER Grant under NSF-DMS-1945615.}

\section{Preliminaries and ideas of the proof}\label{sec:ideas}

In Section~\ref{subsec:prelim-sec2}, we derive the linearized operators and discuss its energy structure. We then proceed to define the notion of a $L^2$ solution, and derive the generalized energy identity. An outline of the proof together with some key ideas are given in Section~\ref{subsec:ideas}.

\subsection{Energy structure of the linearized operator} \label{subsec:prelim-sec2}
\subsubsection{Conjugated linearized operators}
Note that ${\bgtht} = f(x_2)$ is a (formal) solution {to \eqref{eq:ssqg}} for any (regular, decaying) profile $f$. Indeed,
	\begin{equation*}
	u \cdot \nb {\bgtht} = \rd_{x_2} \psi (x_2) \rd_{x_1} f(x_2) - \rd_{x_1} \psi(x_2)  \rd_{x_2} f(x_2) = 0.
	\end{equation*}
The direct linearization {of \eqref{eq:ssqg}} around $\bgtht$ is given as follows:
	\begin{align*}
	L_{\bgtht} \phi = \rd_{t} \phi - \nb^{\perp} \bgtht \cdot \nb \Gmm \phi + \nb^{\perp} \Gmm \bgtht \cdot \nb \phi = 0.
	\end{align*}
	Indeed, writing $\tht = \bgtht+ \phi$,
	\begin{align*}
	u &= \nb^{\perp} \Gmm (\bgtht + \phi), \\
	\rd_{t} (f + \phi) + u \cdot \nb (\bgtht + \phi)
	&= \rd_{t} \phi + \nb^{\perp} \Gmm \phi \cdot \nb \bgtht
	+ \nb^{\perp} \Gmm \bgtht \cdot \nb \phi + \nb^{\perp} \Gmm \phi \cdot \nb \phi \\
	&= \rd_{t} \phi - \nb \Gmm \phi \cdot \nb^{\perp} \bgtht
	+ \nb^{\perp} \Gmm \bgtht \cdot \nb \phi + \nb^{\perp} \Gmm \phi \cdot \nb \phi.
	\end{align*}
	Already we may observe that
	\begin{equation*}
	\tld{\calL}_{\bgtht} \phi = \rd_{t} \phi + \tld{P}_{\bgtht} \phi,
	\end{equation*}
	where the principal symbol of $\tld{P}_{\bgtht}$ is 
	\begin{equation*}
	{p_{\bgtht} :=} - i \nb^{\perp} \bgtht(x_1, x_2) \cdot \xi \Gmm(\xi) 
	\end{equation*}
	where $\xi = (\xi_{1}, \xi_{2})$, and we assumed that $\gmm(\xi) \to \infty$ as $\abs{\xi} \to \infty$. Note that $p_{\bgtht}$ is purely imaginary. However, as we will see soon, $L_{\bgtht}$ does not have a good energy structure; there is a problem with the sub-principal terms. For this reason, we will have to conjugate $L_{\bgtht}$ by $\Gmm^{\frac{1}{2}}$. 
	
As a motivation, let us first discuss the energy structure of $L_{\bgtht}$. Our computation will be formal.  We first write
	\begin{align*}
	\int L_{\bgtht} \phi \phi \, \ud x \ud y
	&= \frac{1}{2} \frac{\ud}{\ud t} \int \phi^{2} \, \ud x \ud y 
	- \int \left(\nb^{\perp} \bgtht \cdot \nb \Gmm \phi\right) \phi \, \ud x \ud y \\
	&\peq + \frac{1}{2} \int \nb^{\perp} \Gmm \bgtht \nb \phi^{2} \, \ud x \ud y
	\end{align*} 
	Integrating $\nb$ by parts, the last term vanishes since $\nb \cdot \nb^{\perp} = 0$. For the second term, we have the following chain of identities:
	\begin{align*}
	- \int \left(\nb^{\perp} \bgtht \cdot \nb \Gmm \phi\right) \phi \, \ud x \ud y
	& = - \frac{1}{2} \int \left(\nb^{\perp} \bgtht \cdot \nb \Gmm \phi\right) \phi \, \ud x \ud y
	+ \frac{1}{2} \int \phi \Gmm \left( \nb^{\perp} \bgtht \cdot \nb \phi\right) \, \ud x \ud y \\
	& = - \frac{1}{2} \int \phi [\nb^{\perp} \bgtht \cdot \nb, \Gmm] \phi \, \ud x \ud y.
	\end{align*}
	However, $[\nb^{\perp} \bgtht \cdot \nb, \Gmm]$ is symmetric and is not bounded in $L^{2}$, which is problematic. 
	
We wish to remove $[\nb^{\perp} \bgtht \cdot \nb, \Gmm]$, we need to conjugate the equation. Motivated by the desire to make the principal term exactly anti-symmetric (alternatively, via a formal, symbolic-calculus computation), we work with the variable $\varphi$ given by
	\begin{equation*}
	\phi = \Gmm^{-\frac{1}{2}} \varphi.
	\end{equation*}
	Accordingly, we define 
	\begin{equation*}
	\calL_{\bgtht} \varphi = \Gmm^{\frac{1}{2}} L_{\bgtht} (\Gmm^{-\frac{1}{2}} \varphi).
	\end{equation*}
	We compute
	\begin{align}\label{eq:lin-L2}
	\calL_{\bgtht} \varphi 
	= \rd_{t} \varphi 
	- \Gmm^{\frac{1}{2}} \nb^{\perp} \bgtht \cdot \nb \Gmm^{\frac{1}{2}} \varphi 
	+ \Gmm^{\frac{1}{2}} \nb^{\perp} \Gmm \bgtht \cdot \nb \Gmm^{-\frac{1}{2}} \varphi.
	\end{align}
	
We now investigate the energy structure of $\calL_{\bgtht}$; as before, we present only a formal computation. We begin with
	\begin{align*}
	\int \calL_{\bgtht} \varphi \varphi \, \ud x \ud y
	&= \frac{1}{2} \frac{\ud}{\ud t} \int \varphi^{2} \, \ud x \ud y
	- \int \left(\Gmm^{\frac{1}{2}} \nb^{\perp} \bgtht \cdot \nb \Gmm^{\frac{1}{2}} \varphi \right) \varphi \, \ud x \ud y \\
	&\peq
	+ \int \left(\Gmm^{\frac{1}{2}} \nb^{\perp} \Gmm \bgtht \cdot \nb \Gmm^{-\frac{1}{2}} \varphi\right) \varphi \, \ud x \ud y.
	\end{align*}
	By the symmetry of $\Gmm^{\frac{1}{2}}$ and anti-symmetry of $\nb^{\perp} \bgtht \cdot \nb$, the second term vanishes. However, the last term does not vanish (cf.~the computation for $L_{\bgtht} \phi$). For this term, we compute
	\begin{align*}
	&\int \left(\Gmm^{\frac{1}{2}} \nb^{\perp} \Gmm \bgtht \cdot \nb \Gmm^{-\frac{1}{2}} \varphi\right) \varphi \, \ud x \ud y \\
	&= \frac{1}{2} \int \left(\Gmm^{\frac{1}{2}} \nb^{\perp} \Gmm \bgtht \cdot \nb \Gmm^{-\frac{1}{2}} \varphi\right) \varphi \, \ud x \ud y
	- \frac{1}{2}\int \varphi \Gmm^{-\frac{1}{2}} \nb^{\perp} \Gmm \bgtht \cdot \nb \Gmm^{\frac{1}{2}}\varphi \, \ud x \ud y \\
	&= \frac{1}{2} \int \left(\nb^{\perp} \Gmm \bgtht \cdot \nb \varphi\right) \varphi \, \ud x \ud y
	+ \frac{1}{2} \int \left([\Gmm^{\frac{1}{2}}, \nb^{\perp} \Gmm \bgtht \cdot \nb] \Gmm^{-\frac{1}{2}} \varphi\right) \varphi \, \ud x \ud y \\
	&\peq
	- \frac{1}{2}\int \varphi \nb^{\perp} \Gmm \bgtht \cdot \nb \varphi \, \ud x \ud y 
	- \frac{1}{2}\int \varphi \Gmm^{-\frac{1}{2}} [\nb^{\perp} \Gmm \bgtht \cdot \nb, \Gmm^{\frac{1}{2}}] \varphi \, \ud x \ud y \\
	&= \frac{1}{2} \int \varphi \left([\Gmm^{\frac{1}{2}}, \nb^{\perp} \Gmm \bgtht \cdot \nb] \Gmm^{-\frac{1}{2}} + \Gmm^{-\frac{1}{2}} [\Gmm^{\frac{1}{2}}, \nb^{\perp} \Gmm \bgtht \cdot \nb]\right) \varphi \, \ud x \ud y.
	\end{align*}
	{An important observation is that the operator}
	\begin{equation}\label{eq:bdd-comm}
	[\Gmm^{\frac{1}{2}}, \nb^{\perp} \Gmm \bgtht \cdot \nb] \Gmm^{-\frac{1}{2}} + \Gmm^{-\frac{1}{2}} [\Gmm^{\frac{1}{2}}, \nb^{\perp} \Gmm \bgtht \cdot \nb]
	\end{equation}
	{is bounded on $L^{2}$; hence this term is acceptable. There are many ways to see this fact. For instance, one may utilize the symbolic calculus developed in Section~\ref{subsec:prelim} below, and observe that $[\Gmm^{\frac{1}{2}}, \nb^{\perp} \Gmm \bgtht \cdot \nb] \in Op(S(\gmm^{\frac{1}{2}}))$ and $\Gmm^{-\frac{1}{2}} \in Op(S(\gmm^{-\frac{1}{2}}))$, so \eqref{eq:bdd-comm} belongs to $Op(S(1))$ (i.e., is a quantization of a classical pseudodifferential operator of order $0$) and thus is bounded on $L^{2}$.} {For the convenience of the reader, we include an {independent} elementary proof of boundedness of this operator below in Proposition \ref{prop:bdd-comm}.}

Finally, we state the linearized operator for \eqref{eq:ssqg-diss}:
	\begin{equation} \label{eq:lin-L2-diss}
		\calL^{(\kpp)}_{\bgtht} \varphi 
		= \rd_{t} \varphi 
		- \Gmm^{\frac{1}{2}} \nb^{\perp} \bgtht \cdot \nb \Gmm^{\frac{1}{2}} \varphi 
		+ \Gmm^{\frac{1}{2}} \nb^{\perp} \Gmm \bgtht \cdot \nb \Gmm^{-\frac{1}{2}} \varphi + \kpp \Ups \varphi.
	\end{equation}
Note that $\calL^{(\kpp)}_{\bgtht} \Gmm^{\frac{1}{2}} \phi  = \Gmm^{\frac{1}{2}} L^{(\kpp)}_{\bgtht} \phi$.

{\begin{proposition}\label{prop:bdd-comm}
	Let $\mathring{\tht} \in H^{s}$ with $s$ sufficiently large depending on $\Gmm$. Then, we have \begin{equation*}
		\begin{split}
			\nrm{ [\Gmm^{\frac{1}{2}}, \nb^{\perp} \Gmm \bgtht \cdot \nb] \Gmm^{-\frac{1}{2}} \varphi }_{L^{2}} &\le  C(\Gmm,\mathring{\tht}) \nrm{\varphi}_{L^{2}} , \\
				\nrm{ \Gmm^{-\frac{1}{2}} [\Gmm^{\frac{1}{2}}, \nb^{\perp} \Gmm \bgtht \cdot \nb] \varphi }_{L^{2}} &\le  C(\Gmm,\mathring{\tht}) \nrm{\varphi}_{L^{2}}  
		\end{split}
	\end{equation*} for $\varphi \in L^{2}$. 
\end{proposition}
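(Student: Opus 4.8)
\emph{Proof proposal.} Set $V := \nb^{\perp} \Gmm \bgtht$; this is a fixed vector field, divergence-free since $\nb \cdot \nb^{\perp} = 0$, and --- because $\bgtht = \mathring{\tht} \in H^{s}$ while $\gmm(\xi) \aleq \brk{\xi}^{\bt_{0}}$ (globally, using $\inf \gmm > 0$) --- lying in $H^{s - \bt_{0} - 1}$, so each component $V_{j}$ is as regular as we wish once $s$ is large depending on $\bt_{0}$, i.e.\ on $\Gmm$. The plan is to first reduce the second bound to the first by duality: since $\gmm$ is real and even, $\Gmm^{\frac12}$ and $\Gmm^{-\frac12}$ are self-adjoint on $L^{2}$, and since $\nb \cdot V = 0$ the transport operator $V \cdot \nb$ is anti-self-adjoint; hence $[\Gmm^{\frac12}, V \cdot \nb]$ is self-adjoint and
\[
\Gmm^{-\frac12} [\Gmm^{\frac12}, V \cdot \nb] = \bigl( [\Gmm^{\frac12}, V \cdot \nb] \Gmm^{-\frac12} \bigr)^{*},
\]
(checked first on Schwartz functions, then extended using the $L^{2}$ bound), so it suffices to bound $T := [\Gmm^{\frac12}, V \cdot \nb] \Gmm^{-\frac12}$ on $L^{2}$.

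Since $\Gmm^{\frac12}$ commutes with each $\rd_{j}$, we have $T = \sum_{j} [\Gmm^{\frac12}, V_{j}] \rd_{j} \Gmm^{-\frac12}$, and a direct computation on the Fourier side gives, with $m := \gmm^{\frac12}$,
\[
\widehat{T u}(\xi) = \sum_{j} \int \bigl( m(\xi) - m(\eta) \bigr) \, \widehat{V_{j}}(\xi - \eta) \, \frac{i \eta_{j}}{m(\eta)} \, \widehat{u}(\eta) \, \ud \eta
\]
(with the integral replaced by a sum, resp.\ a partial integral, when $\Omg = \bbT^{2}$, resp.\ $\bbT \times \bbR$; the argument is identical). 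I would then establish the pointwise kernel bound
\[
\Bigl| \bigl( m(\xi) - m(\eta) \bigr) \, \widehat{V_{j}}(\xi - \eta) \, \tfrac{\eta_{j}}{m(\eta)} \Bigr| \aleq \brk{\xi - \eta}^{1 + \frac{\bt_{0}}{2}} \, \bigl| \widehat{V_{j}}(\xi - \eta) \bigr| =: G_{j}(\xi - \eta),
\]
and conclude by Young's inequality that $\nrm{T u}_{L^{2}} = \nrm{\widehat{T u}}_{L^{2}} \aleq \sum_{j} \nrm{G_{j}}_{L^{1}} \, \nrm{u}_{L^{2}}$, which is finite because $V_{j} \in H^{N}$ with $N > 2 + \tfrac{\bt_{0}}{2}$ forces $\brk{\cdot}^{1 + \frac{\bt_{0}}{2}} \widehat{V_{j}} \in L^{1}$ by Cauchy--Schwarz; thus $s > 3 + \tfrac{3}{2} \bt_{0}$ is more than enough (notably matching the threshold in Theorem~\ref{thm:nonlin-norm-infl}).

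For the kernel bound I would split into two regimes. \emph{Near the diagonal}, $\abs{\xi - \eta} \leq \tfrac12 \brk{\eta}$: the segment joining $\eta$ to $\xi$ stays in $\set{\brk{\cdot} \aeq \brk{\eta}}$, where iterating the slow-variation estimate \eqref{eq:slow-var} gives $\gmm \aeq \gmm(\eta)$; since Assumption~1 and the chain rule yield $\abs{\nb m(\zeta)} \aleq \brk{\zeta}^{-1} m(\zeta)$, the mean value theorem gives $\abs{m(\xi) - m(\eta)} \aleq \abs{\xi - \eta} \brk{\eta}^{-1} m(\eta)$, and multiplying by $\tfrac{\abs{\eta_{j}}}{m(\eta)} \leq \tfrac{\brk{\eta}}{m(\eta)}$ the factors $m(\eta)$ cancel, leaving $\aleq \abs{\xi - \eta} \, \abs{\widehat{V_{j}}(\xi - \eta)}$. \emph{Away from the diagonal}, $\abs{\xi - \eta} > \tfrac12 \brk{\eta}$: here $\brk{\eta}, \brk{\xi} \aleq \brk{\xi - \eta}$, so using only $\abs{m(\xi) - m(\eta)} \leq m(\xi) + m(\eta) \aleq \brk{\xi - \eta}^{\bt_{0}/2}$ (from $\gmm \aleq \brk{\cdot}^{\bt_{0}}$), the lower bound $m(\eta) \geq (\inf \gmm)^{1/2} > 0$, and $\abs{\eta_{j}} \aleq \brk{\xi - \eta}$, one gets $\aleq \brk{\xi - \eta}^{1 + \bt_{0}/2} \abs{\widehat{V_{j}}(\xi - \eta)}$. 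Combining the two cases yields $G_{j}$.

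The only real difficulty lies in the off-diagonal region: there the commutator provides no smoothing whatsoever, and the polynomial growth of $\gmm$ must be absorbed purely by the regularity of the \emph{fixed} field $V = \nb^{\perp} \Gmm \bgtht$ --- which is exactly why $s$ must be taken large depending on $\Gmm$. The near-diagonal region is soft: it is precisely where the derivative gained in the commutator $[\Gmm^{\frac12}, V_{j}]$ compensates the $\rd_{j} \Gmm^{-\frac12}$, and this cancellation works for any order $\bt_{0} > 0$ thanks to the ellipticity and slow variation of $\gmm$ (hence of $m = \gmm^{\frac12}$, via the chain rule).
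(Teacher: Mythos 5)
Your proposal is correct and follows essentially the same route as the paper: both write the commutator as a Fourier-side kernel $(\gmm^{\frac12}(\xi)\gmm^{-\frac12}(\eta)-1)\widehat{V}(\xi-\eta)\cdot i\eta\,\widehat{\varphi}(\eta)$, split into a near-diagonal regime handled by the mean value theorem plus slow variation of $\gmm$ and an off-diagonal regime handled by crude growth bounds absorbed into the regularity of $\nb^{\perp}\Gmm\bgtht$, and conclude by Young's convolution inequality. The only (harmless) deviation is that you obtain the second estimate by self-adjointness/duality from the first, whereas the paper simply notes the analogous direct argument.
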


\begin{proof}
	We provide a proof just for the first inequality, since the argument for the other is similar. For simplicity, we set $F = \nb^{\perp} \Gmm \bgtht $. Then, we have that the Fourier transform of $[\Gmm^{\frac{1}{2}},F \cdot \nb] \Gmm^{-\frac{1}{2}} \varphi $ is given by \begin{equation*}
		\begin{split}
			I(\xi) := \int \gmm^{\frac12}(\xi) \hat{F}(\xi-\eta) i\eta \gmm^{-\frac12}(\eta) \hat{\varphi}(\eta) \, \ud\eta - \int \hat{F}(\xi-\eta) i\eta \hat{\varphi}(\eta)\,\ud\eta ,
		\end{split}
	\end{equation*} where $\hat{F}$ and $\hat{\varphi}$ are the Fourier transforms of $F$ and $\varphi$, respectively. 
	
	We proceed differently depending on the size of frequencies of $F$ and $\varphi$. First, when $|\xi-\eta| \ge |\eta|/10$, we simply take absolute values inside the integrals and use $|\xi|\lesssim |\xi-\eta|$, $ \gmm^{\frac12}(\xi) \lesssim_{\Xi_0} 1 +   \gmm^{\frac12}(\xi-\eta)$ (see \eqref{eq:slow-var}) and $\gmm^{-\frac12}(\eta) \lesssim 1$ to bound \begin{equation*}
		\begin{split}
			&\left|  \int_{ \{ |\xi-\eta| \ge |\eta|/10\}} \gmm^{\frac12}(\xi) \hat{F}(\xi-\eta) i\eta \gmm^{-\frac12}(\eta) \hat{\varphi}(\eta) \, \ud\eta - \int_{ \{ |\xi-\eta| \ge |\eta|/10\}} \hat{F}(\xi-\eta) i\eta \hat{\varphi}(\eta)\,\ud\eta    \right| \\
			&\quad \lesssim \left| \int_{ \{ |\xi-\eta| \ge |\eta|/10\}} (1+\gmm^{\frac12}(\xi-\eta) )|\xi-\eta||\hat{F}(\xi-\eta)| |\hat{\varphi}(\eta)| \, \ud\eta  \right|. 
		\end{split}
	\end{equation*} Next, when $|\xi-\eta| < |\eta|/10$, we combine the integrals to rewrite \begin{equation*}
	\begin{split}
		 \int_{ \{ |\xi-\eta| < |\eta|/10\}} \left(\gmm^{\frac12}(\xi)-\gmm^{\frac12}(\eta)\right) \hat{F}(\xi-\eta) i\eta \gmm^{-\frac12}(\eta) \hat{\varphi}(\eta) \, \ud\eta .
	\end{split}
	\end{equation*} We now take the absolute value inside the integral and use the mean value theorem with the condition $|\xi-\eta| < |\eta|/10$ to observe \begin{equation*}
	\begin{split}
		\left| \left(\gmm^{\frac12}(\xi)-\gmm^{\frac12}(\eta)\right) \eta \gmm^{-\frac12}(\eta) \right| \lesssim |\xi-\eta|.
	\end{split}
	\end{equation*} Therefore, combining two cases, we obtain altogether \begin{equation*}
	\begin{split}
		|I(\xi)| \lesssim  \left| \int  (1+\gmm^{\frac12}(\xi-\eta) )|\xi-\eta||\hat{F}(\xi-\eta)| |\hat{\varphi}(\eta)| \, \ud\eta  \right| 
	\end{split}
	\end{equation*} and applying Young's convolution inequality, we obtain  \begin{equation*}
	\begin{split}
		\nrm{I}_{L^{2}_{\xi}} \lesssim \nrm{ (1+\gmm^{\frac12}(\xi) )|\xi||\hat{F}(\xi)|  }_{L^{1}_{\xi}} \nrm{\hat{\varphi}}_{L^2_{\xi}}, 
	\end{split}
	\end{equation*} which finishes the proof by the Plancherel theorem.
\end{proof}

}

\subsubsection{Notion of a $L^2$ solution}
%For convenience, we recall here the linearized equations in the inviscid and dissipative cases: given symbols $\Gmm$, $\Upsilon$ satisfying the assumptions above and a smooth shear steady state $\bgtht$, we have 
%\begin{align}\label{eq:lin-L2-again}
%	\calL_{\bgtht} \varphi 
%	:= \rd_{t} \varphi 
%	- \Gmm^{\frac{1}{2}} \nb^{\perp} \bgtht \cdot \nb \Gmm^{\frac{1}{2}} \varphi 
%	+ \Gmm^{\frac{1}{2}} \nb^{\perp} \Gmm \bgtht \cdot \nb \Gmm^{-\frac{1}{2}} \varphi = 0
%\end{align} and 
%\begin{equation} \label{eq:lin-L2-diss-again}
%	\calL^{(\kpp)}_{\bgtht} \varphi 
%	:= \rd_{t} \varphi 
%	- \Gmm^{\frac{1}{2}} \nb^{\perp} \bgtht \cdot \nb \Gmm^{\frac{1}{2}} \varphi 
%	+ \Gmm^{\frac{1}{2}} \nb^{\perp} \Gmm \bgtht \cdot \nb \Gmm^{-\frac{1}{2}} \varphi + \kpp \Ups \varphi = 0.
%\end{equation} 
In the following, for simplicity, we consider \eqref{eq:lin-L2} as a special case of \eqref{eq:lin-L2-diss} obtained by taking $\kpp=0$.
Recall from above that a sufficiently smooth and decaying solution to  \eqref{eq:lin-L2-diss} satisfies \begin{equation*}
	\begin{split}
		\frac12 \frac{\ud}{\ud t} \nrm{ \varphi }_{L^2}^2 + \kpp \nrm{\Upsilon^{\frac12}\varphi}_{L^2}^2   + \frac12\brk{ \varphi, \left([\Gmm^{\frac{1}{2}}, \nb^{\perp} \Gmm \bgtht \cdot \nb] \Gmm^{-\frac{1}{2}} + \Gmm^{-\frac{1}{2}} [\Gmm^{\frac{1}{2}}, \nb^{\perp} \Gmm \bgtht \cdot \nb]\right)\varphi } = 0. 
	\end{split}
\end{equation*} Since \begin{equation*}
	\begin{split}
		\nrm{ \left([\Gmm^{\frac{1}{2}}, \nb^{\perp} \Gmm \bgtht \cdot \nb] \Gmm^{-\frac{1}{2}} + \Gmm^{-\frac{1}{2}} [\Gmm^{\frac{1}{2}}, \nb^{\perp} \Gmm \bgtht \cdot \nb]\right)\varphi  }_{L^2} \lesssim_{\Gmm,\bgtht} \nrm{\varphi}_{L^2}, 
	\end{split}
\end{equation*} we obtain from Gr\"onwall's inequality that \begin{equation}\label{eq:lin-energy-integrated}
	\begin{split}
		\nrm{\varphi(t)}_{L^{2}} \le \nrm{\varphi_0}_{L^2} \exp\left( C(\Gmm,\bgtht)t \right). 
	\end{split}
\end{equation} This motivates the following definition of a $L^2$-solution: 
\begin{definition}\label{def:L2-sol}
	Given some interval $I=[0,\tau]$, we say that $\varphi$ is a $L^2$-solution of  {$\calL^{(\kpp)}_{\bgtht} \varphi = 0$} if 
	\begin{itemize}
		\item $\varphi \in C_{w}(I;L^2) \cap L^2_{t}(I; \kpp^{-\frac12}\Upsilon^{-\frac12}L^2)$;
		\item $\varphi$ satisfies  {$\calL^{(\kpp)}_{\bgtht} \varphi = 0$} in the sense of distributions;
		\item $\varphi$ satisfies \eqref{eq:lin-energy-integrated}.
	\end{itemize} 
	We say that $\phi$ is an $\Gmm^{-\frac{1}{2}} L^{2}$-solution of $L^{(\kpp)}_{\bgtht} \phi = 0$ if $\varphi = \Gmm^{\frac{1}{2}} \phi$ is a $L^{2}$-solution of $\calL^{(\kpp)}_{\bgtht} \varphi= 0$. Moreover, we simply drop the requirement $\varphi\in L^2_{t}(I; \kpp^{-\frac12}\Upsilon^{-\frac12}L^2)$ in the inviscid case $\kpp=0$.
\end{definition}

Here, $C_{w}(I;L^2)$ is a subspace of $L^\infty_t(I;L^2)$ containing functions weakly continuous in time with values in $L^2$. The space $L^2_{t}(I; \kpp^{-\frac12}\Upsilon^{-\frac12}L^2)$ is defined by the norm $\int_{I} \nrm{\kpp^{\frac12}\Upsilon^{\frac12}\varphi(t)}_{L^2}^2 \,\ud t$. 

We have the following existence result: \begin{proposition}\label{prop:L2-sol-exist}
	Given any $\varphi_0 \in L^2$, there is at least one $L^2$ solution to  $\calL^{(\kpp)}_{\bgtht} \varphi = 0$  satisfying Definition \ref{def:L2-sol} for any $\kpp\ge0$.
\end{proposition}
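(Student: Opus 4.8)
We plan to construct $\varphi$ via a vanishing frequency-truncation regularization, extracting the needed uniform bounds directly from the (formal) energy identity recalled above. Write $\calL^{(\kpp)}_{\bgtht}\varphi = \rd_t \varphi - S(t) \varphi + \calA(t) \varphi + \kpp \Ups \varphi$, where $S(t) := \Gmm^{\frac12}(\nb^\perp \bgtht(t,\cdot) \cdot \nb) \Gmm^{\frac12}$ and $\calA(t) := \Gmm^{\frac12}(\nb^\perp \Gmm \bgtht(t,\cdot) \cdot \nb) \Gmm^{-\frac12}$. Since $\nb^\perp \bgtht \cdot \nb$ and $\nb^\perp \Gmm \bgtht \cdot \nb$ are divergence-free drifts (hence formally skew-adjoint on $L^2$) and $\Gmm^{\pm\frac12}$ are self-adjoint, $S(t)$ is formally skew-adjoint, while $\calA(t) + \calA(t)^{*} = \Gmm^{\frac12}(\nb^\perp\Gmm\bgtht\cdot\nb)\Gmm^{-\frac12} - \Gmm^{-\frac12}(\nb^\perp\Gmm\bgtht\cdot\nb)\Gmm^{\frac12}$ is exactly the operator displayed in \eqref{eq:bdd-comm}, which is bounded on $L^2$ with norm $\le C(\Gmm, \bgtht)$ by Proposition~\ref{prop:bdd-comm}; moreover $\kpp \Ups \ge 0$. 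These are the only structural facts the argument uses.

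Fix $\tau > 0$, set $I = [0,\tau]$, and let $P_N$ be a Fourier projection onto frequencies $\lesssim N$. The first step is to solve, for each $N$, the regularized linear ODE in $L^2$
\[
\rd_t \varphi_N = P_N S(t) P_N \varphi_N - P_N \calA(t) P_N \varphi_N - \kpp\, P_N \Ups P_N \varphi_N , \qquad \varphi_N|_{t=0} = \varphi_0 ,
\]
whose right-hand side is, for fixed $N$, a bounded operator on $L^2$ depending continuously on $t$; by the Picard theorem it admits a unique solution $\varphi_N \in C^1(I; L^2)$. The crucial point is that the a priori energy estimate survives \emph{uniformly in $N$}: pairing with $\varphi_N$, the skew term has vanishing real part, $\mathrm{Re}\langle P_N \Ups P_N \varphi_N, \varphi_N\rangle = \nrm{\Ups^{\frac12} P_N \varphi_N}_{L^2}^2 \ge 0$, and $\mathrm{Re}\langle P_N \calA(t) P_N \varphi_N, \varphi_N\rangle = \tfrac12 \langle P_N(\calA(t)+\calA(t)^{*}) P_N \varphi_N, \varphi_N\rangle$, so that $\abs{\mathrm{Re}\langle P_N\calA(t)P_N\varphi_N,\varphi_N\rangle} \le C(\Gmm,\bgtht)\nrm{\varphi_N}_{L^2}^2$ uniformly in $N$ and $t$ by Proposition~\ref{prop:bdd-comm}. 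Gr\"onwall's inequality then gives $\nrm{\varphi_N(t)}_{L^2} \le \nrm{\varphi_0}_{L^2}\exp(C(\Gmm,\bgtht)t)$, and integration in time gives $\kpp\int_I \nrm{\Ups^{\frac12}P_N\varphi_N}_{L^2}^2\,\ud t \le C(\Gmm,\bgtht,\tau)\nrm{\varphi_0}_{L^2}^2$, both uniformly in $N$ (and in $\kpp \ge 0$).

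Given these uniform bounds, I would pass to the limit. By Banach--Alaoglu, extract a subsequence with $\varphi_N \weakto \varphi$ weakly-$*$ in $L^\infty(I; L^2)$ and $\kpp^{\frac12}\Ups^{\frac12}P_N\varphi_N \weakto g$ weakly in $L^2(I\times\Omg)$. Testing the regularized equation against a smooth, compactly supported space-time function $\psi$ and using $P_N A(t) P_N \psi \to A(t)\psi$ in $L^2$ uniformly in $t$ for $A \in \{S, \calA, \Ups\}$ (legitimate because $A(t)\psi$ is again smooth and rapidly decaying), one finds that $\varphi$ solves $\calL^{(\kpp)}_{\bgtht}\varphi = 0$ in the sense of distributions; the same kind of computation identifies $g = \kpp^{\frac12}\Ups^{\frac12}\varphi$, so $\varphi \in L^2_t(I; \kpp^{-\frac12}\Ups^{-\frac12}L^2)$. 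For fixed $\psi \in C^\infty_c(\Omg)$ the equation shows $\tfrac{\ud}{\ud t}\langle\varphi,\psi\rangle = \langle\varphi, (S(t)^{*} - \calA(t)^{*})\psi\rangle - \kpp\langle\varphi, \Ups\psi\rangle \in L^\infty(I)$, so $t\mapsto\langle\varphi(t),\psi\rangle$ is Lipschitz; by density of $C^\infty_c$ in $L^2$ together with $\varphi \in L^\infty(I; L^2)$ this yields $\varphi \in C_w(I; L^2)$, and integration by parts against test functions $\psi(x)\chi(t)$ identifies $\varphi(0) = \varphi_0$. Finally, weak-$*$ lower semicontinuity of $u \mapsto \int_I \nrm{u(t)}_{L^2}^2 g(t)\,\ud t$ for $0 \le g \in L^1(I)$, applied to the uniform Gr\"onwall bound, gives $\nrm{\varphi(t)}_{L^2} \le \nrm{\varphi_0}_{L^2}\exp(C(\Gmm,\bgtht)t)$ for a.e.\ $t$, hence for every $t$ by weak continuity; that is, \eqref{eq:lin-energy-integrated}. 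Thus $\varphi$ satisfies all three requirements of Definition~\ref{def:L2-sol}, and correspondingly $\phi = \Gmm^{-\frac12}\varphi$ is an $\Gmm^{-\frac12}L^2$-solution of $L^{(\kpp)}_{\bgtht}\phi = 0$.

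The main --- though mild --- obstacle is the uniform-in-$N$ preservation of the $L^2$ energy estimate. This is where the algebraic structure enters: the truncation must be inserted \emph{symmetrically}, so that the symmetric part of the regularized operator $P_N\calA(t)P_N$ is precisely $P_N \cdot \tfrac12(\calA(t)+\calA(t)^{*}) \cdot P_N$, whose $L^2$ operator norm is controlled by that of $\tfrac12(\calA(t)+\calA(t)^{*}) = \tfrac12 \times \eqref{eq:bdd-comm}$ via Proposition~\ref{prop:bdd-comm}, while the skew principal term and the nonnegative dissipative term contribute harmlessly; an asymmetric regularization would spoil this. Everything else --- Banach--Alaoglu, identification of the distributional limit, weak-in-time continuity, and weak lower semicontinuity of the energy --- is routine compactness machinery.
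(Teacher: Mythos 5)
Your argument is correct, but it follows a genuinely different route from the paper's: the paper omits the proof and refers to a compactness argument based on the Aubin--Lions lemma (following \cite[Appendix A]{JO1}), whereas you construct the solution by a symmetric Fourier--Galerkin truncation $P_N (\cdot) P_N$, prove uniform-in-$N$ energy bounds, and pass to the limit using weak-$\ast$ compactness alone. Since the equation is linear with fixed smooth coefficients, weak convergence indeed suffices to identify the limit equation in the sense of distributions, so no strong compactness (hence no Aubin--Lions) is required; the structural inputs are the same in both approaches, namely the skew-adjointness of $\Gmm^{\frac12} \nb^{\perp}\bgtht\cdot\nb\, \Gmm^{\frac12}$, the nonnegativity of $\kpp \Ups$, and the $L^{2}$-boundedness of the symmetrized lower-order operator, which you correctly identify with \eqref{eq:bdd-comm} so that Proposition~\ref{prop:bdd-comm} applies; inserting the truncation symmetrically is exactly what preserves this structure uniformly in $N$. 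What your route buys is a self-contained, elementary construction (a Picard ODE in $L^{2}$ for each $N$ plus Banach--Alaoglu, weak time-continuity, and weak lower semicontinuity of the energy), while the paper's route simply reuses machinery already set up in \cite{JO1}. One cosmetic point: when passing to the limit in the weak formulation, the convergence you need is for the adjoints acting on the test function, i.e. $P_N S(t)^{*} P_N \psi \to S(t)^{*}\psi$, $P_N \calA(t)^{*} P_N \psi \to \calA(t)^{*}\psi$ and $P_N \Ups P_N \psi \to \Ups \psi$ in $L^{2}$ for fixed smooth, rapidly decaying $\psi$ (which holds because $(P_N - \mathrm{Id})\psi \to 0$ in every Sobolev norm and these operators map such $\psi$ continuously into $L^{2}$); you in fact use this correct adjoint form later when establishing weak continuity in time, so this is a matter of wording rather than a gap.
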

We omit the proof, which is a simple application of the Aubin--Lions lemma; see \cite[Appendix A]{JO1} for details. 

\subsubsection{Generalized energy identity}

We now present the generalized energy identity, which is one of the main tools in the proof of linear and nonlinear illposedness. For the moment assume that $\varphi$ and $\psi$ are sufficiently smooth, decaying fast at infinity, and solve  {$\calL^{(\kpp)}_{\bgtht} \varphi = 0$} with errors $\err_\varphi$ and $\err_\psi$; that is, \begin{equation*}
	\begin{split}
		\calL^{(\kpp)}_{\bgtht} \varphi =\err_\varphi, \qquad \calL^{(\kpp)}_{\bgtht} \psi =\err_\psi . 
	\end{split}
\end{equation*} Then, we compute \begin{align*}
	\frac{\ud}{\ud t} \brk{\varphi,\psi} + 2\kpp\brk{ \Upsilon^{\frac12}\varphi,\Upsilon^{\frac12}\psi } 
	&= \brk{\err_\varphi,\psi} + \brk{\varphi,\err_\psi} + \brk{\Gmm^{\frac{1}{2}} \nb^{\perp} \bgtht \cdot \nb \Gmm^{\frac{1}{2}} \varphi  , \psi} + \brk{ \varphi  , \Gmm^{\frac{1}{2}} \nb^{\perp} \bgtht \cdot \nb \Gmm^{\frac{1}{2}}\psi} \\
	&\peq
	- \brk{\Gmm^{\frac{1}{2}} \nb^{\perp} \Gmm \bgtht \cdot \nb \Gmm^{-\frac{1}{2}} \varphi,\psi} - \brk{\varphi,\Gmm^{\frac{1}{2}} \nb^{\perp} \Gmm \bgtht \cdot \nb \Gmm^{-\frac{1}{2}}\psi} . 
\end{align*} Similarly as in the derivation of the energy identity, we have that the third and fourth terms on the right hand side cancel each other, using the anti-symmetry of $\nb^{\perp} \bgtht \cdot \nb$. Next, following the proof of the energy identity, the last two terms can be combined as follows: \begin{equation*}
	\begin{split}
		\brk{\Gmm^{\frac{1}{2}} \nb^{\perp} \Gmm \bgtht \cdot \nb \Gmm^{-\frac{1}{2}} \varphi,\psi} + \brk{\varphi,\Gmm^{\frac{1}{2}} \nb^{\perp} \Gmm \bgtht \cdot \nb \Gmm^{-\frac{1}{2}}\psi} = \brk{ \varphi, \left([\Gmm^{\frac{1}{2}}, \nb^{\perp} \Gmm \bgtht \cdot \nb] \Gmm^{-\frac{1}{2}} + \Gmm^{-\frac{1}{2}} [\Gmm^{\frac{1}{2}}, \nb^{\perp} \Gmm \bgtht \cdot \nb]\right)\psi}. 
	\end{split}
\end{equation*} Assuming for simplicity that $\err_\varphi=0$, we have the following generalized energy identity:
\begin{proposition}\label{prop:gei}
	Let $\varphi$ be a $L^2$ solution to  {$\calL^{(\kpp)}_{\bgtht} \varphi = 0$} on $I$ in the sense of Definition \ref{def:L2-sol}, and assume that $\psi$ satisfy $\calL^{(\kpp)}_{\bgtht} \psi =\err_\psi$ on $I$ with regularity \begin{equation*}
		\begin{split}
			\psi \in  C_{t}(I;L^2) \cap L^2_{t}(I; \kpp^{-\frac12}\Upsilon^{-\frac12}L^2) \cap L^2_{t}(I;H^1),\qquad \err_\psi \in L^1_{t}(I;L^2). 
		\end{split}
	\end{equation*} Then, we have \begin{equation}\label{eq:gei}
		\begin{split}
			\frac{\ud}{\ud t} \brk{\varphi,\psi} + 2\kpp\brk{ \Upsilon^{\frac12}\varphi,\Upsilon^{\frac12}\psi } 
			= \brk{\varphi,\err_\psi} +  \brk{ \varphi, \left([\Gmm^{\frac{1}{2}}, \nb^{\perp} \Gmm \bgtht \cdot \nb] \Gmm^{-\frac{1}{2}} + \Gmm^{-\frac{1}{2}} [\Gmm^{\frac{1}{2}}, \nb^{\perp} \Gmm \bgtht \cdot \nb]\right)\psi}
		\end{split}
	\end{equation} on $t\in I$. 
\end{proposition}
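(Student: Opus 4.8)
\emph{Plan.} Following the convention stated just above, we regard \eqref{eq:lin-L2} as the case $\kpp = 0$ of \eqref{eq:lin-L2-diss}, and abbreviate $A := \Gmm^{\frac12}\nb^\perp\bgtht\cdot\nb\,\Gmm^{\frac12}$, $B := \Gmm^{\frac12}\nb^\perp\Gmm\bgtht\cdot\nb\,\Gmm^{-\frac12}$, and let $\calR := [\Gmm^{\frac12},\nb^\perp\Gmm\bgtht\cdot\nb]\Gmm^{-\frac12} + \Gmm^{-\frac12}[\Gmm^{\frac12},\nb^\perp\Gmm\bgtht\cdot\nb]$ denote the sub-principal operator, which is bounded on $L^2$ by Proposition~\ref{prop:bdd-comm}. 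The strategy is to first establish \eqref{eq:gei} when $\varphi$ and $\psi$ are both smooth (and spatially decaying, if $\Omg = \bbT\times\bbR$), in which case it is exactly the formal computation carried out just above the statement, and then to recover the general case by regularization. In the smooth category every step of that computation is rigorous: the two pairings involving $A$ cancel after integration by parts, since $\Gmm^{\frac12}$ is formally self-adjoint and $\nb^\perp\bgtht\cdot\nb = -(\rd_{x_2}\bgtht)\rd_{x_1}$ is skew-adjoint (equivalently $\nb\cdot\nb^\perp\bgtht = 0$), while the two pairings involving $B$ recombine, precisely as displayed above, into the term $\brk{\varphi,\calR\psi}$ of \eqref{eq:gei} (using $B + B^* = \calR$). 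Integrating the resulting pointwise-in-$t$ identity over $[0,t]$ yields \eqref{eq:gei} in integrated form; since the right-hand side then lies in $L^1_t(I)$, it follows that $t \mapsto \brk{\varphi,\psi}$ is absolutely continuous and \eqref{eq:gei} holds for a.e.\ $t$.

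\emph{Regularization.} For a general $L^2$-solution $\varphi$ and a general $\psi$ satisfying the hypotheses, let $\{J_\eta\}_{\eta \downarrow 0}$ be a standard family of smoothing Fourier multipliers converging strongly to the identity (composed with a spatial cutoff when $\Omg = \bbT\times\bbR$, whose error is sent to zero first), and set $\varphi_\eta = J_\eta\varphi$, $\psi_\eta = J_\eta\psi$. These are smooth with spatial decay and solve $\calL^{(\kpp)}_{\bgtht}\varphi_\eta = [\calL^{(\kpp)}_{\bgtht},J_\eta]\varphi$ and $\calL^{(\kpp)}_{\bgtht}\psi_\eta = J_\eta\err_\psi + [\calL^{(\kpp)}_{\bgtht},J_\eta]\psi$. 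Applying the smooth-case identity to $(\varphi_\eta,\psi_\eta)$ with these errors, integrating over $[0,t]$, and sending $\eta \to 0$, every term other than the commutator contributions converges to the desired one: $\brk{\varphi_\eta(s),\psi_\eta(s)} \to \brk{\varphi(s),\psi(s)}$ at $s = 0, t$ because $\varphi_\eta \to \varphi$ in $C_w(I; L^2)$ and $\psi_\eta \to \psi$ in $C(I; L^2)$; $\int_0^t \brk{\Ups^{\frac12}\varphi_\eta,\Ups^{\frac12}\psi_\eta} \to \int_0^t \brk{\Ups^{\frac12}\varphi,\Ups^{\frac12}\psi}$ since $J_\eta$ commutes with $\Ups^{\frac12}$ and $\Ups^{\frac12}\varphi, \Ups^{\frac12}\psi \in L^2_t(I; L^2)$; and $\int_0^t \brk{\varphi_\eta, J_\eta\err_\psi}$ and $\int_0^t \brk{\varphi_\eta, \calR\psi_\eta}$ converge by dominated convergence, using $\varphi \in L^\infty_t(I; L^2)$, $\err_\psi \in L^1_t(I; L^2)$, and the $L^2$-boundedness of $\calR$.

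\emph{Main obstacle.} It remains to show that the commutator errors vanish in the limit, i.e.\ $\int_0^t \brk{[\calL^{(\kpp)}_{\bgtht},J_\eta]\varphi, \psi_\eta} \to 0$ and $\int_0^t \brk{\varphi_\eta, [\calL^{(\kpp)}_{\bgtht},J_\eta]\psi} \to 0$; this is the heart of the matter, and is where the singular nature of $\Gmm$ makes the estimate delicate. Since $J_\eta$ commutes with $\rd_t$, $\Gmm^{\pm\frac12}$ and $\Ups$, the commutator $[\calL^{(\kpp)}_{\bgtht},J_\eta]$ reduces to expressions of the form $\Gmm^{\frac12}[a(x_2),J_\eta]\rd_{x_1}\Gmm^{\pm\frac12}$, with $a \in C^\infty$ having bounded derivatives (arising from $\rd_{x_2}\bgtht$ and $\rd_{x_2}(\Gmm\bgtht)$), plus a lower-order contribution from the time-dependence of $\bgtht$ in the dissipative case. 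After distributing the factors $\Gmm^{\pm\frac12}$ between the two entries of each pairing, one invokes a Friedrichs-type commutator estimate of the form $\nrm{[a(x_2),J_\eta]\rd_{x_1}g}_{H^r} \to 0$ for $g \in H^{r+1}$, together with its quantitative version and the slow variation of $\gmm$ (Assumption~1); this is precisely where the regularity $\psi \in L^2_t(I; H^1)$ is used. Alternatively — and this already suffices in every application of Proposition~\ref{prop:gei}, where $\psi$ and $\err_\psi$ are themselves smooth — one bypasses the regularization of $\varphi$ entirely and instead tests the distributional identity $\calL^{(\kpp)}_{\bgtht}\varphi = 0$ against $\chi(t)\psi$ for $\chi \in C^\infty_c((0,t))$, integrating by parts in $t$.
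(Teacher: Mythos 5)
Your overall strategy is exactly the paper's: the paper also proves \eqref{eq:gei} by mollifying $\varphi$ and $\psi$, repeating the formal computation displayed before the statement, and asserting that the mollification errors vanish by the assumed regularity, with details deferred to \cite[Proposition 2.3]{JO1}. Your treatment of the non-commutator limits (weak $L^{2}$ continuity for the endpoint pairings, $L^{2}_{t}\Ups^{\frac12}$ integrability for the dissipative term, boundedness of the sub-principal operator from Proposition~\ref{prop:bdd-comm}, dominated convergence for $\err_\psi$) is fine, and your closing remark that one may instead test the distributional equation for $\varphi$ against $\chi(t)\psi$ when $\psi$ is smooth is indeed the robust route that suffices in every application of the proposition. (One small inaccuracy: since $J_\eta$ is a time-independent spatial multiplier, the time-dependence of $\bgtht$ in the dissipative case produces no additional commutator term at all.)

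The place where your sketch over-claims is precisely the step you call the heart of the matter. You assert that the errors $\brk{[\calL^{(\kpp)}_{\bgtht},J_\eta]\varphi,\psi_\eta}$ and $\brk{\varphi_\eta,[\calL^{(\kpp)}_{\bgtht},J_\eta]\psi}$ vanish via a Friedrichs-type commutator estimate ``together with the slow variation of $\gmm$,'' using only $\psi\in L^{2}_{t}(I;H^{1})$. A derivative count shows this does not close once the order $\bt_{0}$ of $\gmm$ exceeds $1$: after distributing one factor $\Gmm^{\frac12}$ onto the other entry, the pairing costs $\bt_{0}$ derivatives in total (one $\Gmm^{\frac12}$ lands on $\varphi$, which is only in $L^{2}$, and one on $\psi$, which is only in $H^{1}$) in addition to the $\rd_{x_1}$, whereas $[a(x_{2}),J_\eta]$ gains exactly one derivative, and slow variation of $\gmm$ supplies no further gain. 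Even exploiting the cancellation between the two error terms does not help: the two contributions of the principal part combine, up to sign, into $\brk{\rd_{x_1}\Gmm^{\frac12}\varphi,\,[f',J_\eta^{2}]\Gmm^{\frac12}\psi}$, which still carries a deficit of $\bt_{0}-1$ derivatives. So your justification closes only for $\bt_{0}\le 1$; for $\Gmm=\Lmb^{\bt}$ with $\bt>1$ (a central case of the paper, e.g.\ AM-LQG) one needs either additional smoothness of $\psi$ — which is available in every application, where $\psi$ is a compactly supported wave packet — or a genuinely different use of the hypothesis $\calL^{(\kpp)}_{\bgtht}\psi=\err_\psi$. The paper is equally silent on this point, so your proposal matches its level of rigor, but the Friedrichs-plus-slow-variation argument as written is not by itself sufficient and should either be restricted to $\bt_{0}\le1$ or replaced by the duality argument you mention in your last sentence.
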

The proof follows from first mollifying $\varphi,\psi$ and repeating the computations above, which gives a generalized energy identity with some error terms arising from the mollification. Then it is not difficult to observe that the mollification errors vanish as the mollification parameter goes to zero, using the assumed regularity of $\varphi,\psi,\err_\psi$. We omit the straightforward details (cf. \cite[Proposition 2.3]{JO1}). 
Moreover, one may generalize the above proposition to the case when $\varphi$ is a $L^2$ solution with a $L^1_t(I;L^2)$ error, denoted by $\err_\varphi$. Then, we again have \eqref{eq:gei} with $\brk{\err_\varphi,\psi}$ added to the right hand side.

\subsection{Key ideas}\label{subsec:ideas}
\noindent \textit{Outline of the proof.} We give an overall picture of the illposedness proofs. Given \eqref{eq:lin-L2}, most of the work goes into construction of \emph{degenerating {wave packet}} solutions $\widetilde{\varphi}$ to \eqref{eq:lin-L2}. Basically, we would like to construct some approximate solutions which behave similarly with the solution of the toy model \eqref{eq:sol-toy}, given a quadratically degenerate shear steady state $\bgtht$. Some key properties that are required for the degenerating {wave packet}s solutions can be summarized as follows: \begin{itemize}
	\item Frequency localization: the initial data is sharply concentrated near some frequency $\lmb_{0}\gg1$; $\nrm{ \widetilde{\varphi}_0 }_{H^{s}} \sim \lmb_{0}^{s}$ for $s\ge0$.
	\item Error bound: for some interval $I=[0,{t_{0}}]$ with ${t_{0}}>0$, $\nrm{ \calL_{\bgtht}[\widetilde{\varphi}] }_{L^1_{t}(I;L^2)} \ll 1$. That is, $\widetilde{\varphi}$ is an approximate solution to \eqref{eq:lin-L2}. From the energy identity, it also follows that $\nrm{ \widetilde{\varphi} }_{L^\infty_t(I;L^2)} \lesssim \nrm{\widetilde{\varphi}_0}_{L^2} \lesssim 1$.
	\item Decay of negative Sobolev norms (degeneration estimate): we have a decomposition $\widetilde{\varphi} = \widetilde{\varphi}^{main} + \widetilde{\varphi}^{small}$ such that $\nrm{ \widetilde{\varphi}^{small}}_{L^\infty_t(I;L^2)}\ll 1$ and $\lmb_{0}^{s}\nrm{\widetilde{\varphi}^{main}(\tau)}_{H^{-s}} \ll 1$ for some $s>0$.\footnote{The decomposition $\widetilde{\varphi} = \widetilde{\varphi}^{main} + \widetilde{\varphi}^{small}$ is necessary since we would like to take advantage of the amplitude function compactly supported \textit{in space}. Therefore, unlike the toy model solution (which is compactly supported in the frequency side), there is some low-frequency part in $\widetilde{\varphi}$ which does not degenerate.}
	(The large parameter is $\lmb_{0}\gg1$.) 
\end{itemize}
Even in the case of dissipative linear equation  {$\calL^{(\kpp)}_{\bgtht} \varphi = 0$}, the above requirements remain unchanged, except that the profile $\bgtht$ becomes time-dependent (where $\rd_{t} f + \kpp \Ups f = 0$): we shall always incorporate the dissipation term $\kpp \Upsilon \tld{\varphi}$ as a part of the error term. Assuming for a moment that we are given $\widetilde{\varphi}$ satisfying the above, the rest of the illposedness proof follows a \emph{duality} or \emph{generalized energy argument} that originated from \cite{JO1} for Hall- and electron-MHD systems, which we now explain. For the moment we consider the linear illposedness result. To begin, simply take the initial data $\varphi_0 = \widetilde{\varphi}_0$ and apply the generalized energy identity \eqref{eq:gei} with $\psi=\widetilde{\varphi}$ and  $\varphi$ some $L^2$-solution to $\calL_{\bgtht} \varphi = 0$ on $I=[0,{t_{0}}]$ associated with $\varphi_0$. We then obtain \begin{equation*}
	\begin{split}
		\left|\frac{\ud}{\ud t} \brk{ \varphi, \widetilde{\varphi} } \right| \lesssim \nrm{\varphi(t)}_{L^2}(\nrm{\calL_{\bgtht}[\widetilde{\varphi}](t)}_{L^2}  + \nrm{\widetilde{\varphi}(t)}_{L^2}).
	\end{split}
\end{equation*} Using 
\begin{equation*}
	\begin{split}
		\nrm{\varphi(t)}_{L^2}\lesssim \nrm{\varphi_0}_{L^2},\qquad \nrm{\widetilde{\varphi}(t)}_{L^2}\lesssim \nrm{\widetilde{\varphi}_0}_{L^2},
	\end{split}
\end{equation*}
{which is valid for $0 \leq t \leq {t_{0}}$ with ${t_{0}} = O_{\bgtht}(1)$,} and then integrating in time, \begin{equation*}
\begin{split}
	\brk{ \varphi, \widetilde{\varphi}}(t) \ge \brk{ \varphi_0, \widetilde{\varphi}_0} - C\nrm{\varphi_0}_{L^2}(\nrm{\calL_{\bgtht}[\widetilde{\varphi}]}_{L^1([0,t];L^2)} + t\nrm{\widetilde{\varphi}_0}_{L^2}).
\end{split}
\end{equation*} Using the error bound, we can deduce for $t_0>0$ small that $\brk{ \varphi, \widetilde{\varphi}}(t_0)> \frac{9}{10} \brk{ \varphi_0, \widetilde{\varphi}_0}$ (say). Then, combining this estimate with \begin{equation*}
\begin{split}
	\brk{ \varphi, \widetilde{\varphi}}({t_{0}}) \le  \nrm{\varphi({t_{0}})}_{L^2} \nrm{\widetilde{\varphi}^{small}({t_{0}})}_{L^2} + \nrm{\varphi({t_{0}})}_{H^{s}} \nrm{\widetilde{\varphi}^{main}({t_{0}})}_{H^{-s}} 
\end{split}
\end{equation*} and $ \nrm{\varphi({t_{0}})}_{L^2} \nrm{\widetilde{\varphi}^{small}({t_{0}})}_{L^2} < \frac{1}{10}\brk{ \varphi_0, \widetilde{\varphi}_0}$ (say), we deduce that $\lmb_{0}^{-s}\nrm{\widetilde{\varphi}({t_{0}})}_{H^{s}}\gg1$. Here, the initial data $\varphi_0$ only needs to satisfy that $\brk{ \varphi_0, \widetilde{\varphi}_0} $ is comparable with $\nrm{ \varphi_0}_{L^2} \nrm{ \widetilde{\varphi}_0}_{L^2}$. 

To prove the nonlinear illposedness results, we assume towards a contradiction that a sufficiently smooth solution to the nonlinear equation exists, with initial data which is given by a small and smooth perturbation of some quadratically degenerate shear steady state $\bgtht$. Here, a key observation is that then the perturbation, after applying $\Gmm^{\frac12}$, is a solution of the linear operator with a nonlinear error, and that the nonlinear error in $L^2$ can be bounded in terms of the $L^2$-norm of the perturbation, under the assumption that the perturbation remains sufficiently smooth. This is responsible for the restriction on $s'$ in the nonlinear statements. 

In what follows, we explain a few key ideas that are involved in the construction of degenerating {wave packet}s. In \cite[Sections 1--2]{JO1},   a detailed introduction is given regarding the construction of degenerating {wave packet}s for the case of Hall- and electron-MHD systems. For this reason, here we emphasize on how the additional difficulties (over the Hall- and electron-MHD cases) are handled. 

\medskip

\noindent \textit{{Degenerating wave packet construction.}} There is a general recipe for construction of {wave packet} solutions, sometimes referred to as the WKB ansatz; one prepares the ansatz \begin{equation*}
	\begin{split}
		\widetilde{\varphi} = {\Re\left(\exp(i\Phi)a\right)},
	\end{split}
\end{equation*} and derive the equations that $\Phi$ and $a$ should satisfy, based on the given linear operator. The functions $\Phi$ and $a$ will be referred to as the \emph{phase} and the \emph{amplitude}, respectively, {and basic assumption in the analysis is that $a$ varies slower than $\Phi$ (i.e., $\rd_{t} a, \nb a$ are smaller than $\rd_{t} \Phi, \nb \Phi$ in a suitable way)}. In our problem, it is not only nontrivial to choose the correct evolution equations for the phase and amplitude but also to choose the initial data whose associated solution is well-behaved (in particular satisfying the required properties in the above) for a sufficiently long period of time. {When the linear operator is given by a differential operator, this process of extracting the equations for the phase and amplitude is rather straightforward. See classical works of Birkhoff \cite{Birkhoff} and Mizohata \cite{Mz} for the case of linear Schr\"odinger operator (possibly with lower order terms).\footnote{Furthermore, in the case of a differential operator, to deal with the variable coefficient of the principal term, one may work with a renormalized independent variable $x'$ which makes the coefficient into a constant. In turn, this allows us to propagate $\Phi(t,x')\sim \xi_{0}\cdot x'$, see below for the bicharacteristic ODE as well as the equation for $\Phi$. See the lecture notes of the second author \cite[Chap. 4]{Jeong} where this analysis is carried out in some detail for the case of AM-LQG.} }

Before we proceed to explain how such choices are made, let us give some heuristics for the evolution of {wave packet}s {(for a more detailed outline, see Section~\ref{subsec:outline} below).} Roughly, we would like to regard $\widetilde{\varphi}$ as a function which is well-localized both in the physical and Fourier variables, centered at some point $(X(t),\Xi(t))$ in the phase space. Writing our linearized operator as $\calL_{\bgtht} = \rd_t + ip_{\bgtht}$ modulo lower order terms and observing that $p_{\bgtht}$ is purely real, we expect that if $\widetilde{\varphi}$ at the initial time corresponds to $(X_0,\Xi_0)$, then for $t>0$, $\widetilde{\varphi}(t)$ corresponds to $(X(t),\Xi(t))$ which is given by the solution to the {\emph{Hamiltonian ODE system} associated with $p_{\bgtht}(t, x, \xi)$,}
\begin{equation} \label{eq:p-bgtht-bichar}
\left\{
\begin{aligned}
	\dot{X} &= \nb_{\xi}p_{\bgtht}(t,X,\Xi),\\
	\dot{\Xi} &= - \nb_{x} p_{\bgtht} (t,X,\Xi),
\end{aligned}
\right.
\end{equation} with initial data $(X_0,\Xi_0)$. We shall take $|\Xi_0|\gg1$ and $X_{0}$ sufficiently close to the degeneracy of $\bgtht$, and choose the sign of $\Xi_0$ in a way that $X(t)$ moves towards the degeneracy for $t>0$. 

To actually construct {wave packet}s following the above ODE trajectories, given $\calL_{\bgtht}$, we begin with writing \begin{equation}\label{eq:calL}
\begin{split}
	\calL_{\bgtht} = \rd_t + ip_{\bgtht} + s_{\bgtht} + r_{\bgtht},
\end{split}
\end{equation} where $s_{\bgtht}$ and $r_{\bgtht}$ are smoother than $ip_{\bgtht}$ by order at least one and two, respectively. Here, we take\begin{equation*}
\begin{split}
	p_{\bgtht} := -\nb^\perp\bgtht \cdot \xi\gmm(\xi) + \nb^\perp \Gmm\bgtht \cdot\xi
\end{split}
\end{equation*} and define the equation for $\Phi$ by \begin{equation*}
\begin{split}
	\rd_t \Phi + p_{\bgtht}(t,x,\nb\Phi) = 0. 
\end{split}
\end{equation*}
{This scalar first-order PDE is called the \emph{Hamilton--Jacobi equation} associated with the symbol $p_{\bgtht}(t, x, \xi)$. It is formally derived by collecting the ``worst terms'' in $\calL_{\bgtht} \widetilde{\varphi}$, i.e., those only involving $\rd_{t} \Phi$ or $\nb \Phi$, but \emph{not} $\rd_{t} a$ or $\nb a$. Indeed, this procedure can be directly implemented when $\calL_{\bgtht}$ is a differential operator, and in the present pseudodifferential case, we rely on symbolic calculus (in particular, Proposition~\ref{prop:conj-exp}) to extract such terms. We remark that the Hamilton--Jacobi equation provides a direct link between the wave packet construction and the Hamiltonian ODE \eqref{eq:p-bgtht-bichar} for $(X, \Xi)(t)$, in the sense that \eqref{eq:p-bgtht-bichar} is nothing but the bicharacteristic ODEs associated with the Hamilton--Jacobi equation (viewed as a scalar first-order PDE).}

{When $\calL_{\bgtht}$ is a differential operator, $i p_{\bgtht}$ can be simply chosen to be the principal part, since the remainder will automatically be of at least one order lower. However, in our case, we must exercise care in selecting these parts. Indeed, observe} that we have taken the main part of the lower order operator $\Gmm^{\frac12}\nb^\perp\bgtht\cdot\nb\Gmm^{-\frac12}$ into $p_{\bgtht}$. This is inevitable, as these two terms in $p_{\bgtht}$ are barely distinguishable when $\gmm$ is only slightly singular, and in general we do not have any control over the coefficient $\nb^\perp\Gmm\bgtht$. {As a result of our choice of $p_{\bgtht}$,} we are forced to design the initial data $\Phi_{0}$ which takes {the lower order term $\nb^\perp \Gmm\bgtht \cdot\xi$}  into account. Again, the goal is to propagate that $\nb\Phi(t,x) \simeq \Xi(t)$ for $x \simeq X(t)$ for a sufficiently long interval of time, within which degeneration occurs. {We will return to this point below, under ``{\it Estimate for the derivatives of the phase and amplitude}.''}

Returning to the expression \eqref{eq:calL}, note that the difference $\calL_{\bgtht} - (\rd_t + ip_{\bgtht})$ consists of a few commutators. The operator $s_{\bgtht}$ is defined by taking the principal terms of the commutators which are given by Poisson brackets, and then we simply write $r_{\bgtht}$ for the remainder {(see Proposition~\ref{prop:L-tht0-decomp-f} for the precise expressions)}.
{We then declare that $a$ solves the linear transport equation} \begin{equation*}
	\begin{split}
		\rd_t a + \nb_{\xi} p_{\bgtht}(x,\nb\Phi) \cdot \nb a + \left( \frac{1}{2}\nb^2_{\xi} p_{\bgtht}(x,\nb\Phi): \nb^2\Phi + s_{\bgtht}(x,\nb\Phi) \right) a = 0.
	\end{split}
\end{equation*} with the initial data $a_0$ taken to be a smooth bump function supported in a small neighborhood of $X_{0}$. {Heuristically, modulo the zeroth order term, this transport equation is derived by collecting the ``second worst terms'' in $\calL_{\bgtht} \widetilde{\varphi}$. Indeed, when $\calL_{\bgtht}$ is a differential operator, the principal part $\rd_{t} a + \nb_{\xi} p_{\bgtht}(x, \nb \Tht) \cdot \nb a$ consists precisely of those terms in $\calL_{\bgtht} \widetilde{\varphi}$ involving only one factor of $\rd_{t} a$ or $\nb a$; in the present pseudodifferential case, we rely on symbolic calculus (Proposition~\ref{prop:conj-exp}) to extract such terms. Observe that \eqref{eq:p-bgtht-bichar} is also the bicharacteristic ODEs for the this linear transport equation. Finally, we note note that the last (zeroth order) term in the preceding transport is chosen precisely so that $\nrm{a(t, \cdot)}_{L^{2}}$ is conserved; see Proposition~\ref{prop:trans-a} below.}

After solving the equations for $\Phi$ and $a$ to find $\widetilde{\varphi}={\Re(e^{i\bfPhi}a)}$, we then need to estimate Sobolev norms of both $\widetilde{\varphi}$ and the error $\calL_{\bgtht}(\widetilde{\varphi})$. To begin with, compared with $p_{\bgtht}$, the operator  $r_{\bgtht}$ is smoother by order 2, and therefore we expect $r_{\bgtht}(\widetilde{\varphi})$ to be small. However, in the error there is also a contribution from $ip_{\bgtht}$ and $s_{\bgtht}$, which occurs since $e^{i\Phi}$ is not completely localized in the frequency space. To see this contribution, we introduce the conjugation operator ${}^{(\Phi)}p_{\bgtht}(x,D)$ defined by \begin{equation*}
	\begin{split}
		e^{-i\Phi} p_{\bgtht}(x,D) (e^{i\Phi} a ) =: {}^{(\Phi)}p_{\bgtht}(x,D) a.
	\end{split}
\end{equation*} We can then formally expand {(more precisely, see Proposition~\ref{prop:conj-exp})} \ \begin{equation*}
\begin{split}
	{}^{(\Phi)}r_{p}:= {}^{(\Phi)}p_{\bgtht}(x,\xi) - \left(p_{\bgtht}(x,\nb\Phi) + \nb_{\xi}p_{\bgtht}(x,\nb\Phi) {\nb \Phi} -\frac{i}{2} \nb_{\xi}^2 p_{\bgtht}(x,\nb\Phi) \nb^2\Phi\right) . 
\end{split}
\end{equation*} Note that the last two terms are taken into account in the equation for $a$ above. The symbol for ${}^{(\Phi)}r_{p}$ can be expressed explicitly by an oscillatory integral, and we need a sharp bound for its operator norm into $L^{2}$. It turns out that both for the oscillatory integral bound and the degeneration estimate, {the key step is to find the \emph{inverse wave packet scale} $\mu(t)$ such that $\mu(t) \ll \lmb(t)$, $|\rd^{k+1}\Phi(t)| \aleq \mu^{k}(t)\lmb(t)$ (on the support of $a(t)$), and $|\rd^{k}a(t)|\aleq \mu^{k}(t)$, where $\lmb(t) \aeq |\rd\Phi(t)|  (\aeq \abs{\Xi(t)})$ is the overall \emph{wave packet frequency}.} Below, let us briefly explain how it is done. 

\medskip

\noindent \textit{Estimate for the derivatives of the phase and amplitude.} 
A basic difficulty in controlling the high derivatives of $\rd \Phi$ is that $\rd^{2} \Phi$ obeys a Ricatti-type ODE along characteristics, which tends to blow up in finite time (indeed, such a blow up corresponds to a focal point along a bicharacteristic). Therefore, some care is needed to control $\rd^{2} \Phi$ on a sufficiently long time interval, so as to see the effect of degeneration. {Equivalently, we need to carefully make a suitable choice of the initial data $\Phi(t=0)$ for $\Phi$.}

When the shear profile $\bgtht = f(x_2)$ is time-independent and degenerate at $x_2=0$, it turns out that the following solution given by \emph{separation of variables} avoids the basic difficulty and achieves the required bounds:\begin{equation*}
	\begin{split}
		\bfPhi(t,x) = E\lmb_0 + \lmb_{0}x_1 + \rd_{x_2}^{-1} \left( \gmm_{\lmb_{0}}^{-1}\left( \frac{E+\Gmm f'}{-f'} \right) \right),
	\end{split}
\end{equation*} where \begin{equation*}
\begin{split}
	E =  -f'(\overline{x})\gmm_{\lmb_{0}}((1-\tfrac{1}{2} \eps)\lmb_0) - \Gmm f'(\overline{x})
\end{split}
\end{equation*} with some $\overline{x}>0$ and $\eps>0$ sufficiently small, $\gmm_{\lmb_{0}}(\cdot) := \gmm(\lmb_0,\cdot)$,  and $\gmm_{\lmb_{0}}^{-1}$ is the inverse of $\gmm_{\lmb_{0}}$. As in the solution of the toy model above, we have separated out the dependence in $x_1$, which does not change in time due to the $x_1$-independence of the linear operator. While the formula looks somewhat complicated, {when $p_{\bgtht}$ is given by a differential operator}, this form corresponds to the linear ansatz in the renormalized coordinates {in which the coefficient of the principal symbol of $p_{\bgtht}$ is non-degenerate}, which seems to be a {natural} choice. This choice of initial data is especially important in the case of time-dependent background $\bgtht = f(t,x_{2})$. {As discussed before,} already in the case $k = 1$, the equation for $\rd_{x_2}^2\Phi$ becomes a Riccati-type ODE along characteristics, which could in principle grow much faster than $|\rd_{x}\Phi|^2 \aeq \lmb(t)^{2}$. On the other hand, one can check that our choice of $\Phi(t=0)$ makes the variable \begin{equation*}
\begin{split}
	h(t,X_2(t)) := \frac{\rd_{\xi_2} p_{\bgtht}(t, X_2(t), \Xi_2(t))}{\rd_{x_2} p_{\bgtht}(t, X_2(t), \Xi_2(t))} \rd_{x_2}^{2} \Phi (t, X_2(t)) + 1
\end{split}
\end{equation*} vanish at $t = 0$. (Here we are neglecting the dependence on $x_1$ and $\xi_1$.) Moreover, one can check that \emph{the quantity $h$ satisfies a remarkably simple equation}, which allows to propagate $|h(t)|\ll 1$ for {a suitably long time} and in turn gives a sharp bound for $\rd_{x_2}^{2} \Phi (t, X_2(t))$. {Moreover, we obtain a sharp control on the instantaneous expansion of characteristics
\begin{equation*}
	\left. \rd_{x_{2}} (\rd_{\xi_{2}} p(t, x_{2}, \rd_{x_{2}} \Phi(t, x_{2})))\right|_{t, x_{2}=X_{2}(t)} = - \left(\frac{\gmm_{\lmb_{0}}'(\Xi_{2}(t))}{\gmm_{\lmb_{0}}(\Xi_{2}(t))} - \frac{\gmm_{\lmb_{0}}''(\Xi_{2}(t))}{\gmm_{\lmb_{0}}'(\Xi_{2}(t))} \right)\dot{\Xi}_{2} + \cdots =: - A(t) + \cdots.
\end{equation*}
The wave packet scale $\mu^{-1}$ is then nothing but the product of the initial scale $\Dlt x_{0}$ and the integrated expansion factor, i.e.,
\begin{equation*}
	\mu^{-1} \aeq \Dlt x_{0} \exp\left(-\int_{0}^{t} A(t') \ud t' \right) \aeq \Dlt x_{0} \frac{\gmm_{\lmb_{0}}(\lmb_{0})}{\gmm_{\lmb_{0}}(\lmb(t))} \frac{\gmm_{\lmb_{0}}'(\lmb(t))}{\gmm_{\lmb_{0}}'(\lmb_{0})}
	\qquad \hbox{(up to a small power of $\lmb_{0}$).}
\end{equation*}
}
{The actual proof of the bound $\rd_{x_{2}}^{k+1} \Phi \aleq \mu(t)^{k} \lmb(t)$ proceeds by first propagating sharp bounds for $\rd_{x_2}^{k-1}h(t)$, and then converting it to that for $\rd_{x_2}^{k+1}\Phi$. It turns out to be essential to take advantage of the cancellations embedded in the transformation from $\rd_{x_{2}}^{2} \Phi$ to $h$, which are very difficult to see when one works directly with the equations for $\rd_{x_2}^{k+1}\Phi$.} Given the bounds for $\rd_{x_2}^{k+1}\Phi(t)$, it is relatively straightforward to obtain the bounds for $\rd_{x_2}^{k}a(t)$.

\section{Algebraic preliminaries for degenerate wave packet construction} \label{sec:psdo}
Our goal in Sections~\ref{sec:psdo}--\ref{sec:wavepacket} is to construct an approximate solution with initial frequency $O(\lmb_{0})$ to the equation $\calL_{\bgtht} \varphi = 0$ that is valid for times $[0, \frac{1}{1-\eps(\lmb_{0})} t_{f}(\tau_{M})]$, {where $\bgtht$, $M$ and $\tau_{M}$ obey the hypotheses of either Theorem~\ref{thm:norm-growth} (steady state case) or Theorem~\ref{thm:norm-growth-diss} ($f$ is time-dependent but even) and $\eps(\lmb_{0})$ is a small parameter that will be fixed in the construction (see Section~\ref{subsec:HJE-prelim}).} We look for an approximate solution of the form
\begin{equation*}
\tld{\varphi} = \Re \left( a(t, x) e^{i \bfPhi(t, x)} \right), 
\end{equation*}
where the \emph{amplitude} $a(t, x)$ is ``slowly varying'' compared to the real-valued \emph{phase} $\bfPhi$, such that $\calL_{\bgtht} \tld{\varphi}$ is sufficiently small. Anticipating the spatial degeneration property of $\tld{\varphi}$ (which comes with the frequency growth property that we want), we will refer to such an object $\tld{\varphi}$ as a \emph{degenerating wave packet} adapted to $\calL_{\bgtht}$.

In order to construct a degenerating wave packet $\tld{\varphi}$ and, more importantly, to bound the error $\calL_{\bgtht} \tld{\varphi}$, we need to develop tools for computing the action of the multiplier $\Gmm$ on $a e^{i \bfPhi}$. Section~\ref{subsec:prelim} is devoted to accomplishing this task. Then in Section~\ref{subsec:outline}, we use the algebraic identities derived in Section~\ref{subsec:prelim} to specify the construction of the phase $\bfPhi$ and the amplitude $a$ of a degenerate wave packet. The tasks of verifying the degeneration property of $\tld{\varphi}$ and bounding the error $\calL_{\bgtht} \tld{\varphi}$ are deferred to Section~\ref{sec:wavepacket}, after further necessary tools are developed (see also the end of Section~\ref{subsec:outline}).

\subsection{Some symbolic calculus}\label{subsec:prelim}
In this subsection, it will be convenient to generalize the set-up a bit and work with classical pseudo-differential operators with generalized order on $\bbR^{d}$. 

Let $m = m(\xi)$ be a smooth, even, positive, and slowly varying {(i.e., $\abs{\rd_{\xi}^{I} m(\xi)} \aleq_{I} \brk{\xi}^{-\abs{I}} m(\xi)$ for any multi-index $I$ and $\xi \in \bbR^{2}$)} symbol on $\bbR^{d}$. As introduced in Section~\ref{subsec:results}, $\gmm(\xi)$ is an example of such a symbol on $\bbR^{2}$ {and so is $\gmm^{-1}(\xi)$}. As in the case of Fourier multipliers, we say that \emph{a symbol $p = p(x, \xi)$ belongs to the class $S(m)$} if
\begin{equation} \label{eq:symb-m}
\abs{\rd_{x}^{J}  \rd_{\xi}^{I} p(x, \xi)} \aleq_{I, J} \brk{\xi}^{-\abs{I}} m(\xi) \quad \hbox{ for every pair of multi-indices $I$, $J$.}
\end{equation}
When $m = \brk{\xi}^{s}$, $S(m)$ coincides with the class of classical symbols of order $s$. Nonstandard examples of $m$ include $m(\xi) = \log^{s} (10 + \abs{\xi})$, $m(\xi) = \log^{s} (10 + \log (10 + \abs{\xi}))$ etc.

Given $p(x, \xi) \in S(m)$, we define its \emph{left quantization} to be the operator
\begin{equation} \label{eq:psdo-op}
{p(x, D_{1}, D_{2}) u = \int p(x, \xi_{1}, \xi_{2}) u(y) e^{i (\xi_{1} (x^{1} - y^{1}) + \xi_{2} (x^{2} - y^{2}))} \, \ud y^{1} \ud y^{2} \frac{\ud \xi_{1} \ud \xi_{2}}{(2 \pi)^{2}}.}
\end{equation}
{Accordingly, we use the notation $D_{j}$ for the quantization of $\xi_{j}$, or equivalently, the operator $i^{-1} \rd_{x^{j}}$. We will also often use the shorthand $D = (D_{1}, D_{2})$ and simply write $p(x, D) = p(x, D_{1}, D_{2})$.} We denote by $Op(S(m))$ the space of all left quantizations of symbols in $S(m)$. 

We begin by stating the basic symbolic calculus for composition of $Op(S(m))$-operators.
\begin{lemma} [Symbolic calculus] \label{lem:symb-calc}
	Let $p \in S(m_{p})$ and $q \in S(m_{q})$. Then the composition $p(x, D) q(x, D)$ belongs to $Op(S(m_{p} m_{q}))$, and for every $N \in \bbN$, its symbol $p \circ q(x, \xi)$ obeys
	\begin{equation} \label{eq:p-q-comp}
	p \circ q(x, \xi) - \sum_{\alp : \abs{\alp} \leq N-1} \frac{1}{\alp! i^{\abs{\alp}}} \rd_{\xi}^{\alp} p(x, \xi) \rd_{x}^{\alp} q(x, \xi) \in S(\brk{\xi}^{-N} m_{p} m_{q}).
	\end{equation}
	In particular,
	\begin{equation} \label{eq:p-q-comm}
	p \circ q(x, \xi) - q \circ p(x, \xi) - i^{-1} \{p, q\}(x, \xi) \in S(\brk{\xi}^{-2} m_{p}  m_{q}),
	\end{equation}
	where $\{p, q\} = \sum_{j} \left(\rd_{\xi_{j}} p \rd_{x_{j}} q - \rd_{x_{j}} p \rd_{\xi_{j}} q\right)$ is the Poisson bracket.
\end{lemma}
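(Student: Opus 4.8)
The plan is to follow the classical proof of symbolic calculus, the only new feature being that $m_{p}$ and $m_{q}$ need not be powers of $\brk{\xi}$; this is harmless, since the slow-variation hypothesis (iterating the analogue of \eqref{eq:slow-var} for $m_{p}$) yields an exponent $\bt_{p}$ with $m_{p}(\xi) \aleq \brk{\xi}^{\bt_{p}}$, so all convergence questions reduce to the standard ones. First I would substitute the definition \eqref{eq:psdo-op} of $q(x, D) u$ into $p(x, D)(\cdot)$ and apply Fourier inversion to obtain $p(x, D) q(x, D) = (p \circ q)(x, D)$ with
\begin{equation*}
(p \circ q)(x, \xi) = \iint p(x, \xi + \eta)\, q(x + y, \xi)\, e^{- i y \cdot \eta} \, \ud y \, \frac{\ud \eta}{(2\pi)^{d}},
\end{equation*}
understood in the usual oscillatory-integral sense (insert a cutoff $\chi(\veps y, \veps \eta)$, integrate by parts, and let $\veps \to 0$).

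Next I would fix $N$ and Taylor expand $\eta \mapsto p(x, \xi + \eta)$ at $\eta = 0$ to order $N$, with integral remainder $R_{N}(x, \xi, \eta) = N \sum_{\abs{\alp} = N} \tfrac{\eta^{\alp}}{\alp!} \int_{0}^{1} (1-t)^{N-1} (\rd_{\xi}^{\alp} p)(x, \xi + t\eta) \, \ud t$. For the polynomial part, I would use $\eta^{\alp} e^{- i y \cdot \eta} = i^{\abs{\alp}} \rd_{y}^{\alp} e^{- i y \cdot \eta}$, move the $\eta^{\alp}$ onto $q(x+y, \xi)$ by integration by parts in $y$, and then carry out the $\eta$-integral, which produces $\dlt(y)$; the $\alp$-term collapses to $\tfrac{1}{\alp! \, i^{\abs{\alp}}} \rd_{\xi}^{\alp} p(x,\xi)\, \rd_{x}^{\alp} q(x,\xi)$ (using $i^{\abs{\alp}}(-1)^{\abs{\alp}} = i^{-\abs{\alp}}$), which is exactly the finite sum in \eqref{eq:p-q-comp}. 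It then remains to show that the remainder piece
\begin{equation*}
r_{N}(x, \xi) = \sum_{\abs{\alp} = N} \frac{N (-i)^{\abs{\alp}}}{\alp!} \int_{0}^{1} (1-t)^{N-1} \iint (\rd_{\xi}^{\alp} p)(x, \xi + t\eta)\, (\rd_{x}^{\alp} q)(x + y, \xi)\, e^{- i y \cdot \eta} \, \ud y \, \frac{\ud \eta}{(2\pi)^{d}} \, \ud t
\end{equation*}
(obtained after moving $\eta^{\alp}$ onto $q$ as above) belongs to $S(\brk{\xi}^{-N} m_{p} m_{q})$.

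For this I would split the $\eta$-integral at $\abs{\eta} = \tfrac12 \brk{\xi}$. On $\abs{\eta} \leq \tfrac12 \brk{\xi}$ one has $\brk{\xi + t \eta} \aeq \brk{\xi}$ and, by slow variation of $m_{p}$, $m_{p}(\xi + t\eta) \aeq m_{p}(\xi)$, so $\abs{\rd_{\xi}^{\alp} p(x, \xi + t\eta)} \aleq \brk{\xi}^{-N} m_{p}(\xi)$ while $\abs{\rd_{x}^{\alp} q(x+y,\xi)} \aleq m_{q}(\xi)$; integrating by parts $M$ times in $\eta$ (via $\brk{y}^{-2M} (1 - \lap_{\eta})^{M} e^{- i y \cdot \eta} = e^{- i y \cdot \eta}$, which only improves the $p$-factor) and $M'$ times in $y$ (via $\brk{\eta}^{-2M'} (1 - \lap_{y})^{M'} e^{- i y \cdot \eta} = e^{- i y \cdot \eta}$, which keeps $q$ in $S(m_{q})$), with $M, M'$ large, makes the integral absolutely convergent with bound $\aleq \brk{\xi}^{-N} m_{p}(\xi) m_{q}(\xi)$. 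On $\abs{\eta} \geq \tfrac12 \brk{\xi}$ I would again integrate by parts $M' \gg 1$ times in $y$ to gain $\brk{\eta}^{-2M'}$ and $M$ times in $\eta$, now using only the crude bounds $m_{p}(\xi + t\eta) \aleq \brk{\xi}^{\bt_{p}} \brk{\eta}^{\bt_{p}}$ and $\brk{\xi + t\eta}^{-N} \leq 1$; taking $2M' > N + 2 \bt_{p} + d + 1$ together with $\abs{\eta} \geq \tfrac12 \brk{\xi}$ yields $\aleq \brk{\xi}^{-N} m_{p}(\xi) m_{q}(\xi)$, with room to spare. Derivatives of $r_{N}$ are treated the same way: $\rd_{x}$ distributes by Leibniz onto the $p$- and $q$-factors, preserving their symbol classes, and $\rd_{\xi}$ hits $(\rd_{\xi}^{\alp} p)(x, \xi + t\eta)$ and $(\rd_{x}^{\alp} q)(x + y, \xi)$, each gaining $\aleq \brk{\xi}^{-1}$. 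This gives $r_{N} \in S(\brk{\xi}^{-N} m_{p} m_{q})$; taking $N = 0$ gives $p \circ q \in S(m_{p} m_{q})$ and general $N$ gives \eqref{eq:p-q-comp}. Finally, \eqref{eq:p-q-comm} follows by applying \eqref{eq:p-q-comp} with $N = 2$ to both $p \circ q$ and $q \circ p$ and subtracting: the order-zero terms $pq$ and $qp$ cancel, and the two first-order terms combine into $i^{-1} \{p, q\}$, modulo $S(\brk{\xi}^{-2} m_{p} m_{q})$.

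The step I expect to be the main obstacle is the remainder estimate in the far region $\abs{\eta} \gtrsim \brk{\xi}$, where $m_{p}(\xi + t\eta)$ can be large: the point is that slow variation forces at most polynomial growth, so sufficiently many $y$-integrations by parts recover the decay needed for convergence, but one must verify that the gain stays pinned at order $\brk{\xi}^{-N}$ and that all constants remain uniform under differentiation in $(x, \xi)$. Everything else is routine bookkeeping of symbol estimates.
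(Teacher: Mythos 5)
Your proposal is correct and follows essentially the same route as the paper, which also writes the composition symbol as an oscillatory integral, Taylor expands $p$ in the frequency variable about $\xi$, integrates by parts in $y$, and invokes the symbol bounds \eqref{eq:symb-m} (the paper only sketches these steps, while you carry out the near/far splitting in $\eta$ and use the polynomial bounds implied by slow variation to control the remainder). The only cosmetic caveat is that the sharp split at $\abs{\eta} = \tfrac12 \brk{\xi}$ should be replaced by a smooth cutoff (or the integrations by parts performed before splitting) to avoid boundary terms, a routine adjustment that does not affect the argument.
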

\begin{proof}
	The proof of this lemma is analogous to the standard case $m_{p}(\xi) = \brk{\xi}^{s_{p}}$, $m_{q}(\xi) = \brk{\xi}^{s_{q}}$, so we shall only sketch the main points.
	Formally, the symbol $p \circ q(x, \xi)$ for the composition $p(x, D) q(x, D)$ is given by
	\begin{equation*}
	p \circ q(x, \xi) = \iint p(x, \eta) q(y, \xi) e^{- i (x - y) \cdot (\xi - \eta)} \frac{\ud \eta}{(2 \pi)^{d}} \ud y.
	\end{equation*}
	Then $p \circ q \in S(m_{p} m_{q})$ and the expansion \eqref{eq:p-q-comp} then follows, as usual, by Taylor expansion of $p(x, \eta)$ around $\eta = \xi$, integration by parts in $y$ and \eqref{eq:symb-m}. Moreover, \eqref{eq:p-q-comm} follows from \eqref{eq:p-q-comp}. \qedhere
\end{proof}

The following conjugation result will be a starting point for our degenerating wave packet construction.
\begin{lemma}[Conjugation by $e^{i \bfPhi}$] \label{lem:conj}
	Let $p \in S(m)$ and $\bfPhi \in C^{\infty}$. Then we have
	\begin{align*}
	e^{-i \bfPhi} p(x, D) (a e^{i \bfPhi})
	= {}^{(\bfPhi)} p(x, D) a,
	\end{align*}
	where
	\begin{equation} \label{eq:p-Phi}
	{}^{(\bfPhi)} p(x, \xi)
	= \iint p(x, \eta) e^{i (\bfPhi(y) - \bfPhi(x) + (\eta- \xi) \cdot (x-y))} \, \frac{\ud \eta}{(2 \pi)^{d}} \, \ud y.
	\end{equation}
\end{lemma}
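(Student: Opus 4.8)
The plan is to verify the conjugation identity directly from the definition of the left quantization \eqref{eq:psdo-op}. First I would write out $p(x,D)(a e^{i\bfPhi})$ using \eqref{eq:psdo-op} with $u(y) = a(y) e^{i\bfPhi(y)}$, which gives
\begin{equation*}
p(x,D)(a e^{i\bfPhi}) = \iint p(x,\eta) a(y) e^{i\bfPhi(y)} e^{i \eta \cdot (x - y)} \, \ud y \, \frac{\ud \eta}{(2\pi)^{d}}.
\end{equation*}
Multiplying by $e^{-i\bfPhi(x)}$ and inserting a trivial Fourier resolution of $a$, namely $a(y) = \int \widehat{a}(\xi) e^{i \xi \cdot y} \, \frac{\ud\xi}{(2\pi)^{d}}$ — or, more in the spirit of keeping everything on the symbol side, writing $a(y) = \iint a(z) e^{i\xi\cdot(y-z)} \, \ud z \, \frac{\ud\xi}{(2\pi)^{d}}$ — one recognizes the resulting expression as a left quantization applied to $a$, with amplitude (not yet symbol) given by \eqref{eq:p-Phi}. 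The key algebraic step is to combine the phase factors: $e^{i\bfPhi(y)} e^{-i\bfPhi(x)} e^{i\eta\cdot(x-y)} e^{i\xi\cdot(y-z)}$ and, after relabeling, to reorganize them into $e^{i\xi\cdot(x-z)}$ times the oscillatory factor $e^{i(\bfPhi(y) - \bfPhi(x) + (\eta-\xi)\cdot(x-y))}$ appearing in \eqref{eq:p-Phi}. This is just a rearrangement of exponents and a change of the order of integration.

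The one genuine technical point — and the step I expect to require the most care — is justifying that the resulting $y$- and $\eta$-integrals in \eqref{eq:p-Phi} make sense and that all the interchanges of integration are legitimate, since $p(x,\eta)$ only decays like $m(\eta)$ (which may in fact \emph{grow}) in $\eta$, so the integral \eqref{eq:p-Phi} is only conditionally convergent as an oscillatory integral. The standard remedy is to interpret \eqref{eq:p-Phi} in the oscillatory-integral sense: insert a cutoff $\chi(\veps \eta, \veps y)$ with $\chi(0,0)=1$, perform all manipulations for the regularized integrals where Fubini applies unambiguously, and then pass to the limit $\veps \to 0$, using integration by parts in $y$ (against the non-stationary phase in $\eta$, away from $\eta = \nb\bfPhi$) to gain decay — exactly as in the proof of Lemma~\ref{lem:symb-calc}. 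Since the excerpt treats \eqref{eq:p-Phi} as a definition and the subsequent sections (e.g.\ Proposition~\ref{prop:conj-exp}) will extract an asymptotic expansion from it, at this stage it suffices to record that $e^{-i\bfPhi} p(x,D)(a e^{i\bfPhi})$ equals the operator with amplitude \eqref{eq:p-Phi} acting on $a$, with the oscillatory integral understood in this regularized sense; a full proof that \eqref{eq:p-Phi} in fact lies in a good symbol class is postponed to where the expansion is carried out.

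Concretely, the steps in order would be: (1) expand $p(x,D)(ae^{i\bfPhi})$ via \eqref{eq:psdo-op}; (2) multiply by $e^{-i\bfPhi(x)}$ and write $a(y)$ through its own quantization integral in the dual variable $\xi$; (3) collect all exponential factors and change variables so that the integrand displays the kernel $e^{i\xi\cdot(x-z)}$ of a left quantization in $(x,z,\xi)$ times the factor in \eqref{eq:p-Phi}; (4) interchange the $y,\eta$ integrals with the $z,\xi$ integrals — justified by the regularization argument above — to identify the inner $(y,\eta)$-integral as ${}^{(\bfPhi)}p(x,\xi)$; (5) conclude $e^{-i\bfPhi} p(x,D)(a e^{i\bfPhi}) = {}^{(\bfPhi)}p(x,D) a$. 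I would keep the write-up short, remarking that convergence and the Fubini interchanges are handled exactly as in the proof of Lemma~\ref{lem:symb-calc} and deferring the symbol-class membership and asymptotic expansion of ${}^{(\bfPhi)}p$ to the subsequent results.
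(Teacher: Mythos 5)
Your proposal is correct and follows essentially the same route as the paper: expand $p(x,D)(ae^{i\bfPhi})$ via \eqref{eq:psdo-op}, insert the Fourier/quantization representation of $a$, rearrange the phase as $\eta\cdot(x-y)+\xi\cdot(y-z)=(\eta-\xi)\cdot(x-y)+\xi\cdot(x-z)$, and identify the inner $(y,\eta)$-integral with \eqref{eq:p-Phi}. The paper treats this as a purely formal computation (deferring all analytic issues to Propositions~\ref{prop:conj-exp} and \ref{prop:conj-err}), so your extra regularization remarks are harmless but not needed.
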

Observe carefully that we have not placed ${}^{(\bfPhi)} p(x, \xi)$ in any standard symbol class. In Propositions~\ref{prop:conj-exp} and \ref{prop:conj-err}, we will obtain an expansion and a bound for the operator ${}^{(\bfPhi)} p(x, \xi)$ that is adapted to the specific scenario we are interested in. 
\begin{proof}
	Indeed,
	\begin{align*}
	e^{-i \bfPhi} p(x, D) (a e^{i \bfPhi})
	&= \iint p(x, \xi) a(y) e^{i (\bfPhi(y) - \bfPhi(x) + \xi \cdot (x-y))} \, \frac{\ud \xi}{(2 \pi)^{d}}  \ud y \\
	&= \iiint p(x, \xi) \hat{a}(\eta) e^{i (\bfPhi(y) - \bfPhi(x) + \xi \cdot (x-y) + \eta \cdot y)} \, \frac{\ud \eta}{(2 \pi)^{d}} \frac{\ud \xi}{(2 \pi)^{d}}  \ud y \\
	&= \int \left(\iint p(x, \xi)  e^{i (\bfPhi(y) - \bfPhi(x) + (\xi-\eta) \cdot (x-y)}  \frac{\ud \xi}{(2 \pi)^{d}}  \ud y \right) \hat{a}(\eta) e^{i x \cdot \eta} \, \frac{\ud \eta}{(2 \pi)^{d}},
	\end{align*}
	so switching the variables $\xi$ and $\eta$, we obtain the desired claim.
\end{proof}

For our construction, we would like to expand ${}^{(\bfPhi)} p$ under the assumption that $\xi$ and the characteristic frequency of $\rd_{x} \bfPhi$ (bounded by $\mu$ in what follows) are smaller than the typical magnitude of $\rd_{x} \bfPhi$ (denoted by $\lmb$ in what follows). To begin with, the stationary set for the phase of the oscillatory integral on the RHS of \eqref{eq:p-Phi} is 
\begin{align*}
\rd_{y} \bfPhi(y) - (\eta - \xi) = 0, \quad
x - y = 0,
\end{align*}
or equivalently, $\eta = \xi + \rd_{y} \bfPhi(y)$ and $y = x$. Our assumption leads us to also expand in $\xi$ about $\xi = 0$. Following such a route, we are led to the following formulae for the expansion and the remainder.
\begin{proposition} [Formal expansion of ${}^{(\bfPhi)} p$]\label{prop:conj-exp}
	Let $p \in S(m)$ and $\bfPhi \in C^{\infty}$. The symbol ${}^{(\bfPhi)} p(x, \xi)$ admits expansions of the form
	\begin{equation} \label{eq:conj-exp}
	\begin{aligned}
	{}^{(\bfPhi)} p(x, \xi)
	&= p(x, \rd_{x} \bfPhi)
	+ {}^{(\bfPhi)} r_{p, -1}(x, \xi) \\
	&= p(x, \rd_{x} \bfPhi) + \sum_{j} \xi_{j} \rd_{\xi_{j}} p(x, \rd_{x} \bfPhi(x)) 
	- \frac{i}{2} \sum_{j, k} \rd_{\xi_{j}} \rd_{\xi_{k}} p(x, \rd_{x} \bfPhi(x)) \rd_{j} \rd_{k} \bfPhi(x) \\
	& \peq + {}^{(\bfPhi)} r_{p, -2}(x, \xi),
	\end{aligned}
	\end{equation}
	where
	\begin{equation} \label{eq:conj-rem-1} 
	\begin{aligned} 
	& {}^{(\bfPhi)} r_{p, -1}(x, \xi) \\ 
	&= \left( \int_{0}^{1} \iint\rd_{\xi_{j}} p(x, \sgm \eta + (1-\sgm) \rd_{y} \bfPhi(y))  e^{i (\bfPhi(y) - \bfPhi(x) + (\eta- \xi) \cdot (x-y))} \, \frac{\ud \eta}{(2 \pi)^{d}} \, \ud y \ud \sgm \right)   \xi_{j}   \\
	&\peq - i \int_{0}^{1} (1-\sgm) \iint \rd_{\xi_{j}} \rd_{\xi_{k}} p(x, \sgm \eta + (1-\sgm) \rd_{y} \bfPhi(y)) \rd_{j} \rd_{k} \bfPhi (y) \\
	&\phantom{\peq - i \int_{0}^{1} (1-\sgm) \iint} \times e^{i (\bfPhi(y) - \bfPhi(x) + (\eta- \xi) \cdot (x-y))} \, \frac{\ud \eta}{(2 \pi)^{d}} \, \ud y \ud \sgm, 
	\end{aligned}
	\end{equation}
	and
	\begin{equation} \label{eq:conj-rem-2} 
	\begin{aligned} 
	&{}^{(\bfPhi)} r_{p, -2}(x, \xi) \\
	&= \left( \int_{0}^{1} (1-\sgm) \iint \rd_{\xi_{j}}\rd_{\xi_{k}} p(x, \sgm \eta + (1-\sgm) \rd_{y} \bfPhi(y)) e^{i (\bfPhi(y) - \bfPhi(x) + (\eta- \xi) \cdot (x-y))} \, \frac{\ud \eta}{(2 \pi)^{d}} \, \ud y \ud \sgm \right) \xi_{j} \xi_{k}  \\
	&\peq - i \left( \int_{0}^{1} (1-\sgm)\left(\frac{3}{2} - \sgm\right) \iint \rd_{\xi_{j}} \rd_{\xi_{k}} \rd_{\xi_{\ell}} p(x, \sgm \eta + (1-\sgm) \rd_{y} \bfPhi(y)) \rd_{k} \rd_{\ell} \bfPhi(y) \right.  \\
	&\left. \phantom{\peq - i \int_{0}^{1} (1-\sgm)(\frac{3}{2} - \sgm) \iint}
	\times e^{i (\bfPhi(y) - \bfPhi(x) + (\eta- \xi) \cdot (x-y))} \, \frac{\ud \eta}{(2 \pi)^{d}} \, \ud y \ud \sgm \right) \xi_{j} \\
	&\peq - \frac{1}{2} \int_{0}^{1} (1-\sgm)^{2} \iint \rd_{\xi_{j}} \rd_{\xi_{k}} \rd_{\xi_{\ell}} p(x, \sgm \eta + (1-\sgm) \rd_{y} \bfPhi(y)) \rd_{j} \rd_{k} \rd_{\ell} \bfPhi(y) \\
	&\phantom{\peq - \frac{1}{2} \int_{0}^{1} (1-\sgm)^{2} \iint}
	\times e^{i (\bfPhi(y) - \bfPhi(x) + (\eta- \xi) \cdot (x-y))} \, \frac{\ud \eta}{(2 \pi)^{d}} \, \ud y \ud \sgm  \\
	&\peq - \frac{1}{2} \int_{0}^{1} (1-\sgm)^{2} \iint \rd_{\xi_{j}} \rd_{\xi_{k}} \rd_{\xi_{\ell}} \rd_{\xi_{m}} p(x, \sgm \eta + (1-\sgm) \rd_{y} \bfPhi(y)) \rd_{j} \rd_{m} \bfPhi(y) \rd_{k} \rd_{\ell} \bfPhi(y)  \\
	&\phantom{\peq - \frac{1}{2} \int_{0}^{1} (1-\sgm)^{2} \iint } 
	\times e^{i (\bfPhi(y) - \bfPhi(x) + (\eta- \xi) \cdot (x-y))} \, \frac{\ud \eta}{(2 \pi)^{d}} \, \ud y \ud \sgm,  
	\end{aligned}
	\end{equation}
	where the repeated indices in \eqref{eq:conj-rem-1} and \eqref{eq:conj-rem-2} are implicitly summed.
\end{proposition}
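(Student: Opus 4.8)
The proof of Proposition~\ref{prop:conj-exp} is a matter of carefully Taylor-expanding the integrand of the oscillatory integral formula \eqref{eq:p-Phi} for ${}^{(\bfPhi)} p(x, \xi)$ that was established in Lemma~\ref{lem:conj}, and then identifying the leading terms by a stationary-phase-type computation combined with integration by parts. The plan is as follows.

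\textbf{Step 1: Taylor expansion of $p$ in the frequency variable.} Starting from \eqref{eq:p-Phi}, I would expand $p(x, \eta)$ as a function of $\eta$ about the point $\rd_{y} \bfPhi(y)$ — the value of $\eta$ that (together with $y = x$) makes the phase $\bfPhi(y) - \bfPhi(x) + (\eta - \xi) \cdot (x-y)$ stationary, shifted by $\xi$. Writing $\eta = \rd_{y}\bfPhi(y) + (\eta - \rd_{y}\bfPhi(y))$, Taylor's theorem with integral remainder gives
\begin{equation*}
	p(x, \eta) = p(x, \rd_{y}\bfPhi(y)) + \sum_{j} \rd_{\xi_{j}} p(x, \rd_{y}\bfPhi(y)) (\eta - \rd_{y}\bfPhi(y))_{j} + \sum_{j,k} \int_{0}^{1}(1-\sgm)\, \rd_{\xi_{j}}\rd_{\xi_{k}} p(x, \sgm\eta + (1-\sgm)\rd_{y}\bfPhi(y)) (\eta-\rd_{y}\bfPhi(y))_{j}(\eta-\rd_{y}\bfPhi(y))_{k}\, \ud\sgm,
\end{equation*}
and similarly one order higher to reach the $r_{p,-2}$ expansion. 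The point of expanding about $\rd_{y}\bfPhi(y)$ rather than a fixed frequency is that each factor of $(\eta - \rd_{y}\bfPhi(y))$ in front of the oscillatory exponential can be traded, via integration by parts in $y$, for either a factor of $\xi$ (from the $-\xi\cdot(x-y)$ term after integrating by parts one sees $(\eta - \rd_y\bfPhi(y))e^{i(\cdots)} = (\eta - \xi)e^{i(\cdots)} + \cdots$, or more precisely one differentiates $e^{i(\bfPhi(y)-\bfPhi(x)+(\eta-\xi)\cdot(x-y))}$ in $y$) or a factor of $\rd^{2}\bfPhi$ (from differentiating $\rd_{y}\bfPhi(y)$ that appears inside the slot of $p$, or inside the quadratic term). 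This is exactly the bookkeeping that produces the two types of terms $\xi_{j}\rd_{\xi_{j}}p$ and $\rd_{\xi_{j}}\rd_{\xi_{k}}p\,\rd_{j}\rd_{k}\bfPhi$ in \eqref{eq:conj-exp}, each carrying the correct combinatorial constant ($1$, $-\tfrac{i}{2}$, etc.).

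\textbf{Step 2: Integration by parts to extract the leading terms.} For the leading term $p(x,\rd_{y}\bfPhi(y))$ (which has no remaining $(\eta-\rd_{y}\bfPhi(y))$ factor), the $\eta$-integral $\iint p(x,\rd_{y}\bfPhi(y)) e^{i(\bfPhi(y)-\bfPhi(x)+(\eta-\xi)\cdot(x-y))}\,\tfrac{\ud\eta}{(2\pi)^{d}}\,\ud y$ collapses: integrating in $\eta$ first produces $(2\pi)^{d}\dlt(x-y)$, forcing $y = x$ and yielding exactly $p(x,\rd_{x}\bfPhi(x))$. For the terms carrying one or two factors of $(\eta - \rd_{y}\bfPhi(y))$, I would write each such factor as $-i\rd_{y_{j}}$ acting on $e^{i(\bfPhi(y)-\bfPhi(x)+(\eta-\xi)\cdot(x-y))}$ plus the correction $\xi_{j}$ and $\rd_{y}(\rd_{y}\bfPhi) = \rd^{2}\bfPhi$ pieces that arise because the exponent contains $\bfPhi(y)$ and $-\xi\cdot(x-y)$ — note $\rd_{y_{j}}$ of the exponent equals $i(\rd_{y_{j}}\bfPhi(y) - (\xi - \eta)_{j})$... wait, one should be careful: $\rd_{y_{j}}\big(\bfPhi(y) - \bfPhi(x) + (\eta-\xi)\cdot(x-y)\big) = \rd_{y_{j}}\bfPhi(y) - (\eta - \xi)_{j}$, so $(\eta - \rd_{y}\bfPhi(y))_{j}\, e^{i(\cdots)} = \big((\eta - \xi)_{j} + (\xi - \rd_{y}\bfPhi(y))_{j}\big)e^{i(\cdots)}$, and $(\eta-\xi)_{j}e^{i(\cdots)} = i\rd_{y_{j}}e^{i(\cdots)}$ only after also accounting for the $\rd_{y}\bfPhi$ term — integrating by parts in $y$ then moves $\rd_{y_{j}}$ onto the amplitude $\rd_{\xi}^{\alp}p(x,\rd_{y}\bfPhi(y))$ (producing $\rd_{\xi}^{\alp+e_{k}}p \cdot \rd_{j}\rd_{k}\bfPhi$ by the chain rule) and onto any $\rd^{2}\bfPhi$ factors already present. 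Iterating this once for the first-order remainder and twice for the second-order remainder, collecting like terms and tracking the constants $(1-\sgm)$, $(1-\sgm)(\tfrac{3}{2}-\sgm)$, $(1-\sgm)^{2}$ that emerge from combining the Taylor integral-remainder weights with the integration-by-parts steps, yields precisely \eqref{eq:conj-exp}, \eqref{eq:conj-rem-1}, and \eqref{eq:conj-rem-2}.

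\textbf{Main obstacle.} The substantive content is not any single estimate — indeed, Proposition~\ref{prop:conj-exp} is purely a formal algebraic identity, and no symbol-class membership or convergence claim is asserted for the remainders $r_{p,-1}, r_{p,-2}$ at this stage (that is deferred to Proposition~\ref{prop:conj-err}). Rather, the delicate part is the \emph{bookkeeping}: organizing the Taylor expansion so that every intermediate factor of $(\eta - \rd_{y}\bfPhi(y))$ is converted, in the right order, into the canonical building blocks $\xi_{j}$ and $\rd_{j}\rd_{k}\bfPhi(y)$ (and, at second order, $\rd_{j}\rd_{k}\rd_{\ell}\bfPhi$ and products $\rd_{j}\rd_{m}\bfPhi\,\rd_{k}\rd_{\ell}\bfPhi$), while keeping the oscillatory exponential in the fixed form $e^{i(\bfPhi(y)-\bfPhi(x)+(\eta-\xi)\cdot(x-y))}$ throughout and not dropping or double-counting any term. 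One must also be mindful that the manipulations are justified at the level of oscillatory integrals (or, equivalently, by first testing against Schwartz functions and using the representation in Lemma~\ref{lem:conj}), so that the integration-by-parts boundary terms genuinely vanish; since $\bfPhi$ and $p$ are smooth and $p \in S(m)$, this causes no difficulty once the computation is set up with $\hat a$ compactly supported (or by density), as in the proof of Lemma~\ref{lem:conj}. I would present the $r_{p,-1}$ expansion in full detail and then indicate that $r_{p,-2}$ follows by carrying the same Taylor-plus-integration-by-parts procedure one order further, displaying only the resulting formula.
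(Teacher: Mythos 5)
Your proposal is correct and follows essentially the same route as the paper's proof: Taylor expansion of $p(x,\eta)$ in $\eta$ about $\rd_{y}\bfPhi(y)$ with integral remainders as in \eqref{eq:taylor-rem}, followed by repeated use of the identity $(\eta_{j} - \rd_{j}\bfPhi(y))\, e^{i(\bfPhi(y)-\bfPhi(x)+(\eta-\xi)\cdot(x-y))} = \xi_{j}\, e^{i(\cdots)} + i\, \rd_{y^{j}} e^{i(\cdots)}$ (the paper's \eqref{eq:conj-stat-phase}) together with integration by parts in $y$, iterated once more for the second-order remainder. The only point to tidy up is the hesitant passage in your Step~2: the clean statement is precisely the identity above, not the splitting $(\eta-\rd_{y}\bfPhi)_{j}=(\eta-\xi)_{j}+(\xi-\rd_{y}\bfPhi)_{j}$, since $i\rd_{y^{j}} e^{i(\cdots)}$ yields $(\eta_{j}-\xi_{j}-\rd_{j}\bfPhi(y))\, e^{i(\cdots)}$ rather than $(\eta_{j}-\xi_{j})\, e^{i(\cdots)}$.
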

{The formal expansion of ${}^{(\bfPhi)} p$ is standard; see, for instance, \cite[Chapter~VII]{Tay}. The point of Proposition~\ref{prop:conj-exp} is the explicit formulae for the remainder symbols ${}^{(\bfPhi)} r_{p, -j}$, which we shall analyze in Section~\ref{sec:conj-err} with the specific estimates on $\rd_{x}^{k} \rd_{x} \bfPhi$ in our problem, to be proved in Section~\ref{sec:HJE}.}
\begin{proof}
	To ease the notation, in what follows, we work with the convention that the repeated indices are summed. Moreover, we introduce the $n$-th order remainder symbol
	\begin{equation} \label{eq:taylor-rem}
	r^{(n) j_{1} \ldots j_{n}}[p(x, \cdot)](\xi_{1}, \xi_{0}) = \int_{0}^{1} \rd_{\xi_{j_{1}}} \cdots \rd_{\xi_{j_{n}}} p(x, \sgm \xi_{1} + (1-\sgm) \xi_{0}) n (1-\sgm)^{n-1} \, \ud \sgm.
	\end{equation}
	Then the $(n-1)$-th Taylor expansion of $p(x, \xi)$ in $\xi$ about $\xi_{0}$ takes the form
	\begin{equation} \label{eq:p-taylor}
	\begin{split}
	p(x, \xi_{1}) & = \sum_{m=0}^{n} \frac{1}{m!} \rd_{\xi_{j_{1}}} \cdots \rd_{\xi_{j_{m}}} p(x, \xi_{0}) (\xi_1-\xi_0)^{j_1}\cdots (\xi_1-\xi_0)^{j_m} \\
	&\qquad + \frac{1}{n!} r^{(n) j_{1} \ldots j_{n}}[p(x, \cdot)](\xi_{1}, \xi_{0}) (\xi_{1} - \xi_{0})^{j_{1}} \cdots (\xi_{1} - \xi_{0})^{j_{n}}.
	\end{split}
	\end{equation}
	Also note the useful recursive identity (which is simply an integration by parts in $\sgm$)
	\begin{equation} \label{eq:r-recur}
	r^{(n) j_{1} \ldots j_{n}}[p(x, \cdot)](\xi_{1}, \xi_{0})
	= \rd_{\xi_{j_{1}}} \cdots \rd_{\xi_{j_{n}}} p(x, \xi_{0})
	+ \sum_{ j_{n+1}} \frac{1}{n+1} r^{(n+1) j_{1} \ldots j_{n} j_{n+1}}[p(x, \cdot)](\xi_{1}, \xi_{0}) (\xi_1-\xi_0)^{j_{n+1}}.
	\end{equation}
	
	In view of the stationary set of the phase and the expectation that $\abs{\xi} \ll \abs{\rd_{x} \bfPhi}$, we wish to expand $p(x, \eta)$ in $\eta$ about $\eta = \xi + \rd_{y} \bfPhi(y)$: 
	\begin{align*}
	{}^{(\bfPhi)} p(x, \xi)
	&=
	\iint p(x, \rd_{y} \bfPhi(y)) e^{i (\bfPhi(y) - \bfPhi(x) + (\eta- \xi) \cdot (x-y))} \, \frac{\ud \eta}{(2 \pi)^{d}} \, \ud y \\
	&\peq 
	+ \iint r^{(1) j}[p(x, \cdot)] (\eta, \rd_{y} \bfPhi(y)) (\eta_{j} - \rd_{j} \bfPhi(y)) e^{i (\bfPhi(y) - \bfPhi(x) + (\eta- \xi) \cdot (x-y))} \, \frac{\ud \eta}{(2 \pi)^{d}} \, \ud y.
	\end{align*}
	Note the identity
	\begin{align}
	&(\eta_{j} - \rd_{j} \bfPhi(y)) e^{i (\bfPhi(y) - \bfPhi(x) + (\eta- \xi) \cdot (x-y))} \notag \\
	&= (\xi_{j} + \eta_{j} - \xi_{j} - \rd_{j} \bfPhi(y)) e^{i (\bfPhi(y) - \bfPhi(x) + (\eta- \xi) \cdot (x-y))} \notag\\
	&= \xi_{j} e^{i (\bfPhi(y) - \bfPhi(x) + (\eta- \xi) \cdot (x-y))}
	+ i \rd_{y^{j}} e^{i (\bfPhi(y) - \bfPhi(x) + (\eta- \xi) \cdot (x-y))}. \label{eq:conj-stat-phase}
	\end{align}
	After integrating $\rd_{y^{j}}$ by parts, we obtain
	\begin{align*}
	{}^{(\bfPhi)} p(x, \xi) &= p(x, \rd_{x} \bfPhi)
	+ \xi_{j} \iint r^{(1) j}[p(x, \cdot)] (\eta, \rd_{y} \bfPhi(y)) e^{i (\bfPhi(y) - \bfPhi(x) + (\eta- \xi) \cdot (x-y))} \, \frac{\ud \eta}{(2 \pi)^{d}} \, \ud y \\
	&\peq + \iint r^{(1)j}[p(x, \cdot)] (\eta, \rd_{y} \bfPhi(y)) (\eta_{j} - \xi_{j} - \rd_{j} \bfPhi(y)) e^{i (\bfPhi(y) - \bfPhi(x) + (\eta- \xi) \cdot (x-y))} \, \frac{\ud \eta}{(2 \pi)^{d}} \, \ud y \\
	&= p(x, \rd_{x} \bfPhi)
	+ \xi_{j} \iint r^{(1) j}[p(x, \cdot)] (\eta, \rd_{y} \bfPhi(y)) e^{i (\bfPhi(y) - \bfPhi(x) + (\eta- \xi) \cdot (x-y))} \, \frac{\ud \eta}{(2 \pi)^{d}} \, \ud y \\
	&\peq + \iint r^{(1) j}[p(x, \cdot)] (\eta, \rd_{y} \bfPhi(y)) i \rd_{y^{j}} e^{i (\bfPhi(y) - \bfPhi(x) + (\eta- \xi) \cdot (x-y))} \, \frac{\ud \eta}{(2 \pi)^{d}} \, \ud y \\
	&= p(x, \rd_{x} \bfPhi)
	+ \xi_{j} \iint r^{(1) j}[p(x, \cdot)] (\eta, \rd_{y} \bfPhi(y)) e^{i (\bfPhi(y) - \bfPhi(x) + (\eta- \xi) \cdot (x-y))} \, \frac{\ud \eta}{(2 \pi)^{d}} \, \ud y \\
	&\peq - i \iint \left( \rd_{y^{j}} \left( r^{(1) j}[p(x, \cdot)] (\eta, \rd_{y} \bfPhi(y))\right) \right) e^{i (\bfPhi(y) - \bfPhi(x) + (\eta- \xi) \cdot (x-y))} \, \frac{\ud \eta}{(2 \pi)^{d}} \, \ud y,
	\end{align*}
	which already proves \eqref{eq:conj-rem-1}.
	
	We are interested in the next order terms, so we expand
	\begin{equation*}
	r^{(1) j}[p(x, \cdot)](\eta, \rd_{y} \bfPhi(y))
	= \rd_{\xi_{j}} p(x, \rd_{y} \bfPhi(y)) + \frac{1}{2} r^{(2) jk}[p(x, \cdot)](\eta, \rd_{y} \bfPhi(y)) (\eta_{k} - \rd_{k} \bfPhi(y)).
	\end{equation*}
	Thus,
	\begin{align*}
	{}^{(\bfPhi)} p(x, \xi)
	&= p(x, \rd_{x} \bfPhi)
	+ \xi_{j} (\rd_{\xi_{j}} p)(x, \rd_{x} \bfPhi(x)) 
	- \rst{i \rd_{y^{j}} \rd_{\xi_{j}} p(x, \rd_{y} \bfPhi(y))}_{y = x} \\
	&\peq + \frac{1}{2} \xi_{j} \iint r^{(2) jk}[p(x, \cdot)] (\eta, \rd_{y} \bfPhi(y)) (\eta_{k} - \rd_{k} \bfPhi(y)) e^{i (\bfPhi(y) - \bfPhi(x) + (\eta- \xi) \cdot (x-y))} \, \frac{\ud \eta}{(2 \pi)^{d}} \, \ud y \\
	&\peq - \frac{i}{2} \iint \left( \rd_{y^{j}} \left( r^{(2) jk}[p(x, \cdot)] (\eta, \rd_{y} \bfPhi(y)) (\eta_{k} - \rd_{k} \bfPhi(y)) \right) \right)  e^{i (\bfPhi(y) - \bfPhi(x) + (\eta- \xi) \cdot (x-y))} \, \frac{\ud \eta}{(2 \pi)^{d}} \, \ud y \\
	&= p(x, \rd_{x} \bfPhi)
	+ \xi_{j} (\rd_{\xi_{j}} p)(x, \rd_{x} \bfPhi(x)) 
	- i \rd_{\xi_{j}} \rd_{\xi_{k}} p(x, \rd_{x} \bfPhi(x)) \rd_{j} \rd_{k} \bfPhi(x) \\
	&\peq + \frac{1}{2} \xi_{j} \iint r^{(2) jk}[p(x, \cdot)] (\eta, \rd_{y} \bfPhi(y)) (\eta_{k} - \rd_{k} \bfPhi(y)) e^{i (\bfPhi(y) - \bfPhi(x) + (\eta- \xi) \cdot (x-y))} \, \frac{\ud \eta}{(2 \pi)^{d}} \, \ud y \\
	&\peq + \frac{i}{2} \iint r^{(2) jk}[p(x, \cdot)] (\eta, \rd_{y} \bfPhi(y)) \rd_{j} \rd_{k} \bfPhi(y) e^{i (\bfPhi(y) - \bfPhi(x) + (\eta- \xi) \cdot (x-y))} \, \frac{\ud \eta}{(2 \pi)^{d}} \, \ud y \\
	&\peq - \frac{i}{2} \iint \left( \rd_{y^{j}} \left( r^{(2) jk}[p(x, \cdot)] (\eta, \rd_{y} \bfPhi(y))\right) \right) (\eta_{k} - \rd_{k} \bfPhi(y)) e^{i (\bfPhi(y) - \bfPhi(x) + (\eta- \xi) \cdot (x-y))} \, \frac{\ud \eta}{(2 \pi)^{d}} \, \ud y.
	\end{align*}
	For the second to last term, we also expand
	\begin{equation*}
	r^{(2) jk}[p(x, \cdot)](\eta, \rd_{y} \bfPhi(y))
	= \rd_{\xi_{j}} \rd_{\xi_{k}} p(x, \rd_{y} \bfPhi(y)) + \frac{1}{3} r^{(3) jk\ell}[p(x, \cdot)](\eta, \rd_{y} \bfPhi(y)) (\eta_{\ell} - \rd_{\ell} \bfPhi(y)),
	\end{equation*}
	so that
	\begin{align*}
	{}^{(\bfPhi)} p(x, \xi)
	&= p(x, \rd_{x} \bfPhi)
	+ \xi_{j} (\rd_{\xi_{j}} p)(x, \rd_{x} \bfPhi(x)) 
	- \frac{i}{2} \rd_{\xi_{j}} \rd_{\xi_{k}} p(x, \rd_{x} \bfPhi(x)) \rd_{j} \rd_{k} \bfPhi(x) \\
	&\peq + \frac{1}{2} \xi_{j} \iint r^{(2) jk}[p(x, \cdot)] (\eta, \rd_{y} \bfPhi(y)) (\eta_{k} - \rd_{k} \bfPhi(y)) e^{i (\bfPhi(y) - \bfPhi(x) + (\eta- \xi) \cdot (x-y))} \, \frac{\ud \eta}{(2 \pi)^{d}} \, \ud y \\
	&\peq + \frac{i}{6} \iint r^{(3) jk\ell}[p(x, \cdot)] (\eta, \rd_{y} \bfPhi(y)) \rd_{j} \rd_{k} \bfPhi(y) (\eta_{\ell} - \rd_{\ell} \bfPhi(y))e^{i (\bfPhi(y) - \bfPhi(x) + (\eta- \xi) \cdot (x-y))} \, \frac{\ud \eta}{(2 \pi)^{d}} \, \ud y \\
	&\peq - \frac{i}{2} \iint \left( \rd_{y^{j}} \left( r^{(2) jk}[p(x, \cdot)] (\eta, \rd_{y} \bfPhi(y))\right) \right) (\eta_{k} - \rd_{k} \bfPhi(y)) e^{i (\bfPhi(y) - \bfPhi(x) + (\eta- \xi) \cdot (x-y))} \, \frac{\ud \eta}{(2 \pi)^{d}} \, \ud y.
	\end{align*}
	Using \eqref{eq:conj-stat-phase} and integrating by parts, we get
	\begin{align*}
	{}^{(\bfPhi)} p(x, \xi)
	&= p(x, \rd_{x} \bfPhi)
	+ \xi_{j} (\rd_{\xi_{j}} p)(x, \rd_{x} \bfPhi(x)) 
	- \frac{i}{2} \rd_{\xi_{j}} \rd_{\xi_{k}} p(x, \rd_{x} \bfPhi(x)) \rd_{j} \rd_{k} \bfPhi(x) \\
	&\peq + \frac{1}{2} \xi_{j} \xi_{k} \iint r^{(2) jk}[p(x, \cdot)] (\eta, \rd_{y} \bfPhi(y)) e^{i (\bfPhi(y) - \bfPhi(x) + (\eta- \xi) \cdot (x-y))} \, \frac{\ud \eta}{(2 \pi)^{d}} \, \ud y \\
	&\peq - \frac{i}{2} \xi_{j} \iint \left(\rd_{y^{k}} \left( r^{(2) jk}[p(x, \cdot)] (\eta, \rd_{y} \bfPhi(y)) \right) \right) e^{i (\bfPhi(y) - \bfPhi(x) + (\eta- \xi) \cdot (x-y))} \, \frac{\ud \eta}{(2 \pi)^{d}} \, \ud y \\
	&\peq + \frac{i}{6} \xi_{\ell} \iint r^{(3) jk\ell}[p(x, \cdot)] (\eta, \rd_{y} \bfPhi(y)) \rd_{j} \rd_{k} \bfPhi(y) e^{i (\bfPhi(y) - \bfPhi(x) + (\eta- \xi) \cdot (x-y))} \, \frac{\ud \eta}{(2 \pi)^{d}} \, \ud y \\
	&\peq + \frac{1}{6} \iint \left(\rd_{y^{\ell}} \left( r^{(3) jk\ell}[p(x, \cdot)] (\eta, \rd_{y} \bfPhi(y)) \rd_{j} \rd_{k} \bfPhi(y) \right)\right) e^{i (\bfPhi(y) - \bfPhi(x) + (\eta- \xi) \cdot (x-y))} \, \frac{\ud \eta}{(2 \pi)^{d}} \, \ud y \\
	&\peq - \frac{i}{2} \xi_{k} \iint \left( \rd_{y^{j}} \left( r^{(2) jk}[p(x, \cdot)] (\eta, \rd_{y} \bfPhi(y))\right) \right) e^{i (\bfPhi(y) - \bfPhi(x) + (\eta- \xi) \cdot (x-y))} \, \frac{\ud \eta}{(2 \pi)^{d}} \, \ud y \\
	&\peq - \frac{1}{2} \iint \left( \rd_{y^{j}} \rd_{y^{k}} \left( r^{(2) jk}[p(x, \cdot)] (\eta, \rd_{y} \bfPhi(y))\right) \right) e^{i (\bfPhi(y) - \bfPhi(x) + (\eta- \xi) \cdot (x-y))} \, \frac{\ud \eta}{(2 \pi)^{d}} \, \ud y.
	\end{align*}
	Hence, we obtain \eqref{eq:conj-exp} with ${}^{(\bfPhi)} r_{p}(x, \xi)$ consisting of all terms on the RHS except for the first three terms. Recalling \eqref{eq:taylor-rem},  \eqref{eq:conj-rem-2} then follows after a straightforward computation. 
	\qedhere
	
	%Indeed, recall that
	%\begin{align*}
	%r^{(2) jk}[p(x, \cdot)](\eta, \rd_{y} \bfPhi(y)) 
	%&= \int_{0}^{1} \rd_{\xi_{j}} \rd_{\xi_{k}} p(x, \sgm \eta + (1-\sgm) \rd_{y} \bfPhi(y)) 2 (1-\sgm) \,\ud \sgm, \\
	%r^{(3) jk\ell}[p(x, \cdot)](\eta, \rd_{y} \bfPhi(y)) 
	%&= \int_{0}^{1} \rd_{\xi_{j}} \rd_{\xi_{k}} \rd_{\xi_{\ell}} p(x, \sgm \eta + (1-\sgm) \rd_{y} \bfPhi(y)) 3 (1-\sgm)^{2} \,\ud \sgm.
	%\end{align*}
\end{proof}

\subsection{Specialization to $\calL_{\bgtht}$ and equations for the phase and the amplitude}\label{subsec:outline}
We now return to the problem at hand. {In view of the $x_{1}$-independence of $\bgtht$, we shall choose $\bfPhi$ and $a$ in the separated form
\begin{equation*}
	\bfPhi(t, x) = \lmb_{0} x_{1} + \Phi(t, x_{2}), \quad a = a(t, x_{2}),
\end{equation*}
where $\lmb_{0}$ is an integer.} Using standard symbolic calculus (Lemma~\ref{lem:symb-calc}), we first rewrite $\calL_{\bgtht}$ in a form that is more convenient to apply Lemma~\ref{lem:conj} and Proposition~\ref{prop:conj-exp}.

\begin{proposition} \label{prop:L-tht0-decomp-f}
	Let $\bgtht(t, x) = f(t, x_{2})$ and $\lmb_{0} \in \bbZ$. We have
	\begin{equation} \label{eq:L-tht0-decomp}
	\calL_{\bgtht} (e^{i \lmb_{0} x_{1}} \psi)
	= e^{i \lmb_{0} x_{1}} \left(\rd_{t} \psi + i p_{\bgtht, \lmb_{0}}(x_{2}, D_{2}) \psi + s_{\bgtht, \lmb_{0}}(x_{2}, D_{2}) \psi + r_{\bgtht, \lmb_{0}}(x_{2}, D_{2}) \psi\right),
	\end{equation}
	where
	\begin{align*}
	p_{\bgtht, \lmb_{0}}(x_{2}, \xi_{2}) &= \rd_{x_{2}} f(t, x_{2}) \lmb_{0} \gmm_{\lmb_{0}}(\xi_{2}) + (\rd_{x_{2}} \Gmm f)(t, x_{2}) \lmb_{0}, \\
	s_{\bgtht, \lmb_{0}}(x_{2}, \xi_{2}) &= - \frac{1}{2} \rd_{x_{2}}^{2} f(t, x_{2}) \rd_{\xi_{2}} \gmm_{\lmb_{0}}(\xi_{2}) \lmb_{0} - \frac{1}{2} (\rd_{x_{2}}^{2} \Gmm f)(t, x_{2}) \gmm_{\lmb_{0}}^{-1}(\xi_{2}) \rd_{\xi_{2}} \gmm_{\lmb_{0}}(\xi_{2}) \lmb_{0}, \\
	r_{\bgtht, \lmb_{0}}(x_{2}, \xi_{2}) &\in S\left( {\tfrac{\lmb_{0}}{1+\lmb_{0}^{2}+\xi_{2}^{2}} \gmm(\lmb_{0}, \xi_{2})}\right).
	\end{align*}
\end{proposition}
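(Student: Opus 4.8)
\emph{Proof strategy.} The plan is to first reduce $\calL_{\bgtht}$ acting on $x_{1}$-separated functions to a one-dimensional operator in $x_{2}$, and then to expand the resulting composition of one-dimensional Fourier multipliers and multiplication operators using the symbolic calculus of Lemma~\ref{lem:symb-calc} (performed at each fixed $t$). For the reduction, recall from \eqref{eq:lin-L2} that $\calL_{\bgtht} \varphi = \rd_{t} \varphi - \Gmm^{\frac12} (\nb^{\perp} \bgtht \cdot \nb) \Gmm^{\frac12} \varphi + \Gmm^{\frac12} (\nb^{\perp} \Gmm \bgtht \cdot \nb) \Gmm^{-\frac12} \varphi$, and that for $\bgtht = f(t, x_{2})$ we have $\nb^{\perp} \bgtht = (-\rd_{x_{2}} f, 0)$ and $\nb^{\perp} \Gmm \bgtht = (-\rd_{x_{2}} \Gmm f, 0)$, so that $\nb^{\perp} \bgtht \cdot \nb = -(\rd_{x_{2}} f)\rd_{x_{1}}$ and $\nb^{\perp} \Gmm \bgtht \cdot \nb = -(\rd_{x_{2}} \Gmm f)\rd_{x_{1}}$. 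Here $\Gmm$ acts on $x_{1}$-independent functions as $\gmm(0, D_{2})$, so $\rd_{x_{2}} \Gmm f$ is a smooth bounded function with bounded derivatives (since $f$ is and $\gmm$ is slowly varying). Since $\Gmm(e^{i \lmb_{0} x_{1}} \psi(x_{2})) = e^{i \lmb_{0} x_{1}} \gmm(\lmb_{0}, D_{2}) \psi$, writing $\Gmm_{\lmb_{0}} := \gmm_{\lmb_{0}}(D_{2})$ with $\gmm_{\lmb_{0}}(\xi_{2}) = \gmm(\lmb_{0}, \xi_{2})$, a direct computation yields $\calL_{\bgtht}(e^{i \lmb_{0} x_{1}} \psi) = e^{i \lmb_{0} x_{1}}(\rd_{t} \psi + \tld{P} \psi)$ with
\[
\tld{P} = i \lmb_{0} \Gmm_{\lmb_{0}}^{\frac12} (\rd_{x_{2}} f) \Gmm_{\lmb_{0}}^{\frac12} - i \lmb_{0} \Gmm_{\lmb_{0}}^{\frac12} (\rd_{x_{2}} \Gmm f) \Gmm_{\lmb_{0}}^{-\frac12}.
\]
It remains to expand these two compositions and read off the three pieces of \eqref{eq:L-tht0-decomp} (with the evident sign conventions).

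The key algebraic point about the symbols is that $\rd_{\xi_{2}}^{k} \gmm_{\lmb_{0}}(\xi_{2}) = (\rd_{\xi_{2}}^{k} \gmm)(\lmb_{0}, \xi_{2})$, so Assumption~1 on $\gmm$ gives $\abs{\rd_{\xi_{2}}^{k} \gmm_{\lmb_{0}}(\xi_{2})} \aleq_{k} \brk{(\lmb_{0}, \xi_{2})}^{-k} \gmm_{\lmb_{0}}(\xi_{2})$, and likewise for $\gmm_{\lmb_{0}}^{\pm \frac12}$ and $\gmm_{\lmb_{0}}^{-1}$: the $\xi_{2}$-derivatives gain the \emph{larger} weight $\brk{(\lmb_{0}, \xi_{2})}^{-1}$. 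It is therefore natural to run the symbolic calculus in the class of $p(x_{2}, \xi_{2})$ with $\abs{\rd_{x_{2}}^{j} \rd_{\xi_{2}}^{k} p} \aleq \brk{(\lmb_{0}, \xi_{2})}^{-k} m(\xi_{2})$; the proof of Lemma~\ref{lem:symb-calc} goes through verbatim since $\brk{(\lmb_{0}, \xi_{2})}$ is a temperate weight ($\brk{(\lmb_{0}, \xi_{2})} \aleq \brk{(\lmb_{0}, \eta_{2})} \brk{\xi_{2} - \eta_{2}}$ by Peetre's inequality). In this calculus $\Gmm_{\lmb_{0}}^{\pm \frac12}$ quantizes a symbol in the class attached to $\gmm_{\lmb_{0}}^{\pm\frac12}$ and the multiplication operators $\rd_{x_{2}} f,\, \rd_{x_{2}} \Gmm f$ sit in the class attached to the symbol $1$.

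Applying the composition formula with $N=2$ to $\Gmm_{\lmb_{0}}^{\frac12}(\rd_{x_{2}} f)\Gmm_{\lmb_{0}}^{\frac12}$ (noting that composing $\rd_{x_2}f$ with $\gmm_{\lmb_0}^{1/2}$ is exact since $\gmm_{\lmb_0}^{1/2}$ has no $x$-dependence) gives symbol $(\rd_{x_{2}} f)\gmm_{\lmb_{0}}(\xi_{2}) + c\, (\rd_{x_{2}}^{2} f)\rd_{\xi_{2}} \gmm_{\lmb_{0}}(\xi_{2}) + (\text{remainder in } S(\brk{(\lmb_{0}, \xi_{2})}^{-2} \gmm_{\lmb_{0}}))$, where the order-one term is simplified using $\rd_{\xi_{2}} \gmm_{\lmb_{0}}^{\frac12}\cdot \gmm_{\lmb_{0}}^{\frac12} = \tfrac12 \rd_{\xi_{2}} \gmm_{\lmb_{0}}$; similarly $\Gmm_{\lmb_{0}}^{\frac12}(\rd_{x_{2}}\Gmm f)\Gmm_{\lmb_{0}}^{-\frac12}$ has symbol $(\rd_{x_{2}}\Gmm f) + c'\,(\rd_{x_{2}}^{2}\Gmm f)\gmm_{\lmb_{0}}^{-1}\rd_{\xi_{2}}\gmm_{\lmb_{0}} + (\text{remainder in } S(\brk{(\lmb_{0},\xi_{2})}^{-2}))$, using $\rd_{\xi_{2}}\gmm_{\lmb_{0}}^{\frac12}\cdot\gmm_{\lmb_{0}}^{-\frac12} = \tfrac12\gmm_{\lmb_{0}}^{-1}\rd_{\xi_{2}}\gmm_{\lmb_{0}}$. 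Multiplying through by $i\lmb_{0}$ and collecting: the order-zero terms form $i p_{\bgtht, \lmb_{0}}$ with $p_{\bgtht, \lmb_{0}}$ a symbol of order $\lmb_{0}\gmm_{\lmb_{0}}$ of the stated form, the order-one terms form the subprincipal symbol $s_{\bgtht, \lmb_{0}}$ of the stated form, and the two remainders, each multiplied by $\lmb_{0}$, are gathered into $r_{\bgtht, \lmb_{0}}$. Since $\brk{(\lmb_{0}, \xi_{2})}^{2} = 1 + \lmb_{0}^{2} + \xi_{2}^{2}$ and $\gmm_{\lmb_{0}} \ageq 1$, both contributions lie in $S\big(\tfrac{\lmb_{0}}{1 + \lmb_{0}^{2} + \xi_{2}^{2}} \gmm(\lmb_{0}, \xi_{2})\big)$, which proves \eqref{eq:L-tht0-decomp}.

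I expect the only point needing genuine care to be obtaining the \emph{sharp} remainder class $S\big(\tfrac{\lmb_{0}}{1+\lmb_{0}^{2}+\xi_{2}^{2}} \gmm(\lmb_{0},\xi_{2})\big)$ — which is strictly smaller than what the naive $\brk{\xi_{2}}$-gain calculus delivers when $\abs{\lmb_{0}} \gg \abs{\xi_{2}}$. This forces one to keep two effects visible simultaneously through the two nested applications of the composition formula: (i) the $\xi_{2}$-derivatives of $\gmm_{\lmb_{0}}$ genuinely gain $\brk{(\lmb_{0},\xi_{2})}^{-1}$ (from the two-dimensional Assumption~1), so the $\brk{(\lmb_{0},\xi_{2})}$-gain calculus is the right one; and (ii) $\tld{P}$ carries exactly one factor of $\rd_{x_{1}}$, so $x_{1}$-separation extracts the explicit prefactor $\lmb_{0}$ rather than burying it in the symbol order. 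Everything else — the exact constants $c, c'$, the simplifications of the order-one symbols, and the bookkeeping of lower-order terms — is purely mechanical.
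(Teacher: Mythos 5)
Your proposal is correct, but it takes a somewhat different route from the paper, and the comparison is worth spelling out. The paper stays in two dimensions: starting from \eqref{eq:lin-L2} it isolates the principal terms plus two commutators $[\Gmm^{\frac12},\rd_{x_2}f]\rd_{x_1}\Gmm^{\frac12}$ and $[\Gmm^{\frac12},\rd_{x_2}\Gmm f]\rd_{x_1}\Gmm^{-\frac12}$, factors out $\rd_{x_1}$, applies Lemma~\ref{lem:symb-calc} exactly as stated (the Poisson-bracket identity $\{\gmm^{\frac12},g\}=\tfrac12\gmm^{-\frac12}\rd_{\xi_2}\gmm\,\rd_{x_2}g$ producing the subprincipal symbols), and only at the end conjugates by $e^{i\lmb_0 x_1}$, observing that this conjugation is the translation $\xi_1\mapsto\xi_1+\lmb_0$ and that on $x_1$-independent functions one may set $\xi_1=\lmb_0$; the sharp remainder class then comes for free, since the two-dimensional gain $\brk{\xi}^{-2}$ evaluated at $\xi_1=\lmb_0$ is exactly $(1+\lmb_0^2+\xi_2^2)^{-1}$. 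You conjugate first, reduce to the one-dimensional operator $i\lmb_0\Gmm_{\lmb_0}^{\frac12}(\rd_{x_2}f)\Gmm_{\lmb_0}^{\frac12}-i\lmb_0\Gmm_{\lmb_0}^{\frac12}(\rd_{x_2}\Gmm f)\Gmm_{\lmb_0}^{-\frac12}$, and then run a one-dimensional, $\lmb_0$-dependent calculus with the weight $\brk{(\lmb_0,\xi_2)}$. This is legitimate: you correctly identify that the only delicate point is the sharp remainder weight, and the Peetre-type temperance of $\brk{(\lmb_0,\xi_2)}$ together with the two-dimensional Assumption~1 (which gives $\xi_2$-derivative gains of $\brk{(\lmb_0,\xi_2)}^{-1}$ for $\gmm_{\lmb_0}^{\pm\frac12}$, uniformly in $\lmb_0$) does make the composition expansion go through. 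The trade-off: the paper can cite Lemma~\ref{lem:symb-calc} verbatim and needs no new calculus, at the cost of some commutator bookkeeping in 2D; your version avoids commutators, needs only one genuine composition per term (since $(\rd_{x_2}f)\Gmm_{\lmb_0}^{\frac12}$ is an exact left quantization, as you note), but requires re-verifying the composition lemma for the modified, parameter-dependent weight, which you assert at roughly the same level of detail as the paper's own sketch of Lemma~\ref{lem:symb-calc}.

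Two small points to tighten. First, do evaluate $c$ and $c'$: the composition formula gives $c=c'=\tfrac{1}{2i}$, so after multiplying by $i\lmb_0$ the two subprincipal terms are real, namely $\pm\tfrac{\lmb_0}{2}$ times $\rd_{x_2}^2f\,\rd_{\xi_2}\gmm_{\lmb_0}$ and $(\rd_{x_2}^2\Gmm f)\,\gmm_{\lmb_0}^{-1}\rd_{\xi_2}\gmm_{\lmb_0}$ respectively; the realness (and evenness) of $s_{\bgtht,\lmb_0}$ is used later for the amplitude transport, so it should not be left as ``mechanical.'' Second, on signs: your computation produces $p_{\bgtht,\lmb_0}=\lmb_0\rd_{x_2}f\,\gmm_{\lmb_0}-\lmb_0\rd_{x_2}\Gmm f$ and a $+\tfrac12$ in front of the $\rd_{x_2}^2 f$ term of $s_{\bgtht,\lmb_0}$, which agrees with the paper's own intermediate formula \eqref{eq:L-tht0-decomp-pre} but not with the signs printed in the statement of Proposition~\ref{prop:L-tht0-decomp-f}; that discrepancy is internal to the paper, but ``with the evident sign conventions'' glosses over it, and you should state explicitly which signs your expansion yields.
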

Observe that both $p_{\bgtht, \lmb_{0}}(x_{2}, \xi_{2})$ and $s_{\bgtht, \lmb_{0}}(x_{2}, \xi_{2})$ are real-valued and even.

\begin{proof}
	We begin by computing
	\begin{align*}
	\calL_{\bgtht} \varphi
	&= \rd_{t} \varphi
	- \Gmm^{\frac{1}{2}} \nb^{\perp} \bgtht \cdot \nb \Gmm^{\frac{1}{2}} \varphi 
	+ \Gmm^{\frac{1}{2}} \nb^{\perp} \Gmm \bgtht \cdot \nb \Gmm^{-\frac{1}{2}} \varphi \\
	&= \rd_{t} \varphi
	+ \Gmm^{\frac{1}{2}} \rd_{x_{2}} f(t, x_{2}) \rd_{x_{1}} \Gmm^{\frac{1}{2}} \varphi 
	- \Gmm^{\frac{1}{2}} \rd_{x_{2}} \Gmm f(t, x_{2}) \rd_{x_{1}} \Gmm^{-\frac{1}{2}} \varphi \\
	&=  \rd_{t} \varphi
	+ \rd_{x_{2}} f \rd_{x_{1}} \Gmm \varphi 
	- \rd_{x_{2}} \Gmm f \rd_{x_{1}} \varphi \\
	&\peq
	+ [\Gmm^{\frac{1}{2}}, \rd_{x_{2}} f] \rd_{x_{1}} \Gmm^{\frac{1}{2}} \varphi 
	- [\Gmm^{\frac{1}{2}}, \rd_{x_{2}} \Gmm f] \rd_{x_{1}} \Gmm^{-\frac{1}{2}} \varphi.
	\end{align*}
	For the last line, we factor out $\rd_{x_{1}}$ (whose symbol is $i \xi_{1}$) and use Lemma~\ref{lem:symb-calc} to write
	\begin{align*}
	&[\Gmm^{\frac{1}{2}}, \rd_{x_{2}} f] \Gmm^{\frac{1}{2}} 
	- \left( i^{-1} \{ \gmm^{\frac{1}{2}}, \rd_{x_{2}} f \} \gmm^{\frac{1}{2}}(\xi) \right) (x, D) \in Op(S(\brk{\xi}^{-2} m)), \\
	& [\Gmm^{\frac{1}{2}}, \rd_{x_{2}} \Gmm f] \Gmm^{-\frac{1}{2}}
	- \left( i^{-1} \{ \gmm^{\frac{1}{2}}, \rd_{x_{2}} f \} \gmm^{-\frac{1}{2}}(\xi) \right) (x, D) \in Op(S(\brk{\xi}^{-2})).
	\end{align*}
	Then using the identity
	\begin{equation*}
	\{\gmm^{\frac{1}{2}}, g \}
	= \frac{1}{2} \gmm^{-\frac{1}{2}} \rd_{\xi_{2}} \gmm (\xi) \rd_{x_{2}} g
	\end{equation*}
	which holds for any function $g = g(x_{2})$, it follows that
	\begin{equation} \label{eq:L-tht0-decomp-pre}
	\calL_{\bgtht} \varphi
	= \rd_{t} \varphi + i p_{\bgtht}(x_{2}, D_{1}, D_{2}) \varphi + s_{\bgtht}(x_{2}, D_{1}, D_{2}) \varphi + r_{\bgtht}(x_{2}, D_{1}, D_{2}) \varphi,
	\end{equation}
	where
	\begin{align*}
	p_{\bgtht}(x_{2}, \xi_{1}, \xi_{2}) &= \rd_{x_{2}} f(t, x_{2}) \xi_{1} \gmm(\xi_{1}, \xi_{2}) {-} (\rd_{x_{2}} \Gmm f)(t, x_{2}) \xi_{1}, \\
	s_{\bgtht}(x_{2}, \xi_{1}, \xi_{2}) &= {+} \frac{1}{2} \rd_{x_{2}}^{2} f(t, x_{2}) \rd_{\xi_{2}} \gmm(\xi_{1}, \xi_{2}) \xi_{1} - \frac{1}{2} (\rd_{x_{2}}^{2} \Gmm f)(t, x_{2}) \gmm^{-1}(\xi) \rd_{\xi_{2}} \gmm(\xi_{1}, \xi_{2}) \xi_{1}, \\
	r_{\bgtht}(x_{2}, \xi_{1} \xi_{2}) &\in S\left( {\tfrac{\lmb_{0}}{1+\lmb_{0}^{2}+\xi_{2}^{2}} \gmm(\lmb_{0}, \xi_{2})}\right).
	\end{align*}
	Next, we conjugate \eqref{eq:L-tht0-decomp-pre} by $e^{i \lmb_{0} x_{1}}$. For any symbol $p$, note that the effect of conjugating by the linear phase $\lmb_{0} x_{1}$ is simply the translation $\xi_{1} \mapsto \xi_{1} + \lmb_{0}$; i.e., ${}^{(\lmb_{0} x_{1})} p(x_{2}, \xi_{1}, \xi_{2}) = p(x_{2}, \lmb_{0} + \xi_{1}, \xi_{2})$, where the LHS is given in Lemma~\ref{lem:conj}. Moreover, it is clear from \eqref{eq:psdo-op} that for any function $\psi$ that is independent of $x_{1}$, we have $p(x_{2}, \lmb_{0} + D_{1}, D_{2}) \psi = p(x_{2}, \lmb_{0}, D_{2}) \psi$. Since $p_{\bgtht}(x_{2}, \lmb_{0}, \xi_{2})= p_{\bgtht, \lmb_{0}}(x_{2}, \xi_{2})$, $s_{\bgtht}(x_{2}, \lmb_{0}, \xi_{2})= s_{\bgtht, \lmb_{0}}(x_{2}, \xi_{2})$ and $r_{\bgtht}(x_{2}, \lmb_{0}, \xi_{2})= r_{\bgtht, \lmb_{0}}(x_{2}, \xi_{2})$, the proof of Proposition~\ref{prop:L-tht0-decomp-f} is complete. \qedhere
\end{proof}

With Proposition~\ref{prop:L-tht0-decomp-f} in hand, we are ready to specify the equations solved by the phase function $\bfPhi = \lmb_{0} x_{1} + \Phi(t, x_{2})$ and the amplitude function $a = a(t, x_{2})$, which would constitute the desired degenerating wave packet $\tld{\varphi} = \Re(e^{i \bfPhi} a) = \Re(e^{i (\lmb_{0} x_{1} + \Phi)} a)$. The scheme itself is simply the standard WKB procedure for the nonlocal operator $\calL_{\bgtht}$, but there are extra twists that arise in our scenario due to the degeneracy of $\bgtht$; see Remark~\ref{rem:wp-construct} below.

\smallskip
\noindent {\bf Equation for the phase $\Phi$.} We choose the phase function $\Phi = \Phi(t, x_{2})$ to be a solution to the Hamilton--Jacobi equation
\begin{equation} \label{eq:HJ-Phi}
\rd_{t} \Phi + p_{\bgtht, \lmb_{0}}(t, x_{2}, \rd_{x_{2}} \Phi) = 0,
\end{equation}
with initial data satisfying $\abs{\rd_{x_{2}} \Phi(0, x_{2})} \aeq \lmb_{0}$ (the precise choice of the initial data has to be well-adapted to our problem; see Section~\ref{sec:HJE} below).
We note that the $(X, \Xi)$-components of the characteristics for \eqref{eq:HJ-Phi} solve the Hamiltonian ODE associated with $p_{\bgtht, \lmb_{0}}(t, x_{2}, \xi_{2})$,
\begin{equation*}
\left\{
\begin{aligned}
\dot{X}_{2}(t) & = \rd_{\xi_{2}} p_{\bgtht, \lmb_{0}}(t, X_{2}(t), \Xi_{2}(t)), \\
\dot{\Xi}_{2}(t) & = - \rd_{x_{2}} p_{\bgtht, \lmb_{0}}(t, X_{2}(t), \Xi_{2}(t)).
\end{aligned}	
\right.
\end{equation*}

\smallskip
\noindent {\bf Equation for the amplitude $a$.} We choose the amplitude function $a = a(t, x_{2})$ to obey the transport equation
\begin{equation}  \label{eq:transport-a}
\tld{\nb}_{t} a := \rd_{t} a
+ \rd_{\xi_{2}} p_{\bgtht, \lmb_{0}}(x_{2}, \rd_{x_{2}} \Phi) \rd_{x_{2}} a
+ \left( \frac{1}{2} \rd_{\xi_{2}}^{2} p_{\bgtht, \lmb_{0}}(x_{2}, \rd_{x_{2}} \Phi) \rd_{x_{2}}^{2} \Phi 
+ s_{\bgtht, \lmb_{0}}(x_{2}, \rd_{x_{2}} \Phi)\right) a = 0,
\end{equation}
with smooth and compactly supported initial data. At this point, we note that if one takes the inner product of \eqref{eq:transport-a} with $a$ and integrate by parts, then the specific structure of $s_{\bgtht, \lmb_{0}}$ will show that the $L^{2}$ norm of $a$ stays bounded on an $O(1)$ timescale. {This is proved in Proposition \ref{prop:trans-a} below.}

\smallskip
\noindent {\bf Key computation for the error term.}  The reason for the choices of the equations \eqref{eq:HJ-Phi} and \eqref{eq:transport-a} is the following computation:
\begin{equation} \label{eq:error-ps}
\rd_{t} (a e^{i \Phi}) + i p_{\bgtht, \lmb_{0}}(a e^{i \Phi}) + s_{\bgtht, \lmb_{0}}(a e^{i \Phi})
= {}^{(\Phi)} r_{p_{\bgtht, \lmb_{0}}, -2} + {}^{(\Phi)} r_{s_{\bgtht, \lmb_{0}}, -1}.
\end{equation}
Indeed, by the equations for $\rd_t\Phi$ and $\rd_t a$: \begin{equation*}
\begin{split}
\rd_{t} (a e^{i \Phi}) + i p_{\bgtht, \lmb_{0}}(a e^{i \Phi}) + s_{\bgtht, \lmb_{0}}(a e^{i \Phi}) 
& = i\rd_t\Phi  ae^{i\Phi} + (\rd_t a) e^{i\Phi} + i p_{\bgtht, \lmb_{0}} (ae^{i\Phi}) + s_{\bgtht, \lmb_{0}} (ae^{i\Phi}) \\
& = i p_{\bgtht, \lmb_{0}}( ae^{i\Phi} )  -ip_{\bgtht, \lmb_{0}}(x,\rd_x\Phi)  ae^{i\Phi}  - \rd_{\xi_{2}} p_{\bgtht, \lmb_{0}}(x_{2}, \rd_{x_{2}} \Phi) \rd_{x_{2}} a e^{i\Phi} \\
&\quad - \frac{1}{2} \rd_{\xi_{2}}^{2} p_{\bgtht, \lmb_{0}}(x_{2}, \rd_{x_{2}} \Phi) \rd_{x_{2}}^{2} \Phi \, a e^{i\Phi}\\
&\quad + s_{\bgtht, \lmb_{0}} (ae^{i\Phi})  - s_{\bgtht, \lmb_{0}}(x, \rd_{x} \Phi) a e^{i\Phi}.
\end{split}
\end{equation*} Then, \eqref{eq:error-ps} follows from Proposition~\ref{prop:conj-exp}.

In conclusion, to justify that $\tld{\varphi} = \Re (a e^{i (\lmb_{0} x_{1} + \Phi)})$ is a good approximate solution, we need to use \eqref{eq:L-tht0-decomp} and \eqref{eq:error-ps}
to show that $\calL_{\bgtht} \tld{\varphi} = \Re (\calL_{\bgtht} (a e^{i (\lmb_{0} x_{1} + \Phi)}))$ is suitably small. As remarked at the beginning of this section, this goal shall be achieved in Section~\ref{sec:wavepacket}.

\begin{remark} \label{rem:wp-construct}
	In our case, due to the degeneracy of $p_{\tht}$, we expect $\rd_{x_{2}}^{1+k} \Phi$ and $\rd_{x_{2}}^{k} a$ to \emph{grow} rapidly in time for $k \geq 1$ (and indeed, this is precisely the behavior we wish to capture in order to show illposedness). For this reason, we cannot simply rely on standard pseudo-differential calculus to construct a wave packet that is valid up to a timescale where such a growth of higher derivatives (i.e., \emph{degeneration}) occurs. Nevertheless, as we shall see in Section~\ref{sec:HJE}, the growth rate of $\rd_{x_{2}}^{k} \Phi$ and $\rd_{x_{2}}^{k} a$ will be smaller than $\abs{\rd_{x_{2}} \Phi}^{k}$, which we may salvage via Proposition~\ref{prop:conj-err} in Section~\ref{sec:conj-err}.
\end{remark}

\section{The Hamilton--Jacobi equation and the associated transport operator} \label{sec:HJE}
 
This section is devoted to the choice and analysis of the phase and amplitude functions. The main results of this section are Proposition \ref{prop:Phi-derivatives} and \ref{prop:trans-a}, which provide sharp estimates on the derivatives of the phase and amplitude, respectively. 
 
Let us outline the structure of this section. After detailing the choice of parameters in Section \ref{subsec:HJE-prelim}, we consider the case of time-independent background state in Section \ref{subsec:HJE-s}. In this case, we take our phase function to be a solution of the Hamilton--Jacobi equation in separation of variables form. Thanks to the rather explicit form of the phase function, it is straightforward to obtain pointwise estimates for the derivatives of the phase function (Proposition \ref{prop:Phi-derivatives-time-indep}), and the resulting bounds serve as a guide for the time-dependent background case which is handled in Section \ref{subsec:HJE-gen}. In this latter case, we use the same initial data for the phase function, and estimate the solution by the method of characteristics and a bootstrapping argument. The key variable $h$ is introduced in this section in \eqref{eq:def-h}, which encodes some cancellations in the Hamilton--Jacobi equation and allows us to obtain sharp estimates on the second derivative of the phase function (Lemma \ref{lem:HJE-h} and Proposition \ref{prop:h}). Using the variable $h$, we prove Proposition \ref{prop:Phi-derivatives} in Section \ref{subsec:transport}, which gives the desired pointwise bounds on the derivatives of the phase function. Then, it is not very difficult to obtain estimates on the amplitude function, which is Proposition \ref{prop:trans-a} of Section \ref{subsec:transport2}. 

\subsection{Initial reductions and conventions} \label{subsec:HJE-prelim}
Throughout this section, we study the Hamilton--Jacobi equation \eqref{eq:HJ-Phi} with the following conventions: 
	\begin{itemize}
		\item Assume that $\dgx_{2} = 0$ and $\rd_{x_{2}}^{2} f(0, 0) < 0$.
		\item Write $x = x_{2}$ and $\xi = \xi_{2}$. Moreover, a prime ($'$) denotes $\rd_{x_{2}}$.
		\item Introduce the shorthand $\gmm_{\lmb_{0}}(\xi) = \gmm(\lmb_{0}, \xi)$.
	\end{itemize} 
With the above reductions, the Hamilton--Jacobi equation is simply given by 
\begin{equation} \label{eq:HJE}
	\rd_{t} \Phi + p(t, x, \rd_{x} \Phi) = 0 \qquad \hbox{ in } \bbR_{t} \times \bbR_{x}, 
\end{equation}
where 
\begin{equation} \label{eq:HJE-p}
p(t, x, \xi) = f'(t, x) \lmb_{0} \gmm_{\lmb_{0}} (\xi) + \Gmm f'(t, x) \lmb_{0}. 
\end{equation}
The key point of our analysis is that $f'(t, x)$ is assumed to vanish linearly at some point for each $t$. As a result, there exist bicharacteristics $(X(t), \Xi(t))$ associated with $p(t, x, \xi)$ that exhibits rapid growth of $\abs{\Xi}$. Our aim is to construct a solution $\Phi$ to \eqref{eq:HJE}, which we refer to as a \emph{phase function}, that exhibits such a behavior for a sufficiently long time.

\medskip
\noindent{\bf Parameters in the construction.}
In this section, we are given a symbol $\gmm$ and a function $f$ that satisfy the assumptions laid out in Section~\ref{subsec:results}. We are also given $\lmb_{0} \in \bbZ$, $M \geq 1$, $\dlt_{0} > 0$ and $\sgm_{0} \geq 0$ that satisfy \eqref{eq:gf-condition-1}--\eqref{eq:gf-condition-3}\footnote{To be absolutely precise, instead of \eqref{eq:gf-condition-2} we only need $\tau_{M} \leq 1$ in this section. The full condition is needed in Section~\ref{sec:wavepacket} below.}. When $f$ is time-dependent, we assume furthermore that $f$ is even and that we are given $\dlt_{1} > 0$ such that \eqref{eq:gf-condition-diss-2} holds. 

We {\bf fix a nonincreasing function $\eps(\lmb_{0})$ for $\lmb_{0} \geq \Xi_{0}$} such that, for $\lmb_{0} \geq \Lmb$,
\begin{equation} \label{eq:eps-choice} 
\max\set*{\frac{1}{\lmb_{0}^{\sgm_{0}+\frac{1}{6} \dlt_{0}}}, \, \frac{1}{\gmm_{\lmb_{0}}(\lmb_{0})^{1-\frac{1}{2} \dlt_{0}}}}
\leq \eps(\lmb_{0}) 
\leq \min\set*{\frac{1}{100}, \frac{1}{(10+2^{2\bt_{0}})\sup_{M' \in [1, M]} \frac{\gmm_{\lmb_{0}}(M' \lmb_{0})}{M'} \tau_{M'}}}.
\end{equation}
By the second nondissipative growth condition \eqref{eq:gf-condition-1}, we see that there always exists such a function, i.e., if $\eps(\lmb_{0})$ satisfies
\begin{equation*}
\max\set*{\frac{1}{\lmb_{0}^{\sgm_{0}+\frac{1}{6} \dlt_{0}}}, \, \frac{1}{\gmm_{\lmb_{0}}(\lmb_{0})^{1-\frac{1}{2} \dlt_{0}}}} 
\leq \eps(\lmb_{0}) 
\leq \frac{1}{2^{\bt_{0}+1}(2^{\bt_{0}}+3) \min\set{\gmm_{\lmb_{0}}(\lmb_{0})^{1-\dlt_{0}}, \lmb_{0}^{\sgm_{0}}}},
\end{equation*}
for $\lmb_{0} \geq \Xi_{0}$, then it would obey \eqref{eq:eps-choice}.

The primary role of $\eps(\lmb_{0})$ is as the relative physical localization scale of the wave packet: the support of $a(t=0, x)$ will be contained in an interval of size $\Dlt x_{0}$ that is comparable to $x_{0} \eps(\lmb_{0})$ up to a logarithmic power of $\lmb_{0}$, where $x_{0}$ is the initial location of the wave packet. For technical simplicity, we shall also choose (see \eqref{eq:x0-x1} and \eqref{eq:def-x1-x0} below)
\begin{equation*}
x_{0} = c_{x_{0}} \eps(\lmb_{0}), 
\end{equation*}
where {\bf we shall retain the freedom to choose $c_{x_{0}} > 0$ throughout this section} (it will only be fixed in Section~\ref{subsec:def-wavepacket}).

\begin{remark}
The restriction $\sgm_{0} \leq \frac{1}{3}(1 - 2\dlt_{0})$ in Theorems~\ref{thm:norm-growth} and \ref{thm:norm-growth-diss} arise from the uncertainty principle at $t = 0$ (which requires $\Dlt x_{0} \lmb_{0} \ageq 1$) and the wave packet error estimate; see Proposition~\ref{prop:wp-error} below. We note that the requirement $x_{0} \aeq \eps(\lmb_{0})$ can be weakened if $f'''(0, 0) = 0$, in which case the restriction $\sgm_{0} \leq \frac{1}{3}(1-2\dlt_{0})$ can be relaxed; see Remark~\ref{rem:x0-x1} below.
\end{remark}

In addition to $\dlt_{0}$ and $\dlt_{1}$, we are given $\dlt_{2}$ and $N_{0}^{-1}$, which will be chosen in Sections~\ref{sec:wavepacket} and \ref{sec:proofs} (in fact, $\dlt_{2} = \dlt_{0}^{10}$ and $N_{0} = 10^{4} \max\set{\dlt_{2}^{-1}(1+\bt_{0}), 1+\alp_{0}, {1+s, 1+s'}}$; see Proposition~\ref{prop:wp-error} and Section~\ref{subsec:lin-illposed-diss} below).
In the course of this section, {\bf we will choose additional small parameters in the following order} (each choice may also depend on $\gmm$ and $f$): 
\begin{equation*}
\dlt_{0}, \dlt_{1}, \dlt_{2}, N_{0}^{-1} \to \dlt_{3} \to \dlt_{4} \to \dlt_{5}.
\end{equation*}
In this section, {\bf we also require that}
\begin{equation*}
	\tau_{M} \leq \wpT, \qquad \lmb_{0} \geq \Lmb
\end{equation*}
where {\bf we shall retain the freedom to choose $0 < \wpT \leq 1$ and $\Lmb \geq 1$ throughout this section} ({$T$ may be fixed at the end of this section, while $\Lmb$ will be fixed in Section~\ref{subsec:deg}}). 

\subsection{The case of a shear steady state background} \label{subsec:HJE-s}

\noindent \textit{Choice of $\Phi$}. 
When $\bgtht = f(x_{2})$ is time-independent, i.e., $\rd_{t} f' = 0$, we may obtain a useful solution to the Hamilton--Jacobi equation by separation of variables: Consider the ansatz
\begin{equation*}
	\Phi(t, x) = E \lmb_{0} t + G(x).
\end{equation*}
Then $G(x)$ obeys the equation
\begin{equation} \label{eq:HJE-s}
	p(x, \rd_{x} G) = - E \lmb_{0}.
\end{equation}
By the positivity of $\xi \rd_{\xi} \gmm_{\lmb_{0}}$, $\gmm_{\lmb_{0}}(\cdot)$ is invertible on $(-\infty, - \lmb_{0}]$ or $[\lmb_{0}, \infty)$. {We are going to take $G(x)$ whose derivative is blowing up at the origin, and for $\rd_{x} \Phi = \rd_{x} G$ to grow (as $t$ increases) on characteristic curves, we need to choose $\rd_{x} G$ to be positive (indeed, this sign relation may be read off from the bicharacteristic equations).} In what follows, we denote by $\gmm_{\lmb_{0}}^{-1}$ the inverse of the restriction $\gmm_{\lmb_{0}} : [\lmb_{0}, \infty) \to (0, \infty)$. Hence, we obtain the explicit formula
\begin{equation} \label{eq:HJE-s-dPhi}
	\rd_{x} \Phi = \rd_{x} G := \gmm_{\lmb_{0}}^{-1} \left(\frac{E + \Gmm f'}{- f'}\right).
\end{equation} {Note that $\rd_{x} G \to \infty$ at the origin. }

To begin with, we proceed to show that there is a choice of $E$ in \eqref{eq:HJE-s-dPhi} so that 
\begin{equation}\label{eq:rd-Phi-bounds}
\lmb_{0} \leq \rd_{x} \Phi(0, x) \leq (1+\eps) \lmb_{0}
\end{equation} holds for $x_{0} < x < x_{1}$ and $\lmb_{0} \geq \Lmb$, provided that $x_{0}$ and $x_{1}$ are sufficiently small (with respect to $f'$, $\lmb_{0}$, and $\eps$) and $\Lmb$ is sufficiently large (depending on $\gmm$ and $\abs{f''(0)}^{-1} \Gmm f''$). Since $\gmm^{-1}_{\lmb_{0}}$ is increasing, \eqref{eq:rd-Phi-bounds} is equivalent to having \begin{equation}\label{eq:rd-Phi-bounds-2}
\begin{split}
\gmm_{\lmb_{0}}(\lmb_{0}) \le \frac{E + \Gmm f'(x)}{-f'(x)} \le \gmm_{\lmb_{0}}((1+\eps(\lmb_{0}))\lmb_0), \qquad x_{0}<x<x_{1}. 
\end{split}
\end{equation} We first take $x_{1} >0$ to be sufficiently small (recall that $f''(0)<0$), so that
\begin{equation} \label{eq:rd-Phi-bounds-x1-1}
	(1+\tfrac{\eps}{10}) f''(0) < f''(x) < (1-\tfrac{\eps}{10}) f''(0) \qquad \hbox{ for $x \in (0, x_{1})$}.
\end{equation}
In particular, we have $(1+\tfrac{\eps}{10}) f''(0) x < f'(x) < (1-\tfrac{\eps}{10}) f''(0) x$ for $x \in (0, x_{1})$. Then, we define 
\begin{equation*}
\begin{split}
E= -f'(x_1)\gmm_{\lmb_{0}}((1+\tfrac{1}{2} \eps)\lmb_0) - \Gmm f'(x_1).
\end{split}
\end{equation*} With this choice of $E$, \eqref{eq:rd-Phi-bounds-2} is trivially satisfied at $x=x_{1}$. We may now choose $x_0 <x_1 $ in a way that \eqref{eq:rd-Phi-bounds-2} is satisfied for all $x\in(x_{0},x_{1})$: we take \begin{equation}\label{eq:def-x0}
\begin{split}
\frac{x_{1}}{x_{0}} -1 = \frac{1}{10} \frac{\gmm_{\lmb_{0}}((1+\tfrac{3}{4}\eps)\lmb_0)-\gmm_{\lmb_{0}}((1+\tfrac{1}{2}\eps)\lmb_0)}{\gmm_{\lmb_{0}}((1+\tfrac{1}{2}\eps)\lmb_0)}. 
\end{split}
\end{equation} With these choices, we estimate \begin{equation*}
\begin{split}
\frac{f'(x_1)}{f'(x_0)} = 1 + \frac{\int_{x_0}^{x_1} f''(x)\,\ud x}{\int_{0}^{x_{0}} f''(x) \, \ud x} \le 1 + 10\left|1 - \frac{x_{1}}{x_{0}}\right| = \frac{\gmm_{\lmb_{0}}((1+\tfrac{3}{4}\eps)\lmb_0)}{\gmm_{\lmb_{0}}((1+\tfrac{1}{2}\eps)\lmb_0)}.
\end{split}
\end{equation*} where we take $\lmb_{0} \geq \Lmb$ larger if necessary to decrease $\eps$. Then, \begin{equation*}
\begin{split}
\frac{E + \Gmm f'(x)}{-f'(x)} & = \frac{-f'(x_1)\gmm_{\lmb_{0}}((1+\tfrac{1}{2} \eps)\lmb_0)-\Gmm f'(x_{1}) + \Gmm f'(x)}{-f'(x)} \\
&\le \frac{f'(x_1)}{f'(x_0)} \gmm_{\lmb_{0}}((1+\tfrac{1}{2}\eps)\lmb_0) + \frac{2}{f''(0)}\nrm{\Gmm f''}_{L^\infty(x_{0}, x_{1})}\frac{|x_{0}-x_{1}|}{x_{0}} \\
&\le \gmm_{\lmb_{0}}((1+\tfrac{3}{4}\eps)\lmb_0) + \frac{20 \nrm{\Gmm f''}_{L^{\infty}(x_{0}, x_{1})}}{\abs{f''(0)}} \frac{\gmm_{\lmb_{0}}((1+\tfrac{3}{4}\eps)\lmb_0)-\gmm_{\lmb_{0}}((1+\tfrac{1}{2}\eps)\lmb_0)}{\gmm_{\lmb_{0}}((1+\tfrac{1}{2}\eps)\lmb_0)}.
\end{split}
\end{equation*} 
To estimate the last line by $\gmm_{\lmb_{0}}((1+\eps)\lmb_{0})$, it suffices to verify 
\begin{equation} \label{eq:rd-Phi-bound-key}
\begin{split}
\frac{20 \nrm{\Gmm f''}_{L^{\infty}(x_{0}, x_{1})}}{\abs{f''(0)}} \frac{\gmm_{\lmb_{0}}((1+\tfrac{3}{4}\eps)\lmb_0)-\gmm_{\lmb_{0}}((1+\tfrac{1}{2}\eps)\lmb_0)}{\gmm_{\lmb_{0}}((1+\tfrac{1}{2}\eps)\lmb_0)} \le \gmm_{\lmb_{0}}((1+\eps)\lmb_0) - \gmm_{\lmb_{0}}((1+\tfrac{3}{4}\eps)\lmb_0).
\end{split}
\end{equation} From our assumptions on $\gmm$, this is guaranteed for $\lmb_0\ge\Lmb_0$ with sufficiently large $\Lmb_0$. Indeed, by the mean value theorem, there exist $\alp_{1}, \alp_{2} \in [0, 1]$ and $\alp_{3} \in [\tfrac{2+\alp_{1}}{4}, \tfrac{3+\alp_{2}}{4}]$ such that \begin{equation*}
\begin{split}
\frac{\gmm_{\lmb_{0}}((1+\tfrac{3}{4}\eps)\lmb_0)-\gmm_{\lmb_{0}}((1+\tfrac{1}{2} \eps)\lmb_0)}{\gmm_{\lmb_{0}}((1+\eps)\lmb_0) - \gmm_{\lmb_{0}}((1+\tfrac{3}{4} \eps)\lmb_0) } & = 1 +  \frac{\rd_{\xi}\gmm_{\lmb_{0}}((1+\tfrac{2+\alp_{1}}{4} \eps)\lmb_0)-\rd_{\xi}\gmm_{\lmb_{0}}((1+\tfrac{3+\alp_{2}}{4} \eps)\lmb_0)}{\rd_{\xi}\gmm_{\lmb_{0}}((1+\tfrac{3+\alp_{2}}{4} \eps)\lmb_0)}  \\
& \le 1 + \eps\lmb_{0} \frac{\abs{\rd_{\xi}^2\gmm_{\lmb_{0}}((1 + \alp_{3} \eps)\lmb_0)}}{\rd_{\xi}\gmm_{\lmb_{0}}((1+\tfrac{3+\alp_{2}}{4}\eps)\lmb_0)} \leq C,
\end{split}
\end{equation*} where we have used the ellipticity of $\xi \rd_{\xi}\gmm_{\lmb_0}$ for the last inequality. Since $\gmm_{\lmb_{0}} \to \infty$ as $\lmb_{0} \to \infty$, the desired bound \eqref{eq:rd-Phi-bound-key} follows by taking $\lmb_{0} \geq \Lmb$ large enough. We have verified the right inequality of \eqref{eq:rd-Phi-bounds-2}. The proof of the left inequality is only easier since $-f'(x)$ is increasing in $(x_{0},x_{1})$. We hence omit the proof.

We have shown that if $x_{1}$ and $x_{0}$ are chosen so that \eqref{eq:rd-Phi-bounds-x1-1} and \eqref{eq:def-x0} and $\Lmb$ is sufficiently large depending on $\gmm$ and $\abs{f''(0)}^{-1} \Gmm f''$, then $E$ can be chosen so that \eqref{eq:rd-Phi-bounds} holds. In what follows, {\bf we choose $x_{0}$, $x_{1}$ to be of the form }
\begin{equation}\label{eq:x0-x1}
\begin{split}
x_{0} = {c_{x_{0}} \eps(\lmb_{0})}, \quad {x_{1} = x_{0} + \Dlt x_{0}}, \quad \hbox{ where } {\frac{c c_{x_{0}}}{(\log \lmb_{0})^{2}} \eps(\lmb_{0})} \leq \left|\frac{\Dlt x_{0}}{x_{0}}\right| \leq c_{x_{0}} \eps(\lmb_{0}), 
\end{split}
\end{equation} 
where $c$ is a constant depending only on $\gmm$ and $f$, and {$c_{x_{1}} > 0$ obeys 
\begin{equation} \label{eq:def-x1}
	\nrm{f'''(x)}_{L^{\infty}(0, c_{x_{1}}\eps(\lmb_{0}))} c_{x_{0}} \leq \frac{1}{20} \abs{f''(0)}. 
\end{equation}
Clearly, such a choice of $x_{0}$ and $x_{1}$ ensures \eqref{eq:rd-Phi-bounds-x1-1}. In the following, there are several occasions in which we need to take $x_{0}$ and $x_{1}$ sufficiently small. To achieve this, {\bf we shall retain the freedom to shrink $c_{x_{0}} > 0$ until the end of this section}; note that this action only makes the LHS of \eqref{eq:def-x1} smaller.}
The second inequality in \eqref{eq:x0-x1} follows from \eqref{eq:def-x0}, mean value theorem, and the almost comparability of $\xi \rd_{\xi}\gmm_{\lmb_0}$ and $\gmm_{\lmb_{0}}$.

{
\begin{remark}\label{rem:x0-x1}
If $f'''$ vanishes at $0$, then we may choose $x_{0}$ be be larger. More precisely,
\begin{equation}\label{eq:x0-x1-nf}
\begin{split}
x_{0} = {c_{x_{0}} \eps(\lmb_{0})^{\frac{1}{n_{f}}}}
\end{split}
\end{equation} 
would work, where $n_{f}$ is the smallest natural number such that $f^{(n_{f}+2)}(0) \neq 0$.
\end{remark}
}

\medskip

\noindent \textit{Bicharacteristics associated with $\Phi$}. We consider the bicharacteristic ODE \begin{equation}\label{eq:ODE-bichar}
	\left\{
	\begin{aligned}
		\dot{X} &=  f'( X) \lmb_{0} \gmm_{\lmb_{0}}'(\Xi) , \\
		\dot{\Xi} &=   - f''( X) \lmb_{0} \gmm_{\lmb_{0}}(\Xi) + (\Gmm f'')( X) \lmb_{0} 
	\end{aligned}
	\right.
\end{equation} with initial data $(X_{0},\Xi_{0})$ satisfying $x_{0}<X_{0}<x_{1}$ and {$\lmb_{0} \leq \Xi_{0} = \rd\Phi(0,X_{0}) \leq (1+\eps) \lmb_{0}$}. 

\medskip

\noindent \textit{Control of $\Xi(t)$}.
 To obtain bounds on $\Xi(t)$, we introduce $\ollmb(t)$ and $\lmb(t)$, which are solutions to the ODEs  
\begin{equation}  \label{eq:ollmb-ODE}
\left\{
\begin{aligned} 
\dot{\ollmb}(t) &= -(1+\eps) f''( 0) \lmb_{0} \gmm_{\lmb_{0}}(\ollmb),\\
\ollmb(0) & = {(1+\eps)} \lmb_{0} ,
\end{aligned}
\right.
\end{equation}  and \begin{equation}  \label{eq:ullmb-ODE}
\left\{
\begin{aligned} 
\dot{\lmb}(t) &= -(1-\eps) f''( 0) \lmb_{0} \gmm_{\lmb_{0}}(\lmb),\\
\lmb(0) & = {\lmb_{0}} ,
\end{aligned}
\right.
\end{equation} respectively. 
We remind the reader that {$\eps = \eps(\lmb_{0})$ obeys \eqref{eq:eps-choice}}. We now claim that 
\begin{equation} \label{eq:Xi-lmb-compare}
	\lmb(t) \leq \Xi(t) \leq \ollmb(t).
\end{equation}

To begin with, observe that $\dot{X} = 0$ when $X = 0$, from which we see that no solution $(X(t), \Xi(t))$ with $X(0) > 0$ can traverse to the region $\set{X < 0}$. Note furthermore that in the region $\set{X > 0}$, we have $\dot{X} <0$. In conclusion, we see that $0 < X(t) < X(0) < x_{1}$ for any $t>0$.

Next, note that
\begin{equation*}
\begin{split}
-(1-\eps) f''(0) \lmb_{0} \gmm_{\lmb_{0}}(\lmb) 
\leq - f''( X) \lmb_{0} \gmm_{\lmb_{0}}(\Xi) + (\Gmm f'')( X) \lmb_{0} \leq -(1+\eps) f''(0) \lmb_{0} \gmm_{\lmb_{0}}(\ollmb)
\end{split}
\end{equation*} for any $X \in (0,x_{1})$ and $\lmb_{0} \leq \lmb \leq \Xi \leq \ollmb$. 
Indeed, by the first inequality of \eqref{eq:eps-choice}, we have $\eps \gmm_{\lmb_{0}}(\lmb) \to \infty$ as $\lmb_{0} \to \infty$. By taking  $\Lmb_0>0$ larger if necessary (in a way depending only on $\abs{f''(0)}^{-1} \nrm{\Gmm f''}_{L^{\infty}(0, x_{1})}$, $\gmm$ and $\eps$), we may guarantee that the preceding inequalities hold. Then, comparing the equations for $\dot{\Xi}$, $\dot{\lmb}$ and $\dot{\ollmb}$, we obtain \eqref{eq:Xi-lmb-compare}, as desired. 

We now prove the following lemma.
\begin{lemma} \label{lem:Xi-control} The following statements hold.
\begin{enumerate}
\item $\lmb(t) = M \lmb_{0}$ exactly at $t_{M} := \frac{1}{1-\eps} \abs{f''(0)}^{-1} \tau_{M}$, and $\lmb_{0} \le \lmb(t) \le M \lmb_{0}$ for $0 \le t \le t_{M}$.
\item As long as $\lmb(t) \leq M \lmb_{0}$, we have \begin{equation}\label{eq:ratio}
	\lmb(t) \leq \Xi(t) \leq 2 \lmb(t).
	\end{equation}  
\end{enumerate}
\end{lemma}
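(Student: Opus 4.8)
\textbf{Proof proposal for Lemma~\ref{lem:Xi-control}.}

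The plan is to read off part (1) directly from the ODE \eqref{eq:ullmb-ODE} by separation of variables, and then to obtain part (2) by a bootstrap/continuity argument combining \eqref{eq:Xi-lmb-compare} with a comparison between $\ollmb$ and $\lmb$.

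For part (1), I would separate variables in \eqref{eq:ullmb-ODE}. Writing $\lmb' = -(1-\eps) f''(0) \lmb_{0} \gmm_{\lmb_{0}}(\lmb)$ and recalling $f''(0) < 0$, the right-hand side is positive, so $\lmb$ is increasing; dividing and integrating gives
\begin{equation*}
	\int_{\lmb_{0}}^{\lmb(t)} \frac{\ud \lmb}{\gmm_{\lmb_{0}}(\lmb)} = (1-\eps)\abs{f''(0)} \lmb_{0} \, t.
\end{equation*}
Setting $\lmb(t) = M\lmb_{0}$ and comparing with the definition \eqref{eq:gf-time} of $\tau_{M}$, namely $\tau_{M} = \int_{\lmb_{0}}^{M\lmb_{0}} \gmm_{\lmb_{0}}(\lmb)^{-1} \lmb_{0}^{-1} \ud \lmb$, yields $(1-\eps)\abs{f''(0)}\lmb_{0} \, t_{M} = \lmb_{0} \tau_{M}$, i.e. $t_{M} = \frac{1}{1-\eps}\abs{f''(0)}^{-1}\tau_{M}$. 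Monotonicity of $\lmb$ then gives $\lmb_{0} \leq \lmb(t) \leq M\lmb_{0}$ for $0 \leq t \leq t_{M}$. (One should also note $\lmb$ stays finite on this interval since $\gmm_{\lmb_{0}}$ has at most polynomial growth by \eqref{eq:slow-var}, so there is no finite-time blowup to worry about on $[0, t_M]$.)

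For part (2), the lower bound $\lmb(t) \leq \Xi(t)$ is already contained in \eqref{eq:Xi-lmb-compare}. For the upper bound $\Xi(t) \leq 2\lmb(t)$, by \eqref{eq:Xi-lmb-compare} it suffices to show $\ollmb(t) \leq 2\lmb(t)$ as long as $\lmb(t) \leq M\lmb_{0}$. I would do this by a continuity argument. Both $\ollmb$ and $\lmb$ solve equations of the same form, $\dot{w} = -f''(0)(1\pm\eps)\lmb_{0}\gmm_{\lmb_{0}}(w)$, with $\ollmb(0) = (1+\eps)\lmb_{0}$ and $\lmb(0) = \lmb_{0}$; since $\eps \leq \frac{1}{100}$, at $t=0$ we have $\ollmb(0)/\lmb(0) = 1+\eps < 2$. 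Separating variables in both and subtracting,
\begin{equation*}
	\int_{\lmb(t)}^{\ollmb(t)} \frac{\ud w}{\gmm_{\lmb_{0}}(w)} = \abs{f''(0)}\lmb_{0} t \cdot \left((1+\eps) - (1-\eps)\right) + \int_{\lmb_{0}}^{(1+\eps)\lmb_{0}} \frac{\ud w}{\gmm_{\lmb_{0}}(w)} = 2\eps \abs{f''(0)} \lmb_{0} t + \int_{\lmb_{0}}^{(1+\eps)\lmb_{0}} \frac{\ud w}{\gmm_{\lmb_{0}}(w)}.
\end{equation*}
Now I bound the right-hand side while $\lmb(t) \leq M\lmb_{0}$: using $t \leq t_{M} \leq \frac{1}{1-\eps}\abs{f''(0)}^{-1}\tau_{M} \leq 2\abs{f''(0)}^{-1}\tau_{M}$ and the bound $\tau_{M} \leq M/\gmm_{\lmb_{0}}(\lmb_{0})$ (stated in the excerpt, valid for large $\lmb_{0}$), together with $\eps \leq \eps(\lmb_{0})$ from \eqref{eq:eps-choice} which forces $\eps \, \sup_{M'}\frac{\gmm_{\lmb_{0}}(M'\lmb_{0})}{M'}\tau_{M'}$ to be small, the first term is $O(\eps M)$ which is $\ll 1$; the second term is at most $\eps \lmb_{0}/\gmm_{\lmb_{0}}(\lmb_{0}) \cdot \sup = o(1)$ by monotonicity of $\gmm_{\lmb_{0}}$. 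Hence $\int_{\lmb(t)}^{\ollmb(t)} \gmm_{\lmb_{0}}(w)^{-1}\ud w$ is small. On the other hand, if we had $\ollmb(t) \geq 2\lmb(t)$ for some such $t$, then since $\gmm_{\lmb_{0}}$ grows at most like $w^{\bt_{0}}$ (by iterating \eqref{eq:slow-var}), $\int_{\lmb(t)}^{2\lmb(t)} \gmm_{\lmb_{0}}(w)^{-1}\ud w \ageq \lmb(t)/\gmm_{\lmb_{0}}(\lmb(t)) \ageq \lmb(t)^{1-\bt_{0}}\lmb_{0}^{-1}\cdots$; one checks this is bounded below by a quantity that, under the growth conditions, exceeds the small upper bound, giving a contradiction. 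A cleaner packaging is to run a standard continuity/bootstrap argument: the set of $t$ with $\lmb(t) \leq M\lmb_{0}$ and $\ollmb(t) \leq 2\lmb(t)$ is closed, nonempty (contains $0$), and the estimate above shows it is open, hence it is all of $[0, t_{M}]$.

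The main obstacle I anticipate is making the comparison $\ollmb(t) \leq 2\lmb(t)$ quantitatively tight enough: one must verify that the accumulated discrepancy from the $\pm\eps$ in the coefficients, integrated over the (possibly long, of order $\tau_{M}/\abs{f''(0)}$) time interval, stays below the factor $2$. This is exactly where the growth conditions \eqref{eq:gf-condition-1}--\eqref{eq:gf-condition-2} and the choice \eqref{eq:eps-choice} of $\eps(\lmb_{0})$ enter decisively — condition \eqref{eq:gf-condition-1} is precisely designed so that $\eps(\lmb_{0})$ beats $\sup_{M'}\frac{\gmm_{\lmb_{0}}(M'\lmb_{0})}{M'}\tau_{M'}$, which is the quantity controlling how much $\ollmb$ can outrun $\lmb$. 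So the technical heart is a careful bookkeeping of these parameters rather than any conceptual difficulty; everything else is elementary ODE comparison.
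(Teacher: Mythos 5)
Part (1) of your proposal is correct and is exactly the paper's argument (the integral identity from \eqref{eq:ullmb-ODE} compared with \eqref{eq:gf-time}), and your subtraction identity
$\int_{\lmb(t)}^{\ollmb(t)} \gmm_{\lmb_{0}}(w)^{-1}\, \ud w = \frac{2\eps}{1-\eps}\lmb_{0}\tau_{M'(t)} + \int_{\lmb_{0}}^{(1+\eps)\lmb_{0}} \gmm_{\lmb_{0}}(w)^{-1}\, \ud w$ (with $M'(t) = \lmb(t)/\lmb_{0}$) is precisely the rearrangement of \eqref{eq:lmb-ollmb-t} that the paper uses in its second case. (Your aside that polynomial growth of $\gmm_{\lmb_{0}}$ rules out blow-up is not a valid reason — degree $>1$ growth does allow finite-time blow-up — but the identity itself shows $\lmb \leq M\lmb_{0}$ up to $t_{M}$, so this is harmless.) The gap is in how you close the comparison in part (2).

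Two concrete problems. First, your quantitative claims are false as stated: the first term is not ``$O(\eps M)\ll 1$'' but $\aeq \eps \lmb_{0} \tau_{M'(t)}$, the second term $\aeq \eps \lmb_{0}/\gmm_{\lmb_{0}}(\lmb_{0})$ is not $o(1)$, and — crucially — the ``room'' $\int_{\lmb(t)}^{2\lmb(t)}\gmm_{\lmb_{0}}^{-1} \aeq \lmb(t)/\gmm_{\lmb_{0}}(\lmb(t))$ is itself not bounded below by a constant (for $\gmm = \brk{\xi}^{\bt}$ with $\bt>1$ it tends to $0$), so showing the discrepancy is ``small'' in absolute terms proves nothing. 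The comparison must be relative and time-matched: one needs $\eps\,\frac{\gmm_{\lmb_{0}}(M'\lmb_{0})}{M'}\tau_{M'} \aleq 2^{-C\bt_{0}}$ with $M' = M'(t)$, which is exactly what \eqref{eq:eps-choice}/\eqref{eq:gf-condition-1} provide; your closing paragraph names this mechanism, but your displayed estimates never implement it, and replacing $\tau_{M'(t)}$ by $\tau_{M}$ (which you do) destroys it, since \eqref{eq:eps-choice} does not control $\eps\,\tau_{M}\,\gmm_{\lmb_{0}}(M'\lmb_{0})/M'$ at intermediate times. Second, even when done correctly this relative comparison only closes once $\lmb(t) \geq 2\lmb_{0}$ (so that the initial offset $\int_{\lmb_{0}}^{(1+\eps)\lmb_{0}}\gmm_{\lmb_{0}}^{-1}$ can be absorbed into $\eps\lmb_{0}\tau_{M'}$). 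For $\lmb(t) < 2\lmb_{0}$ the discrepancy is dominated by that offset, $\aeq \eps\lmb_{0}/\gmm_{\lmb_{0}}(\lmb_{0})$, while the room is only $\ageq 2^{-2\bt_{0}}\lmb_{0}/\gmm_{\lmb_{0}}(\lmb_{0})$, so your route needs $\eps \aleq 2^{-2\bt_{0}}$ with a definite constant — and \eqref{eq:eps-choice} only guarantees this up to a bounded factor, since $\sup_{M'}\frac{\gmm_{\lmb_{0}}(M'\lmb_{0})}{M'}\tau_{M'}$ can be of order one. This is precisely why the paper splits the proof and, in the regime $M'(t)<2$, runs a different bootstrap (viewing $\ollmb$ as a function of $\lmb$ and integrating $\ud\ollmb/\ollmb^{C} \leq (1+3\eps)\,\ud\lmb/\lmb^{C}$), which only requires $\eps$ small in an absolute sense rather than small compared to $2^{-2\bt_{0}}$. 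To repair your proposal you must either add such a separate small-time argument or carry out the parameter bookkeeping with sharper constants than your sketch (or the paper's choice of $\eps(\lmb_{0})$) provides.
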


\begin{proof} It will be convenient to introduce $\overline{M}'(t)$ and $M'(t)$: $\overline{M}' := \ollmb/\lmb_0$ and $M' := \lmb/\lmb_0$. We have $\overline{M}'(t)\ge M'(t) \ge 1$. {we start with the identities \begin{equation} \label{eq:lmb-ollmb-t}
			\int_{\lmb_0 }^{M'\lmb_0} \frac{1}{\gmm_{\lmb_{0}}(\lmb)} \frac{\ud\lmb}{\lmb_{0}} = -f''(0)(1-\eps)t, \qquad
			\int_{(1+\eps) \lmb_0 }^{\overline{M}'\lmb_0} \frac{1}{\gmm_{\lmb_{0}}(\lmb)} \frac{\ud\lmb}{\lmb_{0}} = -f''(0)(1+\eps)t,
		\end{equation}
		which follow from \eqref{eq:ullmb-ODE} and \eqref{eq:ollmb-ODE}, respectively. Note that the LHS of the first identity is exactly $\tau_{M'}$; from this observation, the first statement easily follows. To prove \eqref{eq:ratio}, noting that $M'$ is strictly increasing in time, we shall split the proof into two time intervals, depending on whether $M'<2$ or $M'\ge2$. 

\medskip

\noindent \textbf{Case 1}. Assume that $M'(t)<2$. We are going to prove that $\overline{M}'(t) < 2M'(t)$ holds in this time interval by a bootstrap argument. Towards a contradiction, we may assume that there exists some $0<T<t_M$, such that $M'(t)<2, \overline{M}'(t) < 2M'(t)$ on $[0,T)$ and $\overline{M}'(T) = 2M'(T)$. 

We now restrict $t$ to $[0,T]$. Since $\lmb$ and $\ollmb$ are strictly increasing functions of time, we may consider $\ollmb = \ollmb(\lmb)$ (with some abuse of notation) and obtain \begin{equation*}
	\begin{split}
		\frac{\ud \ollmb}{\ud \lmb} = \frac{1+\eps}{1-\eps} \frac{\gmm( \lmb \cdot (\ollmb/\lmb) )}{\gmm(\lmb)} \le (1 + 3\eps) \left( \frac{\ollmb}{\lmb} \right)^{C}
	\end{split}
\end{equation*} where we may assume that $C>1$ since $2\lmb\ge\ollmb\ge\lmb$. Integrating in $\lmb$ and recalling that $\ollmb(\lmb_0)= (1+\eps)\lmb_0$, we obtain \begin{equation*}
\begin{split}
	\frac{1}{\ollmb^{C-1}} - \frac{1}{ ((1+\eps)\lmb_0)^{C-1}} \ge (1 + 3\eps) \left(  	\frac{1}{\lmb^{C-1}} -	\frac{1}{\lmb_0^{C-1}}       \right). 
\end{split}
\end{equation*} Applying this inequality at $t = T$, we have that $\ollmb = 2\lmb $ and obtain \begin{equation*}
\begin{split}
	\left(  (1 + 3\eps) - \frac{1}{2^{C-1}}  \right) \frac{1}{\lmb^{C-1}} \le \left( (1+3\eps)  - \frac{1}{(1+\eps)^{C-1}} \right) \frac{1}{\lmb_{0}^{C-1}} = O_{C}(\eps) \frac{1}{\lmb_0^{C-1}} . 
\end{split}
\end{equation*} and we can take $\Lmb_{0}$ larger so that $(1+3\eps) > \frac{1}{2^{C-1}}$. This gives $(\lmb/\lmb_0)^{C-1} \gg 1$, which is a contradiction to $M'(T) <2$. This guarantees \eqref{eq:ratio} in this case. 

\medskip

\noindent \textbf{Case 2}. When $M'(t)\ge 2$, we now combine both identities in \eqref{eq:lmb-ollmb-t} to obtain} \begin{equation*}
	\begin{split}
	\int_{\lmb_0 }^{(1+\eps) \lmb_0} \frac{1}{\gmm_{\lmb_{0}}(\lmb)} \frac{\ud\lmb}{\lmb_{0}} + \frac{2\eps}{1+\eps} \int_{(1+\eps) \lmb_0 }^{M'\lmb_0} \frac{1}{\gmm_{\lmb_{0}}(\lmb)} \frac{\ud\lmb}{\lmb_{0}} = \frac{1-\eps}{1+\eps} \int_{M'\lmb_0 }^{\overline{M}'\lmb_0} \frac{1}{\gmm_{\lmb_{0}}(\lmb)} \frac{\ud\lmb}{\lmb_{0}}. 
	\end{split}
	\end{equation*} Note that the LHS is bounded by \begin{equation*}
	\begin{split}
	&\int_{\lmb_0 }^{(1+\eps) \lmb_0} \frac{1}{\gmm_{\lmb_{0}}(\lmb)} \frac{\ud\lmb}{\lmb_{0}} + \frac{2\eps}{1+\eps} \int_{(1+\eps) \lmb_0 }^{M'\lmb_0} \frac{1}{\gmm_{\lmb_{0}}(\lmb)} \frac{\ud\lmb}{\lmb_{0}}  \\
	& \leq \frac{\eps}{\gmm_{\lmb_{0}}(\lmb_{0})} + 2 \eps \int_{\lmb_{0}}^{M' \lmb_{0}} \frac{1}{\gmm_{\lmb_{0}}(\lmb)} \frac{\ud \lmb}{\lmb_{0}} \\
	& \leq (2^{\bt_{0}}+1) \eps \int_{\lmb_{0}}^{2 \lmb_{0}} \frac{1}{\gmm_{\lmb_{0}}(\lmb)} \frac{\ud \lmb}{\lmb_{0}} 
	+ 2 \eps \int_{\lmb_{0}}^{M' \lmb_{0}} \frac{1}{\gmm_{\lmb_{0}}(\lmb)} \frac{\ud \lmb}{\lmb_{0}} \\
	& \leq (2^{\bt_{0}}+3) \eps \tau_{M'},
	\end{split}
	\end{equation*}
	where we used the quantitative slow-variance condition \eqref{eq:slow-var} for the second inequality.
	By the second inequality in \eqref{eq:eps-choice} and the assumption that $M' \leq M$, we have
	\begin{equation*}
	\begin{split}
	2^{\bt_{0}+1}(2^{\bt_{0}}+3) \eps \frac{\gmm_{\lmb_{0}}(M'\lmb_{0})}{M'} \tau_{M'} \le 1. 
	\end{split}
	\end{equation*} 
	As a result, \begin{equation*}
	\begin{split}
	\int_{M'\lmb_0 }^{\overline{M}'\lmb_0} \frac{1}{\gmm_{\lmb_{0}}(\lmb)} \frac{\ud\lmb}{\lmb_{0}} \leq
	2 (2^{\bt_{0}}+3) \eps \tau_{M'}
	 \leq 2^{-\bt_{0}} \frac{M'}{\gmm_{\lmb_{0}}(M' \lmb_{0})}
	 \leq \int_{M' \lmb_{0}}^{2M' \lmb_{0} } \frac{1}{\gmm_{\lmb_{0}}(\lmb)} \frac{\ud \lmb}{\lmb_{0}},
	\end{split}
	\end{equation*}
	where we used \eqref{eq:slow-var} for the last inequality. This implies $\overline{M}' \le 2M'$. Then, by \eqref{eq:Xi-lmb-compare}, \eqref{eq:ratio} follows. \qedhere\end{proof}
	
\medskip

\noindent \textit{{Control of $X(t)$}}.
{With Lemma~\ref{lem:Xi-control} for $\Xi(t)$}, we may obtain bounds on $X(t)$ using the conservation of the Hamiltonian:
\begin{equation*}
	p(X(t), \rd_{x} \Phi(X(t))) = p(X(0), \rd_{x} \Phi(X(0))).
\end{equation*} 
{This property holds since $(X(t), \rd_{x} \Phi(X(t)))$ solves the bicharacteristic ODEs \eqref{eq:ODE-bichar} (by the characteristic method, since $\Phi$ is a regular solution to the scalar first-order PDE \eqref{eq:HJE}), which is a Hamiltonian ODE with a time-independent Hamiltonian $p(x, \xi) = f'(x) \lmb_{0} \gmm_{\lmb_{0}}(\xi) + \Gmm f'(x) \lmb_{0}$.}
More specifically, we have 
\begin{equation*}
	f'(X(t))= \frac{f'(X(0)) \gmm_{\lmb_{0}}(\Xi(0)) + \Gmm f'(X(0)) - \Gmm f'(X(t))}{ \gmm_{\lmb_{0}}(\Xi(t))  }.
\end{equation*} 
Recall that $0 < X(t) < X(0)$. Therefore, we have
\begin{equation*}
\abs{\Gmm f'(X(0)) - \Gmm f'(X(t))} \leq \nrm{\Gmm f''}_{L^{\infty}(0, x_{1})} \abs{X(0) - X(t)} \leq \nrm{\Gmm f''}_{L^{\infty}(0, x_{1})} X(0). 
\end{equation*}
On the other hand, $\abs{f'(X(0))} \geq (1-\frac{\eps}{10}) \abs{f''(0)} X(0)$. Therefore, choosing $\Lmb$ sufficiently large depending on $\gmm$ and $\abs{f''(0)}^{-1} \Gmm f''$, we obtain  
\begin{equation}\label{eq:X-bounds}
\begin{split}
\frac{1}{2^{\bt_{0}+1}} \frac{X(0)\gmm_{\lmb_{0}}(\lmb_0)}{\gmm_{\lmb_{0}}(\lmb(t))}\le  X(t) \le   \frac{X(0)\gmm_{\lmb_{0}}(\lmb_0)}{\gmm_{\lmb_0}(\lmb(t) )} \quad \hbox{ for } \lmb_{0} \geq \Lmb.  
\end{split}
\end{equation} 
 
\medskip

\noindent \textit{{Control of $\rd_{x}^{2} \Phi(X(t))$}}. As we will see, a fundamental quantity for controlling the geometry of {nearby} characteristic curves is $\rd_{x}^{2} \Phi$. While this quantity may also be computed explicitly by differentiating \eqref{eq:HJE-s-dPhi}, it is more convenient to obtain an implicit formula by differentiating \eqref{eq:HJE}. Indeed, from \eqref{eq:HJE} we see that
\begin{equation*}
\rd_{x} p(x, \rd_{x} \Phi) + \rd_{\xi} p(x, \rd_{x} \Phi) \rd_{x}^{2} \Phi = 0,
\end{equation*}
so that
\begin{equation} \label{eq:HJE-s-ddPhi} 
\rd_{x}^{2} \Phi = - \frac{\rd_{x} p(x, \rd_{x} \Phi)}{\rd_{\xi} p(x, \rd_{x} \Phi)} = -\frac{ f''(x)\gmm_{\lmb_{0}}(\rd_x\Phi) + \Gmm f''(x) }{f'(x) \gmm_{\lmb_{0}}'(\rd_x\Phi)}.
\end{equation}
{By taking $\Lmb_0$ larger if necessary (in a way depending only on $\gmm$ and $\abs{f''(0)}^{-1} \Gmm f''$), we obtain the bound
\begin{equation*}
0 \leq - \rd_{x}^{2} \Phi(X(t)) \leq \frac{C}{x} \frac{\gmm_{\lmb_{0}}(\rd_{x} \Phi(X(t)))}{\rd_{\xi } \gmm_{\lmb_{0}}(\rd_{x} \Phi(X(t)))} 
\leq \frac{C}{x_{0}} \frac{(\gmm_{\lmb_{0}}(\Xi(t)))^{2}}{\gmm_{\lmb_{0}}(\lmb_{0}) \gmm_{\lmb_{0}}'(\Xi(t))}.
\end{equation*}

\begin{remark}
It is worth noting that $-\rd_{x}^{2} \Phi(X(t))$ remains finite as long as $(X(t), \Xi(t))$ exists. Geometrically, this reflects the fact that there are no focal points along each characteristic associated to our $\Phi$ constructed via separation of variables. This fact is not a-priori clear at the level of the ODE for $\rd_{x}^{2} \Phi(X(t))$, which is of Ricatti-type (see \eqref{eq:HJE-ricatti} below); hence, it should be seen as a benefit of our separation of variables approach. The above sharp bound for $-\rd_{x}^{2} \Phi(X(t))$ will serve as the basis for the sharp estimates for higher derivatives of $\Phi$ and the amplitude function $a$ (see \eqref{eq:transport-a}) along characteristics in Sections~\ref{subsec:transport}--\ref{subsec:transport2}.
\end{remark}

For the ensuing argument, we record separately the bound at the initial time $t =0$:
\begin{equation*}
0 \leq - \rd_{x}^{2} \Phi(x) \leq \frac{C}{x} \frac{\gmm_{\lmb_{0}}(\rd_{x} \Phi(x))}{\rd_{\xi } \gmm_{\lmb_{0}}(\rd_{x} \Phi(x))} 
\leq \frac{C}{x_{0}} \lmb_{0} (\log \lmb_{0})^{2}.
\end{equation*}

\medskip

\noindent \textit{Higher derivatives of $\Phi$ in $(x_{0}, x_{1})$}. 
We now consider higher derivatives of $\Phi$ in $(x_{0}, x_{1})$. For any $k \geq 2$, we claim that \begin{equation}\label{eq:Phi-higher}
\begin{split}
|\rd_x^{k}\Phi(x)|\le C_kx_0^{-k+1}\lmb_0(\log\lmb_0)^{2(k-1)}, \quad \hbox{ for } x_{0}<x<x_{1}.
\end{split}
\end{equation} 
Note that this corresponds to the bound for $\rd_{x}^{k} \Phi(X(0))$ at the initial time. We will obtain bounds for $\rd_{x}^{k} \Phi(X(t))$ with $t > 0$ later in Section~\ref{subsec:transport}, based on the bounds for $\rd_{x}^{2} \Phi(X(t))$ and \eqref{eq:Phi-higher}. }

We shall prove \eqref{eq:Phi-higher} with an induction in $k$; assuming it holds for $k = 2, \cdots, k_0+1$ for some $k_0$, we compute \begin{equation*}
\begin{split}
\rd_{x}^{k_0+2}\Phi & = \rd_{x}^{k_0} \left(  -\frac{ f''(x)\gmm_{\lmb_{0}}(\rd_x\Phi) + \Gmm f''(x) }{f'(x)\rd_{\xi}\gmm_{\lmb_{0}}(\rd_x\Phi)} \right) \\ 
& = -\sum_{\ell=0}^{k_0} C_{\ell,k_0} \rd_{x}^{\ell} \frac{1}{\rd_{\xi}\gmm_{\lmb_{0}}(\rd_{x}\Phi)} \left( \rd_{x}^{k_0-\ell} \left(\frac{\Gmm f''}{f' }\right) + \sum_{j=0}^{k_0-\ell} C_{j,k_0-\ell} \rd_{x}^{k_{0}-\ell-j} \left(\frac{f''}{f'} \right)\rd_{x}^{j} \gmm_{\lmb_{0}}(\rd_{x}\Phi)  \right),
\end{split}
\end{equation*} where $C_{\ell,k_0}$ and $C_{j,k_0-\ell}$ are combinatorial coefficients. To begin with, it is not difficult to see that \begin{equation*}
\begin{split}
\rd_{x}^{k_0-\ell} \left(\frac{\Gmm f''}{f' }\right) \le C_{k_0}x_{0}^{-k_0+\ell-1},\qquad \rd_{x}^{k_0-\ell-j} \left(\frac{f''}{f'} \right) \le C_{k_0} x_{0}^{-k_0+\ell+j-1}.
\end{split}
\end{equation*}
Next, we may expand $\rd_{x}^{j} \gmm_{\lmb_{0}}(\rd_{x}\Phi) $ using Fa\`{a} di Bruno's formula:  \begin{equation*}
\begin{split}
\rd_{x}^{j} \gmm_{\lmb_{0}}(\rd_{x}\Phi)  = \sum_{\mathfrak{a}: a_1+2a_2+\cdots+ja_j=j} \frac{j!}{a_1! 1!^{a_1} \cdots a_j!j!^{a_j}} (\rd^{a_1\cdots + a_j}_{\xi}\gmm_{\lmb_{0}} )(\rd_{x}\Phi) \prod_{b=1}^j (\rd_{x}^{1+b}\Phi)^{a_b} .
\end{split}
\end{equation*} In the above, the summation is over $j$-tuples $\mathfrak{a} = (a_1,\cdots,a_j)$ with non-negative integer entries satisfying $a_1+2a_2+\cdots+ja_j=j$. Using the induction assumption \eqref{eq:Phi-higher} and the ellipticity assumption for $\rd_{\xi} \gmm_{\lmb_{0}}$, we obtain that
\begin{equation*}
\begin{split}
|\rd_{x}^{j} \gmm_{\lmb_{0}}(\rd_{x}\Phi) | &\le x_0^{-j}(\log\lmb_0)^{2j} \sum_{\mathfrak{a}} C_{\mathfrak{a}} \lmb_0^{a_1\cdots + a_j} (\rd^{a_1\cdots + a_j}_{\xi}\gmm_{\lmb_{0}} )(\rd_{x}\Phi) \\
&\le C_{j}x_0^{-j}(\log\lmb_0)^{2j} \lmb_0 \rd_{\xi} \gmm_{\lmb_{0}}(\rd_{x}\Phi). 
\end{split}
\end{equation*} Similarly, we expand 
\begin{equation*}
\begin{split}
\rd_{x}^{\ell} \frac{1}{\rd_{\xi}\gmm_{\lmb_{0}}(\rd_{x}\Phi)} = \sum_{\mathfrak{a}: a_1+2a_2+\cdots+\ell a_\ell=\ell} \frac{\ell!}{a_1! 1!^{a_1} \cdots a_\ell!\ell!^{a_\ell}} (\rd^{a_1\cdots + a_\ell}_{\xi}\frac{1}{\rd_{\xi}\gmm_{\lmb_{0}}} )(\rd_{x}\Phi) \prod_{b=1}^\ell (\rd_{x}^{1+b}\Phi)^{a_b} .
\end{split}
\end{equation*} Again, using the ellipticity assumption on {$\xi\rd_{\xi}\gmm_{\lmb_{0}}$}, we see that \begin{equation*}
\begin{split}
\left|(\rd^{n}_{\xi}\frac{1}{\rd_{\xi}\gmm_{\lmb_{0}}} )(\rd_{x}\Phi)\right| \le C_{n}\lmb_0^{-n} \frac{1}{\rd_{\xi}\gmm_{\lmb_{0}}} (\rd_{x}\Phi). 
\end{split}
\end{equation*} This gives \begin{equation*}
\begin{split}
\left| \rd_{x}^{\ell} \frac{1}{\rd_{\xi}\gmm_{\lmb_{0}}(\rd_{x}\Phi)} \right|\le C_{\ell} x_0^{-\ell}(\log\lmb_0)^{2\ell}\frac{1}{\rd_{\xi}\gmm_{\lmb_{0}}}. 
\end{split}
\end{equation*} Collecting the bounds, we conclude that \begin{equation*}
\begin{split}
\left| \rd_{x}^{k_0+2} \Phi \right| \le C_{k_0} x_{0}^{-k_0-1}\lmb_0(\log\lmb_0)^{2(k_0+1)}
\end{split}
\end{equation*}  holds, which is just \eqref{eq:Phi-higher} with $k = k_{0}+2$. Therefore, we have arrived at the following
\begin{proposition}\label{prop:Phi-derivatives-time-indep}
	{Let $\rd_{x}\Phi$ be defined as in \eqref{eq:HJE-s-dPhi}. Then for any $k \geq 1$, $\rd_{x}^{k} \rd_{x} \Phi$} satisfy \begin{equation}\label{eq:rd-Phi-k-bound-time-indep}
	\begin{split}
	{\abs{\rd_{x}^{k} \rd_{x} \Phi(x)} \leq C_k x_{0}^{-k} (\log \lmb_{0})^{2k} \lmb_0 \qquad \hbox{ for } x_{0} < x < x_{1}.}
	\end{split}
	\end{equation}
\end{proposition}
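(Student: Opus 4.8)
The plan is to prove Proposition~\ref{prop:Phi-derivatives-time-indep} by induction on $k$, using the implicit formula \eqref{eq:HJE-s-ddPhi} for $\rd_x^2 \Phi$ as the base case and a Fa\`a di Bruno expansion of the $k$-th derivative of \eqref{eq:HJE-s-ddPhi} for the inductive step. Since the bound \eqref{eq:rd-Phi-k-bound-time-indep} is exactly the bound \eqref{eq:Phi-higher} on $\rd_x^{k+1}\Phi$ (together with the already-established bounds on $\rd_x \Phi$ from \eqref{eq:rd-Phi-bounds} and on $\rd_x^2 \Phi$), the content of the proposition is precisely \eqref{eq:Phi-higher}, so the proof amounts to carrying out the induction sketched in the paragraph preceding the proposition and collecting the estimates cleanly.

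First I would set up the base cases. For $k=1$ we have $\lmb_0 \le \rd_x \Phi(x) \le (1+\eps)\lmb_0$ on $(x_0, x_1)$ from \eqref{eq:rd-Phi-bounds}, and hence trivially $\abs{\rd_x \Phi} \aleq \lmb_0 \le x_0^{-1}(\log\lmb_0)^2 \lmb_0$ (here one uses $x_0 \le 1$ and $\log\lmb_0 \ge 1$, valid for $\lmb_0 \ge \Lmb$). For $k=2$, i.e. the bound on $\rd_x^2 \Phi$, I would use \eqref{eq:HJE-s-ddPhi}: on $(x_0,x_1)$ we have $\abs{f'(x)} \ageq \abs{f''(0)} x_0$ (by \eqref{eq:rd-Phi-bounds-x1-1}, since $\abs{f'(x)} \ge (1-\tfrac{\eps}{10})\abs{f''(0)} x \ge (1-\tfrac{\eps}{10})\abs{f''(0)} x_0$ for $x \ge x_0$), while $\abs{f''(x)}$ and $\abs{\Gmm f''(x)}$ are $O(1)$; combined with $\gmm_{\lmb_0}(\rd_x\Phi) \aleq \lmb_0^{\bt_0}\cdots$—or, more simply, using Assumption~4 in the form $\xi \rd_\xi \gmm_{\lmb_0}(\xi) \ageq (\log\abs{\xi})^{-2}\gmm_{\lmb_0}(\xi)$, which at $\xi \aeq \lmb_0$ gives $\rd_\xi \gmm_{\lmb_0}(\lmb_0) \ageq \lmb_0^{-1}(\log\lmb_0)^{-2}\gmm_{\lmb_0}(\lmb_0)$—one obtains $\abs{\rd_x^2\Phi(x)} \aleq x_0^{-1}\lmb_0(\log\lmb_0)^2$, which is \eqref{eq:Phi-higher} with $k=2$. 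This is exactly the initial-time bound recorded in the ``higher derivatives'' paragraph, so I would just cite that computation.

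Next I would carry out the inductive step: assume \eqref{eq:Phi-higher} holds for all $k = 2, \dots, k_0+1$ and prove it for $k = k_0+2$. Differentiating \eqref{eq:HJE-s-ddPhi} $k_0$ times via the Leibniz rule produces a sum over $0 \le \ell \le k_0$ of terms of the form $\rd_x^\ell\bigl(\tfrac{1}{\rd_\xi\gmm_{\lmb_0}(\rd_x\Phi)}\bigr)$ times $\rd_x^{k_0-\ell}$ of $\tfrac{\Gmm f''}{f'}$ or $\tfrac{f''}{f'}\, \rd_x^j \gmm_{\lmb_0}(\rd_x\Phi)$. For the coefficient factors I would note $\abs{\rd_x^m(\Gmm f''/f')}, \abs{\rd_x^m(f''/f')} \aleq_m x_0^{-m-1}$ on $(x_0,x_1)$ — this follows since $f'$ vanishes linearly and $x_0 \aeq x_1$, so each derivative loses a factor $x_0^{-1}$ (this is a routine estimate I would state without full detail). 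For the composite factors $\rd_x^j \gmm_{\lmb_0}(\rd_x\Phi)$ and $\rd_x^\ell(1/\rd_\xi\gmm_{\lmb_0}(\rd_x\Phi))$, Fa\`a di Bruno plus the inductive hypothesis $\abs{\rd_x^{1+b}\Phi} \aleq x_0^{-b}(\log\lmb_0)^{2b}\lmb_0$ (for $1 \le b \le k_0$) plus the ellipticity of $\xi\rd_\xi\gmm_{\lmb_0}$ and of $\xi\rd_\xi(\xi\rd_\xi\gmm_{\lmb_0})$ — which gives $\abs{\rd_\xi^n \gmm_{\lmb_0}(\xi)} \aleq_n \abs{\xi}^{-n}\gmm_{\lmb_0}(\xi)$ and $\abs{\rd_\xi^n(1/\rd_\xi\gmm_{\lmb_0})(\xi)} \aleq_n \abs{\xi}^{-n}/\rd_\xi\gmm_{\lmb_0}(\xi)$ at $\xi \aeq \lmb_0$ — yields $\abs{\rd_x^j\gmm_{\lmb_0}(\rd_x\Phi)} \aleq_j x_0^{-j}(\log\lmb_0)^{2j}\lmb_0\,\rd_\xi\gmm_{\lmb_0}(\rd_x\Phi)$ and $\abs{\rd_x^\ell(1/\rd_\xi\gmm_{\lmb_0}(\rd_x\Phi))} \aleq_\ell x_0^{-\ell}(\log\lmb_0)^{2\ell}/\rd_\xi\gmm_{\lmb_0}(\rd_x\Phi)$. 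Multiplying the three types of factors together, the $\rd_\xi\gmm_{\lmb_0}$ factors cancel, the powers of $x_0^{-1}$ add up to $x_0^{-k_0-1}$, the powers of $(\log\lmb_0)^2$ add up to at most $(\log\lmb_0)^{2(k_0+1)}$, and one power of $\lmb_0$ survives, giving $\abs{\rd_x^{k_0+2}\Phi} \aleq_{k_0} x_0^{-k_0-1}\lmb_0(\log\lmb_0)^{2(k_0+1)}$, i.e. \eqref{eq:Phi-higher} at $k = k_0+2$. Bookkeeping the constants (they depend on $k$, $\gmm$, $f$ but not $\lmb_0$) closes the induction, and restating in the $\rd_x^k\rd_x\Phi$ notation gives \eqref{eq:rd-Phi-k-bound-time-indep}.

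The main obstacle — though it is bookkeeping rather than a conceptual difficulty — is keeping the powers of $(\log\lmb_0)^2$ from proliferating: one must check that the Fa\`a di Bruno combinatorics and the $k_0$-fold Leibniz expansion do not produce more than $(\log\lmb_0)^{2(k-1)}$ total, which works out because in each monomial $\prod_b (\rd_x^{1+b}\Phi)^{a_b}$ the constraint $\sum_b b\, a_b = j$ forces the total $(\log\lmb_0)$-power contributed to be exactly $(\log\lmb_0)^{2j}$, and the indices $j, \ell, k_0-\ell-j$ add up correctly. The other point requiring a little care is verifying $\abs{f'(x)} \ageq x_0$ uniformly on $(x_0, x_1)$ and that all the $O(1)$ coefficients ($f''$, $\Gmm f''$ and their derivatives) are genuinely bounded on the fixed interval $(0, x_1)$ with $x_1$ small but $\lmb_0$-independent up to the $\eps(\lmb_0)$-dependence absorbed into $x_0$ — this is where the choices \eqref{eq:x0-x1}, \eqref{eq:def-x1} and the smoothness of $f$ enter, and I would simply invoke them. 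No genuinely new idea beyond the separation-of-variables formula \eqref{eq:HJE-s-dPhi} and the structural assumptions on $\gmm$ is needed.
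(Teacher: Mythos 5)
Your proposal is correct and follows essentially the same route as the paper: the base bound on $\rd_x^2\Phi$ from the implicit formula \eqref{eq:HJE-s-ddPhi} together with Assumption~4 (giving the $(\log\lmb_0)^2$ loss), and then the induction \eqref{eq:Phi-higher} via Leibniz and Fa\`a di Bruno with the ellipticity of $\gmm_{\lmb_0}$ and $\xi\rd_\xi\gmm_{\lmb_0}$, so that the $\rd_\xi\gmm_{\lmb_0}$ factors cancel and the $x_0^{-1}$ and $(\log\lmb_0)^2$ losses add up as you describe. The only cosmetic point is the indexing: in the proposition $k=1$ already refers to $\rd_x^2\Phi$ (your "$k=2$" base case), so your extra $k=1$ discussion of $\rd_x\Phi$ itself is harmless but not part of the statement.
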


\subsection{The case of a time-dependent background} \label{subsec:HJE-gen}
We now generalize our construction of the phase function to the case when $f'$ has time dependence. This time, we employ the method of characteristics to analyze \eqref{eq:HJE}. The explicit computation we performed in the time-independent case in Section~\ref{subsec:HJE-s} will serve as a very useful guide.

{\begin{remark}
A small modification of our scheme can handle $f'$ with a moving zero, by working with the new variable $\tilde{x} = x - \dgx(t)$ to fix the position of the zero. Then we need to add a term of the form $\dgx'(t) \rd_{x} \Phi$ in the Hamiltonian, which may be handled perturbatively (dominated by the first term in $p$) in the regime $M \ll \gmm(\lmb_{0})$. 
\end{remark}}
 
In this subsection, {\bf we assume that \eqref{eq:gf-condition-1}--\eqref{eq:gf-condition-3}, as well as \eqref{eq:gf-condition-diss-1}--\eqref{eq:gf-condition-diss-2}} (i.e., the hypotheses for Theorem~\ref{thm:norm-growth-diss}) {\bf hold}. 

\medskip

\noindent \textit{Choice of $\Phi(0, x)$.} 
Let $\Lmb$ be a large positive parameter to be fixed below. 
Introduce $\wpT$ so that
\begin{equation} \label{eq:wpT-choice}
	0 < \wpT \leq 1, \qquad \nrm{\rd_{t} f''(t', 0)}_{L^{\infty}_{t'}(0, \frac{100}{99} t_{f}(\wpT))} t_{f}(\wpT) \leq \frac{1}{10} \abs{f''(0, 0)},
\end{equation}
where we remind the reader that $t_{f}(\tau)$ is defined by $\tau = \int_{0}^{t_{f}(\tau)} -f''(t', 0) \, \ud t'$. We then introduce $c_{x_{0}} > 0$ such that
\begin{equation} \label{eq:cx0-choice}
	\sup_{t \in [0, \frac{100}{99} t_{f}(\wpT)]}\nrm{f'''(t, x)}_{L^{\infty}(0, 2 c_{x_{0}} \eps(\lmb_{0}))} c_{x_{1}} \leq \frac{1}{10} \abs{f''(t, 0)},
\end{equation}
where $\eps(\lmb_{0})$ is an nonincreasing function of $\lmb_{0} \in [\Xi_{0}, \infty)$ obeying \eqref{eq:eps-choice} for $\lmb_{0} \geq \Lmb$. We emphasize that {\bf we retain the freedom to shrink $\wpT, c_{x_{1}} > 0$ and enlarge $\Lmb$ until the end of this section.}

We look for the solution $\Phi$ to \eqref{eq:HJE} with the initial data
\begin{equation} \label{eq:HJE-id}
	\rd_{x} \Phi(0, x) = \gmm_{\lmb_{0}}^{-1} \left(\frac{ f'(0, x_{1} ) \gmm_{\lmb_{0}}((1+\tfrac{1}{2}\eps)\lmb_{0}) + \Gmm f'(0, x) - \Gmm f'(0,x_1)}{- f'(0, x)}\right)
\end{equation}
on $\set{t = 0} \times (x_{0}, x_{1})$, where
\begin{equation} \label{eq:def-x1-x0}
{x_{0} = c_{x_{0}} \eps(\lmb_{0}), \quad
x_{1} = x_{0} + \Dlt x_{0}, \quad
\frac{\Dlt x_{0}}{x_{0}} = \frac{1}{10}\left(\frac{\gmm_{\lmb_{0}}((1+\tfrac{3}{4} \eps(\lmb_{0})) \lmb_0)}{\gmm_{\lmb_{0}}((1+\tfrac{1}{2} \eps(\lmb_{0})) \lmb_0)}- 1\right),}
\end{equation}
and $c_{x_{0}}$ obeys \eqref{eq:cx0-choice}. Note that \eqref{eq:HJE-id}--\eqref{eq:def-x1-x0} are precisely \eqref{eq:HJE-s-dPhi}, \eqref{eq:x0-x1} with $f'$ frozen at $t = 0$. The relevance of this choice will be evident in the estimate for $\rd_{x}^{2} \Phi$ below. As before, observe that for $\Lmb$ sufficiently large,
\begin{equation*}
	\lmb_{0} \leq \rd_{x} \Phi(0, x) \leq (1+\eps) \lmb_{0} \quad \hbox{ on } \set{t = 0} \times (x_{0}, x_{1}) \hbox{ for } \lmb_{0} \geq \Lmb.
\end{equation*}

\medskip

\noindent \textit{Setup for the method of characteristics.} 
We now set up the method of characteristics for \eqref{eq:HJE}. Let $X(t)$ be a characteristic curve parametrized by $t$, and introduce $\Xi(t) = \rd_{x} \Phi(t, X(t))$. The bicharacteristic ODEs for $(X, \Xi)$ read as follows:
\begin{align}
	\dot{X} &= \rd_{\xi} p(t, X, \Xi) = f'(t, X) \lmb_{0} \gmm_{\lmb_{0}}'(\Xi) ,\label{eq:bichar-time-dependent-X} \\
%	\dot{Z} &= f'(t, X) \lmb_{0} \Xi \gmm_{\lmb_{0}}'(\Xi) - p(t, X(t), \Xi(t)), \label{eq:bichar-time-dependent-Z} \\
	\dot{\Xi} &=  - \rd_{x} p(t, X, \Xi) = - f''(t, X) \lmb_{0} \gmm_{\lmb_{0}}(\Xi) + (\Gmm f'')(t, X) \lmb_{0}.\label{eq:bichar-time-dependent-Xi} 
\end{align}
Observe that these equations constitute the Hamiltonian ODEs corresponding to the time-dependent Hamiltonian $p(t, x, \xi)$. In the following, it will be always assumed that the initial data satisfy $x_0<X(0)<x_1$ and $\Lmb_0\le \lmb_0 \le \Xi(0) \leq (1+\eps) \lmb_{0}$. 

\medskip
\noindent \textit{Continuation criterion and set up for the bootstrap argument.}  
Before we continue, we briefly discuss how our bootstrap argument for constructing and estimating $\Phi$ is set up. Given $\Phi$, denote by $X(t; \ubr{x})$ the solution to $\dot{X} = \rd_{\xi} p(t, X, \rd_{x} \Phi(t, X))$. Observe that the method of characteristics guarantees the existence of $\Phi(t, x)$ for $X(t; x_{0}) < x < X(t; x_{1})$, initially for some short time interval $[0, t^{\ast})$. In what follows, we will prove $\nrm{\rd_{x}^{2} \Phi(t, x)}_{L^{\infty}(X(t; (x_{0}, x_{1})))}$ is uniformly bounded for $t < t^{\ast}$, provided that $t^{\ast} < \frac{100}{99} t_{f}(\tau_{M})$ (the time scale in Theorem~\ref{thm:norm-growth-diss}). Then $\ubr{x} \mapsto X(t^{\ast}; \ubr{x})$ is a bi-Lipschitz isomorphism, and $X(t), \Xi(t)$ and $\Phi(t, X(t))$ exist on a longer time interval; this allows us to set up a continuous induction (bootstrap) argument for constructing and estimating $\Phi$.

With such details in mind, in what follows, for the simplicity of exposition, {\bf we will simply assume the existence of $X(t), \Xi(t)$ and $\Phi(t, X(t))$ for $0 \leq t \leq \frac{100}{99} t_{f}(\tau_{M})$} and {\bf demonstrate how to derive bounds for $X(t), \Xi(t)$ and $\rd_{x}^{2} \Phi(t, X(t))$}.

\medskip

\noindent \textit{Control of $\Xi(t)$.}
To control $\Xi(t)$, we employ a comparison argument that is similar to the steady case. We introduce $\lmb(t)$ and $\ollmb(t)$, which are solutions to the ODEs
\begin{equation*}
\left\{
\begin{aligned}
	\dot{\lmb}(t) &= - (1-\eps) f''(t, 0) \lmb_{0} \gmm_{\lmb_{0}}(\lmb), \\
	\lmb(0) &= \lmb_{0},
\end{aligned}
\right.
\end{equation*}
and
\begin{equation*}
\left\{
\begin{aligned}
	\dot{\ollmb}(t) &= - (1+\eps) f''(t, 0) \lmb_{0} \gmm_{\lmb_{0}}(\ollmb), \\
	\ollmb(0) &= (1+\eps)\lmb_{0}.
\end{aligned}
\right.
\end{equation*}
Recall also that $t_{f}(\tau)$ is defined by the relation $\int_{0}^{t_{f}(\tau)} (-f'')(t, 0) \, \ud t = \tau$. For $0 \leq t \leq \frac{100}{99} t_{f}(\wpT)$, we claim that, for $\Lmb \leq \lmb \leq \Xi(t) \leq \ollmb$ and $0 < X(t) < x_{1}$,
\begin{align*}
-(1-\eps) f''(t, 0) \lmb_{0} \gmm_{\lmb_{0}}(\lmb)
&\leq - f''(t, X(t)) \lmb_{0} \gmm_{\lmb_{0}}(\Xi(t)) + (\Gmm f'')(t, X(t)) \lmb_{0} \\
&\leq -(1+\eps) f''(t, 0) \lmb_{0} \gmm_{\lmb_{0}}(\ollmb),
\end{align*}
provided that $\Lmb$ is sufficiently large depending on $\gmm$ and $\abs{f''(0, 0)} \Gmm f''$.
The proof is similar to the steady case using \eqref{eq:wpT-choice}, \eqref{eq:cx0-choice} and \eqref{eq:def-x1-x0}. The first inequality implies, in particular, that $\dot{\Xi} > 0$ for $\lmb_{0} \leq \Lmb$. Observing furthermore that $\dot{X} < 0$ but $X(t) > 0$, and comparing the ODEs for $\lmb(t)$, $\Xi(t)$ and $\ollmb(t)$, we arrive at
\begin{equation*}
	\lmb(t) \leq \Xi(t) \leq \ollmb(t) \qquad \hbox{ for } 0 \leq t \leq \frac{100}{99} t_{f}(\wpT).
\end{equation*}
The following analogue of Lemma~\ref{lem:Xi-control} holds:
\begin{lemma} \label{lem:Xi-control-time-dep}
Let $\lmb_{0}$, $M$ and $\tau_{M}$ obey \eqref{eq:gf-condition-1}--\eqref{eq:gf-condition-3} as well as $\tau_{M} \leq \wpT$. Then the following statements hold.
\begin{enumerate}
\item $\lmb(t) = M \lmb_{0}$ exactly at $t_{M} := \frac{1}{1-\eps} t_{f}(\tau_{M})$, and $\lmb_{0} \leq \lmb(t) \leq M \lmb_{0}$ for $0 \leq t \leq t_{M}$. 
\item As long as $\lmb(t) \leq M \lmb_{0}$, we have
\begin{equation}\label{eq:rd-Phi-bounds-time-dep}
	\lmb(t) \leq \Xi(t) =\rd_{x} \Phi(t, X(t)) \leq 2\lmb(t).
\end{equation}
\end{enumerate}
\end{lemma}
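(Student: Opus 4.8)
\textbf{Proof plan for Lemma~\ref{lem:Xi-control-time-dep}.} The plan is to mirror the argument for Lemma~\ref{lem:Xi-control} in the steady case, making the appropriate modifications to account for the time-dependence of $f''(t, 0)$. For part (1), I would start from the defining ODE for $\lmb(t)$, namely $\dot{\lmb} = -(1-\eps) f''(t, 0) \lmb_{0} \gmm_{\lmb_{0}}(\lmb)$ with $\lmb(0) = \lmb_{0}$, and separate variables to obtain
\begin{equation*}
\int_{\lmb_{0}}^{\lmb(t)} \frac{1}{\gmm_{\lmb_{0}}(\lmb')} \frac{\ud \lmb'}{\lmb_{0}} = (1-\eps) \int_{0}^{t} (-f''(t', 0)) \, \ud t'.
\end{equation*}
Since the left-hand side evaluated at $\lmb(t) = M\lmb_{0}$ equals $\tau_{M}$ by the definition \eqref{eq:gf-time}, and since $\int_{0}^{t_{f}(\tau)} (-f''(t', 0)) \, \ud t' = \tau$ by definition of $t_{f}$, we get $\lmb(t) = M\lmb_{0}$ exactly when $(1-\eps) \int_{0}^{t} (-f'') \, \ud t' = \tau_{M}$, i.e.\ at $t_{M} = \frac{1}{1-\eps} t_{f}(\tau_{M})$. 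Monotonicity of $\lmb$ (which follows from $-f''(t, 0) > 0$ near the degeneracy, guaranteed by \eqref{eq:wpT-choice} on the relevant time interval) gives $\lmb_{0} \leq \lmb(t) \leq M\lmb_{0}$ for $0 \leq t \leq t_{M}$. One must also check $t_{M} \leq \frac{100}{99} t_{f}(\wpT)$ so that the comparison inequalities already established just before the lemma statement remain valid; this follows from $\tau_{M} \leq \wpT$ together with $\eps$ small, shrinking $\wpT$ if necessary.

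For part (2), the lower bound $\lmb(t) \leq \Xi(t)$ is already contained in the inequality $\lmb(t) \leq \Xi(t) \leq \ollmb(t)$ derived above, so the work is in the upper bound $\Xi(t) \leq 2\lmb(t)$, for which it suffices (again by $\Xi \leq \ollmb$) to prove $\ollmb(t) \leq 2\lmb(t)$ whenever $\lmb(t) \leq M\lmb_{0}$. Here I would follow the two-case split used in the proof of Lemma~\ref{lem:Xi-control}. Writing $M'(t) = \lmb(t)/\lmb_{0}$ and $\overline{M}'(t) = \ollmb(t)/\lmb_{0}$, both equations can be put in separated-variable form:
\begin{equation} \label{eq:lmb-ollmb-t-dep}
\int_{\lmb_{0}}^{M' \lmb_{0}} \frac{1}{\gmm_{\lmb_{0}}(\lmb')} \frac{\ud \lmb'}{\lmb_{0}} = (1-\eps) \Tht(t), \qquad \int_{(1+\eps)\lmb_{0}}^{\overline{M}' \lmb_{0}} \frac{1}{\gmm_{\lmb_{0}}(\lmb')} \frac{\ud \lmb'}{\lmb_{0}} = (1+\eps) \Tht(t),
\end{equation}
where $\Tht(t) := \int_{0}^{t} (-f''(t', 0)) \, \ud t'$. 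The crucial point is that these are \emph{exactly} the identities \eqref{eq:lmb-ollmb-t} from the steady case, with the quantity $-f''(0) t$ replaced by $\Tht(t)$; since the rest of the proof of Lemma~\ref{lem:Xi-control} only used \eqref{eq:lmb-ollmb-t} and never the explicit linear form of $-f''(0)t$, it transfers verbatim. In Case~1 ($M' < 2$) one runs the same bootstrap via the differential inequality $\frac{\ud \ollmb}{\ud \lmb} \leq (1 + 3\eps)(\ollmb/\lmb)^{C}$ — valid because $\frac{\ud\ollmb}{\ud\lmb} = \frac{1+\eps}{1-\eps} \frac{\gmm_{\lmb_{0}}(\ollmb)}{\gmm_{\lmb_{0}}(\lmb)}$ and the $\Tht(t)$ factor cancels — and integrates to derive a contradiction with $M' < 2$ if $\overline{M}' = 2M'$ were attained, taking $\Lmb$ large. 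In Case~2 ($M' \geq 2$) one subtracts the two identities in \eqref{eq:lmb-ollmb-t-dep}, bounds the left side by $(2^{\bt_{0}}+3)\eps \tau_{M'}$ using \eqref{eq:slow-var} exactly as before, then invokes the second inequality in \eqref{eq:eps-choice} and $M' \leq M$ to conclude $\int_{M'\lmb_{0}}^{\overline{M}'\lmb_{0}} \gmm_{\lmb_{0}}^{-1} \frac{\ud\lmb'}{\lmb_{0}} \leq \int_{M'\lmb_{0}}^{2M'\lmb_{0}} \gmm_{\lmb_{0}}^{-1} \frac{\ud\lmb'}{\lmb_{0}}$, hence $\overline{M}' \leq 2M'$.

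I do not expect any genuine obstacle here: the lemma is essentially a transcription of Lemma~\ref{lem:Xi-control}, and the only substantive observation is that replacing the Hamiltonian's linear-in-$t$ growth factor $-f''(0)t$ by the integral $\Tht(t) = \int_{0}^{t}(-f''(t',0))\,\ud t'$ leaves the separated-variable identities \eqref{eq:lmb-ollmb-t} structurally unchanged, so every subsequent manipulation (which never differentiated in $t$ but only compared the $\lmb$- and $\ollmb$-integrals at a common value of $\Tht(t)$) carries over. The one place requiring a small amount of care is ensuring the hypotheses feeding into the comparison inequalities — positivity of $-f''(t,0)$, the bound \eqref{eq:cx0-choice} controlling $f'''$, and the restriction to $t \leq \frac{100}{99}t_{f}(\wpT)$ — all hold on the time interval $[0, t_{M}]$ actually used, which is arranged by the freedom to shrink $\wpT$ and $c_{x_{1}}$ and enlarge $\Lmb$ retained throughout this section.
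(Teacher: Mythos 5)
Your proposal is correct and follows essentially the same route as the paper: the paper's proof consists precisely of replacing the steady-case identities \eqref{eq:lmb-ollmb-t} by their separated-variable analogues with $-f''(0)t$ replaced by $\int_{0}^{t}(-f''(t',0))\,\ud t'$, and then noting that the two-case argument of Lemma~\ref{lem:Xi-control} carries over verbatim, exactly as you describe. Your additional checks (cancellation of the time factor in $\frac{\ud\ollmb}{\ud\lmb}$, validity of the comparison inequalities on $[0,\tfrac{100}{99}t_{f}(\wpT)]$) are the details the paper omits.
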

\begin{proof}
Using the identities
\begin{equation*}
	\int_{\lmb_{0}}^{M' \lmb_{0}} \frac{1}{\gmm_{\lmb_{0}}(\lmb)} \frac{\ud \lmb}{\lmb_{0}} = (1-\eps) t_{f}(t), \qquad
	\int_{\lmb_{0}}^{\br{M}' \lmb_{0}} \frac{1}{\gmm_{\lmb_{0}}(\lmb)} \frac{\ud \lmb}{\lmb_{0}} = (1+\eps) t_{f}(t)
\end{equation*}
in lieu of \eqref{eq:lmb-ollmb-t} (where $M' \lmb_{0} = \lmb(t)$ and $\br{M}' \lmb_{0} = \ollmb(t)$ as before), the proof of this lemma proceeds exactly as in that of Lemma~\ref{lem:Xi-control}. We omit the details.
\end{proof}

\medskip

\noindent \textit{Control of $X(t)$.} 
Next, we aim to obtain a bound for $X(t)$ that is comparable to \eqref{eq:X-bounds} in the steady case. Instead of a simple argument involving the Hamiltonian (which is exactly preserved in the steady case), here we need an analysis of the bicharacteristic ODEs.

Let $0 \leq t \leq \frac{1}{1-\eps} t_{f}(\tau_{M})$ with $\tau_{M} \leq \wpT$. Recall the soft fact that $0 < X(t) \leq X(0) < x_{1}$. Using the bicharacteristic ODEs, we may write
\begin{align*}
\frac{\dot{X}(t)}{X(t)} &= \frac{f'(t, X(t))}{X(t)} \lmb_{0} \gmm_{\lmb_{0}}'(\Xi(t)) \\
&= - \frac{\abs{f'(t, X(t))}}{\abs{f''(t, X(t))} X(t)} \frac{\gmm_{\lmb_{0}}'(\Xi(t))}{\gmm_{\lmb_{0}}(\Xi(t))} 
\left(\frac{\abs{f''(t, X(t))} \lmb_{0} \gmm_{\lmb_{0}}(\Xi(t))}{\abs{f''(t, X(t))} \lmb_{0} \gmm_{\lmb_{0}}(\Xi(t)) + \Gmm f''(t, X(t)) \lmb_{0} }\right) \dot{\Xi}(t) \\
&= - \frac{\abs{f'(t, X(t))}}{\abs{f''(t, X(t))} X(t)} \frac{\gmm_{\lmb_{0}}'(\Xi(t))}{\gmm_{\lmb_{0}}(\Xi(t))} 
\left(1 - \frac{\Gmm f''(t, X(t))}{\abs{f''(t, X(t))} \gmm_{\lmb_{0}}(\Xi(t)) + \Gmm f''(t, X(t))}\right) \dot{\Xi}(t).
\end{align*}
By \eqref{eq:wpT-choice} and \eqref{eq:cx0-choice}, for $\lmb_{0} \geq \Lmb$ with $\Lmb$ sufficiently large depending on $\gmm$ and $\abs{f''(0, 0)} \Gmm f''$ on $[0, \frac{100}{99} t_{f}(\wpT)] \times (0, x_{1})$, we have
\begin{equation*}
\abs*{\frac{\abs{f'(t, X(t))}}{\abs{f''(t, 0)} X(t)} - 1} \aleq C_{0} X(t), \qquad
\abs*{\frac{\Gmm f''(t, X(t))}{\abs{f''(t, X(t))} \gmm_{\lmb_{0}}(\Xi(t)) + \Gmm f''(t, X(t))} } \aleq
C_{1} \frac{1}{\gmm_{\lmb_{0}}(\Xi(t))},
\end{equation*}
where the implicit constants are absolute, $C_{0} = \abs{f''(0, 0)}^{-1} \nrm{f'''}_{L^{\infty}([0, \frac{100}{99} t_{f}(\wpT)] \times (0, x_{1}))}$ and $C_{1} = \abs{f''(0, 0)}^{-1} \nrm{\Gmm f''}_{L^{\infty}([0, \frac{100}{99} t_{f}(\wpT)] \times (0, x_{1}))}$. By integration in time, it follows that
\begin{align*}
	\log \frac{X(t)}{X(0)} - C C_{0}(X(0) - X(t)) &\leq \log \frac{\gmm_{\lmb_{0}}(\Xi(0))}{\gmm_{\lmb_{0}}(\Xi(t))} + C C_{1} \left(\frac{1}{\gmm_{\lmb_{0}}(\Xi(0))} - \frac{1}{\gmm_{\lmb_{0}}(\Xi(t))} \right), \\
	\log \frac{\gmm_{\lmb_{0}}(\Xi(0))}{\gmm_{\lmb_{0}}(\Xi(t))} - C C_{1} \left(\frac{1}{\gmm_{\lmb_{0}}(\Xi(0))} - \frac{1}{\gmm_{\lmb_{0}}(\Xi(t))} \right) &\leq \log \frac{X(t)}{X(0)} + C C_{0}(X(0) - X(t)),
\end{align*}
for some absolute constant $C > 0$. By the monotonicity properties of $\gmm_{\lmb_{0}}$, $X$ and $\Xi$, observe that all non-logarithmic terms may be made arbitrarily small by taking $c_{x_{0}} > 0$ small and $\Lmb$ large depending on $\gmm$,  $C_{0}$ and $C_{1}$. Finally, by the slow-variance of $\gmm_{\lmb_{0}}$ and Lemma~\ref{lem:Xi-control-time-dep}, we may replace $\gmm_{\lmb_{0}}(\Xi(0))$ and $\gmm_{\lmb_{0}}(\Xi(t))$ by $\gmm_{\lmb_{0}}(\lmb_{0})$ and $\gmm_{\lmb_{0}}(\lmb(t))$, respectively.

In conclusion, for $0 \leq t \leq \frac{1}{1-\eps} t_{f}(\tau_{M})$, $\tau_{M} \leq \wpT$ and $\lmb_{0} \geq \Lmb$, we obtain
\begin{equation}\label{eq:X-bounds-time-dep}
\begin{split}
\frac{1}{{2^{\bt_{0}+1}}} \frac{X(0)\gmm_{\lmb_{0}}(\lmb_0)}{\gmm_{\lmb_{0}}(\lmb(t))}\le  X(t) \le {2^{\bt_{0}+1}} \frac{X(0)\gmm_{\lmb_{0}}(\lmb_0)}{\gmm_{\lmb_0}(\lmb(t))}. 
\end{split}
\end{equation}
as long as $c_{x_{0}} > 0$ is sufficiently small and $\Lmb$ is sufficiently large depending on $\gmm$, $\abs{f''(0, 0)}^{-1} f'''$ and $\abs{f''(0, 0)}^{-1} \Gmm f''$.

\medskip

\noindent \textit{Control of $\rd_{x}^{2} \Phi(t, X(t))$.} Next, we turn to $\rd_{x}^{2} \Phi(t, X(t))$. Note that
\begin{equation*}
	\frac{\ud}{\ud t} \rd_{x}^{2} \Phi(t, X(t))
	= \rd_{t} \rd_{x}^{2} \Phi(t, X(t)) + \dot{X}(t) \rd_{x}^{3} \Phi(t, X(t)).
\end{equation*}
By the equation
\begin{equation*}
	\rd_{t} \rd_{x}^{2} \Phi (t, x) + \rd_{\xi} p(t, x, \rd_{x} \Phi) \rd_{x}^{3} \Phi(t, x) + \rd_{x}^{2} p(t, x, \rd_{x} \Phi) + 2 \rd_{x} \rd_{\xi} p(x, \rd_{x} \Phi) \rd_{x}^{2} \Phi(t, x) + \rd_{\xi}^{2} p(t, x, \rd_{x} \Phi) ( \rd_{x}^{2} \Phi )^{2} = 0,
\end{equation*}
it follows that
\begin{equation} \label{eq:HJE-ricatti}
\begin{aligned}
	\frac{\ud}{\ud t} \rd_{x}^{2} \Phi(t, X(t))
	&= - \rd_{x}^{2} p(t, X(t), \Xi(t)) - 2  \rd_{x} \rd_{\xi} p(t, X(t), \Xi(t)) \rd_{x}^{2} \Phi(t, X(t)) \\
	&\peq - \rd_{\xi}^{2} p(t, X(t), \Xi(t)) ( \rd_{x}^{2} \Phi (t, X(t)))^{2}.
\end{aligned}
\end{equation}
{Note that \eqref{eq:HJE-ricatti} is a Ricatti-type ODE. As is well-known, this ODE is prone to blowing up in finite time, and indeed this is the reflection of the possibility of formation of focal or conjugate points along bicharacteristics in the Hamilton--Jacobi formulation. While it is possible to play with the initial data for $\Phi$ and analyze \eqref{eq:HJE-ricatti} directly to obtain control on $\rd_{x}^{2} \Phi(t, X(t))$, it is not clear how to obtain control on a sufficiently long time interval needed for our purposes.}

Instead, motivated by \eqref{eq:HJE-s-ddPhi}, we perform the following inspired change of variables:
\begin{equation} \label{eq:def-h}
	h(t) := \frac{\rd_{\xi} p(t, X(t), \Xi(t))}{\rd_{x} p(t, X(t), \Xi(t))} \rd_{x}^{2} \Phi (t, X(t)) + 1.
\end{equation}
Our choice of the initial data \eqref{eq:HJE-id} is such that $h$ is initially zero. More precisely, by \eqref{eq:HJE-id}, we have
\begin{equation*}
\rd_{x} f'(0, 0) x_{1} \lmb_{0} \gmm_{\lmb_{0}}(\lmb_{0}) + p(0, x, \rd_{x} \Phi(0, x)) = 0,
\end{equation*}
hence by differentiating in $x$, it follows that
\begin{equation} \label{eq:HJE-h-id}
h(0) = 0 \qquad \hbox{ for } X(0) = x \in (x_{0}, x_{1}) \, \mbox{ and }\,  \Xi(0) = \rd_{x} \Phi(0, x).
\end{equation}
We shall show that the variable $h$ obeys the following remarkably nice evolution equation:
\begin{lemma} \label{lem:HJE-h}
	We have 
	\begin{equation} \label{eq:HJE-h}
	\dot{h}
	= s - (q + r + s) h + q h^{2},
	\end{equation}
	where
	\begin{equation} \label{eq:def-s-r-q}
s := 	- \frac{\rd_{t} \rd_{\xi} p}{\rd_{\xi} p}  
+\frac{\rd_{t} \rd_{x} p}{\rd_{x} p}, \qquad r := \rd_{x}^{2} p \frac{\rd_{\xi} p}{\rd_{x} p}, \qquad q := -\rd_{\xi}^{2} p \frac{\rd_{x} p}{\rd_{\xi} p},
\end{equation}
which are all evaluated at $(t, x, \xi) = (t, X(t), \Xi(t))$.
\end{lemma}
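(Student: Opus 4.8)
The plan is to establish \eqref{eq:HJE-h} by a direct computation along a bicharacteristic, reducing everything to the Riccati equation \eqref{eq:HJE-ricatti} and the Hamiltonian ODEs \eqref{eq:bichar-time-dependent-X}--\eqref{eq:bichar-time-dependent-Xi}. First I would set up shorthand: write $A(t) = \rd_{\xi} p(t, X(t), \Xi(t))$, $B(t) = \rd_{x} p(t, X(t), \Xi(t))$ and $u(t) = \rd_{x}^{2} \Phi(t, X(t))$, so that by definition \eqref{eq:def-h} we have $h = \tfrac{A}{B} u + 1$, equivalently $\tfrac{A}{B} u = h - 1$. I would then record the chain rule along a bicharacteristic: for any smooth $F = F(t, x, \xi)$, \eqref{eq:bichar-time-dependent-X}--\eqref{eq:bichar-time-dependent-Xi} give $\tfrac{\ud}{\ud t} F(t, X(t), \Xi(t)) = \rd_{t} F + \rd_{\xi} p\, \rd_{x} F - \rd_{x} p\, \rd_{\xi} F$. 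Applying this to $F = \rd_{\xi} p$ and $F = \rd_{x} p$ yields $\dot A = \rd_{t}\rd_{\xi}p + A \rd_{x}\rd_{\xi}p - B \rd_{\xi}^{2}p$ and $\dot B = \rd_{t}\rd_{x}p + A \rd_{x}^{2}p - B \rd_{x}\rd_{\xi}p$, all evaluated at $(t, X(t), \Xi(t))$. (I would also double-check \eqref{eq:HJE-ricatti} by differentiating the Hamilton--Jacobi equation \eqref{eq:HJE} twice in $x$ and restricting to $x = X(t)$.)

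Next I would rewrite these derivatives in terms of the quantities $s, r, q$ from \eqref{eq:def-s-r-q}. Using $r = \tfrac{A}{B} \rd_{x}^{2}p$, $q = -\tfrac{B}{A} \rd_{\xi}^{2}p$ and $s = -\tfrac{\rd_{t}\rd_{\xi}p}{\rd_{\xi}p} + \tfrac{\rd_{t}\rd_{x}p}{\rd_{x}p}$, a short computation gives $\tfrac{\dot A}{A} - \tfrac{\dot B}{B} = -s + 2\rd_{x}\rd_{\xi}p + q - r$, whence $\tfrac{\ud}{\ud t}\!\big(\tfrac{A}{B}\big)\, u = \tfrac{A}{B}\big(\tfrac{\dot A}{A} - \tfrac{\dot B}{B}\big) u = (h-1)(-s + 2\rd_{x}\rd_{\xi}p + q - r)$. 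For the remaining piece, substitute the Riccati equation \eqref{eq:HJE-ricatti}, namely $\dot u = -\rd_{x}^{2}p - 2\rd_{x}\rd_{\xi}p\, u - \rd_{\xi}^{2}p\, u^{2}$, into $\tfrac{A}{B} \dot u$, and use $\tfrac{A}{B} u = h-1$ together with $\tfrac{A}{B} u^{2} = \tfrac{B}{A}(h-1)^{2}$ to obtain $\tfrac{A}{B} \dot u = -r - 2\rd_{x}\rd_{\xi}p\,(h-1) + q(h-1)^{2}$.

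Finally, adding the two contributions via $\dot h = \tfrac{\ud}{\ud t}\!\big(\tfrac{A}{B}\big)\, u + \tfrac{A}{B}\dot u$, the terms proportional to the mixed second derivative $\rd_{x}\rd_{\xi}p$ cancel exactly, leaving $\dot h = (h-1)(-s + q - r) - r + q(h-1)^{2}$; expanding and collecting powers of $h$ produces $\dot h = q h^{2} - (q + r + s) h + s$, which is \eqref{eq:HJE-h}. I do not anticipate a genuine obstacle, since the content is purely algebraic; the only point requiring care is tracking the sign conventions in \eqref{eq:def-s-r-q} and confirming that the $\rd_{x}\rd_{\xi}p$ term, absent from \eqref{eq:HJE-h}, indeed drops out. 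That cancellation is precisely the structural reason the change of variables \eqref{eq:def-h} linearizes away the worst part of the Riccati nonlinearity near $h = 0$ and is what makes the subsequent propagation of $\abs{h} \ll 1$ feasible.
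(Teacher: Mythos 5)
Your computation is correct and follows essentially the same route as the paper: differentiate $h = \tfrac{\rd_{\xi}p}{\rd_{x}p}\,\rd_{x}^{2}\Phi + 1$ along a bicharacteristic using the chain rule for $\rd_{\xi}p$, $\rd_{x}p$ and the Riccati equation \eqref{eq:HJE-ricatti} for $\rd_{x}^{2}\Phi(t,X(t))$, observe the exact cancellation of the $\rd_{x}\rd_{\xi}p$ terms, and rewrite everything via $\tfrac{\rd_{\xi}p}{\rd_{x}p}\rd_{x}^{2}\Phi = h-1$. Your use of the logarithmic-derivative form $\tfrac{\dot A}{A}-\tfrac{\dot B}{B}$ instead of differentiating the quotient $\tfrac{\rd_{\xi}p}{\rd_{x}p}$ directly is only a cosmetic repackaging of the paper's argument, and your signs and the final collection of terms check out.
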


Note that $s = 0$ when $p$ is time-independent. By \eqref{eq:HJE-h-id}, it is then clear that when $p$ is time independent, $h$ remains zero; this is precisely the computation \eqref{eq:HJE-s-ddPhi} in the steady case! The advantage of \eqref{eq:HJE-h} over \eqref{eq:HJE-ricatti} is that we may now incorporate the time dependence of $p$ in a perturbative manner in the estimate for $\rd_{x}^{2} \Phi(t, X(t))$.

\begin{proof} To simplify the notation, we omit the dependence of various derivatives of $p$ on $(t, X(t), \Xi(t))$, as well as the dependence of $\rd_{x}^{2} \Phi$ on $(t, X(t))$. We begin by computing
\begin{align*}
	\frac{\ud}{\ud t} \frac{\rd_{\xi} p}{\rd_{x} p}
	&= \frac{\rd_{t} \rd_{\xi} p + \rd_{x} \rd_{\xi} p \dot{X} + \rd_{\xi}^{2} p \dot{\Xi}}{\rd_{x} p} - \frac{(\rd_{t} \rd_{x} p + \rd_{x}^{2} p \dot{X} + \rd_{x} \rd_{\xi} p \dot{\Xi}) \rd_{\xi} p}{(\rd_{x} p)^{2}} \\
	&= \left(\frac{\rd_{t} \rd_{\xi} p}{\rd_{\xi} p} + \rd_{x} \rd_{\xi} p - \rd_{\xi}^{2} p \frac{\rd_{x} p}{\rd_{\xi} p} - \frac{\rd_{t} \rd_{x} p}{\rd_{x} p} - \rd_{x}^{2} p \frac{\rd_{\xi} p}{\rd_{x} p} + \rd_{x} \rd_{\xi} p\right) \frac{\rd_{\xi} p}{\rd_{x} p} \\
	&= \left(2\rd_{x} \rd_{\xi} p - \rd_{\xi}^{2} p \frac{\rd_{x} p}{\rd_{\xi} p} - \rd_{x}^{2} p \frac{\rd_{\xi} p}{\rd_{x} p} + \frac{\rd_{t} \rd_{\xi} p}{\rd_{\xi} p}  - \frac{\rd_{t} \rd_{x} p}{\rd_{x} p} \right) \frac{\rd_{\xi} p}{\rd_{x} p}.
\end{align*}
Therefore,
\begin{align*}
	\frac{\ud}{\ud t} h
	&= \left(\frac{\ud}{\ud t} \frac{\rd_{\xi} p}{\rd_{x} p}\right) \rd_{x}^{2} \Phi(t, X(t))
	+ \frac{\rd_{\xi} p}{\rd_{x} p} \frac{\ud}{\ud t} \rd_{x}^{2} \Phi(t, X(t)) \\
	&= \left(2\rd_{x} \rd_{\xi} p - \rd_{\xi}^{2} p \frac{\rd_{x} p}{\rd_{\xi} p} - \rd_{x}^{2} p \frac{\rd_{\xi} p}{\rd_{x} p} + \frac{\rd_{t} \rd_{\xi} p}{\rd_{\xi} p}  - \frac{\rd_{t} \rd_{x} p}{\rd_{x} p} \right) \frac{\rd_{\xi} p}{\rd_{x} p} \rd_{x}^{2} \Phi(t, X(t)) \\
	&\peq
	- \rd_{x}^{2} p \frac{\rd_{\xi} p}{\rd_{x} p} 
	- 2 \rd_{x} \rd_{\xi} p \frac{\rd_{\xi} p}{\rd_{x} p} \rd_{x}^{2} \Phi(t, X(t))
	- \rd_{\xi}^{2} p \frac{\rd_{\xi} p}{\rd_{x} p} (\rd_{x}^{2} \Phi(t, X(t)))^{2}.
\end{align*}
Observe the cancellation of the term $2\rd_{x} \rd_{\xi} p \frac{\rd_{\xi} p}{\rd_{x} p} \rd_{x}^{2} \Phi(t, X(t))$. Writing $\frac{\rd_{\xi} p}{\rd_{x} p} \rd_{x}^{2} \Phi(t, X(t)) = h - 1$, we moreover observe that
\begin{align*}
	\frac{\ud}{\ud t} h
	&= 	\left(- \rd_{\xi}^{2} p \frac{\rd_{x} p}{\rd_{\xi} p} - \rd_{x}^{2} p \frac{\rd_{\xi} p}{\rd_{x} p} + \frac{\rd_{t} \rd_{\xi} p}{\rd_{\xi} p}  - \frac{\rd_{t} \rd_{x} p}{\rd_{x} p} \right) (h-1) \\
	&\peq
	- \rd_{x}^{2} p \frac{\rd_{\xi} p}{\rd_{x} p} 
	- \rd_{\xi}^{2} p \frac{\rd_{x} p}{\rd_{\xi} p} (h-1)^{2} \\
	&= \left(\rd_{\xi}^{2} p \frac{\rd_{x} p}{\rd_{\xi} p} 
	+ \rd_{x}^{2} p \frac{\rd_{\xi} p}{\rd_{x} p} 
	- \frac{\rd_{t} \rd_{\xi} p}{\rd_{\xi} p}  
	+\frac{\rd_{t} \rd_{x} p}{\rd_{x} p} \right) - \rd_{x}^{2} p \frac{\rd_{\xi} p}{\rd_{x} p} - \rd_{\xi}^{2} p \frac{\rd_{x} p}{\rd_{\xi} p} \\
	&\peq \left(- \rd_{\xi}^{2} p \frac{\rd_{x} p}{\rd_{\xi} p} - \rd_{x}^{2} p \frac{\rd_{\xi} p}{\rd_{x} p} + \frac{\rd_{t} \rd_{\xi} p}{\rd_{\xi} p}  - \frac{\rd_{t} \rd_{x} p}{\rd_{x} p} + 2 \rd_{\xi}^{2} p \frac{\rd_{x} p}{\rd_{\xi} p} \right)h 
	- \rd_{\xi}^{2} p \frac{\rd_{x} p}{\rd_{\xi} p} h^{2},
\end{align*}
from which \eqref{eq:HJE-h} follows.\end{proof} 

We now analyze \eqref{eq:HJE-h} to obtain a uniform control on $h$ under the additional assumption \eqref{eq:gf-condition-diss-2} compared to the steady case. Then, by \eqref{eq:def-h}, $\rd_{x}^{2} \Phi(t, X(t))$ would enjoy similar estimates as in the steady case.

\begin{proposition}\label{prop:h}
Let $\lmb_{0}$, $M$ and $\tau_{M}$ satisfy \eqref{eq:gf-condition-1}--\eqref{eq:gf-condition-3}, $\tau_{M} \leq \wpT \leq 1$ as well as \eqref{eq:gf-condition-diss-2}. Then for any $\dlt_{5} > 0$, by taking $c_{x_{0}}$ smaller and $\Lmb$ larger depending on $\dlt_{5}$, $\gmm$ and $f$ (for the precise dependence, see $C_{0}$, $C_{0}'$ and $C_{1}$ in the proof), we have
\begin{equation} \label{eq:h-small}
	\abs{h(t)} \leq \dlt_{5} \qquad \hbox{ for } 0 \leq t \leq \frac{1}{1-\eps} t_{f}(\tau_{M}) \hbox{ and } \lmb_{0} \geq \Lmb.
\end{equation}
\end{proposition}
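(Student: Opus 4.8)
The plan is to analyze the Riccati-type ODE \eqref{eq:HJE-h} for $h$ via a continuous bootstrap argument, taking advantage of the fact that $h(0) = 0$ (by \eqref{eq:HJE-h-id}) and that the ``source'' term $s$ vanishes in the time-independent case. First I would gather the sizes of the coefficients $q$, $r$, $s$ appearing in \eqref{eq:def-s-r-q}, evaluated along the bicharacteristic $(t, X(t), \Xi(t))$. Using the explicit form $p(t,x,\xi) = f'(t,x)\lmb_0 \gmm_{\lmb_0}(\xi) + \Gmm f'(t,x)\lmb_0$ together with the bounds $\lmb_0 \le \Xi(t) \le 2\lmb(t) \le 2M\lmb_0$ from Lemma~\ref{lem:Xi-control-time-dep} and the control \eqref{eq:X-bounds-time-dep} on $X(t)$: the term $q = -\rd_\xi^2 p \, \rd_x p / \rd_\xi p$ is comparable (up to the slow-variation constants, and modulo the lower-order $\Gmm f'$ contribution which is harmless since $\gmm_{\lmb_0} \to \infty$) to $\dot\Xi/\Xi \cdot (\text{something bounded by Assumption 1 on }\gmm)$; more precisely, using $\dot\Xi = -f''\lmb_0\gmm_{\lmb_0}(\Xi) + \Gmm f''\lmb_0$ and reparametrizing by $\lmb = \lmb(t)$, the relevant combination $|q| + |r|$ integrated in time becomes an integral of the form $\int_{\lmb_0}^{M\lmb_0} (\text{bounded by Assumption 1}) \frac{\rd_\xi\gmm_{\lmb_0}(\lmb)}{\gmm_{\lmb_0}(\lmb)}\frac{\ud\lmb}{\lmb_0} \cdot \lmb_0\gmm_{\lmb_0} \cdot (\ldots)$, which is $O(\log M) = O(\log\lmb_0)$ by \eqref{eq:gf-condition-3}. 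Meanwhile $s$ involves $\rd_t\rd_x p$ and $\rd_t\rd_\xi p$, which carry a factor $\rd_t f''$, bounded via \eqref{eq:wpT-choice}; so $\int_0^{t} |s| \, \ud t'$ is small — of size $C_0' \wpT$ or so, shrinkable by choosing $\wpT$ small.

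The heart of the matter is then to close the bootstrap $|h(t)| \le \dlt_5$. Assuming this bound on $[0,t^\ast)$, I would use Duhamel/Grönwall on \eqref{eq:HJE-h}: writing $\dot h = s - (q+r+s)h + qh^2$, on the bootstrap region $|h| \le \dlt_5 \le 1/2$ the quadratic term $qh^2$ is dominated by $\dlt_5 |q| \cdot |h|$, so $h$ satisfies a linear-type inequality $|\dot h| \le |s| + (C|q| + |r| + |s|)|h|$. Integrating, $|h(t)| \le \left(\int_0^t |s|\right) \exp\left(\int_0^t (C|q| + |r| + |s|)\right)$. The exponential is the dangerous factor since $\int_0^t(|q| + |r|)$ is only logarithmically large in $\lmb_0$ — this is exactly where condition \eqref{eq:gf-condition-diss-2} must enter to do better: rather than crudely bounding the exponential by a power of $\lmb_0$ (which would swamp the small prefactor $\int|s|$), I expect one rewrites the homogeneous solution of the linear part more carefully, recognizing that $q + r$ is (up to lower-order and time-dependent corrections) a total time-derivative of $\log\left(\frac{\rd_\xi p}{\rd_x p}\right)$ or a similar logarithmic quantity, so the integrating factor is not $\exp(O(\log\lmb_0))$ but is genuinely controlled by the ratio $\left(\rd_{\xi_2}\gmm_{\lmb_0}(\lmb_0)/\rd_{\xi_2}\gmm_{\lmb_0}(\lmb(t))\right)^{\pm(1-\dlt_1)}$-type expression appearing in \eqref{eq:gf-condition-diss-2}. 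The precise bookkeeping — identifying which part of $q + r$ is a perfect derivative and which part is a genuine remainder, and matching it to the integral in \eqref{eq:gf-condition-diss-2} — is the main obstacle.

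Concretely, the key step I anticipate is: split $-(q+r)$ into $\frac{\ud}{\ud t}\log\left|\frac{\rd_\xi p}{\rd_x p}\right|$ plus a remainder $\rho(t)$ with $\int_0^t|\rho| \lesssim \log\lmb_0$ or better, then the solution formula for the linear part of \eqref{eq:HJE-h} gives $h(t) = \frac{\rd_\xi p(t)/\rd_x p(t)}{\rd_\xi p(0)/\rd_x p(0)} e^{-\int_0^t(s + \rho)}\int_0^t \frac{\rd_\xi p(0)/\rd_x p(0)}{\rd_\xi p(t')/\rd_x p(t')} e^{\int_0^{t'}(s+\rho)} s(t')\,\ud t'$. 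The prefactor ratio $\frac{\rd_\xi p(t)/\rd_x p(t)}{\rd_\xi p(0)/\rd_x p(0)}$ is precisely what \eqref{eq:HJE-s-ddPhi} suggests is $O(1/x_0 \cdot \gmm_{\lmb_0}(\Xi)^2/(\gmm_{\lmb_0}(\lmb_0)\rd_\xi\gmm_{\lmb_0}))$ — no, rather the combination $\frac{\rd_\xi p}{\rd_x p} = \frac{f'\lmb_0\rd_\xi\gmm_{\lmb_0}}{-(f''\gmm_{\lmb_0} + \Gmm f'')\lmb_0}$, and along the characteristic the ratio of its values at times $t$ and $0$ is controllable by Lemma~\ref{lem:Xi-control-time-dep} and \eqref{eq:X-bounds-time-dep}. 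Then in the time integral, the weighted factor against $s$, combined with $|s| \lesssim C_0'\cdot$(size of $\rd_t f''$), and using \eqref{eq:wpT-choice} to make $\nrm{\rd_t f''}_{L^\infty} t_f(\wpT)$ as small as we like, plus \eqref{eq:gf-condition-diss-2} to control the accumulated integrating factor by $1$, yields $|h(t)| \le C_0'\wpT \cdot (\text{power of }\log\lmb_0)\le\dlt_5$ for $\wpT$ small enough and $\Lmb$ large enough. Once $|h| \le \dlt_5$ is established, the bootstrap closes by continuity (the initial value $h(0) = 0 < \dlt_5$), which also retroactively justifies the existence of $X, \Xi, \Phi(t,X(t))$ on the whole interval $[0, \frac{100}{99}t_f(\tau_M)]$ as described in the continuation criterion paragraph, completing the proof. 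I expect the main technical obstacle to be the careful extraction of the total-derivative part of $q + r$ and verifying that the leftover remainder genuinely matches the hypothesis \eqref{eq:gf-condition-diss-2} with the stated power $1 - \dlt_1$; the role of $\dlt_1$ is presumably to absorb the discrepancy between $\rd_{\xi_2}\gmm$ and $\gmm_{\lmb_0}$ noted in the remark following Theorem~\ref{thm:norm-growth-diss}.
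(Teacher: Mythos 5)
Your skeleton — bootstrap on a smallness bound for $h$, Duhamel with an integrating factor, extraction of a total time-derivative so the large part of the exponent is not crudely exponentiated, and matching against \eqref{eq:gf-condition-diss-2} — is the same as the paper's, but the two steps you yourself flag as ``the main obstacle'' are exactly where the argument as written fails. The decisive gap is the source estimate and the resulting smallness mechanism. Your bound is the time-uniform $|s| = O(\nrm{\rd_t f''}_{L^\infty}/|f''|)$, integrated to $\aleq C_{0}'\wpT$, and your final claim $|h(t)| \le C_{0}'\wpT\,(\log\lmb_0)^{\mathrm{power}} \le \dlt_5$ cannot close: for fixed $\wpT$ the factor $(\log\lmb_0)^{\mathrm{power}}$ diverges as $\lmb_0 \to \infty$, so enlarging $\Lmb$ makes the bound worse, and the proposition only allows shrinking $c_{x_0}$ and enlarging $\Lmb$ (not $\wpT$) in terms of $\dlt_5$. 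The paper instead proves the pointwise structural bound \eqref{eq:HJE-h-s}: the $O(1)$ parts of $\tfrac{\rd_t f'}{f'}-\tfrac{\rd_t f''}{f''}$ cancel (leaving $O(X(t))$), the $\Gmm f''$-contributions are $O(\gmm_{\lmb_0}(\Xi)^{-1})$, and $X(t)$ decays like $\gmm_{\lmb_0}(\lmb_0)/\gmm_{\lmb_0}(\lmb(t))$, giving $|s| \aleq \tfrac{\gmm_{\lmb_0}(\lmb_0)}{\gmm_{\lmb_0}(\lmb)^{2}}\tfrac{\dot\lmb}{\lmb_0}\big(C_1 c_{x_0}\eps(\lmb_0)+C_0\,\gmm_{\lmb_0}(\lmb_0)^{-1}\big)$. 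The small prefactor $C_1 c_{x_0}\eps(\lmb_0)+C_0/\gmm_{\lmb_0}(\lmb_0)$ — made $\le \dlt_5$ by shrinking $c_{x_0}$ and enlarging $\Lmb$ — is the true source of \eqref{eq:h-small}, and the density $\tfrac{\gmm_{\lmb_0}(\lmb_0)}{\gmm_{\lmb_0}(\lmb)^{2}}\tfrac{\ud\lmb}{\lmb_0}$ is exactly what lets the weighted Duhamel integral be split into a piece bounded by $\tau_M \le 1$ and a piece which is literally the left-hand side of \eqref{eq:gf-condition-diss-2}. With only $|s| = O(1)$ pointwise and $\ud t'$ as the measure, the pairing with \eqref{eq:gf-condition-diss-2} (whose integrand carries the weight $\gmm(\lmb_0,\lmb_0)/\gmm(\lmb_0,\lmb)^{2}$) cannot be carried out, regardless of how the integrating factor is organized.

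The second gap is the total-derivative extraction itself. The clean identity is \eqref{eq:HJE-h-q}: $q = (\rd_\xi\log\gmm_{\lmb_0}')(\Xi(t))\,\dot\Xi(t)$, so $q$ \emph{alone} integrates exactly to $\log\big(\gmm_{\lmb_0}'(\Xi(t))/\gmm_{\lmb_0}'(\Xi(t'))\big)$, which (after the $\dlt_1$-loss coming from the $qh^{2}$-term and the bootstrap bound) is precisely the ratio raised to the power $1-\dlt_1$ in \eqref{eq:gf-condition-diss-2}; no combination with $r$ is needed, since $\int_0^{t}|r|\,\ud t' \aleq C_1' c_{x_0}\eps(\lmb_0)$ is in fact small, not logarithmically large (see \eqref{eq:HJE-h-int-r}); lumping $q$ and $r$ as you do hides this. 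Moreover your proposed split $-(q+r)=\tfrac{\ud}{\ud t}\log\big|\tfrac{\rd_\xi p}{\rd_x p}\big|+\rho$ does not isolate the dangerous part: from the computation in the proof of Lemma~\ref{lem:HJE-h}, $\tfrac{\ud}{\ud t}\log\tfrac{\rd_\xi p}{\rd_x p} = 2\rd_x\rd_\xi p + q - r - s$, so $\rho = -2q - 2\rd_x\rd_\xi p + s$ still contains $q$ and a term of the same logarithmic size; exponentiating $\int|\rho| \aleq \log\lmb_0$ reintroduces exactly the power of $\lmb_0$ you were trying to avoid, which (as you note yourself) would swamp the small prefactor. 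With \eqref{eq:HJE-h-q} and \eqref{eq:HJE-h-s} in hand, the bootstrap closes essentially as you outline, but without these two ingredients the proposal does not yield \eqref{eq:h-small}.
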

\begin{proof} 
In this proof, all implicit constants are absolute unless otherwise stated.
Consider the ODE \eqref{eq:HJE-h}. Note that $q$ can be alternatively written as
\begin{equation} \label{eq:HJE-h-q}
q
= \frac{\rd_{\xi}^{2} \gmm_{\lmb_{0}}(\Xi(t))}{\rd_{\xi} \gmm_{\lmb_{0}}(\Xi(t))} (-\rd_{x} p)(t, X(t), \Xi(t))
=  \left(\rd_{\xi} \log \gmm_{\lmb_{0}}'\right) (\Xi(t)) \dot{\Xi}(t).
\end{equation} We begin by expanding the terms in \eqref{eq:HJE-h} that involve $\rd_{t}$:
\begin{align*}
-s=\frac{\rd_{t} \rd_{\xi} p}{\rd_{\xi} p} - \frac{\rd_{t} \rd_{x} p}{\rd_{x} p} 
&= \frac{\rd_{t} f'}{f'} - \frac{\rd_{t} \rd_{x} f' \gmm_{\lmb_{0}}(\xi) + \rd_{t} \Gmm f''}{f'' \gmm_{\lmb_{0}}(\xi) + \Gmm f''} \\
%&= \frac{\rd_{t} f'}{f'} 
%- \frac{\rd_{t} f'' (\gmm_{\lmb_{0}}(\xi) + (f'')^{-1} \Gmm f'') -  \frac{\rd_{t} f''}{f''}\Gmm f'' + \rd_{t} \Gmm f''}{f'' \gmm_{\lmb_{0}}(\xi) + \Gmm f''} \\
&= \frac{\rd_{t} f'}{f'} - \frac{\rd_{t} f''}{f''}
+ \frac{\rd_{t} f''}{f''} \frac{ \Gmm f''}{f'' (\gmm_{\lmb_{0}}(\xi) + \Gmm f'')} + \frac{\rd_{t} \Gmm f''}{f'' (\gmm_{\lmb_{0}}(\xi) + \Gmm f'')}.
\end{align*}
Observe that, in the first two terms, the terms of order $O(1)$ cancel and we are left with $O(x)$. On the other hand, the remaining two terms are bounded by $O(\frac{1}{\gmm_{\lmb_{0}}(\xi)})$. Combined with Lemma~\ref{lem:Xi-control-time-dep} and \eqref{eq:X-bounds-time-dep}, we obtain (for $\lmb_{0} \geq \Lmb$ sufficiently large)
\begin{align}
	\abs{s} &\aleq \abs{f''(0, 0)} \left( C_{1} X + \frac{C_{0}}{\gmm_{\lmb_{0}}(\Xi(t))} \right) 
	\aleq \abs{f''(0, 0)} \frac{\gmm_{\lmb_{0}}(\lmb_{0})}{\gmm_{\lmb_{0}}(\lmb(t))} \left(C_{1} c_{x_{0}} \eps(\lmb_{0}) + \frac{C_{0}}{\gmm_{\lmb_{0}}(\lmb_{0})}\right) \notag \\
	&\aleq \frac{\gmm_{\lmb_{0}}(\lmb_{0})}{\gmm_{\lmb_{0}}(\lmb(t))^{2}} \frac{\dot{\lmb}(t)}{\lmb_{0}} \left(C_{1} c_{x_{0}} \eps(\lmb_{0}) + \frac{C_{0}}{\gmm_{\lmb_{0}}(\lmb_{0})}\right), \label{eq:HJE-h-s}
\end{align}
where 
\begin{align*}
C_{0} &= \abs{f''(0, 0)}^{-2} \nrm{\rd_{t} \Gmm f''}_{L^{\infty}([0, \tfrac{1}{1-\eps} t_{f}(\tau_{M})] \times (0, 2 c_{x_{0}} \eps))} + \abs{f''(0, 0)}^{-1} \nrm{\Gmm f''}_{L^{\infty}([0, \tfrac{1}{1-\eps} t_{f}(\tau_{M})] \times (0, 2 c_{x_{0}} \eps))}, \\
C_{1} &= \abs{f''(0, 0)}^{-2} \nrm{\rd_{t} f'''}_{L^{\infty}([0, \tfrac{1}{1-\eps} t_{f}(\tau_{M})] \times (0, 2 c_{x_{0}} \eps))} + \abs{f''(0, 0)}^{-1} \nrm{f'''}_{L^{\infty}([0, \tfrac{1}{1-\eps} t_{f}(\tau_{M})] \times (0, 2 c_{x_{0}} \eps))}.
\end{align*}
Next, for the term $r$, we again use Lemma~\ref{lem:Xi-control-time-dep} and \eqref{eq:X-bounds-time-dep} to estimate
\begin{equation} \label{eq:HJE-h-r}
	\abs{r} \aleq C_{1}' X(t) \frac{\gmm_{\lmb_{0}}'(\lmb(t))}{\gmm_{\lmb_{0}}(\lmb(t))} \dot{\lmb(t)}
	\aleq C_{1}' c_{x_{0}} \eps(\lmb_{0}) \frac{\gmm_{\lmb_{0}}'(\lmb(t)) \gmm_{\lmb_{0}}(\lmb_{0}) }{\gmm_{\lmb_{0}}(\lmb(t))^{2}} \dot{\lmb}(t),
\end{equation}
where
\begin{equation*}
	C_{1}' = C_{1} + \gmm_{\lmb_{0}}(\lmb_{0})^{-1} \abs{f''(0, 0)}^{-1} \nrm{\Gmm f'''}_{L^{\infty}([0, \tfrac{1}{1-\eps} t_{f}(\tau_{M})] \times (0, 2 c_{x_{0}} \eps))}.
\end{equation*}

We are now ready to set up a bootstrap argument (continuous induction in time) to prove \eqref{eq:h-small}. Assume, without any loss of generality, that $\dlt_{5} < \dlt_{1}$. Initially, recall that $h(0) = 0$. As a bootstrap assumption, assume that $\abs{h(t)} < \dlt_{1}$ on some time interval $[0, t^{\ast}]$. By the method of integrating factor, \eqref{eq:HJE-h-q} and the bootstrap assumption, we estimate, for any $t \in [0, t^{\ast}]$,
\begin{align*}
	\abs{h(t)}
	&\leq \abs*{\int_{0}^{t} \exp\left(\int_{t'}^{t} (q + r + s - q h)(t'') \, \ud t'' \right) s(t') \ud t'} \\
	&\leq \int_{0}^{t} \exp\left(\int_{t'}^{t} (q + \abs{r} + \abs{s} + \abs{q} \dlt_{1} )(t'') \, \ud t'' \right) \abs{s(t')} \ud t' \\
	&\leq \int_{0}^{t} \max\set*{\frac{\gmm_{\lmb_{0}}'(\lmb(t'))}{\gmm_{\lmb_{0}}'(M'(t) \lmb_{0})}, \frac{\gmm_{\lmb_{0}}'(M'(t) \lmb_{0})}{\gmm_{\lmb_{0}}'(\lmb(t'))}}^{\dlt_{1}} \frac{\gmm_{\lmb_{0}}'(\lmb(t'))}{\gmm_{\lmb_{0}}'(M'(t) \lmb_{0})} \abs{s(t')} \, \ud t' \\
	& \pleq \times \exp \left(\int_{0}^{t} (\abs{s} + \abs{r})(t'') \, \ud t'' \right).
\end{align*}
We first claim that the last term is uniformly bounded in $t$. Indeed, by \eqref{eq:HJE-h-s}, monotonicity of $\gmm_{\lmb_{0}}$, Lemma~\ref{lem:Xi-control-time-dep} (by which $M'(t) \leq M$) and $\tau_{M} \leq \wpT \leq 1$,
\begin{align}
	\int_{0}^{t} \abs{s}(t') \, \ud t'
	&\aleq \int_{\lmb_{0}}^{M'(t) \lmb_{0}} \frac{\gmm_{\lmb_{0}}(\lmb_{0})}{\gmm_{\lmb_{0}}(\lmb(t))^{2}} \frac{\ud \lmb}{\lmb_{0}} \left(C_{1} c_{x_{0}} \eps(\lmb_{0}) + \frac{C_{0}}{\gmm_{\lmb_{0}}(\lmb_{0})}\right) \notag \\
	&\leq \int_{\lmb_{0}}^{M'(t) \lmb_{0}} \frac{1}{\gmm_{\lmb_{0}}(\lmb(t))} \frac{\ud \lmb}{\lmb_{0}} \left(C_{1} c_{x_{0}} \eps(\lmb_{0}) + \frac{C_{0}}{\gmm_{\lmb_{0}}(\lmb_{0})}\right) \notag \\
	&\aleq \left(C_{1} c_{x_{0}} \eps(\lmb_{0}) + \frac{C_{0}}{\gmm_{\lmb_{0}}(\lmb_{0})}\right). \label{eq:HJE-h-int-s}
\end{align}
Next, by \eqref{eq:HJE-h-r} and $M'(t) \leq M$
\begin{align}
	\int_{0}^{t} \abs{r}(t') \, \ud t'
	\aleq C_{1}' c_{x_{0}} \eps(\lmb_{0}) \int_{\lmb_{0}}^{M'(t) \lmb_{0}} \frac{\gmm_{\lmb_{0}}(\lmb_{0}) \gmm_{\lmb_{0}}'(\lmb)}{\gmm_{\lmb_{0}}(\lmb)^{2}} \, \ud \lmb 
	\leq C_{1}' c_{x_{0}} \eps(\lmb_{0}). \label{eq:HJE-h-int-r}
\end{align}
In particular, by taking $c_{x_{0}} > 0$ smaller and $\Lmb$ larger depending on $\gmm$, $C_{0}$, $C_{1}$ and $C_{1}'$, we may ensure that the RHSs of \eqref{eq:HJE-h-int-s} and \eqref{eq:HJE-h-int-r} are smaller than, say, $1$. Thus,
\begin{align*}
	\abs{h(t)}
	&\aleq \int_{\lmb_{0}}^{M'(t) \lmb_{0}} \max\set*{\frac{\gmm_{\lmb_{0}}'(\lmb)}{\gmm_{\lmb_{0}}'(M'(t) \lmb_{0})}, \frac{\gmm_{\lmb_{0}}'(M'(t) \lmb_{0})}{\gmm_{\lmb_{0}}'(\lmb)}}^{\dlt_{1}} \frac{\gmm_{\lmb_{0}}'(\lmb)}{\gmm_{\lmb_{0}}'(M'(t) \lmb_{0})} \frac{\gmm_{\lmb_{0}}(\lmb_{0})}{\gmm_{\lmb_{0}}(\lmb)^{2}} \frac{\ud \lmb}{\lmb_{0}} \\
	&\pleq \times \left(C_{1} c_{x_{0}} \eps(\lmb_{0}) + \frac{C_{0}}{\gmm_{\lmb_{0}}(\lmb_{0})} \right).
\end{align*}
We split the $\lmb$-integral as follows. Define $E := \set{\lmb \in (0, M'(t) \lmb_{0}) : \gmm_{\lmb_{0}}'(M'(t) \lmb_{0})^{-1} \gmm_{\lmb_{0}}'(\lmb) \leq 1}$. As in the proof of \eqref{eq:HJE-h-int-s}, we have
\begin{align*}
&\int_{(0, M'(t) \lmb_{0}) \cap E} \max\set*{\frac{\gmm_{\lmb_{0}}'(\lmb)}{\gmm_{\lmb_{0}}'(M'(t) \lmb_{0})}, \frac{\gmm_{\lmb_{0}}'(M'(t) \lmb_{0})}{\gmm_{\lmb_{0}}'(\lmb)}}^{\dlt_{1}} \frac{\gmm_{\lmb_{0}}'(\lmb)}{\gmm_{\lmb_{0}}'(M'(t) \lmb_{0})} \frac{\gmm_{\lmb_{0}}(\lmb_{0})}{\gmm_{\lmb_{0}}(\lmb)^{2}} \frac{\ud \lmb}{\lmb_{0}} \\
&\leq \int_{0}^{M'(t) \lmb} \frac{\gmm_{\lmb_{0}}(\lmb_{0})}{\gmm_{\lmb_{0}}(\lmb)^{2}} \frac{\ud \lmb}{\lmb_{0}} 
\leq \tau_{M} \leq \wpT \leq 1.
\end{align*}
On the other hand, the $\lmb$-integral on $(0, M'(t) \lmb_{0}) \setminus E$ is bounded from above by the LHS of \eqref{eq:gf-condition-diss-2}. In conclusion,
\begin{equation*}
\begin{aligned}
	\abs{h(t)} &\aleq \left(C_{1} c_{x_{0}} \eps(\lmb_{0}) + \frac{C_{0}}{\gmm_{\lmb_{0}}(\lmb_{0})} \right).
\end{aligned}\end{equation*}
Taking $c_{x_{0}} > 0$ smaller and $\Lmb$ larger depending on $\dlt_{5}$, $\gmm$, $C_{0}$ and $C_{1}$, we obtain $\abs{h(t)} \leq \dlt_{5}$, which improves the bootstrap assumption. \qedhere
\end{proof}

\subsection{Estimates for the phase along characteristic curves} \label{subsec:transport}

We pick up from the end of either Section~\ref{subsec:HJE-s} and \ref{subsec:HJE-gen}; that is, {\bf we assume that the hypotheses, and therefore the conclusions, of one of these sections hold.} Our goal in this subsection is to obtain sharp bounds for the derivatives of $\Phi$, which are essential for sharp estimates for the amplitude function in Section~\ref{subsec:transport2} below. Our key idea is again to consider a renormalization of the form \eqref{eq:def-h}.

To this end, we consider the transport operator 
\begin{equation*}
	\nb_{t} := \rd_{t} + \rd_{\xi} p (t, x, \rd_{x} \Phi) \rd_{x}.
\end{equation*}
Observe that the characteristics for this operator are precisely $X(t)$ in Sections~\ref{subsec:HJE-s} and \ref{subsec:HJE-gen}. Moreover, the solutions to $\nb_{t} \phi = 0$ obey a-priori $L^{\infty}$-bounds. We introduce the commutator notation
\begin{equation}\label{eq:A-def}
	[\nb_{t}, \rd_{x}] = A \rd_{x}, \quad \hbox{ or equivalently, } A = - \rd_{x} \left( \rd_{\xi} p (t, x, \rd_{x} \Phi(t, x))\right).
\end{equation}
We generalize the characteristic-wise definition \eqref{eq:def-h} of $h$ by
\begin{equation*}
	h(t, x) = \frac{\rd_{\xi} p(t, x, \rd_{x} \Phi(t, x))}{\rd_{x} p(t, x, \rd_{x} \Phi(t,x))} \rd_{x}^{2} \Phi(t, x) + 1.
\end{equation*}
The transport equations for $h$ and its the derivatives then follow from Lemma~\ref{lem:HJE-h}: \begin{equation}\label{eq:HJE-h-der}
\begin{split}
\nb_{t} \rd_{x}^{k} h &= k A \rd_{x}^{k} h - (q + r + s) \rd_{x}^{k} h + 2 q h \rd_{x}^{k} h + \rd_{x}^{k}s \\
&\peq + \sum_{\ell=1}^{k-1} C_{k,\ell}^1 \rd_{x}^{\ell}A \rd_{x}^{k-\ell}h 
 + \sum_{\ell=1}^{k} C_{k,\ell}^{2} \rd_{x}^{\ell}\left( q + r + s \right) \rd_{x}^{k-\ell} h  \\
&\peq + \sum_{\ell=1}^{k} \sum_{m=0}^{k-\ell} C^{3}_{k,\ell,m} \rd_{x}^{\ell} q \rd_{x}^{m}h \rd_{x}^{k-\ell-m} h .
\end{split}
\end{equation} 
In the above, $q$, $r$ and $s$ are defined as in \eqref{eq:def-s-r-q} but evaluated at $(t, x, \xi) = (t, x, \rd_{x} \Phi(t, x))$, and $C_{k,\ell}^{1}$, $C_{k,\ell}^{2}$ and $C_{k,\ell,m}^{3}$ are some combinatorial coefficients.

Before we continue, we note the following key commutator computation:
\begin{equation} \label{eq:comm}
\begin{aligned}{}
	[\nb_{t}, \rd_{x}] =  A \rd_{x} 
	&= - \left( \rd_{x} \rd_{\xi} p (x, \rd_{x} \Phi)
	- \rd_{\xi}^{2} p(x, \rd_{x} \Phi) \frac{\rd_{x} p(x, \rd_{x} \Phi)}{\rd_{\xi} p (x, \rd_{x} \Phi)} (1-h) \right)  \rd_{x} \\
	&= - \left( \frac{\rd_{\xi} \rd_{x} p (x, \rd_{x} \Phi)}{\rd_{x} p(x, \rd_{x} \Phi)}
	- \frac{\rd_{\xi}^{2} p(x, \rd_{x} \Phi)}{\rd_{\xi} p(x, \rd_{x} \Phi)} (1-h) \right) \rd_{x} p(x, \rd_{x} \Phi)  \rd_{x} \\
	&=  \left( \frac{\gmm'_{\lmb_{0}}(\Xi)}{\gmm_{\lmb_{0}}(\Xi)}
	- \frac{\gmm''_{\lmb_{0}}(\Xi)}{\gmm'_{\lmb_{0}}(\Xi)} + \frac{O(\dlt_{5} + \gmm_{\lmb_{0}}(\lmb_{0})^{-1})}{\Xi} \right) \dot{\Xi}  \rd_{x},
\end{aligned}
\end{equation}
where $O(\cdot)$ refers to a term whose absolute value is bounded by $C (\cdot)$ with $C$ depending on $\gmm$ and $\abs{f''(0, 0)}^{-1} \Gmm f''$. In what follows, {\bf we shall take $\Lmb_0$ larger so that $\gmm_{\lmb_{0}}(\lmb_{0})^{-1} <\dlt_{5}$}. 

\begin{proposition}\label{prop:Phi-derivatives}
Let $N_{0}$ be a positive integer. There exist {$\dlt_{3} \ll \frac{\dlt_{2}}{N_{0}}$} and $\dlt_{5} \ll \dlt_{0} \dlt_{3}$ such that, the following holds. If $\abs{h(t, X(t))} \leq \dlt_{5}$ for $0 \leq t  \leq \frac{1}{1-\eps} t_{f}(\tau_{M})$, where $\tau_{M} \leq 1$, then for $k = 1, \ldots, N_{0}$,
\begin{align}
	\abs{\rd_{x}^{k} \rd_{x} \Phi(t,X(t))} &\leq \mu(t)^{k-1}(-\rd_{x}^{2}\Phi(t,X(t))), \label{eq:rd-Phi-k-rd-Phi-2} \\
	\abs{\rd_{x}^{k} \rd_{x} \Phi(t, X(t))} &\leq \mu(t)^{k} \lmb(t), \label{eq:rd-Phi-k-bound}
\end{align}
where 
\begin{equation} \label{eq:wp-scale}
	\mu(t) = \frac{\lmb_{0}^{2 \dlt_{3} N_{0}}}{x_{0} \eps(\lmb_{0})} \frac{\gmm_{\lmb_{0}}(\lmb(t))}{\gmm_{\lmb_{0}}(\lmb_{0})} \frac{\gmm_{\lmb_{0}}'(\lmb_{0})}{\gmm_{\lmb_{0}}'(\lmb(t))}.
\end{equation}
\end{proposition}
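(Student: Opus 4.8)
The plan is to prove \eqref{eq:rd-Phi-k-rd-Phi-2} and \eqref{eq:rd-Phi-k-bound} simultaneously by induction on $k$, where the main work is to propagate sharp pointwise bounds for $\rd_{x}^{k-1} h$ along the characteristic flow and then convert them, using the definition of $h$, into bounds for $\rd_{x}^{k+1} \Phi$. The first step is to record the base case: \eqref{eq:rd-Phi-k-rd-Phi-2} for $k = 1$ is trivial, while for $k = 2$ it amounts to the bound $\abs{\rd_{x}^{2} \rd_{x} \Phi} \leq \mu \cdot (-\rd_{x}^{2} \Phi)$, i.e.\ control of $\rd_{x} h$ (since differentiating the algebraic relation defining $h$ expresses $\rd_{x}^{3}\Phi$ in terms of $\rd_{x} h$, $h$, $\rd_{x}^{2}\Phi$, and the already-controlled symbol quantities). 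The bound \eqref{eq:rd-Phi-k-bound} for $k=1$ follows from \eqref{eq:rd-Phi-bounds-time-dep} (or \eqref{eq:rd-Phi-bounds}) together with the sharp bound for $-\rd_{x}^{2}\Phi(t,X(t))$ obtained at the end of Section~\ref{subsec:HJE-s}/\ref{subsec:HJE-gen}, which reads $-\rd_{x}^{2}\Phi \aleq x_{0}^{-1} \gmm_{\lmb_{0}}(\lmb_{0})^{-1}\gmm_{\lmb_{0}}(\lmb(t))^{2}\gmm_{\lmb_{0}}'(\lmb(t))^{-1}$; dividing by $\lmb(t)$ and using ellipticity of $\xi\rd_{\xi}\gmm_{\lmb_{0}}$ shows $-\rd_{x}^{2}\Phi \aleq \mu(t)\lmb(t)$ up to the $\lmb_{0}^{2\dlt_{3}N_{0}}$ factor, which is exactly the role of that factor.

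The core of the argument is the inductive step for $\rd_{x}^{k-1} h$. I would work along characteristic curves $X(t;\ubr x)$, apply $\rd_{x}^{k-1}$ to \eqref{eq:HJE-h-der}, and estimate via the integrating-factor method as in the proof of Proposition~\ref{prop:h}. The integrating factor is governed by the commutator term $(k{-}1)A\rd_{x}^{k-1}h$ plus the linear term $-(q+r+s)\rd_{x}^{k-1}h + 2qh\rd_{x}^{k-1}h$; the key computation \eqref{eq:comm} shows that $A = \big(\rd_{\xi}\log\gmm_{\lmb_{0}}'(\Xi) - \rd_{\xi}\log\gmm_{\lmb_{0}}(\Xi)\big)\dot\Xi + O(\dlt_{5}\Xi^{-1})\dot\Xi$, so that $\exp\!\big(\int_{t'}^{t}(k{-}1)A\big)$ is comparable to $\big(\tfrac{\gmm_{\lmb_{0}}'(\lmb(t))\gmm_{\lmb_{0}}(\lmb(t'))}{\gmm_{\lmb_{0}}'(\lmb(t'))\gmm_{\lmb_{0}}(\lmb(t))}\big)^{k-1}$ up to a small power $\lmb_{0}^{O(\dlt_{5}(k-1))}$ of $\lmb_{0}$; this is precisely the $k$-th power of the ``expansion factor'' appearing in $\mu(t)$. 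The forcing terms split into (i) the genuine source $\rd_{x}^{k-1}s$, which by the computation of $s$ in the proof of Lemma~\ref{lem:HJE-h} and Proposition~\ref{prop:h} is $O\big(c_{x_{0}}\eps(\lmb_{0}) + \gmm_{\lmb_{0}}(\lmb_{0})^{-1}\big)$ times $\gmm_{\lmb_{0}}(\lmb_{0})\gmm_{\lmb_{0}}(\lmb(t))^{-2}\dot\lmb/\lmb_{0}$ and integrates to something small (using $\tau_{M} \leq 1$, and in the time-dependent case \eqref{eq:gf-condition-diss-2}), and (ii) the ``lower-order'' convolution terms $\rd_{x}^{\ell}A\,\rd_{x}^{k-1-\ell}h$, $\rd_{x}^{\ell}(q{+}r{+}s)\,\rd_{x}^{k-1-\ell}h$, $\rd_{x}^{\ell}q\,\rd_{x}^{m}h\,\rd_{x}^{k-1-\ell-m}h$ with $1\le\ell$. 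For these I would use the inductive hypothesis on $\rd_{x}^{j}h$ for $j < k-1$, together with bounds on $\rd_{x}^{\ell}$ of the symbol quantities $q, r, s, A$: each extra $\rd_{x}$ falling on $\gmm_{\lmb_{0}}^{(j)}(\rd_{x}\Phi)$ produces, via Fa\`a di Bruno and the slow-variation of $\gmm$, a factor of roughly $\rd_{x}^{2}\Phi \cdot \lmb(t)^{-1} \aleq \mu(t)\lmb(t)^{-1}\cdot\lmb(t)=\mu(t)$ hitting $\xi\rd_{\xi}$ of the symbol, so $\rd_{x}^{\ell}$ of a symbol quantity is $\aleq \mu(t)^{\ell}$ times its size; combined with the inductive bounds this shows every convolution term is bounded by a small constant times $\mu(t)^{k-1}$ over an interval on which $\mu$ is essentially monotone. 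Choosing $\dlt_{3} \ll \dlt_{2}/N_{0}$ to absorb the $\lmb_{0}^{O(\dlt_{5})}$ factors into $\lmb_{0}^{2\dlt_{3}N_{0}}$, and $\dlt_{5} \ll \dlt_{0}\dlt_{3}$ so that the $2qh$ term and the $O(\dlt_{5}\Xi^{-1}\dot\Xi)$ piece of $A$ contribute only harmless powers, closes the bootstrap for $\rd_{x}^{k-1}h$.

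Finally, I would convert the $h$-bounds back to $\Phi$-bounds. Differentiating the algebraic identity $\rd_{\xi}p(t,x,\rd_{x}\Phi) \cdot \rd_{x}^{2}\Phi = (h-1)\rd_{x}p(t,x,\rd_{x}\Phi)$ a total of $k-1$ times and solving for $\rd_{x}^{k+1}\Phi$ expresses it as $\rd_{x}^{k-1}h$ times lower-order data plus products of $\rd_{x}^{j}h$ ($j<k-1$), $\rd_{x}^{j'}\Phi$ ($j'\le k$), and $\rd_{x}^{\ell}$ of symbol quantities; all of these are already controlled by the inductive hypothesis, and one checks the worst term is $\rd_{x}^{k-1}h \cdot \tfrac{\rd_{x}p}{\rd_{\xi}p} \aleq \mu^{k-1}\cdot\big(\tfrac{\gmm_{\lmb_{0}}(\Xi)}{\gmm_{\lmb_{0}}'(\Xi)}\cdot\tfrac{\abs{f''}}{\abs{f'}}\big)$, and using $\abs{f''}/\abs{f'}\aeq x^{-1}\aeq x_{0}^{-1}\gmm_{\lmb_{0}}(\lmb(t))/\gmm_{\lmb_{0}}(\lmb_{0})$ together with \eqref{eq:wp-scale} gives exactly $\mu^{k-1}(-\rd_{x}^{2}\Phi)$, i.e.\ \eqref{eq:rd-Phi-k-rd-Phi-2}; then \eqref{eq:rd-Phi-k-bound} follows since $-\rd_{x}^{2}\Phi\aleq\mu\lmb$. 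I expect the main obstacle to be the careful bookkeeping in the inductive step (ii): tracking how many factors of $\mu$ and how many small powers $\lmb_{0}^{\dlt_{5}}$ each convolution term carries, and verifying that the total is never worse than $\mu^{k-1}$ with a constant that the choice of $\dlt_{3},\dlt_{5}$ can beat — in particular, that derivatives landing on $A$ (which itself contains $h$) do not generate an uncontrolled chain, which is handled by the observation that $\rd_{x}A$ is again a symbol-type quantity of size $\aleq \mu\cdot A$-size plus $\mu^{2}\lmb^{-1}\cdot(\text{stuff})$, closing the loop.
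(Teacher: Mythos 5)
Your proposal follows essentially the same route as the paper's proof: an induction along characteristics on derivatives of $h$, with the integrating factor generated by the commutator coefficient $A$ (whose time integral is the paper's $I(t)$, comparable to the per-derivative expansion factor in $\mu$), symbol-derivative bounds via Fa\`a di Bruno and the bounds on $X(t)$, $\Xi(t)$, and conversion back to $\rd_{x}^{k+1}\Phi$ through the algebraic relation defining $h$; the quadratic $\lmb_{0}^{\dlt_{3}k^{2}}$ ledger in the paper is exactly the bookkeeping device you anticipate needing. One small slip: your displayed formula for $A$ (and hence the integrating-factor ratio) has the sign reversed — by the paper's key computation one has $A \approx \bigl(\rd_{\xi}\log\gmm_{\lmb_{0}} - \rd_{\xi}\log\gmm_{\lmb_{0}}'\bigr)(\Xi)\,\dot{\Xi}$, so that $\exp\bigl(\int_{t'}^{t}A\bigr)$ is the \emph{growing} factor $\tfrac{\gmm_{\lmb_{0}}(\lmb(t))}{\gmm_{\lmb_{0}}(\lmb(t'))}\tfrac{\gmm_{\lmb_{0}}'(\lmb(t'))}{\gmm_{\lmb_{0}}'(\lmb(t))}$, matching the factor in $\mu$ as you intend.
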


\begin{remark}
	We remark that the requirement $\abs{h(t, X(t))} \leq \dlt_{5}$ is vacuous in the steady case (Section~\ref{subsec:HJE-s}), whereas in the time-dependent case it may be ensured by taking $\Lmb$ larger and $c_{x_{0}}$ smaller depending on $\dlt_{5}$ (Proposition~\ref{prop:h} in Section~\ref{subsec:HJE-gen}).
\end{remark}

\begin{proof}
	In the proof, we fix a bicharacteristic curve $(X(t),\Xi(t))$ with $x_0< X(0)<x_{1}$ and $\Xi(0)= \rd_{x}\Phi(0, X(0))$. 
	
	\medskip
	\noindent \textbf{An integration factor.}
	For simplicity, we set \begin{equation}\label{eq:def-I}
		\begin{split}
			I(t) := \int_{0}^{t} \left( \frac{\rd_{\xi}\rd_{x}p}{\rd_{x}p}  - \frac{\rd_{\xi}^2p}{\rd_{\xi}p} (1 - h)\right)(\tau,X(\tau),\Xi(\tau)) (-\rd_{x}p)(\tau,X(\tau),\Xi(\tau)) \,\ud \tau.
		\end{split}
	\end{equation} Recalling the computations following \eqref{eq:comm}, we have the bounds \begin{equation}\label{eq:I-bounds}
	\begin{split}
	 \frac{\gmm_{\lmb_{0}}(\Xi)}{\gmm_{\lmb_{0}}(\lmb_{0})} \frac{\gmm_{\lmb_{0}}'(\lmb_{0})}{\gmm_{\lmb_{0}}'(\Xi)} \left(\frac{\Xi}{\lmb_{0}}\right)^{ - \dlt_{4}} \le \exp(I(t)) \le \frac{\gmm_{\lmb_{0}}(\Xi)}{\gmm_{\lmb_{0}}(\lmb_{0})} \frac{\gmm_{\lmb_{0}}'(\lmb_{0})}{\gmm_{\lmb_{0}}'(\Xi)} \left(\frac{\Xi}{\lmb_{0}}\right)^{ \dlt_{4}}
	\end{split}
	\end{equation} where {\bf $\dlt_{4} = C \dlt_{5}$ with some constant $C>0$ depending on $\gmm$ and $\abs{f''(0, 0)}^{-1} \Gmm f''$}. 
	We shall establish the claimed bounds by propagating a sharp estimate based on the quantity $I(t)$.

	\medskip 
	
	\noindent \textbf{Induction base case.} 
	The following bound for $h$, which could be weaker than \eqref{eq:h-small}, is more convenient as an induction hypothesis:
\begin{equation}\label{eq:HJE-h-induction-0}
\begin{split}
	\abs{h(t, X(t))} \leq \lmb_{0}^{\frac{\dlt_{3}}{4}} \frac{1}{\gmm_{\lmb_{0}}'(\Xi(t))} \frac{1}{\lmb_{0}} .
\end{split}
\end{equation} 
To prove this, we introduce $h^{(0)}(t) = \lmb_{0} \gmm_{\lmb_{0}}'(\Xi(t)) h(t, X(t))$, which incorporates an integrating factor for the term $q h$. Then
\begin{equation*}
	\frac{\ud}{\ud t} h^{(0)} + (r+s - 2q h) h^{(0)}
	= \lmb_{0} \gmm_{\lmb_{0}}'(\Xi(t)) s.
\end{equation*}
By the ellipticity assumption for $\xi \rd_{\xi} \gmm_{\lmb_{0}}$ and $\dot{\Xi} = - \rd_{x} p$, we have
\begin{equation} \label{eq:qh-dlt5}
	2 \abs{qh} \leq C \dlt_{5} \frac{\dot{\Xi}}{\Xi},
\end{equation}
where $C$ depends on $\gmm$. Using \eqref{eq:HJE-h-s}, \eqref{eq:HJE-h-int-s} and \eqref{eq:HJE-h-int-r}, we obtain
\begin{align*}
	\abs{h^{(0)}(t)}
	&\leq \int_{0}^{t} \exp \left(\int_{t'}^{t} \abs{r} + \abs{s} + 2 \abs{q h} \, \ud t'' \right) \lmb_{0} \gmm_{\lmb_{0}}'(\Xi) \abs{s}\, \ud t' \\
	&\aleq \left(\frac{\Xi(t)}{\lmb_{0}}\right)^{C \dlt_{5}} \int_{0}^{t} \frac{\gmm_{\lmb_{0}}'(\Xi) \gmm_{\lmb_{0}}(\lmb_{0})}{\gmm_{\lmb_{0}}(\Xi)^{2}} \dot{\Xi} \, \ud t'.
\end{align*}
The first factor is bounded by $\lmb_{0}^{\frac{C \dlt_{5}}{\dlt_{0}}}$, where as the $t'$-integral is evaluated and bounded as
\begin{equation} \label{eq:HJE-h-int-s-re}
\int_{0}^{t} \frac{\gmm_{\lmb_{0}}'(\Xi) \gmm_{\lmb_{0}}(\lmb_{0})}{\gmm_{\lmb_{0}}(\Xi)^{2}} \dot{\Xi} \, \ud t'
= \int_{\lmb_{0}}^{\Xi(t)} \frac{\gmm_{\lmb_{0}}'(\Xi) \gmm_{\lmb_{0}}(\lmb_{0})}{\gmm_{\lmb_{0}}(\Xi)^{2}} \, \ud \Xi 
= 1 - \frac{\gmm_{\lmb_{0}}(\lmb_{0})}{\gmm_{\lmb_{0}}(\Xi(t))} \leq 1.
\end{equation}
Hence, by taking $\dlt_{5} \ll \dlt_{0} \dlt_{3}$ and returning to $h$, \eqref{eq:HJE-h-induction-0} follows.

	\medskip 
	
	\noindent \textbf{Induction hypothesis.} 
	We now turn to the case $k_{0} \geq 1$. When $k_{0} > 1$, we assume the following in addition to $\abs{h} \leq \dlt_{5}$: For $k = 1, \ldots, k_{0}-1$,
	\begin{align}
	|\rd^{k}_{x}h(t, X(t))| &\leq \lmb_{0}^{\dlt_{3}k^2} x_{0}^{-k} {\eps(\lmb_{0})^{-k}}\exp(kI(t)) \frac{1}{\gmm_{\lmb_{0}}'(\Xi(t))} \frac{1}{\lmb_{0}},	\label{eq:HJE-h-induction}\\
	|\rd^{k}_{x} \rd_{x}^{2} \Phi (t, X(t))| &\leq \lmb_{0}^{\dlt_{3} ((k+1)^2 -1)} x_{0}^{-k} {\eps(\lmb_{0})^{-k}}\exp(kI(t)) (-\rd_{x}^{2} \Phi(t, X(t))). \label{eq:HJE-Phi-induction}
	\end{align}

	To handle the quadratic term in $h$, it is easier to work with the following simplification of \eqref{eq:HJE-h-induction}. Note that
	\begin{equation} \label{eq:HJE-h-simple-0}
\frac{1}{\lmb_{0} \gmm_{\lmb_{0}}'(\Xi)}
\aleq \frac{\Xi}{\lmb_{0} \gmm_{\lmb_{0}}(\Xi)} (\log \Xi)^{2} 
\aleq \max \set*{\frac{1}{\gmm_{\lmb_{0}}(\lmb_{0})}, \tau_{M}} \dlt_{0}^{-2} (\log \lmb_{0})^{2}
\aleq \dlt_{0}^{-2} (\log \lmb_{0})^{2}.
\end{equation}
In the second inequality, we used the fact that $\frac{\Xi}{\lmb_{0} \gmm_{\lmb_{0}}(\Xi)}$ is clearly bounded by $\frac{1}{\gmm_{\lmb_{0}}(\lmb_{0})}$ for $\Xi \leq 4$, and by $\tau_{M}$ for $\Xi \geq 4$ thanks to $\lmb \leq \Xi \leq 2\lmb$ and $\lmb \leq M \lmb_{0}$. As a consequence of $|h| \leq \dlt_{5}$ for $k = 0$ and \eqref{eq:HJE-h-induction} and \eqref{eq:HJE-h-simple-0} for $k \geq 1$, we obtain, for $k = 0, \ldots, k_{0} -1$, 
	\begin{equation}\label{eq:HJE-h-simple}
	\abs{x_{0}^{k} {\eps(\lmb_{0})^{k}} \exp(-k I(t)) \rd^{k}_{x}h(t, X(t))} \aleq_{\dlt_{0}} \lmb_{0}^{\dlt_{3}k^2} (\log \lmb_{0})^{2}.
	\end{equation} 

	\medskip 
	
	\noindent \textbf{Induction argument.} Our goal now is to prove \eqref{eq:HJE-h-induction} and \eqref{eq:HJE-Phi-induction} for $k = k_{0}$. 
	
	To use \eqref{eq:HJE-h-der} to establish \eqref{eq:HJE-h-induction} for $k = k_{0}$, we work with new variables that incorporate integrating factors for cancelling the large coefficient $k A - q$. For $k \geq 1$, define 
\begin{align*}
	h^{(k)}(t) &= x_{0}^{k} {\eps(\lmb_{0})^{k}} \exp(- k I(t)) \lmb_{0} \gmm_{\lmb_{0}}'(\Xi(t)) \rd_{x}^{k}h(t, X(t)),& 
	A^{(k)}(t) &= x_{0}^{k} {\eps(\lmb_{0})^{k}} \exp(- k I(t)) \rd_{x}^{k} A(t, X(t)), \\
	s^{(k)}(t) &= x_{0}^{k} {\eps(\lmb_{0})^{k}} \exp(- k I(t)) \rd_{x}^{k} s(t, X(t)), &
	r^{(k)}(t) &= x_{0}^{k} {\eps(\lmb_{0})^{k}} \exp(- k I(t)) \rd_{x}^{k} r(t, X(t)), \\
	q^{(k)}(t) &= x_{0}^{k} {\eps(\lmb_{0})^{k}} \exp(- k I(t)) \rd_{x}^{k} q(t, X(t)). & &
\end{align*}
Observe that \eqref{eq:HJE-h-induction-0} and \eqref{eq:HJE-h-induction} are equivalent to
\begin{equation} \label{eq:h-(k)-induction}
	\abs{h^{(0)}} \leq \lmb_{0}^{\frac{\dlt_{3}}{4}}, \quad
	\abs{h^{(k)}} \leq \lmb_{0}^{\dlt_{3} k^{2}} \quad \hbox{ for } 1\leq k < k_{0}.
\end{equation}
	Moreover, evaluating along a characteristic curve, we note that \eqref{eq:HJE-h-der} can be written as 
	\begin{equation} \label{eq:HJE-h-der2}
\begin{aligned}
	\frac{\ud}{\ud t} h^{(k)} + (r+s - 2q h) h^{(k)}
	&= \lmb_{0} \gmm_{\lmb_{0}}'(\Xi(t)) s^{(k)}
	+ \sum_{\ell=1}^{k-1} C^{1}_{k, \ell} A^{(\ell)} h^{(k-\ell)}  \\
	&\peq + \sum_{\ell=1}^{k} C^{2}_{k, \ell} \left( q^{(\ell)} + r^{(\ell)} + s^{(\ell)}\right) h^{(k-\ell)} \\
	&\peq + \sum_{\ell=1}^{k} \sum_{m=0}^{k-\ell} C^{3}_{k, \ell, m} \frac{q^{(\ell)}}{\lmb_{0} \gmm_{\lmb_{0}}'(\Xi) } h^{(m)} h^{(k-\ell-m)} \\
	&\peq + \sum_{m=1}^{k-1} C^{3}_{k, 0, m} \frac{q}{\lmb_{0} \gmm_{\lmb_{0}}'(\Xi) } h^{(m)} h^{(k-m)}.
\end{aligned}
\end{equation}

We shall prove the following estimates for the coefficients of \eqref{eq:HJE-h-der2}: for $1 \leq k \leq k_{0}$,  
\begin{align} 
	\abs{s^{(k)}(t)} &\leq \lmb_{0}^{\dlt_{3} (k^2-\frac{1}{2})} \frac{\gmm_{\lmb_{0}}(\lmb_{0})}{\gmm_{\lmb_{0}}(\Xi(t))^{2} \lmb_{0}} \dot{\Xi}(t) \left(C_{k, 1} x_{0} + \frac{C_{k, 0}}{\gmm_{\lmb_{0}}(\lmb_{0})}\right), \label{eq:h-coeff-1} \\
	\abs{r^{(k)}(t)} &\leq C_{k ,0} C_{k, 1} \lmb_{0}^{\dlt_{3} (k^2-\frac{1}{2})} \frac{\gmm_{\lmb_{0}}'(\Xi(t))\gmm_{\lmb_{0}}(\lmb_{0})}{\gmm_{\lmb_{0}}(\Xi(t))^{2}} \dot{\Xi}(t) x_{0}, \label{eq:h-coeff-2} \\
	\abs{q^{(k)}(t)} &\leq C_{k, 0} \lmb_{0}^{\dlt_{3} (k^2-\frac{1}{2})} \frac{\dot{\Xi}(t)}{\Xi(t)}, \label{eq:h-coeff-3}
\end{align}
and for $k > 1$,
\begin{align}
	\abs{A^{(k-1)}(t)} &\leq C_{k-1, 0} \lmb_{0}^{\dlt_{3} ((k-1)^2-\frac{1}{2})} \frac{\dot{\Xi}(t)}{\Xi(t)}. \label{eq:h-coeff-4}
\end{align}
Here, $C_{k, 1}$ depends on $\abs{f''(0,0)}^{-1} (x \rd_{x})^{k'} f'''$, $\abs{f''(0,0)}^{-1} (x \rd_{x})^{k'} \Gmm f'''$ and $\abs{f''(0,0)}^{-2} (x \rd_{x})^{k'} \rd_{t} f'''$ for $0 \leq k' \leq k$; and $C_{k, 0}$ depends on $\abs{f''(0, 0)}^{-1} (x \rd_{x})^{k'} f''$, $\abs{f''(0, 0)}^{-2} (x \rd_{x})^{k'} \rd_{t} f''$, $\abs{f''(0, 0)}^{-1} (x \rd_{x})^{k'} \Gmm f''$ and $\abs{f''(0, 0)}^{-2} (x \rd_{x})^{k'} \rd_{t} \Gmm f''$ for $0 \leq k' \leq k$. Without loss of generality, we may assume $C_{k', j} \leq C_{k, j}$ for $k' \leq k$ and $j=0, 1$.

Assuming \eqref{eq:h-coeff-1}--\eqref{eq:h-coeff-4}, we can improve the induction hypothesis \eqref{eq:HJE-h-induction} for $h$ as follows. Using \eqref{eq:HJE-h-int-s}, \eqref{eq:HJE-h-int-r}, \eqref{eq:qh-dlt5} and \eqref{eq:HJE-h-der2}, we have
\begin{align*}
\abs{h^{(k_{0})}} &\leq \int_{0}^{t} \exp\left(\int_{t'}^{t} (\abs{r} + \abs{s} + \abs{q} \dlt_{5} \, \ud t'' \right) \abs{\hbox{(RHS of \eqref{eq:HJE-h-der2})}} \, \ud t' 
\aleq \left(\frac{\Xi(t)}{\lmb_{0}}\right)^{C \dlt_{5}}  \int_{0}^{t} \abs{\hbox{(RHS of \eqref{eq:HJE-h-der2})}} \, \ud t'.
\end{align*}
By \eqref{eq:h-(k)-induction} for $k < k_{0}$, \eqref{eq:h-coeff-1}--\eqref{eq:h-coeff-4}, as well as \eqref{eq:HJE-h-simple-0}, we obatin
\begin{align*}
\abs{\hbox{(RHS of \eqref{eq:HJE-h-der2})}}
&\aleq_{C_{k_{0}, 0}, C_{k_{0}, 1}, \dlt_{0}^{-1}} \lmb_{0}^{\dlt_{3} (k_{0}^{2}-\frac{1}{2})}  \left(\frac{\gmm_{\lmb_{0}}(\lmb_{0}) \gmm_{\lmb_{0}}'(\Xi)}{\gmm_{\lmb_{0}}(\Xi)^{2}}
+ \lmb_{0}^{\frac{1}{4}} (\log \lmb_{0})^{2} \frac{1}{\Xi}\right) \dot{\Xi} \\
&\phantom{\aleq_{C_{k_{0}, 0}, C_{k_{0}, 1}, \dlt_{0}^{-1}}}
+ \sum_{m=1}^{k_{0}-1}\lmb_{0}^{\dlt_{3} (m^{2}+(k_{0}-m)^{2})} (\log \lmb_{0})^{2} \frac{\dot{\Xi}}{\Xi},
\end{align*}
where the last term arises from the contribution of the last sum in \eqref{eq:HJE-h-der2} (this sum is vacuous when $k_{0} = 1$). Using $m^{2} + (k_{0} - m)^{2} \leq k_{0}^{2} - 2$ for $1 \leq m \leq k_{0}-1$, we obtain
\begin{align*}
\abs{h^{(k_{0})}} 
&\aleq_{C_{k_{0}, 0}, C_{k_{0}, 1}, \dlt_{0}^{-1}}
\lmb_{0}^{\dlt_{3}(k_{0}^{2}-\frac{1}{4})} (\log \lmb_{0})^{2} \left(\frac{\Xi(t)}{\lmb_{0}}\right)^{C \dlt_{5}}\left( 1 - \frac{\gmm_{\lmb_{0}}(\lmb_{0})}{\gmm_{\lmb_{0}}(\Xi(t))}+ \log \frac{\Xi(t)}{\lmb_{0}} \right).
\end{align*}
Taking $\dlt_{5} \ll \dlt_{0} \dlt_{3}$ and requiring $\lmb_{0} \geq \Lmb$, where $\Lmb$ is sufficiently large, we obtain $\abs{h^{(k_{0})}} \leq \lmb_{0}^{\dlt_{3} k_{0}^{2}}$, which is equivalent to \eqref{eq:HJE-h-induction} for $k = k_{0}$, as desired.

Next, we improve \eqref{eq:HJE-Phi-induction}. We begin with the formula
\begin{equation} \label{eq:rd-k0+2-phi}
	\rd_{x}^{k_{0}} \rd_{x}^{2} \Phi(t, x)
	= \left(\frac{\rd_{x} p}{\rd_{\xi} p} \right) \rd_{x}^{k_{0}} h
	+ \rd_{x}^{k_{0}} \left(\frac{\rd_{x} p}{\rd_{\xi} p} \right) (h-1)
	+ \sum_{\ell=1}^{k_{0}-1} \rd_{x}^{\ell} \left(\frac{\rd_{x} p}{\rd_{\xi} p} \right) \rd_{x}^{k-\ell} h,
\end{equation}
where $\rd_{x} p$ and $\rd_{\xi} p$ are evaluated at $(t, x, \xi) = (t, x, \rd_{x} \Phi(t, x))$. We will prove that, for $0 \leq k \leq k_{0}$,
\begin{equation} \label{eq:rd-k-Phi-coeff}
	\abs*{x_{0}^{k} {\eps(\lmb_{0})^{k}} \exp(-kI(t)) \rd_{x}^{k} \left(\frac{\rd_{x} p}{\rd_{\xi} p}\right)(t, X(t), \Xi(t))} 
	\leq C_{k, 0} \lmb_{0}^{\dlt_{3} k^2} (-\rd_{x}^{2} \Phi),
\end{equation}
where $C_{k, 0}$ depends on $\abs{f''(0, 0)}^{-1} (x \rd_{x})^{k'} \Gmm f''$ and $\abs{f''(0, 0)}^{-2} (x \rd_{x})^{k'} \rd_{t} \Gmm f''$ for $0 \leq k' \leq k$. Without loss of generality, we may assume that these constants are the same as those in \eqref{eq:h-coeff-1}--\eqref{eq:h-coeff-4}. In the proof of \eqref{eq:rd-k-Phi-coeff}, it is important that $\rd_{x}^{k} (\frac{\rd_{x}p}{\rd_{\xi}p} )$ involves only $\rd_{x}^{k'}\Phi$ with $k' \leq k + 1 < k_{0}+2$, so that we may apply the induction hypothesis \eqref{eq:HJE-Phi-induction}. We postpone the details until later.

Assuming \eqref{eq:rd-k-Phi-coeff}, and also using $\abs{h} \leq \dlt_{5}$, \eqref{eq:HJE-h-simple} for $k \leq k_{0}$ (the case $k = k_{0}$ has just been established) and \eqref{eq:rd-k+2-Phi-Xi} for $k \leq k_{0}-1$, we may estimate the RHS of \eqref{eq:rd-k0+2-phi} by
\begin{equation*}
	\abs{\rd_{x}^{k_{0}} \rd_{x}^{2} \Phi(t, X(t)} \aleq_{\dlt_{0}, C_{k_{0}, 0}} \lmb_{0}^{\dlt_{3} k_{0}^{2}} (\log \lmb_{0})^{2} x_{0}^{-k_{0}} {\eps(\lmb_{0})^{-k_{0}}} \exp(k_{0} I) (-\rd_{x}^{2} \Phi),
\end{equation*}
where used the simple inequality $(k_{0}-\ell)^{2} + \ell^{2} \leq k_{0}^{2}$. Since the exponent $\dlt_{3}((k_{0}+1)^{2} - 1)$ on $\lmb_{0}$ in \eqref{eq:HJE-Phi-induction} is strictly greater than $\dlt_{3} k_{0}^{2}$, taking $\lmb_{0}$ large enough depending on $\dlt_{0}$, $\dlt_{3}$ and $C_{k_{0}, 0}$, we obtain \eqref{eq:HJE-Phi-induction} for $k = k_{0}$. 

\medskip 
	
	\noindent \textbf{Proof of \eqref{eq:h-coeff-1}--\eqref{eq:h-coeff-4}, \eqref{eq:rd-k-Phi-coeff}.} 
	In the proof, we fix some $k\ge 1$. We may assume that the bounds \eqref{eq:HJE-h-induction} and \eqref{eq:HJE-Phi-induction} are available for any $\ell$ satisfying $\ell < k$. 

\medskip
\noindent \textit{Some preliminary computations.}
We claim that
\begin{align} 
	\abs{\rd_{x}^{2} \Phi(t, X(t))} &\leq \frac{C \lmb_{0}^{\frac{\dlt_{3}}{2}}}{x_{0} {\eps_{0}(\lmb_{0})}} \exp(I(t)) \Xi(t), \label{eq:rd-2-Phi-Xi} \\
	\frac{1}{X(t)} &\leq \frac{C \lmb_{0}^{\frac{\dlt_{3}}{2}}}{x_{0} {\eps(\lmb_{0})}} \exp(I(t)), \label{eq:X-1-mu}
\end{align}
where $C$ depends on $\gmm$. Note that \eqref{eq:rd-2-Phi-Xi} combined with \eqref{eq:HJE-Phi-induction} would lead to, for any $\ell \leq k-1$,
\begin{equation} \label{eq:rd-k+2-Phi-Xi}
	\abs{\rd^{\ell+1}_{x} \rd_{x} \Phi (t, X(t))} \leq \lmb_{0}^{\dlt_{3} ((\ell+1)^2-\frac{1}{2})} x_{0}^{-(\ell+1)} {\eps(\lmb_{0})^{-(\ell+1)}}\exp((\ell+1)I(t)) \Xi(t). 
\end{equation}
Ignoring the powers of $\lmb_{0}^{\dlt_{3}}$, \eqref{eq:rd-k+2-Phi-Xi} tells us that every derivative of $\rd_{x} \Phi$ loses $x_{0}^{-1} \eps(\lmb_{0})^{-1} \exp(I)$, which is essentially $\mu$ in \eqref{eq:wp-scale}; \eqref{eq:X-1-mu} says that the loss $x^{-1}$ is more favorable than $x_{0}^{-1} \eps(\lmb_{0})^{-1} \exp(I)$ along characteristics.

Essential to the proofs of both \eqref{eq:rd-2-Phi-Xi} and \eqref{eq:X-1-mu} is the following consequence of \eqref{eq:eps-choice}, which is connected to the assumption \eqref{eq:gf-condition-1}: If $2 \lmb_{0} \leq \lmb(t) \leq M \lmb_{0}$, we have
\begin{equation} \label{eq:scale-comp-key}
	\frac{\eps_{0}(\lmb_{0})}{\lmb_{0} \gmm_{\lmb_{0}}'(\lmb_{0})} 
	\leq C \frac{(\log \lmb_{0})^{2} \eps_{0}(\lmb_{0})}{\gmm(\lmb_{0})} 
	\leq C \eps_{0}(\lmb_{0}) (\log \lmb_{0})^{2} \tau_{M'} 
	\leq C(\log \lmb_{0})^{2} \frac{M'}{\gmm_{\lmb_{0}}(M' \lmb_{0})},
\end{equation}
where $M' = \lmb_{0}^{-1} \lmb(t)$. 

To prove \eqref{eq:rd-2-Phi-Xi}, we first use \eqref{eq:def-h}, $\abs{h} \leq \dlt_{5}$, \eqref{eq:X-bounds} or \eqref{eq:X-bounds-time-dep}, and \eqref{eq:I-bounds} to estimate, for $\lmb_{0} \geq \Lmb$ sufficiently large depending on $\abs{f''(0, 0)}^{-1} \Gmm f''$,
\begin{align*}
	\frac{x_{0} {\eps_{0}(\lmb_{0})} \abs{\rd_{x}^{2} \Phi(t, X(t))}}{\Xi(t) \exp(I(t))} 
	&\leq \frac{C {\eps_{0}(\lmb_{0})}}{\Xi(t) \exp(I(t))} \frac{\rd_{x} p(t, X(t), \Xi(t))}{\rd_{\xi} p(t, X(t), \Xi(t))} \\
	&\leq C {\eps_{0} (\lmb_{0})}\left(\frac{\lmb_{0}}{\Xi(t)}\right)^{1-\dlt_{4}}\frac{\gmm_{\lmb_{0}}(\Xi(t))}{\lmb_{0} \gmm_{\lmb_{0}}'(\lmb_{0})}.
\end{align*}
For $\Xi(t) \leq 4 \lmb_{0}$, \eqref{eq:rd-2-Phi-Xi} is obvious, and for $\Xi(t) \geq 4 \lmb_{0}$, we apply \eqref{eq:scale-comp-key} and $\lmb(t) \leq \Xi(t) \leq 2 \lmb(t)$ (from Lemma~\ref{lem:Xi-control} or \ref{lem:Xi-control-time-dep}). 

Next, to prove \eqref{eq:X-1-mu}, we use \eqref{eq:X-bounds} or \eqref{eq:X-bounds-time-dep} and \eqref{eq:I-bounds} to estimate
\begin{align*}
	\frac{x_{0} {\eps(\lmb_{0})}}{X(t) \exp(I(t))}
	&\aleq {\eps_{0}(\lmb_{0})} \left(\frac{\Xi(t)}{\lmb_{0}}\right)^{\dlt_{4}} \frac{\gmm_{\lmb_{0}}'(\Xi(t))}{\gmm_{\lmb_{0}}'(\lmb_{0})}.
\end{align*}
Again, for $\Xi(t) \leq 4 \lmb_{0}$, \eqref{eq:X-1-mu} is obvious. For $\Xi(t) \geq 4 \lmb_{0}$, we apply \eqref{eq:scale-comp-key} and $\lmb(t) \leq \Xi(t) \leq 2 \lmb(t)$ (from Lemma~\ref{lem:Xi-control} or \ref{lem:Xi-control-time-dep}) and estimate
\begin{align*}
{\eps_{0}(\lmb_{0})}\left(\frac{\Xi(t)}{\lmb_{0}}\right)^{\dlt_{4}} \frac{\gmm_{\lmb_{0}}'(\Xi(t))}{\gmm_{\lmb_{0}}'(\lmb_{0})}
\leq C \left(\frac{\Xi(t)}{\lmb_{0}}\right)^{\dlt_{4}} (\log \lmb_{0})^{2} \frac{\Xi(t) \gmm_{\lmb_{0}}'(\Xi(t))}{\gmm_{\lmb_{0}}(\Xi(t))} 
\leq C \lmb_{0}^{\frac{\dlt_{4}}{\dlt_{0}}} (\log \lmb_{0})^{2},
\end{align*}
which is acceptable.

\medskip

\noindent \textit{Derivatives of $q$, $r$, $s$ and $A$.}
We start with some preparations. By Fa\`a di Bruno's formula,
\begin{equation*}
	\rd_{x}^{\ell} \gmm_{\lmb_{0}} (\rd_{x} \Phi) = \sum_{\mathfrak{a} : a_{1} + 2 a_{2} + \cdots + \ell a_{\ell} = \ell} \frac{\ell!}{a_{1}! 1!^{a_{1}} \cdots a_{\ell}! \ell!^{a_{\ell}}} (\rd_{\xi}^{a_{1} + \cdots + a_{\ell}} \gmm_{\lmb_{0}})(\rd_{x} \Phi) \prod_{b=1}^{\ell} (\rd_{x}^{1+b} \Phi)^{a_{b}}.
\end{equation*}
Observe that we see at most $\ell$ derivatives falling on $\rd_{x} \Phi$, to which \eqref{eq:rd-k+2-Phi-Xi} applies as long as $\ell \leq k$. Using also the ellipticity assumption for $\gmm_{\lmb_{0}}$ and $\sum_{b=1}^{\ell} (b a_{b})^{2} \leq \ell^{2}$, we obtain, for any $0 \leq \ell \leq k$,
\begin{equation} \label{eq:gmm-Xi-der}
	\abs*{\left.\rd_{x}^{\ell} (\gmm_{\lmb_{0}}(\rd_{x} \Phi))\right|_{(t, x) = (t, X(t))}}
	\aleq_{\gmm, \ell} \lmb_{0}^{\dlt_{3}(\ell^{2}-\frac{1}{2})} x_{0}^{-\ell} {\eps(\lmb_{0})^{-\ell}} \exp(\ell I)  \gmm_{\lmb_{0}}(\Xi) . 
\end{equation}
Similarly, but using instead the ellipticity assumption for $\xi \rd_{\xi} \gmm_{\lmb_{0}}$, we also obtain, for any $0 \leq \ell \leq k$,
\begin{align}
	\abs*{\left.\rd_{x}^{\ell} (\gmm_{\lmb_{0}}'(\rd_{x} \Phi))\right|_{(t, x) = (t, X(t))}}
	&\aleq_{\gmm, \ell} \lmb_{0}^{\dlt_{3}(\ell^{2} - \frac{1}{2})} x_{0}^{-\ell} {\eps(\lmb_{0})^{-\ell}} \exp(\ell I) \gmm_{\lmb_{0}}'(\Xi). \label{eq:gmm'-Xi-der}
\end{align}
Next, we introduce the notation $\tilde{f} = (-f''(0, 0))^{-1} f$. In what follows, we write $C_{\ell, 1}$ (resp.~$C_{\ell, 0}$) for a constant, that may vary from line to line, that depends on $(x \rd_{x})^{k'} \tilde{f}'''$, $(x \rd_{x})^{k'} \Gmm \tilde{f}'''$ and $(-f''(0,0))^{-1} (x \rd_{x})^{k'} \rd_{t} \tilde{f}'''$ (resp.~$(x \rd_{x})^{k'} \tilde{f}''$, $(-f''(0, 0))^{-1} (x \rd_{x})^{k'} \rd_{t} \tilde{f}''$, $(x \rd_{x})^{k'} \Gmm \tilde{f}''$ and \\ $(-f''(0, 0))^{-1} (x \rd_{x})^{k'} \rd_{t} \Gmm \tilde{f}''$) for $0 \leq k' \leq \ell$. Then thanks to \eqref{eq:X-1-mu}, we easily have, for any $\ell \geq 0$,
\begin{equation} \label{eq:tilde-f''} 
\begin{aligned}
	& \abs*{\tilde{f}^{(\ell+2)}(t, X(t)) }
	+ \abs*{\frac{\rd_{t} \tilde{f}^{(\ell+2)}(t, X(t))}{-f''(0, 0)}} 
	+ \abs*{\Gmm \tilde{f}^{(\ell+2)}(t, X(t))}
	+ \abs*{\frac{\rd_{t} \Gmm \tilde{f}^{(\ell+2)}(t, X(t))}{-f''(0, 0)}} \\
	&\leq  C_{\ell, 0}\lmb_{0}^{\dlt_{3} \frac{\ell}{2}} x_{0}^{-\ell} {\eps(\lmb_{0})^{-\ell}} \exp(\ell I), 
\end{aligned}
\end{equation}
\begin{equation} \label{eq:tilde-f'''}
	\abs*{\tilde{f}^{(\ell+3)}(t, X(t)) }
	+ \abs*{\Gmm \tilde{f}^{(\ell+3)}(t, X(t))}
	+ \abs*{\frac{\rd_{t} \tilde{f}^{(\ell+3)}(t, X(t))}{-f''(0, 0)}}
	\leq  C_{\ell, 1} \lmb_{0}^{\dlt_{3} \frac{\ell}{2}} x_{0}^{-\ell} {\eps(\lmb_{0})^{-\ell}} \exp(\ell I). 
\end{equation}

We are now ready to prove \eqref{eq:h-coeff-1}--\eqref{eq:h-coeff-4}. We start with \eqref{eq:h-coeff-3} for $q$, which may be written as
\begin{equation*}
	q(t, x) = \frac{\gmm_{\lmb_{0}}''(\rd_{x} \Phi)}{\gmm_{\lmb_{0}}'(\rd_{x} \Phi)} (-f'')(0, 0) \lmb_{0} \gmm_{\lmb_{0}}(\rd_{x} \Phi) \left(\tilde{f}''(t, x) + \gmm_{\lmb_{0}}(\rd_{x} \Phi)^{-1} \Gmm \tilde{f}''(t, x) \right).
\end{equation*}
By \eqref{eq:gmm'-Xi-der} and the ellipticity property of $\xi \rd_{\xi} \gmm_{\lmb_{0}}$, we have, for any $0 \leq \ell \leq k$,
\begin{equation*}
\abs*{x_{0}^{\ell} {\eps(\lmb_{0})^{\ell}} \exp(-\ell I) \left.\rd_{x}^{\ell} \frac{\gmm_{\lmb_{0}}''(\rd_{x} \Phi)}{\gmm_{\lmb_{0}}'(\rd_{x} \Phi)}\right|_{(t, x)=(t, X(t))}} \aleq_{\gmm, \ell} \lmb_{0}^{\dlt_{3}(\ell^{2} - \frac{1}{2})}  \frac{1}{\Xi}.
\end{equation*}
By \eqref{eq:gmm-Xi-der} and $\dot{\Xi} = - \rd_{x} p$, we also see that
\begin{equation*}
\abs*{x_{0}^{\ell} {\eps(\lmb_{0})^{\ell}} \exp(-\ell I) \left.\rd_{x}^{\ell} (-f''(0, 0)) \lmb_{0} \gmm_{\lmb_{0}}(\rd_{x} \Phi) \right|_{(t, x)=(t, X(t))}} \aleq_{\gmm, \ell} \lmb_{0}^{\dlt_{3}(\ell^{2} - \frac{1}{2})}  \dot{\Xi}.
\end{equation*}
Derivatives of the last factor are easily bounded using \eqref{eq:gmm-Xi-der} and \eqref{eq:tilde-f''}. Putting together these bounds, \eqref{eq:h-coeff-3} follows.

Next, we prove \eqref{eq:h-coeff-4} for $A$, which we write as
\begin{align*}
A(t, x) &= - \rd_{\xi} \rd_{x} p (t, x, \rd_{x} \Phi) - q(t, x) (1-h(t,x)) \\
&= - f''(0, 0) \lmb_{0} \gmm_{\lmb_{0}}'(\rd_{x} \Phi) \tilde{f}''(t, x) - q(t, x) (1-h(t,x)).
\end{align*}
We now apply $\rd_{x}^{k-1}$ to the above expression. The contribution of $q(1-h)$ is already acceptable thanks to the previous bound for derivatives of $q$, $\abs{h} \leq \dlt_{5}$ and \eqref{eq:HJE-h-simple} for $k-1$ (indeed, note that we need to control only up to $k-1$ derivatives of $h$). By \eqref{eq:gmm'-Xi-der}, ellipticity of $\gmm_{\lmb_{0}}$ and $\dot{\Xi} = - \rd_{x} p$, we may also bound the contribution of the first term by $C_{k-1, 0} \lmb_{0}^{\dlt_{3}((k-1)^{2} -\frac{1}{2})} \Xi^{-1} \dot{\Xi}$, which is acceptable.

To prove \eqref{eq:h-coeff-2} for $r$, we write
\begin{align*}
r(t, x) &= \frac{\rd_{x}^{2} p}{\rd_{x} p} \rd_{\xi} p(t, x, \rd_{x} \Phi) 
	= (-f''(0, 0)) x \gmm_{\lmb_{0}}'(\rd_{x} \Phi) \frac{\tilde{f}'''(t, x) + \gmm_{\lmb_{0}}(\rd_{x} \Phi)^{-1}\Gmm \tilde{f}'''(t, x)}{\tilde{f}''(t, x) + \gmm_{\lmb_{0}}(\rd_{x} \Phi)^{-1}\Gmm \tilde{f}''(t, x)} \frac{\tilde{f}'(t, x)}{x},
\end{align*}
and apply $\rd_{x}^{k}$ to the above expression. By Taylor expansion and \eqref{eq:tilde-f''}, we have, for any $\ell \geq 0$,
\begin{equation*}
\abs*{x_{0}^{\ell} {\eps(\lmb_{0})^{\ell}} \exp(- \ell I) \left. \rd_{x}^{\ell} \left(x^{-1} \tilde{f}'(t, x)\right) \right|_{(t, x)=(t, X(t)}} \aleq_{\ell} C_{\ell, 0} \lmb_{0}^{\dlt_{3} \frac{\ell}{2}}.
\end{equation*}
On the other hand, using \eqref{eq:X-bounds} or \eqref{eq:X-bounds-time-dep}, \eqref{eq:X-1-mu}, \eqref{eq:gmm-Xi-der}, \eqref{eq:gmm'-Xi-der}, \eqref{eq:tilde-f''} and \eqref{eq:tilde-f'''}, it is straightforward to bound each factor appropriately and establish \eqref{eq:h-coeff-2}.

Finally, \eqref{eq:h-coeff-1} for $s$ is proved by starting from the explicit form of $s$ given in the proof of Proposition~\ref{prop:h}, then applying Taylor expansion for $(f')^{-1}\rd_{t} f' - (f'')^{-1} \rd_{t} f''$, \eqref{eq:gmm-Xi-der}, \eqref{eq:tilde-f''} and \eqref{eq:tilde-f'''}, as well as \eqref{eq:X-bounds} or \eqref{eq:X-bounds-time-dep} and $\dot{\Xi} = - \rd_{x} p$.

\medskip
\noindent \textit{Proof of \eqref{eq:rd-k-Phi-coeff}.}
We begin by writing
\begin{align*}
	\frac{\rd_{x} p}{\rd_{\xi} p} = \frac{\tilde{f}''(t, x) + \gmm_{\lmb_{0}}(\rd_{x} \Phi)^{-1} \Gmm \tilde{f}''(t, x)}{x^{-1} \tilde{f}'(t, x)} \frac{\gmm_{\lmb_{0}}(\rd_{x} \Phi)}{x \gmm_{\lmb_{0}}'(\rd_{x} \Phi)},
\end{align*}
and applying $\rd_{x}^{k}$ to the above expression. By \eqref{eq:X-1-mu}, \eqref{eq:gmm-Xi-der}, \eqref{eq:gmm'-Xi-der} and \eqref{eq:tilde-f''} (as well as a Taylor-expansion argument for $x^{-1} f'$ as before), we have
\begin{align*}
	\abs*{x_{0}^{k} {\eps(\lmb_{0})^{k}} \exp(-k I) \rd_{x}^{k} \left(\frac{\rd_{x} p}{\rd_{\xi} p}\right)(t, X(t), \Xi(t))} 
	\leq C_{k, 0} \lmb_{0}^{\dlt_{3} (k^{2} - \frac{1}{2})} \frac{\gmm_{\lmb_{0}}(\rd_{x} \Phi)}{x \gmm_{\lmb_{0}}'(\rd_{x} \Phi)}.
\end{align*}
Then by the relation $\rd_{x}^{2} \Phi = - \frac{\rd_{x} p}{\rd_{\xi} p} (1-h)$ and $\abs{h} \leq \dlt_{5}$ and the above expression for $\frac{\rd_{x} p}{\rd_{\xi} p}$, \eqref{eq:rd-k-Phi-coeff} follows. \qedhere
\end{proof} 

\subsection{Estimates for the amplitude} \label{subsec:transport2} 
 
In this section, we obtain $L^2$-estimates for the amplitude function $a$, {\bf working under the same hypotheses as in Section~\ref{subsec:transport}.} First, we rewrite the amplitude equation \eqref{eq:transport-a}, incorporating the time dependence of $f$ and using the notations and conventions in this section: \begin{equation}\label{eq:transport-a-simplified}
\begin{split}
\rd_t a + \rd_\xi p(t,x,\rd_{x}\Phi) \rd_{x} a + \left( \frac{1}{2} \rd_{\xi}^2 p(t,x,\rd_{x}\Phi) \rd^2_{x}\Phi + s(t,x,\rd_{x}\Phi) \right) a = 0. 
\end{split}
\end{equation} Moreover, we recall that \begin{equation}\label{eq:s-simplified}
\begin{split}
s(t,x,\xi) & =  \frac{1}{2} f''(t,x) \rd_{\xi}\gmm_{\lmb_{0}}(\xi) \lmb_{0} - \frac{1}{2} \Gmm f'' (t,x) \frac{\rd_{\xi}\gmm_{\lmb_{0}}(\xi)}{\gmm_{\lmb_{0}}(\xi)} \lmb_0 \\
& = \frac{1}{2}\rd_{x}\rd_{\xi} p (t,x,\xi)  - \frac{1}{2} \Gmm f''(t,x) \frac{\rd_{\xi}\gmm_{\lmb_{0}}(\xi)}{\gmm_{\lmb_{0}}(\xi)} \lmb_0. 
\end{split}
\end{equation} It will be convenient to define \begin{equation*}
\tld{\nb}_{t} = \rd_{t} + \rd_{\xi} p (x, \rd_{x} \Phi) \rd_{x} + \frac{1}{2} \left(\rd_{x} \rd_{\xi} p(t, x, \rd_{x} \Phi) + \rd_{\xi}^{2} p(t, x, \rd_{x} \Phi) \rd_{x}^{2} \Phi \right)
\end{equation*} so that solutions to $\tld{\nb}_{t} a = 0$ obey a-priori $L^{2}$-bounds. Then, \eqref{eq:transport-a-simplified} is simply given by \begin{equation*}
\begin{split}
\tld{\nb}_{t} a = \frac{1}{2} \Gmm f'' \frac{ \lmb_0 \rd_{\xi}\gmm_{\lmb_{0}}(\rd_{x}\Phi)}{\gmm_{\lmb_{0}}(\rd_{x}\Phi)} a.
\end{split}
\end{equation*} Commutation with $\rd_{x}$ again gives rise to a similar factor as before: recalling \eqref{eq:A-def}, we compute that \begin{equation*}
\begin{split}
[\rd_{x}, \tld{\nb}_{t} ] &= [\rd_{x}, \nb_{t}] + \frac12 [\rd_{x}, \rd_{x}\rd_{\xi} p(x,\rd_{x}\Phi) + \rd_{\xi}^2p(x,\rd_{x}\Phi)\rd_{x}^2\Phi ] {= - A \rd_{x} - \frac{1}{2} \rd_{x} A}.
\end{split}
\end{equation*}
For simplicity, we also set \begin{equation*}
\begin{split}
H(t,x):=   - \frac{1}{2} \Gmm f''(t,x) \frac{\gmm_{\lmb_{0}}'(\rd_{x}\Phi(t,x))}{\gmm_{\lmb_{0}}(\rd_{x}\Phi(t,x))} \lmb_0
\end{split}
\end{equation*} so that the equations for $a$, $\rd_{x}a$, and $\rd_{x}^2 a$ read 
\begin{equation*}
\begin{split}
\tld{\nb}_{t} a = Ha ,
\end{split}
\end{equation*}\begin{equation*}
\begin{split}
\tld{\nb}_{t} \rd_{x} a = [\nb_{t},\rd_{x}] a + {\frac{1}{2} \rd_{x} A} a + \rd_{x}(Ha),
\end{split}
\end{equation*} and \begin{equation*}
\begin{split}
\tld{\nb}_{t} \rd_{x}^{2} a =  2[\nb_{t},\rd_{x}] \rd_{x}a + {\frac{3}{2} \rd_{x} A}\rd_{x} a + {\frac{1}{2}   \rd_{x}((\rd_{x} A)a)} + \rd_{x}^2(Ha).
\end{split}
\end{equation*}
\begin{proposition} \label{prop:trans-a} Assume that $a_{0}(x)$ is sufficiently smooth and supported in $(x_{0},x_{1})$. For the solution of \eqref{eq:transport-a-simplified} with initial data $a(t=0)=a_{0}$, we have, for $0 \leq k \leq N_{0}$,
	\begin{equation*}
	\nrm{\mu^{-k} \rd_{x}^{k} a}_{L^{2}} \aleq \sum_{\ell=0}^{k} \nrm{({\mu_{0}^{-1}} \rd_{x})^{\ell} a_{0}}_{L^{2}} \qquad \hbox{ for } t \leq \tau 
	\end{equation*}
	{where
	\begin{equation} \label{eq:wp-scale-0}
	\mu_{0} = \frac{(\log \lmb_{0})^{2}}{x_{0} {\eps(\lmb_{0})}},
\end{equation}
	and the implicit constant depends on $k$ and $\gmm$, as well as $\abs{f''(0, 0)}^{-1} (x \rd_{x})^{k'} f''$,  $\abs{f''(0, 0)}^{-1} (x \rd_{x})^{k'} \Gmm f''$ for $0 \leq k' \leq k$.
}
 
\end{proposition}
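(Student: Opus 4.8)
The plan is to run an energy estimate for $\rd_x^k a$ directly from the transport equations $\tld{\nb}_t \rd_x^k a = (\cdots)$ displayed just before the statement, using the fact that $\tld{\nb}_t$ was \emph{designed} so that $\nrm{a(t)}_{L^2}$ is exactly conserved (the zeroth-order term of $\tld{\nb}_t$ is $\frac12(\rd_x \rd_\xi p + \rd_\xi^2 p \, \rd_x^2\Phi)$, whose real part in the $L^2$ pairing against $a$ cancels the divergence of the transport vector field). Concretely, pairing $\tld{\nb}_t \rd_x^k a$ with $\rd_x^k a$ in $L^2$, the principal transport part contributes nothing (after integration by parts, using that $\rd_x^k a$ is compactly supported inside the image of $(x_0,x_1)$ under the flow), and one is left with $\frac{\ud}{\ud t}\nrm{\rd_x^k a}_{L^2} \aleq \nrm{\hbox{RHS}}_{L^2}$, where the RHS collects: (i) commutator terms $[\nb_t,\rd_x]\rd_x^{k-1}a = A\rd_x^k a + \cdots$, (ii) terms with $\rd_x^j A$ for $j\ge1$, and (iii) terms $\rd_x^k(Ha)$ expanded by Leibniz. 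I would first set up the induction on $k$: assume the bound holds for $0,\dots,k-1$ and establish it for $k$.

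\textbf{Key steps in order.} First, I would record the needed pointwise bounds on the coefficients along characteristics, which are already furnished by the work in Section~\ref{subsec:transport}: from Proposition~\ref{prop:Phi-derivatives} we have $\abs{\rd_x^{j}\rd_x\Phi(t,X(t))} \le \mu(t)^j \lmb(t)$, hence (by Fa\`a di Bruno applied to $\gmm_{\lmb_0}(\rd_x\Phi)$, $\gmm_{\lmb_0}'(\rd_x\Phi)$ and the ellipticity assumptions on $\gmm$) all of $A$, $\rd_x^j A$, $H$, $\rd_x^j H$ are bounded by $C_j \mu^j$ times an integrable-in-time factor. The crucial quantitative input is that $A$ integrates to $\exp(\int_0^t A) \aeq \mu(t) x_0\eps(\lmb_0)/(\log\lmb_0)^2 \aeq \mu(t)/\mu_0$ (this is exactly the identity \eqref{eq:wp-scale} vs.\ \eqref{eq:wp-scale-0} and the commutator computation \eqref{eq:comm}), so the natural renormalized variable is $a^{(k)} := \mu(t)^{-k} \rd_x^k a$ along characteristics; equivalently one inserts the integrating factor $\exp(-k\int_0^t A)$. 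Second, write the equation for $a^{(k)}$: the dangerous $kA\rd_x^k a$ term is absorbed by the integrating factor, and every remaining term is $\mu(t)^{-k}$ times a product that, by the induction hypothesis plus the coefficient bounds, is controlled in $L^2$ by $\aleq \mu(t)^{-k}\cdot\mu(t)^\ell \cdot \mu(t)^{k-\ell}\nrm{\rd_x^{k-\ell}a}\cdot(\hbox{int.\ factor}) \aeq (\hbox{int.\ factor})\sum_{\ell\le k}\nrm{a^{(k-\ell)}}$, so after renormalization the coefficients are genuinely integrable in time (the integrable factors are $\dot\Xi/\Xi$, $\gmm_{\lmb_0}'(\Xi)\gmm_{\lmb_0}(\lmb_0)\gmm_{\lmb_0}(\Xi)^{-2}\dot\Xi$, etc., all with integral $O(1)$ over $[0,\tau]$ exactly as in \eqref{eq:HJE-h-int-s}--\eqref{eq:HJE-h-int-s-re}). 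Third, close with Gr\"onwall in the vector $(\nrm{a^{(0)}}_{L^2},\dots,\nrm{a^{(k)}}_{L^2})$, which yields $\nrm{a^{(k)}(t)}_{L^2} \aleq \sum_{\ell\le k}\nrm{a^{(\ell)}(0)}_{L^2}$; since $\mu(0)\aeq\mu_0$ and $\rd_x^\ell a(0) = \rd_x^\ell a_0$, this is precisely the claimed estimate. A preliminary step before all of this is to justify that $a(t,\cdot)$ remains smooth and supported inside $X(t;(x_0,x_1))$, which follows from the method of characteristics and the a-priori bound on $\rd_x^2\Phi$ established in Sections~\ref{subsec:HJE-s}--\ref{subsec:HJE-gen}.

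\textbf{Main obstacle.} The routine part is the $L^2$ energy inequality itself; the delicate part is bookkeeping the powers of $\mu(t)$ and, especially, of the logarithmic/$\lmb_0^{\dlt_3}$ losses so that the renormalized coefficients really are $O(1)$-integrable and the induction does not degrade the exponent. In particular one must verify that the ``$\le\mu(t)^j\lmb(t)$'' bounds of Proposition~\ref{prop:Phi-derivatives} — which carry implicit $\lmb_0^{\dlt_3 k^2}(\log\lmb_0)^2$ factors — combine with the coefficient bounds so that the accumulated constant is absorbed into the stated implicit constant $C_k$ and the harmless $(\log\lmb_0)^2$ inside $\mu_0$, rather than into a growing power of $\mu(t)$. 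Concretely, the subtle point is the difference between the ``true'' wave-packet scale $\mu(t)$ in \eqref{eq:wp-scale} (with which $\rd_x^k\Phi$ is measured) and the initial scale $\mu_0$ in \eqref{eq:wp-scale-0}: one must check that $\exp(k\int_0^t A)\aeq (\mu(t)/\mu_0)^k$ up to acceptable powers of $\lmb_0^{\dlt_3}$, using \eqref{eq:I-bounds} and \eqref{eq:comm}, so that converting from the integrating-factor normalization back to $\mu^{-k}\rd_x^k a$ is lossless. Once this identification is in hand, everything else is a direct Gr\"onwall argument paralleling the proof of Proposition~\ref{prop:Phi-derivatives}.
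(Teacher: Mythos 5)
Your proposal is correct and follows essentially the same route as the paper: an induction on $k$, the integrating factor $\exp(-k\int_0^t A)$ along characteristics (the paper's $\exp(-kI)$ with $\nb_t I = A$) to absorb the $kA\rd_x^k a$ term, the coefficient bounds $\abs{\rd_x^j A}, \abs{\rd_x^j H} \aleq_j \mu^j \times(\hbox{time-integrable factor})$ inherited from Section~\ref{subsec:transport}, a Gr\"onwall closure in the renormalized norms, and the final identification $\exp(I)\aeq\mu(t)/\mu_0$ up to harmless $\lmb_0^{\dlt_3}$ losses via \eqref{eq:I-bounds}. The subtle point you single out (converting between the integrating-factor normalization and $\mu^{-k}\rd_x^k a$) is exactly how the paper concludes, so no gap.
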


\begin{proof}
	In what follows, the symbol $\aleq_{k}$ signifies an implicit constant that may depend on $k$ and $\gmm$, as well as $\abs{f''(0, 0)}^{-1} (x \rd_{x})^{k'} f''$,  $\abs{f''(0, 0)}^{-1} (x \rd_{x})^{k'} \Gmm f''$ for $0 \leq k' \leq k$. 

To begin with, we observe the following form of the equation for $\rd_{x}^{k}a$: \begin{equation*}
	\begin{split}
	\tld{\nb}_{t} \rd_{x}^{k} a = k[\nb_{t},\rd_{x}]\rd_{x}^{k-1} a + \tld{R}_{k}= kA\rd_{x}^{k } a + \tld{R}_{k}, 
	\end{split}
	\end{equation*} where $\tld{R}_{k}$ is a linear combination of the terms \begin{equation*}
	\begin{split}
	{\rd_{x}^{\ell} A\rd_{x}^{k-\ell}a, \quad \rd_{x}^{\ell} H \rd_{x}^{k-\ell}a }
	\end{split}
	\end{equation*} for $0\le \ell\le k$, with the exception that the term with $\ell=0$  does not appear in the case of {$A$}. It will be convenient to first solve the following transport equation:\begin{equation*}
	\begin{split}
	\nb_{t} I = A, 
	\end{split}
	\end{equation*} with the initial data $I(t=0) = 0$. The solution, which is simply the integral in time of $A$ along each characteristic curve, is nothing but $I$ defined in \eqref{eq:def-I}. Since $\tld{\nb}_{t} - \nb_{t}$ is simply a multiplication operator, we have that \begin{equation}\label{eq:amp-conj}
	\begin{split}
	\tld{\nb}_{t} \left( \exp(-k I(t,x)) \rd_{x}^{k} a(t,x) \right) =  \exp(-k I(t,x)) \tld{R}_{k}. 
	\end{split}
	\end{equation}
	
	We now set up the induction hypothesis: For $0 \leq k \leq k_{0}$, we shall require \begin{equation}\label{eq:amp-induction}
	\begin{split}
	\sup_{ 0\le t \le \tau}\nrm{\exp(-kI(t,x))\rd_{x}^{k} a(t,x)}_{L^2}  \lesssim_{k} x_{0}^{-k} {\eps(\lmb_{0})^{-k}} \lmb_0^{\dlt_{3} k^2} \sum_{0\le\ell\le k} \nrm{ (x_{0} {\eps(\lmb_{0})} \rd_{x})^{k}a_{0}  }_{L^2}. 
	\end{split}
	\end{equation} 	
	We first need to check the above for $k = 0$. {Recall the definition of $H$. By Lemma~\ref{lem:Xi-control} or \ref{lem:Xi-control-time-dep} and the definition of $\dot{\lmb}$, we have the obvious bound
	\begin{equation*}
		\abs{H} \aleq \frac{\gmm_{\lmb_{0}}'(\lmb)}{\gmm_{\lmb_{0}}(\lmb)^{2}} \dot{\lmb},
	\end{equation*}
	Then from the equation for $a$, we have, by Gr\"onwall's inequality,}
	\begin{equation*}
	\begin{split}
	\sup_{ 0\le t \le \tau} \nrm{a(t)}_{L^2} \lesssim \nrm{a_0}_{L^2}. 
	\end{split}
	\end{equation*} Let us now estimate $\exp(-k I(t,x)) \rd_{x}^{k} a(t,x) $ in $L^2$ for some $k>0$, using \eqref{eq:amp-induction} for all $\ell<k$.  Recalling the bound \eqref{eq:h-coeff-4} for $\rd_{x}^{k}A$ along characteristics, we  obtain the uniform bound $(k>0)$ \begin{equation*}
	\begin{split}
	|\rd_{x}^{k}A| \lesssim_{k} x_0^{-k} {\eps(\lmb_{0})^{-k}} \frac{\dot{\Xi}}{\Xi} \lmb_{0}^{\dlt_{3}( k^2 - \frac{1}{2})} \exp(kI ) \lesssim_{k} x_0^{-k} {\eps(\lmb_{0})^{-k}} \lmb_{0}^{\dlt_{3}( k^2 - \frac{1}{2})} \exp(kI )  \frac{\dot{\lmb}}{\lmb} 
	\end{split}
	\end{equation*} using simply that $ \frac{\dot{\Xi}}{\Xi} \le C\frac{\dot{\lmb}}{\lmb} $. This allows us to bound \begin{equation*}
	\begin{split}
	\nrm{  \exp(-kI) \rd_{x}^{\ell} A\rd_{x}^{k-\ell}a }_{L^2} \lesssim_{k} x_0^{-\ell} {\eps(\lmb_{0})^{-\ell}} \lmb_{0}^{\dlt_{3}( \ell^2 -\frac{1}{4})}   \frac{\dot{\lmb}}{\lmb} \nrm{  \exp(-(k-\ell)I) \rd_{x}^{k-\ell}a }_{L^2} . 
	\end{split}
	\end{equation*} 
	{Finally,} we need to estimate derivatives of $H$. We claim that for $k>0$, \begin{equation*}
	\begin{split}
|	\rd_{x}^{k} H | \lesssim_{k} x_{0}^{-k} {\eps(\lmb_{0})^{-k}} \exp(kI) \lmb_{0}^{\dlt_{3}(k^2-\frac{1}{4})} {\frac{\gmm_{\lmb_{0}}'(\lmb)}{\gmm_{\lmb_{0}}(\lmb)^{2}} \dot{\lmb}}. 
	\end{split}
	\end{equation*} We omit the proof, which can be done along the lines of the proof of   \eqref{eq:h-coeff-1}--\eqref{eq:h-coeff-4}. Applying these bounds to \eqref{eq:amp-conj} and taking the $L^2$ inner product with $\exp(-k I(t,x)) \rd_{x}^{k} a(t,x)$, we obtain that\begin{equation*}
	\begin{split}
	\frac{\ud}{\ud t} \nrm{ \exp(-k I(t,x)) \rd_{x}^{k} a(t,x) }_{L^2} 
	&\lesssim_{k} \sum_{\ell={1}}^{k} x_0^{-\ell} {\eps(\lmb_{0})^{-\ell}}\lmb_{0}^{\dlt_{3}( \ell^2-\frac{1}{4}) } {\left(  \frac{\dot{\lmb}}{\lmb}  + \frac{\gmm_{\lmb_{0}}'(\lmb)\dot{\lmb}}{\gmm_{\lmb_{0}}(\lmb)^{2}} \right)} \nrm{  \exp(-(k-\ell)I) \rd_{x}^{k-\ell} a }_{L^2} \\
	&\paleq + {\frac{\gmm_{\lmb_{0}}'(\lmb) \dot{\lmb}}{\gmm_{\lmb_{0}}(\lmb)^{2}} } \nrm{ \exp(-k I(t,x)) \rd_{x}^{k} a(t,x) }_{L^2} \\
%	&\lesssim_{k} \sum_{\ell={1}}^{k} x_0^{-k} {\eps(\lmb_{0})^{-k}}\lmb_{0}^{\dlt_{3}( \ell^2-\frac{1}{4}) } {\left(  \frac{1}{\lmb}  + \frac{\gmm_{\lmb_{0}}'(\lmb)}{\gmm_{\lmb_{0}}(\lmb)^{2}} \right) \dot{\lmb}} \lmb_0^{\dlt_{3}(k-\ell)^2}\sum_{ \ell'=0}^{{k-\ell}} \nrm{ (x_{0} {\eps(\lmb_{0}) }\rd_{x})^{\ell'}a_{0}  }_{L^2}  \\
%	&\paleq + \frac{\gmm_{\lmb_{0}}'(\lmb)}{\gmm_{\lmb_{0}}(\lmb)^{2}} \dot{\lmb} \nrm{ \exp(-k I(t,x)) \rd_{x}^{k} a(t,x) }_{L^{2}} \\
	&\lesssim_{k} x_0^{-k} {\eps(\lmb_{0})^{-k}}\lmb_{0}^{\dlt_{3}( k^2-\frac{1}{4}) } {\left(  \frac{\dot{\lmb}}{\lmb}  + \frac{\gmm_{\lmb_{0}}'(\lmb) \dot{\lmb}}{\gmm_{\lmb_{0}}(\lmb)^{2}} \right)} \sum_{\ell=0}^{{k-1}} \nrm{ (x_{0} {\eps(\lmb_{0})}\rd_{x})^{\ell }a_{0}  }_{L^2}  \\
	&\paleq + {\frac{\gmm_{\lmb_{0}}'(\lmb) \dot{\lmb}}{\gmm_{\lmb_{0}}(\lmb)^{2}}} \nrm{ \exp(-k I(t,x)) \rd_{x}^{k} a(t,x) }_{L^{2}}.
	\end{split}
	\end{equation*} Applying Gr\"onwall's inequality finishes the proof of \eqref{eq:amp-induction}. Recalling the definitions of $\mu$ and $\mu_{0}$ from \eqref{eq:wp-scale} and \eqref{eq:wp-scale-0}, respectively, this completes the proof of the proposition (by taking $\Lmb$ larger if necessary). \qedhere  
\end{proof}

\subsection{Cutoff and extension of the phase function}\label{subsec:cutoff-extend}

In this section, we shall extend $\Phi$ constructed in the previous subsections globally in space. For this purpose, we take points $x_{0}' = \frac{2x_{0}+x_{1}}{3}$ and $x_{1}' = \frac{x_{0}+2x_{1}}{3}$ so that $x_0 < x_{0}' < x_{1}' < x_{1}$ and let $\chi_{(x_{0}, x_{1})}$ be a smooth cutoff supported in $(x_{0}, x_{1})$ that equals $1$ on $(x_{0}', x_{1}')$. Then the support of $1 - \chi_{(x_{0}, x_{1})}$ has two components; we denote by $\chi_{(-\infty, x_{0})}$ and $\chi_{(x_{1}, \infty)}$ the smooth cutoffs supported in the left and the right components, respectively, such that $1 - \chi_{(x_{0}, x_{1})} = \chi_{(-\infty, x_{0})} +  \chi_{(x_{1}, \infty)}$. For $\ubr{x} \in (x_{0}, x_{1})$, we define
\begin{equation}
\begin{aligned}
	\rd_{x} \Phi^{global}(t, X(t; \ubr{x})) &= \chi_{(x_{0}, x_{1})}(\ubr{x}) \rd_{x} \Phi(t, X(t; \ubr{x})) \\
	&\peq + \chi_{(-\infty, x_{0})}(\ubr{x}) \rd_{x} \Phi(t, X(t; x_{0}))
	+ \chi_{(x_{1}, \infty)}(\ubr{x}) \rd_{x} \Phi(t, X(t; x_{1})),
\end{aligned}
\end{equation}
where $X(t; \ubr{x})$ is the characteristic curve solving $\dot{X} = \rd_{\xi} p(t, X, \rd_{x} \Phi(t, X))$ with $X(0; \ubr{x}) = \ubr{x}$. We furthermore normalize
\begin{equation*}
	\Phi^{global}(t, X(t; \tfrac{x'_{0}+x'_{1}}{2})) = \Phi(t, X(t; \tfrac{x'_{0}+x'_{1}}{2})).
\end{equation*}
Finally, for $x$ outside of the image of $X(t; (x_{0}, x_{1}))$, we extend $\rd_{x} \Phi^{global}(t, x)$ (smoothly) by constants. 

By definition,
\begin{equation*}
	\Phi^{global} = \Phi(t, x) \qquad \hbox{ for } X(t, x_{0}') \leq x \leq X(t, x_{1}').
\end{equation*}
To continue, for each $t$, denote by $\ubr{x}(t, x)$ the inverse of the map $\ubr{x} \mapsto X(t; \ubr{x})$. Since
\begin{equation*}
	\nb_{t} \ubr{x}(t, x) = 0, \qquad \ubr{x}(0, x) = x,
\end{equation*}
by an argument similar to Section~\ref{subsec:transport} using \eqref{eq:h-coeff-4}, we have
\begin{equation*}
\abs{\rd_{x}^{k} \ubr{x}(t, X(t; \ubr{x}))} \aleq_{k} \mu^{k}.
\end{equation*}
It follows that
{\begin{equation*}
	\lmb \leq \rd_{x} \Phi^{global} \leq 2 \lmb, \qquad
	\abs{\rd_{x}^{k} \rd_{x} \Phi^{global}} \aleq_{k} \mu^{k} \lmb \qquad \hbox{ for } 1 \leq k \leq N.
\end{equation*} In Sections~\ref{sec:wavepacket} and \ref{sec:proofs}, {\bf we shall write $\Phi$ for $\Phi^{global}$}. This abuse of notation is minor, since our wave packet would be of the form $\Re(a e^{i \Phi})$, where $a(t, \cdot)$ is supported in $(X(t, x_{0}'), X(t, x_{1}'))$, on which $\Phi^{global} = \Phi$. }

\section{Oscillatory integrals} \label{sec:conj-err}
The purpose of this section is to formulate and prove a result concerning the $L^{2}$-bound for the operators arising in the remainder in Proposition~\ref{prop:conj-exp}. {Logically, this section is \emph{self-contained} so {\bf the symbols in this section are detached from their meanings in the previous sections.} On the other hand, the results of this section will be applied in Section~\ref{sec:wavepacket} to objects that are suggested by the notation here.}

To begin with, observe that each term in the remainder symbols ${}^{(\Phi)} r_{p, -1}(x, \xi)$ and ${}^{(\Phi)} r_{p, -2}(x, \xi)$ in \eqref{eq:conj-rem-1}--\eqref{eq:conj-rem-2} is of the form $\int_{0}^{1} {}^{(\Phi)} q(x, \xi; \sgm) \, \ud \sgm$, where 
\begin{equation} \label{eq:conj-q-sgm}
{}^{(\Phi)} q(x, \xi; \sgm) := \iint q(x, \sgm \eta + (1-\sgm) \rd_{y} \Phi(y)) e^{i (\Phi(y) - \Phi(x) + (\eta - \xi) \cdot (x-y))} \, \ud y \frac{\ud \eta}{(2 \pi)^{2}}
\end{equation}
for some symbol $q$. Hence, it is expedient to formulate a $L^{2}$-bound result for an operator of the form ${}^{(\Phi)} q(x, D; \sgm)$ under suitable assumptions on $q$. 

Our main result is as follows:
\begin{proposition} \label{prop:conj-err}
	Let $\lmb_{0} \in \bbN$, $\lmb \geq \lmb_{0}$, $\mu > 0$, $\lmb_{q} > 0$ and $N_{0} \in \bbN$. Assume that the phase function $\Phi : \bbR_{x_{2}} \to \bbR$ and the symbol $q(x_{2}, \xi_{2}) : \bbR_{x_{2}} \times \bbR_{\xi_{2}} \to \bbC$ satisfy, for any integer $0 \leq k \leq N_{0}$,
	\begin{align} 
	\abs{\rd_{x_{2}}^{k} \rd_{x_{2}} \Phi(x)} &\leq A_{\Phi, k} \mu^{k} \lmb, \label{eq:conj-err-Phi} \\
	\abs{\rd_{\xi_{2}}^{k} q(x_{2}, \xi_{2})} &\leq A_{q, k} \lmb_{q}^{-k} g(x_{2}) M(\xi_{2}), \label{eq:conj-err-q}
	\end{align}
	respectively, where $A_{\Phi, j}, A_{q, j} > 0$ are increasing. Furthermore, assume that $\lmb_{q}, \lmb, \mu$ satisfy
	\begin{equation} \label{eq:conj-err-lmb-mu}
	\frac{\mu}{\lmb_{q}} \leq 1, \quad \frac{\lmb \mu}{\lmb_{q}^{2}} \leq 1.
	\end{equation}
	Then for any smooth $a = a(x_{2})$, $0 \leq \sgm \leq 1$, {$X_{0} \in \bbR$, $s \in \bbR$} and $\max\set{10^{4}, 100 \abs{s}} \leq N \leq N_{0}$, we have the bound
	\begin{equation} \label{eq:conj-err}
\begin{aligned}
	&\nrm*{\left({}^{(\Phi)} q(x_{2}, D_{2}; \sgm) a\right) {\brk{\mu (x_{2} - X_{0})}^{s}}}_{L^{2}}  \\
	&\aleq_{s, N, A_{\Phi, N}, A_{q, N}} \sum_{\ell = 0}^{N} \nrm*{\left(\br{g}_{<\mu^{-1}}(x_{2}) \br{\calM}_{<\mu^{-1}} (x_{2}; \sgm) (\mu^{-1} \rd_{x_{2}})^{\ell} a(x_{2})\right) {\brk{\mu (x_{2}-X_{0})}^{s}}}_{L^{2}},
\end{aligned}	\end{equation}
	where
	\begin{align*}
	\br{g}_{<\mu^{-1}}(x_{2})
	&= \sup_{y_{2}}  \frac{g(y_{2})}{\brk{\mu (x_{2} - y_{2})}^{\frac{N}{100}}}, \\
	\br{\calM}_{<\mu^{-1}}(x_{2}; \sgm)
	&= \sup_{y_{2}} \int  \frac{\calM_{<\lmb_{q}^{-1}, <\lmb_{q}}(y_{2}, \xi_{2}; \sgm)}{\brk{\mu (x_{2}-y_{2})}^{\frac{N}{100}}} \frac{\mu^{-1}}{\brk{\mu^{-1} \xi_{2}}^{\frac{N}{100}}}
	\, \ud \xi_{2},
	\end{align*}
	and
	\begin{align*}
	& \calM_{<\lmb_{q}^{-1}, <\lmb_{q}}(x_{2}, \xi_{2}; \sgm)  \\
	&= \iint M(\sgm (\zt + \xi_{2} + \rd_{x_{2}} \Phi(x) - \rd_{x_{2}} \Phi(x_{1}, x_{2} + z) - \tfrac{1}{2} \rd_{x_{2}}^{2} \Phi(x) z) + \rd_{x_{2}} \Phi(x_{1}, x_{2}+z)) \\
	&\phantom{= \iint} \times \frac{\lmb_{q}}{\brk{\lmb_{q}z}^{\frac{N}{100}}} \frac{\lmb_{q}^{-1}}{\brk{\lmb_{q}^{-1} \zt}^{\frac{N}{100}}} \, \ud \zt \ud z.
	\end{align*}
\end{proposition}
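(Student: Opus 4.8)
The strategy is a repeated integration by parts (nonstationary phase) argument applied to the double oscillatory integral defining ${}^{(\Phi)} q(x_{2}, D_{2}; \sgm) a$, exploiting the two smallness conditions in \eqref{eq:conj-err-lmb-mu}. First I would write out the operator acting on $a$ explicitly: after Fourier-inverting $D_{2}$, we get
\begin{equation*}
\left({}^{(\Phi)} q(x_{2}, D_{2}; \sgm) a\right)(x_{2}) = \iint q\bigl(x_{2}, \sgm \eta_{2} + (1-\sgm)\rd_{x_{2}}\Phi(y_{2})\bigr)\, a(y_{2})\, e^{i(\Phi(y_{2}) - \Phi(x_{2}) + (\eta_{2} - \xi_{2})(x_{2} - y_{2}))} \cdots
\end{equation*}
(with a suitable Fourier-inversion in the variable dual to $\xi_{2}$ too). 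The natural change of variables is $z = y_{2} - x_{2}$ and a shift of $\eta_{2}$ by $\rd_{x_{2}}\Phi$ so that the symbol's argument becomes $\sgm\zt + \rd_{x_{2}}\Phi(x_{1}, x_{2}+z)$ up to the Taylor remainder of $\Phi$, which is exactly the structure encoded in $\calM_{<\lmb_{q}^{-1}, <\lmb_{q}}$. At this point one recognizes two separate oscillatory mechanisms: oscillation in $z$ at scale $\lmb_{q}^{-1}$ coming from the $\xi_{2}$-localization of the symbol (via \eqref{eq:conj-err-q}) paired against the phase defect $\Phi(y_{2}) - \Phi(x_{2}) - \rd_{x_{2}}\Phi(x_{2}) z$ which is quadratically small, and oscillation in $\zt$ (or the conjugate variable) that localizes the $\eta_{2}$-integral to scale $\lmb_{q}$. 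Conditions \eqref{eq:conj-err-lmb-mu} are precisely what is needed to make these two integrations by parts each gain a genuine power: $\mu/\lmb_{q} \le 1$ handles the amplitude derivatives of $a$ hitting the cutoffs, and $\lmb\mu/\lmb_{q}^{2} \le 1$ controls the second-derivative term $\tfrac12 \rd_{x_{2}}^{2}\Phi(x) z$ appearing in $\calM$.

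Second, I would systematically integrate by parts $N$ times. Each integration by parts in the relevant variable produces a factor of the inverse phase-gradient, times derivatives of $q$ (controlled by \eqref{eq:conj-err-q}), derivatives of $\rd_{x_{2}}\Phi$ (controlled by \eqref{eq:conj-err-Phi}), and derivatives of $a$ (which is where the sum $\sum_{\ell=0}^{N}(\mu^{-1}\rd_{x_{2}})^{\ell}a$ on the right-hand side comes from). The bookkeeping here is the bulk of the work: one must verify that after $N$ steps every term carries either a factor $\brk{\lmb_{q}z}^{-N/100}$ or $\brk{\lmb_{q}^{-1}\zt}^{-N/100}$ (for the inner integral) and $\brk{\mu(x_{2}-y_{2})}^{-N/100}$ (for the outer one), which is exactly the decay appearing in the definitions of $\br{g}_{<\mu^{-1}}$, $\br{\calM}_{<\mu^{-1}}$ and $\calM_{<\lmb_{q}^{-1}, <\lmb_{q}}$. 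The constants $A_{\Phi, N}, A_{q, N}$ enter multiplicatively with combinatorial factors depending on $N$; choosing $N \le N_{0}$ ensures all the bounds \eqref{eq:conj-err-Phi}--\eqref{eq:conj-err-q} are available. The weight $\brk{\mu(x_{2} - X_{0})}^{s}$ is carried along: since $\brk{\mu(x_{2}-X_{0})}^{s} \aleq_{s} \brk{\mu(x_{2}-y_{2})}^{\abs{s}}\brk{\mu(y_{2}-X_{0})}^{s}$ and we have taken $N \ge 100\abs{s}$, the extra growth $\brk{\mu(x_{2}-y_{2})}^{\abs{s}}$ is absorbed into the $\brk{\mu(x_{2}-y_{2})}^{-N/100}$ decay with room to spare, leaving the weight $\brk{\mu(y_{2}-X_{0})}^{s}$ on the amplitude side, as in \eqref{eq:conj-err}.

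Third, once the pointwise kernel bound is in hand, I would close the estimate by a Schur-type / Young's inequality argument. After the integrations by parts, $\left({}^{(\Phi)}q(x_{2}, D_{2};\sgm)a\right)(x_{2})\brk{\mu(x_{2}-X_{0})}^{s}$ is bounded by a convolution-type integral in $y_{2}$ of $\br{g}_{<\mu^{-1}}(x_{2})\br{\calM}_{<\mu^{-1}}(x_{2};\sgm) |(\mu^{-1}\rd_{x_{2}})^{\ell}a(y_{2})|\brk{\mu(y_{2}-X_{0})}^{s}$ against a kernel with $L^{1}_{y_{2}}$-norm $\aleq \mu^{-1}\cdot\mu = 1$ (the decay $\brk{\mu(x_{2}-y_{2})}^{-N/100}$ with $N/100 \ge 100$ integrates to $O(\mu^{-1})$, cancelling the $\mu$ from the Jacobian); applying the $L^{2}$-boundedness of convolution with an $L^{1}$ kernel yields \eqref{eq:conj-err}. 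The main obstacle I anticipate is \emph{not} any single estimate but the combinatorial organization of the $N$-fold integration by parts: one must track which variable each derivative acts on, ensure that the two oscillatory integrations (in the $z$/$y_{2}$ direction and in the $\zt$/$\eta_{2}$ direction) do not interfere, and check that the mixed term $\tfrac12\rd_{x_{2}}^{2}\Phi(x)z$ — which couples the outer and inner integrals — is genuinely lower order thanks to $\lmb\mu/\lmb_{q}^{2}\le 1$. Getting the exponents of the polynomial weights to line up consistently (the somewhat arbitrary-looking $N/100$) is the delicate point; everything else is a routine, if lengthy, symbolic-calculus computation.
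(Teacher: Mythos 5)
Your first two steps — writing the operator as a double oscillatory integral, changing variables so the phase becomes $-\zt z$ plus the cubic Taylor remainder of $\Phi$, and gaining decay in $z$ and $\zt$ by repeated nonstationary-phase integration by parts, with \eqref{eq:conj-err-lmb-mu} controlling the contributions of $\rd_{x_{2}}^{2}\Phi$ and of the cubic phase — faithfully reproduce the symbol-level analysis that the paper carries out in Lemma~\ref{lem:conj-err-symb} (there done dyadically in $(z,\zt)$, without $a$). The genuine gap is in your third step. Because you keep $a(x_{2}+z)$ inside the oscillatory integral, there is no external frequency variable left after the integrations by parts: the un-differentiated term carries the majorant $M$ evaluated along one-dimensional slices, e.g.\ (after the $\zt$-integration) $\int M\bigl(\sgm\zt + \sgm\Delta(x_{2},z)+\rd_{x_{2}}\Phi(x_{2}+z)\bigr)\,\lmb_{q}^{-1}\brk{\lmb_{q}^{-1}\zt}^{-N/100}\ud\zt$ at a \emph{fixed} $z$, and at $\sgm=0$ the operator is literally multiplication by $q(x_{2},\rd_{x_{2}}\Phi(x_{2}))$, bounded only by the pointwise value $g(x_{2})M(\rd_{x_{2}}\Phi(x_{2}))$. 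Your Schur/Young step then requires dominating these slice values, pointwise in $(x_{2},z)$, by $\br{g}_{<\mu^{-1}}\,\br{\calM}_{<\mu^{-1}}$ times an integrable kernel. But $\br{\calM}_{<\mu^{-1}}$ is an \emph{average} of $M$ (over $z'$, $\zt'$ at scale $\lmb_{q}^{\mp 1}$ and over $\xi_{2}$ at scale $\mu$), and a point value of $M$ composed with the phase data is not controlled by such averages for a general nonnegative $M$ satisfying only \eqref{eq:conj-err-q} — no slow variation of $M$ is assumed in the proposition. So the chain "LHS $\aleq$ pointwise-$M$ kernel $\aleq$ averaged-$M$ right-hand side" breaks at the second inequality, and the derivatives of $a$ generated by the integrations by parts cannot repair it, since the zeroth-order (no-derivative) term already carries the pointwise weight.

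The paper's proof is structured precisely to avoid this: it first proves a pointwise bound for the conjugated \emph{symbol} ${}^{(\Phi)}q(x_{2},\xi_{2};\sgm)$ with the external $\xi_{2}$ retained and with $M$ kept under the $(z,\zt)$-integrals (Lemma~\ref{lem:conj-err-symb}), and then converts this to the weighted operator bound via a Littlewood--Paley decomposition $a = P_{<\mu}a + \sum_{\nu>\mu}P_{\nu}a$ together with the elementary kernel bound of Lemma~\ref{lem:psdo-kernel}. In that second step the symbol majorant only ever enters integrated over the frequency annuli $\abs{\xi_{2}}\aeq\nu$ of $a$, which is exactly what produces $\br{\calM}_{<\mu^{-1}}$; the losses $(\nu/\mu)^{N/100}$ incurred in passing to the $\mu$-scale average are paid by the factor $\nu^{-2\lfloor N/2\rfloor}$, i.e.\ by the high derivatives of $a$ allowed on the right-hand side. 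This frequency decomposition of $a$ is the decoupling mechanism — converting smoothness of $a$ into averaging of the symbol — that your single-step argument lacks; to close your approach you would either need to assume $M$ slowly varying at scale $\lmb_{q}$ (true in the paper's applications but not in the hypotheses), or reintroduce a frequency decomposition of $a$, at which point you have essentially reconstructed the paper's two-step proof.
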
 
\begin{remark}
	Note that the RHS of \eqref{eq:conj-err} involves a spatial weight $\br{g}_{<\mu^{-1}}(x_{2}) \br{\calM}_{<\mu^{-1}} (x_{2}; \sgm)$; this feature is important for exploiting the degeneracy of the principal symbol $p_{\bgtht}$ in the proof of the error bound in Section~\ref{subsec:wp-error}. We also point out that in \eqref{eq:conj-err}, we are allowed to lose as many derivatives of $a$ (with weights in $\mu^{-1}$) as we wish; this feature simplifies the proof (see Remark~\ref{rem:conj-err-symb-x}). 
\end{remark}

\subsection{Oscillatory integral bounds for the symbol} \label{subsec:conj-err-symb}
In this section, we aim to derive key pointwise bounds for the symbol ${}^{(\Phi)} q(x, \xi; \sgm)$ and its $\xi$-derivatives. 

The main goal of this subsection is to prove the following pointwise bound for the symbol ${}^{(\Phi)} q(x_{2}, \xi_{2}; \sgm)$:
\begin{lemma} \label{lem:conj-err-symb}
	For any $x_{1} \in \bbR$, $\sgm \in [0, 1]$, $\ell, k \in \bbN_{0}$ and $C_{0} \in \bbN$ such that $\ell + 10C_{0} \leq N_{0}$, we have
	\begin{equation} \label{eq:osc-Q-key}
	\begin{aligned}
	\abs{\rd_{\xi_{2}}^{\ell} {}^{(\Phi)} q(x_{2}, \xi_{2}; \sgm)} &\aleq_{\ell, A_{\Phi, 10 C_{0}}, A_{q, \ell + 10 C_{0}}} \lmb_{q}^{-\ell} 
	g(x_{2}) \calM_{<\lmb_{q}^{-1}, <\lmb_{q}}(x_{2}, \xi_{2}; \sgm) \max\set*{1, \mu \lmb_{q}^{-1}, \frac{\lmb}{\lmb_{q}} \left( \mu \lmb_{q}^{-1}  + \mu^{3} \lmb_{q}^{-3} \right)}^{C_{0}+2},
	\end{aligned}
	\end{equation}
	where
	\begin{align*}
	&\calM_{<\lmb_{q}^{-1}, <\lmb_{q}}(x_{2}, \xi_{2}; \sgm)  \\
	&= \iint M(\sgm (\zt + \xi_{2} + \rd_{x_{2}} \Phi(x_{2}) - \rd_{x_{2}} \Phi(x_{2}+z) - \tfrac{1}{2} \rd_{x_{2}}^{2} \Phi(x_{2}) z) + \rd_{x_{2}} \Phi(x_{2}+z)) \\
	&\phantom{=\iint} \times \frac{\lmb_{q}}{\brk{\lmb_{q} z}^{C_{0}}} \frac{\lmb_{q}^{-1}}{\brk{\lmb_{q}^{-1} \zt}^{C_{0}}} \, \ud \zt \ud z.
	\end{align*}
\end{lemma}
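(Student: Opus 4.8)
The goal is a pointwise bound for the $\xi_2$-derivatives of the oscillatory integral symbol
\[
{}^{(\Phi)} q(x_{2}, \xi_{2}; \sgm) = \iint q(x_{2}, \sgm \eta + (1-\sgm) \rd_{y} \Phi(y)) e^{i (\Phi(y) - \Phi(x_{2}) + (\eta - \xi_{2}) \cdot (x_{2}-y))} \, \ud y \frac{\ud \eta}{2 \pi},
\]
restricted to the one-dimensional $(x_2, \xi_2)$-setting. The plan is to perform a careful stationary-phase/integration-by-parts analysis localized to the stationary set. First I would change variables to center everything at the stationary point: write $y = x_2 + z$ and $\eta = \xi_2 + \rd_{x_2}\Phi(x_2) + \zt$, so that the phase becomes $\psi(z,\zt) = \Phi(x_2+z) - \Phi(x_2) - \rd_{x_2}\Phi(x_2) z - \zt z$ (after absorbing the $(\eta - \xi_2)\cdot x_2$ and linear-in-$z$ pieces), whose critical point is at $z = \zt = 0$. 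The quadratic part of $\psi$ in $z$ is $\tfrac{1}{2}\rd_{x_2}^2\Phi(x_2) z^2$, which at frequency $\sim \lmb$ has Hessian of size $\lmb$; combined with the $\zt z$ coupling this gives a nondegenerate (hyperbolic-signature) quadratic form, and one should Taylor-expand $\Phi(x_2+z)$ to second order, treating the cubic-and-higher remainder $\rd_x^{\ge 3}\Phi$ — controlled by \eqref{eq:conj-err-Phi} with size $\aleq \mu^{k}\lmb$ — perturbatively. This is exactly why the argument of $M$ in the definition of $\calM_{<\lmb_q^{-1}, <\lmb_q}$ has the precise form $\sgm(\zt + \xi_2 + \rd_{x_2}\Phi(x_2) - \rd_{x_2}\Phi(x_2+z) - \tfrac12 \rd_{x_2}^2\Phi(x_2) z) + \rd_{x_2}\Phi(x_2+z)$: one substitutes the second-order Taylor expansion of $\sgm\eta + (1-\sgm)\rd_y\Phi(y)$ into $q$'s symbol bound \eqref{eq:conj-err-q}.

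Next I would execute the integration by parts. In the $\zt$-variable, $\rd_{\zt}\psi = -z$, so integrating by parts in $\zt$ produces factors of $z^{-1}$ but only for $|z| \gtrsim \lmb_q^{-1}$; I would instead use the identity $(1 + \lmb_q^2 z^2)^{-1}(1 - \lmb_q^{-2}\rd_\zt^2) e^{i\psi} = e^{i\psi}$ to generate the weight $\lmb_q^{-1}\brk{\lmb_q^{-1}\zt}^{-C_0}$ after distributing $\rd_\zt$ onto $q$ (using \eqref{eq:conj-err-q} to absorb $\xi$-derivatives into $\lmb_q^{-k}$ factors) — this is where the condition $\mu/\lmb_q \le 1$ enters to keep the derivative of the $\sgm\eta + (1-\sgm)\rd_y\Phi$ expression under control. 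In the $z$-variable, $\rd_z\psi = \rd_x\Phi(x_2+z) - \rd_x\Phi(x_2) - \rd_x^2\Phi(x_2) z - \zt$, which vanishes to second order at $z=0$ modulo $\zt$; the key point is that $|\rd_z\psi| \gtrsim \lmb_q|z|$ in the region $|\zt| \lesssim \lmb_q^{-1}$, $|z|\gtrsim \lmb_q^{-1}$, roughly because $\rd_x^2\Phi(x_2)z$ has the dominant size and the cubic remainder is lower order by \eqref{eq:conj-err-lmb-mu} (this is where $\lmb\mu/\lmb_q^2 \le 1$ is used — it says the second-order Taylor remainder at scale $\lmb_q^{-1}$ is dominated by the quadratic term). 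Integrating by parts in $z$ with the operator built from $(\rd_z\psi)^{-1}\rd_z$ generates the weight $\lmb_q \brk{\lmb_q z}^{-C_0}$; each integration by parts costs one derivative on $q$ (absorbed into $\lmb_q^{-1}$ via \eqref{eq:conj-err-q} and also one derivative hitting $\rd_x^{\ge 2}\Phi$, producing the $\mu\lmb_q^{-1}$ and $\mu^3\lmb_q^{-3}$ gains, and hence the $\max\set{1, \mu\lmb_q^{-1}, \tfrac{\lmb}{\lmb_q}(\mu\lmb_q^{-1} + \mu^3\lmb_q^{-3})}^{C_0+2}$ factor) while halving the region of $z$ under consideration. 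The $\xi_2$-derivatives on the left-hand side simply pass onto the symbol $q$ — $\rd_{\xi_2}^\ell$ acting on $q(x_2, \sgm\eta + \cdots)$ brings down $\sgm^\ell \aleq 1$ and lands as $\rd_{\xi_2}^\ell q$, giving the $\lmb_q^{-\ell}$ prefactor.

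Finally I would assemble the pieces: after the integrations by parts, the absolute value of the integrand is bounded by $\lmb_q^{-\ell} g(x_2) M(\sgm(\cdots) + \rd_x\Phi(x_2+z))$ times the two decaying weights in $z$ and $\zt$, times the power of the $\max\set{\cdots}$ factor; integrating the weights over $z$ and $\zt$ reproduces exactly $\calM_{<\lmb_q^{-1}, <\lmb_q}(x_2, \xi_2; \sgm)$. The spatial factor $g(x_2)$ pulls out of the integral directly since the $x_2$-dependence of the symbol bound \eqref{eq:conj-err-q} separates as $g(x_2) M(\xi_2)$ and the $y$-localization near $y = x_2$ is already built into the weight. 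I expect the \textbf{main obstacle} to be the lower bound $|\rd_z\psi| \gtrsim \lmb_q|z|$ in the non-stationary region and the bookkeeping of exactly how many powers of $\mu/\lmb_q$, $\lmb\mu/\lmb_q^2$, $\lmb\mu^3/\lmb_q^3$ one loses per integration by parts: one must track both derivatives landing on $q$'s argument (which involves $\rd_x\Phi$, $\rd_x^2\Phi$ and the full $z$-expansion) and the need to keep $\rd_z\psi$ away from zero uniformly, which requires splitting into $|z| \lesssim \lmb_q^{-1}$ versus dyadic ranges $|z| \sim 2^j\lmb_q^{-1}$ and summing the resulting geometric series (this is where the restriction $\ell + 10C_0 \le N_0$ appears — one needs enough derivatives of $\Phi$ and $q$ to push $C_0$ integrations by parts through, with a fixed budget of roughly $10$ spare derivatives per step for the Taylor remainders). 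Once Lemma~\ref{lem:conj-err-symb} is in hand, Proposition~\ref{prop:conj-err} follows by a further, more standard, Schur-test-type argument converting the pointwise symbol bound into an $L^2$ operator bound with the stated spatial weights.
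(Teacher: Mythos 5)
Your plan breaks down at its central step: the claimed non-stationary-phase lower bound $\abs{\rd_{z}\psi}\gtrsim \lmb_{q}\abs{z}$ for $\abs{z}\gtrsim \lmb_{q}^{-1}$, justified by saying that $\rd_{x_{2}}^{2}\Phi(x_{2})\,z$ is the dominant term. The hypothesis \eqref{eq:conj-err-Phi} gives only \emph{upper} bounds (so $\rd_{x_{2}}^{2}\Phi$ may vanish, and its size is at most $\mu\lmb$, not $\lmb$ as you wrote), and \eqref{eq:conj-err-lmb-mu} only says $\lmb\mu\leq\lmb_{q}^{2}$, which does not make $\mu\lmb\abs{z}$ comparable to $\lmb_{q}\abs{z}$. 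Worse, the relevant range of $\zt$ is $\abs{\zt}\lesssim\lmb_{q}$ (not $\lmb_{q}^{-1}$), so for any fixed $z$ one can choose $\zt$ to cancel $\rd_{x_{2}}\Phi(x_{2}+z)-\rd_{x_{2}}\Phi(x_{2})$ (or its quadratic-subtracted version) entirely, and $\rd_{z}\psi$ genuinely vanishes away from $z=0$; so the operator $(\rd_{z}\psi)^{-1}\rd_{z}$ cannot produce the weight $\brk{\lmb_{q}z}^{-C_{0}}$. Relatedly, your two integrations by parts are feeding the wrong weights: since $\rd_{\zt}\psi=-z$ exactly, the $\zt$-integration by parts (your identity, with the correct power $(1+\lmb_{q}^{2}z^{2})^{-1}(1-\lmb_{q}^{2}\rd_{\zt}^{2})$) produces the decay in $z$ at scale $\lmb_{q}^{-1}$, each $\rd_{\zt}$ costing exactly $\lmb_{q}^{-1}$ by \eqref{eq:conj-err-q} with no phase lower bound needed — it does not produce the $\zt$-weight; the decay in $\zt$ must instead come from the $z$-integration by parts.

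The idea you are missing is the one that makes the $z$-integration by parts usable: choose the change of variables $z=y-x_{2}$, $\zt=\eta-\xi_{2}-\rd_{x_{2}}\Phi(x_{2})-\tfrac{1}{2}\rd_{x_{2}}^{2}\Phi(x_{2})z$, i.e.\ absorb the \emph{quadratic} Taylor term of $\Phi$ into the frequency variable (this is also why that term appears inside the argument of $M$ in $\calM_{<\lmb_{q}^{-1},<\lmb_{q}}$). Then the phase is exactly $-\zt z+\tfrac{1}{3!}r^{(3)}[\Phi](x_{2}+z,x_{2})z^{3}$, and one puts the cubic factor $e^{i r^{(3)}[\Phi]z^{3}/3!}$ into the \emph{amplitude}, so that the $z$-derivative of the phase is exactly $-\zt$. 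Decay in $\zt$ at dyadic scale $H\gtrsim\lmb_{q}$ then comes from $\zt^{-1}\rd_{z}$-integration by parts, whose cost per step is $\max\set{\lmb\mu/\lmb_{q},\,\mu,\,\lmb\mu^{2}Z^{2}+\lmb\mu^{3}Z^{3}}$ (the last terms from differentiating the cubic exponential, controlled by $\abs{\rd_{z}^{n}r^{(3)}[\Phi]}\aleq\lmb\mu^{n+2}$); decay in $z$ at dyadic scale $Z\gtrsim\lmb_{q}^{-1}$ comes from $z^{-1}\rd_{\zt}$-integration by parts at cost $\lmb_{q}^{-1}$ per step; a dyadic decomposition in $(Z,H)$ and summation of the resulting geometric series yields precisely the factor $\max\set{1,\mu\lmb_{q}^{-1},\tfrac{\lmb}{\lmb_{q}}(\mu\lmb_{q}^{-1}+\mu^{3}\lmb_{q}^{-3})}^{C_{0}+2}$ in \eqref{eq:osc-Q-key}. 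Your treatment of the $\xi_{2}$-derivatives (each $\rd_{\xi_{2}}^{\ell}$ landing on $q$ and giving $\lmb_{q}^{-\ell}$) and the final assembly of $g(x_{2})\calM_{<\lmb_{q}^{-1},<\lmb_{q}}$ are fine, but as written the core localization mechanism does not go through.
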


\begin{proof}
	We begin by laying out some notational simplifications to be used in the proof. To simplify the notation, we shall write $x = x_{2}$, $\xi = \xi_{2}$, $q(\xi) = q(x, \xi)$ and $Q(x, \xi) = {}^{(\Phi)} q(x, \xi; \sgm)$. Then
	\begin{equation*}
	Q(x, \xi) = \iint q(\sgm \eta + (1-\sgm) \rd_{x} \Phi(y)) e^{i (\Phi(y) - \Phi(x) + (\eta - \xi)(x - y))} \, \ud y \frac{\ud \eta}{2 \pi}.
	\end{equation*}
	Moreover, the assumption \eqref{eq:conj-err-q} becomes
	\begin{equation} \label{eq:q-order}
	\abs{\rd_{\xi}^{n} q(\xi)} \aleq_{n, A_{q, n}} \lmb_{q}^{-n} g(x) M(\xi).
	\end{equation}
	Finally, by the second assumption on the phase, we have $\abs{\rd_{x}^{n} \rd_{x} \Phi(x)} \aleq_{n, A_{\Phi, n}} \mu^{n} \lmb$. 
	
	In what follows, we first give a detailed proof of the case $\ell = 0$, then indicate the necessary modifications for higher $\ell$'s.
	
	The phase function $\psi(x, \xi; y, \eta)$, defined by
	\begin{align*}
	\psi(x, \xi; y, \eta) = \Phi(y) - \Phi(x) + (\eta - \xi) (x-y),
	\end{align*}
	is stationary at $y = x$ and $\eta = \xi + \rd_{x} \Phi(x)$. By Taylor expansion, we may write
	\begin{align*}
	\psi(x, \xi; y, \eta) & = \rd_{x} \Phi(x) (y-x) + \tfrac{1}{2} \rd_{x}^{2} \Phi(x) (y-x)^{2} + (\eta - \xi) (x-y) + \frac{1}{3!} r^{(3)}[\Phi](y, x) (y-x)^{3} \\
	& =  (x-y) (\eta - \xi - \rd_{x} \Phi(x) + \tfrac{1}{2} \rd_{x}^{2} \Phi(x) (x-y)) + \frac{1}{3!} r^{(3)}[\Phi](y, x) (y-x)^{3}.
	\end{align*}
	We make the change of variables $(y, \eta) \mapsto (z, \zt)$, where
	\begin{align*}
	z &= y-x, \\
	\zt &= \eta - \xi - \rd_{x} \Phi(x) + \tfrac{1}{2} \rd_{x}^{2} \Phi(x) (x-y),
	\end{align*}
	so that
	\begin{equation} \label{eq:Q-reduced}
	\begin{aligned}
	Q(x, \xi) &= \iint \tld{q}(x, \xi; z, \zt) e^{i r^{(3)}[\Phi](x+z, x) \frac{z^{3}}{3!}} e^{- i \zt z} \, \ud \zt \ud z,
	\end{aligned}\end{equation}
	where
	\begin{equation*}
	\tld{q}(x, \xi; z, \zt) = \frac{1}{2\pi} q\left( \sgm (\zt + \xi + \rd_{x} \Phi(x) - \rd_{x} \Phi(x+z) - \tfrac{1}{2} \rd_{x}^{2} \Phi(x) z) + \rd_{x} \Phi(x+z) \right).
	\end{equation*}
	Now observe that
	\begin{equation} \label{eq:Q-ampl-reg}
	\begin{aligned}
	&\abs{\rd_{\zt}^{m} \rd_{z}^{n} \tld{q}(x, \xi; z, \zt)} \\
	& \aleq_{m, n, A_{\Phi, n}, A_{q, m+n}} \lmb_{q}^{-m} \max\set*{\frac{\lmb \mu}{\lmb_{q}}, \mu}^{n} \\
	& \phantom{\aleq_{m, n}} \times g(x) M(\sgm (\zt + \xi + \rd_{x} \Phi(x) - \rd_{x} \Phi(x+z) - \tfrac{1}{2} \rd_{x}^{2} \Phi(x) z) + \rd_{x} \Phi(x+z)).
	\end{aligned}\end{equation}
	
	We shall view \eqref{eq:Q-reduced} as an oscillatory integral with the simple phase $\zt z$. Accordingly, we introduce the dyadic decomposition
	\begin{equation*}
	Q_{Z, H}(x, \xi) = \iint \chi_{H}(\zt) \chi_{Z}(z) \tld{q}(x, \xi; z, \zt) e^{i r^{(3)}[\Phi](x+z, x) \frac{z^{3}}{3!}} e^{- i \zt z} \, \ud \zt \ud z,
	\end{equation*}
	as well as $Q_{<Z, H}(x, \xi)$, $Q_{Z, <H}(x, \xi)$ and $Q_{< Z, < H}(x, \xi)$, which are similarly defined. We also introduce the shorthands
	\begin{equation*}
	M_{Z, H} = \int_{z \in (\frac{1}{2} Z, 2 Z)} \int_{\zt \in (\frac{1}{2} H, 2 H)} M(\sgm (\zt + \xi + \rd_{x} \Phi(x) - \rd_{x} \Phi(x+z) - \tfrac{1}{2} \rd_{x}^{2} \Phi(x) z) + \rd_{x} \Phi(x+z)) \, \ud \zt \ud z ,
	\end{equation*}
	as well as $M_{< H, Z}$, $M_{H, < Z}$ and $M_{< H,  <Z}$, which are similarly defined.
	
	The core localized integral bounds are as follows: For dyadic numbers $Z \ageq \lmb_{q}^{-1}$ and $H \ageq \lmb_{q}$,
	\begin{align}
	\abs{Q_{Z, H}(x, \xi)}
	&\aleq_{m, n, A_{\Phi, n+2}, A_{q, m+n}} g(x) M_{Z, H} \left(Z^{-1} \lmb_{q}^{-1} \right)^{m} \label{eq:osc-Q-HZ} \\
	&\phantom{\aleq_{m, n, A_{\Phi, n+2}, A_{q, m+n}}} \times \left( H^{-1} \max\set*{Z^{-1}, \frac{\lmb \mu}{\lmb_{q}}, \mu, \lmb \mu^{2} Z^{2} + \lmb \mu^{3} Z^{3}} \right)^{n}, \notag \\
	\abs{Q_{< \lmb_{q}^{-1}, H}(x, \xi)} 
	&\aleq_{n, A_{\Phi, n+2}, A_{q, n}} g(x) M_{< \lmb_{q}^{-1}, H}  \left( H^{-1} \max\set*{\lmb_{q}, \frac{\lmb \mu}{\lmb_{q}}, \mu, \frac{\lmb}{\lmb_{q}} \frac{\mu^{2}}{\lmb_{q}}  + \frac{\lmb}{\lmb_{q}} \frac{\mu^{3}}{\lmb_{q}^{2}}} \right)^{n}, \label{eq:osc-Q-Hlow} \\
	\abs{Q_{Z, < \lmb_{q}}(x, \xi)}
	& \aleq_{m, A_{q, m}} g(x) M_{Z, < \lmb_{q}}  \left(Z^{-1} \lmb_{q}^{-1} \right)^{m}, \label{eq:osc-Q-lowZ}\\
	\abs{Q_{< \lmb_{q}^{-1}, < \lmb_{q}}(x, \xi)}
	& \aleq g(x) M_{< \lmb_{q}^{-1}, < \lmb_{q}}. \label{eq:osc-Q-lowlow}
	\end{align}
	Indeed, \eqref{eq:osc-Q-lowlow} is trivial by the definition of $M_{<\lmb_{q}^{-1}, <\lmb_{q}}$ and the fact that the volume of the support of $\chi_{<\lmb_{q}}(\zt) \chi_{<\lmb_{q}^{-1}}(z)$ is $O(1)$. To prove \eqref{eq:osc-Q-lowZ}, we simply use $e^{-i \zt z} = i z^{-1} \rd_{\zt} e^{- i \zt z}$ and integration by parts and estimate
	\begin{align*}
	\abs{Q_{Z, <\lmb_{q}}}
	&= \abs*{\iint \chi_{<\lmb_{q}}(\zt) \chi_{Z}(z) \tld{q}(x, \xi; z, \zt) e^{i r^{(3)}[\Phi](x+z, x) \frac{z^{3}}{3!}} z^{-n} \rd_{\zt}^{n} e^{- i \zt z} \, \ud \zt \ud z} \\
	&=\abs*{\iint \rd_{\zt}^{m} \left( \chi_{<\lmb_{q}}(\zt) \tld{q}(x, \xi; z, \zt)\right) \chi_{Z}(z)  e^{i r^{(3)}[\Phi](x+z, x) \frac{z^{3}}{3!}} z^{-m}  e^{- i \zt z} \, \ud \zt \ud z} \\
	&\aleq_{m, A_{q, m}} g(x) M_{Z, <\lmb_{q}}  \, Z^{-m} \lmb_{q}^{-m},
	\end{align*}
	where $Z \lmb_{q}$ is the volume of the support of $\chi_{<\lmb_{q}}(\zt) \chi_{Z}(z)$.
	Next, to prove \eqref{eq:osc-Q-Hlow}, we use $e^{-i \zt z} = i \zt^{-1} \rd_{z} e^{- i \zt z}$ and integration by parts to bound
	\begin{align*}
	\abs*{Q_{<\lmb_{q}^{-1}, H}}
	&= \abs*{\iint \chi_{H}(\zt) \rd_{z}^{n} \left( \chi_{<\lmb_{q}^{-1}}(z) \tld{q}(x, \xi; z, \zt) e^{i r^{(3)}[\Phi](x+z, x) \frac{z^{3}}{3!}} \right) \zt^{-n} e^{- i \zt z} \, \ud \zt \ud z}.
	\end{align*}
	Then the desired estimate \eqref{eq:osc-Q-Hlow} follows, via the chain rule and the Leibniz rule, from \eqref{eq:Q-ampl-reg} and
	\begin{equation} \label{eq:r3Phi-reg}
	\abs*{\rd_{z}^{n'} \left(r^{(3)}[\Phi](x+z, x)\right)} \aleq_{A_{\Phi, n'+2}} \lmb \mu^{n'+2}. 
	\end{equation}
	Finally, to prove \eqref{eq:osc-Q-HZ}, we use both $e^{-i \zt z} = i z^{-1} \rd_{\zt} e^{- i \zt z}$ and $e^{-i \zt z} = i \zt^{-1} \rd_{z} e^{- i \zt z}$ and integration by parts to estimate
	\begin{align*}
	\abs*{Q_{Z, H}}
	&= \abs*{\iint \rd_{\zt}^{m} \left( \chi_{H}(\zt) \tld{q}(x, \xi; z, \zt) \right) \chi_{Z}(z) e^{i r^{(3)}[\Phi](x+z, x) \frac{z^{3}}{3!}} z^{-m} e^{- i \zt z} \, \ud \zt \ud z} \\
	&= \abs*{\iint \rd_{z}^{n} \left( \rd_{\zt}^{m} \left( \chi_{H}(\zt) \tld{q}(x, \xi; z, \zt) \right) \chi_{Z}(z) z^{-m} e^{i r^{(3)}[\Phi](x+z, x) \frac{z^{3}}{3!}} \right) \zt^{-n} e^{- i \zt z} \, \ud \zt \ud z}.
	\end{align*}
	From the last line, the desired estimate \eqref{eq:osc-Q-HZ} follows, via the chain rule and the Leibniz rule, from \eqref{eq:Q-ampl-reg} and \eqref{eq:r3Phi-reg}.
	
	We are now ready to prove \eqref{eq:osc-Q-key}. In what follows, we omit the dependence of implicit constants on $A_{\Phi, 10 C_{0}}$ and $A_{q, 10 C_{0}}$.  Consider \eqref{eq:osc-Q-HZ} with $m = C_{0}+2$ and $n = 3m + C_{0}+2$, and sum over $Z \ageq \lmb_{q}^{-1}$, $H \ageq \lmb_{q}$. Then
	\begin{align*}
	& \sum_{Z \ageq \lmb_{q}^{-1}, \, H \ageq \lmb_{q}} \abs{Q_{Z, H}(x, \xi)}  \\
	& \aleq g(x) \sum_{Z \ageq \lmb_{q}^{-1}, \, H \ageq \lmb_{q}}  M_{Z, H} \cdot H^{-m} \max\set*{Z^{-1}, \frac{\lmb \mu}{\lmb_{q}}, \mu, \lmb \mu^{2} Z^{2} + \lmb \mu^{3} Z^{3}}^{m}
	Z^{-n} \lmb_{q}^{-n} \\
	& \aleq g(x) \sum_{H \ageq \lmb_{q}, } \left( \sum_{Z \ageq \lmb_{q}^{-1}} M_{Z, H}  (Z \lmb_{q})^{-C_{0}}\right) \, H^{-m} \max\set*{\lmb_{q}, \frac{\lmb \mu}{\lmb_{q}}, \mu, \lmb \mu^{2} \lmb_{q}^{-2} + \lmb \mu^{3} \lmb_{q}^{-3}}^{m} \\	
	%	& \aleq g(x) \left( \sum_{Z \ageq \lmb_{q}^{-1}, \, H \ageq \lmb_{q}} M_{Z, H} Z H (Z \lmb_{q})^{-C_{0}} (H \lmb_{q}^{-1})^{-C_{0}}\right) \max\set*{1, \frac{\lmb}{\lmb_{q}} \mu \lmb_{q}^{-1}, \mu \lmb_{q}^{-1}, \frac{\lmb}{\lmb_{q}} \left(\mu^{2} \lmb_{q}^{-2} + \mu^{3} \lmb_{q}^{-3}\right)}^{m} \\	
	& \aleq g(x) \left( \sum_{Z \ageq \lmb_{q}^{-1}, \, H \ageq \lmb_{q}} M_{Z, H}  (Z \lmb_{q})^{-C_{0}} (H \lmb_{q}^{-1})^{-C_{0}}\right) \max\set*{1, \mu \lmb_{q}^{-1}, \frac{\lmb}{\lmb_{q}} \left(\mu \lmb_{q}^{-1} + \mu^{3} \lmb_{q}^{-3}\right)}^{C_{0}+2} \\
	& \aleq g(x) \calM_{<\lmb_{q}^{-1}, <\lmb_{q}} \max\set*{1, \mu \lmb_{q}^{-1}, \frac{\lmb}{\lmb_{q}} \left(\mu \lmb_{q}^{-1} + \mu^{3} \lmb_{q}^{-3}\right)}^{C_{0}+2}.
	\end{align*}
	Next, invoking \eqref{eq:osc-Q-Hlow} with $m = C_{0} + 2$ and summing over $H \ageq \lmb_{q}$, we obtain
	\begin{align*}
	& \sum_{H \ageq \lmb_{q}} \abs{Q_{< \lmb_{q}^{-1}, H}(x, \xi)} \\
	& \aleq g(x) \sum_{H \ageq \lmb_{q}} M_{< \lmb_{q}^{-1}, H}   \cdot H^{-m} \max\set*{\lmb_{q}, \frac{\lmb \mu}{\lmb_{q}}, \mu, \frac{\lmb}{\lmb_{q}} \frac{\mu^{2}}{\lmb_{q}}  + \frac{\lmb}{\lmb_{q}} \frac{\mu^{3}}{\lmb_{q}^{2}}}^{m} \\
	& \aleq g(x) \left( \sum_{H \ageq \lmb_{q}} M_{< \lmb_{q}^{-1}, H}   (H \lmb_{q}^{-1})^{-C_{0}} \right) \max\set*{1, \mu \lmb_{q}^{-1}, \frac{\lmb}{\lmb_{q}}  \left( \mu \lmb_{q}^{-1}  + \mu^{3} \lmb_{q}^{-3} \right)}^{C_{0}+2} \\
	& \aleq g(x) \calM_{<\lmb_{q}^{-1}, < \lmb_{q}} \max\set*{1, \mu \lmb_{q}^{-1}, \frac{\lmb}{\lmb_{q}} \left(\mu \lmb_{q}^{-1} + \mu^{3} \lmb_{q}^{-3}\right)}^{C_{0}+2}.
	\end{align*}
	From \eqref{eq:osc-Q-lowZ} with $n = C_{0} + 2$ and summing over $Z \ageq \lmb_{q}^{-1}$, we obtain
	\begin{align*}
	\sum_{Z \ageq \lmb_{q}^{-1}} \abs{Q_{Z, < \lmb_{q}}(x, \xi)}
	& \aleq g(x) \sum_{Z \ageq \lmb_{q}^{-1}} M_{Z, < \lmb_{q}}  \left(Z^{-1} \lmb_{q}^{-1} \right)^{n} \\
	& \aleq g(x) \left( \sum_{Z \ageq \lmb_{q}^{-1}} M_{Z, < \lmb_{q}}  (Z \lmb_{q})^{-C_{0}} \right) \\
	& \aleq g(x) \calM_{< \lmb_{q}^{-1}, <\lmb_{q}}.
	\end{align*}
	Combining the preceding three bounds with \eqref{eq:osc-Q-lowlow}, we obtain \eqref{eq:osc-Q-key} with $\ell = 0$.
	
	To handle the case $\ell > 0$, note in \eqref{eq:Q-reduced} that the $\xi$-dependence of $Q(x, \xi)$ arises entirely from that of $\tld{q}(x, \xi; z, \zt)$ in the integrand. By \eqref{eq:q-order} and the definition of $\tld{q}(x, \xi; z, \zt)$, observe that
	\begin{equation*}
	\abs{\rd_{\xi}^{\ell} \rd_{\zt}^{m} \rd_{z}^{n} \tld{q}(x, \xi; z, \zt)}
	\aleq_{\ell, m, n, A_{\Phi, n}, A_{q, \ell+m+n}} \lmb_{q}^{-\ell} \times (\hbox{RHS of \eqref{eq:Q-ampl-reg}}).
	\end{equation*}
	Using the preceding bound in place of \eqref{eq:Q-ampl-reg} and repeating the remainder of the $\ell = 0$ proof, we obtain \eqref{eq:osc-Q-key} with higher $\ell$'s.
	\qedhere
\end{proof}

\begin{remark} \label{rem:conj-err-symb-x}
	Under additional assumptions on $x$-derivatives of $q(x_{1}, x, \lmb_{0}, \xi)$, the computation as above also yields bounds on $\rd_{x}^{k} Q$. For instance, if the symbol $q$ is independent of $x$, then $\abs{\rd_{x}^{k} \rd_{\xi}^{\ell} Q} \aleq \left(\frac{\lmb}{\lmb_{q}}\mu \max\set{1, \mu^{2} \lmb_{q}^{-2}}\right)^{k} \times (\hbox{RHS of \eqref{eq:osc-Q-key} with $g \equiv 1$})$. In this case, by the Calder\'on--Vaillancourt theorem, $Q(x, D)$ is $L^{2}$-bounded with norm $O(\calM_{< \lmb_{q}, < \lmb_{q}^{-1}})$ as long as $\frac{\lmb \mu}{\lmb_{q}^{2}} = O(1)$ and $\lmb_{q}^{-1} \mu = O(1)$. However, since we are able to exploit higher derivatives bounds for $a$ in \eqref{eq:conj-err} (which originate from Section~\ref{sec:HJE}), such spatial derivative bounds for $Q$ will not be necessary; see Section~\ref{subsec:conj-err-kernel} below.
\end{remark}

\subsection{Kernel bounds and proof of Proposition~\ref{prop:conj-err}} \label{subsec:conj-err-kernel}
To complete the proof of Proposition~\ref{prop:conj-err}, it remains to translate the symbol bound \eqref{eq:osc-Q-key} to an operator bound for ${}^{(\Phi)} q(x_{2}, D_{2}; \sgm)$. As we are allowed to use high derivatives of $a$ on the RHS of \eqref{eq:conj-err}, it suffices to utilize the following simple kernel bound: 

\begin{lemma} \label{lem:psdo-kernel}
	Let $n \in \bbN_{0}$ and $p(x, \xi) \in C^{\infty}(\bbR \times \bbR)$ satisfy, for any $0 \leq \ell \leq n$,
	\begin{equation} \label{eq:psdo-kernel-hyp}
	\abs{\rd_{\xi}^{\ell} p(x, \xi)} \leq A_{p, \ell} \lmb_{p}^{-\ell} G(x, \xi),
	\end{equation}
	where $A_{p, \ell}$ are increasing.
	Let $\nu \in 2^{\bbZ}$ and $k \in \bbN_{0}$. Then the kernel $K_{p (-\lap)^{-k} P_{\nu}}(x, y)$ of the operator $p(x, D) (-\lap)^{-k} P_{\nu}(D) u$
	obeys, for any $x \neq y$, the pointwise bound
	\begin{equation} \label{eq:psdo-kernel}
	\abs{K_{p (-\lap)^{-k} P_{\nu}}(x, y)} \aleq_{n, A_{p, n}, k} \frac{\nu^{-2k} \int_{\frac{1}{4} \nu < \abs{\xi} < 4 \nu} G(x, \xi) \, \ud \xi}{\brk{\min \set{\nu, \lmb_{p}} (x-y)}^{n}}.
	\end{equation}
	Moreover, the kernel $K_{p P_{< \nu}}(x, y)$ of the operator $p(x, D) P_{< \nu}(D) u$ obeys the same bound as $K_{p P_{\nu}}$.
\end{lemma}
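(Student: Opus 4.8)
\textbf{Proof plan for Lemma~\ref{lem:psdo-kernel}.} The plan is to compute the kernel explicitly as an oscillatory integral and integrate by parts. Writing $m(\xi) = \abs{\xi}^{-2k} \chi_{\nu}(\xi)$ for the Littlewood--Paley cutoff appearing in $(-\lap)^{-k} P_{\nu}$, the kernel of $p(x, D)(-\lap)^{-k} P_{\nu}$ is
\begin{equation*}
K_{p (-\lap)^{-k} P_{\nu}}(x, y) = \frac{1}{2\pi} \int p(x, \xi) m(\xi) e^{i (x-y) \xi} \, \ud \xi,
\end{equation*}
and the integrand is supported in $\set{\tfrac14 \nu < \abs{\xi} < 4 \nu}$. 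The symbol $p(x, \xi) m(\xi)$ satisfies, by \eqref{eq:psdo-kernel-hyp}, the Leibniz rule, and the bound $\abs{\rd_{\xi}^{\ell} m(\xi)} \aleq_{k, \ell} \nu^{-2k} \nu^{-\ell}$ on its support, the estimate $\abs{\rd_{\xi}^{\ell}(p(x, \xi) m(\xi))} \aleq_{n, A_{p, n}, k} \nu^{-2k} \min\set{\nu, \lmb_{p}}^{-\ell} G(x, \xi)$ for $0 \leq \ell \leq n$, where I used $\lmb_{p}^{-1} \leq \min\set{\nu, \lmb_{p}}^{-1}$ and $\nu^{-1} \leq \min\set{\nu, \lmb_{p}}^{-1}$ so that every derivative costs at most $\min\set{\nu, \lmb_{p}}^{-1}$. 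Using $(x-y)^{n} e^{i(x-y)\xi} = (-i)^{n} \rd_{\xi}^{n} e^{i(x-y)\xi}$ and integrating by parts $n$ times (no boundary terms, since the integrand is compactly supported in $\xi$), we get
\begin{equation*}
\abs{(x-y)^{n} K_{p (-\lap)^{-k} P_{\nu}}(x, y)} \aleq_{n, A_{p, n}, k} \int_{\frac14 \nu < \abs{\xi} < 4 \nu} \nu^{-2k} \min\set{\nu, \lmb_{p}}^{-n} G(x, \xi) \, \ud \xi.
\end{equation*}
Combining this with the trivial bound for $n = 0$ (just take absolute values inside the integral) yields \eqref{eq:psdo-kernel} after inserting the factor $\brk{\min\set{\nu, \lmb_{p}}(x-y)}^{-n}$, which dominates $\min\set{1, \abs{\min\set{\nu, \lmb_{p}}(x-y)}^{-n}}$ up to a constant depending on $n$.

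For the claim about $K_{p P_{< \nu}}$, the only difference is that $m(\xi)$ is replaced by the cutoff $\chi_{< \nu}(\xi)$ supported in $\set{\abs{\xi} < 2\nu}$, with $\abs{\rd_{\xi}^{\ell} \chi_{<\nu}(\xi)} \aleq_{\ell} \nu^{-\ell}$. Since there is no factor $\abs{\xi}^{-2k}$, one sets $k = 0$ in the argument. The region of integration is now $\set{\abs{\xi} < 2\nu}$, but on the piece $\set{\abs{\xi} < \tfrac14 \nu}$ one can still integrate by parts using only $\rd_{\xi}$ acting on $p$ (whose derivatives cost $\lmb_{p}^{-1} \leq \min\set{\nu, \lmb_{p}}^{-1}$), since $\chi_{<\nu}$ is constant there; on the annulus $\set{\tfrac14 \nu \leq \abs{\xi} < 2 \nu}$ one proceeds as before. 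Either way the resulting bound is controlled by $\int_{\abs{\xi} < 4 \nu} G(x, \xi) \, \ud \xi$ divided by $\brk{\min\set{\nu, \lmb_{p}}(x-y)}^{n}$, which is exactly the stated bound with $\tfrac14 \nu < \abs{\xi} < 4 \nu$ weakened to $\abs{\xi} < 4\nu$; since the statement asserts the "same bound as $K_{p P_{\nu}}$" it is enough to have a bound dominated by the one for $K_{p P_{\nu}}$, which holds as the annular integral is a lower bound for the ball integral when $G \geq 0$, and if $G$ is not assumed nonnegative one simply records the ball integral directly.

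The main (and only) subtlety I anticipate is bookkeeping the constant: one must be careful that the $\min\set{\nu, \lmb_{p}}$ in the denominator is correct in both regimes $\nu \lesssim \lmb_{p}$ and $\nu \gtrsim \lmb_{p}$. When $\nu \lesssim \lmb_{p}$, derivatives of the cutoff dominate and cost $\nu^{-1}$; when $\nu \gtrsim \lmb_{p}$, derivatives of $p$ dominate and cost $\lmb_{p}^{-1}$; in both cases the per-derivative cost is $\min\set{\nu, \lmb_{p}}^{-1}$ up to a fixed constant, so the Leibniz rule distributing $n$ derivatives among $p$ and $m$ produces at worst $C_{n} \min\set{\nu, \lmb_{p}}^{-n}$, as needed. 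No genuine analytic difficulty arises; the lemma is purely a stationary-phase-free integration by parts, and the hypotheses have been arranged precisely so that this goes through. This is why the proof in Section~\ref{sec:conj-err} can afford to spend all derivatives of $a$ on the amplitude side (Remark~\ref{rem:conj-err-symb-x}) rather than needing spatial-derivative control of the symbol $Q$.
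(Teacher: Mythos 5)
Your proof is correct and follows essentially the same route as the paper: write the kernel as a compactly supported oscillatory integral, integrate by parts $n$ times using $e^{i(x-y)\xi} = (i(x-y))^{-1}\rd_{\xi} e^{i(x-y)\xi}$, and combine the symbol hypothesis with $\abs{\rd_{\xi}^{\ell} P_{\nu}(\xi)} \aleq_{\ell} \nu^{-\ell}$ so that each derivative costs $\min\set{\nu,\lmb_{p}}^{-1}$. Your extra care with the Leibniz bookkeeping and with the $P_{<\nu}$ support (ball versus annulus) only makes explicit what the paper dismisses as "entirely analogous."
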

\begin{proof}
	By definition,
	\begin{equation*}
	p(x, D) P_{\nu}(D) u = \int p(x, \xi) P_{\nu}(\xi) \hat{u}(\xi) e^{i x \cdot \xi}\, \frac{\ud \xi}{ 2\pi } = \int K_{p P_{\nu}}(x, y) u(y)\, \ud y,
	\end{equation*}
	so formally,
	\begin{equation*}
	K_{p (-\lap)^{-2k} P_{\nu}}(x, y) = \int p(x, \xi) \abs{\xi}^{-2k} P_{\nu}(\xi) e^{i (x-y) \cdot \xi} \, \frac{\ud \xi}{ 2\pi }.
	\end{equation*}
	Thanks to the compact support property of $P_{\nu}(\xi)$, this formal computation is readily justified. Moreover, by the identity $e^{i (x-y) \cdot \xi} = (i (x_{j}-y_{j}))^{-1}\rd_{\xi_{j}} e^{i (x-y) \cdot \xi}$, integration by parts and using the bounds \eqref{eq:psdo-kernel-hyp} and $\abs{\rd_{\xi}^{\ell} P_{\nu}(\xi)} \aleq_{\ell} \nu^{-\ell}$, \eqref{eq:psdo-kernel} follows. The case of $K_{p P_{< \nu}}$ is entirely analogous. \qedhere
\end{proof}
%{It seems to me that $\rd_{\xi} Q$ gains $\lmb_{q}^{-1}$, and $\rd_{x} Q$ loses $\frac{\lmb}{\lmb_{q}}\mu \max\set{1, \mu^{2} \lmb_{q}^{-2}}$. Even $L^{2}$ boundedness seems easy as long as $\frac{\lmb \mu}{\lmb_{q}^{2}} \aleq 1$. But of course, we do not need the $\rd_{x} Q$ bound, as before. Indeed, we directly have the kernel bound (using \eqref{eq:Q-ampl-reg}):
%\begin{equation*}
%	p(x, \xi) \left(\frac{\xi}{\mu}\right)^{-M} \aeq \underbrace{p(x, \xi) \chi_{< \mu}(\xi)}_{\hbox{kernel}: \frac{\mu}{\lmb_{q}} \frac{\lmb_{q}}{(1+ \lmb_{q} \abs{x-y})^{N}}} 
%	+ \sum_{\mu' \geq \mu} \underbrace{p(x, \xi) \chi_{\mu'}(\xi) \left(\frac{\xi}{\mu}\right)^{-M}}_{\hbox{kernel}: \left(\frac{\mu}{\mu'}\right)^{M} \frac{\mu'}{\lmb_{q}} \frac{\lmb_{q}}{(1+ \lmb_{q} \abs{x-y})^{N}}} 
%\end{equation*}
%This should allow us to localize $Q(x, D)$ and to exploit the smallness from $f'(t, x)$.}

We are now ready to complete the proof of Proposition~\ref{prop:conj-err}. 
\begin{proof}
	We introduce the shorthand $Q(x_{2}, \xi_{2}) = {}^{(\Phi)} q(x_{2}, D_{2}; \sgm)$. We begin by splitting
	\begin{align*}
	Q(x_{2}, D_{2}) a
	= Q(x_{2}, D_{2}) P_{<\mu}(D_{2}) a
	+ \sum_{\nu \in 2^{\bbZ}, \nu > \mu} {}^{(\Phi)} q(x_{2}, D_{2}; \sgm) P_{\nu}(D_{2}) a.
	\end{align*}
	Consider the summand $Q(x_{2}, D_{2}) P_{\nu}(D_{2}) a$ with $\nu > \mu$. By Lemma~\ref{lem:conj-err-symb} with $C_{0}= \frac{N}{100}$ and \eqref{eq:conj-err-lmb-mu}, $Q(x_{2}, \xi_{2})$ obeys \eqref{eq:psdo-kernel-hyp} with $G(x, \xi) = g(x_{2}) \calM_{<\lmb_{q}^{-1}, <\lmb_{q}}(x_{2}, \xi_{2}; \sgm)$. By Lemma~\ref{lem:psdo-kernel} with $n = N$, the kernel of $Q(x_{2}, D_{2}) P_{\nu}(D_{2})$ obeys the bound
	\begin{align*}
	\abs{K_{Q (-\rd_{x_{2}}^{2})^{-k} P_{\nu}}(x_{2}, y_{2})} 
	& \aleq \frac{\nu^{-2k} g(x_{2}) \int_{\abs{\xi_{2}} \aeq \nu} \calM_{<\lmb_{q}^{-1}, < \lmb_{q}}(x_{2}, \xi_{2}; \sgm) \ud \xi_{2}}{\brk{\min\set{\nu, \lmb_{q}}(x_{2}-y_{2})}^{N}}.
	\end{align*}
	Note that, for $\nu \geq \mu$,
	\begin{align*}
	\frac{g(x_{2})}{\brk{\nu (y_{2}-x_{2})}^{\frac{N}{100}}}
	& \leq \frac{g(x_{2})}{\brk{\mu (y_{2}-x_{2})}^{\frac{N}{100}}} 
	\aleq \sup_{z_{2}}  \frac{g(z_{2})}{\brk{\mu (y_{2}-z_{2})}^{C_{1}}} = \br{g}_{<\mu^{-1}}(y_{2}), \\
	\frac{\int_{\abs{\xi_{2}} \aeq \nu} \calM_{<\lmb_{q}^{-1}, < \lmb_{q}}(x_{2}, \xi_{2}; \sgm) \ud \xi_{2}}{\brk{\nu(y_{2}-x_{2})}^{\frac{N}{100}}}
	& \aleq \frac{\int_{\abs{\xi_{2}} \aeq \nu} \calM_{<\lmb_{q}^{-1}, < \lmb_{q}}(x_{2}, \xi_{2}; \sgm) \frac{\mu^{-1}}{\brk{\mu^{-1} \xi_{2}}^{\frac{N}{100}}} \ud \xi_{2}}{\brk{\mu(y_{2}-x_{2})}^{\frac{N}{100}}} \mu \left(\frac{\nu}{\mu}\right)^{\frac{N}{100}} \\
	& \leq \br{\calM}_{<\mu^{-1}}(x_{2}; \sgm) \mu \left(\frac{\nu}{\mu}\right)^{\frac{N}{100}}, \\
	{\frac{\brk{\mu (x_{2}-X_{0})}^{s}}{\brk{\nu(y_{2} - x_{2})}^{\abs{s}}}} 
	& {\leq \frac{\brk{\mu (x_{2}-X_{0})}^{s}}{\brk{\mu(y_{2} - x_{2})}^{\abs{s}}}
	\aleq \brk{\mu (y_{2}-X_{0})}^{s}.}
	\end{align*}
	Therefore,
	\begin{align*}
	& \abs{K_{Q (-\rd_{x_{2}}^{2})^{-k} P_{\nu}}(x_{2}, y_{2})} {\brk{\mu (x_{2}-X_{0})}^{s}}\\
	& \aleq \nu^{-2k}  \left(\frac{\mu}{\min\set{\nu, \lmb_{q}}}\right) \left(\frac{\nu}{\mu}\right)^{\frac{N}{100}} \br{g}_{<\mu^{-1}}(y_{2}) \br{\calM}_{<\mu^{-1}}(y_{2}; \sgm) {\brk{\mu (y_{2}-X_{0})}^{s}} \frac{ \min\set{\nu, \lmb_{q}}}{\brk{\min\set{\nu, \lmb_{q}}(x_{2}-y_{2})}^{\frac{N}{2}}} ,
	\end{align*}
	Since $\frac{N}{2} > 1$, the last factor defines a kernel that is $L^{2}$-bounded. Hence,
	\begin{align*}
	&\nrm{Q(x_{2}, D_{2}) P_{\nu}(D_{2}) a  {\brk{\mu (x_{2}-X_{0})}^{s}}}_{L^{2}} \\
	& \aleq \nu^{-2k} \left(\frac{\mu}{\min\set{\nu, \lmb_{q}}}\right) \left(\frac{\nu}{\mu}\right)^{\frac{N}{100}} \nrm{\br{g}_{<\mu^{-1}}(y_{2}) \br{\calM}_{<\mu^{-1}}(y_{2}; \sgm)  \rd_{x_{2}}^{2k} a(y_{2})  {\brk{\mu (y_{2}-X_{0})}^{s}}}_{L^{2}_{y_{2}}}
	\end{align*}
	Choosing $k = \lfloor \frac{N}{2} \rfloor$ and summing over all dyadic numbers $\nu > \mu$ (here, we use $N \gg 100$), we obtain
	\begin{align*}
	&\sum_{\nu \in 2^{\bbZ}, \, \nu > \mu} \nrm{Q(x_{2}, D_{2}) P_{\nu}(D_{2}) a {\brk{\mu (x_{2}-X_{0})}^{s}}}_{L^{2}} \\
	&\aleq \nrm{\br{g}_{<\mu^{-1}}(y_{2}) \br{\calM}_{<\mu^{-1}}(y_{2}; \sgm) (\mu^{-1} \rd_{x_{2}})^{2 \lfloor \frac{N}{2} \rfloor} a(y_{2}) \brk{\mu (y_{2}-X_{0})}^{s}}_{L^{2}_{y_{2}}},
	\end{align*}
	which is acceptable. Moreover, the following can be proved in an entirely analogous manner:
	\begin{align*}
	\nrm{Q(x_{2}, D_{2}) P_{< \mu}(D_{2}) a  {\brk{\mu (x_{2}-X_{0})}^{s}}}_{L^{2}}
	\aleq \nrm{\br{g}_{<\mu^{-1}}(y_{2}) \br{\calM}_{<\mu^{-1}}(y_{2}; \sgm) a(y_{2})  {\brk{\mu (y_{2}-X_{0})}^{s}}}_{L^{2}_{y_{2}}}.
	\end{align*}
	Combining the last two bounds, we obtain \eqref{eq:conj-err}. \qedhere
\end{proof}

\section{Construction of degenerating wave packets}\label{sec:wavepacket}
In this section, we finally put together all tools developed so far to construct degenerating wave packets for $\calL_{\bgtht}$.

\subsection{Specification of the construction} \label{subsec:def-wavepacket}
As in Section~\ref{sec:HJE}, we are given: a symbol $\gmm$ and a function $f$ that satisfy the assumptions in Section~\ref{subsec:results}; $\lmb_{0} \in \bbN$, $M > 1$ and $\dlt_{0} > 0$ that satisfy \eqref{eq:gf-condition-1}--\eqref{eq:gf-condition-3}. When $f$ is time-dependent, we are given $\dlt_{1} > 0$ such that \eqref{eq:gf-condition-diss-2} hold, as well as that $f$ is even. To apply the results of Section~\ref{sec:HJE}, we need to further specify $\dlt_{2} > 0$ and $N_{0}$; $\dlt_{2}$ will be specified below in Proposition~\ref{prop:wp-error}, while {\bf $N_{0}$ will be chosen in Section~\ref{sec:proofs}} (see Proposition~\ref{prop:wp-error} for the condition that $N_{0}$ has to satisfy).

We apply the results of Section~\ref{sec:HJE} with the above parameters, and obtain a global phase function $\Phi = \Phi(t, x_{2})$ on $[0, \frac{1}{1-\eps} t_{f}(\tau_{M})] \times \bbR$ as in Section~\ref{subsec:cutoff-extend}, which agrees with the solution to the Hamilton--Jacobi equation \eqref{eq:HJE} constructed in Sections~\ref{subsec:HJE-s}--\ref{subsec:HJE-gen} in the region $X(t, x'_{0}) \leq x \leq X(t, x'_{1})$. We inherit the parameters $\dlt_{3}$, $\dlt_{4}$, $\dlt_{5}$ and $T$ fixed in Section~\ref{sec:HJE} (see Section~\ref{subsec:HJE-prelim}). The parameters $c_{x_{0}}$ and $\Lmb$, which were not fixed in Section~\ref{sec:HJE}, will be chosen in this section; we shall fix $c_{x_{0}}$ in this subsection, and $\Lmb \geq 1$ in Section~\ref{subsec:deg}. 

Given the global phase function $\Phi$ from Section~\ref{sec:HJE}, the degenerating wave packet takes the form
\begin{equation} \label{eq:wavepacket}
\tld{\varphi}(t, x_{1}, x_{2}) = \Re\left(e^{i \lmb_{0} x_{1}} \tld{\psi} \right), \quad \hbox{ where } \tld{\psi}= a(t, x_{2}) e^{i \Phi(t, x_{2})},
\end{equation}
and $a$ solves \eqref{eq:transport-a}. When $\Omg = \bbT \times \bbR$, {\bf we fix a choice of $c_{x_{0}}$} so that it obeys all the requirements in Section~\ref{sec:HJE}, then we take as the initial data for $a$ the following:
\begin{equation} \label{eq:def-a0}
	a_{0}(x_{2}) = \frac{1}{(x'_1 - x'_{0})^{\frac{1}{2}}} \chi\left( \frac{x_{2} - x'_{0}}{x'_{1}-x'_{0}} \right), 
	\end{equation} where $\chi(\cdot)$ is a smooth nonnegative function supported in $[0,1]$, so that $\supp a_{0} \subseteq (x_{0}, x_{1})$ and $\nrm{a_{0}}_{L^{2}} = 1$. Then $\supp a(t, \cdot) \subseteq (X(t; x'_{0}), X(t; x'_{1}))$, so that \emph{$\Phi(t, x_{2})$ solves \eqref{eq:HJ-Phi} in the support of $\tld{\varphi}$}; hence, the computation in Section~\ref{sec:psdo} is applicable. Moreover, recalling \eqref{eq:x0-x1} or \eqref{eq:def-x1-x0} and \eqref{eq:wp-scale-0}, as well as the fact that $x'_{0} \aeq x_{0}$ and $x'_{1} - x'_{0} \aeq \Dlt x_{0}$, we have, for any $k \geq 0$,
	\begin{equation*}
	\nrm{(\mu_{0}^{-1} \rd_{x})^{k} a_{0}}_{L^{2}} \aleq_{k} 1,
\end{equation*}
so that Proposition~\ref{prop:trans-a} is useful. Moreover, $\nrm{\tld{\varphi}(0, x_{1}, x_{2})}_{L^{2}_{x_{1}, x_{2}}(\Omg)} = 1$, and for $m_{0} > 0 $ sufficiently large,
\begin{equation*}
	\nrm{P_{m_{0}^{-1} \lmb_{0} < \cdot < m_{0} \lmb_{0}} (D_{2}) (e^{i \Phi(0, x_{2})} a_{0})}_{L^{2}_{x_{2}}} \ageq 1,
\end{equation*}
where the implicit constant is independent of $\lmb_{0}$.

When $\Omg = \bbT^{2}$, we simply periodize \eqref{eq:wavepacket} and set
\begin{equation} \label{eq:wavepacket-period}
	\tld{\varphi}(t, x_{1}, x_{2}) = \sum_{n \in \bbZ} {}^{(\bbT \times \bbR)}\tld{\varphi}(t, x_{1}, x_{2} - n).
\end{equation}
where ${}^{(\bbT \times \bbR)}\tld{\varphi}$ is the wave packet in the case $\Omg = \bbT \times \bbR$. {\bf Choosing $c_{x_{0}} > 0$ to be sufficiently small}, we may ensure that \emph{each summand (in particular, $a_{0}$) is supported in a fundamental domain}.

\subsection{Degeneration and initial estimates for the wave packet}\label{subsec:deg} 
The purpose of this subsection is to obtain sharp bounds on the $H^{s}$-norm of $\tld{\varphi}$ for a suitable range of $s$. In what follows, the following convention is in effect: If $\nrm{\cdot}_{X_{x_{2}}}$ is a norm for functions of $x_{2}$, then {\bf $\nrm{\cdot}_{X_{x_{2}}(\Omg)}$ denotes either $\nrm{\cdot}_{X_{x_{2}}(\bbR)}$ or $\nrm{\cdot}_{X_{x_{2}}(\bbT)}$ depending on whether $\Omg = \bbT \times \bbR$ or $\bbT^{2}$, respectively}. We also introduce, for any $k \in \bbN_{0}$ and $L > 0$, 
\begin{equation*}
	\nrm{a}_{H_{(L)}^{k}(\Omg)}^{2} = \sum_{k' = 0}^{k} \nrm{(L \rd_{x_{2}})^{k'} a}_{L^{2}_{x_{2}}(\Omg)}^{2}.
\end{equation*}

\begin{proposition}[Degeneration and initial estimates]\label{prop:deg} 
Let $\Omg = \bbT \times \bbR$ or $\bbT^{2}$, and let $\tld{\varphi}$ be given as in Section~\ref{subsec:def-wavepacket}. For $\lmb_{0} \geq \Lmb$ sufficiently large (depending on $\gmm$ and the parameters fixed in Section~\ref{sec:HJE}), the following statements hold.
\begin{enumerate}
\item There exists a decomposition \begin{equation*}
	\widetilde{\varphi}= \widetilde{\varphi}^{main} + \widetilde{\varphi}^{small}
	\end{equation*} such that for any $s, \sgm \in \bbR$ and $N \in \bbN$ such that $\abs{s} + \bt_{0} \abs{\sgm} \leq N-1$ and $0 \leq t \leq \frac{1}{1-\eps} t_{f}(\tau_{M})$,
	\begin{align} 
	\nrm{\Gmm^{\sgm} \widetilde{\varphi}^{main}(t, x_{1}, x_{2})}_{H^s_{x_{2}}(\Omg)} &\aleq_{\gmm, f, N} \gmm_{\lmb_{0}}(\lmb)^{\sgm} \lmb^s \nrm{ a_{0}}_{H_{(\mu_{0}^{-1})}^{N}(\Omg)}, \label{eq:deg-main} \\
	\nrm{\widetilde{\varphi}^{small}(t, x_{1}, x_{2})}_{L^{2}_{x_{2}}(\Omg)} &\aleq_{\gmm, f} \lmb^{-1} \mu \nrm{ a_{0}}_{H_{(\mu_{0}^{-1})}^{1}(\Omg)}. \label{eq:deg-small}
	\end{align}
\item At $t = 0$, we additionally have, for any $m_{0} > 0$ (and taking $\Lmb$ larger if necessary),
	\begin{align}
	\nrm{\Gmm^{\sgm} \widetilde{\varphi}(0, x_{1}, x_{2})}_{H^{s}_{x_{2}}(\Omg)} &\aleq_{\gmm, f, N} \gmm_{\lmb_{0}}(\lmb_{0})^{\sgm} \lmb_{0}^{s} \nrm{a_{0}}_{H_{(\mu_{0}^{-1})}^{N}(\Omg)},	\label{eq:id-upper} \\
	\nrm{\Gmm^{\sgm} \widetilde{\varphi}(0, x_{1}, x_{2})}_{H^{s}_{x_{2}}(\Omg)} &\ageq_{\gmm, f, m_{0}} \gmm_{\lmb_{0}}(\lmb_{0})^{\sgm} \lmb_{0}^{s} \nrm{P_{m_{0}^{-1}\lmb_{0} < \cdot < m_{0} \lmb_{0}}(D_{2})(e^{i \Phi(0, x_{2})} a_{0}(x_{2}))}_{L^{2}(\Omg)},	\label{eq:id-lower} 
	\end{align}
\end{enumerate}

	Moreover, analogous conclusions hold for $\tld{\psi}$.
\end{proposition}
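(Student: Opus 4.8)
The plan is to reduce everything to the estimates for $\Phi$ and $a$ from Section~\ref{sec:HJE} via a Littlewood--Paley / wave packet decomposition in the $\xi_2$-variable. We work first in the case $\Omg = \bbT \times \bbR$; the $\bbT^2$ case follows by the same argument after noting that periodization \eqref{eq:wavepacket-period} only adds summands supported in disjoint fundamental domains (because $c_{x_0}$ was chosen small), so the Sobolev norms are comparable to those on $\bbT \times \bbR$ up to harmless constants.

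\emph{Step 1: Reduction to $\tld\psi$ and frequency localization.} Since $\tld\varphi = \Re(e^{i\lmb_0 x_1}\tld\psi)$ and $e^{i\lmb_0 x_1}$ is a unit-modulus factor translating the $\xi_1$-frequency by $\lmb_0$, it suffices to prove the stated bounds for $\tld\psi(t,x_2) = a(t,x_2) e^{i\Phi(t,x_2)}$ as a function of $x_2$ alone, with $\gmm$ replaced everywhere by $\gmm_{\lmb_0}$; this is the last assertion of the proposition. Now decompose $\tld\psi = \sum_{\nu} P_\nu(D_2)\tld\psi$ dyadically. From Proposition~\ref{prop:Phi-derivatives} we have $\lmb \le \rd_{x_2}\Phi \le 2\lmb$ and $|\rd_{x_2}^{k}\rd_{x_2}\Phi| \aleq \mu^k \lmb$ on $\supp a(t,\cdot)$, while Proposition~\ref{prop:trans-a} gives $\|(\mu^{-1}\rd_{x_2})^k a\|_{L^2} \aleq \|a_0\|_{H^k_{(\mu_0^{-1})}}$. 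Since $\mu \ll \lmb$ (indeed $\mu/\lmb$ is a small power of $\lmb_0$ times $\gmm_{\lmb_0}(\lmb_0)^{-1}\eps(\lmb_0)^{-1}x_0^{-1}\cdots$, which the conditions \eqref{eq:gf-condition-1}--\eqref{eq:gf-condition-3} force to be $\ll \lmb$ for $\lmb_0 \ge \Lmb$), the function $e^{i\Phi}a$ is, up to rapidly-decaying tails, frequency-localized at $\xi_2 \aeq \lmb$: repeated integration by parts in the formula for $P_\nu(D_2)(e^{i\Phi}a)$, using $e^{i\Phi} = (\rd_{x_2}\Phi)^{-1}\tfrac{1}{i}\rd_{x_2}e^{i\Phi}$ and the derivative bounds above, yields for any $N$
\begin{equation*}
	\|P_\nu(D_2)(e^{i\Phi}a)\|_{L^2_{x_2}} \aleq_{N} \Bigl(\tfrac{\min\{\nu,\lmb\}}{\max\{\nu,\lmb\}}\Bigr)^{N}\Bigl(\tfrac{\mu}{\min\{\nu,\lmb\}}\Bigr)^{N}\sum_{k=0}^{N}\|(\mu^{-1}\rd_{x_2})^k a\|_{L^2_{x_2}}
\end{equation*}
for $\nu \ne \lmb$ (dyadically), with the main contribution from $\nu \aeq \lmb$ bounded by $\sum_{k\le N}\|(\mu^{-1}\rd_{x_2})^k a\|_{L^2}$. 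This is the technical heart of the argument.

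\emph{Step 2: Splitting into main and small pieces; proof of (1).} Define $\tld\psi^{main} = P_{\aeq \lmb}(D_2)\tld\psi$ (frequencies in, say, $[\lmb/C_0, C_0\lmb]$ for a fixed large $C_0 = C_0(\gmm,f)$) and $\tld\psi^{small} = \tld\psi - \tld\psi^{main}$. For $\tld\psi^{main}$: on its frequency support $\gmm_{\lmb_0}(\xi_2) \aeq \gmm_{\lmb_0}(\lmb)$ by slow variation \eqref{eq:slow-var}, and $|\xi_2|^s \aeq \lmb^s$, so $\|\Gmm^\sigma \tld\psi^{main}\|_{H^s_{x_2}} \aleq \gmm_{\lmb_0}(\lmb)^\sigma \lmb^s \|\tld\psi^{main}\|_{L^2}$, and $\|\tld\psi^{main}\|_{L^2} \le \|\tld\psi\|_{L^2} \aeq \|a\|_{L^2} \aleq \|a_0\|_{L^2}$ by Step~1; combined with Proposition~\ref{prop:trans-a} this gives \eqref{eq:deg-main}. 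For $\tld\psi^{small}$, summing the tail bound of Step~1 over $\nu \not\aeq \lmb$ (the geometric series in $\nu/\lmb$ and $\lmb/\nu$ both converge once $N$ is large enough, using $\mu \ll \lmb$), we get $\|\tld\psi^{small}\|_{L^2} \aleq (\mu/\lmb)\sum_{k\le N}\|(\mu^{-1}\rd_{x_2})^k a\|_{L^2}$, and one checks that a single derivative of $a$ already suffices in this series (the $k=0,1$ terms dominate once the geometric factor is extracted), yielding \eqref{eq:deg-small}. The constraint $|s| + \bt_0|\sigma| \le N-1$ enters precisely to absorb the loss $\gmm_{\lmb_0}(\nu)^{\sigma}|\nu|^{s} \aleq (\nu/\lmb)^{\bt_0|\sigma|+|s|}\gmm_{\lmb_0}(\lmb)^\sigma\lmb^s$ (via the iterated slow-variance bound $\gmm(\xi)\aleq |\xi|^{\bt_0}|\Xi_0|^{-\bt_0}\gmm(\Xi_0)$) against the decay $(\nu/\lmb)^{-N}$ when summing the high-frequency tail.

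\emph{Step 3: Estimates at $t=0$ (part (2)).} Here $\Phi(0,\cdot)$ and $a_0$ are the explicit initial data of Section~\ref{subsec:HJE-prelim}, with $\lmb_0 \le \rd_{x_2}\Phi(0,x_2) \le (1+\eps)\lmb_0$ and $|\rd_{x_2}^{k}\rd_{x_2}\Phi(0,x_2)| \aleq \mu_0^k \lmb_0$ from Proposition~\ref{prop:Phi-derivatives-time-indep} (together with \eqref{eq:wp-scale-0}). The upper bound \eqref{eq:id-upper} is exactly Step~2 specialized to $t=0$ (with $\lmb \rightsquigarrow \lmb_0$, $\mu \rightsquigarrow \mu_0$), noting $\mu_0 \ll \lmb_0$ since $\mu_0 = (\log\lmb_0)^2/(x_0\eps(\lmb_0))$ and $x_0 = c_{x_0}\eps(\lmb_0)$ gives $\mu_0 \aeq (\log\lmb_0)^2 \eps(\lmb_0)^{-2} \ll \lmb_0^{2\sigma_0 + \dlt_0/3} \le \lmb_0$ by \eqref{eq:eps-choice} and the restriction $\sigma_0 \le \tfrac13(1-2\dlt_0)$. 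For the lower bound \eqref{eq:id-lower}: write $\|\Gmm^\sigma\tld\psi(0)\|_{H^s} \ge \|\Gmm^\sigma P_{m_0^{-1}\lmb_0 < \cdot < m_0\lmb_0}(D_2)(e^{i\Phi(0,\cdot)}a_0)\|_{H^s}$; on that frequency band $\Gmm^\sigma \aeq \gmm_{\lmb_0}(\lmb_0)^\sigma$ and $|\xi_2|^s \aeq \lmb_0^s$ (up to constants depending on $m_0$), so this is $\ageq \gmm_{\lmb_0}(\lmb_0)^\sigma \lmb_0^s \|P_{m_0^{-1}\lmb_0<\cdot<m_0\lmb_0}(D_2)(e^{i\Phi(0,\cdot)}a_0)\|_{L^2}$, which is \eqref{eq:id-lower}; one also records, using Step~1 at $t=0$, that this last $L^2$-norm is $\ageq 1$ once $m_0$ is large enough, since the full $L^2$-norm of $e^{i\Phi(0,\cdot)}a_0$ is $\aeq 1$ and the complementary frequency pieces are $O((\mu_0/\lmb_0)^N) = o(1)$.

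\emph{Main obstacle.} The one genuinely delicate point is Step~1: establishing that $e^{i\Phi}a$ is frequency-localized near $\xi_2 \aeq \lmb$ with \emph{quantitative} control $(\mu/\lmb)^N$ on the off-diagonal frequency tails, uniformly on the long time interval $[0,\tfrac{1}{1-\eps}t_f(\tau_M)]$. This requires carefully combining the sharp higher-derivative bounds $|\rd_{x_2}^{k}\rd_{x_2}\Phi| \aleq \mu^k\lmb$ (Proposition~\ref{prop:Phi-derivatives}) with $\mu \ll \lmb$ — a separation that is itself nontrivial and relies on the growth conditions \eqref{eq:gf-condition-1}--\eqref{eq:gf-condition-3} through the identity \eqref{eq:scale-comp-key} — so that the integration-by-parts argument in $\xi_2$ does not lose powers of $\lmb$. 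Everything else (the slow-variance bookkeeping in Step~2, the explicit-data estimates in Step~3, the periodization in the $\bbT^2$ case) is routine once this localization is in hand.
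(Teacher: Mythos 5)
Your Step~1 is where the argument breaks. The bound you claim,
\begin{equation*}
\nrm{P_{\nu}(D_{2})(e^{i\Phi}a)}_{L^{2}} \aleq_{N} \Bigl(\tfrac{\min\{\nu,\lmb\}}{\max\{\nu,\lmb\}}\Bigr)^{N}\Bigl(\tfrac{\mu}{\min\{\nu,\lmb\}}\Bigr)^{N}\sum_{k\le N}\nrm{(\mu^{-1}\rd_{x_{2}})^{k}a}_{L^{2}},
\end{equation*}
asserts that $e^{i\Phi}a$ is frequency-concentrated at scale $\mu$ about $\lmb$ (decay $(\mu/\lmb)^{N}$ for \emph{every} $\nu<\lmb$, $(\mu/\nu)^{N}$ for $\nu>\lmb$). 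But the argument you cite — iterating $e^{i\Phi}=(\rd_{x_{2}}\Phi)^{-1}\tfrac1i\rd_{x_{2}}e^{i\Phi}$ and pulling out $P_{\nu}D_{2}$ or $P_{\nu}D_{2}\abs{D_{2}}^{-2}$ — is exactly the paper's Lemma~\ref{lem:wp-dyadic}, and it only yields $\min\{((\nu+\mu)/\lmb)^{n},((\lmb+\mu)/\nu)^{n}\}$: the operator-norm bookkeeping $\nrm{P_{\nu}D_{2}}\aleq\nu$ cannot see the distance $\abs{\nu-\lmb}$, so for $\mu\ll\nu\ll\lmb$ it gives only $(\nu/\lmb)^{n}$, not $(\mu/\lmb)^{n}$. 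This matters because your decomposition puts every frequency outside a fixed band $[\lmb/C_{0},C_{0}\lmb]$ into $\tld{\varphi}^{small}$: with the correctly justified bound, the dyadic blocks with $\nu\aeq\lmb/C_{0}$ or $\nu\aeq C_{0}\lmb$ each contribute $\aeq C_{0}^{-N}$, a constant independent of $\lmb_{0}$, so your small piece is only $O(1)$ and fails \eqref{eq:deg-small}, which demands $O(\mu/\lmb)=O(\lmb_{0}^{-1/3-})$. Relatedly, your remark that ``a single derivative of $a$ already suffices'' is inconsistent: any gain of order $N$ comes from $N$ integrations by parts and hence uses up to $N$ derivatives of $a$, so at best you would get \eqref{eq:deg-small} with $\nrm{a_{0}}_{H^{N}_{(\mu_{0}^{-1})}}$ in place of $H^{1}_{(\mu_{0}^{-1})}$.

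The paper avoids all of this by thresholding at $\mu$ rather than at $\aeq\lmb$: $\tld{\psi}^{main}=\sum_{\nu\ge\mu}P_{\nu}\tld{\psi}$ and $\tld{\psi}^{small}=P_{<\mu}\tld{\psi}$. Then \eqref{eq:wp-dyadic-low} with $n=1$ gives the small piece $\aleq(\mu/\lmb)\nrm{a_{0}}_{H^{1}_{(\mu_{0}^{-1})}}$ exactly as in \eqref{eq:deg-small}, while \eqref{eq:deg-main} follows by summing the crude dyadic bounds against the weights $\gmm_{\lmb_{0}}(\nu)^{\sgm}\brk{\nu}^{s}$ — this is precisely where the hypothesis $\abs{s}+\bt_{0}\abs{\sgm}\le N-1$ is consumed (in your scheme it is used nowhere, a symptom that the weighted tail bookkeeping has been shifted onto the unproven localization). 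To salvage your band decomposition you would have to prove the sharp concentration by a genuine non-stationary-phase argument, integrating by parts in $\iint\chi_{\nu}(\xi)\,a(y)e^{i(\Phi(y)-\xi y)}\,\ud y\,\ud\xi$ using $\abs{\rd_{x_{2}}\Phi(y)-\xi}\ageq\max\{\lmb,\nu\}$ off the band together with Proposition~\ref{prop:Phi-derivatives}; that is true but is a different and heavier argument than the one you give, and it still costs many derivatives of $a$. Your reduction to $\tld{\psi}$, the main-piece estimate, and Step~3 are fine as written.
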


The core ingredient of the proof of Proposition~\ref{prop:deg} is the following simple lemma (on $\bbR$):
\begin{lemma} \label{lem:wp-dyadic}
	Assume $n \in \bbN_{0}$, $\lmb \leq \abs{\rd_{x_{2}} \Phi(t, x_{2})} \leq \br{\lmb}$ and $\abs{\rd_{x_{2}}^{\ell} \rd_{x_{2}} \Phi} \leq A_{\Phi, \ell} \mu^{\ell} \br{\lmb}$ on $\bbR$ for $1 \leq \ell \leq n$, where $A_{\Phi, \ell} > 0$ is increasing. Then for any $\nu \geq 1$ and function $a = a(x_{2})$,
	\begin{align} 
	\nrm{P_{\nu}(D_{2})(e^{i \Phi} a)}_{L^{2}(\bbR)} &\aleq_{n, A_{\Phi, n}} \min \set*{\left(\frac{\nu + ( \br\lmb / \lmb) \mu}{\lmb}\right)^{n}, \left(\frac{\br{\lmb} + \mu}{\nu}\right)^{n} } \sum_{\ell=0}^{n} \nrm{(\mu^{-1} \rd_{x_{2}})^{\ell} a}_{L^{2}(\bbR)}, \label{eq:wp-dyadic} \\
	\nrm{P_{<\nu}(D_{2})(e^{i \Phi} a)}_{L^{2}(\bbR)} &\aleq_{n, A_{\Phi, n}} \left(\frac{\nu + ( \br\lmb / \lmb)  \mu}{\lmb}\right)^{n} \sum_{\ell=0}^{n} \nrm{(\mu^{-1} \rd_{x_{2}})^{\ell} a}_{L^{2}(\bbR)}. \label{eq:wp-dyadic-low}
	\end{align}
\end{lemma}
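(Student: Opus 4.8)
\textbf{Proof plan for Lemma~\ref{lem:wp-dyadic}.} The plan is to estimate $P_{\nu}(D_{2})(e^{i\Phi}a)$ by exploiting a non-stationary phase argument in the representation
\begin{equation*}
P_{\nu}(D_{2})(e^{i\Phi(t,\cdot)}a)(x_{2}) = \int K_{P_{\nu}}(x_{2}-y_{2}) e^{i\Phi(t,y_{2})} a(y_{2}) \, \ud y_{2},
\end{equation*}
or, dually, by writing the Fourier transform of $e^{i\Phi}a$ at frequency $\eta$ and integrating by parts in the physical variable against the oscillation $e^{i(\Phi(y_{2})-y_{2}\eta)}$. The point is that on $\mathrm{supp}\, P_{\nu}$ the relevant phase $\Phi(y_{2})-y_{2}\eta$ has derivative $\rd_{x_{2}}\Phi(y_{2})-\eta$, which is bounded below by $\max\set{\lmb-\tfrac12\nu, \nu - \br\lmb}$ (once one splits into $\nu \gg \br\lmb$ versus $\nu \ll \lmb$ using the hypothesis $\lmb \leq \abs{\rd_{x_{2}}\Phi} \leq \br\lmb$), so integrating by parts $n$ times produces the gain. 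First I would handle the regime $\nu \geq 4\br\lmb$: here $\abs{\rd_{x_{2}}\Phi(y_{2})-\eta} \ageq \nu$ for all $\eta \in \mathrm{supp}\, P_{\nu}$, and repeated integration by parts against $\tfrac{1}{i(\rd_{x_{2}}\Phi-\eta)}\rd_{y_{2}}$ costs, at each step, either a derivative landing on $a$ (giving a factor $\mu^{-1}\rd_{x_{2}}a$, accounting for the sum over $\ell$) or a derivative of $(\rd_{x_{2}}\Phi-\eta)^{-1}$, which by the bounds $\abs{\rd_{x_{2}}^{\ell}\rd_{x_{2}}\Phi}\leq A_{\Phi,\ell}\mu^{\ell}\br\lmb$ and $\abs{\rd_{x_{2}}\Phi-\eta}\ageq\nu$ contributes at most $\mu\br\lmb/\nu^{2}$ per pair of derivatives, hence an overall factor $(\mu/\nu + \br\lmb\mu/\nu^{2} + \cdots)^{n}$, which is $\aleq ((\br\lmb+\mu)/\nu)^{n}$ after using $\mu \leq \br\lmb$ or absorbing; together with the trivial $L^{2}$-boundedness of $P_{\nu}$ this gives the second term in the minimum in \eqref{eq:wp-dyadic}.

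Next I would treat the regime $\nu \leq \tfrac{1}{4}\lmb$: now $\abs{\rd_{x_{2}}\Phi(y_{2})-\eta}\ageq\lmb$ on $\mathrm{supp}\,P_{\nu}$, and the same integration by parts yields a gain $(\nu/\lmb + \br\lmb\mu/\lmb^{2} + \cdots)^{n}$; using $\br\lmb/\lmb \geq 1$ one rewrites $\br\lmb\mu/\lmb^{2} = (\br\lmb/\lmb)\mu/\lmb$, so the gain is $\aleq ((\nu + (\br\lmb/\lmb)\mu)/\lmb)^{n}$, matching the first term in \eqref{eq:wp-dyadic}. Actually the cleanest bookkeeping is to integrate by parts against $\tfrac{1}{i(\rd_{x_{2}}\Phi-\eta)}\rd_{y_{2}}$ and track: the base size $\nu^{-1}$ from $K_{P_{\nu}}$ (or equivalently $\abs{\mathrm{supp}\,P_{\nu}}$ in the dual picture) against the decay factor, with each integration by parts giving $\ageq \min\set{\lmb,\nu}$ net decay, and then compare; I would organize this as a single Leibniz-rule estimate on $\rd_{y_{2}}^{n}\big((\rd_{x_{2}}\Phi-\eta)^{-n}a(y_{2})\big)$ using Fa\`a di Bruno and the Cauchy--Schwarz/Plancherel to pass to $L^{2}$. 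The intermediate regime $\tfrac14\lmb \leq \nu \leq 4\br\lmb$ requires no oscillation: one just uses that $P_{\nu}$ is $L^{2}$-bounded and that both terms in the minimum in \eqref{eq:wp-dyadic} are $\ageq 1$ there (since $\nu \aeq \lmb \aeq \br\lmb$ up to constants in that window, so $(\nu+(\br\lmb/\lmb)\mu)/\lmb \ageq 1$ and $(\br\lmb+\mu)/\nu\ageq 1$), so the bound is trivial. Finally, \eqref{eq:wp-dyadic-low} follows by the identical argument applied to $P_{<\nu}(D_{2})$, whose Fourier support lies in $\abs{\eta}\leq\nu$, so only the regime $\nu \leq \tfrac14\lmb$ part of the argument (plus the trivial $L^{2}$ bound when $\nu \ageq \lmb$) is needed, yielding the single gain factor $((\nu+(\br\lmb/\lmb)\mu)/\lmb)^{n}$.

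The main obstacle I anticipate is purely bookkeeping: carefully tracking how the various scales $\lmb, \br\lmb, \mu, \nu$ enter the gain after $n$ integrations by parts — in particular verifying that the cross terms (a derivative hitting $(\rd_{x_{2}}\Phi-\eta)^{-1}$ versus hitting $a$) combine to exactly $(\text{gain})^{n}\sum_{\ell\leq n}\nrm{(\mu^{-1}\rd_{x_{2}})^{\ell}a}_{L^{2}}$ and not something with mixed powers — and making sure the split into the three $\nu$-regimes is clean, i.e.\ that in the non-oscillatory window the claimed RHS is genuinely $\ageq$ the trivial bound. There is no analytic difficulty; the hypotheses $\lmb \leq \abs{\rd_{x_{2}}\Phi}\leq\br\lmb$ and $\abs{\rd_{x_{2}}^{\ell}\rd_{x_{2}}\Phi}\leq A_{\Phi,\ell}\mu^{\ell}\br\lmb$ are exactly what is needed to lower-bound the phase derivative and to control the amplitude derivatives, respectively, and everything closes by Plancherel. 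I would also note for later use (in deriving Proposition~\ref{prop:deg}) that the estimate is stated on $\bbR$, so the periodization in \eqref{eq:wavepacket-period} and the passage to $\Omg = \bbT^{2}$ will be handled separately using the support properties of $a_{0}$ fixed in Section~\ref{subsec:def-wavepacket}.
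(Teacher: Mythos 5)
Your overall strategy (exploit that $e^{i\Phi}$ lives at frequencies in $[\lmb,\br\lmb]$ while $P_{\nu}$ selects frequency $\nu$, and gain by integrating by parts against the oscillation) is sound, but there is a genuine gap at the step you dismiss as having ``no analytic difficulty,'' namely the passage to $L^{2}$. If you integrate by parts $n$ times against $\tfrac{1}{i(\rd_{x_{2}}\Phi-\eta)}\rd_{y_{2}}$ at fixed $\eta\in\supp P_{\nu}$, what you control is the \emph{pointwise} size of $\widehat{e^{i\Phi}a}(\eta)$, and the resulting amplitude still depends on $\eta$ through the powers of $(\rd_{x_{2}}\Phi(y_{2})-\eta)^{-1}$. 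Closing by ``Cauchy--Schwarz/Plancherel'' then gives a bound of the form $\abs{\supp P_{\nu}}^{1/2}\,(\mathrm{gain})^{n}\,\nrm{\sum_{\ell\leq n}\abs{(\mu^{-1}\rd_{x_{2}})^{\ell}a}}_{L^{1}}$, i.e.\ with an extra factor $\aeq\nu^{1/2}$ and an $L^{1}$ norm of the derivatives of $a$ in place of the $L^{2}$ norms demanded by \eqref{eq:wp-dyadic}; the lemma is stated (and used) for arbitrary $a\in L^{2}$ with no support or integrability assumption, so this is not the stated estimate, and even for compactly supported $a$ it costs powers that cannot be recovered with only $n$ derivatives available. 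To make your route rigorous you must handle the $\eta$-dependence of the integrated-by-parts amplitude --- e.g.\ also integrate by parts in $\eta$ to produce a kernel with decay in $\brk{\nu(x_{2}-y_{2})}$ and conclude by Schur's test, or expand $(\rd_{x_{2}}\Phi-\eta)^{-1}$ in a convergent series separating the $y_{2}$- and $\eta$-dependence --- and that is an additional idea, not bookkeeping.

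The paper sidesteps both this issue and your three-regime case analysis by integrating by parts against the $\eta$-independent operator $\tfrac{1}{i\rd_{x_{2}}\Phi}\rd_{x_{2}}$ and by a dual smoothing identity:
\begin{align*}
P_{\nu}(D_{2})\left(a e^{i \Phi}\right) &= - P_{\nu}(D_{2}) D_{2} \left(\tfrac{a}{\rd_{x_{2}} \Phi} e^{i \Phi} \right) - P_{\nu}\left(\rd_{x_{2}}(\tfrac{a}{i \rd_{x_{2}}\Phi}) e^{i \Phi} \right), \\
P_{\nu}(D_{2})\left(a e^{i \Phi}\right) &= \frac{P_{\nu}(D_{2}) D_{2}}{\abs{D_{2}}^{2}}\, i \rd_{x_{2}} \left(a e^{i \Phi}\right),
\end{align*}
iterated $n$ times, using only $\nrm{P_{\nu}(D_{2})D_{2}}_{L^{2}\to L^{2}}\aleq\nu$ and $\nrm{P_{\nu}(D_{2})D_{2}\abs{D_{2}}^{-2}}_{L^{2}\to L^{2}}\aleq\nu^{-1}$. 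The first identity yields the factor $((\nu+(\br\lmb/\lmb)\mu)/\lmb)^{n}$ --- the $\nu$ coming from $P_{\nu}D_{2}$, the $(\br\lmb/\lmb)\mu/\lmb$ from derivatives hitting $a/\rd_{x_{2}}\Phi$ --- and the second yields $((\br\lmb+\mu)/\nu)^{n}$ using only the upper bound $\abs{\rd_{x_{2}}\Phi}\leq\br\lmb$; no lower bound on $\abs{\rd_{x_{2}}\Phi-\eta}$ and no splitting in $\nu$ is needed, both bounds hold for every $\nu\geq 1$, and \eqref{eq:wp-dyadic-low} follows from the first identity alone. Two smaller points in your plan: in the intermediate window it is not true that $\nu\aeq\lmb\aeq\br\lmb$ (the lemma allows $\br\lmb\gg\lmb$), though your conclusion there survives since $(\br\lmb+\mu)/\nu\geq 1/4$ and $(\nu+(\br\lmb/\lmb)\mu)/\lmb\geq 1/4$ already follow from $\lmb/4\leq\nu\leq 4\br\lmb$; and your kernel-side variant of the low-frequency regime, in which $\rd_{y_{2}}$ is also allowed to hit $K_{P_{\nu}}(x_{2}-y_{2})$ at cost $\nu$, is in substance the paper's first identity and would be the cleanest way to repair your argument.
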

\begin{proof}
To prove \eqref{eq:wp-dyadic} for $n = 1$, we write
\begin{align*}
	P_{\nu}(D_{2})\left(a e^{i \Phi}\right) &= P_{\nu}(D_{2})\left( \tfrac{a}{i \rd_{x_{2}} \Phi} \rd_{x_{2}} e^{i \Phi} \right) 
	= - P_{\nu}(D_{2}) D_{2} \left(\tfrac{a}{\rd_{x_{2}} \Phi} e^{i \Phi} \right) - P_{\nu}\left(\rd_{x_{2}}(\tfrac{a}{i \rd_{x_{2}}\Phi}) e^{i \Phi} \right) , \hbox{ or } \\
	P_{\nu}(D_{2})\left(a e^{i \Phi}\right) &= \frac{P_{\nu}(D_{2}) D_{2}}{\abs{D_{2}}^{2}} i \rd_{x_{2}} \left(a e^{i \Phi}\right),
\end{align*}
and use the hypotheses, as well as the bounds $\nrm{P_{\nu}(D_{2}) D_{2}}_{L^{2} \to L^{2}} \aleq \nu$ and $\nrm{\tfrac{P_{\nu}(D_{2})D_{2}}{\abs{D_{2}}^{2}}}_{L^{2} \to L^{2}} \aleq \nu^{-1}$. Higher $n$'s are handled by iteration. Finally, \eqref{eq:wp-dyadic-low} is proved using an analogous identity as the first one for $P_{< \nu}$. \qedhere
\end{proof}

\begin{proof}[Proof of Proposition~\ref{prop:deg}]
To begin with, observe that, thanks to the support conditions for $\tld{\varphi}$ and $a_{0}$, we may assume that $\Omg = \bbT \times \bbR$ without any loss of generality. Note the identity
\begin{equation} \label{eq:deg-varphi-psi}
	\brk{D}^{s} \Gmm^{\sgm} \tld{\varphi}  = \brk{D}^{s} \Gmm^{\sgm} \Re(e^{i \lmb_{0} x_{1}} \tld{\psi}) = \tfrac{1}{2}(e^{i \lmb_{0} x_{1}} \brk{D_{2}}_{\lmb_{0}}^{s} \gmm_{\lmb_{0}}(D_{2})^{\sgm} \tld{\psi} + e^{-i \lmb_{0} x_{1}} \brk{D_{2}}_{-\lmb_{0}}^{s} \gmm_{-\lmb_{0}}(D_{2})^{\sgm} \br{\tld{\psi}}),
\end{equation}
where $\brk{\xi_{2}}_{\lmb_{0}}^{s} = (\lmb_{0}^{2} + \xi_{2}^{2})^{\frac{s}{2}}$. In view of the symmetries $\brk{\xi_{2}}_{-\lmb_{0}} = \brk{\xi_{2}}_{\lmb_{0}}$ and $\gmm_{-\lmb_{0}}(\xi_{2}) = \gmm_{\lmb_{0}}(-\xi_{2})$, the two terms on the RHS are always treated similarly. In what follows, we focus on $\brk{D_{2}}_{\lmb_{0}}^{s} \gmm_{\lmb_{0}}(D_{2})^{\sgm} \tld{\psi}$, except in the proof of \eqref{eq:id-lower}.
To simplify the notation, we shall abbreviate $P_{\nu} = P_{\nu}(D_{2})$, $L^{2} = L^{2}_{x_{2}}$ and $H^{s}= H^{s}_{x_{2}}$ in this proof. We shall also often suppress the dependence of constants on $\gmm$ and $f$.

Note that $\rd_{x_{2}} \Phi$ obeys the hypothesis of Lemma~\ref{lem:wp-dyadic} with $\br{\lmb} \aleq \lmb$. Applying Lemma~\ref{lem:wp-dyadic} for any $0 \leq n \leq N$, then using Proposition~\ref{prop:trans-a} to bound $\nrm{a}_{H_{(\mu^{-1})}^{n}} \aleq \nrm{a_{0}}_{H_{(\mu_{0}^{-1})}^{n}}$, we obtain
	\begin{align}
\nrm{P_{\nu}(D_{2})\tld{\psi}}_{L^{2}}  
&\aleq_{N} \min \set*{\left(\frac{\nu + \mu}{\lmb}\right)^{n}, \left(\frac{\lmb + \mu}{\nu}\right)^{n}} \nrm{a_{0}}_{H^{n}_{(\mu_{0}^{-1})}}, \label{eq:wp-dyadic-a0} \\
\nrm{P_{< \nu}(D_{2})\tld{\psi}}_{L^{2}}  
&\aleq_{N} \left(\frac{\nu + \mu}{\lmb}\right)^{n} \nrm{a_{0}}_{H^{n}_{(\mu_{0}^{-1})}}. \label{eq:wp-dyadic-low-a0}
\end{align}
From \eqref{eq:wp-scale}, note moreover that
\begin{equation}  \label{eq:mu-precise}
	\frac{\mu}{\lmb} \aleq \lmb_{0}^{-1+2 \dlt_{3} N_{0}} (\log \lmb_{0})^{-2} \eps(\lmb_{0})^{-1} x_{0}^{-1} = \lmb_{0}^{-1+2 \dlt_{3} N_{0}} (\log \lmb_{0})^{-2} \eps(\lmb_{0})^{-2} c_{x_{0}}^{-1},
\end{equation}
where the implicit constant depends only on $\gmm$. Recall \eqref{eq:eps-choice} and that $\dlt_{3} \ll \frac{\dlt_{2}}{N_{0}} \ll \frac{\dlt_{0}}{N_{0}}$ (see Proposition~\ref{prop:Phi-derivatives}). Hence, by taking $\lmb_{0} \geq \Lmb$ sufficiently large, we may ensure that
\begin{equation} \label{eq:mu-small}
	\frac{\mu}{\lmb} \leq \lmb_{0}^{-\frac{1}{3}-\frac{1}{10} \dlt_{0}}.
\end{equation}
We also observe that, in view of Assumption~1 for $\Gmm$, we have, for any $b \in L^{2}(\bbR)$ and $\nu > 0$,
\begin{align} 
	\nrm{\brk{D_{2}}_{\lmb_{0}} \gmm_{\lmb_{0}}(D_{2}) P_{\nu} b}_{L^{2}}
	&\aleq_{\gmm, s, \sgm} (\nu + \lmb_{0})^{s} \gmm_{\lmb_{0}}(\nu)^{\sgm} \nrm{P_{\nu} b}_{L^{2}}, \label{eq:multiplier-dyadic} \\
	\nrm{\brk{D_{2}}_{\lmb_{0}} \gmm_{\lmb_{0}}(D_{2}) P_{< \lmb_{0}} b}_{L^{2}}
	&\aleq_{\gmm, s, \sgm} \lmb_{0}^{s} \gmm_{\lmb_{0}}(\lmb_{0})^{\sgm} \nrm{P_{< \lmb_{0}} b}_{L^{2}}. \label{eq:multiplier-dyadic-low}
\end{align}
Now we are ready to begin the proof of the proposition in earnest. To prove the first statement, we begin by defining 
\begin{equation*}
\tld{\psi}^{main} = \sum_{\nu \geq \mu} P_{\nu} \tld{\psi}, \qquad \tld{\psi}^{small} = \tld{\psi} - \tld{\psi}^{main},
\end{equation*}
and, accordingly, $\tld{\varphi}^{main} = \Re(e^{i \lmb_{0} x_{1}} \tld{\psi}^{main})$ and $\tld{\varphi}^{small}= \tld{\varphi} - \tld{\varphi}^{main}$.
Applying \eqref{eq:wp-dyadic-a0} with $n = N$, as well as \eqref{eq:multiplier-dyadic} and \eqref{eq:slow-var}, we obtain
\begin{align*}
	\gmm_{\lmb_{0}}(\lmb)^{-\sgm} \lmb^{-s} \nrm{\brk{D_{2}}_{\lmb_{0}} \gmm_{\lmb_{0}}(D_{2}) \sum_{\nu \geq \mu} P_{\nu} \tld{\psi}}_{L^{2}}
	&\aleq_{N} \sum_{\mu \leq \nu < \max \set{\mu, \lmb_{0}}} \left( \frac{\lmb}{\lmb_{0}} \right)^{\bt_{0} \abs{\sgm}} \left( \frac{\lmb_{0}}{\lmb} \right)^{s} \left(\frac{\nu}{\lmb}\right)^{N} \nrm{a_{0}}_{H^{N}_{(\mu_{0}^{-1})}} \\
	&\peq 
	+\sum_{\max\set{\mu, \lmb_{0}} \leq \nu < \lmb} \left( \frac{\lmb}{\nu} \right)^{\bt_{0} \abs{\sgm}} \left( \frac{\nu}{\lmb} \right)^{s} \left(\frac{\nu}{\lmb}\right)^{N} \nrm{a_{0}}_{H^{N}_{(\mu_{0}^{-1})}} \\
	&\peq 
	+ \sum_{\nu > \lmb} \left( \frac{\nu}{\lmb} \right)^{\bt_{0} \abs{\sgm}} \left( \frac{\nu}{\lmb} \right)^{s} \left(\frac{\lmb}{\nu}\right)^{N} \nrm{a_{0}}_{H^{N}_{(\mu_{0}^{-1})}}.
\end{align*}
By the assumed lower bound on $N$, the RHS may be bounded by $\nrm{a_{0}}_{H^{N}_{(\mu_{0}^{-1})}}$ (up to a constant), which implies \eqref{eq:deg-main}. On the other hand, for $\tld{\psi}^{small} = P_{< \mu} \tld{\psi}$, \eqref{eq:deg-small} follows quickly from \eqref{eq:wp-dyadic-low-a0} with $n = 1$ and Proposition~\ref{prop:trans-a}.

Next, we turn to the second statement. Note that $(\lmb, \mu, a) = (\lmb_{0}, \mu_{0}, a_{0})$ at $t= 0$. In order to establish \eqref{eq:id-upper}, in view of \eqref{eq:deg-main}, it suffices to prove
\begin{equation*}
	\nrm{\brk{D_{2}}_{\lmb_{0}} \Gmm_{\lmb_{0}}^{\sgm}(D_{2}) \tld{\psi}^{small}(t=0)}_{L^{2}} \aleq \gmm_{\lmb_{0}}(\lmb_{0})^{\sgm} \lmb_{0}^{s} \nrm{a_{0}}_{H_{(\mu_{0}^{-1})}^{1}}.
\end{equation*}
Since $\tld{\psi}^{small}(t=0) = P_{< \mu_{0}} \tld{\psi}_{0}$ and $\mu_{0} < \lmb_{0}$, the desired bound follows from \eqref{eq:wp-dyadic-low-a0} and \eqref{eq:multiplier-dyadic-low}.
%Indeed,
%\begin{align*}
%	\gmm_{\lmb_{0}}(\lmb_{0})^{-\sgm} \lmb_{0}^{-s} \nrm{\brk{D_{2}}_{\lmb_{0}} \gmm_{\lmb_{0}}(D_{2}) P_{< \mu_{0}} \tld{\psi}_{0}}_{L^{2}}
%	&\aleq \left(\frac{\mu}{\lmb}\right)^{1} \nrm{a_{0}}_{H^{1}_{(\mu_{0}^{-1})}}
%\end{align*}
%
To prove \eqref{eq:id-lower}, observe that, by \eqref{eq:deg-varphi-psi} and orthogonality in the frequency space (first in $\xi_{1}$ and then in $\xi_{2}$), we have
\begin{align*}
\nrm{\brk{D}^{s} \Gmm^{\sgm} \tld{\varphi}_{0}}_{L^{2}_{x_{1}, x_{2}}} \geq \tfrac{1}{2}\nrm{e^{i \lmb_{0} x_{1}} \brk{D_{2}}_{\lmb_{0}}^{s} \gmm_{\lmb_{0}}(D_{2})^{\sgm} \tld{\psi}_{0}}_{L^{2}_{x_{1}, x_{2}}}
\ageq \nrm{\brk{D_{2}}_{\lmb_{0}}^{s} \gmm_{\lmb_{0}}(D_{2})^{\sgm} P_{m_{0}^{-1} \lmb_{0} < \cdot < m_{0} \lmb_{0}}\tld{\psi}_{0}}_{L^{2}}.
\end{align*}
Hence, it suffices to prove 
\begin{align*}
\nrm{P_{m_{0}^{-1} \lmb_{0} < \cdot < m_{0} \lmb_{0}}\tld{\psi}_{0}}_{L^{2}} \aleq_{m_{0}} \lmb_{0}^{-s} \gmm_{\lmb_{0}}(\lmb_{0})^{-\sgm} \nrm{\brk{D_{2}}_{\lmb_{0}}^{s} \gmm_{\lmb_{0}}(D_{2})^{\sgm} P_{m_{0}^{-1} \lmb_{0} < \cdot < m_{0} \lmb_{0}}\tld{\psi}_{0}}_{L^{2}},
\end{align*}
which follows from \eqref{eq:multiplier-dyadic}. This completes the proof. \qedhere
\end{proof}

{\bf We may now fix our choice of $\Lmb$} so that it obeys all the requirements that appeared so far.

\subsection{Error estimate}\label{subsec:wp-error}
The main result of this subsection is the following:
\begin{proposition}[Error estimate] \label{prop:wp-error}
	Let $\tld{\varphi}$ be constructed as in Section~\ref{subsec:def-wavepacket}, with $\dlt_{2} = \dlt_{0}^{10}$ and $N_{0} \geq \frac{10^{4}}{\dlt_{2}}(1+\bt_{0})$. Then there exists $\dlt_{6} > 0$ such that
	\begin{equation} \label{eq:wp-error}
	\int_{0}^{\frac{1}{1-\eps}t_{f}(\tau_{M})}\nrm{\calL_{\bgtht} \tld{\varphi}}_{L^{2}_{x_{1}, x_{2}}(\Omg)} \, \ud t \aleq \lmb_{0}^{-\dlt_{6}} \nrm{a_{0}}_{H^{N_{0}}_{(\mu_{0}^{-1})}(\Omg)}.
	\end{equation}
\end{proposition}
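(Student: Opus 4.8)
\textbf{Proof proposal for Proposition~\ref{prop:wp-error}.}

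The plan is to reduce the error estimate to bounding the oscillatory-integral remainder operators ${}^{(\Phi)} r_{p_{\bgtht, \lmb_{0}}, -2}$ and ${}^{(\Phi)} r_{s_{\bgtht, \lmb_{0}}, -1}$ (plus the genuinely lower-order contribution $r_{\bgtht, \lmb_{0}}$), using the key computation \eqref{eq:error-ps} together with Proposition~\ref{prop:L-tht0-decomp-f}. First I would observe that, since $\Phi(t, \cdot)$ solves the Hamilton--Jacobi equation \eqref{eq:HJ-Phi} on $\supp \tld{\varphi}$ and $a$ solves the transport equation \eqref{eq:transport-a}, combining \eqref{eq:L-tht0-decomp} and \eqref{eq:error-ps} gives
\begin{equation*}
	\calL_{\bgtht} \tld{\varphi} = \Re\left( e^{i \lmb_{0} x_{1}} e^{i \Phi} \left( {}^{(\Phi)} r_{p_{\bgtht, \lmb_{0}}, -2}(x_{2}, D_{2}) a + {}^{(\Phi)} r_{s_{\bgtht, \lmb_{0}}, -1}(x_{2}, D_{2}) a + r_{\bgtht, \lmb_{0}}(x_{2}, D_{2}) \psi \cdot e^{-i\Phi}\right)\right),
\end{equation*}
so that $\nrm{\calL_{\bgtht} \tld{\varphi}}_{L^{2}_{x_{1}, x_{2}}} \aleq \nrm{{}^{(\Phi)} r_{p_{\bgtht, \lmb_{0}}, -2} a}_{L^{2}_{x_{2}}} + \nrm{{}^{(\Phi)} r_{s_{\bgtht, \lmb_{0}}, -1} a}_{L^{2}_{x_{2}}} + \nrm{r_{\bgtht, \lmb_{0}} \tld{\psi}}_{L^{2}_{x_{2}}}$, and it suffices to bound the time integral of each term on $[0, \frac{1}{1-\eps} t_{f}(\tau_{M})]$. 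For the last term, the explicit symbol class $r_{\bgtht, \lmb_{0}} \in S(\tfrac{\lmb_{0}}{1+\lmb_{0}^{2}+\xi_{2}^{2}} \gmm(\lmb_{0}, \xi_{2}))$ together with Lemma~\ref{lem:wp-dyadic} and standard Sobolev/multiplier estimates should give a bound gaining a factor of roughly $\lmb_{0}^{-1}\gmm_{\lmb_{0}}(\lmb) \mu/\lmb$ per the two extra orders of smoothness, which after integration in time (using $\tau_{M} \le 1$ and the size of $\mu/\lmb$ from \eqref{eq:mu-small}) is acceptable.

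The core of the argument is estimating the two genuinely oscillatory remainders, and here the plan is to apply Proposition~\ref{prop:conj-err} term by term, since each term in ${}^{(\Phi)} r_{p, -2}$ and ${}^{(\Phi)} r_{s, -1}$ (see \eqref{eq:conj-rem-1}--\eqref{eq:conj-rem-2}) is of the form $\int_{0}^{1} {}^{(\Phi)} q(x_{2}, \xi_{2}; \sgm) \, \ud \sgm$ with $q$ built from derivatives of $p_{\bgtht, \lmb_{0}}$ or $s_{\bgtht, \lmb_{0}}$ multiplied by powers of $\xi_{2}$ and by derivatives $\rd_{x_{2}}^{j} \rd_{x_{2}} \Phi$. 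I would set $\lmb = \lmb(t)$, $\mu = \mu(t)$ as in Section~\ref{sec:HJE}, $\lmb_{q} \aeq \lmb_{0}$ (the frequency scale at which the symbols $\gmm_{\lmb_{0}}, \rd_{\xi_{2}}\gmm_{\lmb_{0}}$, etc.\ vary, coming from Assumption~1 and slow variation), and check the hypotheses \eqref{eq:conj-err-Phi}--\eqref{eq:conj-err-lmb-mu}: condition \eqref{eq:conj-err-Phi} is exactly Proposition~\ref{prop:Phi-derivatives} (via \eqref{eq:rd-Phi-k-bound}), while \eqref{eq:conj-err-lmb-mu}, i.e.\ $\mu/\lmb_{q} \le 1$ and $\lmb \mu/\lmb_{q}^{2} \le 1$, must be verified from \eqref{eq:mu-precise} together with the growth conditions \eqref{eq:gf-condition-1}--\eqref{eq:gf-condition-3} and the choice of $\eps(\lmb_{0})$ in \eqref{eq:eps-choice}; this is where the specific choice $x_{0} \aeq \eps(\lmb_{0})$ and the $\sgm_{0}$-dependent conditions are used. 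The two extra orders of decay in ${}^{(\Phi)} r_{p, -2}$ (coming from the two extra $\xi_{2}$-factors or the two extra $\rd_{x_{2}}^{2}\Phi$-factors) translate, on the RHS of \eqref{eq:conj-err}, into an overall gain of $(\lmb/\lmb_{q}^{2})^{2}$ against the size of $\tld{\psi}$, i.e.\ roughly $\gmm_{\lmb_{0}}(\lmb) \lmb_{0}^{-2} (\lmb \mu/\lmb_{q}^{2})^{?}$ — the point is that after using Proposition~\ref{prop:trans-a} to control $\nrm{(\mu^{-1}\rd_{x_{2}})^{\ell} a}_{L^{2}} \aleq \nrm{a_{0}}_{H^{N_{0}}_{(\mu_{0}^{-1})}}$ and the crude degeneracy weight $\br{g}_{<\mu^{-1}}$ (which exploits that $f''$ vanishes linearly, so $g(x_{2}) \aeq \abs{x_{2}}$ near the degeneracy and thus $\br g_{<\mu^{-1}}(x_{2}) \aleq \abs{x_{2}} + \mu^{-1}$ with $a_{0}$ supported near $x_{0} \aeq \eps(\lmb_{0})$), the net bound at time $t$ is a fixed negative power of $\lmb_{0}$ times $\gmm_{\lmb_{0}}(\lmb(t)) \dot\lmb(t)/\lmb(t)$ or similar, whose time integral is $\int_{\lmb_{0}}^{M\lmb_{0}} \gmm_{\lmb_{0}}(\lmb) \tfrac{\ud\lmb}{\lmb}$-type and is controlled by a power of $\lmb_{0}$ that is beaten by the gain, provided $\dlt_{2} = \dlt_{0}^{10}$ is small and $N_{0} \ge 10^{4} \dlt_{2}^{-1}(1+\bt_{0})$ is large enough to absorb the loss $\lmb_{0}^{2\dlt_{3} N_{0}}$ hidden in $\mu$ (see \eqref{eq:wp-scale}) against the genuine decay. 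One must also handle the weight $\br{\calM}_{<\mu^{-1}}$ coming from the non-trivial $\xi_{2}$-dependence of $\gmm_{\lmb_{0}}$-type symbols; here the slow variation of $\gmm_{\lmb_{0}}$ and $\lmb \le \Xi \le 2\lmb$ from Lemma~\ref{lem:Xi-control}/\ref{lem:Xi-control-time-dep} ensure $\br{\calM}_{<\mu^{-1}} \aeq \gmm_{\lmb_{0}}(\lmb)$ up to logarithmic factors.

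The main obstacle I anticipate is the careful bookkeeping of the powers of $\lmb_{0}$ — reconciling the loss $\lmb_{0}^{2\dlt_{3} N_{0}}$ in $\mu$ (and hence in each derivative bound for $\Phi$ and $a$) and the powers of $\lmb_{0}^{\dlt_{4}/\dlt_{0}}$-type from the integrating factor $\exp(I)$, against the genuine gains of $(\lmb\mu/\lmb_{q}^{2})$ and the degeneracy factor $\eps(\lmb_{0})$ from localization near $x_{0}$ — and confirming that the balance $\dlt_{2} = \dlt_{0}^{10}$, $N_{0} \gtrsim \dlt_{2}^{-1}(1+\bt_{0})$ is exactly what makes the surplus negative, yielding some $\dlt_{6} > 0$. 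A secondary technical point is that \eqref{eq:conj-err} loses many derivatives of $a$ with $\mu^{-1}$-weights, but these are harmless because Proposition~\ref{prop:trans-a} controls all of them up to order $N_{0}$ uniformly by $\nrm{a_{0}}_{H^{N_{0}}_{(\mu_{0}^{-1})}}$; one must only ensure $N \le N_{0}$ in each application of Proposition~\ref{prop:conj-err}, which dictates the lower bound on $N_{0}$. Finally, for $\Omg = \bbT^{2}$ the periodization \eqref{eq:wavepacket-period} is handled by the support condition (each summand in a fundamental domain), so the $\bbT \times \bbR$ estimate transfers directly.
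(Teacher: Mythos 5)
Your overall reduction (use \eqref{eq:L-tht0-decomp} and \eqref{eq:error-ps} to reduce to the conjugated remainders plus $r_{\bgtht,\lmb_{0}}\tld{\psi}$, then invoke Proposition~\ref{prop:conj-err} with the phase bounds of Proposition~\ref{prop:Phi-derivatives} and the amplitude bounds of Proposition~\ref{prop:trans-a}, and periodize by support) matches the paper's strategy, but there is a genuine gap in the single most delicate choice: you take $\lmb_{q}\aeq\lmb_{0}$. With that choice the hypotheses \eqref{eq:conj-err-lmb-mu} fail exactly in the regime the theorems need. Indeed, by \eqref{eq:wp-scale} and the almost-comparability of $\gmm_{\lmb_{0}}$ and $\xi\rd_{\xi}\gmm_{\lmb_{0}}$, one has $\mu(t)\aeq\lmb(t)\cdot\lmb_{0}^{2\dlt_{3}N_{0}}/(x_{0}\eps(\lmb_{0})\lmb_{0})$ up to logarithms, so while $\mu/\lmb\leq\lmb_{0}^{-\frac{1}{3}-\frac{1}{10}\dlt_{0}}$ (\eqref{eq:mu-small}), $\mu$ itself grows proportionally to $\lmb(t)$; once $M\gg\lmb_{0}^{1/3}$ (e.g.\ $\gmm=\brk{\xi}$, $M=\lmb_{0}^{1/(1-\sgm)}$ in Corollary~\ref{cor:norm-growth-ex}) both $\mu/\lmb_{0}\leq1$ and $\lmb\mu/\lmb_{0}^{2}\leq1$ are violated, and Proposition~\ref{prop:conj-err} is not applicable. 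The paper's fix, which your plan is missing, is to take the \emph{time-dependent} scale $\lmb_{q}=\lmb(t)^{1-\dlt_{2}}$; this forces two further ingredients that do not appear in your sketch: (i) a frequency cutoff, inserting $P_{>\lmb^{1-\dlt_{2}}}(\xi_{2})$ into $p_{\bgtht,\lmb_{0}}$ and $s_{\bgtht,\lmb_{0}}$ (harmless on the expansion terms because $\lmb\leq\rd_{x_{2}}\Phi\leq2\lmb$), which is what justifies the symbol gain $\lmb_{q}^{-\ell}=\lmb^{-(1-\dlt_{2})\ell}$ per $\xi_{2}$-derivative — Assumption~1 alone only gives $\brk{\xi_{2}}^{-\ell}$, which is far too weak near $\xi_{2}=O(\lmb_{0})$; and (ii) a separate, direct estimate of $(ip_{\bgtht,\lmb_{0}}+s_{\bgtht,\lmb_{0}})P_{<\lmb^{1-\dlt_{2}}}(D_{2})\tld{\psi}$, handled by classical $L^{2}$-boundedness plus Lemma~\ref{lem:wp-dyadic}, since $\tld{\psi}$ is concentrated at frequency $\aeq\lmb$ and its low-frequency tail is $O((\lmb^{-\dlt_{2}}+\mu/\lmb)^{N})$.

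A secondary issue is the bookkeeping of where the hypotheses enter: you place the $\sgm_{0}$-dependent conditions at the verification of \eqref{eq:conj-err-lmb-mu}, but with the correct $\lmb_{q}$ those conditions are not what is needed there (only \eqref{eq:mu-small} and $M\leq\lmb_{0}^{1/\dlt_{0}}$, $\dlt_{2}=\dlt_{0}^{10}$ are). The condition \eqref{eq:gf-condition-2}, i.e.\ $\tau_{M}\leq\gmm_{\lmb_{0}}(\lmb_{0})^{-1}\lmb_{0}^{1-\dlt_{0}-3\sgm_{0}}$, is used in the time integration of the main term: the weight $\br{g}_{<\mu^{-1}}$ must be evaluated on the support of $a(t,\cdot)$, i.e.\ at $X(t;x_{0})\aeq x_{0}\gmm_{\lmb_{0}}(\lmb_{0})/\gmm_{\lmb_{0}}(\lmb(t))$ (together with $\mu^{-1}\aleq X(t;x_{0})$ from \eqref{eq:X-1-mu}), and it is precisely this shrinking of the support that cancels the growing factor $\gmm_{\lmb_{0}}(\lmb(t))$ in $\br{\calM}_{<\mu^{-1}}$, leaving a bound of the form $\tau_{M}\lmb_{0}\gmm_{\lmb_{0}}(\lmb_{0})(\mu/\lmb)^{2}\eps(\lmb_{0})^{-1}$ (up to small powers of $\lmb_{0}$), which is where \eqref{eq:gf-condition-2} is spent. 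Your sketch leaves this balance at the level of a literal question mark, and with your static support bound ``$x_{0}\aeq\eps(\lmb_{0})$'' the cancellation against $\gmm_{\lmb_{0}}(\lmb(t))$ would be lost. The remaining pieces of your plan ($r_{\bgtht,\lmb_{0}}$ by dyadic decomposition, the harmlessness of losing $\mu^{-1}$-weighted derivatives of $a$, the periodization via unit-scale localized bounds with the weight $\brk{\mu(x_{2}-X_{0})}^{2}$) are consistent with the paper.
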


\begin{proof}
	By translation in $x_{2}$ and rescaling $t$, we assume, without loss of generality, that $\dgx_{2} = 0$ and $\rd_{x_{2}}^{2} f(0, 0) = -1$. We let all implicit constants in this proof to depend on $\gmm$ and $f$. Note that
	\begin{align*}
	\calL_{\bgtht} \tld{\varphi} &= \Re \left(\calL_{\bgtht} (e^{i \lmb_{0} x_{1}} \tld{\psi})\right) \\
	&= \Re \left(e^{i \lmb_{0} x_{1}} \left( i p_{\bgtht, \lmb_{0}}(x_{2}, D_{2}) + s_{\bgtht, \lmb_{0}}(x_{2}, D_{2}) + r_{\bgtht, \lmb_{0}}(x_{2}, D_{2}) \right) \tld{\psi} \right) .
	\end{align*}
	Hence, in what follows, we focus on estimating the $L^{1}_{t} L^{2}_{x_{2}}$-norm of
	\begin{equation*}
	\left( i p_{\bgtht, \lmb_{0}}(x_{2}, D_{2}) + s_{\bgtht, \lmb_{0}}(x_{2}, D_{2}) + r_{\bgtht, \lmb_{0}}(x_{2}, D_{2}) \right) \tld{\psi}.
	\end{equation*}
	
	We first treat the case $\Omg = \bbT \times \bbR$. 
	
	\medskip
	\noindent {\bf Step~1: Contribution of $i p_{\bgtht, \lmb_{0}} + s_{\bgtht, \lmb_{0}}$, low frequency input.} 
	By Assumption~1 on $\gmm$, it follows that 
	\begin{align*}
	\abs*{\rd_{x_{2}}^{k} \rd_{\xi_{2}}^{\ell} \left( \left(i p_{\bgtht, \lmb_{0}}(x_{2}, \xi_{2}) + s_{\bgtht, \lmb_{0}}(x_{2}, \xi_{2}) \right)P_{<\lmb^{1-\dlt_{2}}}(\xi_{2})\right)}
	\aleq_{k, \ell} \brk{\xi_{2}}^{-\ell} \lmb_{0} \gmm(\lmb_{0}, \lmb^{1-\dlt_{2}}).
	\end{align*}
	By the standard $L^{2}$-boundedness of a pseudo-differential operator with a classical symbol and {Lemma~\ref{lem:wp-dyadic}}, we have 
	\begin{align*}
	\nrm*{\left(i p_{\bgtht, \lmb_{0}}(x_{2}, D_{2}) + s_{\bgtht, \lmb_{0}}(x_{2}, D_{2}) \right)P_{<\lmb^{1-\dlt_{2}}}(D_{2}) \tld{\psi}}_{L^{2}}
	\aleq \lmb_{0} (\max \set{\lmb_{0}, \lmb^{1-\dlt_{2}}})^{\bt_{0}} \left(\frac{\mu + \lmb^{1-\dlt_{2}}}{\lmb} \right)^{N} \nrm{a}_{H_{(\mu^{-1})}^{N}}.
	\end{align*}
	By \eqref{eq:mu-small}, we see that $\frac{\mu+\lmb^{1-\dlt_{2}}}{\lmb} \leq \max\set{\lmb_{0}^{-\frac{1}{3}-\frac{1}{10} \dlt_{0}}, \lmb^{-\dlt_{2}}}$. Choosing $N = \dlt_{2}^{-1} (\bt_{0} + 2)$, which bounded by $N_{0}$, then applying Proposition~\ref{prop:trans-a}, the RHS is $O(\lmb_{0}^{-1} \nrm{a_{0}}_{H^{N_{0}}_{(\mu_{0}^{-1})}})$, which is sufficient.
	
	\medskip
	\noindent {\bf Step~2: Decomposition of $i p_{\bgtht, \lmb_{0}} + s_{\bgtht, \lmb_{0}}$}
	Thanks to our pointwise bound $\lmb \leq \rd_{x_{2}} \Phi \leq 2 \lmb$, observe that
	\begin{align*}
	p_{\bgtht, \lmb_{0}} (x_{2}, \rd_{x_{2}} \Phi) P_{>\lmb^{1-\dlt_{2}}}(\rd_{x_{2}} \Phi) &= p_{\bgtht, \lmb_{0}} (x_{2}, \rd_{x_{2}} \Phi), \\
	s_{\bgtht, \lmb_{0}} (x_{2}, \rd_{x_{2}} \Phi) P_{>\lmb^{1-\dlt_{2}}}(\rd_{x_{2}} \Phi) &= s_{\bgtht, \lmb_{0}} (x_{2}, \rd_{x_{2}} \Phi),
	\end{align*}
	so that by definition,
	\begin{equation*}
	{}^{(\Phi)} r_{p_{\bgtht, \lmb_{0}}, -2} = {}^{(\Phi)} r_{p_{\bgtht, \lmb_{0}} P_{>\lmb^{1-\dlt_{2}}}, -2}, \quad
	{}^{(\Phi)} r_{s_{\bgtht, \lmb_{0}}, -1} = {}^{(\Phi)} r_{s_{\bgtht, \lmb_{0}} P_{>\lmb^{1-\dlt_{2}}}, -1}.
	\end{equation*}
	To proceed, we furthermore split $p_{\bgtht, \lmb_{0}}$ into $p_{\bgtht, \lmb_{0}}^{main} + p_{\bgtht, \lmb_{0}}^{rem}$
	\begin{align*}
	p_{\bgtht, \lmb_{0}}^{main}(x_{2}, \xi_{2}) &= \rd_{x_{2}} f(t, x_{2}) \lmb_{0} \gmm_{\lmb_{0}}(\xi_{2}), \\
	p_{\bgtht, \lmb_{0}}^{rem}(x_{2}, \xi_{2}) &= \Gmm \rd_{x_{2}} f(t, x_{2}) \lmb_{0}.
	\end{align*}
	By the linear dependence of ${}^{(\Phi)} r_{p, -k}$ on $p$, the remaining task reduces to estimating
	\begin{align*}
	\left(i {}^{(\Phi)} r_{p_{\bgtht, \lmb_{0}}^{main} P_{>\lmb^{1-\dlt_{2}}}, -2}  + i {}^{(\Phi)} r_{p_{\bgtht, \lmb_{0}}^{rem} P_{>\lmb^{1-\dlt_{2}}}, -2}
	+ {}^{(\Phi)} r_{s_{\bgtht, \lmb_{0}} P_{>\lmb^{1-\dlt_{2}}}, -1}\right)(x_{2}, D_{2})\tld{\psi}.
	\end{align*}
	
	\medskip
	\noindent {\bf Step~3: Contribution of $i p_{\bgtht, \lmb_{0}}^{main}$.}
	In this step, we estimate the contribution of $p_{\bgtht, \lmb_{0}}^{main} P_{>\lmb^{1-\dlt_{2}}}$. Observe that this symbol obeys
	\begin{equation*}
	\rd_{\xi_{2}}^{\ell} \left( p_{\bgtht, \lmb_{0}}^{main}(x_{2}, \xi_{2}) P_{>\lmb^{1-\dlt_{2}}}(\xi_{2}) \right) \aleq_{\ell} \lmb^{-\ell(1-\dlt_{2})}\abs{\rd_{x_{2}} f(x_{2})} \lmb_{0} m(\lmb_{0}, \xi_{2}).
	\end{equation*}
	As a result, each term in \eqref{eq:conj-rem-2} for $p = p_{\bgtht, \lmb_{0}}^{main}(x_{2}, \xi_{2}) P_{>\lmb^{1-\dlt_{2}}}(\xi_{2})$ is of the form (after rewriting $\xi_2= \mu \cdot (\xi_2/\mu)$) \begin{equation*}
		\begin{split}
		 \int_{0}^{1} {}^{(\Phi)} q(x_{2}, \xi_{2}; \sgm) \, \ud \sgm  \left( \frac{\xi_{2}}{\mu} \right)^{\ell} , \qquad \ell = 0, 1, 2, 
		\end{split}
	\end{equation*} where $q$ obeys the hypothesis of Proposition~\ref{prop:conj-err} with
	\begin{equation*}
	\lmb_{q} = \lmb^{1-\dlt_{2}}, \quad 
	g(x_{2}) = \abs{f'(x_{2})}, \quad
	M(\xi_{2}) = \lmb_{0} \gmm_{\lmb_{0}}(\xi_{2}) \left(\frac{\mu^{2}}{{\lmb^{2(1-\dlt_{2})}}} + \frac{\mu^{2} \lmb}{{\lmb^{3(1-\dlt_{2})}}} + \frac{\mu^{2} \lmb^{2}}{{\lmb^{4(1-\dlt_{2})}}}\right).
	\end{equation*}
	In order to apply Proposition~\ref{prop:conj-err}, note that, since $\frac{\mu}{\lmb_{q}} \aleq \lmb_{0}^{-\frac{1}{3}-\frac{1}{20} \dlt_{0}}$ by \eqref{eq:mu-small}, $f'$ is smooth and bounded and $\gmm$ obeys \eqref{eq:slow-var}, we have
\begin{equation*}
\br{g}_{<\mu^{-1}} \aleq x_{2} + \mu^{-1}, \qquad
\br{\calM}_{<\mu^{-1}} \aleq M(\lmb),
\end{equation*}
	where we choose $N = 1000 (1+\bt_{0})$. {We remark that the second inequality may be readily verified using the fact that $M(\xi_{2})$ is increasing and slowly varying (i.e., Assumptions 1 and 2 for $\Gmm$).} Putting together the above observations with Propositions~\ref{prop:conj-exp} and \ref{prop:conj-err} (with $s = 0$), we arrive at
	\begin{align*}
	\nrm{{}^{(\Phi)} r_{p_{\bgtht, \lmb_{0}}^{main} P_{>\lmb^{1-\dlt_{2}}}, -2} \tld{\psi}}_{L^{2}}
	&\aleq (X(t; x_{0}) + \mu^{-1}) \lmb_{0} \gmm_{\lmb_{0}}(\lmb) \left(\frac{\mu^{2}}{{\lmb^{2(1-\dlt_{2})}}} + \frac{\mu^{2} \lmb}{{\lmb^{3(1-\dlt_{2})}}} + \frac{\mu^{2} \lmb^{2}}{{\lmb^{4(1-\dlt_{2})}}}\right) \nrm{a}_{H_{(\mu^{-1})}^{N}}\\
	&\aleq (X(t; x_{0}) + \mu^{-1}) \lmb_{0} \gmm_{\lmb_{0}}(\lmb) \left(\frac{\mu}{\lmb}\right)^{2} \lmb_{0}^{\frac{10 \dlt_{2}}{\dlt_{0}}} \nrm{a_{0}}_{H_{(\mu^{-1})}^{N_{0}}}.
	\end{align*}
	We need to estimate the integral of the preceding expression on $t \in [0, \frac{1}{1-\eps} t_{f}(\tau_{M})]$. By \eqref{eq:X-1-mu} from the proof of Proposition~\ref{prop:Phi-derivatives}, it follows that $\mu^{-1} \aleq X(t; x_{0})$; hence it suffices to estimate the contribution of $X(t; x_{0})$. Using \eqref{eq:eps-choice}, \eqref{eq:X-bounds} or \eqref{eq:X-bounds-time-dep} and \eqref{eq:mu-precise}, we have
	\begin{align*}
	&\int_{0}^{\frac{1}{1-\eps} t_{f}(\tau_{M})} X(t; x_{0}) \lmb_{0} \gmm_{\lmb_{0}}(\lmb) \left(\frac{\mu}{\lmb}\right)^{2} \lmb_{0}^{\frac{10 \dlt_{2}}{\dlt_{0}}} \nrm{a_{0}}_{H^{N_{0}}_{(\mu_{0}^{-1})}} \, \ud t \\
	&\aleq \tau_{M} \lmb_{0} \gmm_{\lmb_{0}}(\lmb_{0})  \lmb_{0}^{-2+4 \dlt_{3} N_{0}+ \frac{10 \dlt_{2}}{\dlt_{0}}} (\log \lmb_{0})^{-4} \eps(\lmb_{0})^{-3} c_{x_{0}}^{-1} \nrm{a_{0}}_{H^{N_{0}}_{(\mu_{0}^{-1})}}  \\
	&\aleq \tau_{M} \frac{\gmm_{\lmb_{0}}(\lmb_{0})}{\lmb_{0}^{1-\dlt_{0}-3\sgm_{0}}}  \lmb_{0}^{-\dlt_{0}+4 \dlt_{3} N_{0}+ \frac{10 \dlt_{2}}{\dlt_{0}} + \frac{1}{2} \dlt_{0}} (\log \lmb_{0})^{-4} c_{x_{0}}^{-1} \nrm{a_{0}}_{H^{N_{0}}_{(\mu_{0}^{-1})}}  \\
	\end{align*}
	which is tightly acceptable thanks to \eqref{eq:gf-condition-2}. 
	
	\medskip
	\noindent {\bf Step~4: Contribution of $i p_{\bgtht, \lmb_{0}}$, remainder.}
	Now, we estimate the contribution of $i p_{\bgtht, \lmb_{0}}^{rem} P_{>\lmb^{1-\dlt_{2}}}$. Each term in \eqref{eq:conj-rem-2} for $p = p_{\bgtht, \lmb_{0}}^{rem}(x_{2}, \xi_{2}) P_{>\lmb^{1-\dlt_{2}}}(\xi_{2})$ is of the form $\int_{0}^{\sgm} {}^{(\Phi)} q(x_{2}, \xi_{2}; \sgm) \, \ud \sgm$, where $q$ obeys the hypothesis of Proposition~\ref{prop:conj-err} with
	\begin{equation*}
	\lmb_{q} = \lmb^{1-\dlt_{2}}, \quad 
	g(x_{2}) = \nrm{\Gmm \rd_{x_{2}} f}_{L^{\infty}}, \quad
	M(\xi_{2}) = \lmb_{0} \left(\frac{\mu^{2}}{{\lmb^{2(1-\dlt_{2})}}} + \frac{\mu^{2} \lmb}{{\lmb^{3(1-\dlt_{2})}}} + \frac{\mu^{2} \lmb^{2}}{{\lmb^{4(1-\dlt_{2})}}}\right).
	\end{equation*}
	Since $g(x_{2})$ and $M(\xi_{2})$ are constant, Proposition~\ref{prop:conj-err} (with $s = 0$) immediately implies
	\begin{align*}
	\nrm{{}^{(\Phi)} r_{p_{\bgtht, \lmb_{0}}^{rem} P_{>\lmb^{1-\dlt_{2}}}, -2} \tld{\psi}}_{L^{2}}
	&\aleq \nrm{\Gmm \rd_{x_{2}} f}_{L^{\infty}} \lmb_{0} \left(\frac{\mu^{2}}{{\lmb^{2(1-\dlt_{2})}}} + \frac{\mu^{2} \lmb}{{\lmb^{3(1-\dlt_{2})}}} + \frac{\mu^{2} \lmb^{2}}{{\lmb^{4(1-\dlt_{2})}}}\right) \nrm{a}_{H_{\mu^{-1}}^{10000}}\\
	&\aleq \nrm{\Gmm \rd_{x_{2}} f}_{L^{\infty}} \lmb_{0}  \frac{\lmb_{0}^{10 \dlt_{2} + N_{0} \dlt_{3}}}{\lmb_{0}^{2}} \nrm{a_{0}}_{H_{\mu^{-1}}^{N_{0}}},
	\end{align*}
	which is acceptable.
	
	\medskip
	\noindent {\bf Step~5: Contribution of $s_{\bgtht, \lmb_{0}}$.}
	In this step, we handle the contribution of $s_{\bgtht, \lmb_{0}} P_{>\lmb^{1-\dlt_{2}}}$. Each term in \eqref{eq:conj-rem-1} for $p = s_{\bgtht, \lmb_{0}}^{rem}(x_{2}, \xi_{2}) P_{>\lmb^{1-\dlt_{2}}}(\xi_{2})$ is of the form $\int_{0}^{\sgm} {}^{(\Phi)} q(x_{2}, \xi_{2}; \sgm) \, \ud \sgm$, where $q$ obeys the hypothesis of Proposition~\ref{prop:conj-err} with
	\begin{equation*}
	\lmb_{q} = \lmb^{1-\dlt_{2}}, \quad 
	g(x_{2}) = g:= \nrm{\rd_{x_{2}}^{2} f}_{L^{\infty}} + \nrm{\Gmm \rd_{x_{2}}^{2} f}_{L^{\infty}}, \quad
	M(\xi_{2}) = \lmb_{0} \frac{\xi_{2}}{\abs{\xi_{2}}} \gmm_{\lmb_{0}}'(\xi_{2})\left(\frac{\mu}{{\lmb^{1-\dlt_{2}}}} + \frac{\mu \lmb}{{\lmb^{2(1-\dlt_{2})}}}\right).
	\end{equation*}
	In order to apply Proposition~\ref{prop:conj-err}, note that, by \eqref{eq:slow-var} and the almost comparability of $\gmm$ and $\xi_{2} \rd_{\xi_{2}} \gmm$, we have $\br{\calM}_{<\mu^{-1}} \aleq M(\lmb)$ if we choose $N = 1000(1+\bt_{0})$. On the other hand, $g$ is constant, so $\br{g}_{<\mu^{-1}} = g$. Putting together the above observations with Propositions~\ref{prop:conj-exp} and \ref{prop:conj-err} (with $s = 0$), we arrive at
	\begin{align*}
	\nrm{{}^{(\Phi)} r_{s_{\bgtht, \lmb_{0}} P_{>\lmb^{1-\dlt_{2}}}, -1} \tld{\psi}}_{L^{2}}
	&\aleq g \lmb_{0} \gmm_{\lmb_{0}}'(\lmb)\left(\frac{\mu}{{\lmb^{1-\dlt_{2}}}} + \frac{\mu \lmb}{{\lmb^{2(1-\dlt_{2})}}}\right) \nrm{a}_{H_{\mu^{-1}}^{N}}\\
	&\aleq g \lmb_{0} \gmm_{\lmb_{0}}'(\lmb) \left(\frac{\mu}{\lmb}\right) \lmb_{0}^{\frac{10 \dlt_{2}}{\dlt_{0}}} \nrm{a_{0}}_{H_{\mu^{-1}}^{N_{0}}}.
	\end{align*}
	To estimate the integral of the preceding expression on $t \in [0, \frac{1}{1-\eps} t_{f}(\tau_{M})]$, we use \eqref{eq:eps-choice} and \eqref{eq:mu-small} to estimate
	\begin{align*}
	\int_{0}^{\frac{1}{1-\eps} t_{f}(\tau_{M})} \lmb_{0} \gmm_{\lmb_{0}}'(\lmb) \left(\frac{\mu}{\lmb}\right) \lmb_{0}^{\frac{10 \dlt_{2}}{\dlt_{0}}} \, \ud t
	&\aleq \left(\frac{\mu}{\lmb}\right) \lmb_{0}^{\frac{10 \dlt_{2}}{\dlt_{0}}} \int_{\lmb_{0}}^{M \lmb_{0}} \frac{\gmm_{\lmb_{0}}'(\lmb)}{\gmm_{\lmb_{0}}(\lmb)} \, \ud \lmb \\
	&\aleq \lmb_{0}^{-\frac{1}{3}-\frac{1}{10} \dlt_{0}+\frac{10 \dlt_{2}}{\dlt_{0}}} \bt_{0} \log M \aleq \dlt_{0}^{-1} \bt_{0} \lmb_{0}^{-\frac{1}{3}-\frac{1}{10} \dlt_{0} +\frac{10 \dlt_{2}}{\dlt_{0}}} \log \lmb_{0},
	\end{align*}
	which is acceptable.
	
	\medskip
	\noindent {\bf Step~6: Contribution of $r_{\bgtht, \lmb_{0}}$.}
	We begin by splitting
	\begin{align*}
	\nrm{r_{\bgtht, \lmb_{0}}(x_{2}, D_{2}) \tld{\psi}}_{L^{2}}
	&\aleq \nrm{r_{\bgtht, \lmb_{0}}(x_{2}, D_{2}) P_{< \max \set{\mu, \lmb_{0}}}(D_{2})\tld{\psi}}_{L^{2}} \\
	&\peq + \sum_{\nu > \max\set{\mu, \lmb_{0}}} \nrm{r_{\bgtht, \lmb_{0}}(x_{2}, D_{2}) P_{\nu}(D_{2})\tld{\psi}}_{L^{2}}.
	\end{align*}
	For the first term, note that $\frac{\lmb_{0}}{\gmm(\lmb_{0}, \max\set{\mu, \lmb_{0}})} r_{\bgtht, \lmb_{0}}(x_{2}, \xi_{2}) P_{< \max \set{\mu, \lmb_{0}}}(\xi_{2})$ is a classical symbol of order $0$. By the $L^{2}$-boundedness of its quantization, as well as Lemma~\ref{lem:wp-dyadic}, we obtain the bound
	\begin{align*}
	\nrm{r_{\bgtht, \lmb_{0}}(x_{2}, D_{2}) P_{< \max\set{\mu, \lmb_{0}}}(D_{2})\tld{\psi}}_{L^{2}}
	& \aleq \frac{1}{\lmb_{0}} \gmm_{\lmb_{0}}(\max\set{\mu, \lmb_{0}}) \left(\frac{\max\set{\mu, \lmb_{0}}}{\lmb}\right)^{N} \nrm{a}_{H_{\mu^{-1}}^{N}},
	\end{align*}
	which is acceptable if we take $N = \dlt_{2}^{-1} (\bt_{0} + 2)$, which is bounded by $N_{0}$. 
	
	For each summand in the second term, note that $\frac{\nu^{2}}{\lmb_{0} \gmm_{\lmb_{0}}(\nu)} r_{\bgtht, \lmb_{0}}(x_{2}, \xi_{2}) P_{\nu}(\xi_{2})$ is a classical symbol of order $0$. As before, we therefore have
	\begin{align*}
	\nrm{r_{\bgtht, \lmb_{0}}(x_{2}, D_{2}) P_{\nu}(D_{2})\tld{\psi}}_{L^{2}}
	&\aleq \frac{\lmb_{0}}{\nu^{2}} \gmm_{\lmb_{0}}(\nu) \min \set*{\left(\frac{\nu}{\lmb}\right)^{N}, \left(\frac{\lmb}{\nu}\right)^{N}} \nrm{a}_{H_{\mu^{-1}}^{N}}.
	\end{align*}
	By the slow variance assumption on $\gmm$, we have $\gmm_{\lmb_{0}}(\nu) \aleq \gmm_{\lmb_{0}}(\lmb) \max \set{\left(\frac{\nu}{\lmb}\right)^{\bt_{0}}, \left(\frac{\lmb}{\nu}\right)^{\bt_{0}}}$.
	Hence, applying the preceding bound with $N = 1000(1+\bt_{0})$, we obtain the summed bound
	\begin{align*}
	\sum_{\nu > \mu} \nrm{r_{\bgtht, \lmb_{0}}(x_{2}, D_{2}) P_{\nu}(D_{2})\tld{\psi}}_{L^{2}}
	\aleq \lmb_{0}^{\dlt_{2}} \frac{\lmb_{0}}{\lmb^{2}} \gmm_{\lmb_{0}}(\lmb) \nrm{a_{0}}_{H_{\mu^{-1}}^{N}}.
	\end{align*}
	Then
	\begin{align*}
	\int_{0}^{\frac{1}{1-\eps} t_{f}(\tau_{M})} \sum_{\nu > \mu} \nrm{r_{\bgtht, \lmb_{0}}(x_{2}, D_{2}) P_{\nu}(D_{2})\tld{\psi}}_{L^{2}} \, \ud t
	& \aleq \lmb_{0}^{\dlt_{2}} \int_{0}^{\frac{1}{1-\eps} t_{f}(\tau_{M})} \frac{\lmb_{0}}{\lmb^{2}} \gmm_{\lmb_{0}}(\lmb) \, \ud t \\
	& \aleq \lmb_{0}^{\dlt_{2}} \int_{\lmb_{0}}^{M \lmb_{0}} \frac{1}{\lmb^{2}} \ud \lmb 
	\aleq \lmb_{0}^{-1+\dlt_{2}},
	\end{align*}
	which is acceptable. 
	 
	Finally, we consider the case $\Omg = \bbT^{2}$, in which case, by the $x_{2}$-periodicity of $\calL_{\bgtht}$,
	\begin{equation*}
	\calL_{\bgtht} \tld{\varphi} (x_{1}, x_{2})= \sum_{k \in \bbZ} (\calL_{\bgtht} ({}^{(\bbT \times \bbR)}\tld{\varphi}))(x_{1}, x_{2} - k).
\end{equation*}
	We repeat the above procedure, but now use the boundedness on $L^{2}(\brk{x_{2}}^{4} \ud x_{2})$ for classical pseudo-differential operators (for Step~1), and Proposition~\ref{prop:conj-err} with $X_{0} = X(t; x_{0})$ and $s = 2$ for $\Phi$-conjugated remainders. Then we obtain {the unit-scale-localized bounds}
	\begin{equation} \label{eq:wp-error-loc}
\int_{0}^{\frac{1}{1-\eps} t_{f}(\tau_{M})} \nrm{\calL_{\bgtht} ({}^{(\bbT \times \bbR)}\tld{\varphi})}_{L^{2}(k, k+1)} \, \ud t\aleq \min\set*{1, \frac{1}{k^{2}}} \lmb_{0}^{-\dlt_{6}}.
\end{equation}
Summing up in $k$, the desired conclusion in the periodic case follows. \qedhere
	%We recall the formula for the error \eqref{eq:error} associated with $\widetilde{\varphi}$ and $\calL_{\bgtht}$. Together with a $L^2$ bound for $r_{\tht_0}$ (see \eqref{eq:r-bound}), Propositions \ref{prop:operator-norm-bound-1} and \ref{prop:operator-norm-bound-2}, we arrive at the following.
	%\begin{proposition}[Error estimate]\label{prop:error}
	%	In the time interval $0\le t \le T(\lmb_0)$, we have \begin{equation}\label{eq:wp-error}
	%	\begin{split}
	%	\nrm{\calL_{\bgtht}\widetilde{\varphi}(t)}_{L^2} \le C \tilde{\mu}^{17} (1 + (t\lmb_0)^{\frac{1}{2}}) m(\lmb)\lmb^{-1} \nrm{a_0}_{H^{10}}. 
	%	\end{split}
	%	\end{equation}
	%\end{proposition}
\end{proof}

\section{Proof of illposedness}\label{sec:proofs}
We now establish the illposedness theorems stated in Section~\ref{subsec:results}.

\subsection{Linear illposedness}\label{subsec:lin-illposed}

In this section, we prove Theorem \ref{thm:norm-growth}. As it is assumed in the statement of Theorem \ref{thm:norm-growth}, we are given a quadratically degenerate function $f$ together with parameters $\lmb_{0} \in \bbN$, $M>1$, $0<\dlt_0<\frac{1}{100}$, $0 \leq \sgm_{0} \leq \frac{1}{3} (1- 2\dlt_{0})$, so that the conditions \eqref{eq:gf-condition-1}--\eqref{eq:gf-condition-3} hold.  We also set $c_{1} = \frac{1}{3} + \frac{1}{10} \dlt_{0}$ (cf.~\eqref{eq:mu-small}).  Without loss of generality, we may assume that $\mathring{x}_{2} = 0$ and $f''(0) < 0$. We introduce parameters $\Lmb_{0} \geq 1$ and $0 < T_{0} \leq 1$, to be determined in the course of the proof, and require $\lmb_{0} \geq \Lmb_{0}$ and $\tau_{M} \leq T_{0}$. We also fix the regularity exponents $s, s' \in \bbR$.

We apply the construction of a degenerating wave packet in Sections~\ref{sec:psdo}--\ref{sec:wavepacket} with $N_{0} = 10000 \max\set{1+\bt_{0}, 1+\abs{s}, 1+\abs{s'}}$ and the above parameters; we take $\Lmb_{0} \geq \Lmb$ and $T_{0} \leq T$. As a result, we obtain\footnote{Observe from Section~\ref{subsec:def-wavepacket} that while the constant $\mu$ and bounds depend on the choice of $N_{0}$, the wave packet $\varphi$ itself is independent of $N_{0}$.} a degenerating wave packet $\tld{\varphi} = \Re\left(a e^{i \bfPhi}\right)$ satisfying $\nrm{\tld{\varphi}_{0}}_{L^{2}(\Omg)} = 1$, where $\tld{\varphi}_{0} =\left. \tld{\varphi} \right|_{t=0}$. Introducing the shorthand 
\begin{equation*}
\nnrm{\tld{\varphi}_{0}} := \nrm{a_{0}}_{H_{(\mu_{0}^{-1})}^{N_{0}}}, 
\end{equation*}
note that $\nnrm{\tld{\varphi}_{0}} \aeq 1$. Moreover, in view of Proposition~\ref{prop:deg}.(2) and the choice of $m_{0}$ in Section~\ref{subsec:def-wavepacket}, we have, for any $s'', \sgm \in \bbR$,
\begin{equation} \label{eq:lin-illposed-tldvarphi0}
\lmb_{0}^{-s''} \gmm_{\lmb_{0}}(\lmb_{0})^{-\sgm} \nrm{\Gmm^{\sgm} \tld{\varphi}_{0}}_{H^{s''}} \aeq_{s'', \sgm} 1.
\end{equation}
We introduce $\phi_{0}$ and $\varphi_{0}$ defined by the relations
\begin{equation} \label{eq:lin-illposed-phi0}
	\phi_{0}(x) = \Gmm^{-\frac{1}{2}} \varphi_{0}(x), \quad \varphi_{0}(x)= \tld{\varphi}_{0}(x).
\end{equation}

We now proceed to prove \eqref{eq:norm-growth}. 
Let $\phi$ be an $\Gmm^{-\frac{1}{2}} L^{2}$-solution to $L_{\bgtht} \phi = 0$ on $\left[0, \frac{100}{99} t_{f}(\tau_{M}) \right]$ with $\left. \phi\right|_{t = 0} = \phi_{0}$. Then $\varphi = \Gmm^{\frac{1}{2}} \phi$ is a $L^{2}$-solution to $\calL_{\bgtht} \varphi = 0$ on the same interval with $\left. \varphi \right|_{t = 0} = \Gmm^{\frac{1}{2}} \phi_{0} = \tld{\varphi}_{0}(x)$. With the simplified notation  $t^{\ast} = \frac{1}{1-\eps} t_{f}(\tau_{M})$, note that $t^{\ast} \leq \frac{100}{99} t_{f}(\tau_{M})$ by \eqref{eq:eps-choice}. In view of \eqref{eq:lin-illposed-tldvarphi0} and \eqref{eq:lin-illposed-phi0}, it suffices to establish 
\begin{equation*}
	\begin{split}
		\sup_{t \in [0, t^{\ast}] } \nrm{\Gmm^{-\frac{1}{2}} \varphi(t,\cdot)}_{H^{s'}} \ge C_{s'} \frac{1}{\gmm_{\lmb_{0}}(M \lmb_{0})^{\frac{1}{2}}} M^{s'} \lmb_{0}^{s'} 
	\end{split}
\end{equation*} for any $s'>0$.

Recall  that $\tld{\varphi}  = \Re (e^{i\bfPhi}a)$   is well-defined on $\left[0, t^{\ast} \right]$ and belongs to $C^\infty_{c}(\Omg)$. Therefore, the generalized energy inequality \begin{equation*}
	\begin{split}
		\left| \frac{\ud}{\ud t} \brk{\varphi,\widetilde{\varphi}} - \brk{\calL_{\bgtht}\varphi,\widetilde{\varphi}} - \brk{\varphi, \calL_{\bgtht}\widetilde{\varphi}} \right| \lesssim \nrm{\varphi}_{L^2} \nrm{\widetilde{\varphi}}_{L^2} 
	\end{split}
\end{equation*} can be justified on the time interval $\left[0, t^{\ast}  \right]$. Therefore, we immediately obtain the inequality  \begin{equation*}
	\begin{split}
		\left| \frac{\ud}{\ud t} \brk{\varphi,\widetilde{\varphi}} \right| \lesssim ( \nrm{\widetilde{\varphi}}_{L^2} + \nrm{\calL_{\bgtht}\widetilde{\varphi}}_{L^2} ) \nrm{\varphi}_{L^2} .
	\end{split}
\end{equation*} Integrating in time and  applying the error estimate \eqref{eq:wp-error}   gives\begin{equation*}
	\begin{split}
		\left| \brk{\varphi,\widetilde{\varphi}}(t ) - \brk{\varphi,\widetilde{\varphi}}(0)\right|& \lesssim \int_{0}^{t } ( \nrm{\widetilde{\varphi}}_{L^2} + \nrm{\calL_{\bgtht}\widetilde{\varphi}}_{L^2} ) \nrm{\varphi}_{L^2} \, \ud t' \\
		& \lesssim (t \nrm{\widetilde{\varphi}_0}_{L^2}  + \lmb_{0}^{-\dlt_{6}} \nnrm{\widetilde{\varphi}_0}) \nrm{\varphi_0}_{L^2} 
	\end{split}
\end{equation*} for any $0< t \le t^{\ast}$. In the last inequality we have used $\nrm{\widetilde{\varphi}}_{L^2} \lesssim \nrm{\widetilde{\varphi}_0}_{L^2}$ and $\nrm{\varphi}_{L^\infty([0,t^{\ast} ];L^2)} \lesssim \nrm{\varphi_0}_{L^2}$. Note that in the above inequalities, the implicit constants depend only on $f,\gmm$. Therefore, taking $\Lmb_0$ larger and the constant {$T_0$} smaller if necessary (the latter in a way depending only on $f,\gmm$), we may guarantee that \begin{equation*}
	\begin{split}
		\left| \brk{\varphi,\widetilde{\varphi}}(t^{\ast}) - \brk{\varphi,\widetilde{\varphi}}(0)\right| \le \frac{1}{4} \nrm{\varphi_0}_{L^2} \nrm{\widetilde{\varphi}_0}_{L^2},
	\end{split}
\end{equation*}	which gives, recalling $\varphi|_{t=0} = \widetilde{\varphi}|_{t=0} = \tld{\varphi}_{0}$, \begin{equation*}
	\begin{split}
		\brk{\varphi,\widetilde{\varphi}}(t^{\ast}) > \frac{1}{4} \nrm{\varphi_0}_{L^2} \nrm{\widetilde{\varphi}_0}_{L^2}. 
	\end{split}
\end{equation*} Furthermore, from $\widetilde{\varphi} = \widetilde{\varphi}^{main} + \widetilde{\varphi}^{small}$ and the estimate for $\widetilde{\varphi}^{small}$ in  \eqref{eq:deg-small} and \eqref{eq:mu-small}, \begin{equation*}
	\begin{split}
		\brk{ \varphi, \widetilde{\varphi}^{main} }(t^{\ast}) &\ge \frac{1}{4}\nrm{\varphi_0}_{L^2} \nrm{\widetilde{\varphi}_0}_{L^2} -  \nrm{\varphi_0}_{L^2}\nrm{\widetilde{\varphi}^{small}(t^{\ast})}_{L^2}  \\
		&\ge \frac{1}{4}\nrm{\varphi_0}_{L^2} \nrm{\widetilde{\varphi}_0}_{L^2} - C\lmb_0^{-c_1}\nrm{\varphi_0}_{L^2}\nnrm{\widetilde{\varphi}_0} \ge \frac{1}{8}\nrm{\varphi_0}_{L^2} \nrm{\widetilde{\varphi}_0}_{L^2} ,
	\end{split}
\end{equation*} by again taking $\Lmb_0$ larger if necessary. Combining this inequality with $\nrm{\widetilde{\varphi}_0}_{L^2} = 1$, \begin{equation*}
	\begin{split}
		\brk{ \varphi, \widetilde{\varphi}^{main} }(t^{\ast}) \le \nrm{\Gmm^{-\frac{1}{2}} \varphi(t^{\ast})}_{H^{s'}}\nrm{\Gmm^{\frac{1}{2}} \widetilde{\varphi}^{main}(t^{\ast})}_{H^{-s'}},
	\end{split}
\end{equation*} and \eqref{eq:deg-main}, we conclude that \begin{equation*}
	\begin{split}
		\nrm{\phi(t^{\ast})}_{H^{s'}}= \nrm{\Gmm^{-\frac{1}{2}} \varphi(t^{\ast})}_{H^{s'}} \ge C_{s'} \frac{1}{\gmm_{\lmb_{0}}(\lmb(t^{\ast}))^{\frac{1}{2}}} \lmb(t^{\ast})^{s'} \nrm{\varphi_0}_{L^2} 
		\geq C_{s'} \frac{1}{\gmm_{\lmb_{0}}(M \lmb_{0})^{\frac{1}{2}}} M^{s'}\lmb_0^{s'} .
	\end{split}
\end{equation*} In the last inequality, we have used \eqref{eq:norm-growth-ini} and that $\lmb(t^{\ast}) = M\lmb_0$ (see Lemma~\ref{lem:Xi-control}). This completes the proof of Theorem \ref{thm:norm-growth}. \hfill \qedsymbol

\subsection{Linear illposedness, dissipative case}\label{subsec:lin-illposed-diss}
In this section, we proceed to prove Theorem \ref{thm:norm-growth-diss}. Let $f_{0}$ be a smooth function satisfying the assumptions in the statement of Theorem \ref{thm:norm-growth-diss}. Without loss of generality, we further assume that $f''_{0}(0) < 0$. Recall that $f(t,x_{2})$ for $t\ge0$ was defined to be the solution of $\rd_{t} f + \kappa \Upsilon f = 0$. From the assumption that the symbol of $\Upsilon$ is even, we have that $f(t,\cdot)$ is even as well. In particular, $f'(t,0) = 0$ for all $t\ge0$. We are also given parameters $\lmb_{0} \in \bbN$, $M > 1$, $0 < \dlt_{1} \leq \dlt_{0} < \frac{1}{100}$ and $0 \leq \sgm_{0} \leq \frac{1}{3} (1- 2 \dlt_{0})$, so that the conditions \eqref{eq:gf-condition-1}--\eqref{eq:gf-condition-3} as well as \eqref{eq:gf-condition-diss-1}--\eqref{eq:gf-condition-diss-2} hold. {We also set $c_{1} = \frac{1}{3} + \frac{1}{10} \dlt_{0}$ (cf.~\eqref{eq:mu-small}).} Let $\Lmb_{1} \geq 1$ and $0 < T_{1} \leq 1$ be parameters to be determined below, and we also require that $\lmb_{0} \geq \Lmb_{1}$ and $\tau_{M} \leq T_{1}$. Fix also the regularity exponents $s, s' \in \bbR$.

We apply the construction in Sections~\ref{sec:psdo}--\ref{sec:wavepacket} with $N_{0} = 10^{4} \max\set{1+\bt_{0}, 1+\alp_{0}, 1+\abs{s}, 1+\abs{s'}}$ and the above parameters; we take $\Lmb_{1} \leq \Lmb$ and $T_{1} \leq T$. As in the proof of Theorem~\ref{thm:norm-growth}, we obtain a $L^{2}$-normalized degenerating wave packet $\tld{\varphi} = \Re(a e^{i \bfPhi})$; we set $\phi_{0} = \Gmm^{-\frac{1}{2}} \tld{\varphi}_{0}$, where $\tld{\varphi}_{0} = \left. \tld{\varphi}\right|_{t=0}$.

Let $\phi$ be a $\Gmm^{-\frac{1}{2}} L^2$-solution to $L_{\bgtht}^{(\kappa)} \phi = 0$ on $\left[0, \frac{100}{99} t_{f}(\tau_{M}) \right]$ with $\phi|_{t=0} = \phi_{0}$, so that $\varphi := \Gmm^{\frac{1}{2}} \phi$ is a $L^{2}$-solution to $\calL_{\bgtht}^{(\kpp)} \varphi = 0$ with $\left. \varphi \right|_{t=0} = \varphi_{0}$. We also introduce $t^{\ast} = \frac{1}{1-\eps} t_{f}(\tau_{M})$, which obeys $t^{\ast} \leq \frac{100}{99} t_{f}(\tau_{M})$. As before, we apply the generalized energy inequality to $\varphi$ and $\widetilde{\varphi}$ and obtain \begin{equation*}
	\begin{split}
		\left| \frac{\ud}{\ud t} \brk{\varphi,\widetilde{\varphi}} - \brk{\calL^{(\kpp)}_{\bgtht}\varphi,\widetilde{\varphi}} - \brk{\varphi, \calL_{\bgtht}^{(\kpp)}\widetilde{\varphi}} \right| \lesssim \nrm{\varphi}_{L^2} \nrm{\widetilde{\varphi}}_{L^2} ,
	\end{split}
\end{equation*} which is valid on the time interval $\left[0, t^{\ast}  \right]$. This time, we obtain the inequality  \begin{equation}\label{eq:gei-linear-diss}
	\begin{split}
		\left| \frac{\ud}{\ud t} \brk{\varphi,\widetilde{\varphi}} \right| \lesssim ( \nrm{\widetilde{\varphi}}_{L^2} + \nrm{\calL_{\bgtht}\widetilde{\varphi}}_{L^2} + \kpp \nrm{ \Upsilon \widetilde{\varphi} }_{L^2} ) \nrm{\varphi}_{L^2} .
	\end{split}
\end{equation} 
Since $N_{0} \geq 10^{4} (1+\alp_{0})$, arguing as in the proof of Proposition~\ref{prop:deg}, we have
\begin{equation*}
	\kpp \nrm{\Upsilon \tld{\varphi}}_{L^{2}} \aleq \kpp \ups(\lmb_{0}, \lmb) \nnrm{\tld{\varphi}_{0}}, 
\end{equation*}
where $\nnrm{\tld{\varphi}_{0}}$ is defined as in the proof of Theorem~\ref{thm:norm-growth}.
Applying the previous bound to \eqref{eq:gei-linear-diss} and proceeding as in the inviscid case gives \begin{equation*}
	\begin{split}
		\left| \brk{\varphi,\widetilde{\varphi}}(t ) - \brk{\varphi,\widetilde{\varphi}}(0)\right| 
		& \lesssim \left(t \nrm{\widetilde{\varphi}_0}_{L^2}  + \left( \lmb_{0}^{-\dlt_{6}} +  \int_0^{t} |\upsilon(\lmb_{0},\lmb(t'))|   \,\ud t' \right) \nnrm{\widetilde{\varphi}_0}  \right) \nrm{\varphi_0}_{L^2} 
	\end{split}
\end{equation*} for any $0< t \le t^{\ast}$. Taking $t=t^{\ast}$ and using $\dot{\lmb} = (1-\eps) \abs{f''(0, 0)} \lmb_{0} \gmm(\lmb_{0}, \lmb)$ with a change of variables, we have that \begin{equation*}
	\begin{split}
		\int_0^{t^{\ast}} |\upsilon(\lmb_{0},\lmb(t))|   \,\ud t \aleq \int_{\lmb_0}^{M\lmb_{0}} \frac{\upsilon(\lmb_0,\lmb)}{\gmm(\lmb_0,\lmb)} \frac{\ud \lmb}{\lmb_{0}} .
	\end{split}
\end{equation*} Under the assumption \eqref{eq:gf-condition-diss-1}, taking $\Lmb_1$ larger if necessary, we can conclude that \begin{equation*}
	\begin{split}
		\brk{\varphi,\widetilde{\varphi}}(t^{\ast})  \ge \frac{1}{16} \nrm{\varphi_0}_{L^2} \nrm{\tld{\varphi}_{0}}_{L^{2}}. 
	\end{split}
\end{equation*} The rest of the proof is identical to the inviscid case. \hfill \qedsymbol

\subsection{Nonlinear illposedness}\label{subsec:nonlin-illposed}

In this section, we will establish Theorem \ref{thm:nonlin-norm-infl}. We only consider the inviscid case, as the dissipative case can be treated similarly. 

We assume the same hypothesis and conventions as in the proof of Theorem~\ref{thm:norm-growth} in the beginning of Section~\ref{subsec:lin-illposed}. Given $\eps>0$ and $\mathring{\tht} = f(x_{2})$ in the statement of the Theorem \ref{thm:nonlin-norm-infl}, we consider the sequence \begin{equation*}
	\begin{split}
	 {\vtht_0^{(\lmb_{0})}} := c \eps  {\gmm(\lmb_{0}, \lmb_{0})^{\frac{1}{2}} \lmb_{0}^{-s} \Gmm^{-\frac{1}{2}} \tld{\varphi}_{0}} \in C^{\infty}_{c}(\Omg), 
	\end{split}
\end{equation*} where $\tld{\varphi}_{0} = \Re( e^{i\bfPhi} a )|_{t=0}$ is the $L^{2}$-normalized degenerating wave packet at the initial time with frequency $\lmb_{0}$ from the proof of Theorem \ref{thm:norm-growth}. Here, $c >0$ is an absolute constant inserted to guarantee that \begin{equation*}
	\begin{split}
		\nrm{  {\vtht_0^{(\lmb_{0})}}  }_{H^s} \le \eps 
	\end{split}
\end{equation*} uniformly for $\lmb_0 \ge \Lmb_{0}$ (the same $\Lmb_{0}$ from Theorem \ref{thm:norm-growth}), as it is required in the statement of the Theorem \ref{thm:nonlin-norm-infl}. We then consider the sequence initial data \begin{equation*}
	\begin{split}
		\tht^{(\lmb_0)}|_{t=0} = \mathring{\tht} + {\vtht_0^{(\lmb_{0})}}
	\end{split}
\end{equation*} with $\lmb_0\ge \Lmb_0$ for \eqref{eq:ssqg}. Recalling the parameters $\dlt, A> 0$ given in the statement, we may assume towards a contradiction that for any sufficiently large $\lmb_0$, there exists a solution $\tht^{(\lmb_0)}$ to \eqref{eq:ssqg} with initial data  $\tht^{(\lmb_0)}|_{t=0}$ satisfying $\tht^{(\lmb_0)} - \mathring{\tht} \in L^\infty([0,\dlt];H^{s'})$ and  \begin{equation*}
	\begin{split}
		\sup_{t\in[0,\dlt]} \nrm{ (\tht^{(\lmb_0)} - \mathring{\tht})(t,\cdot) }_{ H^{s'}} \le A. 
	\end{split}
\end{equation*} From now on, we shall fix some large $\lmb_0 \ge \Lmb_{0}$ and omit writing out the dependence of $\lmb_0$ on the solution. On the time interval $[0,\dlt]$, we simply define \begin{equation*}
	\begin{split}
		\varphi^\star(t,\cdot) = \Gmm^{\frac{1}{2}}(\tht^{(\lmb_0)} - \mathring{\tht})(t,\cdot).
	\end{split}
\end{equation*} Since $\tht^{(\lmb_0)}$ and $ \mathring{\tht}$ are solutions to \eqref{eq:ssqg}, we see that the equation for $\varphi^\star$ is given by \begin{equation*}
	\begin{split}
		\calL_{\bgtht}\varphi^\star = -\Gmm^{\frac{1}{2}} \left( \nb^\perp \Gmm^{\frac{1}{2}} \varphi^\star \cdot \nb \Gmm^{-\frac{1}{2}} \varphi^\star \right) =: \calN[\varphi^\star]. 
	\end{split}
\end{equation*} We claim that 
\begin{align} 
\brk{\varphi^{\star}_{0}, \tld{\varphi}_{0}} &\aeq \nrm{\varphi^{\star}_{0}}_{L^{2}}, \label{eq:nonlinearity-initial} \\
\nrm{ \calN[\varphi^\star] }_{L^2} &\le C(1+A) \nrm{ \varphi^\star }_{L^2}. \label{eq:nonlinearity-L2}
\end{align}
Bound \eqref{eq:nonlinearity-initial} follows from Proposition~\ref{prop:deg}.(2).
To prove \eqref{eq:nonlinearity-L2}, we use $s' > \frac{3}{2}\beta_{0} + 3$. We take some $0<\varepsilon<s'-(3+\beta_0)$ and estimate \begin{equation*}
	\begin{split}
		\nrm{ \calN[\varphi^\star] }_{L^2} & \le C\nrm{  \nb^\perp \Gmm^{\frac{1}{2}} \varphi^\star \cdot \nb \Gmm^{-\frac{1}{2}} \varphi^\star }_{H^{\frac{\beta_{0}}{2}}} \\
		& \le C(\nrm{  \nb^\perp \Gmm^{\frac{1}{2}} \varphi^\star   }_{H^{\frac{\beta_{0}}{2}}} \nrm{  \nb \Gmm^{-\frac{1}{2}} \varphi^\star}_{L^\infty} + \nrm{  \nb^\perp \Gmm^{\frac{1}{2}} \varphi^\star   }_{L^{\infty}} \nrm{  \nb \Gmm^{-\frac{1}{2}} \varphi^\star}_{H^{\frac{\beta_{0}}{2}}}  )\\
		& \le C_{\varepsilon} \nrm{\varphi^\star}_{L^2}^{1- \frac{\beta_0+1}{s'} } \nrm{\varphi^\star}_{H^{s'}}^{ \frac{\beta_0+1}{s'} } \nrm{\varphi^\star}_{L^2}^{1- \frac{2+\varepsilon}{s'} } \nrm{\varphi^\star}_{H^{s'}}^{ \frac{2+\varepsilon}{s'} } \\
		&\qquad + C_{\varepsilon}\nrm{\varphi^\star}_{L^2}^{1- \frac{\frac{\beta_0}{2}+1}{s'} } \nrm{\varphi^\star}_{H^{s'}}^{ \frac{\frac{\beta_0}{2}+1}{s'} } \nrm{\varphi^\star}_{L^2}^{1- \frac{2+\varepsilon+\frac{\beta_0}{2}}{s'} } \nrm{\varphi^\star}_{H^{s'}}^{ \frac{2+\varepsilon+\frac{\beta_0}{2}}{s'} } \\
		&\le C_{\varepsilon}(1+A)\nrm{\varphi^\star}_{L^2}.
	\end{split}
\end{equation*} In this chain of estimates, we used $\nrm{\Gmm^{\frac{1}{2}} h}_{H^{s''}} \aleq_{s''} \nrm{h}_{H^{s''+\frac{\bt}{2}}}$ and $\nrm{\Gmm^{-\frac{1}{2}} h}_{H^{s''}} \aleq_{s''} \nrm{h}_{H^{s''}}$, which follow the assumptions on $\Gmm$ and Littlewood--Paley decomposition, as well as the Sobolev product estimate and interpolation estimate.  This verifies the claim. 

Using the energy structure of $\calL_{\bgtht}$, as well as the cancellation $\brk{\calN[\varphi^{\star}], \varphi^{\star}} = 0$, we obtain
\begin{equation*}
	\begin{split}
		\left| \frac{\ud}{\ud t} \brk{\varphi^\star,\varphi^\star} \right| \lesssim   {\nrm{\varphi^\star}_{L^2}^{2}}.
	\end{split}
\end{equation*} Hence, by Gronwall's inquality, \begin{equation}\label{eq:nonlin-energy}
	\begin{split}
		\nrm{\varphi^\star(t)}_{L^2} \le \nrm{\varphi^\star_0}_{L^2} \exp( {C t})
	\end{split}
\end{equation} for $0\le t \le \dlt$ with $C>0$ depending only on $f$ and $\Gmm$. Using the generalized energy inequality together with \eqref{eq:nonlinearity-L2} and \eqref{eq:nonlin-energy}, 
\begin{equation}\label{eq:gei-nonlin}
	\begin{split}
		\left| \frac{\ud}{\ud t} \brk{\varphi^\star,\widetilde{\varphi}} \right|& \lesssim ( \nrm{\widetilde{\varphi}}_{L^2} + \nrm{\calL_{\bgtht}\widetilde{\varphi}}_{L^2} )( \nrm{\varphi^\star}_{L^2} + \nrm{ \calN[\varphi^\star] }_{L^2} ) \\
		&\lesssim (1+A)\exp( {C t}) ( \nrm{\widetilde{\varphi}}_{L^2} + \nrm{\calL_{\bgtht}\widetilde{\varphi}}_{L^2} ) \nrm{\varphi^\star_0}_{L^2}. 
	\end{split}
\end{equation} Taking $t \le {\dlt}$ and integrating in time, we obtain similarly as in the proof of linear illposedness that 
\begin{equation}\label{eq:gei-nonlin-pre}
	\begin{split}
		\brk{\varphi^\star, \widetilde{\varphi}}(t) \ge \brk{\varphi^{\star}_{0}, \tld{\varphi}_{0}}  - C(1+A)  \left( t \nrm{\widetilde{\varphi}_0}_{L^2} + \lmb_0^{-\dlt_{6}} \nnrm{\widetilde{\varphi}_0} \right)\nrm{\varphi^\star_0}_{L^2}  .
	\end{split}
\end{equation}
At this point, from the definition of $H^{s}$-$H^{s'}$ instability (Definition \ref{def:HsHs'-illposed}), we have a sequence $(M_{(n)}, \lmb_{0 (n)})$ satisfying the growth conditions \eqref{eq:gf-condition-1}--\eqref{eq:gf-condition-3} (for some $\sgm_{0(n)}$), $\tau_{M_{(n)}}\to 0$, and $\frac{\gmm(\lmb_{0 (n)}, \lmb_{0 (n)})^{\frac{1}{2}}}{\gmm(\lmb_{0 (n)}, M_{(n)} \lmb_{0 (n)})^{\frac{1}{2}}}M_{(n)}^{s'} \lmb_{0 (n)}^{s'-s} \to \infty$. Therefore, by taking $n$ to be sufficiently large, we have \begin{equation*}
	\begin{split}
	t^{\ast}_{(n)} := 	\frac{1}{1-\eps} t_{f}(\tau_{M_{(n)}}) < \min\{ \dlt, \tfrac{1}{10 C(1+A)} \}, \quad C(1+A)\lmb_{0 (n)}^{-\dlt_{6}} \nnrm{\widetilde{\varphi}_0}   < \frac{1}{10}.
	\end{split}
\end{equation*} Here, $C$ is the constant from \eqref{eq:gei-nonlin-pre} which is independent of $\lmb_0$. From \eqref{eq:gei-nonlin-pre}, we now deduce that \begin{equation*}
	\begin{split}
		\brk{\varphi^\star, \widetilde{\varphi}}(t^{\ast}_{(n)}) \ge \frac12 \nrm{\varphi^{\star}_0}_{L^2}.
	\end{split}
\end{equation*}  By Proposition~\ref{prop:deg}, Cauchy--Schwartz and the $H^{s'}$-$H^{-s'}$ duality, we may decompose \begin{equation*}
	\begin{split}
		\brk{\varphi^\star, \widetilde{\varphi}}(t^{\ast}_{(n)})  \le \nrm{\Gmm^{-\frac12}  \varphi^\star (t^{\ast}_{(n)}) }_{H^s}   \nrm{\Gmm^{\frac12} \widetilde{\varphi}^{main}(t^{\ast}_{(n)})}_{H^{-s}  } + \nrm{ \varphi^\star (t^{\ast}_{(n)}) }_{L^{2}}   \nrm{\widetilde{\varphi}^{small}(t^{\ast}_{(n)})}_{L^{2}} .
	\end{split}
\end{equation*} where \begin{equation*}
\begin{split}
\nrm{ \varphi^\star (t^{\ast}_{(n)}) }_{L^{2}}   \nrm{\widetilde{\varphi}^{small}(t^{\ast}_{(n)})}_{L^{2}} <\frac{1}{10} \nrm{\varphi^\star_0}_{L^2}.
\end{split}
\end{equation*}  By \eqref{eq:deg-main} in Proposition~\ref{prop:deg}, we obtain \begin{equation}\label{eq:norm-growth-nonlin}
	\begin{split}
		\nrm{ \Gmm^{-\frac{1}{2}}\varphi^\star (t^{\ast}_{(n)}) }_{H^{s'}} & \gtrsim \frac{  \nrm{\varphi^\star_0}_{L^2} }{ \nrm{\Gmm^{\frac{1}{2}} \widetilde{\varphi}^{main}(t^{\ast}_{(n)})}_{H^{-{s'}} } }  
%		\gtrsim \eps \frac{ \lmb(t^{\ast}_{(n)})^{s'}  }{\lmb_{0 (n)}^s} 
		\gtrsim \eps \frac{\gmm(\lmb_{0 (n)}, \lmb_{0 (n)})^{\frac{1}{2}}}{\gmm(\lmb_{0 (n)}, M_{(n)} \lmb_{0 (n)})^{\frac{1}{2}}} M_{(n)}^{s'} \lmb_{0 (n)}^{s'-s}.
	\end{split}
\end{equation} 
By $H^{s}$-$H^{s'}$ instability (see Definition \ref{def:HsHs'-illposed}), the RHS becomes arbitrary large as $n\to\infty$. On the other hand, using the definition of $\varphi^\star$, we obtain \begin{equation*}
	\begin{split}
		A \ge \nrm{\tht(t^{\ast}_{(n)})}_{H^{s'}} = \nrm{\Gmm^{-\frac{1}{2}} \varphi^\star(t^{\ast}_{(n)}) + \mathring{\tht} }_{H^{s'}}\ge \nrm{\Gmm^{-\frac{1}{2}} \varphi^\star(t^{\ast}_{(n)})}_{H^{s'}} -  \nrm{\mathring{\tht} }_{H^{s'}}
	\end{split}
\end{equation*} which is a contradiction with \eqref{eq:norm-growth-nonlin} as we take $n\to\infty$. The proof is now complete. 
\hfill \qedsymbol

\subsection{Nonexistence}\label{subsec:nonexist}

The goal of this section is to prove Theorem \ref{thm:nonexist}. We proceed in several steps. 

\medskip

\noindent \textit{1. Choice of the initial data}. To begin with, we fix some $f_{0}(x_{2})$ satisfying the following properties: \begin{itemize}
	\item $f_{0}$ is $C^\infty$ smooth and supported in $[-2,2]$;
	\item $f_{0}(x_{2})=-\frac12 x_{2}^2$ for $|x_{2}|\le 1$. 
\end{itemize} %We shall write $g_{0} = \Gmm f_{0}$. 
For $\lmb_0 \ge \Lmb_{0}$, let us denote by $\widetilde{\varphi}^{(\lmb_0)}(t)$ the degenerate wave packet solution adapted to $f_{0}$ with frequency $\lmb_0$, normalized in $L^2$. Furthermore, we assume that the wave packet at the initial time is supported in $(\frac12,1]$. For convenience, we recall a few essential properties of $\widetilde{\varphi}^{(\lmb_0)}(t)$. Define  $M(\lmb_0) :=  \min\set*{\gmm(\lmb_{0}, \lmb_{0})^{1-\dlt_{0}}, \lmb_{0}^{\dlt_{0}}, \, \gmm(\lmb_{0}, \lmb_{0})^{\frac{1-2\dlt_{0}}{\bt_{0}}}, \lmb_{0}^{\frac{1-5\dlt_{0}}{3\bt_{0}}}} $ and $\tau(\lmb_0) := \int_{\lmb_0}^{M(\lmb_0)\lmb_0} \frac{1}{\gmm(\lmb_0,\lmb)} \frac{\ud\lmb}{\lmb_0}$. \begin{itemize}
\item Degeneration estimate: there is a decomposition $\widetilde{\varphi}^{(\lmb_0)} = \widetilde{\varphi}^{(\lmb_0), main} + \widetilde{\varphi}^{(\lmb_0), small}$ such that for $t\in[0,\tau(\lmb_0)]$,\begin{equation*}
\begin{split}
\nrm{ \Gmm^{\frac{1}{2}} \widetilde{\varphi}^{(\lmb_0), main}(t) }_{H^{-s}} \le C_{-s} \gmm(\lmb_{0}, \lmb(t))^{\frac{1}{2}} \lmb(t)^{-s} , \qquad \nrm{\widetilde{\varphi}^{(\lmb_0), small} (t)}_{L^2} \le C\lmb(t)^{-c_{1}}.
\end{split}
\end{equation*}
\item Error estimate: $\nrm{ \calL_{f_{0}} \widetilde{\varphi}^{(\lmb_0)} }_{L^1([0,\frac{100}{99} t_{f_{0}}(\tau_{M})];L^2)} \le C\lmb_{0}^{-\dlt_{6}}$ and $\nrm{ \calL_{f_{0}} \widetilde{\varphi}^{(\lmb_0)} }_{L^1([0,\frac{100}{99} t_{f_{0}}(\tau_{M})];L^2[k, k+1])} \le C \min\set{1, k^{-2}} \lmb_{0}^{-\dlt_{6}}$ (see \eqref{eq:wp-error} and \eqref{eq:wp-error-loc}).
\end{itemize} By $\lmb(t)$, we mean the solution of \eqref{eq:ullmb-ODE} with $f = f_{0}$, which verifies $\lmb(\tau(\lmb_0)) \ge M(\lmb_0)\lmb_0$. In the above, it is important that the positive constants $C$, $c_1$ and $\dlt_{6}$ do not depend on $\lmb_0$.

We now take the following shear steady state: \begin{equation*}
\begin{split}
	\bgtht(x_{2}) = \sum_{k=k_{0}}^{\infty} f_{k} := \sum_{k=k_{0}}^{\infty} a_{k} f_{0}(x_{2} - y_{k} )
\end{split}
\end{equation*} where $\{ a_{k} \}_{k\ge 1}$ can be any square summable sequence; {\bf we fix it to be $a_{k}=k^{-2}$} for simplicity. On the other hand, $\{y_{k}\}_{k\ge 1}$ is a strictly increasing sequence to be determined; $y_{1}=1$ and we shall take $y_{k}$ sufficiently large with respect to $y_{k-1}$ for $k\ge 2$. By taking $k_{0}\ge 1$ large, it is guaranteed that $\nrm{\bgtht}_{H^{s}}<\frac{\eps}{2}$ for any given $\eps>0$ and $s$. 

Next, we consider the perturbation \begin{equation*}
	\begin{split}
		 {\vtht_0} =  {\Gmm^{-\frac{1}{2}} \varphi^{\star}_{0}} := \sum_{k=k_{0}}^{\infty} a_{k}\gmm(\lmb_{0, k},\lmb_{0, k})^{\frac12} \lmb_{0,k}^{-s} \Gmm^{-\frac12}  {\tld{\varphi}_{0,k}},
	\end{split}
\end{equation*} where 
\begin{equation*}
 {\tld{\varphi}_{0, k} := \widetilde{\varphi}^{(\lmb_{0, k})}(t=0, x_{2}-y_k),}
\end{equation*}
and $\lmb_{0,k} \ge \Lmb_{0}$ is a strictly increasing sequence to be determined below.  Observe that (using a simple rescaling in time) $\widetilde{\varphi}^{(\lmb_{0, k})}(a_{k}t, x_{2}-y_k)$ is simply the wave packet adapted to the rescaled and translated profile $f_{k}$ with frequency $\lmb_{0, k}$. By taking larger $k_{0}$ if necessary, we can guarantee that $\nrm{\vtht_0}_{H^{s}}<\frac{\eps}{2}$. 

We then take the initial data \begin{equation*}
	\begin{split}
		\tht_{0} = \bgtht_{0} + \vtht_0
	\end{split}
\end{equation*} for \eqref{eq:ssqg}, which satisfies $\nrm{\tht_0}_{H^{s}}<\eps$. Towards a contradiction, we assume that there exist $\dlt>0$ and a solution $\tht \in L^\infty([0,\dlt];H^{s})$ to \eqref{eq:ssqg} with $\tht(t=0)=\tht_0$. We denote \begin{equation*}
\begin{split}
A = \sup_{t \in [0,\dlt]} \nrm{\tht(t)}_{H^{s}} 
\end{split}
\end{equation*} and define on $t\in[0,\dlt]$ \begin{equation*}
\begin{split}
	\vtht(t) := \tht(t) - \bgtht, \qquad \varphi^\star(t) := \Gmm^{\frac12}\vtht(t).
\end{split}
\end{equation*}  

In what follows, we shall often suppress the dependence of implicit constants on $\Gmm$, $f_{0}$ and $s$. We remark that, logically, the sequence $\set{\lmb_{0, k}}$ shall be fixed first, and then shall $\set{y_{0, k}}$ be fixed -- the last part crucially uses the unboundedness of $\Omg = \bbT \times \bbR$. 

\medskip

\noindent \textit{2. Localization of the energy identity}. We first introduce some cutoff functions. Let $\chi\ge0$ be a smooth function supported on $[-1,1]$ and satisfies $\chi=1$ on $[-\frac12,\frac12]$. Assuming that $y_{k-1}$ is given for some $k\ge2$, we take $y_{k} \ge 8y_{k-1}$ and $\chi_{k}(x_2) := \chi( 2y_{k}^{-1} (x_{2}-y_{k}))$. It is then guaranteed that the support of $\chi_{k}$ is disjoint from each other. 

Recall that the equation for $\varphi^\star$ is given by \begin{equation*}
	\begin{split}
		\calL_{\bgtht}\varphi^\star + \Gmm^{\frac{1}{2}}( \nb^\perp \Gmm^{\frac{1}{2}} \varphi^\star \cdot \nb \Gmm^{-\frac{1}{2}} \varphi^\star) = 0.
	\end{split}
\end{equation*} %Furthermore, recall that for any function $\psi$, \begin{equation*}
%\begin{split}
%	\brk{  \Gmm^{\frac{1}{2}}( \nb^\perp \Gmm^{\frac{1}{2}} \psi \cdot \nb \Gmm^{-\frac{1}{2}} \varphi^\star), \psi } = 0. 
%\end{split}
%\end{equation*} 
As in Section~\ref{subsec:nonlin-illposed}, testing the equation against $\varphi^{\star}$, we immediately obtain 
\begin{equation*}
	\frac{\ud}{\ud t} \nrm{\varphi^{\star}}_{L^{2}}^{2} \aleq \nrm{\varphi^{\star}}_{L^{2}}^{2}
\end{equation*}
which implies, by Gronwall's inequality,
\begin{equation} \label{eq:nonexist-en}
	\nrm{\varphi^{\star}}_{L^{2}} \aleq \nrm{\varphi^{\star}_{0}}_{L^{2}} \exp(C t) .
\end{equation}
Moreover, multiplying the equation by $\chi_k$ and testing against $\chi_k\varphi^\star$, we have from \begin{equation*}
\begin{split}
&	\brk{	- \chi_k\Gmm^{\frac{1}{2}} \nb^{\perp} \bgtht \cdot \nb \Gmm^{\frac{1}{2}} \varphi^\star 
	+ \chi_k\Gmm^{\frac{1}{2}} \nb^{\perp} \Gmm \bgtht \cdot \nb \Gmm^{-\frac{1}{2}} \varphi^\star, \chi_k\varphi^\star} \\
 & \qquad =\brk{	- \Gmm^{\frac{1}{2}} \nb^{\perp} \bgtht \cdot \nb \Gmm^{\frac{1}{2}}( \chi_k\varphi^\star )
	+  \Gmm^{\frac{1}{2}} \nb^{\perp} \Gmm \bgtht \cdot \nb \Gmm^{-\frac{1}{2}} (\chi_k\varphi^\star), \chi_k\varphi^\star} \\
&\qquad\quad  + \brk{	- \Gmm^{\frac{1}{2}} \nb^{\perp} \bgtht \cdot [\chi_k,\nb \Gmm^{\frac{1}{2}}] \varphi^\star 
	+ \Gmm^{\frac{1}{2}} \nb^{\perp} \Gmm \bgtht \cdot[ \chi_k, \nb \Gmm^{-\frac{1}{2}}] \varphi^\star, \chi_k\varphi^\star} \\
	& \qquad\quad + \brk{	- [\chi_k, \Gmm^{\frac{1}{2}}] \nb^{\perp} \bgtht \cdot \nb \Gmm^{\frac{1}{2}} \varphi^\star 
	+ [\chi_k, \Gmm^{\frac{1}{2}}] \nb^{\perp} \Gmm \bgtht \cdot \nb \Gmm^{-\frac{1}{2}} \varphi^\star, \chi_k\varphi^\star},
\end{split}
\end{equation*} and  \begin{equation*}
\begin{split}
	\brk{ \chi_{k} \Gmm^{\frac{1}{2}}( \nb^\perp \Gmm^{\frac{1}{2}} \varphi^\star \cdot \nb \Gmm^{-\frac{1}{2}} \varphi^\star),  \chi_k\varphi^\star  } & = \brk{ [\chi_{k} ,\Gmm^{\frac{1}{2}}]( \nb^\perp \Gmm^{\frac{1}{2}} \varphi^\star \cdot \nb \Gmm^{-\frac{1}{2}} \varphi^\star),  \chi_k\varphi^\star  } \\
	&\qquad + \brk{ [\chi_{k},\nb^\perp \Gmm^{\frac{1}{2}}] \varphi^\star \cdot \nb \Gmm^{-\frac{1}{2}} \varphi^\star,  \Gmm^{\frac{1}{2}}(\chi_k\varphi^\star)  } 
\end{split}
\end{equation*} that \begin{equation*}
\begin{split}
	\frac{\ud}{\ud t} \nrm{ \chi_k\varphi^\star}_{L^2}^2 \lesssim_{ \bgtht,\Gmm}   \nrm{ \chi_k\varphi^\star}_{L^2}^2  +  {A} \nrm{\chi_{k}'}_{L^\infty}  \nrm{ \chi_k\varphi^\star}_{L^2}. 
\end{split}
\end{equation*} Here, it is important that the implicit constant depends only on $\bgtht$ and $\Gmm$ and is independent of $k$.  {We remark that we have used the commutator bounds $\nrm{[g, \Gmm^{\frac{1}{2}}] h}_{H^{s'}} \aleq_{s', \Gmm} \nrm{g'}_{L^{\infty}} \nrm{h}_{H^{s'+\bt_{0}-1}}$ and $\nrm{[g, \Gmm^{-\frac{1}{2}}] h}_{H^{s'}} \aleq_{s', \Gmm} \nrm{g'}_{L^{\infty}} \nrm{h}_{H^{s'}}$, which may be established using the assumptions on $\Gmm$, Littlewood--Paley decomposition and writing out the commutator using the integral kernel of $\Gmm^{\pm \frac{1}{2}} P_{\nu}$; we omit the standard details.} Regarding $\bgtht$, this estimate requires $\nrm{\Gmm\nb\bgtht}_{L^\infty}<\infty$, and here it suffices to have that $s>\beta_{0}+2$. 

Requiring that $y_{k}$ satisfies
\begin{equation*}
y_{k}^{-1} \aleq a_{k} \gmm(\lmb_{0, k},\lmb_{0,k})^{\frac{1}{2}} \lmb_{0,k}^{-s}
\end{equation*}
for suitably chosen implicit constant independent of $k$, we may ensure that
\begin{equation*}
\nrm{\chi_{k}'}_{L^\infty} \le  \nrm{ \chi_k\varphi^\star_{0}}_{L^2}.
\end{equation*}
 {By Gronwall's inequality,} we conclude the following \textit{localized energy estimate}: \begin{equation}\label{eq:lee}
\begin{split}
	\sup_{t \in [0,\dlt]}\nrm{ \chi_k\varphi^\star(t)}_{L^2} \lesssim \nrm{ \chi_k\varphi^\star_{0}}_{L^2}  {\exp(C (1+A)t)}. 
\end{split}
\end{equation}

\medskip

\noindent \textit{3. Localization of the generalized energy identity}. We denote \begin{equation*}
	\begin{split}
		\widetilde{\varphi}_{k}(t,x_2):= \widetilde{\varphi}^{(\lmb_{0,k})}(a_{k}t,x_2-y_k).
	\end{split}
\end{equation*} From the properties of $\widetilde{\varphi}^{(\lmb_{0})}$ summarized in the above, we have that with \begin{equation*}
\begin{split}
M_{k} := M(\lmb_{0,k}), \qquad \tau_{k} := \tau(\lmb_{0,k})
\end{split}
\end{equation*} \begin{itemize}
\item for $t\in[0,a_{k}^{-1}\tau_{k}]$, $\widetilde{\varphi}_{k}(t)$ is supported on $[y_{k},y_{k}+1]$ and satisfies $\nrm{\widetilde{\varphi}(t)}_{L^\infty_{t}([0,a_{k}^{-1}\tau_{k}];L^2  )} \le C$;
\item there is a decomposition $\widetilde{\varphi}_{k} = \widetilde{\varphi}^{main}_{k} + \widetilde{\varphi}^{small}_{k}$ such that for $t\in[0,a_{k}^{-1}\tau_{k}]$, \begin{equation*}
\begin{split}
\nrm{ \widetilde{\varphi}^{main}_{k}(t) }_{H^{-s}} \le C_{-s}\lmb_{k}^{-s}(a_{k}t) , \qquad \nrm{\widetilde{\varphi}^{small}_{k} (t)}_{L^2} \le C\lmb_{k}^{-c_0}(a_{k}t);
\end{split}
\end{equation*}
\item we have $\nrm{ \calL_{\bgtht} \widetilde{\varphi}_{k} }_{L^1([0,a_{k}^{-1}\tau_{k}];L^2)} \le C\lmb_{0,k}^{-c_0}$  {and $\nrm{ \calL_{\bgtht} \widetilde{\varphi}_{k} }_{L^1([0,a_{k}^{-1}\tau_{k}];L^2[\ell, \ell+1])} \le C \min\set{1, \ell^{-2}} \lmb_{0,k}^{-c_0}$.}
\end{itemize} Here, $\lmb_{k}(t)$ is the solution to \eqref{eq:ullmb-ODE} with $\lmb_{0}=\lmb_{0,k}$ and $f = \bgtht$. The constants $C, C_{-s}$ do not depend on $k$; in particular, the $L^{1}_{t}L^{2}$ estimates follow directly from scaling property of $\calL_{\bgtht}$ \eqref{eq:lin-L2}. 
In particular, on $t\in [0, \min\{ \dlt, a_{k}^{-1}\tau_{k} \} ]$, we have $\chi_{k}^2	\widetilde{\varphi}_{k}(t)=\chi_{k}\widetilde{\varphi}_{k}(t)=	\widetilde{\varphi}_{k}(t)$. Before we proceed, note \begin{equation*}
\begin{split}
\tau_{k}  {\le \frac{M_{k}}{\gmm(\lmb_{0, k}, \lmb_{0, k})} \leq} \frac{1}{\gmm^{\dlt_0}(\lmb_{0,k},\lmb_{0,k})} 
\end{split}
\end{equation*} and therefore by taking $\lmb_{0,k}$ sufficiently large, we may ensure that $a_{k}^{-1}\tau_{k} \le \dlt$. Choosing $\lmb_{0, k}$ larger if necessary, we may also ensure that $a_{k} M_{k}^{s - \bt_{0}} \to \infty$ as $k \to \infty$.
%In the following, we shall take $\lmb_{0,k}$ even larger in a way depending on a few absolute constants, so that $a_{k}^{-1}\tau_{k}$ is sufficiently small. 

Denoting $\err_{k} := \calL_{\bgtht}\widetilde{\varphi}_{k}$, we deduce from applying \eqref{eq:gei} with $\varphi=\varphi^\star$ and $\psi=\widetilde{\varphi}_{k}$ that  \begin{equation}\label{eq:gei2}
\begin{split}
	\frac{\ud}{\ud t} \brk{\chi_{k}\varphi^\star,\widetilde{\varphi}_{k}} 
	&= \brk{\varphi^\star,\err_{k}} +  \brk{ \varphi^\star, \left([\Gmm^{\frac{1}{2}}, \nb^{\perp} \Gmm \bgtht \cdot \nb] \Gmm^{-\frac{1}{2}} + \Gmm^{-\frac{1}{2}} [\Gmm^{\frac{1}{2}}, \nb^{\perp} \Gmm \bgtht \cdot \nb]\right)(\chi_{k}\widetilde{\varphi}_k)} \\
	&\qquad -\brk{ \Gmm^{\frac{1}{2}}( \nb^\perp \Gmm^{\frac{1}{2}} \varphi^\star \cdot \nb \Gmm^{-\frac{1}{2}} \varphi^\star), \chi_{k}^2 \widetilde{\varphi}_k}.
\end{split}
\end{equation} We write \begin{equation*}
\begin{split}
	\brk{\varphi^\star,\err_{k}}=\brk{\chi_{k}\varphi^\star,\err_{k}}+\brk{\varphi^\star,(1-\chi_{k})\err_{k}}
\end{split}
\end{equation*} and  {apply Cauchy--Schwartz.} Furthermore, modulo several commutators  {involving $\chi_{k}$}, the other two terms on the right hand side of \eqref{eq:gei2} can be written as \begin{equation*}
\begin{split}
	\brk{\chi_{k} \varphi^\star, \left([\Gmm^{\frac{1}{2}}, \nb^{\perp} \Gmm \bgtht \cdot \nb] \Gmm^{-\frac{1}{2}} + \Gmm^{-\frac{1}{2}} [\Gmm^{\frac{1}{2}}, \nb^{\perp} \Gmm \bgtht \cdot \nb]\right)\widetilde{\varphi}_k} -\brk{ \Gmm^{\frac{1}{2}}( \nb^\perp \Gmm^{\frac{1}{2}} (\chi_{k}\varphi^\star) \cdot \nb \Gmm^{-\frac{1}{2}} (\chi_{k}\varphi^\star)),   \widetilde{\varphi}_k}.
\end{split}
\end{equation*} Now, it is important that we have \begin{equation*}
\begin{split}
	\nrm{ \Gmm^{\frac{1}{2}}( \nb^\perp \Gmm^{\frac{1}{2}} (\chi_{k}\varphi^\star) \cdot \nb \Gmm^{-\frac{1}{2}} (\chi_{k}\varphi^\star)) }_{L^2} \lesssim (1+A)\nrm{\chi_{k}\varphi^\star}_{L^2}. 
\end{split}
\end{equation*}
Observe that the terms involving commutators can be bounded in absolute value by \begin{equation*}
\begin{split}
	\lesssim (1+A^{2})\nrm{\chi_k'}_{L^\infty} \nrm{ \widetilde{\varphi}_k }_{L^2}\lesssim(1+ A^{2})\nrm{\chi_{k}\varphi^\star_0}_{L^2} \nrm{ \widetilde{\varphi}_k }_{L^2} .
\end{split}
\end{equation*}
Therefore, we have arrived at the following localized and generalized energy estimate: \begin{equation}\label{eq:lgee}
\begin{split}
	\left|  \frac{\ud}{\ud t} \brk{\chi_{k}\varphi^\star,\widetilde{\varphi}_{k}}  \right| &\lesssim (1+A^2) ( \nrm{ \widetilde{\varphi}_k}_{L^2} + \nrm{\err_k}_{L^2} )(\nrm{\chi_{k}\varphi^\star}_{L^2}+\nrm{\chi_{k}\varphi^\star_0}_{L^2}) {+ \nrm{\varphi^{\star}}_{L^{2}} \nrm{(1-\chi_{k}) \err_{k}}_{L^{2}}.} 
\end{split}
\end{equation}

\medskip

\noindent \textit{4. Conclusion}. Applying \eqref{eq:nonexist-en}, \eqref{eq:gei2} to \eqref{eq:lgee} and integrating in time, we obtain that \begin{equation*}
	\begin{split}
		\brk{\chi_{k}\varphi^\star,\widetilde{\varphi}_{k}} (t) &\ge \brk{\chi_{k}\varphi^\star,\widetilde{\varphi}_{k}} (0) \\
		&\peq - C (1+A^2)  {\exp(C (1+A) t)} \nrm{\chi_{k}\varphi^\star_0}_{L^2} ( t\nrm{ \widetilde{\varphi}_{0,k}}_{L^2} + \int_0^t \nrm{\err_k(\tau)}_{L^2}\,\ud\tau  ) \\
		&\peq  {- C \exp(C t) \nrm{\varphi^{\star}_{0}}_{L^{2}} \int_0^t \nrm{(1-\chi_{k}) \err_k(\tau)}_{L^2}\,\ud\tau.}
	\end{split}
\end{equation*} 
 {By Proposition~\ref{prop:deg}.(2) and our construction, observe that 
 \begin{equation*}
\brk{\chi_{k}\varphi^\star,\widetilde{\varphi}_{k}} (0) \aeq \nrm{\chi_{k}\varphi^\star_{0}}_{L^{2}} \aleq a_{k} \gmm(\lmb_{0, k}, \lmb_{0, k})^{\frac{1}{2}} \lmb_{0, k}^{-s}. 
\end{equation*}
We fix $t = a_{k}^{-1} \tau_{k}$. Using the localized error bound, if we take $y_{k}$ large enough depending on $\lmb_{0, k}$, we may ensure that
\begin{equation*}
\nrm{\varphi^{\star}_{0}}_{L^{2}} \int_0^t \nrm{(1-\chi_{k}) \err_k(\tau)}_{L^2}\,\ud\tau
\leq \lmb_{0, k}^{-c_{1}} \brk{\chi_{k}\varphi^\star,\widetilde{\varphi}_{k}} (0).
\end{equation*}
For $k$ large enough so that $\tau_{k} \ll (1+A^{2})^{-1}$ and $\lmb_{0, k}^{-c_{1}} \ll (1+A^{2})^{-1}$, we obtain
}
\begin{equation*}
\begin{split}
	\brk{\chi_{k}\varphi^\star,\widetilde{\varphi}_{k}} (a_{k}^{-1}\tau_{k}) \ge \frac12\brk{\chi_{k}\varphi^\star,\widetilde{\varphi}_{k}} (0).
\end{split}
\end{equation*} On the other hand, we may bound \begin{equation*}
\begin{split}
	\brk{\chi_{k}\varphi^\star,\widetilde{\varphi}_{k}} (a_{k}^{-1}\tau_{k}) \le \nrm{\Gmm^{-\frac12} \chi_k\varphi^\star (a_{k}^{-1}\tau_{k}) }_{H^{s}}\nrm{\Gmm^{\frac12} \widetilde{\varphi}^{main}_k(a_{k}^{-1}\tau_{k})}_{H^{-{s}}} +  \nrm{ \chi_k\varphi^\star (a_{k}^{-1}\tau_{k}) }_{L^{2}}\nrm{\widetilde{\varphi}^{small}_k(a_{k}^{-1}\tau_{k})}_{L^{2}} .
\end{split}
\end{equation*} By taking $k$ larger if necessary, we may guarantee that \begin{equation*}
\begin{split}
 \nrm{ \chi_k\varphi^\star (a_{k}^{-1}\tau_{k}) }_{L^{2}}\nrm{\widetilde{\varphi}^{small}_k(a_{k}^{-1}\tau_{k})}_{L^{2}} < \frac14\brk{\chi_{k}\varphi^\star,\widetilde{\varphi}_{k}} (0).
\end{split}
\end{equation*} This gives 
\begin{equation*}
	\begin{split}
		\nrm{\Gmm^{-\frac12}  \chi_k\varphi^\star (a_{k}^{-1}\tau_{k}) }_{H^{s}} & \gtrsim \frac{  \nrm{\chi_{k} \varphi^\star_0}_{L^2} \nrm{\widetilde{\varphi}_{k,0}}_{L^2}  }{ \nrm{\Gmm^{\frac12} \widetilde{\varphi}_{k}^{main}(a_{k}^{-1}\tau_{k})}_{H^{-{s}}} } \gtrsim  {a_{k}} \frac{\gmm(\lmb_{0,k})^{\frac{1}{2}}}{\lmb_{0,k}^s} \frac{ \lmb^{s}_{k}(\tau_{k})}{\gmm(\lmb_{k}(\tau_{k}))^{\frac{1}{2}}} \\
		&\gtrsim  {a_{k}} \frac{\gmm(\lmb_{0,k})^{\frac{1}{2}}}{\gmm(\lmb_{k}(\tau_{k}))^{\frac{1}{2}}} M_{k}^{s} \gtrsim  {a_{k}} M_{k}^{s-\beta_{0}}. 
	\end{split}
\end{equation*} For $k$ large, $\nrm{ [\Gmm^{-\frac12},\chi_{k}] \varphi^\star }_{H^{s}} \lesssim (1+A)\nrm{\chi_{k}'}_{L^\infty} \lesssim 1$ and therefore we obtain \begin{equation*}
\begin{split}
\nrm{ \chi_k (\tht(a_{k}^{-1}\tau_{k}) - \bgtht) }_{H^{s}} = \nrm{ \chi_k \Gmm^{-\frac12}\varphi^\star (a_{k}^{-1}\tau_{k}) }_{H^{s}} \gtrsim  {a_{k}} M_{k}^{s-\beta_{0}}. 
\end{split}
\end{equation*} On the other hand, with a constant independent of $k$, \begin{equation*}
\begin{split}
\nrm{ \chi_k (\tht(a_{k}^{-1}\tau_{k}) - \bgtht) }_{H^{s}} \lesssim \nrm{ (\tht(a_{k}^{-1}\tau_{k}) - \bgtht) }_{H^{s}} \lesssim 1 +A. 
\end{split}
\end{equation*} We obtain a contradiction as $k\to\infty$ since $ {a_{k} M_{k}^{s - \bt_{0}}\to\infty}$. This finishes the proof. \hfill \qedsymbol

% ----------------------------------------------------------------
\bibliographystyle{amsplain}

\begin{thebibliography}{10}
	
	\bibitem{Abdelhamid_2016}
	Hamdi~M. Abdelhamid, Manasvi Lingam, and Swadesh~M. Mahajan, \emph{Extended mhd
		turbulence and its applications to the solar wind}, The Astrophysical Journal
	\textbf{829} (2016), no.~2, 87.
	
	\bibitem{ASWY}
	David~M. Ambrose, Gideon Simpson, J.~Douglas Wright, and Dennis~G. Yang,
	\emph{Ill-posedness of degenerate dispersive equations}, Nonlinearity
	\textbf{25} (2012), no.~9, 2655--2680. \MR{2967120}
	
	\bibitem{MR4302173}
	Weiwei Ao, Juan D\'avila, Manuel del Pino, Monica Musso, and Juncheng Wei,
	\emph{Travelling and rotating solutions to the generalized inviscid surface
		quasi-geostrophic equation}, Trans. Amer. Math. Soc. \textbf{374} (2021),
	no.~9, 6665--6689. \MR{4302173}
	
	\bibitem{BBKT}
	F.~Betancourt, R.~B\"{u}rger, K.~H. Karlsen, and E.~M. Tory, \emph{On nonlocal
		conservation laws modelling sedimentation}, Nonlinearity \textbf{24} (2011),
	no.~3, 855--885. \MR{2772627}
	
	\bibitem{Birkhoff}
	G.~D. Birkhoff, \emph{{Quantum mechanics and asymptotic series}}, Bulletin of
	the American Mathematical Society \textbf{39} (1933), no.~10, 681 -- 700.
	
	\bibitem{BCS}
	J.~L. Bona, M.~Chen, and J.-C. Saut, \emph{Boussinesq equations and other
		systems for small-amplitude long waves in nonlinear dispersive media. {I}.
		{D}erivation and linear theory}, J. Nonlinear Sci. \textbf{12} (2002), no.~4,
	283--318. \MR{1915939}
	
	\bibitem{Burg}
	B.~H. Burgess, \emph{Thin-jet scaling in large-scale shallow water
		quasigeostrophic flow}, Geophys. Astrophys. Fluid Dyn. \textbf{114} (2020),
	no.~4-5, 481--503. \MR{4150285}
	
	\bibitem{BurgDrits}
	B.~H. Burgess and D.~G. Dritschel, \emph{Potential vorticity fronts and the
		late-time evolution of large-scale quasi-geostrophic flows}, J. Fluid Mech.
	\textbf{939} (2022), Paper No. A40, 16. \MR{4402682}
	
	\bibitem{CaPa}
	R.~Caflisch and G.~C. Papanicolaou, \emph{Dynamic theory of suspensions with
		{B}rownian effects}, SIAM J. Appl. Math. \textbf{43} (1983), no.~4, 885--906.
	\MR{709743}
	
	\bibitem{CaHo}
	Roberto Camassa and Darryl~D. Holm, \emph{An integrable shallow water equation
		with peaked solitons}, Phys. Rev. Lett. \textbf{71} (1993), no.~11,
	1661--1664. \MR{1234453}
	
	\bibitem{CQZZ}
	Daomin Cao, Guolin Qin, Weicheng Zhan, and Changjun Zou, \emph{Existence and
		regularity of co-rotating and traveling-wave vortex solutions for the
		generalized {SQG} equation}, J. Differential Equations \textbf{299} (2021),
	429--462. \MR{4295163}
	
	\bibitem{CQZZ-gSQG}
	\bysame, \emph{On the global classical solutions for the generalized {SQG}
		equation}, J. Funct. Anal. \textbf{283} (2022), no.~2, Paper No. 109503, 37.
	\MR{4406410}
	
	\bibitem{CCG-duke}
	Angel Castro, Diego C\'{o}rdoba, and Javier G\'{o}mez-Serrano, \emph{Existence
		and regularity of rotating global solutions for the generalized surface
		quasi-geostrophic equations}, Duke Math. J. \textbf{165} (2016), no.~5,
	935--984. \MR{3482335}
	
	\bibitem{CCG-annpde}
	\bysame, \emph{Uniformly rotating analytic global patch solutions for active
		scalars}, Ann. PDE \textbf{2} (2016), no.~1, Art. 1, 34. \MR{3462104}
	 
	\bibitem{CCJ}
	Dongho Chae, Kyudong Choi, and In-Jee Jeong, \emph{Axi-symmetric solutions for
		active vector models generalizing 3{D} {E}uler and electron-{MHD} equations},
	Nonlinear Anal. Real World Appl. \textbf{74} (2023), Paper No. 103953, 20.
	\MR{4611680}
	
	\bibitem{CCCGW}
	Dongho Chae, Peter Constantin, Diego C\'{o}rdoba, Francisco Gancedo, and
	Jiahong Wu, \emph{Generalized surface quasi-geostrophic equations with
		singular velocities}, Comm. Pure Appl. Math. \textbf{65} (2012), no.~8,
	1037--1066. \MR{2928091}
	
	\bibitem{CCW3}
	Dongho Chae, Peter Constantin, and Jiahong Wu, \emph{Inviscid models
		generalizing the two-dimensional {E}uler and the surface quasi-geostrophic
		equations}, Arch. Ration. Mech. Anal. \textbf{202} (2011), no.~1, 35--62.
	\MR{2835862}
	
	\bibitem{CDL}
	Dongho Chae, Pierre Degond, and Jian-Guo Liu, \emph{Well-posedness for
		{H}all-magnetohydrodynamics}, Ann. Inst. H. Poincar\'{e} Anal. Non
	Lin\'{e}aire \textbf{31} (2014), no.~3, 555--565. \MR{3208454}
	
	\bibitem{CJ}
	Dongho Chae and In-Jee Jeong, \emph{Active vector models generalising 3{D}
		{E}uler and electron-{MHD} equations}, Nonlinearity \textbf{36} (2023),
	no.~1, 458--475. \MR{4521950}
	
	\bibitem{CJNO}
	Dongho Chae, In-Jee Jeong, Jungkyoung Na, and Sung-Jin Oh, \emph{Well-posedness
		for {O}hkitani model and long-time existence for surface quasi-geostrophic
		equations}, Comm. Math. Phys. \textbf{406} (2025), 
		Paper No. 75.
	
	\bibitem{CWW}
	Dongho Chae, Renhui Wan, and Jiahong Wu, \emph{Local well-posedness for the
		{H}all-{MHD} equations with fractional magnetic diffusion}, J. Math. Fluid
	Mech. \textbf{17} (2015), no.~4, 627--638. \MR{3412271}
	
	\bibitem{CheBe}
	Christophe Cheverry and Nicolas Besse, \emph{The equations of extended
		magnetohydrodynamics}, 2024.
	
	\bibitem{CMT2}
	Peter Constantin, Andrew~J. Majda, and Esteban Tabak, \emph{Formation of strong
		fronts in the {$2$}-{D} quasigeostrophic thermal active scalar}, Nonlinearity
	\textbf{7} (1994), no.~6, 1495--1533. \MR{1304437}
	
	\bibitem{CMT1}
	Peter Constantin, Andrew~J. Majda, and Esteban~G. Tabak, \emph{Singular front
		formation in a model for quasigeostrophic flow}, Phys. Fluids \textbf{6}
	(1994), no.~1, 9--11. \MR{1252829}
	
	\bibitem{CGI}
	Diego C\'{o}rdoba, Javier G\'{o}mez-Serrano, and Alexandru~D. Ionescu,
	\emph{Global solutions for the generalized {SQG} patch equation}, Arch.
	Ration. Mech. Anal. \textbf{233} (2019), no.~3, 1211--1251. \MR{3961297}
	
	\bibitem{CMZ}
	Diego C\'ordoba and Luis Mart\'inez-Zoroa, \emph{Non-existence and strong
		ill-posedness in {$C^{k,\beta}$} for the generalized surface
		quasi-geostrophic equation}, Comm. Math. Phys. \textbf{405} (2024), no.~7,
	Paper No. 170, 53. \MR{4768540}
	
	\bibitem{CrGm}
	Walter Craig and Jonathan Goodman, \emph{Linear dispersive equations of {A}iry
		type}, J. Differential Equations \textbf{87} (1990), no.~1, 38--61.
	\MR{1070026}
	
	\bibitem{dLS}
	J.~de~Frutos, M.~\'{A}. L\'{o}pez~Marcos, and J.~M. Sanz-Serna, \emph{A finite
		difference scheme for the {$K(2,2)$} compacton equation}, J. Comput. Phys.
	\textbf{120} (1995), no.~2, 248--252. \MR{1349459}
	
	\bibitem{DE}
	Theodore~D. Drivas and Tarek~M. Elgindi, \emph{Singularity formation in the
		incompressible {E}uler equation in finite and infinite time}, EMS Surv. Math.
	Sci. \textbf{10} (2023), no.~1, 1--100. \MR{4667415}
	
	\bibitem{DEJ}
	Theodore~D. Drivas, Tarek~M. Elgindi, and In-Jee Jeong, \emph{Twisting in
		{H}amiltonian flows and perfect fluids}, Invent. Math. \textbf{238} (2024),
	no.~1, 331--370. \MR{4794597}
	
	\bibitem{GaPa}
	Francisco Gancedo and Neel Patel, \emph{On the local existence and blow-up for
		generalized {SQG} patches}, Ann. PDE \textbf{7} (2021), no.~1, Paper No. 4,
	63. \MR{4235799}
	
	\bibitem{GHM}
	Pierre Germain, Benjamin Harrop-Griffiths, and Jeremy~L. Marzuola,
	\emph{Existence and uniqueness of solutions for a quasilinear {K}d{V}
		equation with degenerate dispersion}, Comm. Pure Appl. Math. \textbf{72}
	(2019), no.~11, 2449--2484. \MR{4011864}
	
	\bibitem{GrNa}
	A.~E. Green, N.~Laws, and P.~M. Naghdi, \emph{On the theory of water waves},
	Proc. Roy. Soc. London Ser. A \textbf{338} (1974), 43--55. \MR{349127}
	
	\bibitem{GIP}
	Javier Gómez-Serrano, Alexandru~D. Ionescu, and Jaemin Park,
	\emph{Quasiperiodic solutions of the generalized SQG equation}, 2023.
	
	\bibitem{HaseMima}
	A.~Hasegawa and K.~Mima, \emph{Pseudo-three-dimensional turbulence in a
		magnetized nonuniform plasma}, Phys. Fluids 21 (1978).
	
	\bibitem{HH-sqg}
	Zineb Hassainia and Taoufik Hmidi, \emph{On the {V}-states for the generalized
		quasi-geostrophic equations}, Comm. Math. Phys. \textbf{337} (2015), no.~1,
	321--377. \MR{3324164}
	
	\bibitem{HeKi}
	Siming He and Alexander Kiselev, \emph{Small-scale creation for solutions of
		the {SQG} equation}, Duke Math. J. \textbf{170} (2021), no.~5, 1027--1041.
	\MR{4255049}
	
	\bibitem{HMsqg}
	Taoufik Hmidi and Joan Mateu, \emph{Existence of corotating and
		counter-rotating vortex pairs for active scalar equations}, Comm. Math. Phys.
	\textbf{350} (2017), no.~2, 699--747. \MR{3607460}
	
	\bibitem{HXX}
	Taoufik Hmidi, Liutang Xue, and Zhilong Xue, \emph{Emergence of time periodic
		solutions for the generalized surface quasi-geostrophic equation in the
		disc}, J. Funct. Anal. \textbf{285} (2023), no.~10, Paper No. 110142, 61.
	\MR{4632079}
	
	\bibitem{HSZ}
	John~K. Hunter, Jingyang Shu, and Qingtian Zhang, \emph{Global solutions of a
		surface quasigeostrophic front equation}, Pure Appl. Anal. \textbf{3} (2021),
	no.~3, 403--472. \MR{4379142}
	
	\bibitem{HSZ2}
	\bysame, \emph{Global solutions for a family of {GSQG} front equations},
	Discrete Contin. Dyn. Syst. \textbf{44} (2024), no.~9, 2598--2640.
	\MR{4762611}
	
	\bibitem{IsTa}
	M.~S. Ismail and T.~R. Taha, \emph{A numerical study of compactons}, Math.
	Comput. Simulation \textbf{47} (1998), no.~6, 519--530. \MR{1662379}
	
	\bibitem{Jeong}
	In-Jee Jeong, \emph{Generalized surface quasi-geostrophic equations:
		wellposedness and dynamical properties}, Lecture notes in mathematics
	\textbf{59} (2024).
	
	\bibitem{JKY}
	In-Jee Jeong, Junha Kim, and Yao Yao, \emph{On well-posedness of
		$\alpha$-{S}{Q}{G} equations in the half-plane}, Trans. Amer. Math. Soc.
	\textbf{378} (2025), 421--446.
	
	\bibitem{JO3}
	In-Jee Jeong and Sung-Jin Oh, \emph{Strong illposedness for quasilinear
		dispersive equations via degenerate dispersion}, 2024.
	
	\bibitem{JO1}
	\bysame, \emph{On the {C}auchy problem for the {H}all and electron
		magnetohydrodynamic equations without resistivity {I}: {I}llposedness near
		degenerate stationary solutions}, Ann. PDE \textbf{8} (2022), no.~2, Paper
	No. 15, 106. \MR{4456288}
	
	\bibitem{JO4}
	\bysame, \emph{On illposedness of the hall and electron
		magnetohydrodynamic equations without resistivity on the whole space}, 2024.
	
	\bibitem{JO2}
	\bysame, \emph{Wellposedness of the electron MHD without resistivity for large
		perturbations of the uniform magnetic field},  (2024).
	
	\bibitem{KR}
	Calvin Khor and Jos\'e{}~L. Rodrigo, \emph{On sharp fronts and almost-sharp
		fronts for singular {SQG}}, J. Differential Equations \textbf{278} (2021),
	111--145. \MR{4199394}
	
	\bibitem{Kis}
	Alexander Kiselev, \emph{Regularity and blow up for active scalars}, Math. Model. Nat.
	Phenom. \textbf{4} (2010), no.~5, 225--255.
	
	\bibitem{Kis2}
	\bysame, \emph{Special issue editorial: small scales and singularity
		formation in fluid dynamics}, J. Nonlinear Sci. \textbf{28} (2018), no.~6,
	2047--2050. \MR{3867636}
	
	\bibitem{KiNa}
	Alexander Kiselev and Fedor Nazarov, \emph{A simple energy pump for the surface
		quasi-geostrophic equation}, Nonlinear partial differential equations, Abel
	Symp., vol.~7, Springer, Heidelberg, 2012, pp.~175--179. \MR{3289364}
	
	\bibitem{KRYZ}
	Alexander Kiselev, Lenya Ryzhik, Yao Yao, and Andrej Zlato{\v{s}}, \emph{Finite
		time singularity for the modified {SQG} patch equation}, Ann. of Math. (2)
	\textbf{184} (2016), no.~3, 909--948. \MR{3549626}
	
	\bibitem{KYZ}
	Alexander Kiselev, Yao Yao, and Andrej Zlato\v{s}, \emph{Local regularity for
		the modified {SQG} patch equation}, Comm. Pure Appl. Math. \textbf{70}
	(2017), no.~7, 1253--1315. \MR{3666567}
	
	\bibitem{LaMc}
	Vitaly~D. Larichev and James~C. McWilliams, \emph{{Weakly decaying turbulence
			in an equivalent‐barotropic fluid}}, Physics of Fluids A: Fluid Dynamics
	\textbf{3} (1991), no.~5, 938--950.
	
	\bibitem{LiSi}
	Siyu Liang, \emph{On white noise solutions of m{SQG} equations on {$\Bbb
			R^2$}}, J. Math. Anal. Appl. \textbf{519} (2023), no.~2, Paper No. 126830,
	23. \MR{4511366}
	
	\bibitem{Light}
	M.~J. Lighthill, \emph{Studies on magneto-hydrodynamic waves and other
		anisotropic wave motions}, Philos. Trans. Roy. Soc. London Ser. A
	\textbf{252} (1960), 397--430. \MR{0148337}
	
	\bibitem{LM}
	Yuri~E. Litvinenko and Liam~C. McMahon, \emph{Finite-time singularity formation
		at a magnetic neutral line in {H}all magnetohydrodynamics}, Appl. Math. Lett.
	\textbf{45} (2015), 76--80. \MR{3316965}
	
	\bibitem{MMT}
	Jeremy~L. Marzuola, Jason Metcalfe, and Daniel Tataru, \emph{Quasilinear
		{S}chr\"{o}dinger equations {I}: {S}mall data and quadratic interactions},
	Adv. Math. \textbf{231} (2012), no.~2, 1151--1172. \MR{2955206}
	
	\bibitem{MTXX}
	Qianyun Miao, Changhui Tan, Liutang Xue, and Zhilong Xue, \emph{Local
		regularity and finite-time singularity for a class of generalized sqg patches
		on the half-plane}, 2024.
	
	\bibitem{Miloshevich_2017}
	George Miloshevich, Manasvi Lingam, and Philip~J Morrison, \emph{On the
		structure and statistical theory of turbulence of extended
		magnetohydrodynamics}, New Journal of Physics \textbf{19} (2017), no.~1,
	015007.
	
	\bibitem{Mz}
	Sigeru Mizohata, \emph{Some remarks on the {C}auchy problem}, J. Math. Kyoto
	Univ. \textbf{1} (1961/62), 109--127. \MR{170112}
	
	\bibitem{Oh1}
	Koji Ohkitani, \emph{Growth rate analysis of scalar gradients in generalized
		surface quasigeostrophic equations of ideal fluids}, Phys. Rev. E (3)
	\textbf{83} (2011), no.~3, 036317, 8. \MR{2788267}
	
	\bibitem{Oh2}
	\bysame, \emph{Asymptotics and numerics of a family of two-dimensional
		generalized surface quasi-geostrophic equations}, Physics of Fluids
	\textbf{24} (2012), no.~095101.
	
	\bibitem{Pecseli}
	Hans P{\'e}cseli, \emph{Waves and oscillations in plasmas}, Series in Plasma
	Physics (2012).
	
	\bibitem{Ped}
	J.~Pedlosky, \emph{Geophysical fluid dynamics}, Springer, 1987.
	
	\bibitem{Ro}
	Philip Rosenau, \emph{On solitons, compactons, and {L}agrange maps}, Phys.
	Lett. A \textbf{211} (1996), no.~5, 265--275. \MR{1377202}
	
	\bibitem{RoHy}
	Philip Rosenau and James~M. Hyman, \emph{Compactons: Solitons with finite
		wavelength}, Phys. Rev. Lett. \textbf{70} (1993), 564--567.
	
	\bibitem{Rub}
	Jacob Rubinstein, \emph{Evolution equations for stratified dilute suspensions},
	Phys. Fluids A \textbf{2} (1990), no.~1, 3--6. \MR{1030168}
	
	\bibitem{RuKe}
	Jacob Rubinstein and Joseph~B. Keller, \emph{Sedimentation of a dilute
		suspension}, Phys. Fluids A \textbf{1} (1989), no.~4, 637--643. \MR{1021642}
	
	\bibitem{ScDr}
	R.~K. Scott and D.~G. Dritschel, \emph{Scale-invariant singularity of the
		surface quasigeostrophic patch}, J. Fluid Mech. \textbf{863} (2019), R2, 12.
	\MR{3903908}
	
	\bibitem{PrattStern}
	Melvin~E. Stern and Lawrence~J. Pratt, \emph{Dynamics of vorticity fronts}, J.
	Fluid Mech. \textbf{161} (1985), 513--532. \MR{828157}
	
	\bibitem{SHF1}
	Anton Svirsky, Corentin Herbert, and Anna Frishman, \emph{Statistics of
		inhomogeneous turbulence in large-scale quasigeostrophic dynamics}, Phys.
	Rev. E \textbf{108} (2023), 065102.
	
	\bibitem{SHF2}
	\bysame, \emph{Two-dimensional turbulence with local interactions: Statistics
		of the condensate}, Phys. Rev. Lett. \textbf{131} (2023), 224003.
	
	\bibitem{Tay}
	Michael~E. Taylor, \emph{Pseudodifferential operators}, Princeton Mathematical
	Series, vol.~34, Princeton University Press, Princeton, N.J., 1981.
	\MR{618463}
	
	\bibitem{TrDr}
	Chuong~V. Tran and David~G. Dritschel, \emph{Impeded inverse energy transfer in
		the {C}harney-{H}asegawa-{M}ima model of quasi-geostrophic flows}, J. Fluid
	Mech. \textbf{551} (2006), 435--443. \MR{2263703}
	
	\bibitem{Z2}
	Andrej Zlato{\v{s}}, \emph{Local regularity and finite time singularity for the
		generalized {S}{Q}{G} equation on the half-plane}, arXiv:2305.02427.
	
	\bibitem{Zum}
	Kevin Zumbrun, \emph{On a nonlocal dispersive equation modeling particle
		suspensions}, Quart. Appl. Math. \textbf{57} (1999), no.~3, 573--600.
	\MR{1704419}
	
\end{thebibliography}
\providecommand{\bysame}{\leavevmode\hbox to3em{\hrulefill}\thinspace}
\providecommand{\MR}{\relax\ifhmode\unskip\space\fi MR }
% \MRhref is called by the amsart/book/proc definition of \MR.
\providecommand{\MRhref}[2]{%
	\href{http://www.ams.org/mathscinet-getitem?mr=#1}{#2}
}
\providecommand{\href}[2]{#2}

% ----------------------------------------------------------------

\end{document}